\newcommand{\nats}{\mathbb{N}}
\newcommand{\nnegints}{\mathbb{Z}_{\geq0}}
\newcommand{\reals}{\mathbb{R}}
\newcommand{\nnegreals}{\mathbb{R}_{\geq0}}
\newcommand{\posreals}{\mathbb{R}_{>0}}
\DeclarePairedDelimiter{\br}{[}{]}
\let\set\relax
\DeclarePairedDelimiter{\set}{\{}{\}}
\DeclarePairedDelimiter{\abs}{\lvert}{\rvert}
\DeclarePairedDelimiter{\card}{\lvert}{\rvert}
\DeclarePairedDelimiter{\norm}{\lVert}{\rVert}
\newcommand\condbase[1][]{\:#1\lvert\:}
\let\cond\condbase
\DeclarePairedDelimiterX\pr[1](){\let\cond\scond #1}
\newcommand{\given}{\colon}
\newcommand{\ccpevents}{\mathcal{E}}
\newcommand{\ccpneevents}{\mathcal{E}_{\emptyset}}
\newcommand{\ccpdomain}{\mathcal{D}}
\newcommand{\prob}{P}
\newcommand{\prev}{E}
\newcommand{\prevdomain}{\mathcal{G}}
\newcommand{\lprev}{\underline{E}}
\newcommand{\uprev}{\overline{E}}
\newcommand{\lpoisprev}{\underline{P}}
\newcommand{\upoisprev}{\overline{P}}
\newcommand{\indic}{\mathbb{I}}
\newcommand{\indica}[1]{\mathbb{I}_{#1}}
\newcommand{\setofallcproc}{\mathbb{P}}
\newcommand{\setofcproc}{\mathscr{P}}
\newcommand{\setofconscproc}[1]{\mathbb{P}_{#1}}
\newcommand{\pth}{\omega}
\newcommand{\setofpths}{\Omega}
\newcommand{\cpfield}{\mathcal{F}}
\newcommand{\cpdomain}{\mathcal{D}_{\mathrm{CP}}}
\newcommand{\setoftseq}{\mathscr{U}}
\newcommand{\setofnetseq}{\mathscr{U}_{\emptyset}}
\newcommand{\stsp}{\mathscr{X}}
\newcommand{\pois}{\psi}
\newcommand{\llambda}{\underline{\lambda}}
\newcommand{\ulambda}{\overline{\lambda}}
\newcommand{\consistent}{\sim}
\newcommand{\setoffn}{\mathcal{L}}
\newcommand{\setofcafn}{\mathcal{L}^{\mathrm{c}}}
\newcommand{\setofbbfn}{\mathcal{K}_{\mathrm{b}}}
\newcommand{\genset}{\mathcal{Y}}
\newcommand{\lowx}{\underline{x}}
\newcommand{\upx}{\overline{x}}
\newcommand{\ltro}{\underline{Q}}
\newcommand{\genltro}{\underline{R}}
\newcommand{\lto}{\underline{T}}
\newcommand{\setoftrm}{\mathcal{Q}}
\newcommand{\trm}{Q}
\newcommand{\setoflambdaseq}{\mathcal{S}}
\title{
  First Steps Towards an Imprecise Poisson Process
}
\author{
  \Name{Alexander Erreygers}\Email{Alexander.Erreygers@UGent.be}\\
  \Name{Jasper {De Bock}}\Email{Jasper.DeBock@UGent.be}\\
  \addr ELIS -- FL\hspace{.055555em}ip, Ghent University, Belgium
}
\begin{document}
\maketitle

\begin{abstract}
  The Poisson process is the most elementary continuous-time stochastic process that models a stream of repeating events.
  It is uniquely characterised by a single parameter called the rate.
  Instead of a single value for this rate, we here consider a rate interval and let it characterise two nested sets of stochastic processes.
  We call these two sets of stochastic process imprecise Poisson processes, explain why this is justified, and study the corresponding lower and upper (conditional) expectations.
  Besides a general theoretical framework, we also provide practical methods to compute lower and upper (conditional) expectations of functions that depend on the number of events at a single point in time.
\end{abstract}
\begin{keywords}
  Poisson process, counting process, continuous-time Markov chain, imprecision
\end{keywords}

\section{Introduction}
\label{sec:Introduction}
The \emph{Poisson process} is arguably one of the most basic stochastic processes.
At the core of this model is our subject, who is interested in something specific that occurs repeatedly over time, where time is assumed to be continuous.
For instance, our subject could be interested in the arrival of a customer to a queue, to give an example from queueing theory.
For the sake of brevity, we will call such a specific occurrence a \emph{Poisson-event},\footnote{
  We use the term ``Poisson-event'' rather than just ``event'' to avoid confusion with the standard usage of event in probability theory, where event refers to a subset of the sample space; we are indebted to an anonymous reviewer for pointing out this potential confusion, and to Gert de Cooman for suggesting the adopted terminology.
} whence our subject is interested in a stream of Poisson-events.
The time instants at which subsequent Poisson-events occur are uncertain to our subject, hence the need for a probabilistic model.
This set-up is not exclusive to queueing theory; it is also used in renewal theory and reliability theory, to name but a few applications.

There is a plethora of alternative but essentially equivalent characterisations of this Poisson process.
Some of the more well-known and basic characterisations are as the limit of the Bernoulli process \cite[Chapter~VI, Sections~5 and 6]{1968Feller} or as a sequence of mutually independent and exponentially distributed inter-event times \cite[Chapter~5, Section~3.A]{1975KarlinTaylor}.
An alternative way to look at the Poisson process is as a random dispersion of points in some general space---that need not be the real number line---see for instance \cite[Sections~2.1 and 2.2]{2013DaleyVereJones} or \cite[Chapter~2]{1993Kingman}.
More theoretically involved characterisations that follow our set-up are (i) as a counting process---a type of continuous-time stochastic process---that has independent and stationary increments that are Poisson distributed, see for example \cite[Definition~2.1.1]{1996Ross} or \cite[Section~3]{1999Sato}; (ii) as a counting process with a condition on the ``rate''---the rate of change of the probability of having a Poisson-event in a vanishingly small time period---see for instance \cite[Chapter~XVII, Section~2]{1968Feller} or \cite[Definition~2.1.2]{1996Ross}; (iii) as a stationary counting process that has no after-effects---see for instance \cite[Chapter~XVII, Section~2]{1968Feller} or \cite[Section~1]{1960Khintchine}; (iv) as a martingale through the Watanabe characterisation \cite[Theorem~2.3]{1964Watanabe}; or (v) as a pure-birth chain---a type of continuous-time Markov chain---with one birth rate, see for instance \cite[Section~2.4]{1997Norris}.
Many of these characterisations are actually equivalent, see for instance \cite[Theorem~2.4.3]{1997Norris} or \cite[Theorem~2.1.1]{1996Ross}.

Broadly speaking, these characterisations all make the same three assumptions: (i) \emph{orderliness}, in the sense that the probability that two or more Poisson-events occur at the same time is zero; (ii) independence, more specifically the absence of after-effects or \emph{Markovianity}; and (iii) \emph{homogeneity}.
It is essentially well-known that these three assumptions imply the existence of a parameter called the rate, and that this rate uniquely characterises the Poisson process.
We here weaken the three aforementioned assumptions.
First and foremost, we get rid of the implicit assumption that our subject's beliefs can be accurately modelled by a single stochastic process; instead, we assume that her beliefs only allow us to consider a \emph{set} of stochastic processes.
Specifically, we consider a rate interval instead of a precise value for the rate, and examine two distinct sets: (i) the set of all Poisson processes whose rate belongs to this rate interval; and (ii) the set of all processes that are orderly and ``consistent'' with the rate interval.
We then define lower and upper conditional expectations as the infimum and supremum of the conditional expectations with respect to the stochastic processes in these respective sets.
Aside from this general theoretical framework, we focus on computing the lower and upper expectation of functions that depend on the number of occurred Poisson-events at a single future time point.
For the set of Poisson processes, we show that this requires the solution of a one-parameter optimisation problem; for the second set, we show that this can be computed using backwards recursion.
Furthermore, we argue that both sets can be justifiably called imprecise Poisson processes: imprecise because their lower and upper expectations are not equal, and Poisson because their lower and upper expectations satisfy imprecise versions of the defining properties of the (precise) Poisson process.
The interested reader can find proofs for all our results in the Appendix.

Our approach is heavily inspired by the theory of imprecise continuous-time Markov chains \cite{2017Krak}.
For instance, we define the imprecise Poisson process via consistency with a rate interval, whereas \citet{2017Krak} use consistency with a set of transition rate matrices.
In the bigger picture, our contribution can therefore be seen as the first steps towards generalising the theory of imprecise continuous-time Markov chains from finite to countably infinite state spaces.

\section{Counting Processes in General}
\label{sec:Counting processes in general}
Recall from the Introduction that our subject is interested in the occurrences of a Poisson-event.
In this setting, it makes sense to consider the number of Poisson-events that have occurred from the initial time point \(t_{\mathrm{ini}}=0\) up to a time point~\(t\), where \(t\) is a non-negative real number.

\subsection{Counting Paths and the Sample Space}
The temporal evolution of the number of occurred Poisson-events is given by a counting path~\(\pth\colon\nnegreals\to\nnegints\); at any time point~\(t\) in \(\nnegreals\), \(\pth\pr{t}\) is the number of Poisson-events that have occurred from \(t_{\mathrm{ini}}=0\) up to \(t\).\footnote{We use \(\nnegints\) and \(\nats\) to denote the non-negative integers and natural numbers (or positive integers), respectively.
Furthermore, the real numbers, non-negative real numbers and positive real numbers are denoted by \(\reals\), \(\nnegreals{}\) and \(\posreals\), respectively.}
Since the actual temporal evolution of the number of occurred Poisson-events is unknown to the subject, we need a probabilistic model, more specifically a continuous-time stochastic process.
The sample space---the space of all possible outcomes---of this process is a set of counting paths, denoted by~\(\setofpths\).
One popular choice for \(\setofpths\) is the set of all c\`adl\`ag---right-continuous with left limits---counting paths, in this set-up usually also assumed to be non-decreasing.
However, our results do not require such a strong assumption.
Before we state our assumptions on~\(\setofpths\), we first introduce some notation.

In the remainder, we frequently use increasing sequences \(t_1, \dots, t_n\) of time points, that is, sequences \(t_1, \dots, t_n\) in \(\nnegreals\) of arbitrary length---that is, with \(n\) in \(\nats\)---such that \(t_i<t_{i+1}\) for all \(i\) in \(\set{1, \dots, n-1}\).
For the sake of brevity, we follow \cite[Section~2.1]{2017Krak} in denoting such a sequence by \(u\).
We collect all increasing---but possibly empty---sequences of time points in \(\setoftseq\), and let \(\setofnetseq\coloneqq\setoftseq\setminus\set{\emptyset}\).
Observe that as a sequence of time points \(u\) in \(\setoftseq\) is just a finite and ordered set of non-negative real numbers, we can perform common set-theoretic operations on them like unions.
In order to lighten our notation, we identify the single time point~\(t\) with a sequence; as such, we can use \(u\cup t\) as a notational shorthand for \(u \cup\set{t}\).
Also, a statement of the form \(\max u < t\) is taken to be true if \(u=\emptyset\); see for instance Lemma~\ref{lem:Element of cpfield_u}.
With this convention, for any \(t\) in \(\nnegreals\), we let \(\setoftseq_{<t}\coloneqq\set{u\in\setoftseq\colon \max u<t}\) be the set of all sequences of time points of which the last time point precedes \(t\).
Note that if \(t=0\), then there is no such non-empty sequence and so \(\setoftseq_{<t}=\set{\emptyset}\).

In order to better distinguish between general non-negative integers and counts, we let \(\stsp\coloneqq\nnegints\).
For any \(u=t_1, \dots, t_n\) in \(\setofnetseq\), we let \(\stsp_u\) be the set of all \(n\)-tuples \(x_u=\pr{x_{t_1}, \dots, x_{t_n}}\) of non-negative integers that are non-decreasing:
\begin{equation}
\label{eqn:stsp_u}
  \stsp_u
  \coloneqq \set{\pr{x_{t_1}, \dots, x_{t_n}} \in \stsp^n \colon x_{t_1} \leq \cdots \leq x_{t_n}}.
\end{equation}
If \(u\) is the empty sequence~\(\emptyset\), then we let \(\stsp_u=\stsp_\emptyset\) denote the singleton containing the empty tuple, denoted by \(x_\emptyset\).

With all this notation in place, we can now formally state our requirements on \(\setofpths\):
\begin{enumerate}[label=\upshape{}A\arabic*., ref=\upshape(A\arabic*), leftmargin=*]
  \item \label{Paths:Non-decreasing}
  \(\pr{\forall \pth\in\setofpths}\pr{\forall t,\Delta\in\nnegreals}~\pth\pr{t}\leq\pth\pr{t+\Delta}\);
  \item \label{Paths:Existence for any instantiation}
  \(\pr{\forall u\in\setofnetseq}\pr{\forall x_u\in\stsp_u}\pr{\exists\pth\in\setofpths}\pr{\forall t\in u}~\pth\pr{t}=x_t\).
\end{enumerate}
Assumption~\ref{Paths:Non-decreasing} ensures that all paths are non-decreasing, which is essential if we interpret \(\pth\pr{t}\) as the number of Poisson-events that have occurred up to time~\(t\).
Assumption~\ref{Paths:Existence for any instantiation} ensures that the set~\(\setofpths\) is sufficiently large, essentially ensuring that the finitary events of Equation~\eqref{eqn:finitary event} further on are non-empty.

\subsection{Coherent Conditional Probabilities}
We follow \citet{2017Krak} in using the framework of coherent conditional probabilities to model the beliefs of our subject.
What follows is a brief introduction to coherent conditional probabilities; we refer to \cite{1985Regazzini} and \cite[Section~4.1]{2017Krak} for a more detailed exposition.
For any sample space---that is, a non-empty set---\(S\), we let \(\ccpevents\pr{S}\) denote the set all events---that is, subsets of \(S\)---and let \(\ccpneevents\pr{S}\coloneqq\ccpevents\pr{S}\setminus\set{\emptyset}\) denote the set of all non-empty events.
Before we introduce coherent conditional probabilities, we first look at full conditional probabilities.
\begin{definition}
\label{def:Full conditional probability}
  Let \(S\) be a sample space.
  A \emph{full conditional probability}~\(\prob\) is a real-valued map on \(\ccpevents\pr{S} \times \ccpevents_{\emptyset}\pr{S}\) such that, for all \(A\), \(B\) in \(\ccpevents\pr{S}\) and \(C\), \(D\) in \(\ccpneevents\pr{S}\),
  \begin{enumerate}[label=\upshape{P\arabic*.}, leftmargin=*, ref=\upshape{(P\arabic*)}, leftmargin=*]
    \item \label{LOP:geq 0}
    \(\prob\pr{A \cond C} \geq 0\);
    \item \label{LOP:1 if C in A}
    \(\prob\pr{A \cond C} = 1\) if \(C \subseteq A\);
    \item \label{LOP:Additive if disjoint}
    \(\prob\pr{A \cup B \cond C} = \prob\pr{A \cond C} + \prob\pr{B \cond C}\) if \(A \cap B = \emptyset\);
    \item \label{LOP:Bayes rule}
    \(\prob\pr{A \cap D \cond C} = \prob\pr{A \cond D \cap C} \prob\pr{D \cond C}\) if \(D\cap C\neq\emptyset\).
  \end{enumerate}
\end{definition}
Note that \ref{LOP:geq 0}--\ref{LOP:Additive if disjoint} just state that \(\prob\pr{\cdot\cond C}\) is a finitely-additive probability measure, and that \ref{LOP:Bayes rule} is a multiplicative version of Bayes' rule.
We use the adjective \emph{full} because the domain of \(\prob\) is \(\ccpevents\pr{S} \times \ccpevents_{\emptyset}\pr{S}\).
Next, we move to domains that are a subset of \(\ccpevents\pr{S} \times \ccpevents_{\emptyset}\pr{S}\).
\begin{definition}
\label{def:Coherent conditional probability in text}
  Let \(S\) be a sample space.
  A coherent conditional probability~\(\prob\) is a real-valued map on \(\ccpdomain{}\subseteq \ccpevents\pr{S} \times \ccpevents_{\emptyset}\pr{S}\) that can be extended to a full conditional probability.
\end{definition}
Important to emphasise here is that simply demanding that \ref{LOP:geq 0}--\ref{LOP:Bayes rule} hold on the domain~\(\ccpdomain{}\) is in general \emph{not} sufficient to guarantee that \(\prob\) can be extended to a full conditional probability.
A necessary and sufficient condition for the existence of such an extension can be found in \cite[Theorem~3]{1985Regazzini} or \cite[Corollary~4.3]{2017Krak}, but we refrain from stating it here because of its technical nature.
We here only mention that this so-called \emph{coherence} condition---hence explaining the use of the adjective coherent---has an intuitive betting interpretation, and that checking this condition is usually feasible while explicitly constructing the full conditional extension is typically not; this is extremely useful when constructing proofs.
Another strong argument for using coherent conditional probabilities is that they can always be extended to a coherent conditional probability on a larger domain \cite[Theorem~4]{1985Regazzini}.
This too is an essential tool in the proof of many of our main results, including Theorems~\ref{the:Pois transition probabilities are poisson distributed and the converse}, \ref{the:Lower conditional expectation of bounded function with respect to all consistent processes} and \ref{the:Lower and upper expectation of functions that do not increase to fast}.

\subsection{Events and Fields}
For any \(v=t_1, \dots, t_n\) in \(\setofnetseq\) and \(B\subseteq\stsp_v\), we define the \emph{finitary event}
\begin{equation}
\label{eqn:finitary event}
  \pr{X_v\in B}
  \coloneqq \set{\pth\in\setofpths\colon\pr{\pth\pr{t_1}, \dots, \pth\pr{t_n}}\in B}.
\end{equation}
Furthermore, we also let \(\pr{X_\emptyset=x_\emptyset}\coloneqq\setofpths{}\eqqcolon\pr{X_\emptyset\in\stsp_\emptyset}\).
Then for any \(u\) in \(\setoftseq\), we let \(\cpfield_u\) be the field of events---or algebra of sets---generated by the finitary events for all sequences with time points in or succeeding \(u\):
\begin{multline}
\label{eqn:cpfield_u}
  \cpfield_u
  \coloneqq \langle \set{\pr{X_v\in B} \colon v\in\setoftseq, B\subseteq\stsp_v, \\ \pr{\forall t\in v}~t\in u\cup [\max u, +\infty)} \rangle.
\end{multline}
\begin{lemma}
\label{lem:Element of cpfield_u}
  Consider some \(u\) in \(\setoftseq\) and \(A\) in \(\cpfield_u\).
  Then there is some \(v\) in \(\setoftseq\) with \(\min v>\max u\) and some \(B\subseteq\stsp_w\) with \(w\coloneqq u\cup v\) such that \(A=\pr{X_w \in B}\).
\end{lemma}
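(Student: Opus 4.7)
The plan is to unpack the definition of the generated field and push every Boolean operation down to the indexing set of time points. By definition, $\cpfield_u$ is the field generated by finitary events $(X_{v_i} \in B_i)$ whose time points lie in $u\cup[\max u, +\infty)$. Since any element of a field can be written as a finite Boolean combination of its generators, I would start by fixing such a representation: $A$ is a finite combination (unions, intersections, complements) of finitely many finitary events $(X_{v_1}\in B_1), \dots, (X_{v_k}\in B_k)$, where each $v_i\in\setoftseq$ consists of time points in $u\cup[\max u,+\infty)$.

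Next I would merge all indexing sequences into one common grid. Concretely, define $w$ as the ordered sequence whose underlying set is $u\cup v_1\cup\cdots\cup v_k$, and set $v \coloneqq w\setminus u$, viewed as an increasing sequence. Because every $t\in v_i$ satisfies $t\in u$ or $t\geq \max u$, and because $\max u\in u$ whenever $u\neq\emptyset$, the remaining time points in $v$ must strictly exceed $\max u$; this gives $\min v>\max u$, using the convention that this is vacuously true when $u=\emptyset$.

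Then I would rewrite each generator on the common grid. Since $v_i\subseteq w$, there is a natural projection $\pi_i\colon \stsp_w\to\stsp_{v_i}$ given by restriction of coordinates, and by the definition~\eqref{eqn:finitary event} of finitary events we have
\begin{equation*}
  (X_{v_i}\in B_i) = (X_w \in \pi_i^{-1}(B_i)),
\end{equation*}
with $\pi_i^{-1}(B_i)\subseteq\stsp_w$. The final step is to observe that Boolean operations commute with the map $B\mapsto (X_w\in B)$: unions become unions of subsets of $\stsp_w$, intersections become intersections, and complements become complements (relative to $\stsp_w$). Applying these identities to the fixed Boolean representation of $A$ then yields a single $B\subseteq\stsp_w$ with $A=(X_w\in B)$, as required.

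The only genuine subtlety is bookkeeping: checking that $\pi_i^{-1}(B_i)$ actually lies in $\stsp_w$ (which is automatic because the constraints defining $\stsp_w$ in~\eqref{eqn:stsp_u} project down to those of $\stsp_{v_i}$, so preimages are well-defined subsets of $\stsp_w$), and handling the two corner cases $u=\emptyset$ and $v_i\subseteq u$ via the stated conventions on $\max u$. None of these should present a real obstacle; the proof is essentially a translation of ``generated field = Boolean combinations of generators'' into the notation of finitary events.
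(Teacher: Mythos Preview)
Your approach is correct in substance and differs from the paper's mainly in organisation. The paper first proves, as a standalone lemma, that the generating collection $\mathcal{C}_u$ of finitary events is itself already a field (closed under complements and finite unions), so that $\cpfield_u=\mathcal{C}_u$ and every $A\in\cpfield_u$ is \emph{already} a single finitary event $(X_v\in B_v)$; the remaining work is then only to reindex this one event on $u\cup v'$ with $v'$ strictly after $\max u$. Your route bypasses that intermediate lemma by starting from a Boolean combination of generators and merging onto a common grid---which is effectively proving the same closure properties inline. Both arguments are valid; the paper's separation is slightly cleaner for reuse elsewhere, while yours is more self-contained for this single lemma.

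One corner case deserves more care than you give it. If every $v_i\subseteq u$, so that $v=w\setminus u=\emptyset$, the condition $\min v>\max u$ is \emph{not} covered by the paper's stated convention, which concerns only $\max u$ for empty $u$, not $\min v$ for empty $v$. The paper handles this explicitly by adjoining a single dummy time point $t\in(\max u,+\infty)$ whenever the difference comes out empty, and then building $B$ on the enlarged grid. This is a one-line fix rather than a real obstacle, but it is not dispatched by the existing conventions as your last paragraph suggests.
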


\subsection{Counting Processes as Coherent Conditional Probabilities}
From here on, we focus on coherent conditional probabilities with the domain
\begin{equation*}
  \cpdomain
  \coloneqq \set{\pr{A, X_u=x_u} \colon u\in\setoftseq, A\in\cpfield_u, x_u\in\stsp_u},
\end{equation*}
which essentially consists of future events conditional on the number of occurred Poisson-events at specified past time-points.
The rationale behind this domain is twofold.
First and foremost, it is sufficiently large to make most inferences that one is usually interested in.
For example, this domain allows us to compute---tight lower and upper bounds on---the expectation of a real-valued function on the number of occurred Poisson-events at a single future time point, as we will see in Section~\ref{sec:Computing lower expectations}.
Second, it guarantees that every rate corresponds to a unique Poisson process, as we will see in Section~\ref{sec:Poisson processes in particular}.
\begin{definition}
\label{def:Counting process}
  A \emph{counting process}~\(\prob\) is a coherent conditional probability on \(\cpdomain\) such that
  \begin{enumerate}[label=\upshape{}CP\arabic*., ref=\upshape(CP\arabic*), leftmargin=*]
    \item \label{CP:Start at 0}
    \(\prob\pr{X_0=0}=1\);
    \item \label{CP:Orederliness}
    for all \(t\) in \(\nnegreals\), \(u\) in \(\setoftseq_{<t}\) and \(\pr{x_u, x}\) in \(\stsp_{u\cup t}\),
    \begin{equation*}
      \lim_{\Delta\to0^+} \frac{\prob\pr{X_{t+\Delta}\geq x+2\cond X_u=x_u, X_t=x}}{\Delta}
      = 0
    \end{equation*}
    and, if \(t>0\),
    \begin{equation*}
      \lim_{\Delta\to0^+} \frac{\prob\pr{X_t\geq x+2\cond X_u=x_u, X_{t-\Delta}=x}}{\Delta}
      = 0.
    \end{equation*}
  \end{enumerate}
\end{definition}
The second requirement~\ref{CP:Orederliness} is---our version of---the \emph{orderliness} property that we previously mentioned in the Introduction.
In essence, it ensures that the probability that two or more Poisson-events occur at the same time is zero.
We collect all counting processes in the set~\(\setofallcproc\).

\subsection{Conditional Expectation with Respect to a Counting Process}
\label{ssec:Conditional expectation with respect to a counting process}
For any counting process~\(\prob\), we let \(\prev_\prob\) denote the associated (conditional) expectation, defined in the usual sense as an integral with respect to the measure~\(\prob\)---see for instance \cite[Theorem~6]{1985Regazzini} or \cite[Section~15.10.1]{2014LowerPrevisions}.

Let \(\setofbbfn\pr{\setofpths}\) denote the set of all real-valued functions on \(\setofpths\) that are bounded below.\footnote{
  Note that we could just as well consider arbitrary real-valued functions instead of restricting ourselves to bounded-below functions.
  Our main reason for doing so is that this facilitates a more elegant treatment.
  Furthermore, many functions of practical interest are bounded-below.
}
Fix some \(u\) in \(\setoftseq\).
Then \(f\) in \(\setofbbfn\pr{\setofpths}\) is \emph{\(\cpfield_u\)-measurable} if for all \(\alpha\) in \([\inf f, +\infty)\), the level set \(\set{f>\alpha}\coloneqq\set{\pth\in\setofpths\colon f\pr{\pth}>\alpha}\) is an element of \(\cpfield_u\).
We collect all such \(\cpfield_u\)-measurable functions in \(\prevdomain_u\).

The (conditional) expectation~\(\prev_\prob\) has domain
\begin{multline*}
  \prevdomain
  \coloneqq\{\pr{f, X_u=x_u}\in \setofbbfn\pr{\setofpths}\times\ccpneevents\pr{\setofpths} \colon \\ u\in\setoftseq, x_u\in\stsp_u, f\in\prevdomain_u\}.
\end{multline*}
For any \(\pr{f, X_u=x_u}\) in \(\prevdomain\), we have
\begin{multline*}
  \prev_\prob\pr{f\cond X_u=x_u} \\
  \coloneqq \inf f + \int_{\inf f}^{\sup f} \prob\pr{\set{f>\alpha}\cond X_u=x_u} \,\mathrm{d}\alpha,
\end{multline*}
where the integral is a---possibly improper---Riemann integral.
Note that this integral always exists because \(\prob\pr{\set{f>\alpha}\cond X_u=x_u}\) is a non-increasing function of \(\alpha\).
This expression simplifies if \(f\) is an \emph{\(\cpfield_u\)-simple function}.
To define these, we let \(\indica{A}\colon\setofpths\to\reals\) denote the indicator of an event~\(A\subseteq\setofpths\), defined for all \(\pth\) in \(\setofpths\) as \(\indica{A}\pr{\pth}\coloneqq1\) if \(\pth\in A\) and \(0\) otherwise.
We then say that \(f\) is \(\cpfield_u\)-simple if it can be written as \(f=\sum_{i=1}^n a_i \indica{A_i}\), with \(n\) in \(\nats\) and, for all \(i\) in \(\set{1, \dots, n}\), \(a_i\) in \(\reals\) and \(A_i\) in \(\cpfield_u\).
In this case, the integral expression reduces to
\begin{equation}
\label{eqn:Expectation of a simple function}
  \prev_\prob\pr{f\cond X_u=x_u}
  = \sum_{i=1}^n a_i \prob\pr{A_i\cond X_u=x_u}.
\end{equation}
For unconditional expectations, we have that
\begin{equation*}
  \prev\pr{\cdot}
  \coloneqq \prev_\prob\pr{\cdot\cond \setofpths}
  = \prev_\prob\pr{\cdot\cond X_\emptyset=x_\emptyset}
  = \prev_\prob\pr{\cdot\cond X_0=0},
\end{equation*}
where the final equality holds due to \ref{CP:Start at 0}.
Therefore, in the remainder, we can restrict ourselves to expectations of the form \(\prev_\prob\pr{\cdot\cond X_u=x_u, X_t=x}\), as \(\prev\pr{\cdot}\) corresponds to the case \(u=\emptyset\), \(t=0\) and \(x=0\).

\section{The Poisson Process in Particular}
\label{sec:Poisson processes in particular}
We now turn to the most well-known counting process, namely the Poisson process.
As explained in the Introduction, there are plenty of alternative characterisations of the Poisson process.
The following definition turns out to capture all its essential properties in our framework.
\begin{definition}
\label{def:Poisson process}
  A \emph{Poisson process}~\(\prob\) is a counting process such that, for all \(t, \Delta\) in \(\nnegreals\), \(u\) in \(\setoftseq_{<t}\), \(\pr{x_u, x}\) in \(\stsp_{u\cup t}\) and \(y\) in \(\stsp\) with \(y\geq x\),
  \begin{enumerate}[label=\upshape{}PP\arabic*., ref=\upshape(PP\arabic*), leftmargin=*]
    \item \label{Pois:Markovian}
    \(\prob\pr{X_{t+\Delta}=y\cond X_u=x_u, X_t=x} = \prob\pr{X_{t+\Delta}=y\cond X_t=x}\);
    \item \label{Pois:State-homogeneous}
    \(\prob\pr{X_{t+\Delta}=y\cond X_t=x} = \prob\pr{X_{t+\Delta}=y-x\cond X_t=0}\);
    \item \label{Pois:Time-homogeneous}
    \(\prob\pr{X_{t+\Delta}=y\cond X_t=x} = \prob\pr{X_{\Delta}=y\cond X_0=x}\).
  \end{enumerate}
\end{definition}
The first condition~\ref{Pois:Markovian} states that the Poisson process is Markovian, while conditions \ref{Pois:State-homogeneous} and \ref{Pois:Time-homogeneous} state that the Poisson process is homogeneous.
Note that---unlike many of the characterisations mentioned in the Introduction---we do \emph{not} impose that the transition probabilities are Poisson distributed, nor do we impose some value for the ``rate''.
It was already observed by \citet[Chapter~XVII, Section~2, Footnote~4]{1968Feller} and \citet[Sections~1 and 2]{1960Khintchine} that assuming---their version of---\ref{Pois:Markovian}--\ref{Pois:Time-homogeneous} is sufficient to obtain the Poisson process.
Our results basically extend these characterisations to our framework for counting processes using coherent conditional probabilities.

First and foremost, we obtain that the transition probabilities are Poisson distributed, hence explaining the name of the process.
\begin{theorem}
\label{the:Pois transition probabilities are poisson distributed and the converse}
  Consider a Poisson process~\(\prob\).
  Then there is a rate~\(\lambda\) in \(\nnegreals\) such that, for all \(t, \Delta\) in \(\nnegreals\), \(u\) in \(\setoftseq_{<t}\), \(\pr{x_u, x}\) in \(\stsp_{u\cup t}\) and \(y\) in \(\stsp\),
  \begin{multline}
  \label{eqn:Poisson transition probabilities}
    \prob\pr{X_{t+\Delta}=y \cond X_u=x_u, X_t=x} \\
    = \begin{cases}
      \pois_{\lambda \Delta}\pr{y-x} &\text{if } y\geq x, \\
      0 &\text{otherwise,}
    \end{cases}
  \end{multline}
  where \(\pois_{\lambda \Delta}\) is the Poisson distribution with parameter \(\lambda \Delta\), defined for all \(k\) in \(\nnegints\) as
  \begin{equation*}
    \pois_{\lambda \Delta}\pr{k}
    \coloneqq e^{-\lambda\Delta}\frac{\pr{\lambda\Delta}^k}{k!}.
  \end{equation*}
  Conversely, for every~\(\lambda\) in \(\nnegreals\), there is a unique coherent conditional probability~\(\prob\) on \(\cpdomain{}\) that satisfies~\ref{CP:Start at 0} and Equation~\eqref{eqn:Poisson transition probabilities}, and this \(\prob\) is a Poisson process.
\end{theorem}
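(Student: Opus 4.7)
The plan is to prove the two implications separately, reducing both to a careful study of the one-parameter family of ``standard'' transition probabilities $p_k(\Delta) \coloneqq \prob\pr{X_\Delta = k \cond X_0 = 0}$; sufficiency of this family for the full conditional expression in Equation~\eqref{eqn:Poisson transition probabilities} follows at once from \ref{Pois:Markovian}--\ref{Pois:Time-homogeneous}. For the forward direction, the first step is to use \ref{Pois:Markovian}--\ref{Pois:Time-homogeneous} together with \ref{LOP:Additive if disjoint} and \ref{LOP:Bayes rule} to establish the Chapman--Kolmogorov identity $p_k(s+t) = \sum_{j=0}^k p_j(s) p_{k-j}(t)$ and, specialising to $k=0$, the multiplicative relation $p_0(s+t) = p_0(s) p_0(t)$. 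Since $p_0$ is $[0,1]$-valued, this forces $t \mapsto p_0(t)$ to be non-increasing, so the only solutions of the multiplicative equation on $[0,+\infty)$ are $p_0(t) = e^{-\lambda t}$ for some $\lambda \in [0,+\infty]$, with $\lambda = +\infty$ corresponding to $p_0 \equiv 0$ on $\posreals$. I rule out $\lambda = +\infty$ by noting that Chapman--Kolmogorov at $k=1$ then gives $p_1(\Delta) = 2\, p_0(\Delta/2)\, p_1(\Delta/2) = 0$ for every $\Delta > 0$, which contradicts the orderliness consequence $p_0(\Delta) + p_1(\Delta) = 1 - o(\Delta) \to 1$ extracted from \ref{CP:Orederliness}.

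With $\lambda < +\infty$ in hand, the next step is to derive the Kolmogorov forward system $p_k'(t) = \lambda\pr{p_{k-1}(t) - p_k(t)}$ with $p_k(0) = \delta_{k,0}$ by letting $h \to 0^+$ in the decomposition $p_k(t+h) - p_k(t) = p_k(t)\pr{p_0(h)-1} + p_{k-1}(t)\, p_1(h) + \sum_{j=0}^{k-2} p_j(t)\, p_{k-j}(h)$; the crucial input is the orderliness estimate $1 - p_0(h) - p_1(h) = o(h)$ from \ref{CP:Orederliness}, which combined with $p_0(h) = 1 - \lambda h + o(h)$ yields $p_1(h) = \lambda h + o(h)$ and controls the trailing sum as $o(h)$. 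Solving the system inductively on $k$ produces the Poisson form $p_k(t) = e^{-\lambda t}(\lambda t)^k/k!$, which via \ref{Pois:Markovian}--\ref{Pois:Time-homogeneous} yields Equation~\eqref{eqn:Poisson transition probabilities}.

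For the converse, given a fixed $\lambda$ in $\nnegreals$, I define, for every $u$ in $\setofnetseq$ and every $x_u$ in $\stsp_u$, a candidate value for $\prob\pr{A \cond X_u = x_u}$ on each finitary event $A$ in $\cpfield_u$ via products of Poisson increment probabilities in the form dictated by \ref{Pois:Markovian}--\ref{Pois:Time-homogeneous} and Equation~\eqref{eqn:Poisson transition probabilities}; Lemma~\ref{lem:Element of cpfield_u} ensures this determines a well-defined map on all of $\cpfield_u$. A direct verification of \ref{LOP:geq 0}--\ref{LOP:Bayes rule} on this restricted domain, combined with the extension theorem \cite[Theorem~4]{1985Regazzini}, then yields a coherent conditional probability on $\cpdomain$ with the prescribed values. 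Uniqueness follows because any coherent conditional probability on $\cpdomain$ satisfying \ref{CP:Start at 0} and Equation~\eqref{eqn:Poisson transition probabilities} is forced by \ref{LOP:Bayes rule} and \ref{LOP:Additive if disjoint} to coincide with this candidate on the generators appearing in Lemma~\ref{lem:Element of cpfield_u}, and hence on all of $\cpdomain$. Finally, \ref{CP:Start at 0} and \ref{Pois:Markovian}--\ref{Pois:Time-homogeneous} are built into the construction, while \ref{CP:Orederliness} follows from the tail estimate $\sum_{k \geq 2} e^{-\lambda \Delta}(\lambda \Delta)^k/k! = O(\Delta^2)$.

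The hard part will be the analysis of $p_0$ in the forward direction---specifically, both ruling out $\lambda = +\infty$ and justifying that the non-increasing solutions of the multiplicative functional equation are exactly the exponentials, working only with the pointwise orderliness limits in \ref{CP:Orederliness}---together with the care required to verify coherence (rather than mere finite additivity of each conditional) of the candidate family in the converse direction, since coherence is the genuinely stronger property needed to invoke \cite[Theorem~4]{1985Regazzini}.
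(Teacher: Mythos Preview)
Your forward direction is essentially the paper's own argument: Chapman--Kolmogorov from \ref{Pois:Markovian}--\ref{Pois:Time-homogeneous}, the multiplicative equation for $p_0$ solved via monotonicity, $\lambda=+\infty$ ruled out through orderliness, then the forward ODE system. For the converse, the paper takes a different tack---it defines the candidate only on the single-step domain $\tilde{\ccpdomain}$ via the linear semigroup $\mathcal{T}_\lambda$, verifies coherence there through a reusable construction lemma (Lemma~\ref{lem:Construction lemma with sequence of time points and pmf}), and only then extends---whereas you propose to assign values on all of $\cpdomain$ at once. Your route is viable, but as you yourself flag at the end, verifying \ref{LOP:geq 0}--\ref{LOP:Bayes rule} is explicitly \emph{not} sufficient for coherence (the paper says so right after Definition~\ref{def:Coherent conditional probability in text}), so the sentence ``a direct verification of \ref{LOP:geq 0}--\ref{LOP:Bayes rule} \dots\ combined with the extension theorem'' does not do what you need; you must check the betting-style inequality of Proposition~\ref{prop:Coherence of conditional probability}, which is exactly what the paper's construction lemma packages.

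The sharper gap is in your uniqueness argument. You claim that \ref{LOP:Bayes rule} and \ref{LOP:Additive if disjoint} force any coherent extension to agree with the candidate on the generators from Lemma~\ref{lem:Element of cpfield_u}, but those generators are events $(X_w\in B)$ with $B\subseteq\stsp_w$ possibly \emph{infinite}, and finite additivity together with Bayes only pins down the value for finite $B$. Coherent conditional probabilities are merely finitely additive, so nothing in \ref{LOP:geq 0}--\ref{LOP:Bayes rule} rules out two extensions disagreeing on an infinite $B$. The paper closes this with a tail-approximation step that is absent from your plan: because the Poisson distribution genuinely sums to one, $\prob(X_{\max w}>z\mid X_u=x_u)\leq 1-\sum_{y=x_{\max u}}^{z}\pois_{\lambda\Delta}(y-x_{\max u})\to 0$ as $z\to\infty$, which squeezes $\prob(X_w\in B\mid X_u=x_u)$ between the value on the finite truncation $B\cap\{y_{\max w}\leq z\}$ and that value plus $\epsilon$. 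Without this step---which uses the specific $\sigma$-additivity of the Poisson law, not just coherence---uniqueness on $\cpdomain$ does not follow.
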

Theorem~\ref{the:Pois transition probabilities are poisson distributed and the converse} might seem somewhat trivial, but its proof is surprisingly lengthy.
Note that it also establishes that any rate~\(\lambda\) gives rise to a unique Poisson process, so in the remainder we can talk of \emph{the} Poisson process with rate~\(\lambda\).
Finally, it has the following obvious corollary.
\begin{corollary}
\label{cor:Poisson has rate lambda}
  Consider a Poisson process~\(\prob\).
  Then there is a rate~\(\lambda\) in \(\nnegreals\) such that, for all \(t\) in \(\nnegreals\), \(u\) in \(\setoftseq_{<t}\) and \(\pr{x_u, x}\) in \(\stsp_{u\cup t}\),
  \begin{equation}
  \label{eqn:Pois:Right rate}
    \lim_{\Delta\to0^+}\frac{\prob\pr{X_{t+\Delta}=x+1\cond X_u=x_u, X_t=x}}{\Delta}
    = \lambda
  \end{equation}
  and, if \(t>0\),
  \begin{equation}
  \label{eqn:Pois:Left rate}
    \lim_{\Delta\to0^+}\frac{\prob\pr{X_t=x+1\cond X_u=x_u, X_{t-\Delta}=x}}{\Delta}
    = \lambda.
  \end{equation}
\end{corollary}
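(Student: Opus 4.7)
The plan is to reduce the corollary directly to Theorem~\ref{the:Pois transition probabilities are poisson distributed and the converse}, which already gives the exact form of all transition probabilities for a Poisson process. Once we have $\prob(X_{t+\Delta}=y\cond X_u=x_u, X_t=x)=\pois_{\lambda\Delta}(y-x)$ for $y\geq x$, the two limits in the corollary become purely analytic facts about the Poisson point mass at $1$, so the only substantive work is lining up the hypotheses so the theorem is applicable in each case.

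For Equation~\eqref{eqn:Pois:Right rate}, I would take the rate~\(\lambda\) supplied by Theorem~\ref{the:Pois transition probabilities are poisson distributed and the converse} and, for any \(\Delta>0\), plug $y=x+1$ directly into Equation~\eqref{eqn:Poisson transition probabilities}. This yields $\prob(X_{t+\Delta}=x+1\cond X_u=x_u, X_t=x)=e^{-\lambda\Delta}\lambda\Delta$, whence dividing by $\Delta$ and letting $\Delta\to 0^+$ gives $\lim_{\Delta\to 0^+}e^{-\lambda\Delta}\lambda=\lambda$.

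For Equation~\eqref{eqn:Pois:Left rate}, the situation is symmetric but requires a small bookkeeping step. I would apply Theorem~\ref{the:Pois transition probabilities are poisson distributed and the converse} with the role of the reference time point played by $t-\Delta$ rather than $t$, and the increment being $\Delta$. The hypothesis of the theorem requires the conditioning sequence to lie in $\setoftseq_{<t-\Delta}$; since we are given $u\in\setoftseq_{<t}$ and $t>0$, we have $\max u<t$ (with the convention that this holds vacuously if $u=\emptyset$), so for all sufficiently small $\Delta>0$ we also have $\max u<t-\Delta$. For such $\Delta$, Theorem~\ref{the:Pois transition probabilities are poisson distributed and the converse} gives $\prob(X_t=x+1\cond X_u=x_u, X_{t-\Delta}=x)=\pois_{\lambda\Delta}(1)=e^{-\lambda\Delta}\lambda\Delta$, and the same elementary limit yields \(\lambda\).

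There is essentially no genuine obstacle here beyond checking the applicability of Theorem~\ref{the:Pois transition probabilities are poisson distributed and the converse} in the left-limit case, since the corollary is a direct computational consequence of the explicit Poisson form of the transition probabilities. The real work has already been done upstream in establishing Theorem~\ref{the:Pois transition probabilities are poisson distributed and the converse}; the corollary's purpose is merely to reinterpret the rate $\lambda$ as the derivative of the one-step transition probability at $\Delta=0$, from either side.
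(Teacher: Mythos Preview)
Your proposal is correct and follows essentially the same approach as the paper's proof: invoke Theorem~\ref{the:Pois transition probabilities are poisson distributed and the converse} to obtain \(\prob\pr{X_{t+\Delta}=x+1\cond X_u=x_u, X_t=x}=\pois_{\lambda\Delta}\pr{1}=\lambda\Delta e^{-\lambda\Delta}\), divide by \(\Delta\), and take the limit, with the left-limit case handled symmetrically. Your bookkeeping remark that \(\max u<t-\Delta\) holds for all sufficiently small \(\Delta\) is a detail the paper leaves implicit.
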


We end our discussion of Poisson processes with the following result, which actually is a---not entirely immediate---consequence of Theorem~\ref{the:Lower conditional expectation of bounded function with respect to all consistent processes} further on.
\begin{theorem}
\label{the:Counting process with rate lambda is Poisson}
  Consider a counting process~\(\prob\).
  If there is a rate~\(\lambda\) in \(\nnegreals\) such that \(\prob\) satisfies Equations~\eqref{eqn:Pois:Right rate} and \eqref{eqn:Pois:Left rate}, then \(\prob\) is the Poisson process with rate~\(\lambda\).
\end{theorem}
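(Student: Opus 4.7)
The plan is to derive this result as the degenerate case of the more general framework built around a rate interval $[\llambda, \ulambda]$: specifically, I would apply Theorem~\ref{the:Lower conditional expectation of bounded function with respect to all consistent processes} with $\llambda = \ulambda = \lambda$. The hypothesis of the present theorem says exactly that the right and left rates of $\prob$---as appearing in Equations~\eqref{eqn:Pois:Right rate} and \eqref{eqn:Pois:Left rate}---both equal $\lambda$, so that $\prob$ is consistent with the singleton interval $[\lambda, \lambda]$. Moreover, Corollary~\ref{cor:Poisson has rate lambda} tells us that the Poisson process with rate $\lambda$ is also consistent with this interval, so the two processes belong to a common consistent set.

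Next, I would invoke Theorem~\ref{the:Lower conditional expectation of bounded function with respect to all consistent processes} to compute the lower and upper conditional expectations over the set of all processes consistent with $[\lambda, \lambda]$. Because the interval is degenerate, the two bounds must collapse to a single common value, and this common value is necessarily the expectation under the Poisson process with rate $\lambda$ (since that process lies in the set). Hence every consistent process---in particular, our $\prob$---must share the same conditional expectation as the Poisson process with rate $\lambda$ on every $\cpfield_u$-measurable bounded-below function. Specialising to indicators $\indica{A}$ of events $A$ in $\cpfield_u$ yields, via Equation~\eqref{eqn:Expectation of a simple function}, equality of the conditional probabilities $\prob\pr{A \cond X_u = x_u}$ with their Poisson counterparts on the entire domain~$\cpdomain$. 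The uniqueness part of Theorem~\ref{the:Pois transition probabilities are poisson distributed and the converse} then forces $\prob$ to be the Poisson process with rate $\lambda$.

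The main obstacle will be verifying the degenerate case of Theorem~\ref{the:Lower conditional expectation of bounded function with respect to all consistent processes}: one needs to check that when $\llambda = \ulambda$ its lower-expectation formula evaluates to the Poisson expectation, so that the sandwich between the lower and upper bounds pins down all consistent processes. A secondary subtlety is that membership in the consistent set may carry additional requirements beyond the two rate limits---most notably orderliness---but these are already built into the definition of a counting process via~\ref{CP:Orederliness}, so $\prob$ qualifies by assumption. Once these two points are in place, the proof reduces to a short invocation of Theorem~\ref{the:Lower conditional expectation of bounded function with respect to all consistent processes} followed by the uniqueness clause in Theorem~\ref{the:Pois transition probabilities are poisson distributed and the converse}.
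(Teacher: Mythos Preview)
Your approach is essentially the paper's: observe that the hypotheses make $\prob$ consistent with the degenerate interval $[\lambda,\lambda]$, invoke Theorem~\ref{the:Lower conditional expectation of bounded function with respect to all consistent processes} to sandwich the transition probabilities, use linearity in the degenerate case to collapse the bounds to the Poisson value, and finish with the converse part of Theorem~\ref{the:Pois transition probabilities are poisson distributed and the converse}.

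One correction is warranted. Theorem~\ref{the:Lower conditional expectation of bounded function with respect to all consistent processes} only concerns functions of the form $f(X_s)$ for a \emph{single} time point $s$, not arbitrary $\cpfield_u$-measurable functions; your claim that $\prob$ agrees with the Poisson process ``on every $\cpfield_u$-measurable bounded-below function'' and hence on all of $\cpdomain$ is not supported by that theorem. Fortunately you do not need it: the converse clause of Theorem~\ref{the:Pois transition probabilities are poisson distributed and the converse} requires only \ref{CP:Start at 0} and Equation~\eqref{eqn:Poisson transition probabilities}, i.e.\ equality of the one-step transition probabilities $\prob(X_{t+\Delta}=y\mid X_u=x_u,X_t=x)$. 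The paper accordingly applies Theorem~\ref{the:Lower conditional expectation of bounded function with respect to all consistent processes} only to $f=\indica{y}$, obtains $\lprev_{[\lambda,\lambda]}=\uprev_{[\lambda,\lambda]}=[T_{t,\lambda}^{t+\Delta}\indica{y}](x)$ via the linearity~\ref{def:LinTT:Homogeneity} of $T_{t,\lambda}^{t+\Delta}$, identifies this with $\pois_{\lambda\Delta}(y-x)$ by Proposition~\ref{prop:T_lambda indic gives Poisson distribution}, and then invokes Theorem~\ref{the:Pois transition probabilities are poisson distributed and the converse}. Trim your middle paragraph to this and you match the paper exactly.
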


\section{Sets of Counting Processes}
\label{sec:Sets of counting processes}
Instead of considering a single counting process, we now study \emph{sets} of counting processes.
With any subset~\(\setofcproc\) of \(\setofallcproc\), we associate a \emph{lower expectation}
\begin{equation}
\label{eqn:Lower expectation with respect to set of counting processes}
  \lprev_{\setofcproc}\pr{\cdot\cond\cdot}
  \coloneqq \inf\set{\prev_\prob\pr{\cdot\cond\cdot} \colon \prob\in\setofcproc}
\end{equation}
and, similarly, an \emph{upper expectation}
\begin{equation}
\label{eqn:Upper expectation with respect to set of counting processes}
  \uprev_{\setofcproc}\pr{\cdot\cond\cdot}
  \coloneqq \sup\set{\prev_\prob\pr{\cdot\cond\cdot} \colon \prob\in\setofcproc}.
\end{equation}
Since the expectation~\(\prev_\prob\) associated with any counting process~\(\prob\) in \(\setofcproc\) has domain~\(\prevdomain{}\), \(\lprev_{\setofcproc}\) and \(\uprev_{\setofcproc}\) are well-defined on the same domain~\(\prevdomain{}\).
Observe that for any \(\pr{f, X_u=x_u}\) in \(\prevdomain\) such that \(f\) is bounded, the lower and upper expectations are conjugate in the sense that \(\uprev_{\setofcproc}\pr{f\cond X_u=x_u}=-\lprev_{\setofcproc}\pr{-f\cond X_u=x_u}\).
Therefore, it suffices to study one of the two if only considering bounded functions; we will focus on lower expectations in the remainder.

\subsection{The Obvious Imprecise Poisson Process}
\label{ssec:Set of Poisson processes}
From here on, we consider a closed interval~\(\Lambda\coloneqq\br{\llambda, \ulambda}\subset\nnegreals\) of rates instead of a single value for the rate~\(\lambda\).
In order not to unnecessarily repeat ourselves, we fix one rate interval~\(\Lambda\) that we use throughout the remainder.
Due to Theorem~\ref{the:Pois transition probabilities are poisson distributed and the converse}, there is one obvious set of counting processes that is entirely characterised by this rate interval~\(\Lambda\): the set
\begin{equation*}
  \setofconscproc{\Lambda}^\star
  \coloneqq \set{\prob_\lambda\colon \lambda\in\Lambda}
\end{equation*}
that consists of all Poisson processes with rate in this interval, where \(\prob_\lambda\) denotes the Poisson process with rate \(\lambda\).

The lower and upper expectation associated with this set~\(\setofconscproc{\Lambda}^\star\) according to Equations~\eqref{eqn:Lower expectation with respect to set of counting processes} and \eqref{eqn:Upper expectation with respect to set of counting processes} are denoted by \(\lprev_\Lambda^\star\) and \(\uprev_\Lambda^\star\), respectively.
It is clear that by construction, determining \(\lprev_\Lambda^\star\pr{f\cond X_u=x_u}\) and/or \(\uprev_\Lambda^\star\pr{f\cond X_u=x_u}\) boils down to solving a one-parameter optimisation problem: one has to minimise and/or maximise \(\prev_{\prob_\lambda}\pr{f\cond X_u=x_u}\)---the conditional expectation of \(f\) with respect to the Poisson process with rate \(\lambda\)---with respect to all values of \(\lambda\) in the rate interval~\(\Lambda\).
For some specific functions~\(f\), see for example Proposition~\ref{prop:Monotone bounded functions:equality between conditional expectations} further on, this one-parameter optimisation problem can be solved analytically.
For more involved functions, the optimisation problem has to be solved numerically, for instance by evaluating \(\prev_{\prob_\lambda}\pr{f\cond X_u=x_u}\) over a (sufficiently fine) grid of values of \(\lambda\) in the rate interval~\(\Lambda\), where \(\prev_{\prob_\lambda}\pr{f\cond X_u=x_u}\) might also have to be numerically approximated.

\subsection{A More Involved Imprecise Poisson Process}
\label{ssec:Set of consistent processes}
A second set of counting processes characterised by the rate interval~\(\Lambda\) is inspired by Theorem~\ref{the:Counting process with rate lambda is Poisson}.
This theorem suggests that the dynamics of a counting process are captured by the rate---that is, the limit expressions in Equations~\eqref{eqn:Pois:Right rate} and \eqref{eqn:Pois:Left rate} of Corollary~\ref{cor:Poisson has rate lambda}.
Essential to our second characterisation is the notion of consistency.
\begin{definition}
\label{def:Consistency of counting processes}
  A counting process~\(\prob\) is \emph{consistent} with the rate interval~\(\Lambda\), denoted by \(\prob\consistent\Lambda\), if for all \(t\) in \(\nnegreals\), \(u\) in \(\setoftseq_{<t}\) and \(\pr{x_u,x }\) in \(\stsp_{u\cup t}\),
  \begin{multline}
  \label{eqn:Consistent rate from right}
    \llambda
    \leq \liminf_{\Delta\to0^+}\frac{\prob\pr{X_{t+\Delta}=x+1\cond X_u=x_u, X_t=x}}{\Delta} \\
    \leq \limsup_{\Delta\to0^+}\frac{\prob\pr{X_{t+\Delta}=x+1\cond X_u=x_u, X_t=x}}{\Delta}
    \leq \ulambda
  \end{multline}
  and, if \(t>0\),
  \begin{multline}
  \label{eqn:Consistent rate from left}
    \llambda
    \leq  \liminf_{\Delta\to0^+}\frac{\prob\pr{X_t=x+1\cond X_u=x_u, X_{t-\Delta}=x}}{\Delta} \\
    \leq \limsup_{\Delta\to0^+}\frac{\prob\pr{X_t=x+1\cond X_u=x_u, X_{t-\Delta}=x}}{\Delta}
    \leq \ulambda.
  \end{multline}
\end{definition}
We let
\begin{equation*}
  \setofconscproc{\Lambda}
  \coloneqq\set{\prob\in\setofallcproc{}\colon \prob\consistent{}\Lambda}
\end{equation*}
denote the set of all \emph{counting} processes that are consistent with the rate interval~\(\Lambda\).
Observe that, as every Poisson process is a counting process,
\begin{equation}
\label{eqn:Relation between sets of consistent processes}
  \setofconscproc{\Lambda}^\star
  \subseteq \setofconscproc{\Lambda}.
\end{equation}
It is essential to realise that \(\setofconscproc{\Lambda}^\star\) is \emph{not} equal to \(\setofconscproc{\Lambda}\), at least not in general.
Indeed, the set \(\setofconscproc{\Lambda}\) will contain counting processes that have much more exotic dynamics than Poisson processes, in the sense that they need not be Markovian nor homogeneous.
However, if \(\Lambda\) is equal to the degenerate interval~\(\br{\lambda, \lambda}\), then it follows from Theorem~\ref{the:Counting process with rate lambda is Poisson} that
\begin{equation}
\label{eqn:Singleton of Poisson with rate lambda}
  \setofconscproc{\Lambda}^\star
  = \setofconscproc{\Lambda}
  = \set{\prob_\lambda},
\end{equation}
where \(\prob_\lambda\) is the Poisson process with rate~\(\lambda\), as before.
Therefore, both \(\setofconscproc{\Lambda}\) and \(\setofconscproc{\Lambda}^\star\) are proper generalisations of the Poisson process.

We let \(\lprev_\Lambda\) and \(\uprev_\Lambda\) denote the lower and upper expectations associated with the set~\(\setofconscproc{\Lambda}\) according to Equations~\eqref{eqn:Lower expectation with respect to set of counting processes} and \eqref{eqn:Upper expectation with respect to set of counting processes}.
It is an immediate consequence of Equations~\eqref{eqn:Lower expectation with respect to set of counting processes}, \eqref{eqn:Upper expectation with respect to set of counting processes} and \eqref{eqn:Relation between sets of consistent processes} that
\begin{equation}
\label{eqn:Relation between lower expectations for consistent processes}
  \lprev_{\Lambda}\pr{\cdot\cond\cdot}
  \leq \lprev_{\Lambda}^\star\pr{\cdot\cond\cdot}
  \leq \uprev_{\Lambda}^\star\pr{\cdot\cond\cdot}
  \leq \uprev_{\Lambda}\pr{\cdot\cond\cdot}.
\end{equation}
The remainder of this contribution is concerned with computing these lower and upper expectations for a specific type of functions, with a particular focus on the outer ones.

We end this section by mentioning that \(\setofconscproc{\Lambda}^\star\) and \(\setofconscproc{\Lambda}\) are not the only two sets of counting processes that are of potential interest, but they are---to some extent---the two most extreme sets.
One set of counting process that lies in between the two is that of the time-inhomogeneous Poisson processes---see for instance \cite[Section~5]{1960Khintchine} or \cite[Section~2.4]{1996Ross}---that are consistent with the rate interval \(\Lambda\).
In order not to unnecessarily complicate our exposition, we have chosen to limit ourselves to the two extreme cases.

\section{The Poisson Generator and Its Corresponding Semi-Group}
\label{sec:The Poisson semi-group}
Our method for computing lower expectations is based on the method used in the theory of imprecise continuous-time Markov chains~\cite{2017Krak}.
Essential to this method of \citet{2017Krak} is a semi-group of ``lower transition operators'' that is generated by a ``lower transition rate operator''.
In Section~\ref{ssec:The Poisson generator}, we extend their method for generating this semi-group to a countably infinite state space, be it only for one specific type of lower transition rate operator.
First, however, we introduce some necessary concepts and terminology.

\subsection{Functions, Operators and Norms}
\label{ssec:Functions, operators and norms}
Consider some non-empty ordered set~\(\genset\) that is at most countably infinite, and let \(\setoffn\pr{\genset}\) be the set of all bounded real-valued functions on \(\genset\).
Observe that \(\setoffn\pr{\genset}\) is clearly a vector space.
Even more, it is well-known that this vector space is complete under the supremum norm
\begin{equation*}
  \norm{f}
  \coloneqq \sup\set{\abs{f\pr{x}}\colon x\in \genset}
  \quad\text{for all } f\in\setoffn\pr{\genset}.
\end{equation*}

A \emph{transformation} is any operator \(A\colon\setoffn\pr{\genset}\to\setoffn\pr{\genset}\).
Such a transformation~\(A\) is \emph{non-negatively homogeneous} if, for all \(f\) in \(\setoffn\pr{\genset}\) and \(\gamma\) in \(\nnegreals\), \(A\pr{\gamma f} = \gamma Af\).
The supremum norm induces an operator norm for non-negatively homogeneous transformations~\(A\):
\begin{equation*}
  \norm{A}
  \coloneqq\sup\set{\norm{A f} \colon f\in\setoffn\pr{\genset}, \norm{f}=1};
\end{equation*}
see \cite{2016DeBock} for a proof that this is indeed a norm.
An important non-negatively homogeneous transformation is the \emph{identity map}~\(I\) that maps any \(f\) in \(\setoffn\pr{\genset}\) to itself.

\subsection{The Poisson Generator}
\label{ssec:The Poisson generator}
A non-negatively homogeneous transformation that will be essential in the remainder is the \emph{Poisson generator}~\(\ltro\colon\setoffn\pr{\stsp}\to\setoffn\pr{\stsp}\) associated with the rate interval~\(\Lambda\), defined for all \(f\) in \(\setoffn\pr{\stsp}\) and \(x\) in \(\stsp\) as
\begin{equation*}
  \br{\ltro f}\pr{x}
  \coloneqq \min\set{\lambda f\pr{x+1}-\lambda f\pr{x}\colon \lambda\in\br{\llambda, \ulambda}}.
\end{equation*}

Fix any \(t, s\) in \(\nnegreals\) with \(t\leq s\).
If \(t<s\), then we let \(\setoftseq_{\br{t, s}}\) denote the set of all non-empty and increasing sequences of time points~\(t_0, \dots, t_n\) that start with \(t_0=t\) and end with \(t_n=s\).
For any sequence \(u\) in this set~\(\setoftseq_{\br{t, s}}\), we let
\begin{equation}
\label{eqn:PoisGen:Phi_u}
  \Phi_u
  \coloneqq \prod_{i=1}^n \pr{I+\Delta_i \ltro},
\end{equation}
where for any \(i\) in \(\set{1, \dots, n}\), we denote the difference between the consecutive time points \(t_i\) and \(t_{i-1}\) by \(\Delta_i\coloneqq t_{i}-t_{i-1}\).
In the remainder, we let \(\sigma\pr{u}\coloneqq\max\set{\Delta_i\colon i\in\set{1, \dots, n}}\) be the largest of these time differences.
If \(t=s\), then we let \(\setoftseq_{\br{t,s}}\coloneqq\set{t}\), \(\sigma\pr{t}\coloneqq0\) and \(\Phi_t\coloneqq I\).

The Poisson generator~\(\ltro\) generates a family of transformations, as is evident from the following result.
This result is very similar to \cite[Corollary~7.11]{2017Krak}, which establishes an analoguous result for imprecise Markov chains with finite state spaces; it should therefore not come as a surprise that their proofs are largely similar as well.
\begin{theorem}
\label{the:Transformation induced by PoisGen in text}
  Fix any \(t, s\) in \(\nnegreals\) with \(t\leq s\).
  For any sequence \(\set{u_i}_{i\in\nats}\) in \(\setoftseq_{\br{t,s}}\) such that \(\lim_{i\to+\infty}\sigma\pr{u_i}=0\), the corresponding sequence \(\set{\Phi_{u_i}}_{i\in\nats}\) converges to a unique non-negatively homogeneous transformation that does not depend on the chosen sequence \(\set{u_i}_{i\in\nats}\).
\end{theorem}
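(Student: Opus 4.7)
The plan is to adapt the construction of \citet[Corollary~7.11]{2017Krak} to the present countably-infinite-state setting. First I would establish the basic analytical properties of the Poisson generator. Since
\[
  \abs{\br{\ltro f}\pr{x}}
  \leq \ulambda \abs{f\pr{x+1} - f\pr{x}}
  \leq 2 \ulambda \norm{f},
\]
$\ltro$ maps $\setoffn\pr{\stsp}$ into itself and has operator norm $\norm{\ltro}\leq 2\ulambda$. An analogous computation on the two pointwise minima shows that $\ltro$ is Lipschitz with respect to the supremum norm: for any $g, h$ in $\setoffn\pr{\stsp}$, $\norm{\ltro\pr{g+h} - \ltro g} \leq \norm{\ltro}\norm{h}$. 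Consequently, each factor $I + \Delta_i \ltro$ is Lipschitz with constant $1 + \Delta_i \norm{\ltro}$, and the product $\Phi_u$ from Equation~\eqref{eqn:PoisGen:Phi_u} is a non-negatively homogeneous transformation (inherited from $I$ and $\ltro$) satisfying
\[
  \norm{\Phi_u}
  \leq \prod_{i=1}^n\pr{1 + \Delta_i \norm{\ltro}}
  \leq e^{\norm{\ltro}\pr{s-t}}.
\]

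The crucial ingredient is a refinement estimate: for any $u$ in $\setoftseq_{\br{t,s}}$ and any $u'$ in $\setoftseq_{\br{t,s}}$ with $u\subseteq u'$,
\[
  \norm{\Phi_{u'} f - \Phi_u f}
  \leq \pr{s-t}\sigma\pr{u}\norm{\ltro}^2 e^{\norm{\ltro}\pr{s-t}}\norm{f}.
\]
The starting point is the local identity
\[
  \pr{I + a\ltro}\pr{I + b\ltro}g - \pr{I+\pr{a+b}\ltro}g
  = a\br{\ltro\pr{g + b\ltro g} - \ltro g},
\]
which together with the Lipschitz estimate yields the bound $ab\norm{\ltro}^2\norm{g}$. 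Iterating this across all subintervals of $u$ that are refined by $u'$ and propagating through the remaining factors of $\Phi_u$ and $\Phi_{u'}$---whose norms are uniformly controlled by the previous display---produces the claimed global bound.

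Given the refinement estimate, for any sequence $\set{u_i}_{i\in\nats}$ with $\sigma\pr{u_i}\to 0$, the sequence $\set{\Phi_{u_i}f}$ is Cauchy in $\setoffn\pr{\stsp}$ for every fixed $f$: comparing $u_i$ and $u_j$ through the common refinement $u_i \cup u_j$ and applying the estimate to each bounds $\norm{\Phi_{u_i}f - \Phi_{u_j}f}$ by a constant multiple of $\max\set{\sigma\pr{u_i}, \sigma\pr{u_j}}\norm{f}$. Since $\setoffn\pr{\stsp}$ is complete under the supremum norm, the pointwise limit $\Phi f$ exists in $\setoffn\pr{\stsp}$, and non-negative homogeneity is preserved under pointwise limits. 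To show that $\Phi$ does not depend on the chosen sequence, I would interleave any two such sequences into a single sequence with vanishing mesh; the interleaved sequence is Cauchy by the same argument, forcing the two subsequential limits to coincide.

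The main obstacle will be the refinement estimate, because $\ltro$ is genuinely nonlinear and the standard operator-exponential arguments do not transfer verbatim. Its proof must carefully combine the Lipschitz property of $\ltro$ with the local quadratic identity above, and then telescope through an arbitrary refinement while controlling propagation through the uniformly norm-bounded remaining factors; the fact that $\stsp$ is countably infinite causes no essential difficulty, as all estimates are expressed in terms of $\norm{\ltro}\leq 2\ulambda$, which is finite.
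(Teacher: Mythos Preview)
Your approach is correct and follows essentially the same architecture as the paper---refinement estimate, common-refinement Cauchy argument, completeness of $\setoffn(\stsp)$, uniqueness via interleaving---but it differs in one structural point that affects both the constants and the strength of the conclusion. You control each factor $I+\Delta_i\ltro$ only through its Lipschitz constant $1+\Delta_i\norm{\ltro}$, which compounds to the exponential $e^{\norm{\ltro}(s-t)}$ in your refinement bound. The paper instead shows (Lemma~\ref{lem:I+Delta GenPoisGen is LTT}) that whenever $\Delta\norm{\ltro}\leq 2$, the operator $I+\Delta\ltro$ is a \emph{lower transition transformation} and therefore has operator norm at most~$1$ and is non-expansive (properties \ref{LTT:Norm is lower than one} and \ref{LTT:Composition with nneg hom}). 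This removes the exponential factor entirely: the paper's refinement estimate (Corollary~\ref{cor:GenPoisGen:Phi_u vs Phi_u'}) is simply $\sigma(u)(s-t)\norm{\ltro}^2$, and the telescoping step (Lemma~\ref{lem: bound on norm of A_1 ... A_k - B_1 ... B_k}) becomes a clean sum rather than a product. More importantly, because each $\Phi_u$ with small enough mesh is a lower counting transformation and the defining inequalities \ref{def:LTT:Non-negative homgeneity}--\ref{def:LTT:Counting} survive limits, the paper obtains that the limit is itself a lower counting transformation---this is what immediately delivers \ref{LTO in text:Non-negateive homogeneity}--\ref{LTO in text:Bound} and Corollary~\ref{cor:lpoisprev is a coherent lower prevision}, whereas your argument only yields non-negative homogeneity and would require a separate verification of the remaining properties. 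Your route is slightly more elementary in that it avoids the lower-transition-transformation machinery, but the paper's route gives sharper estimates and a stronger conclusion for free.
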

For any \(t, s\) in \(\nnegreals\) with \(t\leq s\), Theorem~\ref{the:Transformation induced by PoisGen in text} allows us to define the non-negatively homogeneous transformation
\begin{equation}
\label{eqn:LTO gen by PoisGen}
  \lto_t^s
  \coloneqq \lim_{\sigma\pr{u}\to0} \set{\Phi_u \colon u\in\setoftseq_{\br{t,s}}},
\end{equation}
where this unconventional notation for the limit denotes the unique limit mentioned in Theorem~\ref{the:Transformation induced by PoisGen in text}.
The family of transformations thus defined has some very interesting properties: in the Appendix, we prove that for any \(t, s\) in \(\nnegreals\) with \(t\leq s\), \(f, g\) in \(\setoffn\pr{\stsp}\) and \(\gamma\) in \(\nnegreals\),
\begin{enumerate}[label=\upshape{}SG\arabic*., ref=\upshape(SG\arabic*), leftmargin=*]
  \item \label{LTO in text:Non-negateive homogeneity} \(\lto_t^s\pr{\gamma f}=\gamma\lto_t^s f\);
  \item \(\lto_t^s\pr{f+g}\geq\lto_t^s f+\lto_t^s g\);
  \item \label{LTO in text:Bound} \(\lto_t^s f\geq\inf f\).
\end{enumerate}
We furthermore prove that this family forms a time-homogeneous semi-group, in the sense that
\begin{enumerate}[resume*]
  \item \label{LTO in text:Identity} \(\lto_t^t = I\);
  \item \(\lto_t^s = \lto_t^r \lto_r^s\) for all \(r\) in \(\nnegreals\) with \(t\leq r\leq s\);
  \item \label{LTO in text:Time-invariance} \(\lto_t^s = \lto_0^{s-t}\).
\end{enumerate}

While the induced transformation~\(\lto_t^s\) is interesting in its own right, we will be mainly interested in (a single component of) the image \(\lto_t^s f\) of some bounded function~\(f\).
Therefore, for any \(x\) in \(\stsp\) and \(t, s\) in \(\nnegreals\) with \(t\leq s\), we define the operator~\(\lpoisprev_t^s\pr{\cdot\cond x}\colon\setoffn\pr{\stsp}\to\reals\) as
\begin{equation*}
  \lpoisprev_t^s\pr{f\cond x}
  \coloneqq \br{\lto_t^s f}\pr{x}
  \qquad\text{for all } f\in\setoffn\pr{\stsp}.
\end{equation*}
The following follows immediately from \ref{LTO in text:Non-negateive homogeneity}--\ref{LTO in text:Bound}.
\begin{corollary}
\label{cor:lpoisprev is a coherent lower prevision}
  For any \(x\) in \(\stsp\) and \(t, s\) in \(\nnegreals\) with \(t\leq s\), \(\lpoisprev_t^s\pr{\cdot\cond x}\) is a coherent lower prevision in the sense of \cite[Definition~4.10]{2014LowerPrevisions}.
\end{corollary}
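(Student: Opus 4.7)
The plan is to unpack the definition of a coherent lower prevision from \cite[Definition~4.10]{2014LowerPrevisions} and verify each defining axiom by invoking the corresponding property \ref{LTO in text:Non-negateive homogeneity}--\ref{LTO in text:Bound} of the transformation~$\lto_t^s$, evaluated pointwise at the state~$x$. Recall that a real-valued functional $\underline{P}$ on $\setoffn(\stsp)$ is a coherent lower prevision precisely when it satisfies the three axioms: (C1)~$\underline{P}(f)\geq\inf f$, (C2)~$\underline{P}(\gamma f)=\gamma\underline{P}(f)$ for all $\gamma\in\nnegreals$, and (C3)~$\underline{P}(f+g)\geq\underline{P}(f)+\underline{P}(g)$. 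These are exactly the three properties we need to read off from $\lto_t^s$.

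First, I would note that for any $f$ in $\setoffn(\stsp)$ we have, by definition, $\lpoisprev_t^s(f\cond x)=[\lto_t^s f](x)$, so every pointwise inequality for $\lto_t^s f$ translates directly into an inequality for $\lpoisprev_t^s(f\cond x)$. Then (C1) follows from \ref{LTO in text:Bound}: since $\lto_t^s f\geq \inf f$ as a function on $\stsp$, evaluating at $x$ gives $\lpoisprev_t^s(f\cond x)\geq\inf f$. Axiom (C2) follows from \ref{LTO in text:Non-negateive homogeneity}, because $[\lto_t^s(\gamma f)](x)=\gamma[\lto_t^s f](x)$ is precisely $\lpoisprev_t^s(\gamma f\cond x)=\gamma\lpoisprev_t^s(f\cond x)$. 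Finally, (C3) follows from the super-additivity property of $\lto_t^s$: evaluating $\lto_t^s(f+g)\geq \lto_t^s f+\lto_t^s g$ at $x$ yields $\lpoisprev_t^s(f+g\cond x)\geq \lpoisprev_t^s(f\cond x)+\lpoisprev_t^s(g\cond x)$.

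There is essentially no obstacle, since all the work is already packaged in properties \ref{LTO in text:Non-negateive homogeneity}--\ref{LTO in text:Bound} of the semi-group. The only tiny subtlety is to confirm that the version of coherence in \cite[Definition~4.10]{2014LowerPrevisions} is equivalent to the three-axiom characterisation (C1)--(C3) on the domain of all bounded real-valued functions on a set; this is a standard textbook result, so it suffices to cite it rather than reprove it. Hence the entire proof reduces to three one-line verifications after quoting the equivalence.
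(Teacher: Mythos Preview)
Your proposal is correct and is precisely the approach the paper takes: the paper simply states that the corollary ``follows immediately from \ref{LTO in text:Non-negateive homogeneity}--\ref{LTO in text:Bound}'', and you have made explicit exactly how those three properties, evaluated at~$x$, yield the three defining axioms of a coherent lower prevision on $\setoffn(\stsp)$.
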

In the remainder, we let \(\upoisprev_t^s\pr{\cdot\cond x}\coloneqq-\lpoisprev_t^s\pr{-\cdot\cond x}\) be the conjugate coherent upper prevision of the coherent lower prevision~\(\lpoisprev_t^s\pr{\cdot\cond x}\).

\subsection{The Reduced Poisson Generator}
\label{ssec:The reduced Poisson generator}
Fix any \(\lowx, \upx\) in \(\stsp\) such that \(\lowx \leq \upx\), and let
\begin{equation*}
  \chi
  \coloneqq \set{x\in\stsp\colon\lowx\leq x\leq\upx}.
\end{equation*}
We define the \emph{reduced Poisson generator}~\(\ltro^\chi\colon\setoffn\pr{\chi}\to\setoffn\pr{\chi}\) for all \(g\) in \(\setoffn\pr{\chi}\) and \(x\) in \(\chi\) as
\begin{equation*}
\label{eqn:Lower transition rate operator}
  \br{\ltro^\chi g}\pr{x}
  \coloneqq \begin{dcases}
    \min_{\lambda\in\br{\llambda,\ulambda}} \pr{\lambda g\pr{x+1}-\lambda g\pr{x}}&\text{if } \lowx\leq x < \upx, \\
    0 &\text{if } x=\upx.
  \end{dcases}
\end{equation*}
In the Appendix, we verify that this reduced Poisson generator~\(\ltro^\chi\) is a \emph{lower transition rate operator} in the sense of \cite[Definition~7.2]{2017Krak}.
As outlined in \cite[Section~7]{2017Krak}, this lower transition rate operator generates a family of transformations as well.
For any \(t,s\) in \(\nnegreals\) with \(t\leq s\) and any \(u\) in \(\setoftseq_{\br{t,s}}\), we let
\begin{equation*}
  \Phi^\chi_u
  \coloneqq \prod_{i=1}^n \pr{I+\Delta_i\ltro^\chi}.
\end{equation*}
Note the similarity between the equation above and Equation~\eqref{eqn:PoisGen:Phi_u}.
Because \(\ltro^\chi\) is a lower transition rate operator, it follows from \cite[Corollary~7.11]{2017Krak}---a result similar to Theorem~\ref{the:Transformation induced by PoisGen in text}---that the transformation
\begin{equation}
\label{eqn:PoisGenChi:Lto}
  \lto_{t,s}^\chi
  \coloneqq \lim_{\sigma\pr{u}\to0} \set{\Phi_u^\chi\colon u\in\setoftseq_{\br{t,s}}}
\end{equation}
is non-negatively homogeneous.
The limit in this definition is to be interpreted as the limit in Equation~\eqref{eqn:LTO gen by PoisGen}: it does not depend on the actual sequence \(\set{u_i}_{i\in\nats}\) as long as \(\lim_{i\to+\infty}\sigma\pr{u_i}=0\).
Unsurprisingly, \citet{2017Krak} show that this family of transformations~\(\lto_{t,s}^\chi\) also satisfies \ref{LTO in text:Non-negateive homogeneity}--\ref{LTO in text:Time-invariance}.
Observe that Equation~\eqref{eqn:PoisGenChi:Lto} suggests a method to evaluate \(\lto^\chi_{t,s}\) for some \(g\) in \(\setoffn\pr{\chi}\): choose a sufficiently fine grid~\(u\), and compute \(\Phi^\chi_u g\) via backwards recursion.
There is much more to this approximation method than we can cover here; the interested reader is referred to \cite[Section~8.2]{2017Krak} and \cite{2017Erreygers}.

\subsection{The Essential Case of Eventually Constant Functions}
\label{ssec:The case of eventually constant functions}
Our reason for introducing the restricted Poisson generator~\(\ltro^\chi\) and its induced transformation~\(\lto_{t,s}^\chi\) is because the latter can be used to compute \(\lpoisprev_t^s\pr{f\cond x}\).
Essential to our method are those functions~\(f\) in \(\setoffn\pr{\stsp}\) that are \emph{eventually constant}, in the sense that
\begin{equation*}
  \pr{\exists \upx\in\stsp}
  \pr{\forall x \in \stsp, x\geq \upx}~
  f\pr{x}=f\pr{\upx}.
\end{equation*}
In this case, we say that \(f\) is \emph{constant starting from \(\upx\)}.
We collect all real-valued bounded functions~\(f\) on \(\stsp\) that are eventually constant in \(\setofcafn\pr{\stsp}\).

Our next result establishes a link between \(\lpoisprev_t^s\pr{\cdot\cond x}\) and \(\lto_{t,s}^\chi\) for eventually constant functions.
\begin{proposition}
\label{prop:eventually bounded functions:lto f equals lto^chi f}
  Fix some \(t, s\) in \(\nnegreals\) with \(t\leq s\) and some \(f\) in \(\setofcafn\pr{\stsp}\) that is constant starting from \(\upx\).
  Choose some \(\lowx\) in \(\stsp\) with \(\lowx\leq\upx\), and let \(\chi\coloneqq\set{x\in\stsp\colon\lowx\leq x\leq\upx}\).
  Then for any \(x\) in \(\stsp\) with \(x\geq\lowx\),
  \begin{equation*}
    \lpoisprev_t^s\pr{f\cond x}
    = \br{\lto_t^s f}\pr{x}
    = \begin{cases}
      \br{\lto^\chi_{t, s} f^\chi}\pr{x} &\text{if } x \leq \upx, \\
      f\pr{\upx} &\text{if } x\geq\upx,
    \end{cases}
  \end{equation*}
  where \(f^\chi\) is the restriction of \(f\) to \(\chi\).
\end{proposition}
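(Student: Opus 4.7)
The first equality $\lpoisprev_t^s(f\cond x) = [\lto_t^s f](x)$ is by definition of $\lpoisprev_t^s$, so only the second equality requires work. The plan is to establish the claimed identities at the level of the finite-grid approximations $\Phi_u$ and $\Phi^\chi_u$ from~\eqref{eqn:PoisGen:Phi_u} and its restricted counterpart, pointwise for every $u \in \setoftseq_{\br{t,s}}$, and then to pass to the limit $\sigma(u) \to 0$ via Theorem~\ref{the:Transformation induced by PoisGen in text} on the left and \cite[Corollary~7.11]{2017Krak} on the right.

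The engine of the proof is an invariance lemma: whenever $g \in \setoffn(\stsp)$ is constant starting from $\upx$, then (a) $(I + \Delta \ltro)g$ is also constant starting from $\upx$ and takes the value $g(\upx)$ there, and (b) on $\chi$ one has $[(I + \Delta \ltro)g](y) = [(I + \Delta \ltro^\chi)g^\chi](y)$ for every $y \in \chi$. Both parts are elementary. For~(a), if $y \geq \upx$ then $g(y+1) - g(y) = 0$, so $[\ltro g](y) = \min_\lambda \lambda(g(y+1) - g(y)) = 0$, and hence $[(I + \Delta \ltro)g](y) = g(y) = g(\upx)$. For~(b), at $y = \upx$ both sides equal $g(\upx)$---the left by part~(a) and the right because $\ltro^\chi$ vanishes at $\upx$ by definition; at $\lowx \leq y < \upx$ both sides reduce to $g(y) + \Delta \min_\lambda \lambda(g(y+1) - g(y))$, using that $y + 1 \leq \upx$ guarantees $g^\chi(y+1) = g(y+1)$.

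With this invariance lemma in hand, the two cases follow by straightforward induction on the number of factors of $\Phi_u$. For $x \geq \upx$: applying~(a) successively to each of the factors of $\Phi_u$ shows that $[\Phi_u f](x) = f(\upx)$ for every $u \in \setoftseq_{\br{t,s}}$, and passing to the limit gives $[\lto_t^s f](x) = f(\upx)$. For $\lowx \leq x \leq \upx$: iterating (a) and (b) simultaneously shows that the intermediate functions produced while computing $\Phi_u f$ are all constant from $\upx$, and that their restrictions to $\chi$ coincide with the corresponding intermediates in the computation of $\Phi^\chi_u f^\chi$; therefore $[\Phi_u f](y) = [\Phi^\chi_u f^\chi](y)$ for every $y \in \chi$ and every $u \in \setoftseq_{\br{t,s}}$. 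Choosing any sequence $\{u_i\}$ with $\sigma(u_i) \to 0$ and applying the two convergence theorems pointwise at $x$ yields $[\lto_t^s f](x) = [\lto^\chi_{t,s} f^\chi](x)$. Consistency at the overlap point $x = \upx$ comes for free, since the restricted iteration stabilises at $f(\upx)$ by the boundary condition on $\ltro^\chi$.

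I anticipate no deep conceptual obstacle; the main care needed is bookkeeping---tracking the order in which the non-commuting factors of $\Phi_u$ act, and verifying part~(b) of the invariance lemma at the boundary point $y = \upx$, where the definitions of $\ltro$ and $\ltro^\chi$ differ in form but not in value on functions that are constant from $\upx$. Given that both limiting families are already known to exist and to be sequence-independent, equality of the approximating operators on the relevant class of inputs automatically transfers to equality of their limits.
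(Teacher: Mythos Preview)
Your proposal is correct and follows essentially the same route as the paper: the invariance lemma you describe is precisely the content of Lemmas~\ref{lem:eventually contant function:I+Delta ltro f} and~\ref{lem:eventually contant function:I+Delta_i ltro f}, and the passage to the limit is the paper's $\epsilon$-argument written in slightly more informal language. The only cosmetic difference is that for $x\geq\upx$ the paper bypasses the approximants and appeals directly to the state-homogeneity of $\lto_t^s$ (Lemma~\ref{lem:State homogeneity of lto induced by genpoisgen}) together with \ref{LTT:constant}, whereas you stay with the $\Phi_u$'s throughout; both arguments are equally short.
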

Note that we are free to choose \(\lowx\).
If we are interested in \(\lpoisprev_t^s\pr{f\cond x}\) for a specific value of \(x\), then choosing \(\lowx=\min\set{\upx, x}\) is the optimal choice.
However, if we are interested in \(\lpoisprev_t^s\pr{f\cond x}\) for a finite range~\(R\subset\stsp\) of different \(x\) values, the obvious choice is \(\lowx=\min\pr{R\cup\set{\upx}}\) because we then only have to determine \(\lto^\chi_{t, s} f^\chi\) once!

A method to compute \(\lpoisprev_t^s\pr{\cdot\cond x}\) for all bounded functions~\(f\) follows from combining Proposition~\ref{prop:eventually bounded functions:lto f equals lto^chi f} with the following result.
\begin{proposition}
\label{prop:Approximate lto f using lto f indic leq upx}
  For any \(t, s\) in \(\nnegreals\) with \(t\leq s\), \(f\) in \(\setoffn\pr{\stsp}\) and \(x\) in \(\stsp\),
  \begin{equation*}
    \lpoisprev_t^s\pr{f\cond x}
    = \lim_{\upx\to+\infty} \lpoisprev_t^s\pr{\indica{\leq\upx} f + f\pr{\upx}\indica{>\upx}\cond x},
  \end{equation*}
  where \(\indica{\leq \upx}\) and \(\indica{> \upx}\) are the indicators of \(\set{z\in\stsp\colon z\leq \upx}\) and \(\set{z\in\stsp\colon z>\upx}\), respectively.
\end{proposition}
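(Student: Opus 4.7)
The plan is to approximate $f$ by eventually constant truncations and bound the resulting error by a tail quantity that we can explicitly control. Writing $M\coloneqq\sup_{y\in\stsp}\abs{f(y)}$ and $f_{\upx}\coloneqq\indica{\leq\upx}f + f(\upx)\indica{>\upx}$, we have $-2M\indica{>\upx}\leq f-f_{\upx}\leq 2M\indica{>\upx}$. The non-negative homogeneity, super-additivity and lower bound \ref{LTO in text:Non-negateive homogeneity}--\ref{LTO in text:Bound} of $\lto_t^s$ together imply its monotonicity, and combined with conjugacy they yield the envelope bound
\begin{equation*}
\abs{[\lto_t^s f](x)-[\lto_t^s f_{\upx}](x)}\leq 2M\,[\uto_t^s\indica{>\upx}](x).
\end{equation*}
It therefore suffices to show that $[\uto_t^s\indica{>\upx}](x)\to 0$ as $\upx\to +\infty$ for fixed~$x$.

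For this tail estimate, I would first pass to the conjugate form $[\uto_t^s\indica{>\upx}](x)=1-[\lto_t^s\indica{\leq\upx}](x)$, which relies on the shift invariance $\lto_t^s(g+c)=\lto_t^s g + c$; this is valid because $\ltro$ annihilates constants, a property inherited by every $\Phi_u$ and hence by the limit $\lto_t^s$. The function $g\coloneqq\indica{\leq\upx}$ is non-increasing, so $[\ltro g](x)=\ulambda(g(x+1)-g(x))$, and an easy induction---valid once $\sigma(u)\ulambda\leq 1$---shows that each $(I+\Delta_i\ltro)$ preserves non-increasing-ness and acts on such $g$ as the convex combination $(1-\Delta_i\ulambda)I + \Delta_i\ulambda T$, where $T$ denotes the integer shift. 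Iterating gives $[\Phi_u g](x) = \mathbb{E}[g(x+S_n)]$, with $S_n\coloneqq\sum_{i=1}^n B_i$ and the $B_i$ independent Bernoulli variables with parameters $\Delta_i\ulambda$.

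As $\sigma(u)\to 0$ with $\sum_i\Delta_i = s-t$, the classical Poisson limit theorem together with bounded convergence identifies $\lim_{\sigma(u)\to 0}[\Phi_u g](x)$ with $\sum_{k=0}^{+\infty}e^{-\ulambda(s-t)}(\ulambda(s-t))^k/k!\,g(x+k)$, and Theorem~\ref{the:Transformation induced by PoisGen in text} forces this to equal $[\lto_t^s g](x)$. Specialising to $g=\indica{\leq\upx}$ yields $[\uto_t^s\indica{>\upx}](x)=\sum_{k>\upx-x}e^{-\ulambda(s-t)}(\ulambda(s-t))^k/k!$, a Poisson tail that vanishes as $\upx\to+\infty$; combined with the envelope bound of the first paragraph, this gives the claim. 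The main obstacle is the identification of $\lim_{\sigma(u)\to 0}\Phi_u g$ with the Poisson semigroup on the countably infinite state space; an alternative that sidesteps this limit is to invoke Proposition~\ref{prop:eventually bounded functions:lto f equals lto^chi f} (applicable because $\indica{\leq\upx}$ is eventually constant) to reduce $\lto_t^s\indica{\leq\upx}$ to the finite-state reduced Poisson generator, and then appeal to the standard theory of~\cite{2017Krak}.
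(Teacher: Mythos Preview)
Your proposal is correct, but the route is quite different from the paper's and noticeably heavier.

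The paper's argument never computes the tail $\overline{T}_t^s\indica{>\upx}$ at all. It works directly with the finite-step approximation $\Phi_u$: given $\epsilon$, choose $u=t_0,\dots,t_n\in\setoftseq_{[t,s]}$ with $\norm{\lto_t^s-\Phi_u}\leq \epsilon'$, and observe (Lemma~\ref{lem:Phi_u:Value of f does not matter above treshold}) that $\br{\Phi_u f}\pr{x}$ depends only on $f(x),f(x+1),\dots,f(x+n)$. Hence for every $\upx\geq x+n$ one has $\br{\Phi_u f}\pr{x}=\br{\Phi_u f_{\upx}}\pr{x}$ exactly, and the triangle inequality gives $\abs{\br{\lto_t^s f}\pr{x}-\br{\lto_t^s f_{\upx}}\pr{x}}\leq 2\epsilon'\norm{f}$. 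That is the whole proof.

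Your approach instead bounds $\abs{\br{\lto_t^s f}\pr{x}-\br{\lto_t^s f_{\upx}}\pr{x}}$ by $2\norm{f}\,\br{\uto_t^s\indica{>\upx}}\pr{x}$ and then shows this tail is a Poisson tail by reducing, via monotonicity, to the linear semigroup $T_{t,\ulambda}^s$. This is essentially the content of Lemma~\ref{lem:I+Delta GenPoisGen for monotonous functions} combined with Proposition~\ref{prop:T_lambda indic gives Poisson distribution}, both of which appear \emph{after} Proposition~\ref{prop:Approximate lto f using lto f indic leq upx} in the logical flow of the appendix; your alternative via Proposition~\ref{prop:eventually bounded functions:lto f equals lto^chi f} is cleaner but still imports the finite-state machinery of \cite{2017Krak}. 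The payoff of your route is an explicit, quantitative bound (the Poisson tail), and it anticipates Lemma~\ref{lem:upoisprev of I geq upx goes to zero}, which the paper proves separately---by the very same $\Phi_u$-truncation trick it uses here. So the paper's idea underlies both results, and your detour, while valid, does more work than the statement requires.
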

Observe that \(\indica{\leq\upx} f + f\pr{\upx}\indica{>\upx}\)---with \(\indica{\leq\upx} f\) the point-wise multiplication of \(\indica{\leq\overline{x}}\) and \(f\)---is constant starting from \(\upx\).
Therefore, it follows from Proposition~\ref{prop:eventually bounded functions:lto f equals lto^chi f} that \(\lpoisprev_t^s\pr{\indica{\leq\upx} f + f\pr{\upx}\indica{>\upx}\cond x}=\br{\lto_{t,s}^\chi f^\chi}\pr{x}\), where \(f^\chi\) is the restriction of \(f\) to \(\chi\).
We can combine this observation and Proposition~\ref{prop:Approximate lto f using lto f indic leq upx} to obtain a method to compute \(\lpoisprev_t^s\pr{f\cond x}\) for any bounded function~\(f\): (i) choose some sufficiently large \(\upx\) and let \(\chi\coloneqq\set{y\in\stsp\colon x\leq y\leq\upx}\); (ii) compute \(\lpoisprev_t^s\pr{\indica{\leq\upx} f + f\pr{\upx}\indica{>\upx}\cond x}=\br{\lto_{t,s}^\chi f^\chi}\pr{x}\), using one of the existing approximation methods mentioned at the end of Section~\ref{ssec:The reduced Poisson generator}; (iii) repeat (i)--(ii) for increasingly larger \(\upx\) until convergence is empirically observed.

\section{Computing Lower Expectations of Functions on \texorpdfstring{\(X_s\)}{Xs}}
\label{sec:Computing lower expectations}
Let \(\setofbbfn\pr{\stsp}\) denote the set of all real-valued bounded-below functions on \(\stsp\).
With any \(f\) in \(\setofbbfn\pr{\stsp}\) and \(s\) in \(\nnegreals\), we associate the real-valued bounded-below function
\begin{equation*}
  f\pr{X_s}\colon\setofpths{}\to\reals\colon
  \pth\mapsto\br{f\pr{X_s}}\pr{\pth}
  \coloneqq f\pr{\pth\pr{s}}.
\end{equation*}
In other words, and as suggested by our notation, \(f\pr{X_s}\) is the functional composition of \(f\) with the projector
\begin{equation*}
  X_s
  \colon \setofpths\to\stsp
  \colon \pth\mapsto X_s\pr{\pth}\coloneqq\pth\pr{s}.
\end{equation*}
The (conditional) expectation of \(f\pr{X_s}\) exists for any counting process~\(\prob\), as is established by the following rather obvious result.
\begin{lemma}
\label{lem:f of X_t is a F_u-measurable function}
  Consider some \(s\) in \(\nnegreals\) and \(u\) in \(\setoftseq\) with \(\max u \leq s\).
  Then for any \(f\) in \(\setofbbfn\pr{\stsp}\), \(f\pr{X_s}\) is an \(\cpfield_u\)-measurable function.
\end{lemma}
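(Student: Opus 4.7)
The plan is to directly unpack the definition of $\cpfield_u$-measurability and realise the relevant level sets as generators of $\cpfield_u$. First I would note that $f\pr{X_s}$ is bounded below: assumption~\ref{Paths:Existence for any instantiation} applied to the singleton sequence consisting of $s$ and to each $x$ in $\stsp$ yields $\set{\pth\pr{s}\colon \pth \in \setofpths} = \stsp$, so $\inf f\pr{X_s} = \inf f$. Thus $f\pr{X_s}$ lies in $\setofbbfn\pr{\setofpths}$, and measurability reduces to showing that for every $\alpha \in [\inf f, +\infty)$ the level set $\set{f\pr{X_s} > \alpha}$ belongs to $\cpfield_u$.

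Next, for any such $\alpha$, let $B_\alpha \coloneqq \set{x \in \stsp \colon f\pr{x} > \alpha} \subseteq \stsp$. Unpacking the definition of $f\pr{X_s}$ directly gives
\[
  \set{f\pr{X_s} > \alpha}
  = \set{\pth \in \setofpths \colon \pth\pr{s} \in B_\alpha}
  = \pr{X_v \in B_\alpha},
\]
where $v$ is the singleton sequence containing only the time point $s$; the last equality is just Equation~\eqref{eqn:finitary event} combined with the identification $\stsp_v = \stsp \supseteq B_\alpha$.

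Finally, the hypothesis $\max u \leq s$ implies $s \in u \cup [\max u, +\infty)$, so every time point of $v$ lies in $u \cup [\max u, +\infty)$. By the definition in Equation~\eqref{eqn:cpfield_u}, the event $\pr{X_v \in B_\alpha}$ is one of the generators of $\cpfield_u$ and hence belongs to $\cpfield_u$, completing the argument. There is no real obstacle here; the only things that need a moment of care are handling the edge case $u = \emptyset$ through the convention on $\max \emptyset$ stated in the excerpt, and verifying that the infimum of $f\pr{X_s}$ really agrees with $\inf f$ so that the correct range of $\alpha$ is covered.
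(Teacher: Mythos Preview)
Your proposal is correct and follows essentially the same argument as the paper: both identify the level set $\set{f\pr{X_s}>\alpha}$ with the finitary event $\pr{X_s\in B_\alpha}$ for $B_\alpha=\set{y\in\stsp\colon f\pr{y}>\alpha}$, and then observe that this is one of the generators of $\cpfield_u$ because $s\in u\cup[\max u,+\infty)$. Your version is slightly more thorough in that you explicitly verify $\inf f\pr{X_s}=\inf f$ via \ref{Paths:Existence for any instantiation}, which the paper's proof leaves implicit.
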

In the remainder, we provide several methods for computing lower and upper expectations; first for those with respect to the consistent Poisson processes and second for those with respect to all consistent  counting processes.
For the latter, we first limit ourselves to bounded functions and subsequently move on to functions that are bounded-below.

\subsection{With Respect to the Consistent Poisson Processes}
\label{ssec:Lower and upper with respect to consistent Poisson}
Fix some rate~\(\lambda\) in \(\nnegreals\), and let \(\prob\) be the Poisson process with rate \(\lambda\).
It is essentially well-known---and a consequence of Theorem~\ref{the:Pois transition probabilities are poisson distributed and the converse}---that for any \(t, s\) in \(\nnegreals\) with \(t\leq s\), \(u\) in \(\setoftseq_{<t}\), \(\pr{x_u, x}\) in \(\stsp_{u\cup t}\) and \(f\) in \(\setofbbfn\pr{\stsp}\),
\begin{equation}
\label{eqn:prev_lambda f(X_s) in text}
  \prev_{\prob}\pr{f\pr{X_s}\cond X_u=x_u, X_t=x}
  = \sum_{y=x}^{+\infty} f\pr{y}\pois_{\lambda\pr{s-t}}\pr{y-x}.
\end{equation}
Because of this expression, \(\lprev_\Lambda^\star\pr{f\pr{X_s}\cond X_u=x_u, X_t=x}\) can be computed using the straightforward method that we already discussed in Section~\ref{ssec:Set of Poisson processes}: (i) fix a finite grid of \(\lambda\)'s in \(\Lambda=\br{\llambda, \ulambda}\), (ii) (numerically) evaluate the infinite sum in Equation~\eqref{eqn:prev_lambda f(X_s) in text} for each \(\lambda\) in this grid, and (iii) compute the minimum. In some specific cases, it is even possible to know beforehand for which $\lambda$ this minimum will be achieved.
For example, if $f$ is monotone and bounded, or bounded below and non-decreasing, then as we will see in Propositions~\ref{prop:Monotone bounded functions:equality between conditional expectations} and~\ref{prop:Lower and upper expectation of non-decreasing functions}, it suffices to consider $\lambda=\llambda$ or $\lambda=\ulambda$.

\subsection{With Respect to the Consistent Counting Processes}
\label{ssec:Lower and upper with respect to consistent counting}
Computing \(\lprev_\Lambda\pr{f\pr{X_s}\cond X_u=x_u, X_t=x}\) is less straightforward, as in general this does not reduce to a one-parameter optimisation problem.
Nevertheless, as we are about to show, the semi-group of Section~\ref{sec:The Poisson semi-group} allows us to circumvent this issue.
Our first result establishes a method to compute the lower---and hence also the upper---expectation of bounded functions.
\begin{theorem}
\label{the:Lower conditional expectation of bounded function with respect to all consistent processes}
  For any \(t,s\) in \(\nnegreals\) with \(t\leq s\), \(u\) in \(\setoftseq_{<t}\), \(f\) in \(\setoffn\pr{\stsp}\) and \(\pr{x_u, x}\) in \(\stsp_{u\cup t}\),
  \begin{equation*}
    \lprev_\Lambda\pr{f\pr{X_s}\cond X_u=x_u, X_t=x}
    = \lpoisprev_t^s\pr{f\cond x}.
  \end{equation*}
\end{theorem}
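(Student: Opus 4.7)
The plan is to prove the two inequalities $\lprev_\Lambda\pr{\cdots} \geq \lpoisprev_t^s\pr{f \cond x}$ and $\lprev_\Lambda\pr{\cdots} \leq \lpoisprev_t^s\pr{f \cond x}$ separately. Both rely on discretising $[t, s]$ via a grid $u^* = t_0, t_1, \dots, t_n$ in $\setoftseq_{\br{t, s}}$ with $t_0 = t$ and $t_n = s$, comparing the conditional expectation of interest with the finite-grid approximation $\br{\Phi_{u^*} f}\pr{x}$, and then invoking Theorem~\ref{the:Transformation induced by PoisGen in text} to pass from the grid approximation to $\lpoisprev_t^s\pr{f \cond x} = \br{\lto_t^s f}\pr{x}$ as $\sigma\pr{u^*} \to 0$.

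For the direction $\lprev_\Lambda\pr{\cdots} \geq \lpoisprev_t^s\pr{f \cond x}$, I would fix an arbitrary $\prob \in \setofconscproc{\Lambda}$ and apply the tower rule for coherent conditional probabilities---which follows from~\ref{LOP:Bayes rule}---iteratively along the grid, conditioning on $X_{t_{n-1}}, X_{t_{n-2}}, \dots$ in turn. The key per-step estimate is that, for any bounded $g \in \setoffn\pr{\stsp}$, any $y \in \stsp$ and any compatible history,
\begin{equation*}
  \prev_\prob\pr{g\pr{X_{t_i}} \cond \text{past}, X_{t_{i-1}} = y} \geq g\pr{y} + \Delta_i \br{\ltro g}\pr{y} + o\pr{\Delta_i},
\end{equation*}
obtained by writing $g\pr{X_{t_i}} = g\pr{y} + \pr{g\pr{y+1} - g\pr{y}}\indica{X_{t_i} = y+1} + \pr{g\pr{X_{t_i}} - g\pr{y}}\indica{X_{t_i} \geq y+2}$, bounding the middle term via consistency~\eqref{eqn:Consistent rate from right} (the infimum over $\lambda \in \br{\llambda, \ulambda}$ recovering precisely $\ltro$), and controlling the last term by $o\pr{\Delta_i}$ via orderliness~\ref{CP:Orederliness}. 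Iterating backward through the grid yields $\prev_\prob\pr{f\pr{X_s} \cond X_u = x_u, X_t = x} \geq \br{\Phi_{u^*} f}\pr{x} - \sum_i \eta_i$; letting $\sigma\pr{u^*} \to 0$ and then infimising over $\prob$ delivers the inequality.

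For the reverse direction, I would construct a sequence of consistent counting processes realising the semigroup value in the limit. A natural candidate is a Markovian pure-birth process $\prob_n$ whose transition rate at state $y$ during the $i$-th interval of a refining grid $u^{(n)}$ is a minimiser $\lambda^\star_{y, i} \in \br{\llambda, \ulambda}$ of $\lambda\pr{g_i\pr{y+1} - g_i\pr{y}}$, with the functions $g_i$ computed via the same backward recursion. Such a process is a counting process consistent with $\Lambda$, and the per-step estimate above---now saturated because the minimiser is achieved---yields $\prev_{\prob_n}\pr{f\pr{X_s} \cond X_u = x_u, X_t = x} \leq \br{\Phi_{u^{(n)}} f}\pr{x} + o\pr{1}$. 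Taking $\sigma\pr{u^{(n)}} \to 0$ gives the complementary bound. If making this construction fully rigorous proves awkward, one can instead specify the relevant finitary probabilities directly and invoke coherent extendibility as discussed after Definition~\ref{def:Coherent conditional probability in text}.

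The main obstacle I anticipate is making the per-step error $o\pr{\Delta_i}$ uniform in both the state~$y$ and the conditioning history for a given~$\prob$: consistency~\eqref{eqn:Consistent rate from right} and orderliness~\ref{CP:Orederliness} a priori only furnish pointwise limits as $\Delta \to 0^+$. To conclude that the accumulated error $\sum_{i=1}^n \eta_i$ vanishes as $\sigma\pr{u^*} \to 0$, one needs a uniform $o\pr{\Delta}$ estimate, which likely requires leveraging the finite range of values attained by the bounded functions appearing in the recursion, perhaps combined with direct control of the multi-jump probability via orderliness. Verifying that the candidate witness process in the reverse direction genuinely lies in $\setofconscproc{\Lambda}$---in particular, that it satisfies orderliness and the consistency bounds~\eqref{eqn:Consistent rate from right}--\eqref{eqn:Consistent rate from left}---is a second delicate technical point requiring nontrivial bookkeeping of transition probabilities.
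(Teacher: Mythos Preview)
Your high-level strategy---prove two inequalities via grid approximation, one by bounding an arbitrary consistent $\prob$ from below, one by constructing a near-optimal witness---matches the paper's structure. The gap is precisely where you flag it: uniformity of the $o(\Delta)$ estimates. You correctly identify the obstacle but do not propose a workable resolution, and the paper's resolution involves two ideas that are absent from your sketch.

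First, the paper does \emph{not} work directly with bounded $f$ on the infinite state space. It first proves the result for \emph{eventually constant} $f$ (constant from some $\upx$ on). For such $f$, only finitely many states $\chi=\{x,\dots,\upx\}$ are relevant, and the entire lower-bound argument is carried out via a finite-state reduced generator $\ltro^\chi$ and finite-state transition operators $T^\chi_{x_u,t,s}$. On a finite state space, the pointwise consistency and orderliness hypotheses become uniform in $y$ simply by taking a finite maximum; this is Lemma~\ref{lem:Bound on derivative of the transition matrix of a consistent process}. Your suggestion to ``leverage the finite range of values'' of a bounded $f$ does not accomplish this: a bounded $f$ on $\stsp$ can take infinitely many values, and the recursion visits all of $\stsp$. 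The passage from eventually constant to general bounded $f$ is a separate step (Lemma~\ref{lem:Consistent counting process:Expectation via limit of eventually constant function}), controlling the truncation error \emph{uniformly over} $\setofconscproc{\Lambda}$ via $\prev_\prob(\indica{>\upx}(X_s)\mid\cdots)\leq\upoisprev_t^s(\indica{>\upx}\mid x)\to 0$.

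Second, even on the finite state space $\chi$, the $o(\Delta)$ bounds from consistency and orderliness are given separately at each time $r$ and each history; there is no a~priori uniformity in $r\in[t,s]$. The paper handles this with a Heine--Borel covering argument (Lemma~\ref{lem:Heine-Borel generated sequence}, following \cite[Lemma~F.1]{2017Krak}): for each $r$ one obtains a small window of admissible step sizes, and compactness of $[t,s]$ extracts a finite subcover that furnishes the grid with controlled per-step error. Your backward tower-rule iteration implicitly assumes this uniformity without a mechanism for securing it.

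For the reverse direction, your construction is essentially what the paper does: it builds the witness via concatenated counting transformation systems generated by linear generalised Poisson generators $\trm_{S_i}$ with state-dependent rates (Lemma~\ref{lem:lto:Approximate with prod of T_S}, Corollary~\ref{cor:Constructed counting transformation system}), and the verification that the resulting process lies in $\setofconscproc{\Lambda}$ is Lemma~\ref{lem:System results in consistent counting process}. So that half of your plan is on target, though the bookkeeping is indeed nontrivial.
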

Indeed, because of this result, we can use the method that was introduced at the end of Section~\ref{ssec:The case of eventually constant functions} to compute the lower expectation of $f$.

For the special case of monotone bounded functions, we obtain an even stronger result.
\begin{proposition}
\label{prop:Monotone bounded functions:equality between conditional expectations}
  Fix any \(t,s\) in \(\nnegreals\) with \(t\leq s\), \(u\) in \(\setoftseq_{<t}\), \(\pr{x_u, x}\) in \(\stsp_{u\cup t}\) and \(f\) in \(\setoffn\pr{\stsp}\).
  If \(f\) is monotone, then
  \begin{multline*}
    \lprev_\Lambda\pr{f\pr{X_s}\cond X_u=x_u, X_t=x} \\
    = \lprev_\Lambda^\star\pr{f\pr{X_s}\cond X_u=x_u, X_t=x} \\
    = \prev_{\prob_\lambda}\pr{f\pr{X_s}\cond X_u=x_u, X_t=x},
  \end{multline*}
  where \(\prob_\lambda\) is the Poisson process with rate~\(\lambda=\llambda\) if \(f\) is non-decreasing and rate~\(\lambda=\ulambda\) if \(f\) is non-increasing.
\end{proposition}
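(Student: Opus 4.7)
The plan is to reduce the computation to the Poisson semi-group by means of Theorem~\ref{the:Lower conditional expectation of bounded function with respect to all consistent processes}, and then to show that on monotone functions the imprecise semi-group $\lto_t^s$ coincides with the precise Poisson semi-group at the appropriate extremal rate. Assume first that $f$ is non-decreasing; the non-increasing case is symmetric, with $\ulambda$ in place of $\llambda$. Since $f(x+1)-f(x)\geq 0$ for every $x$, the minimum defining the Poisson generator is attained at $\lambda=\llambda$, so
\begin{equation*}
[\ltro f](x)=\llambda\bigl(f(x+1)-f(x)\bigr)=[\llambda(S-I)f](x),
\end{equation*}
where $S$ is the unit shift $(Sg)(x)\coloneqq g(x+1)$.

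Next I would propagate this identity through the product in Equation~\eqref{eqn:PoisGen:Phi_u}. By backward induction on $i$, for any grid $u=t_0,\dots,t_n\in\setoftseq_{\br{t,s}}$ with $\sigma(u)\llambda\leq 1$, each partial product $g_i\coloneqq\prod_{j=i}^{n}(I+\Delta_j\ltro)f$ is non-decreasing: if $g_{i+1}$ is, then $(I+\Delta_i\llambda(S-I))g_{i+1}=(1-\Delta_i\llambda)g_{i+1}+\Delta_i\llambda Sg_{i+1}$ is a convex combination of two non-decreasing functions, hence non-decreasing. In particular, on every sufficiently fine grid the Poisson generator inside each factor may be replaced by the precise generator, giving $\Phi_uf=\prod_{i=1}^n(I+\Delta_i\llambda(S-I))f$.

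The third step is the passage to the limit. Because $\llambda(S-I)$ is a bounded linear operator on the Banach space $\setoffn(\stsp)$ (note $\norm{S}=1$), the classical product formula gives
\begin{equation*}
\lim_{\sigma(u)\to 0}\prod_{i=1}^{n}\bigl(I+\Delta_i\llambda(S-I)\bigr)=e^{(s-t)\llambda(S-I)},
\end{equation*}
in operator norm, and this exponential acts pointwise by $[e^{(s-t)\llambda(S-I)}f](x)=\sum_{k=0}^{\infty}f(x+k)\pois_{\llambda(s-t)}(k)$. By Theorem~\ref{the:Transformation induced by PoisGen in text} the left-hand side is precisely $\lto_t^s f$; by Equation~\eqref{eqn:prev_lambda f(X_s) in text} the right-hand side evaluated at $x$ equals $\prev_{\prob_\llambda}(f(X_s)\cond X_t=x)$; and by the Markov property~\ref{Pois:Markovian} this coincides with $\prev_{\prob_\llambda}(f(X_s)\cond X_u=x_u,X_t=x)$.

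Combining everything with Theorem~\ref{the:Lower conditional expectation of bounded function with respect to all consistent processes} yields $\lprev_\Lambda(f(X_s)\cond X_u=x_u,X_t=x)=\prev_{\prob_\llambda}(f(X_s)\cond X_u=x_u,X_t=x)$; since $\prob_\llambda\in\setofconscproc{\Lambda}^\star\subseteq\setofconscproc{\Lambda}$ and the right-hand side is one of the expectations over which $\lprev_\Lambda^\star$ takes its infimum, Equation~\eqref{eqn:Relation between lower expectations for consistent processes} forces all three quantities in the statement to coincide. The main obstacle I anticipate is the identification carried out in the third step: because the state space is countably infinite, one cannot appeal to finite-dimensional matrix arguments, and one must instead verify convergence in the operator norm on $\setoffn(\stsp)$ while controlling the tail of the Poisson sum uniformly in the grid. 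Boundedness of $S$ turns this into a routine semi-group-theoretic argument, but some care is needed to align the limit $\lto_t^s$ produced by Theorem~\ref{the:Transformation induced by PoisGen in text} with the operator exponential and to justify the termwise interpretation of the sum.
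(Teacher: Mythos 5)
Your proposal is correct, and its core is the same as the paper's: on a non-decreasing $f$ the minimum in $\ltro$ is attained at $\llambda$, monotonicity propagates through each factor $\pr{I+\Delta_i\ltro}$ by the convex-combination argument on sufficiently fine grids (this is exactly the paper's Lemma~\ref{lem:I+Delta GenPoisGen for monotonous functions}), so $\Phi_u f$ coincides with the corresponding \emph{precise} product, and the conclusion then follows from Theorem~\ref{the:Lower conditional expectation of bounded function with respect to all consistent processes} together with the sandwich $\lprev_\Lambda\leq\lprev_\Lambda^\star\leq\prev_{\prob_{\llambda}}$. The only genuine divergence is in how you identify the precise limit with $\prev_{\prob_{\llambda}}$: you pass from $\prod_{i=1}^n\pr{I+\Delta_i\llambda\pr{S-I}}$ to the operator exponential $e^{\pr{s-t}\llambda\pr{S-I}}$ via the classical product formula for a bounded linear operator, compute the exponential explicitly as the Poisson sum, and invoke Equation~\eqref{eqn:prev_lambda f(X_s) in text} (formally Proposition~\ref{prop:expectation of poisson process as sum}); the paper instead identifies the limit with the linear counting transformation $T_{t,\llambda}^s$ (Corollary~\ref{cor:Gen trm:Existence of T_S}) and connects it to $\prev_{\prob_{\llambda}}$ through the degenerate-interval case $\Lambda=\br{\llambda,\llambda}$ of Theorem~\ref{the:Lower conditional expectation of bounded function with respect to all consistent processes} and Equation~\eqref{eqn:Singleton of Poisson with rate lambda}, never forming the exponential. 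Your route is slightly more self-contained analytically but requires the standard norm estimate $\norm{\prod_{i=1}^n\pr{I+\Delta_i A}-e^{\pr{s-t}A}}\to0$ for the bounded operator $A=\llambda\pr{S-I}$, which the paper does not state (though it parallels Lemma~\ref{lem:GenPoisGen:Delta_i vs Delta}); the paper's route buys the identification for free from machinery it has already built. Two cosmetic remarks: the step-size condition for the non-increasing case needs $\Delta_i\ulambda\leq1$ rather than $\Delta_i\llambda\leq1$, which your ``sufficiently fine grid'' covers but is worth saying; and the appeal to \ref{Pois:Markovian} at the end is redundant, since Equation~\eqref{eqn:prev_lambda f(X_s) in text} is already stated conditional on $\pr{X_u=x_u,X_t=x}$.
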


Almost everything has now been set up to consider a general real-valued bounded below function of \(X_s\).
An essential intermediary step is an extension of Proposition~\ref{prop:Monotone bounded functions:equality between conditional expectations}.
\begin{proposition}
\label{prop:Lower and upper expectation of non-decreasing functions}
  Fix any \(t, s\) in \(\nnegreals\) with \(t\leq s\), \(u\) in \(\setoftseq_{<t}\) and \(\pr{x_u, x}\) in \(\stsp_{u\cup t}\).
  Then for any \(f\) in \(\setofbbfn\pr{\stsp}\) that is non-decreasing,
  \begin{multline*}
    \lprev_\Lambda\pr{f\pr{X_s}\cond X_u=x_u, X_t=x} \\
    = \lprev_\Lambda^\star\pr{f\pr{X_s}\cond X_u=x_u, X_t=x} \\
    = \prev_{\prob_{\llambda}}\pr{f\pr{X_s}\cond X_u=x_u, X_t=x}
  \end{multline*}
  and
  \begin{multline*}
    \uprev_\Lambda\pr{f\pr{X_s}\cond X_u=x_u, X_t=x} \\
    = \uprev_\Lambda^\star\pr{f\pr{X_s}\cond X_u=x_u, X_t=x} \\
    = \prev_{\prob_{\ulambda}}\pr{f\pr{X_s}\cond X_u=x_u, X_t=x},
  \end{multline*}
  where \(\prob_{\llambda}\) and \(\prob_{\ulambda}\) are the Poisson processes with rates~\(\llambda\) and \(\ulambda\), respectively.
\end{proposition}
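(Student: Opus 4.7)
The plan is to extend Proposition~\ref{prop:Monotone bounded functions:equality between conditional expectations} from bounded monotone functions to bounded-below non-decreasing functions via upper truncation. For each \(n \in \stsp\), I would set \(f_n \coloneqq f \wedge f(n)\), which is bounded and non-decreasing and satisfies \(f_n \uparrow f\) pointwise. Applying Proposition~\ref{prop:Monotone bounded functions:equality between conditional expectations} (and conjugacy, applied to \(-f_n\) which is non-increasing and bounded) yields, for every \(n\),
\begin{equation*}
  \lprev_\Lambda(f_n(X_s) \cond \cdot) = \lprev_\Lambda^\star(f_n(X_s) \cond \cdot) = \prev_{\prob_\llambda}(f_n(X_s) \cond \cdot)
\end{equation*}
and the analogous identities for the upper expectations with \(\prob_\ulambda\).

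For the lower-expectation statement I would chain \(f_n \leq f\), the monotonicity of \(\lprev_\Lambda\), and Equation~\eqref{eqn:Relation between lower expectations for consistent processes} to obtain
\begin{equation*}
  \prev_{\prob_\llambda}(f_n(X_s) \cond \cdot) = \lprev_\Lambda(f_n(X_s) \cond \cdot) \leq \lprev_\Lambda(f(X_s) \cond \cdot) \leq \lprev_\Lambda^\star(f(X_s) \cond \cdot) \leq \prev_{\prob_\llambda}(f(X_s) \cond \cdot).
\end{equation*}
Both outer terms admit the explicit Poisson-sum form given by Equation~\eqref{eqn:prev_lambda f(X_s) in text}, and since \(f_n(y) \uparrow f(y)\) for every \(y \geq x\), monotone convergence for the series forces \(\prev_{\prob_\llambda}(f_n(X_s) \cond \cdot) \uparrow \prev_{\prob_\llambda}(f(X_s) \cond \cdot)\). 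The squeeze then collapses the chain to a string of equalities.

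The upper-expectation statement is the harder half, because the analogous squeeze fails: both \(f_n \leq f\) and Equation~\eqref{eqn:Relation between lower expectations for consistent processes} push the known inequalities in the same direction. Instead, I would fix any \(\prob \in \setofconscproc{\Lambda}\) and note, for every \(n\),
\begin{equation*}
  \prev_\prob(f_n(X_s) \cond \cdot) \leq \uprev_\Lambda(f_n(X_s) \cond \cdot) = \prev_{\prob_\ulambda}(f_n(X_s) \cond \cdot) \leq \prev_{\prob_\ulambda}(f(X_s) \cond \cdot),
\end{equation*}
where the last step uses monotonicity of \(\prev_{\prob_\ulambda}\). Taking \(n \to \infty\) on the left-hand side is the main obstacle, since it requires monotone convergence for an expectation associated with a general coherent conditional probability, something that does not follow from countable additivity (which we do not have). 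My escape hatch is the special shape of \(f_n\): because \(f\) is non-decreasing, \(\{f_n(X_s) > \alpha\} = \{f(X_s) > \alpha\}\) for \(\alpha < f(n)\) and is empty otherwise, so the survival-function formula for \(\prev_\prob\) reduces to a truncated Riemann integral whose convergence to the full improper integral is literally the definition of \(\prev_\prob(f(X_s) \cond \cdot)\). This gives \(\prev_\prob(f(X_s) \cond \cdot) \leq \prev_{\prob_\ulambda}(f(X_s) \cond \cdot)\) for every \(\prob \in \setofconscproc{\Lambda}\); taking the supremum delivers \(\uprev_\Lambda(f(X_s) \cond \cdot) \leq \prev_{\prob_\ulambda}(f(X_s) \cond \cdot)\), and the reverse inequality follows from \(\prob_\ulambda \in \setofconscproc{\Lambda}^\star \subseteq \setofconscproc{\Lambda}\) together with Equation~\eqref{eqn:Relation between lower expectations for consistent processes}.
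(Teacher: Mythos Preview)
Your proposal is correct. The paper takes a slightly different and more uniform route: rather than truncating \(f\) and invoking Proposition~\ref{prop:Monotone bounded functions:equality between conditional expectations} on the truncations \(f_n\), it works directly with the survival-function representation of \(\prev_\prob\pr{f\pr{X_s}\cond\cdot}\), decomposes the integral over the intervals \([f(y),f(y{+}1))\) where \(\set{f\pr{X_s}>\alpha}=\pr{X_s>y}\), and applies Proposition~\ref{prop:Monotone bounded functions:equality between conditional expectations} to the bounded non-decreasing indicators \(\indica{>y}\) to obtain \(\prob_{\llambda}\pr{X_s>y\cond\cdot}\leq\prob\pr{X_s>y\cond\cdot}\) for every \(\prob\in\setofconscproc{\Lambda}\); integrating then yields \(\prev_{\prob_{\llambda}}\pr{f\pr{X_s}\cond\cdot}\leq\prev_\prob\pr{f\pr{X_s}\cond\cdot}\) directly, and the upper half follows by the same device with \(\ulambda\). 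Your ``escape hatch'' for the upper half is essentially this same survival-function observation, so the two arguments coincide there. Where you genuinely diverge is in the lower half: your truncation-plus-squeeze avoids the survival-function decomposition and instead leans on monotone convergence for the explicit Poisson sum of Equation~\eqref{eqn:prev_lambda f(X_s) in text}. Your route is arguably more elementary for the lower half but sacrifices the symmetry that lets the paper handle both halves with one computation; in particular, once you have your survival-function observation in hand, you could have used it for the lower half too and dispensed with the separate squeeze.
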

As an immediate corollary of Proposition~\ref{prop:Lower and upper expectation of non-decreasing functions}, we obtain an interpretation for the rate interval~\(\Lambda\): its bounds provide tight lower and upper bounds on the expected number of Poisson-events in any time period.
\begin{corollary}
\label{cor:Lower and upper expected number of events}
  Fix any \(t, s\) in \(\nnegreals\) with \(t\leq s\), \(u\) in \(\setoftseq_{<t}\) and \(\pr{x_u, x}\) in \(\stsp_{u\cup t}\).
  Then
  \begin{equation*}
    \lprev_\Lambda\pr{X_s\cond X_u=x_u, X_t=x}
    = x + \llambda\pr{s-t}
  \end{equation*}
  and
  \begin{equation*}
    \uprev_\Lambda\pr{X_s\cond X_u=x_u, X_t=x}
    = x + \ulambda\pr{s-t},
  \end{equation*}
  and similarly for \(\lprev_\Lambda^\star\) and \(\uprev_\Lambda^\star\).
\end{corollary}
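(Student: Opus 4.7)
The plan is to realise $X_s$ as $f(X_s)$ for $f$ the identity on $\stsp=\nnegints$ and then invoke Proposition~\ref{prop:Lower and upper expectation of non-decreasing functions}. The identity map is non-decreasing, and it lies in $\setofbbfn(\stsp)$ since $f(x)=x\geq 0$ for all $x\in\nnegints$. Hence Proposition~\ref{prop:Lower and upper expectation of non-decreasing functions} applies and collapses all four expectations of interest: the lower expectations $\lprev_\Lambda(X_s\cond X_u=x_u,X_t=x)$ and $\lprev_\Lambda^\star(X_s\cond X_u=x_u,X_t=x)$ both coincide with the expectation under the Poisson process $\prob_{\llambda}$, and similarly the upper expectations both coincide with the expectation under $\prob_{\ulambda}$. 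This already takes care of the ``and similarly'' part of the statement.

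Next, I would evaluate $\prev_{\prob_\lambda}(X_s\cond X_u=x_u,X_t=x)$ for $\lambda\in\set{\llambda,\ulambda}$ using Equation~\eqref{eqn:prev_lambda f(X_s) in text}, which gives
\begin{equation*}
  \prev_{\prob_\lambda}\pr{X_s\cond X_u=x_u,X_t=x}
  = \sum_{y=x}^{+\infty} y\,\pois_{\lambda(s-t)}\pr{y-x}.
\end{equation*}
Substituting $k=y-x$ and splitting the sum yields
\begin{equation*}
  \sum_{k=0}^{+\infty} \pr{k+x}\pois_{\lambda(s-t)}\pr{k}
  = x + \sum_{k=0}^{+\infty} k\,\pois_{\lambda(s-t)}\pr{k}
  = x + \lambda\pr{s-t},
\end{equation*}
where I used that the Poisson probabilities sum to $1$ and that the mean of a Poisson distribution with parameter $\lambda(s-t)$ is $\lambda(s-t)$. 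Plugging $\lambda=\llambda$ and $\lambda=\ulambda$ into the two identities obtained from Proposition~\ref{prop:Lower and upper expectation of non-decreasing functions} immediately produces the four claimed equalities.

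There is essentially no obstacle here, since the heavy lifting has already been done in Proposition~\ref{prop:Lower and upper expectation of non-decreasing functions}. The only minor point worth checking is that the identity really is an element of $\setofbbfn(\stsp)$ so that Proposition~\ref{prop:Lower and upper expectation of non-decreasing functions} is applicable, and that the infinite sum defining $\prev_{\prob_\lambda}(X_s\cond\cdots)$ is well-defined as a (possibly improper) Riemann integral in the sense of Section~\ref{ssec:Conditional expectation with respect to a counting process}; both are immediate from the fact that $f(x)=x$ is non-negative on $\nnegints$ and that Poisson moments of all orders are finite.
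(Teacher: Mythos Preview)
Your proposal is correct and follows essentially the same route as the paper's own proof: apply Proposition~\ref{prop:Lower and upper expectation of non-decreasing functions} to the identity map (which is non-decreasing and bounded below on $\stsp$), then evaluate the resulting Poisson expectation via Equation~\eqref{eqn:prev_lambda f(X_s) in text} (equivalently, Proposition~\ref{prop:expectation of poisson process as sum}) and the standard Poisson mean. The paper's argument is identical in structure and detail.
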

A more important consequence of Proposition~\ref{prop:Lower and upper expectation of non-decreasing functions} is the following result, which can be regarded as an extension of (the combination of) Proposition~\ref{prop:Approximate lto f using lto f indic leq upx} and Theorem~\ref{the:Lower conditional expectation of bounded function with respect to all consistent processes}.
\begin{theorem}
\label{the:Lower and upper expectation of functions that do not increase to fast}
  Fix any \(t, s\) in \(\nnegreals\) with \(t\leq s\), \(u\) in \(\setoftseq_{<t}\), \(\pr{x_u, x}\) in \(\stsp_{u\cup t}\) and \(f\) in \(\setofbbfn\pr{\stsp}\).
  If
  \begin{equation*}
    \sum_{y=x}^{+\infty} f_{\max}\pr{y} \pois_{\ulambda\pr{s-t}}\pr{y-x}
    < +\infty,
  \end{equation*}
  where \(f_{\max}\) in \(\setofbbfn\pr{\stsp}\) is defined for all \(y\) in \(\stsp\) as
  \begin{equation*}
     f_{\max}\pr{y}
     \coloneqq\max\set{f\pr{z}\colon z\in\stsp, z\leq y},
  \end{equation*}
 then
  \begin{equation*}
    \lprev_\Lambda\pr{f\pr{X_s}\cond X_u=x_u, X_t=x}
    = \lim_{\upx\to+\infty} \lpoisprev_t^s\pr{\indica{\leq\upx}f+f\pr{\upx}\indica{>\upx}\cond x},
  \end{equation*}
  \begin{equation*}
    \uprev_\Lambda\pr{f\pr{X_s}\cond X_u=x_u, X_t=x}
    = \lim_{\upx\to+\infty} \upoisprev_t^s\pr{\indica{\leq\upx}f+f\pr{\upx}\indica{>\upx}\cond x},
  \end{equation*}
  where the two limits are finite.
\end{theorem}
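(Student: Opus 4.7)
The plan is to reduce the claim to Theorem~\ref{the:Lower conditional expectation of bounded function with respect to all consistent processes} by truncating $f$, and then interchange $\inf_P$ (resp.\ $\sup_P$) over $\setofconscproc{\Lambda}$ with the limit $\upx \to +\infty$. The role of the summability hypothesis is to supply, via Proposition~\ref{prop:Lower and upper expectation of non-decreasing functions}, an integrable non-decreasing dominating function that makes this interchange uniform in~$P$.

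Concretely, I let $g_{\upx} \coloneqq \indica{\leq \upx} f + f\pr{\upx}\indica{>\upx}$. Each $g_{\upx}$ is bounded, so Theorem~\ref{the:Lower conditional expectation of bounded function with respect to all consistent processes} immediately yields $\lprev_\Lambda\pr{g_{\upx}\pr{X_s} \cond X_u=x_u, X_t=x} = \lpoisprev_t^s\pr{g_{\upx} \cond x}$, and analogously for the upper. It therefore suffices to prove that $\prev_P\pr{g_{\upx}\pr{X_s}\cond\ldots} \to \prev_P\pr{f\pr{X_s}\cond\ldots}$ as $\upx \to +\infty$ \emph{uniformly} in $P \in \setofconscproc{\Lambda}$, after which one may exchange $\inf_P$ (resp.\ $\sup_P$) with $\lim_{\upx}$ in the definition of $\lprev_\Lambda$ (resp.\ $\uprev_\Lambda$).

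For the uniform bound, I would first express both expectations as countable sums $\sum_{y\geq x} f\pr{y}\prob\pr{X_s=y\cond\ldots}$ and $\sum_{y\geq x} g_{\upx}\pr{y}\prob\pr{X_s=y\cond\ldots}$, justified by monotone approximation through Equation~\eqref{eqn:Expectation of a simple function}. Using $f\leq f_{\max}$, $f\geq\inf f$, and $f_{\max}\pr{\upx}\leq f_{\max}\pr{y}$ for $y>\upx$, a short case analysis gives the pointwise estimate $\abs{f\pr{y}-f\pr{\upx}}\leq 2 f_{\max}\pr{y}+C$ for all $y>\upx$, where $C$ depends only on $\inf f$. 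Hence
\[
  \abs{\prev_P\pr{f\pr{X_s}\cond\ldots} - \prev_P\pr{g_{\upx}\pr{X_s}\cond\ldots}}
  \leq \prev_P\pr{\pr{2 f_{\max}+C}\indica{>\upx}\pr{X_s}\cond\ldots}.
\]
The function $\pr{2 f_{\max}+C}\indica{>\upx}$ is non-negative and non-decreasing on $\stsp$, so Proposition~\ref{prop:Lower and upper expectation of non-decreasing functions} upper-bounds the right-hand side by the same expression with $P$ replaced by $\prob_{\ulambda}$, a quantity independent of~$P$. The summability hypothesis combined with Equation~\eqref{eqn:prev_lambda f(X_s) in text} gives $\prev_{\prob_{\ulambda}}\pr{f_{\max}\pr{X_s}\cond\ldots} < +\infty$, so ordinary dominated convergence under the genuine Poisson law drives the uniform upper bound to zero as $\upx \to +\infty$.

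The two claimed identities then follow by exchanging the limit with the infimum (resp.\ supremum), and finiteness of both limits is immediate from the sandwich $\inf f \leq \lprev_\Lambda\pr{f\pr{X_s}\cond\ldots} \leq \uprev_\Lambda\pr{f\pr{X_s}\cond\ldots} \leq \prev_{\prob_{\ulambda}}\pr{f_{\max}\pr{X_s}\cond\ldots} < +\infty$. The main obstacle will be justifying the countable-sum representation of $\prev_P\pr{f\pr{X_s}\cond\ldots}$ for an \emph{arbitrary}---not necessarily Markovian or homogeneous---counting process~$P$: the conditional expectation in Section~\ref{ssec:Conditional expectation with respect to a counting process} is defined via a Riemann integral of level-set probabilities rather than directly as a measure-theoretic integral on $\stsp$, so turning it into the form $\sum_{y\geq x} f\pr{y}\prob\pr{X_s=y\cond\ldots}$ requires a careful monotone-truncation argument whose legitimacy once again hinges on the finite dominating expectation supplied by the hypothesis.
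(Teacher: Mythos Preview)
Your proposal is essentially the same strategy as the paper's proof: truncate to $g_{\upx}$, invoke Theorem~\ref{the:Lower conditional expectation of bounded function with respect to all consistent processes} on the truncations, and control $\abs{\prev_P\pr{f\pr{X_s}\cond\ldots}-\prev_P\pr{g_{\upx}\pr{X_s}\cond\ldots}}$ uniformly in $P\in\setofconscproc{\Lambda}$ by a tail of $f_{\max}$ under $\prob_{\ulambda}$ via Proposition~\ref{prop:Lower and upper expectation of non-decreasing functions}.

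The one noteworthy difference concerns precisely the obstacle you flag. The paper does \emph{not} attempt a countable-sum representation of $\prev_P\pr{f\pr{X_s}\cond\ldots}$ for general $P$. Instead it first shifts to $f'\coloneqq f-\inf f\geq0$ and $f'_{\max}\coloneqq f_{\max}-\inf f\geq0$, then uses the two pointwise sandwiches
\[
\indica{\leq\upx}f'\;\leq\;f',\,f'_{\upx}\;\leq\;\indica{\leq\upx}f'+f'_{\max}\indica{>\upx},
\]
applies monotonicity of $\prev_P$, and only needs the \emph{single} additivity
\[
\prev_P\pr{\br{\indica{\leq\upx}f'+f'_{\max}\indica{>\upx}}\pr{X_s}\cond\ldots}
=\prev_P\pr{\br{\indica{\leq\upx}f'}\pr{X_s}\cond\ldots}
+\prev_P\pr{\br{f'_{\max}\indica{>\upx}}\pr{X_s}\cond\ldots},
\]
which it derives directly from the Riemann-integral definition by observing that the two summands have disjoint supports, so the level sets of the sum split as a disjoint union of the level sets of the pieces. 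This sidesteps the need for a full $\sigma$-additive integral on $\stsp$. Your route through $\abs{f-g_{\upx}}\leq(2f_{\max}+C)\indica{>\upx}$ works too, but to make it rigorous you would still need some version of this level-set splitting (or equivalently the inequality $\prev_P\pr{h_1\pr{X_s}\cond\ldots}-\prev_P\pr{h_2\pr{X_s}\cond\ldots}\leq\prev_P\pr{\abs{h_1-h_2}\pr{X_s}\cond\ldots}$ for bounded-below $h_1,h_2$), so the paper's sandwich is the cleaner way to close the gap you anticipated.
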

Because of this result, we can compute the lower and upper expectation using the same method as before. Note that it makes no difference that $f$ is no longer bounded; the method still works because \(\indica{\leq\upx} f + f\pr{\upx}\indica{>\upx}\) is bounded.

\subsection{A Numerical Example}
\label{ssec:Numerical example}
\begin{figure}
\floatconts
  {fig:fig-label}
  {\caption{Bounds on transition probabilities as a function of \(t\) for the rate interval \(\Lambda=\br{1, 2}\).}}
  {%
  \begin{tikzpicture}
    \pgfplotstableread[col sep=comma]{no-event.csv}{\data}
    \begin{axis}[
      width=\columnwidth,
      height=.5\columnwidth,
      legend style={font=\footnotesize},
      tick label style={font=\footnotesize},
      label style={font=\footnotesize},
      ylabel={\(\prob\pr{X_t=0\cond X_0=0}\)},
      ymin=0, ymax=1, xmin=0, xmax=4,
      legend entries={
        \(\setofconscproc{\Lambda}^\star\),
        \(\setofconscproc{\Lambda}\),
      },
    ]
      \addplot[dashed, mark=*, mark color=black, mark size=1.3pt, mark repeat=4] table [x=t, y=Plow]{\data};
      \addplot[solid] table [x=t, y=low]{\data};
      \addplot[dashed, mark=*, mark color=black, mark size=1.3pt, mark repeat=4] table [x=t, y=Pup]{\data};
      \addplot[solid] table [x=t, y=up]{\data};
    \end{axis}
  \end{tikzpicture}
  \begin{tikzpicture}
    \pgfplotstableread[col sep=comma]{one-event.csv}{\data}
    \begin{axis}[
      width=\columnwidth,
      height=.5\columnwidth,
      legend style={font=\footnotesize},
      tick label style={font=\footnotesize},
      label style={font=\footnotesize},
      xlabel={Time~\(t\)},
      ylabel={\(\prob\pr{X_t=1\cond X_0=0}\)},
      ymin=0, 
      xmin=0, xmax=4,
      legend entries={
        \(\setofconscproc{\Lambda}^\star\),
        \(\setofconscproc{\Lambda}\),
      },
    ]
      \addplot[name path=Poislow, dashed, mark=*, mark color=black, mark size=1.3pt, mark repeat=4] table [x=t, y=Plow]{\data};
      \addplot[name path=low, solid] table [x=t, y=low]{\data};
      \addplot[name path=Poisup, dashed, mark=*, mark color=black, mark size=1.3pt, mark repeat=4] table [x=t, y=Pup]{\data};
      \addplot[name path=up, solid] table [x=t, y=up]{\data};
    \end{axis}
  \end{tikzpicture}
  }
\end{figure}
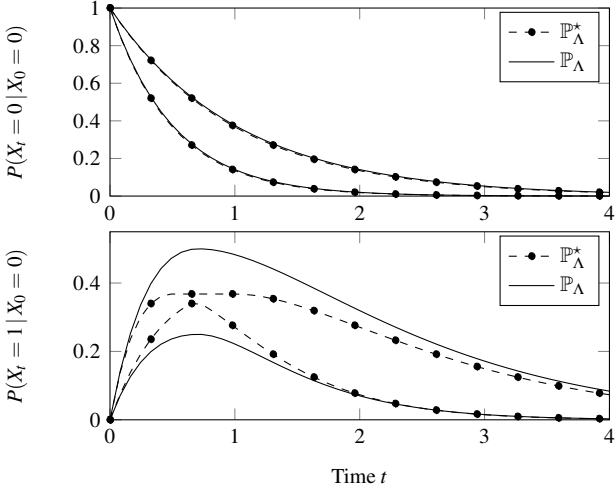
We end this section with a basic numerical example.
We determine tight lower and upper bounds on
\begin{equation*}
  \prob\pr{X_t=x\cond X_0=0}
  = \prev_\prob\pr{\indica{x}\pr{X_t}\cond X_0=0},
\end{equation*}
with \(x\) equal to \(0\) or \(1\).
We use the methods outlined in Sections~\ref{ssec:Lower and upper with respect to consistent Poisson} and \ref{ssec:Lower and upper with respect to consistent counting} to compute lower and upper bounds with respect to the sets~\(\setofconscproc{\Lambda}^\star\) and \(\setofconscproc{\Lambda}\) for \(\Lambda=\br{1,2}\).
The resulting bounds are depicted in Figure~\ref{fig:fig-label}.
Observe that for \(x=0\), the bounds with respect to~\(\setofconscproc{\Lambda}^\star\) and \(\setofconscproc{\Lambda}\) are equal, as is to be expected due to Proposition~\ref{prop:Monotone bounded functions:equality between conditional expectations} because \(\indica{x}\) is monotone for $x=0$.
For \(x=1\), \(\indica{x}\) is not monotone and the bounds with respect to~\(\setofconscproc{\Lambda}^\star\) are clearly \emph{not} equal to those with respect to~\(\setofconscproc{\Lambda}\).

\section{Justification for the Term Imprecise Poisson Process}
\label{sec:Justification for the term imprecise poisson process}
Until now, we have provided little justification for why we call both \(\setofconscproc{\Lambda}^\star\) and \(\setofconscproc{\Lambda}\) imprecise Poisson processes.
In Section~\ref{ssec:Set of consistent processes}, we already briefly mentioned that the two sets are proper generalisations of the Poisson process: if the rate interval~\(\Lambda\) is degenerate, meaning that \(\llambda=\lambda=\ulambda\), then both sets reduce to the singleton containing the Poisson process with rate~\(\lambda\).
Another argument for referring to \(\setofconscproc{\Lambda}^\star\) and \(\setofconscproc{\Lambda}\) as imprecise Poisson processes concerns the (tight lower and upper bounds on the) expected number of Poisson events in a time period of length \(\Delta\).
For a Poisson process, it is well-known that this expectation is equal to \(\Delta\lambda\), and we know from Corollary~\ref{cor:Lower and upper expected number of events} that the corresponding lower and upper expectations are equal to \(\Delta\llambda\) and \(\Delta\ulambda\), respectively.

We end this section with our strongest argument for using the term imprecise Poisson process to refer to both \(\setofconscproc{\Lambda}^\star\) and \(\setofconscproc{\Lambda}^\star\).
The following result establishes that the corresponding lower expectations \(\lprev_{\Lambda}^\star\) and \(\lprev_{\Lambda}\)---and, due to conjugacy, also the corresponding upper expectations \(\uprev_{\Lambda}^\star\) and \(\uprev_{\Lambda}\)---satisfy imprecise generalisations of \ref{CP:Start at 0}, \ref{CP:Orederliness} and \ref{Pois:Markovian}--\ref{Pois:Time-homogeneous}, which are the defining properties of a Poisson process.
\begin{proposition}
\label{prop:Poisson like}
  For all \(t, \Delta\) in \(\nnegreals\), \(u\) in \(\setoftseq_{<t}\), \(\pr{x_u,x}\) in \(\stsp_{u\cup t}\) and \(f\) in \(\setoffn\pr{\stsp}\),
  \begin{enumerate}[label=\upshape(\roman*)]
    \item
    \(\lprev_\Lambda\pr{f\pr{X_0}}=f\pr{0}\);
    \item
    \begin{equation*}
      \lim_{\Delta\to0^+} \frac{\lprev_\Lambda\pr{\indica{\pr{X_{t+\Delta}\geq x+2}}\cond X_u=x_u, X_t=x}}{\Delta}
      =0
    \end{equation*}
    and, if \(t>0\),
    \begin{equation*}
      \lim_{\Delta\to0^+} \frac{\lprev_\Lambda\pr{\indica{\pr{X_t\geq x+2}}\cond X_u=x_u, X_{t-\Delta}=x}}{\Delta}
      =0;
    \end{equation*}
    \item
    \(\lprev_\Lambda\pr{f\pr{X_{t+\Delta}}\cond X_u=x_u, X_t=x} = \lprev_\Lambda\pr{f\pr{X_{t+\Delta}}\cond X_t=x}\);
    \item
    \(\lprev_\Lambda\pr{f\pr{X_{t+\Delta}}\cond X_t=x} = \lprev_\Lambda\pr{f'_x\pr{X_{t+\Delta}}\cond X_t=0}\);
    \item
    \(\lprev_\Lambda\pr{f\pr{X_{t+\Delta}}\cond X_t=x} = \lprev_\Lambda\pr{f\pr{X_{\Delta}}\cond X_0=x}\);
  \end{enumerate}
  with \(f'_x\colon\stsp\to\reals\colon z\mapsto f'_x\pr{z}\coloneqq f\pr{x+z}\).
  The same equalities also hold for \(\lprev_\Lambda^\star\).
\end{proposition}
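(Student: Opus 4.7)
The plan is to dispatch each of the five claims by reducing to the semi-group representation of Theorem~\ref{the:Lower conditional expectation of bounded function with respect to all consistent processes} (for $\lprev_\Lambda$) or to the explicit Poisson sum of Equation~\eqref{eqn:prev_lambda f(X_s) in text} (for $\lprev_\Lambda^\star$). Claim~(i) is immediate: by \ref{CP:Start at 0}, every $\prob$ in $\setofconscproc{\Lambda}$ (and hence in $\setofconscproc{\Lambda}^\star$) satisfies $\prev_\prob\pr{f\pr{X_0}}=f\pr{0}$, so the infimum is $f\pr{0}$ as well.

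For claim~(ii), I first invoke Theorem~\ref{the:Lower conditional expectation of bounded function with respect to all consistent processes} to rewrite the right-sided lower expectation as $\lpoisprev_t^{t+\Delta}\pr{\indica{\geq x+2}\cond x}$. For the left-sided expression, once $\Delta$ is small enough that $\max u < t-\Delta$, the same theorem applies and, combined with \ref{LTO in text:Time-invariance}, rewrites it as $\lpoisprev_0^\Delta\pr{\indica{\geq x+2}\cond x}$. Since the Poisson process $\prob_{\llambda}$ belongs to both $\setofconscproc{\Lambda}^\star$ and $\setofconscproc{\Lambda}$, the non-negative infimum is bounded above by $\prev_{\prob_{\llambda}}\pr{\indica{X_\Delta\geq x+2}\cond X_0=x}=\pois_{\llambda\Delta}\pr{\set{k\geq 2}}=O\pr{\Delta^2}$ via Equation~\eqref{eqn:prev_lambda f(X_s) in text}; dividing by $\Delta$ and sending $\Delta\to 0^+$ gives zero.

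For claims~(iii) and~(v), I apply Theorem~\ref{the:Lower conditional expectation of bounded function with respect to all consistent processes} to both sides of each equation: in (iii) they both reduce to $\lpoisprev_t^{t+\Delta}\pr{f\cond x}$; in (v) they both reduce to $\lpoisprev_0^\Delta\pr{f\cond x}$ after using \ref{LTO in text:Time-invariance}. Their $\setofconscproc{\Lambda}^\star$ counterparts follow from Equation~\eqref{eqn:prev_lambda f(X_s) in text}, which shows that $\prev_{\prob_\lambda}\pr{f\pr{X_s}\cond X_u=x_u, X_t=x}$ depends only on $f$, $x$, $s-t$ and $\lambda$, so that the conditioning sequence $\pr{u, x_u}$ is irrelevant for~(iii) while only the time difference matters for~(v); taking the infimum over $\lambda$ in $\Lambda$ then preserves the equality.

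For claim~(iv), the key observation is that the Poisson generator commutes with the shift operator $S_x\colon g \mapsto g\pr{x+\cdot}$: a one-line computation from the definition gives $\ltro S_x g = S_x \ltro g$, hence $\pr{I+\Delta_i\ltro}S_x = S_x\pr{I+\Delta_i\ltro}$, and taking limits of products of such factors yields $\lto_0^\Delta S_x = S_x \lto_0^\Delta$. Since $f'_x = S_x f$, evaluating at $0$ gives $\br{\lto_0^\Delta f'_x}\pr{0} = \br{S_x \lto_0^\Delta f}\pr{0} = \br{\lto_0^\Delta f}\pr{x}$, and Theorem~\ref{the:Lower conditional expectation of bounded function with respect to all consistent processes} combined with \ref{LTO in text:Time-invariance} finishes the argument. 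For $\lprev_\Lambda^\star$, the substitution $z = y-x$ turns the Poisson sum in Equation~\eqref{eqn:prev_lambda f(X_s) in text} into $\sum_{z\geq 0} f'_x\pr{z}\pois_{\lambda\Delta}\pr{z}$, and taking the infimum over $\lambda$ in $\Lambda$ concludes. I expect the main obstacle to be this verification of~(iv), since it requires carrying the shift-invariance through the limit in the definition~\eqref{eqn:LTO gen by PoisGen} of $\lto_0^\Delta$; however, because that limit is invariant under the choice of grid by Theorem~\ref{the:Transformation induced by PoisGen in text} and commutation with $S_x$ is preserved under composition and pointwise limits, this goes through without issue.
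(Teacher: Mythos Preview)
Your proposal is correct and follows essentially the same strategy as the paper: reduce everything for \(\lprev_\Lambda\) to the semi-group via Theorem~\ref{the:Lower conditional expectation of bounded function with respect to all consistent processes}, and everything for \(\lprev_\Lambda^\star\) to the explicit Poisson sum of Equation~\eqref{eqn:prev_lambda f(X_s) in text}. The only tactical differences are that for~(ii) the paper invokes a prepared lemma showing \(\lim_{\Delta\to0^+}\br{\lto_t^{t+\Delta}\indica{\geq x+2}}\pr{x}/\Delta=0\) via the differential characterisation of the semi-group, whereas you sandwich between \(0\) and a single Poisson process; and for~(iv) the paper cites a shift-invariance lemma for \(\lto_t^s\) whose content you rederive through the commutation \(\ltro S_x = S_x\ltro\)---both are clean and your versions are arguably more self-contained.
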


\section{Conclusion}
In this contribution, we proposed two generalisations of the Poisson process in the form of two sets of counting processes: the set~\(\setofconscproc{\Lambda}^\star\) of all Poisson processes with rate~\(\lambda\) in the rate interval~\(\Lambda\), and the set~\(\setofconscproc{\Lambda}\) of all counting process that are consistent with the rate interval~\(\Lambda\).
We argued why both of these sets can be seen as proper generalisations of the Poisson process.
First and foremost, for a degenerate rate interval they both reduce to the singleton containing the Poisson process with this rate.
Second, the lower and upper expectations with respect to both sets satisfy imprecise generalisations of \ref{CP:Start at 0}, \ref{CP:Orederliness} and \ref{Pois:Markovian}--\ref{Pois:Time-homogeneous}, the defining properties of a Poisson process.
We also presented several methods for computing lower and upper expectations for functions that depend on the number of occurred Poisson-events at a single time point.

We end with two suggestions for future research.
An obvious open question is whether we can efficiently compute lower and upper expectations for functions that depend on the number of occurred Poisson-events at multiple points in time.
Based on similar results of \citet{2017Krak} for imprecise continuous-time Markov chains with a finite state space, we strongly believe that this will be the case for \(\setofconscproc{\Lambda}\) but not for \(\setofconscproc{\Lambda}^\star\), whence providing a practical argument in favour of the former.
A perhaps slightly less obvious open question is whether Theorem~\ref{the:Pois transition probabilities are poisson distributed and the converse} and Corollary~\ref{cor:Poisson has rate lambda} can be generalised to sets of counting processes, in the sense that we can infer the existence of a rate interval rather than specify one, by imposing appropriate conditions on the set of counting processes, including the imprecise generalisations of \ref{CP:Start at 0}, \ref{CP:Orederliness} and \ref{Pois:Markovian}--\ref{Pois:Time-homogeneous}.

\acks{%
  We would like to express our gratitude to the three anonymous reviewers for their valuable time, constructive feedback and sound suggestions for improvements.
  Jasper De Bock's research was partially funded by the European Commission's H2020 programme, through the UTOPIAE Marie Curie Innovative Training Network, H2020-MSCA-ITN-2016, Grant Agreement number 722734.
}

\bibliography{erreygers19}

\onecolumn
\clearpage
\appendix

\newenvironment{proofof}[1]{\par\noindent{\bfseries\upshape Proof of #1\ }}{\jmlrQED}

\section*{Appendix}
In this appendix, we will not entirely follow the same order as we did in the main text.
Our reason for doing so is that to prove the results in Section~\ref{sec:Poisson processes in particular}, we need some results that are very much related to the transformations that we introduce in Section~\ref{sec:The Poisson semi-group}.
Therefore, we have chosen to start off this appendix with some general results regarding transformations.

\section{Some Preliminary Results Regarding Transformations}
\label{app:Preliminary results regarding transformations}
Throughout this appendix, and as mentioned in Section~\ref{ssec:Functions, operators and norms}, we let \(\genset\) be any non-empty set that is at most countably finite; furthermore, we assume that \(\genset\) is endowed with a total order ``\(\leq\)''.
\subsection{General Non-Negatively Homogeneous Transformations}
\label{sapp:Transformations:General non-negatively homogeneous}
We start of with some essential properties of non-negatively homogeneous transformations.
\begin{lemma}
\label{lem:Properties of non-negatively homogeneous transformations}
  Consider two non-negatively homogeneous transformations~\(A\) and \(B\) on \(\setoffn\pr{\genset}\).
  Then
  \begin{enumerate}[label=\upshape{}NH\arabic*., ref=\upshape(NH\arabic*), leftmargin=*]
    \item \label{BNH: A+B} \(A+B\) is non-negatively homogeneous;
    \item \label{BNH: mu a} \(\mu A\) is non-negatively homogeneous for any \(\mu\) in \(\reals\);
    \item  \label{BNH: A B} \(A B\) is non-negatively homogeneous;
    \item \label{BNH: norm Af <= norm A norm f} \(\norm{A f} \leq \norm{A}\norm{f}\) for any \(f\) in \(\setoffn\pr{\genset}\) ;
    \item  \label{BNH:norm A B <= norm A norm B} \(\norm{A B} \leq \norm{A}\norm{B}\).
  \end{enumerate}
\end{lemma}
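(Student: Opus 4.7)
The plan is to verify each of the five properties in turn; all follow directly from the definitions, so the proof is essentially a sequence of short chains of equalities and inequalities, with the only subtle point being that we have non-negative (rather than full) homogeneity, so we must be careful never to scale by a negative number.

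For properties \ref{BNH: A+B}, \ref{BNH: mu a} and \ref{BNH: A B}, I would pick an arbitrary $f\in\setoffn(\genset)$ and $\gamma\in\nnegreals$, and compute directly: $(A+B)(\gamma f) = A(\gamma f)+B(\gamma f) = \gamma Af+\gamma Bf = \gamma(A+B)f$ for the sum; $(\mu A)(\gamma f) = \mu A(\gamma f) = \mu\gamma Af = \gamma(\mu A)f$ for the scalar multiple (valid for any $\mu\in\reals$ because we only scale the argument by $\gamma\geq0$); and $(AB)(\gamma f) = A(B(\gamma f)) = A(\gamma Bf) = \gamma A(Bf) = \gamma(AB)f$ for the composition, using non-negative homogeneity of $B$ first and then of $A$.

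For property \ref{BNH: norm Af <= norm A norm f}, I would first dispose of the trivial case $f=0$: applying non-negative homogeneity with $\gamma=0$ gives $A(0)=A(0\cdot 0)=0\cdot A(0)=0$, so $\|Af\|=0$ and the inequality holds. If $f\neq 0$, then $1/\|f\|\in\nnegreals$, so non-negative homogeneity yields $A(f/\|f\|)=(1/\|f\|)Af$; the function $f/\|f\|$ has norm one, so by definition of the operator norm $\|(1/\|f\|)Af\|\leq\|A\|$, and rearranging gives $\|Af\|\leq\|A\|\|f\|$. For property \ref{BNH:norm A B <= norm A norm B}, I would combine \ref{BNH: A B} and \ref{BNH: norm Af <= norm A norm f}: for any $f$ with $\|f\|=1$, we have $\|(AB)f\|=\|A(Bf)\|\leq\|A\|\|Bf\|\leq\|A\|\|B\|\|f\|=\|A\|\|B\|$, and taking the supremum over all such $f$ yields $\|AB\|\leq\|A\|\|B\|$.

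There is no real obstacle here: every step is forced by the definition of non-negative homogeneity and of the operator norm. The only place where one could slip up is in \ref{BNH: mu a} or in the scaling step of \ref{BNH: norm Af <= norm A norm f}, where one must resist the temptation to apply homogeneity with a negative scalar; the argument sidesteps this because in both cases the scalar that actually multiplies the argument of $A$ is non-negative.
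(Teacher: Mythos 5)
Your proposal is correct and follows essentially the same route as the paper: the paper also dismisses \ref{BNH: A+B}--\ref{BNH: A B} as direct verification, handles \ref{BNH: norm Af <= norm A norm f} by separating the case \(\norm{f}=0\) from the rescaling-to-unit-norm argument, and derives \ref{BNH:norm A B <= norm A norm B} from \ref{BNH: norm Af <= norm A norm f} via the operator-norm supremum. The only cosmetic difference is that you apply \ref{BNH: norm Af <= norm A norm f} twice in the last step where the paper applies it once and then factors out \(\norm{A}\); this changes nothing of substance.
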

\begin{proof}
  The proof of \ref{BNH: A+B}--\ref{BNH: A B} is a matter of straightforward verification.
  We therefore move on to proving \ref{BNH: norm Af <= norm A norm f}.
  Observe first that if \(\norm{f} = 0\), then \(f = 0\) and it follows from the non-negative homogeneity of \(A\) that \(A f = A\pr{0 f} = 0 \pr{A f} = 0\).
  Therefore, \(\norm{Af} = 0\); hence the stated is true.
  Next, we assume that \(\norm{f}>0\).
  Then
  \[
    A f
    = A \pr*{\frac{\norm{f} f}{\norm{f}}}
    = \norm{f} A \pr*{\frac{f}{\norm{f}}}
    = \norm{f} A f',
  \]
  where we let \(f'\coloneqq f / \norm{f}\).
  Note that \(\norm{f'} = \norm{f}/\norm{f} = 1\).
  Consequently,
  \begin{align*}
    \norm{A f}
    &= \sup\set{\abs{\br{Af}\pr{x}} \colon x \in \genset} \\
    &= \sup\set{\abs{\norm{f}\br{Af'}\pr{x}} \colon x \in \genset}
    = \norm{f} \sup\set{\abs{\br{Af'}\pr{x}} \colon x \in \genset}
    = \norm{f} \norm{A f'} \\
    &\leq \norm{A}\norm{f},
  \end{align*}
  where the final inequality holds because \(\norm{f'}=1\) implies that \(\norm{Af'}\leq\norm{A}\).

  Finally, we prove \ref{BNH:norm A B <= norm A norm B}.
  To that end, we observe that
  \begin{align*}
      \norm{AB}
      &= \sup\set{\norm{ABg} \colon g\in\setoffn\pr{\genset}, \norm{g}=1} \\
      &\leq \sup\set{\norm{A} \norm{Bg} \colon g\in\setoffn\pr{\genset}, \norm{g}=1} \\
      &= \norm{A} \sup\set{\norm{Bg} \colon g\in\setoffn\pr{\genset}, \norm{g}=1}
      = \norm{A}\norm{B},
    \end{align*}
    where the inequality follows from \ref{BNH: norm Af <= norm A norm f}.
\end{proof}
Time and time again, we will consider transformations on \(\setoffn\pr{\genset}\) that are constructed using a finite succession of the operations \ref{BNH: A+B}--\ref{BNH: A B}.
For instance, we will often be interested in the (norm of the) ``difference''~\(\norm{A_1\cdots A_k-B_1\cdots B_\ell}\) between the two transformations~\(A\coloneqq A_1 \cdots A_k\) and \(B\coloneqq B_1\cdots B_\ell\), where \(A_1\), \dots \(A_k\) and \(B_1\), \ldots, \(B_\ell\) are non-negatively homogeneous transformations.
That \(A\) is a non-negatively homogeneous transformation follows from repeated application of \ref{BNH: A B}, and similarly for \(B\).
Furthermore, \(A-B\) is a non-negatively homogeneous transformation due to \ref{BNH: mu a} with \(\mu=-1\) and \ref{BNH: A+B}, so the norm of \(A-B\) is indeed well-defined.
In order not to needlessly repeat ourselves, we will usually refrain from explicitly mentioning that the operator constructed by a finite succession of the operations \ref{BNH: A+B}--\ref{BNH: A B} is a non-negatively homogeneous transformation.

Next, we verify that our norm for non-negatively homogeneous transformations on \(\setoffn\pr{\genset}\) satisfies the three conditions of a norm: it should (i) be absolutely homogeneous, (ii) be sub-additive; and (iii) separate points.
We here simply repeat the arguments of \citet{2016DeBock}, who restricted himself to finite sets~\(\genset\).
\begin{enumerate}[label=(\roman*)]
  \item Let \(A\) be some non-negatively homogeneous operator, and fix some real number \(\mu\).
  Observe that \(\mu A\) is a non-negatively homogeneous transformation by \ref{BNH: mu a}.
  Furthermore, some straightforward manipulations yield that
  \begin{align*}
    \norm{\mu A}
    &= \sup\set{\norm{\mu A f}\colon f\in\setoffn\pr{\genset}, \norm{f}=1}
    = \sup\set{\abs{\mu}\norm{A f}\colon f\in\setoffn\pr{\genset}, \norm{f}=1} \\
    &= \abs{\mu}\sup\set{\norm{A f}\colon f\in\setoffn\pr{\genset}, \norm{f}=1}
    =\abs{\mu}\norm{A},
  \end{align*}
  where the second equality holds due to the absolute homogeneity of the supremum norm.
  \item Let \(A\) and \(B\) be two non-negatively homogeneous transformations.
  Recall from \ref{BNH: A+B} that \(A+B\) is also a non-negatively homogeneous transformation.
  It now follows from the sub-additivity of the supremum norm that
  \begin{align*}
    \norm{A+B}
    &= \sup\set{\norm{\pr{A+B} f}\colon f\in\setoffn\pr{\genset}, \norm{f}=1}
    = \sup\set{\norm{Af+Bf}\colon f\in\setoffn\pr{\genset}, \norm{f}=1} \\
    &\leq \sup\set{\norm{A f}+\norm{Bf}\colon f\in\setoffn\pr{\genset}, \norm{f}=1}
    \leq \sup\set{\norm{A}+\norm{B}\colon f\in\setoffn\pr{\genset}, \norm{f}=1}=\norm{A}+\norm{B},
  \end{align*}
  where for the second inequality we have used \ref{BNH: norm Af <= norm A norm f}.
  \item Let \(A\) be a non-negatively homogeneous transformation.
  Recall from \ref{BNH: norm Af <= norm A norm f} that, for any \(f\) in \(\setoffn\pr{\genset}\),
  \begin{equation*}
    0\leq \norm{Af}\leq\norm{A}\norm{f}.
  \end{equation*}
  Hence, if \(\norm{A}=0\), then it follows from these inequalities that \(\norm{Af}=0\) for all \(f\) in \(\setoffn\pr{\genset}\).
  From this, we conclude that \(Af=0\) for all \(f\) in \(\setoffn\pr{\genset}\), and so \(A=0\), because the supremum norm separates points.
\end{enumerate}

\subsection{Lower Counting Transformations}
\label{sapp:Transformations:Lower counting}
The first two types of non-negatively homogeneous transformations that will be essential in the remainder are lower transition transformations and lower counting transformations.
The following definition is a straightforward generalisation (or modification) of the existing concept of a \emph{lower transition operator} on a finite state space, see for instance \cite[Definition~7.1]{2017Krak}.
\begin{definition}
\label{def:Lower transition transformation}
  A \emph{lower transition transformation}~\(\lto\colon\setoffn\pr{\genset}\to\setoffn\pr{\genset}\) is a transformation such that
  \begin{enumerate}[label=\upshape{}LT\arabic*., ref=\upshape(LT\arabic*), leftmargin=*]
    \item \label{def:LTT:Non-negative homgeneity}
    \(\lto\pr{\gamma f} = \gamma \lto f\), for all \(f\) in \(\setoffn\pr{\genset}\) and \(\gamma\) in \(\nnegreals\); \hfill [non-negative homogeneity]
    \item \label{def:LTT:Super-additivity}
    \(\lto\pr{f+g}\geq \lto f + \lto g\), for all \(f, g\) in \(\setoffn\pr{\genset}\); \hfill [super-additivity]
    \item\label{def:LTT:Bound}
    \(\lto  f \geq \inf f\), for all \(f\) in \(\setoffn\pr{\genset}\). \hfill [bound]
  \end{enumerate}
  A \emph{lower counting transformation}~\(\lto\) is a lower transition transformation with
  \begin{enumerate}[resume*]
    \item \label{def:LTT:Counting} \(\br{\lto  f}\pr{x}=\br{\lto \pr{\indica{\geq x}f}}\pr{x}\), for all \(f\) in \(\setoffn\pr{\genset}\) and \(x\) in \(\genset\).
  \end{enumerate}
\end{definition}

Lower transition transformations have many interesting properties.
We start with some basic ones.
\begin{lemma}
\label{lem:LTT:properties from coherence}
  Consider a lower transition transformation~\(\lto\colon\setoffn\pr{\genset}\to\setoffn\pr{\genset}\).
  Then
  \begin{enumerate}[label=\upshape{}LT\arabic*., ref=\upshape(LT\arabic*), start=5, leftmargin=*]
    \item \label{LTT:inf f T f sup f}\(\inf f \leq \lto f \leq -\lto\pr{-f}\leq \sup f\) for all \(f\) in \(\setoffn\pr{\stsp}\);
    \item \label{LTT:constant} \(\lto \mu = \mu\) for all \(\mu\) in \(\reals\);
    \item \(\lto\pr{f+\mu} = \lto f +\mu\) for all \(f\) in \(\setoffn\pr{\genset}\) and all \(\mu\) in \(\reals\);
    \item \label{LTT:monotonicity} \(\lto f \leq \lto g\) for all \(f,g\) in \(\setoffn\pr{\genset}\) such that \(f\leq g\);
    \item \label{LTT:absolute value of difference} \(\abs{\lto f-\lto g} \leq -\lto\pr{-\abs{f-g}}\) for all \(f,g\) in \(\setoffn\pr{\genset}\).
  \end{enumerate}
\end{lemma}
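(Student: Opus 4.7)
The plan is to deduce all five properties in turn from the three defining axioms \ref{def:LTT:Non-negative homgeneity}--\ref{def:LTT:Bound}, leveraging each newly established property to streamline the next ones. A key preliminary observation is that non-negative homogeneity applied with $\gamma = 0$ forces $\lto 0 = 0$, which is what makes super-additivity interact nicely with sign changes.

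For \ref{LTT:inf f T f sup f}, the outer inequalities $\inf f \leq \lto f$ and $-\lto(-f) \leq \sup f$ are immediate from the bound \ref{def:LTT:Bound} applied to $f$ and to $-f$ respectively, since $\inf(-f) = -\sup f$. The middle inequality $\lto f \leq -\lto(-f)$ follows by applying super-additivity \ref{def:LTT:Super-additivity} to the pair $f$ and $-f$, which yields $0 = \lto 0 \geq \lto f + \lto(-f)$. Property \ref{LTT:constant} is then an immediate corollary of \ref{LTT:inf f T f sup f} applied to the constant function $\mu$, for which $\inf \mu = \mu = \sup \mu$.

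For the translation property, I would get one inequality by super-additivity together with \ref{LTT:constant}, namely $\lto(f + \mu) \geq \lto f + \lto \mu = \lto f + \mu$; the reverse inequality comes from writing $\lto f = \lto\bigl((f + \mu) + (-\mu)\bigr)$ and applying super-additivity and \ref{LTT:constant} once more. Monotonicity \ref{LTT:monotonicity} then follows by decomposing $g = f + (g - f)$ with $g - f \geq 0$, invoking super-additivity and then bounding $\lto(g - f) \geq \inf(g - f) \geq 0$ via \ref{def:LTT:Bound}.

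The final inequality \ref{LTT:absolute value of difference} is where I expect the main bookkeeping obstacle, though no individual step is deep. Starting from the pointwise bound $f \leq g + \abs{f - g}$, monotonicity gives $\lto f \leq \lto(g + \abs{f - g})$; then super-additivity applied to the decomposition $g = (g + \abs{f - g}) + (-\abs{f - g})$ rearranges to $\lto(g + \abs{f - g}) \leq \lto g - \lto(-\abs{f - g})$, yielding $\lto f - \lto g \leq -\lto(-\abs{f - g})$. The symmetric argument (swapping $f$ and $g$) gives the reverse bound, so that the two combine into the desired absolute-value inequality. Keeping the sign conventions straight throughout these rearrangements is the main pitfall to avoid.
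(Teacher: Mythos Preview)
Your proof is correct and fully self-contained; every step goes through as written. The paper, however, takes a different and much shorter route: it simply observes that for each fixed \(x\in\genset\), the map \(f\mapsto\br{\lto f}\pr{x}\) satisfies exactly the axioms of a coherent lower prevision on the linear space \(\setoffn\pr{\genset}\), and then cites the standard list of consequences from \cite[Theorem~4.13]{2014LowerPrevisions}. So the paper outsources the derivation to the lower-prevision literature, whereas you rederive everything directly from \ref{def:LTT:Non-negative homgeneity}--\ref{def:LTT:Bound}. Your approach is more elementary and keeps the paper independent of external results; the paper's approach is terser and makes explicit the structural link to coherent lower previsions, which is conceptually useful elsewhere (e.g.\ Corollary~\ref{cor:lpoisprev is a coherent lower prevision}).
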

\begin{proof}
  For any \(x\) in \(\genset\), the operator component \(\lto_x\coloneqq\br{\lto \cdot }\pr{x}\) is a coherent lower prevision with domain \(\setoffn\pr{\genset}\), the linear space of all bounded functions on \(\genset\).
  The properties therefore follow from their respective counterparts for lower previsions, see for instance \cite[Theorem~4.13]{2014LowerPrevisions}.
\end{proof}
\citet{2016DeBock} states some additional basic properties that follow from those of Lemma~\ref{lem:LTT:properties from coherence}.
\begin{lemma}
\label{lem:LTT:Further properties}
  Consider a lower transition transformation~\(\lto\colon\setoffn\pr{\genset}\to\setoffn\pr{\genset}\).
  Then for all \(f, g\) in \(\setoffn\pr{\genset}\) and all non-negatively homogeneous transformations \(A\) and \(B\),
\begin{enumerate}[label=\upshape{}LT\arabic*., ref=\upshape(LT\arabic*), start=10, leftmargin=*, widest=12]
    \item \label{LTT:Norm is lower than one} \(\norm{\lto}\leq1\);
    \item \label{LTT:Non-expansiveness} \(\norm{\lto f - \lto g}\leq\norm{f-g}\);
    \item \label{LTT:Composition with nneg hom} \(\norm{\lto A - \lto B}\leq\norm{A-B}\).
  \end{enumerate}
\end{lemma}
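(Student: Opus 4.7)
The plan is to derive each of the three statements in sequence, with the later ones leaning on the earlier, and to stay entirely within the coherence-style toolkit already gathered in Lemma~\ref{lem:LTT:properties from coherence}.

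For \ref{LTT:Norm is lower than one}, I would start from the two-sided bound \ref{LTT:inf f T f sup f}, namely \(\inf f \leq \lto f \leq -\lto\pr{-f} \leq \sup f\). Evaluated pointwise, this gives \(\abs{\br{\lto f}\pr{x}} \leq \max\set{\abs{\inf f}, \abs{\sup f}} \leq \norm{f}\) for every \(x\) in \(\genset\), whence \(\norm{\lto f} \leq \norm{f}\). Taking the supremum over all \(f\) with \(\norm{f}=1\) yields \(\norm{\lto} \leq 1\). This step is essentially mechanical; the only thing to check is that the pointwise bound transfers to the supremum norm.

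For \ref{LTT:Non-expansiveness}, the key ingredient is the absolute-difference bound \ref{LTT:absolute value of difference}: \(\abs{\lto f - \lto g} \leq -\lto\pr{-\abs{f-g}}\). Applying \ref{LTT:inf f T f sup f} to \(-\abs{f-g}\) gives \(\lto\pr{-\abs{f-g}} \geq \inf\pr{-\abs{f-g}} = -\norm{f-g}\), hence \(-\lto\pr{-\abs{f-g}} \leq \norm{f-g}\). Combining these two inequalities yields the pointwise bound \(\abs{\br{\lto f - \lto g}\pr{x}} \leq \norm{f-g}\) for every \(x\), and taking a supremum closes the argument.

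For \ref{LTT:Composition with nneg hom}, the proof becomes almost immediate once \ref{LTT:Non-expansiveness} is available. I would fix an arbitrary \(f\) in \(\setoffn\pr{\genset}\) with \(\norm{f}=1\) and chain the two non-expansiveness results already in hand: by \ref{LTT:Non-expansiveness},
\begin{equation*}
  \norm{\lto A f - \lto B f}
  \leq \norm{A f - B f}
  = \norm{\pr{A-B}f},
\end{equation*}
and then, since \(A-B\) is non-negatively homogeneous by \ref{BNH: A+B} and \ref{BNH: mu a}, the operator-norm inequality \ref{BNH: norm Af <= norm A norm f} gives \(\norm{\pr{A-B}f} \leq \norm{A-B}\norm{f} = \norm{A-B}\). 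Taking the supremum over such \(f\) delivers \(\norm{\lto A - \lto B} \leq \norm{A-B}\).

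I do not expect any serious obstacle here: each item reduces to a short manipulation once the corresponding property from Lemma~\ref{lem:LTT:properties from coherence} and Lemma~\ref{lem:Properties of non-negatively homogeneous transformations} is invoked. The most delicate point is simply the order of presentation — establishing \ref{LTT:Non-expansiveness} before \ref{LTT:Composition with nneg hom} so that the last item becomes a one-line chaining — and verifying that \(\lto A - \lto B\) is non-negatively homogeneous, which follows from \ref{BNH: A B}, \ref{BNH: mu a} and \ref{BNH: A+B} so that its norm is well-defined.
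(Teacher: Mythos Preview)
Your proposal is correct and follows essentially the same route as the paper: \ref{LTT:Norm is lower than one} from the two-sided bound \ref{LTT:inf f T f sup f}, \ref{LTT:Non-expansiveness} from \ref{LTT:absolute value of difference} combined with \ref{LTT:inf f T f sup f}, and \ref{LTT:Composition with nneg hom} by specialising \ref{LTT:Non-expansiveness} to \(Af\), \(Bf\) and passing to the operator norm. The paper's proof is just a one-line citation of these same ingredients, so your version is simply a more explicit rendering of the intended argument.
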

\begin{proof}
  \ref{LTT:Norm is lower than one} can be verified by combining the definition of the operator norm and \ref{LTT:inf f T f sup f}.
  Next, \ref{LTT:Non-expansiveness} follows from the definition of the (supremum) norm, \ref{LTT:absolute value of difference} and \ref{LTT:inf f T f sup f}.
  Finally, \ref{LTT:Composition with nneg hom} follows from the definition of the operator norm and \ref{LTT:Non-expansiveness}.
\end{proof}
 The following is an obvious extension/adaptation of \cite[Proposition~7.1]{2017Krak} to our setting.
\begin{lemma}
\label{lem:LTT:Composition is again a LTT}
  For any two lower transition (counting) transformations~\(\lto_1\) and \(\lto_2\), their composition \(\lto_1 \lto_2\) is again a lower transition (counting) transformation.
\end{lemma}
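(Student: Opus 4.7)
The plan is to verify each defining property \ref{def:LTT:Non-negative homgeneity}--\ref{def:LTT:Bound} of a lower transition transformation for the composition $\lto_1\lto_2$, and---in the counting case---additionally the counting property \ref{def:LTT:Counting}. Non-negative homogeneity \ref{def:LTT:Non-negative homgeneity} and the bound \ref{def:LTT:Bound} are immediate by applying the corresponding property of $\lto_2$ first and then of $\lto_1$: for any $\gamma\in\nnegreals$ and $f\in\setoffn\pr{\genset}$, $\lto_1\lto_2\pr{\gamma f}=\lto_1\pr{\gamma\lto_2 f}=\gamma\,\lto_1\lto_2 f$; and $\lto_1\lto_2 f\geq\inf\pr{\lto_2 f}\geq\inf f$ follows by two successive applications of the bound property.

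The super-additivity \ref{def:LTT:Super-additivity} requires slightly more care, because the naive attempt only yields $\lto_2\pr{f+g}\geq\lto_2 f+\lto_2 g$ and super-additivity of $\lto_1$ cannot be applied directly, since its argument is not literally a sum of two functions. The workaround is to invoke the monotonicity of $\lto_1$---property~\ref{LTT:monotonicity}, established in Lemma~\ref{lem:LTT:properties from coherence} from coherence---to pass from $\lto_2\pr{f+g}\geq\lto_2 f+\lto_2 g$ to $\lto_1\pr{\lto_2\pr{f+g}}\geq\lto_1\pr{\lto_2 f+\lto_2 g}$, and only then invoke super-additivity of $\lto_1$ to obtain $\lto_1\pr{\lto_2 f+\lto_2 g}\geq\lto_1\lto_2 f+\lto_1\lto_2 g$.

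The counting property \ref{def:LTT:Counting} is what I expect to be the most delicate step, because merely applying \ref{def:LTT:Counting} for $\lto_2$ at the single point $x$ is insufficient: one really needs it at every $y\geq x$. The key observation is that, by \ref{def:LTT:Counting} for $\lto_2$ at each such $y$, combined with the pointwise identity $\indica{\geq y}\indica{\geq x}=\indica{\geq y}$ whenever $y\geq x$, one obtains $\br{\lto_2 f}\pr{y}=\br{\lto_2\pr{\indica{\geq y}f}}\pr{y}=\br{\lto_2\pr{\indica{\geq x}f}}\pr{y}$ for all such $y$. Hence $\indica{\geq x}\lto_2 f=\indica{\geq x}\lto_2\pr{\indica{\geq x}f}$, and applying the counting property of $\lto_1$ at $x$ to both sides yields $\br{\lto_1\lto_2 f}\pr{x}=\br{\lto_1\pr{\indica{\geq x}\lto_2 f}}\pr{x}=\br{\lto_1\pr{\indica{\geq x}\lto_2\pr{\indica{\geq x}f}}}\pr{x}=\br{\lto_1\lto_2\pr{\indica{\geq x}f}}\pr{x}$, which is the desired identity.
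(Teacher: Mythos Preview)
Your proposal is correct and follows essentially the same approach as the paper: non-negative homogeneity by iterating \ref{def:LTT:Non-negative homgeneity}, super-additivity via monotonicity \ref{LTT:monotonicity} followed by \ref{def:LTT:Super-additivity}, and the counting property via the key identity $\indica{\geq x}\lto_2 f=\indica{\geq x}\lto_2(\indica{\geq x}f)$ obtained from \ref{def:LTT:Counting} at each $y\geq x$. The only cosmetic difference is in \ref{def:LTT:Bound}: you argue $\lto_1\lto_2 f\geq\inf(\lto_2 f)\geq\inf f$ by two applications of the bound, whereas the paper uses $\lto_2 f\geq\inf f$, then monotonicity \ref{LTT:monotonicity}, then \ref{LTT:constant}---both are equally valid.
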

\begin{proof}
  To verify the four conditions of Definition~\ref{def:Lower transition transformation}, we fix some \(f,g\) in \(\setoffn\pr{\genset}\), \(\gamma\) in \(\nnegreals\) and \(x\) in \(\genset\).
  The first condition follows immediately from applying \ref{def:LTT:Non-negative homgeneity} twice:
  \begin{equation*}
    \lto_1 \lto_2 \pr{\gamma f}
    = \lto_1 \pr{\gamma \lto_2 f}
    = \gamma \lto_1\lto_2 f.
  \end{equation*}
  Next, we move on to the second condition.
  As \(\lto_2\) is a lower transition transformation, it follows from \ref{def:LTT:Super-additivity} that \(\lto_2\pr{f+g}\geq\lto_2 f+\lto_2 g\).
  Hence, it follows from \ref{LTT:monotonicity} that
  \begin{equation*}
    \lto_1\lto_2\pr{f+g}
    \geq \lto_1\pr{\lto_2 f+\lto_2 g}
    \geq \lto_1\lto_2 f + \lto_1\lto_2 g,
  \end{equation*}
  where for the final inequality we have again used \ref{def:LTT:Super-additivity}.
  The third condition follows from \ref{def:LTT:Bound}, \ref{LTT:monotonicity} and \ref{LTT:constant}:
  \begin{equation*}
    \lto_1\lto_2 f
    \geq \lto_1 \inf f
    = \inf f.
  \end{equation*}
  Finally, we assume that \(\lto_1\) and \(\lto_2\) are both lower counting transformations, and verify that their composition \(\lto_1\lto_2\) satisfies the fourth condition.
  To that end, we let \(h\coloneqq \lto_2 f\) and \(h'\coloneqq \lto_2 \pr{\indica{\geq x}f}\).
  Observe that, for all \(z\) in \(\stsp\),
  \begin{align*}
    \indica{\geq x}\pr{z} h\pr{z}
    &= \indica{\geq x}\pr{z} \br{\lto_2 f}\pr{z}
    = \indica{\geq x}\pr{z} \br{\lto_2 \pr{\indica{\geq z}f}}\pr{z} \\
    &= \begin{cases}
      \br{\lto_2 \pr{\indica{\geq z}f}}\pr{z} &\text{if } z\geq x \\
      0 &\text{otherwise}
    \end{cases}
    = \begin{cases}
      \br{\lto_2 \pr{\indica{\geq z}\pr{\indica{\geq x}f}}}\pr{z} &\text{if } z\geq x \\
      0 &\text{otherwise}
    \end{cases}
    = \begin{cases}
      \br{\lto_2 \pr{\indica{\geq x}f}}\pr{z} &\text{if } z\geq x \\
      0 &\text{otherwise}
    \end{cases} \\
    &= \indica{\geq x}\pr{z} \br{\lto_2 \pr{\indica{\geq x}f}}\pr{z}
    = \indica{\geq x}\pr{z} h'\pr{z},
  \end{align*}
  where the second and fifth equality hold due to \ref{def:LTT:Counting} because \(\lto_2\) is a lower counting transformation.
  Hence, \(\indica{\geq x}h=\indica{\geq x}h'\).
  As \(\lto_1\) is a lower counting transformation as well, it now follows that
  \begin{align*}
    \br{\lto_1 \lto_2 f}\pr{x}
    &= \br{\lto_1 h}\pr{x}
    = \br{\lto_1 \pr{\indica{\geq x} h}}\pr{x}
    = \br{\lto_1 \pr{\indica{\geq x} h'}}\pr{x}
    = \br{\lto_1 h'}\pr{x}
    = \br{\lto_1 \lto_2 \pr{\indica{\geq x} f}}\pr{x}.
  \end{align*}
\end{proof}
The following is an extension of~\cite[Lemma~E.4]{2017Krak} to our---slightly---more general setting.
\begin{lemma}
\label{lem: bound on norm of A_1 ... A_k - B_1 ... B_k}
  Consider some \(n\) in \(\nats\) and two sequences \(\lto_1, \dots, \lto_n\) and \(\lto'_1, \dots, \lto'_n\) of lower transition transformations on \(\setoffn\pr{\genset}\).
  Then
  \begin{equation*}
    \norm*{\prod_{i=1}^n \lto_i - \prod_{i=1}^n \lto'_i}
    \leq \sum_{i=1}^n \norm{\lto_i-\lto'_i}.
  \end{equation*}
\end{lemma}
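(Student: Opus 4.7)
The plan is to prove the inequality by a standard telescoping argument combined with the sub-multiplicativity of the operator norm and the fact that lower transition transformations have operator norm at most one.

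First, I would establish the telescoping identity
\begin{equation*}
  \prod_{i=1}^n \lto_i - \prod_{i=1}^n \lto'_i
  = \sum_{k=1}^n \left(\prod_{i=1}^{k-1} \lto_i\right) (\lto_k - \lto'_k) \left(\prod_{i=k+1}^n \lto'_i\right),
\end{equation*}
with the convention that empty products equal the identity transformation~\(I\). This is a routine verification by induction on \(n\) (or by directly noting that the right-hand side is a telescoping cancellation). Each summand on the right is a composition of non-negatively homogeneous transformations: the \(\lto_i\) and \(\lto'_i\) are non-negatively homogeneous by \ref{def:LTT:Non-negative homgeneity}, and the difference \(\lto_k-\lto'_k\) is non-negatively homogeneous by \ref{BNH: A+B} and \ref{BNH: mu a}, so compositions are non-negatively homogeneous by \ref{BNH: A B}.

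Next, I would take norms on both sides and apply subadditivity of the operator norm, followed by repeated application of sub-multiplicativity \ref{BNH:norm A B <= norm A norm B}, to obtain
\begin{equation*}
  \norm*{\prod_{i=1}^n \lto_i - \prod_{i=1}^n \lto'_i}
  \leq \sum_{k=1}^n \left(\prod_{i=1}^{k-1} \norm{\lto_i}\right) \norm{\lto_k - \lto'_k} \left(\prod_{i=k+1}^n \norm{\lto'_i}\right).
\end{equation*}
Finally, \ref{LTT:Norm is lower than one} gives \(\norm{\lto_i}\leq 1\) and \(\norm{\lto'_i}\leq 1\) for every \(i\), so each of the two surrounding products is bounded above by \(1\), yielding the desired inequality
\begin{equation*}
  \norm*{\prod_{i=1}^n \lto_i - \prod_{i=1}^n \lto'_i}
  \leq \sum_{k=1}^n \norm{\lto_k - \lto'_k}.
\end{equation*}

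The argument is essentially mechanical, and there is no serious obstacle; the only small care needed is to observe that although \(\lto_k-\lto'_k\) is not itself a lower transition transformation, it is still a non-negatively homogeneous transformation, so that \ref{BNH:norm A B <= norm A norm B} (and not any stronger property requiring LT-structure) is what is actually being invoked when the difference appears sandwiched between the \(\lto_i\)s and \(\lto'_i\)s.
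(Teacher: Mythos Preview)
Your telescoping identity
\[
  \prod_{i=1}^n \lto_i - \prod_{i=1}^n \lto'_i
  = \sum_{k=1}^n \Bigl(\prod_{i=1}^{k-1} \lto_i\Bigr) (\lto_k - \lto'_k) \Bigl(\prod_{i=k+1}^n \lto'_i\Bigr)
\]
is \emph{not valid} as an equality of operators. Lower transition transformations are only super-additive (\ref{def:LTT:Super-additivity}), not additive, so the prefix \(\prod_{i=1}^{k-1}\lto_i\) does not distribute over the difference it is applied to: in general
\(
  C(g-h) \neq Cg - Ch
\)
when \(C\) is a lower transition transformation. Concretely, writing \(P_k \coloneqq \bigl(\prod_{i<k}\lto_i\bigr)\bigl(\prod_{i\geq k}\lto'_i\bigr)\), the telescoping sum \(\sum_k(P_{k+1}-P_k)\) does collapse to the left-hand side, but each term \(P_{k+1}-P_k\) is \emph{not} equal to \(\bigl(\prod_{i<k}\lto_i\bigr)(\lto_k-\lto'_k)\bigl(\prod_{i>k}\lto'_i\bigr)\); you only have an inequality in one direction, and in the wrong direction for your purposes once you take norms.

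The paper's proof sidesteps this by induction, splitting off only the last factor and invoking \ref{LTT:Composition with nneg hom} (i.e.\ the non-expansiveness \(\norm{\lto A-\lto B}\leq\norm{A-B}\)) for the prefix \(\prod_{i=1}^m\lto_i\), which \emph{is} a lower transition transformation by Lemma~\ref{lem:LTT:Composition is again a LTT}. Your approach can be repaired in the same spirit: keep the norm-level telescoping \(\norm{P_n-P_0}\leq\sum_k\norm{P_{k+1}-P_k}\), and bound each \(\norm{P_{k+1}-P_k}\) directly via \ref{LTT:Composition with nneg hom} applied to the prefix, followed by \ref{BNH:norm A B <= norm A norm B} and \ref{LTT:Norm is lower than one} for the suffix. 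What cannot be saved is the operator identity itself.
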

\begin{proof}
  Our proof is entirely the same as that of \cite[Lemma~E.4]{2017Krak}, and is one using induction.
  Observe that the stated clearly holds for \(n=1\).
  Fix some \(m\) in \(\nats\) and assume that the stated holds for \(n=m\).
  We now show that the stated then also holds for \(m+1\).
  \begin{align*}
    \norm*{\prod_{i=1}^{m+1} \lto_i - \prod_{i=1}^{m+1} \lto'_i}
    &= \norm*{\prod_{i=1}^{m+1} \lto_i - \pr*{\prod_{i=1}^m \lto_i} \lto'_{m+1} + \pr*{\prod_{i=1}^m \lto_i} \lto'_{m+1} - \prod_{i=1}^{m+1} \lto'_i} \\
    &\leq \norm*{\prod_{i=1}^{m+1} \lto_i - \pr*{\prod_{i=1}^m \lto_i} \lto'_{m+1}}+ \norm*{\pr*{\prod_{i=1}^m \lto_i} \lto'_{m+1} - \prod_{i=1}^{m+1} \lto'_i} \\
    &= \norm*{\pr*{\prod_{i=1}^m \lto_i}\lto_{m+1} -\pr*{\prod_{i=1}^m \lto_i} \lto'_{m+1}}+\norm*{\pr*{\prod_{i=1}^m \lto_i - \prod_{i=1}^m \lto'_i} \lto'_{m+1}} \\
    &\leq \norm{\lto_{m+1} - \lto'_{m+1}}+\norm*{\prod_{i=1}^m \lto_i - \prod_{i=1}^m \lto'_i}\norm{\lto'_{m+1}} \\
    &\leq \norm{\lto_{m+1} - \lto'_{m+1}}+\norm*{\prod_{i=1}^m \lto_i - \prod_{i=1}^m \lto'_i} \\
    &\leq \norm{\lto_{m+1} - \lto'_{m+1}}+\sum_{i=1}^m\norm*{\lto_i - \lto'_i}
    = \sum_{i=1}^{m+1}\norm*{\lto_i - \lto'_i},
  \end{align*}
  where the second inequality follows from Lemma~\ref{lem:LTT:Composition is again a LTT}, \ref{LTT:Composition with nneg hom} and \ref{BNH:norm A B <= norm A norm B}, the third inequality follows from \ref{LTT:Norm is lower than one} and the penultimate inequality follows from the induction hypothesis.
\end{proof}

\section{Lower Transition Rate Transformations and the Corresponding Semi-Group of Lower Transition Transformations}
\label{app:ltro to lto}
In this short section, we introduce lower transition rate transformations, a second type of non-negatively homogeneous transformations.
Additionally, we also briefly explain how these lower transition transformations generate a semi-group of lower transition transformations.
Rather than working with a non-empty, ordered and possibly countably infinite set \(\genset\), in this section we will consider a non-empty and \emph{finite} set \(\chi\).
\begin{definition}[Definition~7.2 in \cite{2017Krak}]
\label{def:Lower transition rate transformation}
  A \emph{lower transition rate transformation} is a transformation~\(\genltro^\chi\colon\setoffn\pr{\chi}\to\setoffn\pr{\chi}\) such that
  \begin{enumerate}[label=\upshape{}LR\arabic*., ref=\upshape(LR\arabic*), leftmargin=*]
    \item \label{ltrt:non-negatively homogeneous} \(\genltro^\chi\pr{\gamma f}=\gamma \genltro^\chi f\), for all \(f\) in \(\setoffn\pr{\chi}\) and \(\gamma\) in \(\nnegreals\); \hfill[non-negative homogeneity]
    \item \(\genltro^\chi\pr{f+g}\geq \genltro^\chi f+\genltro^\chi g\), for all \(f, g\) in \(\setoffn\pr{\chi}\); \hfill[super-additivity]
    \item \label{ltrt:zero row-sums} \(\genltro^\chi\mu=0\), for all \(\mu\) in \(\reals\); \hfill[zero row-sums]
    \item \label{ltrt:non-negative off-diagonal} \(\br{\genltro^\chi\indica{y}}\pr{x}\geq0\), for all \(x,y\in\chi\) with \(x\neq y\). \hfill[non-negative off-diagonal elements]
  \end{enumerate}
\end{definition}

\subsection{The Corresponding Semi-Group}
We first repeat two intermediate results that are essential to the construction method of the semi-group.
We will see in Appendix~\ref{app:The Generalised Poisson Generator} further one that similar results hold in the setting of (generalised) Poisson generators.
\begin{lemma}[Proposition~4 in \cite{2017Erreygers}]
\label{lem:Gen ltro^chi:Norm}
  If \(\genltro^\chi\colon\setoffn\pr{\chi}\to\setoffn\pr{\chi}\) is a lower transition rate transformation, then
  \begin{equation*}
    \norm{\genltro^\chi}
    = 2\max\set{\abs{\br{\genltro^\chi\indica{x}}\pr{x}}\colon x\in \chi}.
  \end{equation*}
\end{lemma}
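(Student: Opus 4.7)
The plan is to prove the equality by establishing the two inequalities separately, after a handful of elementary consequences of LR1--LR4. Let $M \coloneqq \max_{x \in \chi} \abs{\br{\genltro^\chi \indica{x}}\pr{x}}$. First I would record: (i) translation invariance, $\genltro^\chi\pr{h + c} = \genltro^\chi h$ for every $h \in \setoffn\pr{\chi}$ and constant $c \in \reals$, obtained by applying LR2 twice together with $\genltro^\chi c = 0$ from LR3; (ii) writing $1 = \sum_{y \in \chi} \indica{y}$ and combining LR2, LR3 and LR4, one gets both $\br{\genltro^\chi \indica{x}}\pr{x} \leq 0$ for every $x$ and $\sum_{y \in \chi} \br{\genltro^\chi \indica{y}}\pr{x} \leq 0$.

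The lower bound $\norm{\genltro^\chi} \geq 2M$ is then easy. Choose $x^* \in \chi$ attaining $M$ and set $f \coloneqq 2\indica{x^*} - 1$, so $\norm{f} = 1$. Combining (i) with non-negative homogeneity LR1 (applied with $\gamma = 2 \geq 0$) yields $\br{\genltro^\chi f}\pr{x^*} = 2 \br{\genltro^\chi \indica{x^*}}\pr{x^*}$, whose absolute value is $2M$. Hence $\norm{\genltro^\chi f} \geq 2M$, which gives the inequality.

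For the upper bound, the key observation is that for each fixed $x \in \chi$, the functional $\phi_x \colon f \mapsto \br{\genltro^\chi f}\pr{x}$ is super-linear---positively homogeneous and super-additive by LR1--LR2---and, since $\chi$ is \emph{finite}, finite-valued on the finite-dimensional space $\setoffn\pr{\chi}$, hence continuous. A standard Hahn--Banach argument therefore gives the envelope representation
\begin{equation*}
  \phi_x\pr{f} = \min_{q \in \mathcal{Q}_x} \sum_{y \in \chi} q\pr{y} f\pr{y},
\end{equation*}
where $\mathcal{Q}_x$ collects those $q \in \reals^\chi$ with $\sum_y q\pr{y} h\pr{y} \geq \phi_x\pr{h}$ for every $h \in \setoffn\pr{\chi}$. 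Testing this defining inequality with $h = \pm 1$ and $h = \indica{y}$ and invoking the preliminaries forces every $q \in \mathcal{Q}_x$ to satisfy $\sum_y q\pr{y} = 0$, $q\pr{y} \geq 0$ for $y \neq x$, and $q\pr{x} \geq \br{\genltro^\chi \indica{x}}\pr{x}$; in particular $q\pr{x} \leq 0$ with $\abs{q\pr{x}} \leq M$. For any $f$ with $\norm{f} \leq 1$ one then estimates
\begin{equation*}
  \abs*{\sum_y q\pr{y} f\pr{y}} \leq \sum_y \abs{q\pr{y}} = \abs{q\pr{x}} + \sum_{y \neq x} q\pr{y} = 2\abs{q\pr{x}} \leq 2M,
\end{equation*}
and the envelope representation yields $\abs{\phi_x\pr{f}} \leq 2M$, hence $\norm{\genltro^\chi} \leq 2M$. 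The main obstacle lies in justifying the envelope representation and extracting the constraints on $q$ cleanly; finite-dimensionality of $\setoffn\pr{\chi}$ is essential here, which is precisely why the statement is restricted to finite $\chi$ and why the analogous claim for the Poisson generator acting on $\setoffn\pr{\stsp}$ in the main text will require additional work.
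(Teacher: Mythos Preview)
The paper does not supply its own proof of this lemma: it is quoted verbatim as Proposition~4 of \cite{2017Erreygers} and used as a black box. There is therefore nothing in the present paper to compare your argument against directly.

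That said, your proof is correct. The lower bound via the test function $f = 2\indica{x^*} - 1$ is clean, and for the upper bound the envelope representation of the super-linear functional $\phi_x$ is the natural tool; the constraints you extract on each dominating $q$ (zero sum, non-negative off-diagonal, $q(x) \geq \br{\genltro^\chi \indica{x}}\pr{x}$) are exactly right and give the $\ell^1$-bound $\sum_y |q(y)| = 2|q(x)| \leq 2M$ immediately. One cosmetic point: you write $\min$ in the envelope representation without arguing compactness of $\mathcal{Q}_x$, but this is harmless since your bound on $|\langle q, f\rangle|$ holds for every $q \in \mathcal{Q}_x$, so $\inf$ suffices.

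It may interest you that the paper does prove the countable-state analogue, Lemma~\ref{lem:GenPoisGen:Norm}, for the generalised Poisson generator $\ltro_S$, and there the approach is entirely hands-on: an explicit family of witness functions $f_y = \indica{y} - \indica{y+1}$ for the lower bound, and a direct estimate $|\br{\ltro_S f}(y)| \leq \ulambda_y |f(y+1) - f(y)| \leq 2\ulambda_y$ for the upper bound, exploiting the concrete form of $\ltro_S$. Your Hahn--Banach route is more structural and would work for any lower transition rate transformation on a finite $\chi$, whereas the paper's direct calculation leans on the specific bidiagonal structure of the Poisson generator but in exchange extends painlessly to the countably infinite state space.
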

\begin{lemma}[Proposition~3 in \cite{2017Erreygers}]
\label{lem:LTRO:I+Delta Q is a LTT}
  Consider any lower transition rate transformation~\(\genltro^\chi\colon\setoffn\pr{\chi}\to\setoffn\pr{\chi}\) and \(\Delta\) in \(\nnegreals\).
  Then \(\pr{I+\Delta\genltro^\chi}\) is a lower transition transformation if and only if \(\Delta\norm{\genltro^\chi}\leq 2\).
\end{lemma}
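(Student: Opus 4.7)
The plan is to prove both directions of the equivalence separately, noting that the ``only if'' direction is almost free given Lemma~\ref{lem:Gen ltro^chi:Norm}, while the ``if'' direction requires verifying the three defining conditions of a lower transition transformation.

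For the ``only if'' direction, I would suppose that \(A\coloneqq I+\Delta\genltro^\chi\) is a lower transition transformation and pick any \(x\) in \(\chi\). Applying the bound \ref{def:LTT:Bound} to \(f=\indica{x}\) gives
\[
  0=\inf\indica{x}\leq\br{A\indica{x}}\pr{x}=1+\Delta\br{\genltro^\chi\indica{x}}\pr{x}.
\]
Since an argument analogous to the one sketched after Definition~\ref{def:Lower transition rate transformation} yields \(\br{\genltro^\chi\indica{x}}\pr{x}\leq 0\) (combine \ref{ltrt:zero row-sums} applied to \(\mu=1\), the finite decomposition \(\mathbf{1}=\sum_y\indica{y}\), super-additivity, and \ref{ltrt:non-negative off-diagonal}), the above inequality is equivalent to \(\Delta\abs{\br{\genltro^\chi\indica{x}}\pr{x}}\leq 1\). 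Taking the maximum over \(x\) and invoking Lemma~\ref{lem:Gen ltro^chi:Norm} then gives \(\Delta\norm{\genltro^\chi}\leq 2\).

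For the ``if'' direction, suppose \(\Delta\norm{\genltro^\chi}\leq 2\) and verify \ref{def:LTT:Non-negative homgeneity}--\ref{def:LTT:Bound} for \(A=I+\Delta\genltro^\chi\). Non-negative homogeneity \ref{def:LTT:Non-negative homgeneity} and super-additivity \ref{def:LTT:Super-additivity} are immediate from \ref{ltrt:non-negatively homogeneous} and \ref{ltrt:zero row-sums} respectively together with \(\Delta\geq 0\). The only nontrivial condition is the bound \ref{def:LTT:Bound}. First I would establish constant-additivity of \(\genltro^\chi\), i.e.\ \(\genltro^\chi\pr{f+\mu}=\genltro^\chi f\) for all \(\mu\) in \(\reals\), by applying super-additivity twice together with \ref{ltrt:zero row-sums}. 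This lets me reduce to showing \(Af\geq 0\) whenever \(f\geq 0\), because replacing \(f\) by \(f-\inf f\) leaves \(Af-\inf f\) unchanged.

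So fix any non-negative \(f\) in \(\setoffn\pr{\chi}\) and any \(x\) in \(\chi\). Since \(\chi\) is finite, I can write \(f=\sum_{y\in\chi}f\pr{y}\indica{y}\), and an induction on super-additivity together with non-negative homogeneity gives
\[
  \br{\genltro^\chi f}\pr{x}
  \geq\sum_{y\in\chi}f\pr{y}\br{\genltro^\chi\indica{y}}\pr{x}
  \geq f\pr{x}\br{\genltro^\chi\indica{x}}\pr{x},
\]
where the second inequality drops the \(y\neq x\) terms, all of which are non-negative by \ref{ltrt:non-negative off-diagonal} and \(f\pr{y}\geq 0\). Consequently,
\[
  \br{Af}\pr{x}
  \geq f\pr{x}\pr{1+\Delta\br{\genltro^\chi\indica{x}}\pr{x}}.
\]
By Lemma~\ref{lem:Gen ltro^chi:Norm} and the assumption, \(\Delta\abs{\br{\genltro^\chi\indica{x}}\pr{x}}\leq\Delta\norm{\genltro^\chi}/2\leq 1\), whence \(1+\Delta\br{\genltro^\chi\indica{x}}\pr{x}\geq 0\); since \(f\pr{x}\geq 0\), we conclude \(\br{Af}\pr{x}\geq 0\), as required.

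I do not expect a real obstacle here; the only subtle step is the diagonal-dominance argument that turns the bound \ref{def:LTT:Bound} into the norm inequality, and conversely the use of the \(\indica{x}\)-decomposition to reduce the preservation of non-negativity to a one-dimensional check. Both rest on the non-negative off-diagonal property \ref{ltrt:non-negative off-diagonal} and the explicit norm formula from Lemma~\ref{lem:Gen ltro^chi:Norm}, so the proof is essentially bookkeeping.
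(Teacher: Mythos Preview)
The paper does not prove this lemma itself; it is stated with a citation to \cite{2017Erreygers} and no proof is given in the appendix. Your argument is therefore the only proof on offer, and it is essentially correct and standard: reduce the bound condition to non-negativity preservation via constant-additivity, then use the indicator decomposition together with \ref{ltrt:non-negative off-diagonal} to isolate the diagonal term, which is controlled by Lemma~\ref{lem:Gen ltro^chi:Norm}.

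One small slip: you write that super-additivity \ref{def:LTT:Super-additivity} of \(I+\Delta\genltro^\chi\) follows from \ref{ltrt:zero row-sums}. It follows from the super-additivity axiom \ref{ltrt:non-negative off-diagonal}'s neighbour, namely the super-additivity of \(\genltro^\chi\) (your \ref{ltrt:non-negatively homogeneous}'s successor, i.e.\ LR2), combined with the additivity of \(I\) and \(\Delta\geq 0\); zero row-sums plays no role there. This is a referencing typo, not a gap.

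For comparison, the paper does prove the closely related Lemma~\ref{lem:I+Delta GenPoisGen is LTT} for the generalised Poisson generator. There the bound \ref{def:LTT:Bound} is verified by writing \(\br{\pr{I+\Delta\ltro_S}f}\pr{x}\) explicitly as a convex combination of \(f\pr{x}\) and \(f\pr{x+1}\), exploiting the very special bidiagonal structure of \(\ltro_S\). Your indicator-decomposition argument is the natural generalisation that works for an arbitrary lower transition rate transformation, where no such two-term formula is available.
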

Next, we repeat the two results that establish how a lower transition rate transformation generates a family of lower transition transformations; they are our direct inspiration for Theorem~\ref{the:Transformation induced by PoisGen in text}, as well as for Theorems~\ref{the:GenPoisGen:Phi_u_i converges to a LCT} and \ref{the:LCT induced by GenPoisGen} further on.
\begin{proposition}[Corollary~7.11 in \cite{2017Krak}]
\label{prop:Gen ltro^chi:Sequence converges to ltt}
  Consider some lower transition rate transformation~\(\genltro^\chi\colon\setoffn\pr{\chi}\to\setoffn\pr{\chi}\), and fix some \(t,s\) in \(\nnegreals\) such that \(t\leq s\).
  For every sequence \(\set{u_i}_{i\in\nats}\) in \(\setoftseq_{\br{t,s}}\) with \(\lim_{i\to+\infty}\sigma\pr{u_i}=0\), the corresponding sequence
  \begin{equation*}
    \set*{\prod_{k=1}^{k_i}\pr{I+\Delta^i_{k} \genltro^\chi}}_{i\in\nats}
  \end{equation*}
  converges to a lower transition transformation, where for every \(i\) in \(\nats\), \(k_i+1\) is the length of the sequence \(u_i=t^i_0, \dots, t^i_{k_i}\) and, for every \(k\) in \(\set{1,\dots,k_i}\),\(\Delta^i_k\) is the difference between the consecutive time points $t^i_k$ and \(t^i_{k-1}\) of this sequence.
\end{proposition}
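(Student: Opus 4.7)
The plan is to show that the sequence $\{\Phi^\chi_{u_i}\}$ is Cauchy in the operator norm and then verify that the limit inherits the defining properties of a lower transition transformation. If $\norm{\genltro^\chi}=0$, then $\genltro^\chi=0$ and every product equals the identity, so the statement is trivial; I therefore assume $\norm{\genltro^\chi}>0$. Since $\sigma(u_i)\to 0$, for all sufficiently large $i$ every step size $\Delta^i_k$ in $u_i$ satisfies $\Delta^i_k\norm{\genltro^\chi}\leq 2$, so Lemma~\ref{lem:LTRO:I+Delta Q is a LTT} ensures that each factor $(I+\Delta^i_k\genltro^\chi)$ is a lower transition transformation, and iterating Lemma~\ref{lem:LTT:Composition is again a LTT} then gives that $\Phi^\chi_{u_i}$ itself is a lower transition transformation for all such $i$.

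To establish the Cauchy property, I would use a common-refinement argument: given $u,v$ in $\setoftseq_{\br{t,s}}$, set $w\coloneqq u\cup v$ and bound $\norm{\Phi^\chi_u-\Phi^\chi_v}$ via the triangle inequality through $\Phi^\chi_w$. The key local estimate is that whenever a subinterval of $u$ of length $\Delta$ is further refined by $w$ into consecutive pieces of lengths $\delta_1,\dots,\delta_m$ with $\sum_{l=1}^m\delta_l=\Delta$,
\[
  \norm*{(I+\Delta\,\genltro^\chi) - \prod_{l=1}^m (I+\delta_l\,\genltro^\chi)} \leq C\,\Delta^2\norm{\genltro^\chi}^2
\]
for some universal constant $C$. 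Since each factor on either side is a lower transition transformation, Lemma~\ref{lem: bound on norm of A_1 ... A_k - B_1 ... B_k} lets me sum these local estimates along the subintervals of $u$ to conclude that $\norm{\Phi^\chi_u-\Phi^\chi_w}\leq C'\sigma(u)(s-t)\norm{\genltro^\chi}^2$, and analogously for $v$. Since $\sigma(u_i)\to 0$, the sequence $\{\Phi^\chi_{u_i}\}$ is Cauchy.

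Completeness of $\setoffn\pr{\chi}$ under the supremum norm induces completeness of the space of bounded non-negatively homogeneous transformations under the operator norm, so the Cauchy sequence converges to some non-negatively homogeneous transformation $\lto$. Because the defining properties \ref{def:LTT:Non-negative homgeneity}--\ref{def:LTT:Bound} of a lower transition transformation are all pointwise (in)equalities, and operator-norm convergence on $\setoffn\pr{\chi}$ implies pointwise convergence, all three are inherited by $\lto$. Independence of the limit from the particular sequence follows by a standard interleaving argument: given two admissible sequences $\{u_i\}$ and $\{v_i\}$, the interleaved sequence $u_1,v_1,u_2,v_2,\dots$ also has mesh tending to zero and hence converges, which forces the two subsequential limits to coincide.

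The main obstacle is the local $O(\Delta^2)$ estimate: since $\genltro^\chi$ is only super-additive rather than additive, the expansions of $(I+\Delta\,\genltro^\chi)$ and $\prod_l(I+\delta_l\,\genltro^\chi)$ cannot be compared by ordinary distributivity, and one must verify that the first-order terms genuinely match and that the residual is second order in the step sizes. This requires exploiting the specific algebraic structure of a lower transition rate transformation---properties \ref{ltrt:non-negatively homogeneous}--\ref{ltrt:non-negative off-diagonal} together with the sharp norm bound in Lemma~\ref{lem:Gen ltro^chi:Norm}---and is the essential place where the rate-transformation structure enters beyond what is available for generic non-negatively homogeneous transformations.
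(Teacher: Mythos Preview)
The paper does not actually prove this proposition; it is quoted verbatim as Corollary~7.11 of \cite{2017Krak} and used as a black box. What the paper \emph{does} prove is the countably-infinite-state analogue, Theorem~\ref{the:GenPoisGen:Phi_u_i converges to a LCT}, and your outline matches that argument essentially line for line: Cauchy via the common-refinement bound (Corollary~\ref{cor:GenPoisGen:Phi_u vs Phi_u'} and Lemma~\ref{lem:GenPoisGen:Phi_u-Phi_u'}), passage to the limit, and inheritance of \ref{def:LTT:Non-negative homgeneity}--\ref{def:LTT:Bound} under pointwise limits.

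Two remarks on the details. First, the ``main obstacle'' you flag---the local $O(\Delta^2)$ estimate for a merely super-additive $\genltro^\chi$---is resolved in the paper (for the generalised Poisson generator) by the device in Corollary~\ref{cor:GenPoisGen:A-B}: writing $\genltro^\chi=\tfrac{1}{\Delta}\bigl((I+\Delta\genltro^\chi)-I\bigr)$ for an admissible $\Delta$ turns the needed contraction $\norm{\genltro^\chi A-\genltro^\chi B}\leq\norm{\genltro^\chi}\,\norm{A-B}$ into an instance of \ref{LTT:Composition with nneg hom}, after which the inductive proof of Lemma~\ref{lem:GenPoisGen:Delta_i vs Delta} goes through. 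The same trick works verbatim for any lower transition rate transformation via Lemma~\ref{lem:LTRO:I+Delta Q is a LTT}, so your identification of the obstacle is accurate but it is not deep once this observation is in hand. Second, your appeal to ``completeness of the space of bounded non-negatively homogeneous transformations'' is fine but slightly non-standard since that space is not linear; the paper sidesteps this by first constructing the limit pointwise on each $f$ (Lemma~\ref{lem:GenPoisGen:Phi_u_i f converges to a limit}) using completeness of $\setoffn\pr{\chi}$, and only afterwards upgrading to operator-norm convergence. Either route works.
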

\begin{proposition}[Theorem~7.12 in \cite{2017Krak}]
\label{prop:Gen ltro^chi:Induced lto^chi}
  Consider a lower transition rate transformation~\(\genltro^\chi\colon\setoffn\pr{\chi}\to\setoffn\pr{\chi}\).
  Then for any \(t,s\) in \(\nnegreals\) with \(t\leq s\), there is a unique lower transition transformation~\(\lto\colon\setoffn\pr{\chi}\to\setoffn\pr{\chi}\) such that
  \begin{equation*}
    \pr{\forall \epsilon\in\posreals}
    \pr{\exists\delta\in\posreals}
    \pr{\forall u\in\setoftseq_{\br{t,s}}, \sigma\pr{u}\leq\delta}~
    \norm*{\lto - \prod_{i=1}^n\pr{I+\Delta_i\genltro^\chi}}
    \leq \epsilon.
  \end{equation*}
\end{proposition}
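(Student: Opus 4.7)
The plan is to combine Proposition~\ref{prop:Gen ltro^chi:Sequence converges to ltt}, which gives convergence along a single sequence with $\sigma(u_i)\to 0$, with a uniform Cauchy estimate that upgrades this to the stronger convergence mode required here. I would start by picking any sequence $\set{u_i}_{i\in\nats}$ in $\setoftseq_{\br{t,s}}$ with $\sigma(u_i)\to 0$, so that by Proposition~\ref{prop:Gen ltro^chi:Sequence converges to ltt} the corresponding $\Phi_{u_i}$ converge in operator norm to some lower transition transformation $\lto$. The task then reduces to showing that $\norm*{\Phi_u - \Phi_{u'}}\to 0$ uniformly over all $u, u'\in\setoftseq_{\br{t,s}}$ as $\max\set{\sigma(u), \sigma(u')}\to 0$, for then any such $\Phi_u$ must converge to this same $\lto$ at the uniform rate claimed, which yields both uniqueness and the quantitative statement.

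To establish the uniform Cauchy bound, I would pass to the common refinement $w\coloneqq u\cup u'$ (which also satisfies $\sigma(w)\leq\max\set{\sigma(u), \sigma(u')}$) and reduce via the triangle inequality to the case where $w$ refines $u$. Grouping the factors of $\Phi_w$ according to the intervals of $u$, one writes $\Phi_w=\prod_{i=1}^n\Phi_w^i$, where $\Phi_w^i$ is the sub-product of factors $(I+\delta\genltro^\chi)$ over the refinement of $\br{t_{i-1},t_i}$. Lemma~\ref{lem:LTRO:I+Delta Q is a LTT} together with Lemma~\ref{lem:LTT:Composition is again a LTT} ensures that, once $\sigma(u)\norm{\genltro^\chi}\leq 2$, both $(I+\Delta_i\genltro^\chi)$ and $\Phi_w^i$ are lower transition transformations, so Lemma~\ref{lem: bound on norm of A_1 ... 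A_k - B_1 ... B_k} applies and yields
\begin{equation*}
    \norm*{\Phi_u - \Phi_w}
    \leq \sum_{i=1}^n \norm*{(I+\Delta_i\genltro^\chi) - \Phi_w^i}.
\end{equation*}

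The main obstacle is obtaining a sufficiently sharp per-interval estimate. A naive bound via comparison with $I$ only yields $O(\Delta_i)$ per interval, which sums to a useless $O(s-t)$. What is needed is a ``second-order'' estimate of the form $O(\Delta_i\sigma(w))$, which on summation produces the required $O((s-t)\sigma(w))$ Cauchy bound. For a linear transformation $\genltro^\chi$ such an estimate would follow by expanding the product and isolating the linear-in-$\genltro^\chi$ part, but because $\genltro^\chi$ is genuinely non-linear, this direct expansion is unavailable. The remedy is an induction on the length of the sub-refinement, combining sub-additivity~\ref{def:LTT:Super-additivity} with the non-expansiveness~\ref{LTT:Non-expansiveness} and the sub-multiplicativity of the operator norm~\ref{BNH:norm A B <= norm A norm B} to absorb the cross-terms into a second-order residual; Lemma~\ref{lem:Gen ltro^chi:Norm} keeps $\norm{\genltro^\chi}$ under uniform control throughout.

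Once this uniform Cauchy bound is in hand, existence of the limit $\lto$ follows from completeness of the finite-dimensional space $\setoffn(\chi)$ under the supremum norm (and hence of the operator-norm space of non-negatively homogeneous transformations on $\setoffn(\chi)$), while uniqueness is immediate from the uniformity of the convergence rate. That the limit $\lto$ is itself a lower transition transformation follows from closedness: each of the properties \ref{def:LTT:Non-negative homgeneity}--\ref{def:LTT:Bound} is preserved under pointwise, and hence operator-norm, limits of lower transition transformations.
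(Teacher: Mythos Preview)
The paper does not actually prove this proposition: it is cited verbatim as Theorem~7.12 in \cite{2017Krak}. However, the paper does prove the countably-infinite analogue, Theorem~\ref{the:LCT induced by GenPoisGen}, and explicitly notes that the arguments are ``almost entirely the same''; your proposal lines up with that proof. The overall architecture you describe---fix a sequence with $\sigma(u_i)\to 0$, invoke Proposition~\ref{prop:Gen ltro^chi:Sequence converges to ltt} for a limit $\lto$, then upgrade via a uniform Cauchy bound obtained by passing to the common refinement and telescoping with Lemma~\ref{lem: bound on norm of A_1 ... A_k - B_1 ... B_k}---is exactly what the paper does (see Lemma~\ref{lem:GenPoisGen:Phi_u-Phi_u'} and Corollary~\ref{cor:GenPoisGen:Phi_u vs Phi_u'}). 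Your identification of the needed second-order per-interval estimate $O(\Delta_i\,\sigma(w))$ is also correct; in the paper this is Lemma~\ref{lem:GenPoisGen:Delta_i vs Delta} (resp.\ its finite-state precursor in \cite{2017Krak}), proved by induction.

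One small sharpening: the inductive second-order bound does not really hinge on super-additivity~\ref{def:LTT:Super-additivity} as you suggest, but rather on the Lipschitz-type estimate $\norm{\genltro^\chi A-\genltro^\chi B}\leq\norm{\genltro^\chi}\norm{A-B}$, which the paper isolates as Corollary~\ref{cor:GenPoisGen:A-B} and obtains by writing $\genltro^\chi=\tfrac{1}{\Delta}((I+\Delta\genltro^\chi)-I)$ with $\Delta\norm{\genltro^\chi}=2$ and then applying~\ref{LTT:Composition with nneg hom}. With that correction, your sketch is correct and essentially the paper's argument.
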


Consider some lower transition rate transformation~\(\genltro^\chi\colon\setoffn\pr{\chi}\to\setoffn\pr{\chi}\), and fix some \(t,s\) in \(\nnegreals\) such that \(t\leq s\).
As explained by \citet[Section~7.3]{2017Krak}, the two results above allow us to define the corresponding lower transition transformation
\begin{equation*}
  \lto^\chi_{t,s}
  \coloneqq \lim_{\sigma\pr{u}\to0} \set*{\prod_{i=1}^n \pr{I+\Delta_i\genltro^\chi} \colon u\in\setoftseq_{\br{t,s}}}.
\end{equation*}
In this definition, the unconventional notation for the limit is used to indicate that the limit does not depend on the choice of sequence \(\set{u_i}_{i\in\nats}\) in \(\setoftseq_{\br{t,s}}\), all that is required is that \(\lim_{i\to+\infty}\sigma\pr{u_i}=0\).
We conclude this brief section by repeating the result that establishes that the family of corresponding lower transition transformations forms a semi-group.
\begin{proposition}[Propositions~7.13--14 in \cite{2017Krak}]
\label{prop:Gen ltro^chi:semi-group properties}
  Consider any lower transition rate operator~\(\genltro^\chi\colon\setoffn\pr{\chi}\to\setoffn\pr{\chi}\).
  Then for any \(t,s\) in \(\nnegreals\) with \(t\leq s\),
  \begin{enumerate}[label=\upshape(\roman*)]
    \item \(\lto^\chi_{t,t}=I\);
    \item \(\lto^\chi_{t,s}=\lto^\chi_{t,r} \lto^\chi_{r,s}\) for any \(r\) in \(\nnegreals\) such that \(t\leq r\leq s\);
    \item \(\lto^\chi_{t,s}=\lto^\chi_{0, s-t}\).
  \end{enumerate}
\end{proposition}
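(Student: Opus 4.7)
The plan is to prove the three parts separately, each by exploiting the limit characterisation $\lto^\chi_{t,s} = \lim_{\sigma\pr{u}\to 0}\set{\Phi_u^\chi \colon u \in \setoftseq_{\br{t,s}}}$ guaranteed by Proposition~\ref{prop:Gen ltro^chi:Induced lto^chi}. Part~(i) is essentially immediate: under the convention $\setoftseq_{\br{t,t}} \coloneqq \set{t}$ and $\Phi_t^\chi \coloneqq I$ (in analogy with Section~\ref{ssec:The Poisson generator}), every admissible sequence already yields the identity, so the limit collapses to~$I$.

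For part~(ii), I would fix any two sequences $\set{u'_i}_{i\in\nats}$ in $\setoftseq_{\br{t,r}}$ and $\set{u''_i}_{i\in\nats}$ in $\setoftseq_{\br{r,s}}$ with $\sigma\pr{u'_i}, \sigma\pr{u''_i} \to 0$, and concatenate them as $u_i \coloneqq u'_i \cup u''_i \in \setoftseq_{\br{t,s}}$. Because $r$ is the right endpoint of $u'_i$ and the left endpoint of $u''_i$, we have $\sigma\pr{u_i} = \max\set{\sigma\pr{u'_i}, \sigma\pr{u''_i}} \to 0$, and the product factors cleanly as $\Phi_{u_i}^\chi = \Phi_{u'_i}^\chi\, \Phi_{u''_i}^\chi$. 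Proposition~\ref{prop:Gen ltro^chi:Induced lto^chi} then delivers $\Phi_{u'_i}^\chi \to \lto^\chi_{t,r}$, $\Phi_{u''_i}^\chi \to \lto^\chi_{r,s}$ and $\Phi_{u_i}^\chi \to \lto^\chi_{t,s}$; I would pass to the limit through the composition using the standard continuity inequality
\[
  \norm{A_i B_i - AB}
  \leq \norm{A_i}\norm{B_i - B} + \norm{A_i - A}\norm{B},
\]
which is a consequence of \ref{BNH:norm A B <= norm A norm B} together with sub-additivity, combined with a uniform bound $\norm{\Phi_{u'_i}^\chi}, \norm{\Phi_{u''_i}^\chi} \leq 1$. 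This bound is available as soon as $\sigma\pr{u'_i}\norm{\genltro^\chi} \leq 2$ and $\sigma\pr{u''_i}\norm{\genltro^\chi} \leq 2$: then each factor $\pr{I + \Delta\genltro^\chi}$ is a lower transition transformation by Lemma~\ref{lem:LTRO:I+Delta Q is a LTT}, so Lemma~\ref{lem:LTT:Composition is again a LTT} together with \ref{LTT:Norm is lower than one} gives the bound.

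Part~(iii) is a translation-invariance argument resting on the time-independence of $\genltro^\chi$. The map $u = \pr{t_0, \dots, t_n} \mapsto u - t \coloneqq \pr{t_0 - t, \dots, t_n - t}$ is a bijection $\setoftseq_{\br{t,s}} \to \setoftseq_{\br{0, s - t}}$ that preserves both the consecutive time differences and $\sigma\pr{u}$. Since $\Phi_u^\chi$ depends only on those differences, $\Phi_u^\chi = \Phi_{u - t}^\chi$, so evaluating this identity along any sequence $\set{u_i}$ with $\sigma\pr{u_i} \to 0$ shows that the two defining limits coincide.

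The main obstacle will be the uniform-norm-boundedness required in part~(ii): without an a~priori bound $\norm{\Phi_{u'_i}^\chi}, \norm{\Phi_{u''_i}^\chi} \leq 1$, continuity of composition could fail and one would have to resort to much more delicate estimates. Fortunately the lower transition structure of $\pr{I + \Delta\genltro^\chi}$ for $\Delta$ small enough supplies this bound for free, so the argument goes through after quoting Lemmas~\ref{lem:LTRO:I+Delta Q is a LTT} and \ref{lem:LTT:Composition is again a LTT} and the property~\ref{LTT:Norm is lower than one}.
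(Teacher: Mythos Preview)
Your proposal is correct and follows essentially the same route as the paper. The paper does not prove this particular proposition directly (it is cited from \cite{2017Krak}), but it does prove the analogous Proposition~\ref{prop:Properties of lto_S induced by GenPoisGen} and remarks that the arguments are ``almost entirely the same''; your parts~(i) and~(iii) match that proof exactly, and for part~(ii) the only difference is that the paper invokes Lemma~\ref{lem: bound on norm of A_1 ... A_k - B_1 ... B_k} (which exploits non-expansiveness \ref{LTT:Composition with nneg hom} to get $\norm{\Phi_{u_1}\Phi_{u_2}-\lto_t^r\lto_r^s}\leq\norm{\Phi_{u_1}-\lto_t^r}+\norm{\Phi_{u_2}-\lto_r^s}$ directly) rather than your more generic continuity inequality plus the uniform bound $\norm{\Phi_{u'_i}^\chi}\leq 1$---both reach the same conclusion with the same ingredients.
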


\section{The Generalised Poisson Generator}
\label{app:The Generalised Poisson Generator}
In this section, we essentially generalise the results of Appendix~\ref{app:ltro to lto} to the setting of a countably infinite state space; however, we limit ourselves to one specific type of (a generalisation of) lower transition rate transformations.
Essential to our exposition are sequences \(S\coloneqq\set{\pr{\llambda_x, \ulambda_x}}_{x\in\stsp}\) in \(\nnegreals^2\) such that \(\llambda_x\leq\ulambda_x\) for all \(x\in\stsp\).
Even more, we will usually demand that \(\llambda_x\) and \(\ulambda_x\) are both contained in \(\Lambda=\br{\llambda, \ulambda}\).
We collect all such sequences in the set
\begin{equation*}
  \setoflambdaseq_\Lambda
  \coloneqq\set*{\set{\pr{\llambda_x, \ulambda_x}}_{x\in\stsp} \text{ in } \br{\llambda, \ulambda}^2 \colon\pr{\forall x\in\stsp}~\llambda_x\leq\ulambda_x}.
\end{equation*}
With any \(S=\set{\pr{\llambda_x, \ulambda_x}}_{x\in\stsp}\) in \(\setoflambdaseq_\Lambda\), we associate the generalised Poisson generator~\(\ltro_S\), defined for all \(f\) in \(\setoffn\pr{\stsp}\) and \(x\) in \(\stsp\) as
\begin{equation}
\label{eqn:Generalized Poisson Generator}
  \br{\ltro_S f}\pr{x}
  \coloneqq \min\set{\lambda f\pr{x+1}-\lambda f\pr{x}\colon\lambda\in\br{\llambda_x, \ulambda_x}}.
\end{equation}
Observe that the generalised Poisson generator is a generalisation of the Poisson generator, because clearly
\begin{equation}
\label{eqn:PoisGen as special case of GenPoisGen}
  \ltro
  = \ltro_{S}
  \quad\text{with } S=\set{\pr{\llambda, \ulambda}}_{x\in\stsp}.
\end{equation}

\subsection{From Generalised Poisson Generators \textellipsis}
We first establish that generalised Poisson generators can be seen as lower transition rate transformations with a countably infinite state space; more precisely, we establish that they are transformations that furthermore satisfy properties that are similar to conditions \ref{ltrt:non-negatively homogeneous}--\ref{ltrt:non-negative off-diagonal} of Definition~\ref{def:Lower transition rate transformation}.
\begin{proposition}
\label{prop:GenPoisGen properties}
  Consider a sequence \(S=\set{\pr{\llambda_x,\ulambda_x}}_{x\in\stsp}\) in \(\setoflambdaseq_{\Lambda}\).
  Then \(\ltro_S\) is a transformation on \(\setoffn\pr{\stsp}\).
  Furthermore,
  \begin{enumerate}[label=\upshape{}GP\arabic*., ref=\upshape(GP\arabic*), leftmargin=*]
    \item \label{prop:GenPoisGen:Non-negative homogeneity} \(\ltro_S\pr{\gamma f}=\gamma\ltro_S f\) for all \(\gamma\) in \(\nnegreals\) and \(f\) in \(\setoffn\pr{\stsp}\); \hfill[non-negative homogeneity]
    \item \label{prop:GenPoisGen:Super-additivity}\(\ltro_S\pr{f+g}\geq\ltro_S f+\ltro_S g\) for all \(f, g\) in \(\setoffn\pr{\stsp}\); \hfill[super-additivity]
    \item \label{prop:GenPoisGen:Zero row sums}\(\ltro_s \mu=0\) for all \(\mu\) in \(\reals\); \hfill[zero row-sums]
    \item \label{prop:GenPoisGen:Non-negative off-diagonal}\(\br{\ltro_S \indica{y}}\pr{x}\geq 0\) for all \(x,y\) in \(\stsp\) with \(x\neq y\); \hfill[non-negative off-diagonal elements]
    \item \label{prop:GenPoisGen:Counting} \(\br{\ltro_S f}\pr{x}=\br{\ltro_S\pr{\indica{\geq x}f}}\pr{x}\) for all \(x\) in \(\stsp\) and \(f\) in \(\setoffn\pr{\stsp}\);
    \item \label{prop:GenPoisGen:Values above do not matter} \(\br{\ltro_S f}\pr{x}=\br{\ltro_S\pr{\indica{\leq x+1}f}}\pr{x}\) for all \(x\) in \(\stsp\) and \(f\) in \(\setoffn\pr{\stsp}\).
  \end{enumerate}
\end{proposition}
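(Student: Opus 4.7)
The key simplification is to rewrite Equation~\eqref{eqn:Generalized Poisson Generator} as
\begin{equation*}
  \br{\ltro_S f}\pr{x}
  = \min\set{\lambda\pr{f\pr{x+1}-f\pr{x}}\colon\lambda\in\br{\llambda_x,\ulambda_x}},
\end{equation*}
so that the expression inside the minimum is a linear function of \(\lambda\) on the compact interval \(\br{\llambda_x,\ulambda_x}\). Because a linear function attains its extrema at the endpoints, the minimum exists and equals either \(\llambda_x\pr{f\pr{x+1}-f\pr{x}}\) (when \(f\pr{x+1}\geq f\pr{x}\)) or \(\ulambda_x\pr{f\pr{x+1}-f\pr{x}}\) (otherwise). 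This observation will drive all six properties.

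The plan is to first check that \(\ltro_S\) actually maps \(\setoffn\pr{\stsp}\) to itself. Since \(\llambda_x, \ulambda_x\in\br{\llambda,\ulambda}\) for every \(x\) in \(\stsp\) by the definition of \(\setoflambdaseq_\Lambda\), I would bound \(\abs{\br{\ltro_S f}\pr{x}}\leq\ulambda\abs{f\pr{x+1}-f\pr{x}}\leq 2\ulambda\norm{f}\), giving \(\norm{\ltro_S f}\leq 2\ulambda\norm{f}<+\infty\). This is the only step that uses the \(\setoflambdaseq_\Lambda\)-restriction in an essential way, and it is where the uniform bound on the rates matters.

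The remaining six claims \ref{prop:GenPoisGen:Non-negative homogeneity}--\ref{prop:GenPoisGen:Values above do not matter} are then obtained by direct calculation on the reformulated definition. For \ref{prop:GenPoisGen:Non-negative homogeneity}, I would pull the non-negative constant \(\gamma\) out of the minimum. For \ref{prop:GenPoisGen:Super-additivity}, I would use the standard inequality \(\min\pr{A+B}\geq\min A+\min B\) for two affine functions of \(\lambda\). For \ref{prop:GenPoisGen:Zero row sums}, if \(f\equiv\mu\) then \(f\pr{x+1}-f\pr{x}=0\) and the minimum is zero. For \ref{prop:GenPoisGen:Non-negative off-diagonal}, given \(y\neq x\), two cases arise: if \(y=x+1\), the argument of the min is \(\lambda\cdot 1\geq 0\), whose minimum over \(\br{\llambda_x,\ulambda_x}\subseteq\nnegreals\) is \(\llambda_x\geq 0\); otherwise both values of the indicator vanish and the min is zero.

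Finally, for the two ``locality'' properties \ref{prop:GenPoisGen:Counting} and \ref{prop:GenPoisGen:Values above do not matter}, the crucial observation is that \(\br{\ltro_S f}\pr{x}\) depends on \(f\) only through \(f\pr{x}\) and \(f\pr{x+1}\). Since \(\indica{\geq x}\pr{x}=\indica{\geq x}\pr{x+1}=1\) and, likewise, \(\indica{\leq x+1}\pr{x}=\indica{\leq x+1}\pr{x+1}=1\), multiplying \(f\) by either indicator leaves its values at \(x\) and \(x+1\) unchanged, so the minimum defining \(\br{\ltro_S\pr{\indica{\geq x}f}}\pr{x}\) and \(\br{\ltro_S\pr{\indica{\leq x+1}f}}\pr{x}\) coincides with that defining \(\br{\ltro_S f}\pr{x}\). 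Overall, I expect no genuine obstacle here---every step is a short manipulation---the only real content lies in checking boundedness and being careful with the sign-dependent endpoint at which the minimum is attained.
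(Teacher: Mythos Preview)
Your proposal is correct and takes essentially the same approach as the paper. The paper's proof is extremely terse: it gives the identical boundedness estimate \(\abs{\br{\ltro_S f}\pr{x}}\leq 2\ulambda\norm{f}\) to show \(\ltro_S\) maps \(\setoffn\pr{\stsp}\) to itself, and then simply states that \ref{prop:GenPoisGen:Non-negative homogeneity}--\ref{prop:GenPoisGen:Values above do not matter} ``follow immediately from the definition of \(\ltro_S\)''---you have spelled out exactly those immediate verifications.
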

\begin{proof}
  We first verify that \(\ltro_S\) is a transformation on \(\setoffn\pr{\stsp}\).
  To that end, we fix any \(f\) in \(\setoffn\pr{\stsp}\).
  Observe that, for any \(x\) in \(\stsp\),
  \begin{equation*}
    \abs{\br{\ltro_S f}\pr{x}}
    = \abs*{\min\set{\lambda f\pr{x+1}-\lambda f\pr{x}\colon \lambda\in\pr{\llambda_x, \ulambda_x}}}
    \leq \ulambda_x \abs{f\pr{x+1}-f\pr{x}}
    \leq 2 \ulambda_x \norm{f}
    \leq 2 \ulambda \norm{f}.
  \end{equation*}
  Hence, \(\ltro_S f\) is clearly bounded.
  Since \(f\) was arbitrary, this proves that \(\ltro_S\) is a transformation, as required.

  For the second part of the stated, we observe that properties \ref{prop:GenPoisGen:Non-negative homogeneity}--\ref{prop:GenPoisGen:Values above do not matter} follow immediately from the definition of \(\ltro_S\).
\end{proof}

Next, we consider the norm of a generalised Poisson generator.
Note that the following result is similar to---or an extension of---Lemma~\ref{lem:Gen ltro^chi:Norm} because
\begin{equation*}
  2 \sup\set{\abs{\br{\ltro_S \indica{x}}\pr{x}}\colon x\in\stsp}
  =2 \sup\set{\abs{-\ulambda_x}\colon x\in\stsp}
  =2 \sup\set{\ulambda_x\colon x\in\stsp}.
\end{equation*}
\begin{lemma}
\label{lem:GenPoisGen:Norm}
  For any sequence \(S=\set{\pr{\llambda_x,\ulambda_x}}_{x\in\stsp}\) in \(\setoflambdaseq_{\Lambda}\),
  \begin{equation*}
    \norm{\ltro_S}
    = 2\sup\set{\ulambda_x\colon x\in\stsp}.
  \end{equation*}
\end{lemma}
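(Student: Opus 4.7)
The plan is to establish the equality by two matching inequalities, an upper bound obtained directly from the definition and a lower bound obtained by exhibiting, for each $x^\star\in\stsp$, a test function $f$ of unit norm whose image $\ltro_S f$ attains value $-2\ulambda_{x^\star}$ at $x^\star$.

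\textbf{Upper bound.} First I would observe that because $[\llambda_x,\ulambda_x]$ is a compact interval, the minimum in the definition \eqref{eqn:Generalized Poisson Generator} is attained at one of the endpoints of $[\llambda_x,\ulambda_x]$, so for every $f\in\setoffn(\stsp)$ and every $x\in\stsp$,
\begin{equation*}
  [\ltro_S f](x)
  = \min\bigl\{\llambda_x(f(x+1)-f(x)),\,\ulambda_x(f(x+1)-f(x))\bigr\}.
\end{equation*}
In either case $\abs{[\ltro_S f](x)}\leq \ulambda_x\,\abs{f(x+1)-f(x)}\leq 2\ulambda_x\norm{f}$. Taking a supremum over $x\in\stsp$ and then over all $f$ with $\norm{f}=1$ yields $\norm{\ltro_S}\leq 2\sup\{\ulambda_x\colon x\in\stsp\}$.

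\textbf{Lower bound.} Fix any $x^\star\in\stsp$ and define $f^\star\colon\stsp\to\reals$ by $f^\star(y)\coloneqq 1$ if $y\leq x^\star$ and $f^\star(y)\coloneqq -1$ otherwise. Clearly $\norm{f^\star}=1$. Moreover, $f^\star(y+1)-f^\star(y)=0$ for every $y\neq x^\star$, while $f^\star(x^\star+1)-f^\star(x^\star)=-2$. Substituting into the simplified expression above gives $[\ltro_S f^\star](y)=0$ for $y\neq x^\star$ and $[\ltro_S f^\star](x^\star)=-2\ulambda_{x^\star}$, so $\norm{\ltro_S f^\star}=2\ulambda_{x^\star}$. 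Hence $\norm{\ltro_S}\geq 2\ulambda_{x^\star}$, and since $x^\star$ was arbitrary, $\norm{\ltro_S}\geq 2\sup\{\ulambda_x\colon x\in\stsp\}$.

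Combining the two bounds gives the claimed equality. The only subtle issue is ensuring the supremum is finite so the two bounds are legitimately comparable, but this is immediate from $S\in\setoflambdaseq_\Lambda$, which forces $\ulambda_x\leq\ulambda<+\infty$ for every $x$; hence this is not really an obstacle. In fact I expect no genuine obstacle in the proof at all: the argument is just a direct generalisation of Lemma~\ref{lem:Gen ltro^chi:Norm} to the countably infinite state space $\stsp$, with the only mild care being to note that the test function $f^\star$ must be globally bounded (hence in $\setoffn(\stsp)$), which is why we use a two-valued indicator-type construction rather than a function supported only near $x^\star$.
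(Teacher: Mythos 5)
Your proof is correct and follows essentially the same two-bound strategy as the paper: the pointwise estimate \(\abs{\br{\ltro_S f}\pr{x}}\leq\ulambda_x\abs{f\pr{x+1}-f\pr{x}}\leq 2\ulambda_x\norm{f}\) for the upper bound, and unit-norm test functions whose image attains \(-2\ulambda_{x^\star}\) for the lower bound. The only cosmetic differences are that the paper uses \(f_y=\indica{y}-\indica{y+1}\) instead of your step function, and obtains the upper bound by extracting a near-maximising \(f\) and component via an \(\epsilon\)-argument rather than your direct supremum estimate.
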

\begin{proof}
  We first show that \(\norm{\ltro_S}\geq2\sup\set{\ulambda_x\colon x\in\stsp}\).
  For any \(y\) in \(\stsp\), we let \(f_y\coloneqq\indica{y}-\indica{y+1}\).
  Fix any \(y\) in \(\stsp\).
  Then for any \(z\) in \(\stsp\),
  \begin{align*}
    \br{\ltro_S f_y}\pr{z}
    &= \min\set{\lambda f_y\pr{z+1} - \lambda f_y\pr{z}\colon \lambda\in\br{\llambda_z, \ulambda_z}} \\
    &= \min\set{\lambda \indica{y}\pr{z+1}-\lambda \indica{y+1}\pr{z+1}-\lambda\indica{y}\pr{z}+\lambda\indica{y+1}\pr{z}\colon \lambda\in\br{\llambda_z, \ulambda_z}} \\
    &= \begin{cases}
      \llambda_z &\text{if } z=y-1 \text{ or } z=y+1,\\
      - 2\ulambda_z &\text{if } z=y, \\
      0 &\text{otherwise}.
    \end{cases}
  \end{align*}
  Observe that for any \(y\) in \(\stsp\), \(f_y\) is a bounded real-valued function on \(\stsp\) and that
  \begin{equation*}
    \norm{\ltro_S f_y}
    = \sup\set{\abs{\br{\ltro_S f_y}\pr{z}}\colon z\in\stsp}
    = \max\set{\llambda_{y-1}, 2\ulambda_y, \llambda_{y+1}},
  \end{equation*}
  where \(\llambda_{y-1}\) is not included in the set if \(y=0\).
  Therefore
  \begin{equation*}
    \sup\set{\norm{\ltro_S f_x}\colon x\in\stsp}
    = \sup\set{\max\set{\llambda_{x-1}, 2\ulambda_x, \llambda_{x+1}}\colon x\in\stsp}
    = 2\sup\set{\ulambda_x\colon x\in\stsp}.
  \end{equation*}
  Since \(\norm{f_x}=1\) for all \(x\) in \(\stsp\), we observe that
  \begin{equation*}
    \norm{\ltro_S}
    = \sup\set{\norm{\ltro_S f}\colon f\in\setoffn\pr{\stsp}, \norm{f}=1}
    \geq \sup\set{\norm{\ltro_S f_x}\colon x\in\stsp}
    = 2\sup\set{\ulambda_x\colon x\in\stsp},
  \end{equation*}
  as we set out to prove.

  Next, we prove that \(\norm{\ltro_S}\leq2\sup\set{\ulambda_x\colon x\in\stsp}\).
  Fix any \(\epsilon\) in \(\posreals\).
  Then by the definition of \(\norm{\ltro_S}\) and \(\norm{\ltro_S f}\), there is an \(f\) in \(\setoffn\pr{\stsp}\) with \(\norm{f}=1\) and an \(y\) in \(\stsp\) such that
  \begin{equation*}
    \norm{\ltro_S}-\epsilon
    < \norm{\ltro_S f}-\frac{\epsilon}2
    < \abs{\br{\ltro_S f}\pr{y}}.
  \end{equation*}
  From the definition of \(\ltro_S\), it now follows that
  \begin{align*}
    \abs{\br{\ltro_S f}\pr{y}}
    &= \abs{\min\set{\lambda f\pr{y+1}-\lambda f\pr{y}\colon\lambda\in\br{\llambda_y,\ulambda_y}}} \\
    &\leq \max \set{\lambda \abs{f\pr{y+1}-f\pr{y}}\colon\lambda\in\br{\llambda_y,\ulambda_y}} \\
    &= \ulambda_y\abs{f\pr{y+1}-f\pr{y}}
    \leq 2\ulambda_y
    \leq 2\sup\set{\ulambda_x\colon x\in\stsp},
  \end{align*}
  where the second inequality follows from the fact that \(\norm{f}=1\).
  Hence,
  \begin{equation*}
    \norm{\ltro_S}-\epsilon
    < \abs{\br{\ltro_S f}\pr{y}}
    \leq 2\sup\set{\ulambda_x\colon x\in\stsp}.
  \end{equation*}
  As this holds for any arbitrary positive real number \(\epsilon\), we conclude that \(\norm{\ltro_S}\leq 2\sup\set{\ulambda_x\colon x\in\stsp}\), as required.
  The stated now follows because \(\norm{\ltro_S}\geq2\sup\set{\ulambda_x\colon x\in\stsp}\) and \(\norm{\ltro_S}\leq2\sup\set{\ulambda_x\colon x\in\stsp}\).
\end{proof}

\subsection{\textellipsis to Lower Counting Transformations}
\label{sapp:GenPoisGen:To Lower Counting Transformations}
The generalised Poisson generator naturally defines a family of lower transition (or, more precisely, counting) transformations.
Crucial to our exposition are Theorems~\ref{the:GenPoisGen:Phi_u_i converges to a LCT} and \ref{the:LCT induced by GenPoisGen} further on.
In essence, these two results extend Propositions~\ref{prop:Gen ltro^chi:Sequence converges to ltt} and \ref{prop:Gen ltro^chi:Induced lto^chi} to the setting of generalised Poisson generators.
Even more, our reasoning that uniquely defines this family of transformations is largely analoguous to the line of reasoning followed in \cite[Appendix~E]{2017Krak}.
Our first step is the following observation.
\begin{lemma}
\label{lem:I+Delta GenPoisGen is LTT}
  Consider some \(S\) in \(\setoflambdaseq_\Lambda\) and some \(\Delta\) in \(\nnegreals\).
  Then \(\pr{I+\Delta\ltro_S}\) is a lower counting transformation if and only if \(\Delta\norm{\ltro_S}\leq2\).
\end{lemma}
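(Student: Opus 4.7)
The plan is to prove both implications separately, with most of the work going into the forward direction. The four conditions \ref{def:LTT:Non-negative homgeneity}, \ref{def:LTT:Super-additivity} and \ref{def:LTT:Counting} of Definition~\ref{def:Lower transition transformation} will follow almost immediately from the corresponding properties \ref{prop:GenPoisGen:Non-negative homogeneity}, \ref{prop:GenPoisGen:Super-additivity} and \ref{prop:GenPoisGen:Counting} of Proposition~\ref{prop:GenPoisGen properties}; the condition $\Delta\norm{\ltro_S}\leq2$ enters only through the bound property~\ref{def:LTT:Bound}.

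For the ``if'' direction, I would first note that $(I+\Delta\ltro_S)$ is a transformation on $\setoffn\pr{\stsp}$ since both $I$ and $\ltro_S$ are, by Proposition~\ref{prop:GenPoisGen properties}. For \ref{def:LTT:Non-negative homgeneity} and \ref{def:LTT:Super-additivity}, I would expand $(I+\Delta\ltro_S)(\gamma f)$ and $(I+\Delta\ltro_S)(f+g)$ and apply \ref{prop:GenPoisGen:Non-negative homogeneity} and \ref{prop:GenPoisGen:Super-additivity}, using $\Delta\geq0$ for the inequality in the latter. For \ref{def:LTT:Counting}, I would simply observe that $\br{\indica{\geq x}f}\pr{x}=f\pr{x}$ and invoke \ref{prop:GenPoisGen:Counting}. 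The heart of the argument is the bound \ref{def:LTT:Bound}. Fix $f$ in $\setoffn\pr{\stsp}$ and $x$ in $\stsp$. By the definition of $\ltro_S$ in Equation~\eqref{eqn:Generalized Poisson Generator}, the minimum defining $\br{\ltro_S f}\pr{x}$ is attained at $\lambda=\llambda_x$ if $f\pr{x+1}\geq f\pr{x}$ and at $\lambda=\ulambda_x$ otherwise. In the first case, $\br{\ltro_S f}\pr{x}\geq0$, so $\br{\pr{I+\Delta\ltro_S}f}\pr{x}\geq f\pr{x}\geq\inf f$. In the second case,
\[
  \br{\pr{I+\Delta\ltro_S}f}\pr{x}
  =\pr{1-\Delta\ulambda_x}f\pr{x}+\Delta\ulambda_x f\pr{x+1};
\]
provided $\Delta\ulambda_x\leq1$, this is a genuine convex combination of $f\pr{x}$ and $f\pr{x+1}$, hence bounded below by $\inf f$. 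It therefore suffices to check that $\Delta\ulambda_x\leq 1$ for all $x$, which follows from Lemma~\ref{lem:GenPoisGen:Norm} together with $\Delta\norm{\ltro_S}\leq 2$.

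For the ``only if'' direction, I would prove the contrapositive. Assume $\Delta\norm{\ltro_S}>2$; then by Lemma~\ref{lem:GenPoisGen:Norm} there exists some $y$ in $\stsp$ with $\Delta\ulambda_y>1$. Consider the bounded function $f\coloneqq\indica{y}$, for which $\inf f=0$. A direct computation using the definition of $\ltro_S$ gives $\br{\ltro_S f}\pr{y}=-\ulambda_y$, so
\[
  \br{\pr{I+\Delta\ltro_S}f}\pr{y}
  =1-\Delta\ulambda_y<0=\inf f,
\]
which contradicts \ref{def:LTT:Bound}. Hence $\pr{I+\Delta\ltro_S}$ is not a lower counting transformation.

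The only step that requires real care is the bound property in the forward direction, where one has to recognise that $(I+\Delta\ltro_S)$ acts at each $x$ as a convex combination of the values $f\pr{x}$ and $f\pr{x+1}$, with the condition $\Delta\ulambda_x\leq1$ being exactly what guarantees non-negative weights. Everything else is a routine unpacking of the already-established properties of $\ltro_S$.
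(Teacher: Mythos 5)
Your proof is correct and takes essentially the same route as the paper's: the first three lower-transition properties and the counting property follow from Proposition~\ref{prop:GenPoisGen properties}, the bound~\ref{def:LTT:Bound} is obtained by recognising \(\br{\pr{I+\Delta\ltro_S}f}\pr{x}\) as a convex combination of \(f\pr{x}\) and \(f\pr{x+1}\) under \(\Delta\ulambda_x\leq1\) (via Lemma~\ref{lem:GenPoisGen:Norm}), and necessity is shown by the indicator counterexample \(f=\indica{y}\). The only (harmless) difference is in the necessity step: you select \(y\) with \(\Delta\ulambda_y>1\) directly from the fact that \(\sup_x\ulambda_x>1/\Delta\), while the paper works with an \(\epsilon\)-near maximiser of the possibly non-attained supremum; both arguments are valid.
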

\begin{proof}
  We first check the sufficiency of the condition \(\Delta\norm{\ltro_S}\leq2\).
  To that end, we fix any \(\Delta\) in \(\nnegreals\) that satisfies this condition, and let \(\lto\coloneqq I+\Delta\ltro_S\).
  That \(\lto\) satisfies \ref{def:LTT:Non-negative homgeneity} follows immediately from the non-negative homogeneity of \(I\) and that of \(\ltro\)---that is, \ref{prop:GenPoisGen:Non-negative homogeneity}; similarly, \ref{def:LTT:Super-additivity} follows immediately from the super-additivity of \(I\) and that of \(\ltro\)---that is, \ref{prop:GenPoisGen:Super-additivity}.
  Next, we verify that \ref{def:LTT:Bound} holds.
  To that, we fix any \(f\) in \(\setoffn\pr{\stsp}\) and \(x\) in \(\stsp\), and observe that
  \begin{align}
    \br{\lto f}\pr{x}
    &= \br{If}\pr{x}+\br{\Delta\ltro_Sf}\pr{x}
    = f\pr{x} + \Delta \br{\ltro_S f}\pr{x}
    = f\pr{x} + \Delta\min\set{\lambda_x f\pr{x+1}-\lambda_x f\pr{x}\colon \lambda_x\in\br{\llambda_x, \ulambda_x}} \notag\\
    &= \min\set{f\pr{x}+\Delta\lambda_x f\pr{x+1}-\Delta\lambda_x f\pr{x}\colon \lambda_x\in\br{\llambda_x, \ulambda_x}} \notag\\
    &= \min\set{\pr{1-\Delta\lambda_x} f\pr{x}+\Delta\lambda_x f\pr{x+1}\colon \lambda_x\in\br{\llambda_x, \ulambda_x}},
    \label{eqn:Proof of I+Delta GenPoisGen is LTT:minimum}
  \end{align}
  where the fourth equality holds because \(\Delta\geq 0\).
  Observe now that \(0\leq\Delta\lambda_x\) because \(\Delta\) and \(\lambda_x\) are non-negative by assumption, and that furthermore \(\Delta\lambda_x\leq 1\) because \(\Delta\norm{\ltro_S}\leq 2\) by assumption and \(\lambda_x\leq\sup\set{\lambda_y\colon y\in\stsp}=\norm{\ltro_S}/2\) due to Lemma~\ref{lem:GenPoisGen:Norm}.
  Consequently, the sum in the minimum in Equation~\eqref{eqn:Proof of I+Delta GenPoisGen is LTT:minimum} is a convex combination of \(f\pr{x}\) and \(f\pr{x+1}\).
  Because a convex combination of two real numbers is always greater than or equal to the minimum of these two numbers, it now follows that
  \begin{equation*}
    \br{\lto f}\pr{x}
    = \min\set{\pr{1-\Delta\lambda_x} f\pr{x}+\Delta\lambda_x f\pr{x+1}\colon \lambda_x\in\br{\llambda_x, \ulambda_x}}
    \geq \min\set{f\pr{x}, f\pr{x+1}}
    \geq \inf f.
  \end{equation*}
  Since this holds for all \(f\) in \(\setoffn\pr{\stsp}\) and \(x\) in \(\stsp\), this implies \ref{def:LTT:Bound}.
  Finally, \ref{def:LTT:Counting} follows immediately from \ref{prop:GenPoisGen:Counting} and the observation that \(\br{If}\pr{x}=\br{I\pr{\indica{\geq x}f}}\pr{x}\) for all \(f\) in \(\setoffn\pr{\stsp}\) and \(x\) in \(\stsp\).

  That the condition \(\Delta\norm{\ltro_S}\leq2\) is necessary follows from a counterexample.
  Fix some \(\Delta\) in \(\posreals\) such that \(\Delta\norm{\ltro_S}>2\), and assume ex-absurdo that \(\pr{I+\Delta\ltro_S}\) is a lower counting transformation.
  Fix any \(\epsilon\) in \(\posreals\) such that \(2\Delta\epsilon<\Delta\norm{\ltro_S}-2\).
  Then there is an \(x\) in \(\stsp\) such that
  \begin{equation}
    \br{\ltro_S\indica{x}}\pr{x}
    = -\ulambda_x
    \leq -\sup\set{\ulambda_y\colon y\in\stsp}+\epsilon
    = -\frac{\norm{\ltro_S}}2 + \epsilon,
  \end{equation}
  where the second equality follows from Lemma~\ref{lem:GenPoisGen:Norm}.
  Therefore
  \begin{equation*}
    \br{\pr{I+\Delta\ltro_S}\indica{x}}\pr{x}
    = \indica{x}\pr{x} + \Delta \br{\ltro_S\indica{x}}\pr{x}
    = 1 + \Delta \br{\ltro_S\indica{x}}\pr{x}
    \leq 1-\Delta \frac{\norm{\ltro_S}}2 + \Delta\epsilon
    = \frac12 \pr*{2-\Delta\norm{\ltro_S}+2\Delta\epsilon}
    < 0,
  \end{equation*}
  where the final inequality follows from our condition on \(\epsilon\).
  Since \(\inf \indica{x}=0\), this is a clear contradiction with \ref{def:LTT:Bound}.
  Hence, the condition \(\Delta\norm{\ltro_S}\leq2\) is indeed necessary.
\end{proof}

For our second step, we construct a lower counting transformation as the composition of lower counting transformations of the form of Lemma~\ref{lem:I+Delta GenPoisGen is LTT}.
More formally, we construct this transformation as follows.
For any sequence~\(S=\set{\pr{\llambda_x,\ulambda_x}}_{x\in\stsp}\) in \(\setoflambdaseq_{\Lambda}\), any \(t,s\) in \(\nnegreals{}\) such that \(t\leq s\) and any \(u\) in \(\setoftseq_{\br{t,s}}\), we let
\begin{equation}
\label{eqn:GenPoisGen:Phi_u}
  \Phi_u
  \coloneqq \begin{dcases}
    \prod_{i=1}^n \pr{I+\Delta_i\ltro_S} &\text{if } t<s, \\
    I &\text{otherwise.}
  \end{dcases}
\end{equation}
This notation is clearly reminiscent of the notation that was previously introduced in Section~\ref{ssec:The Poisson generator}; in fact, the latter is a special case of the former because---as was previously observed in Equation~\eqref{eqn:PoisGen as special case of GenPoisGen}---the Poisson generator~\(\ltro\) is a special case of the generalised Poisson generator.
The following result establishes that the operator~\(\Phi_u\) thus defined is a lower counting transformation.
\begin{corollary}
\label{cor:Phi_u is LTT}
  Fix a sequence \(S=\set{\pr{\llambda_x,\ulambda_x}}_{x\in\stsp}\) in \(\setoflambdaseq_{\Lambda}\), some \(t,s\) in \(\nnegreals\) with \(t\leq s\) and a sequence \(u=t_0, \dots, t_n\) in \(\setoftseq_{\br{t,s}}\).
  If \(\sigma\pr{u}\norm{\ltro_S}\leq2\), then \(\Phi_u\), as defined in Equation~\eqref{eqn:GenPoisGen:Phi_u}, is a lower counting transformation.
\end{corollary}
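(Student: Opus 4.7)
The plan is to reduce Corollary~\ref{cor:Phi_u is LTT} to Lemmas~\ref{lem:I+Delta GenPoisGen is LTT} and~\ref{lem:LTT:Composition is again a LTT}, essentially by noting that each factor in the product defining \(\Phi_u\) is itself a lower counting transformation, and that compositions of such transformations are again of the same type.

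First, I would dispose of the trivial case. If \(t=s\), then by the second clause of Equation~\eqref{eqn:GenPoisGen:Phi_u} we have \(\Phi_u=I\), and it is immediate from Definition~\ref{def:Lower transition transformation} that the identity map \(I\) satisfies \ref{def:LTT:Non-negative homgeneity}--\ref{def:LTT:Counting}, so it is a lower counting transformation.

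For the main case \(t<s\), write \(u=t_0,\dots,t_n\) with \(t_0=t\), \(t_n=s\), and \(\Delta_i=t_i-t_{i-1}\) for \(i\in\set{1,\dots,n}\). By the definition of \(\sigma(u)\) as the largest of these increments, \(\Delta_i\leq\sigma(u)\) for every \(i\), and hence the hypothesis \(\sigma(u)\norm{\ltro_S}\leq 2\) yields \(\Delta_i\norm{\ltro_S}\leq 2\) for every \(i\). Lemma~\ref{lem:I+Delta GenPoisGen is LTT} then guarantees that each factor \((I+\Delta_i\ltro_S)\) is a lower counting transformation.

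Finally, \(\Phi_u=\prod_{i=1}^{n}(I+\Delta_i\ltro_S)\) is a composition of \(n\) lower counting transformations, and Lemma~\ref{lem:LTT:Composition is again a LTT} states that the composition of two lower counting transformations is again a lower counting transformation. A straightforward induction on \(n\), with base case \(n=1\) handled by Lemma~\ref{lem:I+Delta GenPoisGen is LTT} alone and the inductive step handled by a single application of Lemma~\ref{lem:LTT:Composition is again a LTT} to \(\prod_{i=1}^{n-1}(I+\Delta_i\ltro_S)\) and \((I+\Delta_n\ltro_S)\), completes the argument. There is no real obstacle here; the only thing to be careful about is to verify the norm bound uniformly over all factors before invoking Lemma~\ref{lem:I+Delta GenPoisGen is LTT}, which is precisely what the monotonicity \(\Delta_i\leq\sigma(u)\) provides.
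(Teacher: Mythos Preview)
Your proposal is correct and follows exactly the same approach as the paper, which simply states that the result ``follows immediately from Lemmas~\ref{lem:LTT:Composition is again a LTT} and~\ref{lem:I+Delta GenPoisGen is LTT}.'' Your version is more detailed---explicitly handling the trivial case \(t=s\) and spelling out the induction---but the underlying argument is identical.
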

\begin{proof}
  Follows immediately from Lemmas~\ref{lem:LTT:Composition is again a LTT} and \ref{lem:I+Delta GenPoisGen is LTT}.
\end{proof}

Next, we establish some results that will allow us to determine the difference between \(\Phi_u\) and \(\Phi_{u'}\), where \(u\) and \(u'\) are two sequences of time points in \(\setoftseq_{\br{t,s}}\).
\begin{lemma}
\label{lem:GenPoisGen:Delta_i vs Delta}
  Consider a sequence \(S=\set{\pr{\llambda_x,\ulambda_x}}_{x\in\stsp}\) in \(\setoflambdaseq_{\Lambda}\) and a sequence \(\Delta_1, \dots, \Delta_n\) in \(\nnegreals\) with \(n\) in \(\nats\).
  If \(\Delta_i\norm{\ltro_S}\leq2\) for all \(i\) in \(\set{1, \dots, n}\), then
  \[
    \norm*{\prod_{i=1}^n \pr{I+\Delta_i\ltro_S} - \pr{I+\Delta\ltro_S}}
    \leq \norm{\ltro_S}^2 \sum_{i=1}^n \Delta_i \sum_{j=i+1}^n \Delta_j,
  \]
  where \(\Delta\coloneqq\sum_{i=1}^n\Delta_i\).
\end{lemma}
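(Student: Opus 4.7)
My plan is to proceed by induction on $n$. The base case $n=1$ is immediate, since both sides of the claimed inequality equal zero. For the inductive step, I assume the bound for $n-1$ factors and set $\Delta'\coloneqq\sum_{i=1}^{n-1}\Delta_i$, so that $\Delta=\Delta'+\Delta_n$. Splitting off the last factor from the product and inserting the auxiliary operator $\pr*{I+\Delta'\ltro_S}\pr*{I+\Delta_n\ltro_S}$, the triangle inequality yields
\begin{align*}
  \norm*{\prod_{i=1}^n\pr{I+\Delta_i\ltro_S} - \pr{I+\Delta\ltro_S}}
  \leq{}& \norm*{\pr*{\prod_{i=1}^{n-1}\pr{I+\Delta_i\ltro_S} - \pr{I+\Delta'\ltro_S}}\pr{I+\Delta_n\ltro_S}} \\
  &+ \norm*{\pr{I+\Delta'\ltro_S}\pr{I+\Delta_n\ltro_S} - \pr{I+\Delta\ltro_S}}.
\end{align*}

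The first term on the right is directly amenable to induction: by \ref{BNH:norm A B <= norm A norm B} it is at most $\norm*{\prod_{i=1}^{n-1}\pr{I+\Delta_i\ltro_S} - \pr{I+\Delta'\ltro_S}}\cdot\norm{I+\Delta_n\ltro_S}$, the first factor of which is bounded by $\norm{\ltro_S}^2\sum_{1\leq i<j\leq n-1}\Delta_i\Delta_j$ by the induction hypothesis, while the second factor is at most $1$ because $\pr{I+\Delta_n\ltro_S}$ is a lower transition transformation by Lemma~\ref{lem:I+Delta GenPoisGen is LTT} (applicable since $\Delta_n\norm{\ltro_S}\leq 2$) and hence has norm at most $1$ by \ref{LTT:Norm is lower than one}. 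The substantive work therefore lies in the second term.

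Unravelling the composition in the second term, using non-negative homogeneity of $\ltro_S$ and the obvious distributive identities for $I$, a direct algebraic calculation shows that, for every $f$ in $\setoffn\pr{\stsp}$,
\begin{equation*}
  \br*{\pr{I+\Delta'\ltro_S}\pr{I+\Delta_n\ltro_S} - \pr{I+\Delta\ltro_S}}f
  = \Delta'\br*{\ltro_S\pr*{f+\Delta_n\ltro_S f}-\ltro_S f}.
\end{equation*}
The hard part is therefore to establish a Lipschitz-style estimate $\norm{\ltro_S\pr{f+g}-\ltro_S f}\leq\norm{\ltro_S}\norm{g}$ for an arbitrary perturbation~$g$. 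Here the key observation is that the super-additivity of $\ltro_S$, that is, property~\ref{prop:GenPoisGen:Super-additivity}, applied with both decompositions $f+g$ and $\pr{f+g}+\pr{-g}=f$, yields the pointwise envelope $\ltro_S g\leq\ltro_S\pr{f+g}-\ltro_S f\leq-\ltro_S\pr{-g}$; since any value enclosed between two real numbers has absolute value at most the larger of their absolute values, combining this envelope with \ref{BNH: norm Af <= norm A norm f} delivers the desired Lipschitz bound.

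Applying this Lipschitz bound with $g=\Delta_n\ltro_S f$ and invoking \ref{BNH: norm Af <= norm A norm f} once more to estimate $\norm{\ltro_S f}\leq\norm{\ltro_S}\norm{f}$ shows that the operator norm of the second term is at most $\Delta'\Delta_n\norm{\ltro_S}^2=\norm{\ltro_S}^2\Delta_n\sum_{i=1}^{n-1}\Delta_i$. Summing the two bounds and recognising that $\sum_{1\leq i<j\leq n-1}\Delta_i\Delta_j+\Delta_n\sum_{i=1}^{n-1}\Delta_i=\sum_{i=1}^n\Delta_i\sum_{j=i+1}^n\Delta_j$ then completes the induction.
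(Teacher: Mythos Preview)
Your proof is correct and follows the same overall induction strategy as the paper, but with a slightly different decomposition. The paper peels off the \emph{first} factor, writing $\prod_{i=1}^n(I+\Delta_i\ltro_S)=\prod_{i=2}^n(I+\Delta_i\ltro_S)+\Delta_1\ltro_S\prod_{i=2}^n(I+\Delta_i\ltro_S)$ and splitting directly against $I+(\sum_{i\geq 2}\Delta_i)\ltro_S+\Delta_1\ltro_S$; its second term is then handled via a separately proved operator-level Lipschitz bound $\norm{\ltro_S A-\ltro_S B}\leq\norm{\ltro_S}\norm{A-B}$ (Corollary~\ref{cor:GenPoisGen:A-B}, proved by the rescaling trick $\ltro_S=\tfrac{\norm{\ltro_S}}{2}(\lto-I)$) together with Lemma~\ref{lem: bound on norm of A_1 ... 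A_k - B_1 ... B_k} to estimate $\norm{\prod_{i=2}^n(I+\Delta_i\ltro_S)-I}$. You instead peel off the \emph{last} factor and insert the auxiliary two-factor product $(I+\Delta'\ltro_S)(I+\Delta_n\ltro_S)$, which makes the second term an explicit two-term comparison that you can compute by hand; and you derive the needed Lipschitz property of $\ltro_S$ directly from super-additivity~\ref{prop:GenPoisGen:Super-additivity} at the function level. Your route is marginally more self-contained in that it avoids both auxiliary results, at the cost of a small explicit computation; the paper's route packages the Lipschitz step into a reusable corollary.
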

In our proof of Lemma~\ref{lem:GenPoisGen:Delta_i vs Delta}, we will make use of the following corollary.
\begin{corollary}
\label{cor:GenPoisGen:A-B}
  Consider a sequence \(S=\set{\pr{\llambda_x,\ulambda_x}}_{x\in\stsp}\) in \(\setoflambdaseq_{\Lambda}\).
  Then for any two non-negatively homogeneous transformations \(A\) and \(B\) on \(\setoffn\pr{\genset}\),
  \begin{equation*}
    \norm{\ltro_S A - \ltro_S B}
    \leq\norm{\ltro_S}\norm{A-B}.
  \end{equation*}
\end{corollary}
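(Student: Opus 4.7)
The plan is to reduce the stated estimate to a \emph{non-expansiveness property} of $\ltro_S$ itself, namely that for any $g,g'\in\setoffn\pr{\stsp}$,
\begin{equation}
\label{eqn:proof-plan:star}
  \norm{\ltro_S g - \ltro_S g'}
  \leq \norm{\ltro_S}\norm{g-g'}.
\end{equation}
Once~\eqref{eqn:proof-plan:star} is established, the corollary follows readily: for any non-negatively homogeneous $A$ and $B$, the difference $A-B$ is again non-negatively homogeneous by \ref{BNH: A+B} and \ref{BNH: mu a}, so for every $f\in\setoffn\pr{\stsp}$ with $\norm{f}=1$,
\[
  \norm{\pr{\ltro_S A-\ltro_S B}f}
  = \norm{\ltro_S\pr{Af}-\ltro_S\pr{Bf}}
  \leq \norm{\ltro_S}\norm{Af-Bf}
  = \norm{\ltro_S}\norm{\pr{A-B}f}
  \leq \norm{\ltro_S}\norm{A-B},
\]
where the first inequality uses~\eqref{eqn:proof-plan:star} and the second uses \ref{BNH: norm Af <= norm A norm f}. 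Taking the supremum over such $f$ delivers the stated bound.

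The heart of the matter is therefore proving~\eqref{eqn:proof-plan:star}, and this is where the explicit envelope structure of $\ltro_S$---as a pointwise minimum of the linear maps $g\mapsto\lambda\pr{g\pr{x+1}-g\pr{x}}$ over $\lambda\in\br{\llambda_x,\ulambda_x}$---is what makes things work. Fix any $x\in\stsp$ and let $\lambda^*\in\br{\llambda_x,\ulambda_x}$ attain the minimum in the definition of $\br{\ltro_S g'}\pr{x}$. Then
\[
  \br{\ltro_S g}\pr{x}-\br{\ltro_S g'}\pr{x}
  \leq \lambda^*\pr{g\pr{x+1}-g\pr{x}}-\lambda^*\pr{g'\pr{x+1}-g'\pr{x}}
  \leq \ulambda_x\abs{\pr{g-g'}\pr{x+1}-\pr{g-g'}\pr{x}}.
\]
Swapping the roles of $g$ and $g'$ yields the same bound for $\br{\ltro_S g'}\pr{x}-\br{\ltro_S g}\pr{x}$, whence
\[
  \abs{\br{\ltro_S g}\pr{x}-\br{\ltro_S g'}\pr{x}}
  \leq 2\ulambda_x\norm{g-g'}
  \leq \pr*{2\sup_{y\in\stsp}\ulambda_y}\norm{g-g'}
  = \norm{\ltro_S}\norm{g-g'},
\]
where the last equality invokes Lemma~\ref{lem:GenPoisGen:Norm}. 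Taking the supremum over $x\in\stsp$ then gives~\eqref{eqn:proof-plan:star}.

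There is no real obstacle: the only slightly delicate step is the Lipschitz-style comparison of two pointwise minima, but this is immediate from the elementary inequality $\min_a h\pr{a}-\min_a k\pr{a}\leq \sup_a\pr{h\pr{a}-k\pr{a}}$ applied to the one-parameter family of linear functionals indexed by $\lambda$. Everything else is a bookkeeping combination of the norm identity of Lemma~\ref{lem:GenPoisGen:Norm} and the sub-multiplicative property \ref{BNH: norm Af <= norm A norm f} of the operator norm on non-negatively homogeneous transformations.
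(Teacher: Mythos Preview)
Your proof is correct, but it takes a different route from the paper's. The paper does not work with the explicit pointwise-minimum structure of $\ltro_S$ at all; instead it writes $\ltro_S=\tfrac{\norm{\ltro_S}}{2}\pr{\lto-I}$ with $\lto\coloneqq I+\tfrac{2}{\norm{\ltro_S}}\ltro_S$, invokes Lemma~\ref{lem:I+Delta GenPoisGen is LTT} to conclude that $\lto$ is a lower counting transformation, and then applies the general non-expansiveness property~\ref{LTT:Composition with nneg hom} for lower transition transformations together with the triangle inequality. Your argument is more elementary and self-contained---it avoids the detour through Lemma~\ref{lem:I+Delta GenPoisGen is LTT} and the machinery of lower transition transformations---at the price of being specific to the concrete form of $\ltro_S$. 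The paper's approach, by contrast, would apply verbatim to any non-negatively homogeneous $\genltro$ for which $I+c\genltro$ is a lower transition transformation for some $c>0$, which is why it is phrased that way (it mirrors \cite[R12]{2016DeBock}). Both are clean; yours is arguably more direct for this particular generator.
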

\begin{proof}
  Our proof is---almost---equal to that of \cite[R12]{2016DeBock}.
  Since the stated is clearly true for \(\norm{\ltro_S}=0\), we may assume that \(\norm{\ltro_S}>0\) without loss of generality.
  If we let \(\Delta\coloneqq2/\norm{\ltro_S}\), then it follows from Lemma~\ref{lem:I+Delta GenPoisGen is LTT} that \(\lto\coloneqq I+\Delta\ltro\) is a lower counting transformation.
  Observe that
  \begin{align*}
    \norm{\ltro_S A-\ltro_S B}
    &= \norm*{\frac{\norm{\ltro_S}}2\pr{\lto A-A}-\frac{\norm{\ltro_S}}2\pr{\lto B-B}}
    = \frac{\norm{\ltro_S}}2\norm{\pr{\lto A-\lto B}-\pr{A-B}} \\
    &\leq \frac{\norm{\ltro_S}}2\norm{\lto A-\lto B}+\frac{\norm{\ltro_S}}2\norm{A-B}
    \leq \frac{\norm{\ltro_S}}2\norm{A-B}+\frac{\norm{\ltro_S}}2\norm{A-B}
    = \norm{\ltro_S}\norm{A-B},
  \end{align*}
  where the final inequality follows from Lemma~\ref{lem:LTT:Further properties}~\ref{LTT:Composition with nneg hom} because \(\lto\) is a lower counting transformation by construction.
\end{proof}
\begin{proofof}{Lemma~\ref{lem:GenPoisGen:Delta_i vs Delta}}
  Our proof is one by induction, and is almost equivalent to the one that \citeauthor{2017Krak} provide for \cite[Lemma~E.5]{2017Krak}, although ours yields a (marginally) smaller upper bound.
  First, we observe that for \(n=1\), the stated is trivially true.
  Next, we fix some \(n\geq2\) and assume that the stated holds for \(1\leq n'<n\).
  We now show that this then implies that the stated also holds for \(n\).
  Some straightforward manipulations yield
  \begin{align*}
    \norm*{\prod_{i=1}^n \pr{I+\Delta_i\ltro_S} - \pr{I+\Delta\ltro_S}}
    &= \norm*{\prod_{i=2}^n \pr{I+\Delta_i\ltro_S} + \Delta_1 \ltro_S \prod_{i=2}^n \pr{I+\Delta_i\ltro_S} - I-\pr*{\sum_{i=2}^n\Delta_i}\ltro_S - \Delta_1 \ltro_S} \\
    &\leq \norm*{\prod_{i=2}^n \pr{I+\Delta_i\ltro_S} - I-\pr*{\sum_{i=2}^n\Delta_i}\ltro_S} + \norm*{\Delta_1 \ltro_S \prod_{i=2}^n \pr{I+\Delta_i\ltro_S}  - \Delta_1 \ltro_S} \\
    &\leq \norm{\ltro_S}^2 \sum_{i=2}^n \Delta_i \sum_{j=i+1}^n \Delta_j + \norm*{\Delta_1 \ltro_S \prod_{i=2}^n \pr{I+\Delta_i\ltro_S}  - \Delta_1 \ltro_S} \\
    &\leq \norm{\ltro_S}^2 \sum_{i=2}^n \Delta_i \sum_{j=i+1}^n \Delta_j + \Delta_1 \norm{\ltro_S} \norm*{\prod_{i=2}^n \pr{I+\Delta_i\ltro_S}  - I} \\
    &\leq \norm{\ltro_S}^2 \sum_{i=2}^n \Delta_i \sum_{j=i+1}^n \Delta_j + \Delta_1 \norm{\ltro_S} \sum_{i=2}^n\norm{\pr{I+\Delta_i\ltro_S}  - I} \\
    &= \norm{\ltro_S}^2 \sum_{i=2}^n \Delta_i \sum_{j=i+1}^n \Delta_j + \Delta_1 \norm{\ltro_S}^2 \sum_{i=2}^n \Delta_i
    = \norm{\ltro_S}^2 \sum_{i=1}^n \Delta_i \sum_{j=i+1}^k \Delta_j,
  \end{align*}
  where the first inequality is a consequence of the triangle inequality, the second inequality follows from the induction hypothesis, the third inequality follows from Corollaries~\ref{cor:Phi_u is LTT} and \ref{cor:GenPoisGen:A-B}, and the fourth inequality follows from Lemma~\ref{lem: bound on norm of A_1 ... A_k - B_1 ... B_k}.
\end{proofof}
\begin{lemma}
\label{lem:GenPoisGen:Delta_i,j vs Delta_i}
  Fix a sequence \(S\) in \(\setoflambdaseq_{\Lambda}\) and some \(n\) in \(\nats\).
  Furthermore, for all \(i\) in \(\set{1, \dots, n}\), we fix some sequence \(\Delta_{i,1}, \dots \Delta_{i,k_i}\) in \(\nnegreals\) and let \(\Delta_i\coloneqq\sum_{j=1}^{k_i} \Delta_{i, j}\).
  Let \(\Delta\coloneqq\sum_{i=1}^n\Delta_i\) and \(\Delta^\star\coloneqq \max\set{\Delta_i\given i\in\set{1, \dots, n}}\).
  If \(\Delta^\star\norm{\ltro_S}\leq2\), then
  \begin{equation*}
    \norm*{\prod_{i=1}^n \pr*{\prod_{j=1}^{k_i} \pr{I+\Delta_{i,j}\ltro_S} } - \prod_{i=1}^n \pr{I+\Delta_i\ltro_S}}
    \leq \norm{\ltro_S}^2 \sum_{i=1}^n \Delta_i^2
    \leq \norm{\ltro_S}^2 \Delta \Delta^\star.
  \end{equation*}
\end{lemma}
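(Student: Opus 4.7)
The plan is to reduce this to the previous lemma (Lemma~\ref{lem:GenPoisGen:Delta_i vs Delta}) applied blockwise, combined with the telescoping estimate of Lemma~\ref{lem: bound on norm of A_1 ... A_k - B_1 ... B_k}. Concretely, for each \(i\) in \(\set{1, \dots, n}\), I would introduce the shorthand
\begin{equation*}
  A_i \coloneqq \prod_{j=1}^{k_i}\pr{I+\Delta_{i,j}\ltro_S}
  \quad\text{and}\quad
  B_i \coloneqq I+\Delta_i\ltro_S,
\end{equation*}
so that the norm we want to bound is \(\norm{\prod_{i=1}^n A_i - \prod_{i=1}^n B_i}\).

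The first step is to verify that each \(A_i\) and each \(B_i\) is a lower counting (hence lower transition) transformation. Since \(\Delta_{i,j}\leq\Delta_i\leq\Delta^\star\) for all \(j\), the hypothesis \(\Delta^\star\norm{\ltro_S}\leq 2\) implies \(\Delta_{i,j}\norm{\ltro_S}\leq 2\) for every \(j\), so \(A_i\) is a lower counting transformation by Corollary~\ref{cor:Phi_u is LTT}; similarly \(B_i\) is one by Lemma~\ref{lem:I+Delta GenPoisGen is LTT}. This is the routine but necessary bookkeeping that makes the subsequent lemmas applicable.

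The second step is to estimate \(\norm{A_i-B_i}\) for each \(i\). Since \(\Delta_i = \sum_{j=1}^{k_i}\Delta_{i,j}\) and each \(\Delta_{i,j}\) satisfies the norm bound, Lemma~\ref{lem:GenPoisGen:Delta_i vs Delta} gives
\begin{equation*}
  \norm{A_i - B_i}
  \leq \norm{\ltro_S}^2 \sum_{j=1}^{k_i} \Delta_{i,j} \sum_{\ell=j+1}^{k_i}\Delta_{i,\ell}
  \leq \norm{\ltro_S}^2 \Delta_i^2,
\end{equation*}
where the last inequality uses that \(\sum_{\ell=j+1}^{k_i}\Delta_{i,\ell}\leq \Delta_i\) and \(\sum_{j=1}^{k_i}\Delta_{i,j}=\Delta_i\).

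The final step is to combine these blockwise estimates via Lemma~\ref{lem: bound on norm of A_1 ... A_k - B_1 ... B_k}, which yields
\begin{equation*}
  \norm*{\prod_{i=1}^n A_i - \prod_{i=1}^n B_i}
  \leq \sum_{i=1}^n \norm{A_i-B_i}
  \leq \norm{\ltro_S}^2 \sum_{i=1}^n \Delta_i^2.
\end{equation*}
The second inequality of the lemma then follows from the elementary estimate \(\sum_{i=1}^n \Delta_i^2 \leq \Delta^\star\sum_{i=1}^n\Delta_i = \Delta^\star\Delta\). I do not anticipate any serious obstacle; the only mild care needed is to confirm the lower-transition property of each factor so that the telescoping lemma may be invoked, which the hypothesis \(\Delta^\star\norm{\ltro_S}\leq 2\) guarantees uniformly.
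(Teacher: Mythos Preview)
Your proposal is correct and follows essentially the same route as the paper's proof: apply Lemma~\ref{lem: bound on norm of A_1 ... A_k - B_1 ... B_k} to the block factors (whose lower-transition property is guaranteed by Corollary~\ref{cor:Phi_u is LTT} via the hypothesis \(\Delta^\star\norm{\ltro_S}\leq 2\)), bound each block difference by Lemma~\ref{lem:GenPoisGen:Delta_i vs Delta}, and finish with the elementary estimate \(\sum_i\Delta_i^2\leq\Delta^\star\Delta\). The paper compresses these steps into a single chain of inequalities but invokes exactly the same lemmas in the same order.
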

\begin{proof}
  Our proof is almost the same as that of \cite[Lemma~E.6]{2017Krak}.
  Observe that
  \begin{align*}
    \norm*{\prod_{i=1}^n \pr*{\prod_{j=1}^{k_i} \pr{I+\Delta_{i,j}\ltro_S} } - \prod_{i=1}^n \pr{I+\Delta_i\ltro_S}}
    &\leq \sum_{i=1}^n \norm*{\prod_{j=1}^{k_i} \pr{I+\Delta_{i,j}\ltro_S}  - \pr{I+\Delta_i\ltro_S}}
    \leq \sum_{i=1}^n \norm{\ltro_S}^2 \sum_{j=1}^{k_i} \Delta_{i, j} \sum_{\ell=j+1}^{k_i} \Delta_{i, \ell} \\
    &\leq \norm{\ltro_S}^2 \sum_{i=1}^n \sum_{j=1}^{k_i} \Delta_{i, j} \Delta_i
    \leq \norm{\ltro_S}^2 \sum_{i=1}^n \Delta_i \Delta_i \\
    &\leq \norm{\ltro_S}^2 \sum_{i=1}^n \Delta_{i} \Delta^\star
    = \norm{\ltro_S}^2 \Delta \Delta^\star,
  \end{align*}
  where the first inequality follows from Corollary~\ref{cor:Phi_u is LTT} and Lemma~\ref{lem: bound on norm of A_1 ... A_k - B_1 ... B_k}, and the second inequality follows from Lemma~\ref{lem:GenPoisGen:Delta_i vs Delta}.
\end{proof}
Everything is now set up to establish the following two results regarding the difference between \(\Phi_u\) and \(\Phi_{u'}\).
\begin{corollary}
\label{cor:GenPoisGen:Phi_u vs Phi_u'}
  Consider a sequence \(S\) in \(\setoflambdaseq_{\Lambda}\), some \(t,s\) in \(\nnegreals\) with \(t\leq s\) and some \(u\) in \(\setoftseq_{\br{t,s}}\) such that \(\sigma\pr{u}\norm{\ltro_S}\leq2\).
  Then for any \(u'\) in \(\setoftseq_{\br{t,s}}\) such that \(u\subseteq u'\),
  \begin{equation*}
    \norm{\Phi_u-\Phi_{u'}}
    \leq \sigma\pr{u}\pr{s-t}\norm{\ltro_S}^2.
  \end{equation*}
\end{corollary}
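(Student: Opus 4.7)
The plan is to reduce the stated inequality to a direct application of Lemma~\ref{lem:GenPoisGen:Delta_i,j vs Delta_i} after setting up the right indexing of $u'$ as a refinement of $u$. First dispose of the trivial case $t=s$: by definition $u = u' = t$ and $\Phi_u = \Phi_{u'} = I$, so both sides of the inequality vanish. Assume henceforth $t < s$.

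Write $u = t_0, t_1, \dots, t_n$ with $t_0 = t$ and $t_n = s$, and set $\Delta_i \coloneqq t_i - t_{i-1}$ for every $i$ in $\set{1, \dots, n}$. Because $u \subseteq u'$ and both sequences have the same endpoints $t$ and $s$, the extra time points of $u'$ are distributed inside the subintervals determined by $u$. For each $i$ in $\set{1, \dots, n}$, list the time points of $u'$ that lie in $\br{t_{i-1}, t_i}$ in increasing order as $t_{i-1} = s_{i, 0} < s_{i, 1} < \cdots < s_{i, k_i} = t_i$, and set $\Delta_{i,j} \coloneqq s_{i, j} - s_{i, j-1}$, so that $\sum_{j=1}^{k_i} \Delta_{i, j} = \Delta_i$. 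By the definition of $\Phi_u$ and $\Phi_{u'}$ in Equation~\eqref{eqn:GenPoisGen:Phi_u}, we then have
\begin{equation*}
  \Phi_u = \prod_{i=1}^n \pr{I + \Delta_i \ltro_S},
  \qquad
  \Phi_{u'} = \prod_{i=1}^n \pr*{\prod_{j=1}^{k_i} \pr{I + \Delta_{i,j} \ltro_S}}.
\end{equation*}
This is exactly the form of the two operators compared in Lemma~\ref{lem:GenPoisGen:Delta_i,j vs Delta_i}.

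It only remains to verify the hypothesis of that lemma and to read off the bound. With $\Delta^\star \coloneqq \max\set{\Delta_i \colon i \in \set{1, \dots, n}}$, the inequalities $\Delta_i \leq \sigma\pr{u}$ for every $i$ give $\Delta^\star \leq \sigma\pr{u}$, and hence $\Delta^\star \norm{\ltro_S} \leq \sigma\pr{u} \norm{\ltro_S} \leq 2$ by the assumption on $u$. Writing $\Delta \coloneqq \sum_{i=1}^n \Delta_i = s - t$, Lemma~\ref{lem:GenPoisGen:Delta_i,j vs Delta_i} then yields
\begin{equation*}
  \norm{\Phi_{u'} - \Phi_u}
  \leq \norm{\ltro_S}^2 \, \Delta \, \Delta^\star
  \leq \norm{\ltro_S}^2 \pr{s-t} \sigma\pr{u},
\end{equation*}
which is the claimed bound. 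The only mildly delicate step is the bookkeeping in the first paragraph, namely recognising that any refinement $u' \supseteq u$ of $\setoftseq_{\br{t,s}}$ decomposes canonically as a subdivision of each $\br{t_{i-1}, t_i}$; once this is observed, everything else is a matter of invoking Lemma~\ref{lem:GenPoisGen:Delta_i,j vs Delta_i} with the correct parameters.
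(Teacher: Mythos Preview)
Your proof is correct and follows exactly the approach the paper indicates: the paper's proof is the single line ``Follows almost immediately from Lemma~\ref{lem:GenPoisGen:Delta_i,j vs Delta_i}'', and you have simply spelled out the bookkeeping needed to recognise $\Phi_{u'}$ as the refined product and to match the hypotheses of that lemma. Note in passing that $\Delta^\star = \sigma\pr{u}$ exactly, by definition of $\sigma$, so your inequality $\Delta^\star \leq \sigma\pr{u}$ is in fact an equality.
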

\begin{proof}
   Follows almost immediately from Lemma~\ref{lem:GenPoisGen:Delta_i,j vs Delta_i}
\end{proof}
\begin{lemma}
\label{lem:GenPoisGen:Phi_u-Phi_u'}
  Fix a sequence \(S\) in \(\setoflambdaseq_{\Lambda}\), \(t,s\) in \(\nnegreals\) with \(t\leq s\), \(\delta\) in \(\nnegreals\) with \(\delta\norm{\ltro_S}\leq2\) and \(u, u'\) in \(\setoftseq_{\br{t,s}}\).
  If \(\sigma\pr{u}\leq\delta\) and \(\sigma\pr{u'}\leq\delta\), then
  \begin{equation*}
    \norm{\Phi_u-\Phi_{u'}}
    \leq 2 \delta \pr{s-t} \norm{\ltro_S}^2.
  \end{equation*}
\end{lemma}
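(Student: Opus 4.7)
The plan is to reduce to Corollary~\ref{cor:GenPoisGen:Phi_u vs Phi_u'} by introducing a common refinement of the two grids and applying the triangle inequality. The case \(t=s\) is trivial since then \(\setoftseq_{[t,s]}=\set{t}\) and \(\Phi_u=\Phi_{u'}=I\), so I would henceforth assume \(t<s\).

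First, I would set \(u''\coloneqq u\cup u'\) and note that \(u''\) lies in \(\setoftseq_{[t,s]}\), since every element of \(\setoftseq_{[t,s]}\) starts at \(t\) and ends at \(s\). Because \(u''\) is a refinement of both \(u\) and \(u'\), every time difference of \(u''\) is bounded above by some time difference of \(u\) (respectively of \(u'\)); in particular, \(\sigma(u'')\leq\min\set{\sigma(u),\sigma(u')}\leq\delta\). Combined with the hypothesis \(\delta\norm{\ltro_S}\leq 2\), this gives \(\sigma(u)\norm{\ltro_S}\leq 2\) and \(\sigma(u')\norm{\ltro_S}\leq 2\), so Corollary~\ref{cor:GenPoisGen:Phi_u vs Phi_u'} applies to both \((u,u'')\) and \((u',u'')\).

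Invoking that corollary twice yields
\[
  \norm{\Phi_u-\Phi_{u''}}
  \leq \sigma(u)(s-t)\norm{\ltro_S}^2
  \leq \delta(s-t)\norm{\ltro_S}^2
\]
and likewise \(\norm{\Phi_{u'}-\Phi_{u''}}\leq\delta(s-t)\norm{\ltro_S}^2\). The triangle inequality for the operator norm---which is available because, as noted after Lemma~\ref{lem:Properties of non-negatively homogeneous transformations}, our norm is genuinely a norm on the vector space of non-negatively homogeneous transformations---then gives
\[
  \norm{\Phi_u-\Phi_{u'}}
  \leq \norm{\Phi_u-\Phi_{u''}}+\norm{\Phi_{u''}-\Phi_{u'}}
  \leq 2\delta(s-t)\norm{\ltro_S}^2,
\]
as required.

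There is no real obstacle here; the only thing to be careful about is checking that the hypotheses of Corollary~\ref{cor:GenPoisGen:Phi_u vs Phi_u'} really do hold for the refinement---specifically, that \(\sigma(u)\norm{\ltro_S}\leq 2\), which is exactly what the assumption \(\delta\norm{\ltro_S}\leq 2\) is designed to deliver. Everything else is immediate from the triangle inequality.
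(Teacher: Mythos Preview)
Your proposal is correct and follows essentially the same approach as the paper: form the common refinement \(u^\star=u\cup u'\), apply Corollary~\ref{cor:GenPoisGen:Phi_u vs Phi_u'} to each of the pairs \((u,u^\star)\) and \((u',u^\star)\), and conclude via the triangle inequality. Your version is slightly more explicit in checking the hypotheses and handling the degenerate case \(t=s\), but the argument is the same.
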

\begin{proof}
  Our proof is entirely similar to that of \citet[Proposition~7.9]{2017Krak}.
  Let \(u^\star\) be the sequence of time points in \(\setoftseq_{\br{t,s}}\) that contains all time points in \(u\) and \(u'\).
  It then follows immediately from Corollary~\ref{cor:GenPoisGen:Phi_u vs Phi_u'} that \(\norm{\Phi_u-\Phi_{u^\star}}\leq\delta\pr{s-t}\norm{\ltro_S}^2\) and \(\norm{\Phi_{u'}-\Phi_{u^\star}}\leq\delta\pr{s-t}\norm{\ltro_S}^2\), whence
  \begin{equation*}
    \norm{\Phi_u-\Phi_{u'}}
    \leq \norm{\Phi_u-\Phi_{u^\star}} + \norm{\Phi_{u^\star}-\Phi_{u'}}
    \leq 2\delta\pr{s-t}\norm{\ltro_S}^2.
  \end{equation*}
\end{proof}

Now that we have an upper bound on the measure of the distance between \(\Phi_u\) and \(\Phi_{u'}\), we can fix some sequence \(\set{u_i}_{i\in\nats}\) in \(\setoftseq_{\br{t,s}}\) and study the behaviour of the corresponding sequence \(\set{\Phi_{u_i}}_{i\in\nats}\) in the limit for \(i\to+\infty\).
\begin{lemma}
\label{lem:GenPoisGen:Phu_u_i is a Cauchy sequence}
  Fix a sequence \(S\) in \(\setoflambdaseq_{\Lambda}\) and some \(t,s\) in \(\nnegreals\) with \(t\leq s\).
  Then for every sequence \(\set{u_i}_{i\in\nats}\) in \(\setoftseq_{\br{t,s}}\) such that \(\lim_{i\to+\infty}\sigma\pr{u_i}=0\), the corresponding sequence \(\set*{\Phi_{u_i}}_{i\in\nats}\) is Cauchy.
\end{lemma}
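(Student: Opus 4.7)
The plan is to deduce this Cauchy property directly from the already established uniform bound in Lemma~\ref{lem:GenPoisGen:Phi_u-Phi_u'}, which gives $\norm{\Phi_u - \Phi_{u'}} \leq 2\delta(s-t)\norm{\ltro_S}^2$ whenever both $\sigma(u)$ and $\sigma(u')$ are at most $\delta$ and $\delta\norm{\ltro_S} \leq 2$. The essence is that the right-hand side can be made arbitrarily small by choosing $\delta$ small, and the assumption $\lim_{i\to+\infty}\sigma(u_i)=0$ guarantees that all tails of the sequence $\set{u_i}$ eventually satisfy $\sigma(u_i)\leq\delta$.

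First I would dispose of two degenerate cases. If $t=s$, then by definition $\Phi_{u_i}=I$ for all $i$ and the sequence is trivially Cauchy. If $\norm{\ltro_S}=0$, then $\ltro_S=0$ (the operator norm separates points, as verified in Appendix~\ref{sapp:Transformations:General non-negatively homogeneous}), so $\Phi_{u_i}=I$ for every $i$ and again the sequence is Cauchy. Henceforth I would assume $t<s$ and $\norm{\ltro_S}>0$.

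In the main case, fix an arbitrary $\epsilon$ in $\posreals$. Choose
\begin{equation*}
  \delta
  \coloneqq \min\set*{\frac{2}{\norm{\ltro_S}},\,\frac{\epsilon}{2(s-t)\norm{\ltro_S}^2 + 1}},
\end{equation*}
so that $\delta$ is a positive real number satisfying $\delta\norm{\ltro_S}\leq 2$ and $2\delta(s-t)\norm{\ltro_S}^2<\epsilon$. Since $\lim_{i\to+\infty}\sigma(u_i)=0$, there is some $N$ in $\nats$ such that $\sigma(u_i)\leq\delta$ for all $i\geq N$. Then for any $i,j\geq N$, Lemma~\ref{lem:GenPoisGen:Phi_u-Phi_u'} yields
\begin{equation*}
  \norm{\Phi_{u_i}-\Phi_{u_j}}
  \leq 2\delta(s-t)\norm{\ltro_S}^2
  <\epsilon,
\end{equation*}
proving that $\set{\Phi_{u_i}}_{i\in\nats}$ is Cauchy with respect to the operator norm on non-negatively homogeneous transformations.

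There is no real obstacle here, since all the heavy lifting has already been done in Lemma~\ref{lem:GenPoisGen:Phi_u-Phi_u'}; the only thing to watch is that one must exclude (or trivially handle) the edge case $\norm{\ltro_S}=0$ before dividing by $\norm{\ltro_S}$ in the choice of $\delta$, and that the condition $\delta\norm{\ltro_S}\leq 2$ needed to invoke that lemma is actually built into the definition of $\delta$.
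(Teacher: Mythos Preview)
Your proof is correct and follows essentially the same route as the paper: both invoke Lemma~\ref{lem:GenPoisGen:Phi_u-Phi_u'} after using $\lim_{i\to+\infty}\sigma(u_i)=0$ to push the mesh size below a threshold that makes the bound $2\delta(s-t)\norm{\ltro_S}^2$ smaller than $\epsilon$. The only cosmetic difference is that you treat the degenerate cases $t=s$ and $\norm{\ltro_S}=0$ separately, whereas the paper handles all cases at once (the bound simply vanishes when either factor is zero, and the hypothesis $\sigma(u_i)\norm{\ltro_S}\leq 2$ is then automatic), so your explicit case split is unnecessary but harmless.
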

\begin{proof}
  In order to prove the stated, we need to show that for every \(\epsilon\) in \(\posreals\), there exists an \(i^\star\) in \(\nats\) such that \(\norm{\Phi_{u_i}-\Phi_{u_j}}\leq\epsilon\) for all \(i,j\) in \(\nats\) with \(i\geq i^\star\) and \(j\geq i^\star\).
  Fix now any \(\epsilon\) in \(\posreals\).
  Because \(\lim_{i\to+\infty}\sigma\pr{u_i}=0\), there is an \(i^\star\) in \(\nats\) such that (i) \(\sigma\pr{u_i}\norm{\ltro_S}\leq 2\) for all \(i\geq i^\star\), and (ii) \(2\sigma\pr{u_i}\pr{s-t}\norm{\ltro_S}^2\leq\epsilon\) for all \(i\geq i^\star\).
  From this and Lemma~\ref{lem:GenPoisGen:Phi_u-Phi_u'}, it now follows that, for all \(i,j\) in \(\nats\) with \(i\geq i^\star\) and \(j\geq i^\star\),
  \begin{equation*}
    \norm{\Phi_{u_i}-\Phi_{u_j}}
    \leq 2 \max\set{\sigma\pr{u_i}, \sigma\pr{u_j}} \pr{s-t} \norm{\ltro_S}^2
    \leq \epsilon.
  \end{equation*}
  Because \(\epsilon\) was an arbitrary positive real number, this proves the stated.
\end{proof}
\begin{lemma}
\label{lem:GenPoisGen:Phi_u_i f converges to a limit}
  Fix a sequence \(S\) in \(\setoflambdaseq_{\Lambda}\), some \(t,s\) in \(\nnegreals\) with \(t\leq s\) and some \(f\) in \(\setoffn\pr{\stsp}\).
  For every sequence \(\set{u_i}_{i\in\nats}\) in \(\setoftseq_{\br{t,s}}\) such that \(\lim_{i\to+\infty}\sigma\pr{u_i}=0\), the corresponding sequence \(\set{\Phi_{u_i} f}_{i\in\nats}\) converges to a limit~\(f_{\mathrm{lim}}\) in \(\setoffn\pr{\stsp}\) that does not depend on the chosen sequence \(\set{u_i}_{i\in\nats}\).
\end{lemma}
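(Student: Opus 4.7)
The plan is to leverage the preceding lemma, which establishes that $\{\Phi_{u_i}\}_{i\in\nats}$ is Cauchy in the operator norm, and then transfer this Cauchy property to the sequence of images $\{\Phi_{u_i} f\}_{i\in\nats}$ in $(\setoffn(\stsp), \norm{\cdot})$.

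First I would fix an arbitrary sequence $\set{u_i}_{i\in\nats}$ in $\setoftseq_{\br{t,s}}$ with $\lim_{i\to+\infty}\sigma(u_i)=0$. By Corollary~\ref{cor:Phi_u is LTT}, for $i$ sufficiently large $\Phi_{u_i}$ is a lower counting transformation; in particular, $\Phi_{u_i}-\Phi_{u_j}$ is a non-negatively homogeneous transformation by \ref{BNH: A+B} and \ref{BNH: mu a}, so its operator norm is well-defined. Applying \ref{BNH: norm Af <= norm A norm f} yields
\begin{equation*}
  \norm{\Phi_{u_i}f - \Phi_{u_j}f}
  = \norm{(\Phi_{u_i}-\Phi_{u_j})f}
  \leq \norm{\Phi_{u_i}-\Phi_{u_j}}\norm{f},
\end{equation*}
so Lemma~\ref{lem:GenPoisGen:Phu_u_i is a Cauchy sequence} immediately implies that $\set{\Phi_{u_i}f}_{i\in\nats}$ is a Cauchy sequence in $\setoffn(\stsp)$ under the supremum norm. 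Since $\setoffn(\stsp)$ is complete under this norm (as recalled in Section~\ref{ssec:Functions, operators and norms}), there exists a unique $f_{\mathrm{lim}}$ in $\setoffn(\stsp)$ such that $\Phi_{u_i}f\to f_{\mathrm{lim}}$.

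Next I would show independence of the sequence. Given a second sequence $\set{u'_i}_{i\in\nats}$ in $\setoftseq_{\br{t,s}}$ with $\sigma(u'_i)\to 0$, I would consider the interleaved sequence $u_1, u'_1, u_2, u'_2, \dots$; its mesh sequence still tends to $0$, so the corresponding sequence of images converges by the above argument. Because $\set{\Phi_{u_i}f}_{i\in\nats}$ and $\set{\Phi_{u'_i}f}_{i\in\nats}$ are both subsequences of this single convergent sequence, they must share the same limit.

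I do not expect any major obstacle: the heavy lifting---bounding $\norm{\Phi_u-\Phi_{u'}}$ uniformly in terms of $\max\set{\sigma(u),\sigma(u')}$ via Lemma~\ref{lem:GenPoisGen:Phi_u-Phi_u'} and concluding the Cauchy property---has already been done in Lemma~\ref{lem:GenPoisGen:Phu_u_i is a Cauchy sequence}. The only small subtlety is to remember that $\Phi_u$ itself is not a linear operator but only non-negatively homogeneous, so I must invoke the non-negatively homogeneous operator norm inequality \ref{BNH: norm Af <= norm A norm f} rather than the usual bounded linear operator bound; once that is acknowledged, the completeness of $(\setoffn(\stsp),\norm{\cdot})$ and the interleaving trick close the argument.
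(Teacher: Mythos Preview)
Your proposal is correct. The first half---transferring the Cauchy property of $\{\Phi_{u_i}\}$ to $\{\Phi_{u_i}f\}$ via \ref{BNH: norm Af <= norm A norm f} and invoking completeness of $\setoffn(\stsp)$---is exactly what the paper does (the paper also treats the trivial case $\norm{f}=0$ separately, but your inequality covers it anyway).

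For uniqueness of the limit, you take a genuinely different route. The paper fixes two sequences, writes the triangle inequality
\[
  \norm{f_{\lim}-f'_{\lim}}
  \leq \norm{f_{\lim}-\Phi_{u_j}f}+\norm{\Phi_{u_j}f-\Phi_{u'_j}f}+\norm{\Phi_{u'_j}f-f'_{\lim}},
\]
and bounds the middle term directly using Lemma~\ref{lem:GenPoisGen:Phi_u-Phi_u'} (or rather Corollary~\ref{cor:GenPoisGen:Phi_u vs Phi_u'}). Your interleaving trick is cleaner: it avoids re-invoking the quantitative bound on $\norm{\Phi_u-\Phi_{u'}}$ and instead recycles the convergence result you just proved. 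Both arguments are standard; yours is slightly more economical, while the paper's makes the dependence on the mesh bound more explicit (which is useful later when proving Theorem~\ref{the:LCT induced by GenPoisGen}). One small clean-up: you do not need Corollary~\ref{cor:Phi_u is LTT} to justify that $\Phi_{u_i}-\Phi_{u_j}$ is non-negatively homogeneous---each $\Phi_u$ is non-negatively homogeneous for \emph{every} $u$ (as a finite composition of non-negatively homogeneous maps), regardless of whether $\sigma(u)\norm{\ltro_S}\leq 2$.
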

\begin{proof}
  Our proof consists of two parts.
  In the first part, we will prove that \(\set{\Phi_{u_i}f}_{i\in\nats}\) converges to a limit; in the second part, we will prove that this limit does not depend on the chosen sequence \(\set{u_i}_{i\in\nats}\).

  Fix some sequence \(\set{u_i}_{i\in\nats}\) in \(\setoftseq_{\br{t,s}}\) such that \(\lim_{i\to+\infty}\sigma\pr{u_i}=0\).
  The corresponding sequence \(\set{\Phi_{u_i}f}_{i\in\nats}\) converges to a limit because (i) \(\setoffn\pr{\stsp}\) is a complete normed vector space, and (ii) \(\set{\Phi_{u_i}f}_{i\in\nats}\) is a Cauchy sequence in \(\setoffn\pr{\stsp}\).
  We now prove that \(\set{\Phi_{u_i}f}_{i\in\nats}\) is a Cauchy sequence.
  To that end, we fix some \(\epsilon\) in \(\posreals\).
  If \(\norm{f}=0\), then \(f=0\).
  Hence, it follows almost immediately from Equations~\eqref{eqn:GenPoisGen:Phi_u} and \eqref{eqn:Generalized Poisson Generator} and \ref{LTT:constant} that \(\Phi_{u_i} f=\Phi_{u_i} 0=0\) for all \(i\) in \(\nats\).
  Consequently, \(\norm{\Phi_{u_i}f-\Phi_{u_j}}=0\leq\epsilon\) for all \(i,j\) in \(\nats\), and so the veracity of the claim is immediate.

  Next, we consider the alternative case that \(\norm{f}\neq0\).
  By Lemma~\ref{lem:GenPoisGen:Phu_u_i is a Cauchy sequence}, there is a \(i^\star\) in \(\nats\) such that
  \begin{equation*}
    \pr{\forall i,j\in\nats, i\geq i^\star, j\geq i^\star}~
    \norm{\Phi_{u_i} - \Phi_{u_j}}
    \leq \frac{\epsilon}{\norm{f}}.
  \end{equation*}
  Observe now that
  \begin{equation*}
    \pr{\forall i,j\in\nats, i\geq i^\star, j\geq i^\star}~
    \norm{\Phi_{u_i}f - \Phi_{u_j}f}
    \leq \norm{\Phi_{u_i}-\Phi_{u_j}}\norm{f}
    \leq \frac{\epsilon}{\norm{f}}\norm{f}
    = \epsilon,
  \end{equation*}
  where the first inequality holds due to \ref{BNH: norm Af <= norm A norm f} because---for reasons explained right after Lemma~\ref{lem:Properties of non-negatively homogeneous transformations}---\(A\coloneqq\Phi_{u_i}-\Phi_{u_j}\) is a non-negatively homogeneous transformation.
  Since \(\epsilon\) was an arbitrary positive real number, we conclude from this that \(\set{\Phi_{u_i} f}_{i\in\nats}\) is Cauchy.

  Next, we prove that the limit does not depend on the chosen sequence.
  To that end, we fix two sequences \(\set{u_i}_{i\in\nats}\) and \(\set{u'_i}_{i\in\nats}\) such that \(\lim_{i\to+\infty}\sigma\pr{u_i}=0\) and \(\lim_{i\to+\infty}\sigma\pr{u'_i}=0\).
  Furthermore, we let \(f_{\lim}\) and \(f'_{\lim}\) denote the limits of \(\set{\Phi_{u_i}f}_{i\in\nats}\) and \(\set{\Phi_{u'_i}f}_{i\in\nats}\), respectively.
  In order to prove the stated, we need to verify that \(f_{\lim}=f'_{\lim}\).
  To that end, we observe that, for all \(i\) in \(\nats\),
  \begin{equation}
  \label{eqn:Proof of GenPoisGen:Phi_u_i f converges to a limit:Triangle inequality}
    \norm{f_{\lim}-f'_{\lim}}
    = \norm{f_{\lim}-\Phi_{u_i}f+\Phi_{u_i}f-\Phi_{u'_i}f+\Phi_{u'_i}f-f'_{\lim}}
    \leq \norm{f_{\lim}-\Phi_{u_i}f}+\norm{\Phi_{u_i}f-\Phi_{u'_i}}f+\norm{f'_{\lim}-\Phi_{u'_i}f},
  \end{equation}
  where the inequality follows from the triangle inequality.
  Fix now any \(\epsilon\) in \(\posreals\), and choose any \(\epsilon'\) in \(\posreals\) such that \(3\epsilon'\leq\epsilon\), and additionally choose any \(\delta\) in \(\posreals\) such that \(2\delta\pr{s-t}\norm{\ltro_S}^2\norm{f}\leq \epsilon'\).
  Due to the first part of the statement, and because \(\lim_{i\to+\infty}\sigma\pr{u_i}=0=\lim_{i\to+\infty}\sigma\pr{u'_i}\), there is some \(j\) in \(\nats\) such that
  \begin{equation}
  \label{eqn:Proof of GenPoisGen:Phi_u_i f converges to a limit:bounds on diff with limit}
    \norm{\Phi_{u_j}-f_{\lim}}
    \leq \epsilon'
    \qquad\text{and}\qquad
    \norm{\Phi_{u'_j}-f'_{\lim}}
    \leq \epsilon'
  \end{equation}
  and
  \begin{equation}
  \label{eqn:Proof of GenPoisGen:Phi_u_i f converges to a limit:bounds on max granularity}
    \sigma\pr{u_j}
    \leq \delta
    \qquad\text{and}\qquad
    \sigma\pr{u'_j}
    \leq \delta.
  \end{equation}
  Fix any such \(j\).
  Observe furthermore that
  \begin{equation}
  \label{eqn:Proof of GenPoisGen:Phi_u_i f converges to a limit:bounds on diff between approx}
    \norm{\Phi_{u_j}f-\Phi_{u'_j}f}
    \leq \norm{\Phi_{u_j}-\Phi_{u'_j}}\norm{f}
    \leq 2\delta\pr{s-t}\norm{\ltro_S}^2 \norm{f}
    \leq \epsilon',
  \end{equation}
  where the first inequality holds due to \ref{BNH: norm Af <= norm A norm f} because---for reasons mentioned right after Lemma~\ref{lem:Properties of non-negatively homogeneous transformations}---\(\Phi_{u_j}-\Phi_{u'_j}\) is a non-negatively homogeneous transformation, the second inequality follows from Equation~\eqref{eqn:Proof of GenPoisGen:Phi_u_i f converges to a limit:bounds on max granularity} and Corollary~\ref{cor:GenPoisGen:Phi_u vs Phi_u'} and the final inequality is precisely our condition on \(\delta\).
  We now use Equations~\eqref{eqn:Proof of GenPoisGen:Phi_u_i f converges to a limit:bounds on diff with limit} and \eqref{eqn:Proof of GenPoisGen:Phi_u_i f converges to a limit:bounds on diff between approx} to bound the terms in Equation~\eqref{eqn:Proof of GenPoisGen:Phi_u_i f converges to a limit:Triangle inequality} for \(i=j\), to yield
  \begin{equation*}
    \norm{f_{\lim}-f'_{\lim}}
    \leq \norm{f_{\lim}-\Phi_{u_j}}+\norm{\Phi_{u_j}-\Phi_{u'_j}}+\norm{f'_{\lim}-\Phi_{u'_j}}
    \leq 3\epsilon'
    \leq \epsilon,
  \end{equation*}
  where the final inequality is precisely our condition on \(\epsilon'\).
  Because \(\epsilon\) was an arbitrary positive real number, we infer from this inequality that \(\norm{f_{\lim}-f'_{\lim}}=0\), which in turn implies that \(f_{\lim}=f'_{\lim}\).
\end{proof}
We now have all the necessary intermediary results to establish the two main results regarding the limit behaviour of the sequences \(\set{\Phi_{u_i}}_{i\in\nats}\).
Our first result establishes that the sequence always converges to a lower counting transformation.
In this sense, it is similar to Proposition~\ref{prop:Gen ltro^chi:Sequence converges to ltt}---that is, \cite[Corollary~7.11]{2017Krak}.
\begin{theorem}
\label{the:GenPoisGen:Phi_u_i converges to a LCT}
  Consider a sequence \(S\) in \(\setoflambdaseq_{\Lambda}\), and fix some \(t,s\) in \(\nnegreals\) with \(t\leq s\).
  For any sequence \(\set{u_i}_{i\in\nats}\) in \(\setoftseq_{\br{t,s}}\) such that \(\lim_{i\to+\infty}\sigma\pr{u_i}=0\), the corresponding sequence \(\set{\Phi_{u_i}}_{i\in\nats}\) converges to a lower counting transformation.
\end{theorem}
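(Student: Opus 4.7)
The plan is to construct the limit operator pointwise and then verify that it inherits all four defining properties of a lower counting transformation. Lemma~\ref{lem:GenPoisGen:Phi_u_i f converges to a limit} already tells us that for every $f$ in $\setoffn\pr{\stsp}$ the sequence $\set{\Phi_{u_i}f}_{i\in\nats}$ converges in the supremum norm to a limit that does not depend on the chosen sequence $\set{u_i}_{i\in\nats}$. I would therefore define $\lto_S\colon\setoffn\pr{\stsp}\to\setoffn\pr{\stsp}$ by $\lto_S f\coloneqq\lim_{i\to+\infty}\Phi_{u_i}f$; because $\setoffn\pr{\stsp}$ is complete under the supremum norm, this limit is again bounded, so $\lto_S$ really is a transformation on $\setoffn\pr{\stsp}$.

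Next I would upgrade the pointwise statement to operator-norm convergence, so that the claim that $\set{\Phi_{u_i}}_{i\in\nats}$ ``converges'' is read in the strongest sensible sense. Here Lemma~\ref{lem:GenPoisGen:Phu_u_i is a Cauchy sequence} does the work: fix any $\epsilon$ in $\posreals$ and choose $i^\star$ such that $\norm{\Phi_{u_i}-\Phi_{u_j}}\leq\epsilon$ for all $i,j\geq i^\star$. For any $f$ with $\norm{f}\leq 1$, \ref{BNH: norm Af <= norm A norm f} gives $\norm{\Phi_{u_i}f-\Phi_{u_j}f}\leq\epsilon$, and letting $j\to+\infty$ together with continuity of the supremum norm yields $\norm{\Phi_{u_i}f-\lto_S f}\leq\epsilon$. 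Taking the supremum over such $f$ shows $\norm{\Phi_{u_i}-\lto_S}\leq\epsilon$ for every $i\geq i^\star$, so indeed $\Phi_{u_i}\to\lto_S$ in operator norm.

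It remains to verify \ref{def:LTT:Non-negative homgeneity}--\ref{def:LTT:Counting} for $\lto_S$. Because $\lim_{i\to+\infty}\sigma\pr{u_i}=0$, Corollary~\ref{cor:Phi_u is LTT} ensures that $\Phi_{u_i}$ is itself a lower counting transformation for all sufficiently large $i$, and each of the four defining properties is an equality or weak inequality that passes to pointwise limits. Concretely, \ref{def:LTT:Non-negative homgeneity} follows by taking the limit in $\Phi_{u_i}\pr{\gamma f}=\gamma\Phi_{u_i}f$; \ref{def:LTT:Super-additivity} by taking the limit in $\Phi_{u_i}\pr{f+g}\geq\Phi_{u_i}f+\Phi_{u_i}g$; \ref{def:LTT:Bound} by taking the limit in $\Phi_{u_i}f\geq\inf f$; and \ref{def:LTT:Counting} by applying Lemma~\ref{lem:GenPoisGen:Phi_u_i f converges to a limit} separately to $f$ and to $\indica{\geq x}f$, so that $\br{\lto_S f}\pr{x}=\lim_i\br{\Phi_{u_i}f}\pr{x}=\lim_i\br{\Phi_{u_i}\pr{\indica{\geq x}f}}\pr{x}=\br{\lto_S\pr{\indica{\geq x}f}}\pr{x}$.

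I do not expect any genuine obstacle: all the heavy lifting has already been done in the preceding lemmas and corollary, and the argument amounts to packaging them carefully. The one subtle step worth flagging is the double application of Lemma~\ref{lem:GenPoisGen:Phi_u_i f converges to a limit} needed to verify \ref{def:LTT:Counting}, but this is harmless because the limit is independent of the chosen sequence and the same $\set{u_i}_{i\in\nats}$ can be used on both sides of the identity.
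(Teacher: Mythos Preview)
Your proposal is correct and follows essentially the same route as the paper: define the limit operator pointwise via Lemma~\ref{lem:GenPoisGen:Phi_u_i f converges to a limit}, use Corollary~\ref{cor:Phi_u is LTT} to conclude that the $\Phi_{u_i}$ are eventually lower counting transformations so that \ref{def:LTT:Non-negative homgeneity}--\ref{def:LTT:Counting} pass to the limit, and invoke the Cauchy property of Lemma~\ref{lem:GenPoisGen:Phu_u_i is a Cauchy sequence} for operator-norm convergence. Your argument for the latter---fixing $i\geq i^\star$, bounding $\norm{\Phi_{u_i}f-\Phi_{u_j}f}$ uniformly in $f$ with $\norm{f}\leq 1$, and then letting $j\to+\infty$---is in fact slightly more direct than the paper's $3\epsilon'$ version, which instead picks a near-maximising $f_1$ and a suitable $j$; but the underlying idea is the same.
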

\begin{proof}
  Recall from Lemma~\ref{lem:GenPoisGen:Phi_u_i f converges to a limit} that, for all \(f\) in \(\setoffn\pr{\stsp}\), the sequence \(\set{\Phi_{u_i} f}_{i\in\nats}\) converges to the bounded function \(f_{\lim}\).
  Let \(\lto\) be the transformation that maps any \(f\) in \(\setoffn\pr{\stsp}\) to the corresponding limit \(f_{\lim}\):
  \begin{equation}
  \label{eqn:Proof of GenPoisGen:Phi_u_i converges to a LCT:Definition of LTO}
    \lto
    \colon\setoffn\pr{\stsp}\to\setoffn\pr{\stsp}
    \colon f\mapsto \lto f
    \coloneqq \lim_{i\to+\infty} \Phi_{u_i} f
    = f_{\lim}.
  \end{equation}

  We now first verify that \(\lto\) is a lower counting transformation---and therefore also a non-negatively homogeneous transformation.
  Because \(\lim_{i\to+\infty}\sigma\pr{u_i}=0\), there is an \(i^\star\) in \(\nats\) such that \(\sigma\pr{u_i}\norm{\ltro_S}\leq 2\) for all \(i\geq i^\star\).
  From this and Corollary~\ref{cor:Phi_u is LTT}, it follows that \(\Phi_{u_i}\) is a lower counting transformation for all \(i\geq i^\star\).
  This implies that \(\lto\), as defined in Equation~\eqref{eqn:Proof of GenPoisGen:Phi_u_i converges to a LCT:Definition of LTO}, is a lower counting transformation as well because the (in)equalities in the conditions \ref{def:LTT:Non-negative homgeneity}--\ref{def:LTT:Counting} are preserved under taking limits.

  Next, we verify that \(\set{\Phi_{u_i}}_{i\in\nats}\) converges to \(\lto\).
  To that end, we fix any \(\epsilon\) in \(\posreals\), and choose some \(\epsilon'\) in \(\posreals\) such that \(3\epsilon'\leq\epsilon\).
  Recall from Lemma~\ref{lem:GenPoisGen:Phu_u_i is a Cauchy sequence} that \(\set{\Phi_{u_i}}_{i\in\nats}\) is a Cauchy sequence.
  Hence, there is an \(i_{\epsilon}\) in \(\nats\) such that, for all \(i,j\) in \(\nats\) with \(i\geq i_{\epsilon}\) and \(j\geq i_{\epsilon}\),
  \begin{equation}
  \label{eqn:Proof of GenPoisGen:Phi_u_i converges to a LCT:Cauchy condition}
    \norm{\Phi_{u_i}-\Phi_{u_j}}
    \leq \epsilon'.
  \end{equation}
  Fix now any \(i\) in \(\nats\) such that \(i\geq i_{\epsilon}\).
  From the definition of the norm for non-negatively homogeneous transformations, it follows that there is some \(f_1\) in \(\setoffn\pr{\stsp}\) with \(\norm{f_1}=1\) such that
  \begin{equation}
  \label{eqn:Proof of GenPoisGen:Phi_u_i converges to a LCT:Bound 1}
    \norm{\lto-\Phi_{u_i}}
    \leq \norm{\lto f_1-\Phi_{u_i}f_1}+\epsilon'.
  \end{equation}
  Furthermore, due to Equation~\eqref{eqn:Proof of GenPoisGen:Phi_u_i converges to a LCT:Definition of LTO}, there is a \(j\) in \(\nats\) such that \(j\geq i_{\epsilon}\) and \(\norm{\lto f_1 - \Phi_{u_j} f_1}\leq \epsilon'\).
  We now use this and Equation~\eqref{eqn:Proof of GenPoisGen:Phi_u_i converges to a LCT:Bound 1}, to yield
  \begin{align*}
    \norm{\lto-\Phi_{u_i}}
    &\leq \norm{\lto f_1-\Phi_{u_i}f_1}+\epsilon'
    = \norm{\lto f_1-\Phi_{u_j}f_1+\Phi_{u_j}f_1-\Phi_{u_i}f_1}+\epsilon' \\
    &\leq \norm{\lto f_1-\Phi_{u_j}f_1}+\norm{\Phi_{u_j}f_1-\Phi_{u_i}f_1}+\epsilon' \\
    &\leq \norm{\Phi_{u_j} f_1-\Phi_{u_i} f_1}+2\epsilon'.
  \end{align*}
  Finally, we use \ref{BNH: norm Af <= norm A norm f} and the fact that \(\norm{f_1}=1\), to yield
  \begin{equation*}
    \norm{\lto-\Phi_{u_i}}
    \leq \norm{\Phi_{u_j}-\Phi_{u_i}} \norm{f_1}+2\epsilon'
    = \norm{\Phi_{u_j}-\Phi_{u_i}}+2\epsilon'
    \leq 3\epsilon'
    \leq \epsilon,
  \end{equation*}
  where the penultimate inequality follows from Equation~\eqref{eqn:Proof of GenPoisGen:Phi_u_i converges to a LCT:Cauchy condition} because \(i\geq i_{\epsilon}\) and \(j\geq i_{\epsilon}\), and where the final inequality is precisely our condition on \(\epsilon'\).
  Because this inequality holds for any \(i\geq i_\epsilon\), and because \(\epsilon\) was an arbitrary positive real number, we infer from this that \(\lim_{i\to+\infty} \Phi_{u_i}=\lto\), as required.
\end{proof}
Our second result establishes that the limit of \(\set{\Phi_{u_i}}_{i\in\nats}\) is unique, in the sense that it does not depend on the choice of \(\set{u_i}_{i\in\nats}\).
Note the similarity with Proposition~\ref{prop:Gen ltro^chi:Induced lto^chi}---that is, \cite[Theorem~7.12]{2017Krak}.
\begin{theorem}
\label{the:LCT induced by GenPoisGen}
  Consider a sequence \(S\) in \(\setoflambdaseq_{\Lambda}\).
  For any \(t,s\) in \(\nnegreals\) with \(t\leq s\), there is a unique lower counting transformation~\(\lto\) such that
  \begin{equation*}
    \pr{\forall\epsilon\in\posreals}
    \pr{\exists\delta\in\posreals}
    \pr{\forall u\in\setoftseq_{\br{t,s}},\sigma\pr{u}\leq\delta}~
    \norm{\lto-\Phi_u}
    \leq \epsilon.
  \end{equation*}
\end{theorem}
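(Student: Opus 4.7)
The plan is to define $\lto$ as the (sequence-independent) limit already established by Theorem~\ref{the:GenPoisGen:Phi_u_i converges to a LCT} together with Lemma~\ref{lem:GenPoisGen:Phi_u_i f converges to a limit}, and then to upgrade this sequential convergence to the uniform $\epsilon$-$\delta$ statement via Lemma~\ref{lem:GenPoisGen:Phi_u-Phi_u'}. The degenerate case $t=s$ is immediate because then $\setoftseq_{\br{t,s}} = \set{t}$ and $\Phi_t = I$, so $\lto = I$ trivially works; the substantive case is $t<s$.

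First I would fix some explicit approximating sequence $\set{u_i^\star}_{i\in\nats}$ in $\setoftseq_{\br{t,s}}$ with $\sigma\pr{u_i^\star}\to 0$, for instance equidistant partitions of $\br{t,s}$ with mesh $\pr{s-t}/i$. By Theorem~\ref{the:GenPoisGen:Phi_u_i converges to a LCT}, the corresponding sequence $\set{\Phi_{u_i^\star}}_{i\in\nats}$ converges in operator norm to a lower counting transformation, which I take as my candidate $\lto$. To make sure this $\lto$ does not depend on the specific approximating sequence, I would invoke Lemma~\ref{lem:GenPoisGen:Phi_u_i f converges to a limit}: for any other admissible sequence $\set{u_i}_{i\in\nats}$ and any $f$ in $\setoffn\pr{\stsp}$, the pointwise limit $\lim_{i\to+\infty}\Phi_{u_i}f$ agrees with $\lim_{i\to+\infty}\Phi_{u_i^\star}f = \lto f$.

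Next, I would verify the $\epsilon$-$\delta$ condition. Given $\epsilon\in\posreals$, I choose $\delta\in\posreals$ small enough that $\delta\norm{\ltro_S}\leq 2$ and $2\delta\pr{s-t}\norm{\ltro_S}^2\leq\epsilon/2$. Since $\sigma\pr{u_i^\star}\to 0$ and $\Phi_{u_i^\star}\to\lto$ in norm, I can fix an index $i^\star$ with $\sigma\pr{u_{i^\star}^\star}\leq\delta$ and $\norm{\lto-\Phi_{u_{i^\star}^\star}}\leq\epsilon/2$. Then for any $u\in\setoftseq_{\br{t,s}}$ with $\sigma\pr{u}\leq\delta$, the triangle inequality together with Lemma~\ref{lem:GenPoisGen:Phi_u-Phi_u'} yields
$$\norm{\lto-\Phi_u} \leq \norm{\lto-\Phi_{u_{i^\star}^\star}} + \norm{\Phi_{u_{i^\star}^\star}-\Phi_u} \leq \tfrac{\epsilon}{2} + 2\delta\pr{s-t}\norm{\ltro_S}^2 \leq \epsilon,$$
as required.

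Uniqueness is then a routine triangle-inequality argument: if $\lto'$ were another lower counting transformation satisfying the same $\epsilon$-$\delta$ statement, then for any $\epsilon\in\posreals$ one can pick $u$ with $\sigma\pr{u}$ small enough that both $\norm{\lto-\Phi_u}\leq\epsilon/2$ and $\norm{\lto'-\Phi_u}\leq\epsilon/2$, hence $\norm{\lto-\lto'}\leq\epsilon$, and letting $\epsilon\to 0$ forces $\lto=\lto'$. No step poses a real obstacle: all the analytic heavy lifting---existence of the sequential limit and the uniform-in-the-partition bound $\norm{\Phi_u-\Phi_{u'}}\leq 2\delta\pr{s-t}\norm{\ltro_S}^2$---is already in place, and this theorem merely assembles those pieces into the desired $\epsilon$-$\delta$ characterisation.
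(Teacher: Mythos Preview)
Your proposal is correct and follows essentially the same approach as the paper: define $\lto$ as the sequential limit guaranteed by Theorem~\ref{the:GenPoisGen:Phi_u_i converges to a LCT}, then use Lemma~\ref{lem:GenPoisGen:Phi_u-Phi_u'} together with a triangle-inequality argument to upgrade to the uniform $\epsilon$--$\delta$ statement, and conclude with the same uniqueness argument. Your write-up is in fact slightly cleaner than the paper's, which introduces an auxiliary function $f_1$ with $\norm{f_1}=1$ to pass through the operator-norm definition before applying the triangle inequality, whereas you apply the triangle inequality directly at the operator level; your explicit handling of the degenerate case $t=s$ and the observation that sequence-independence is already contained in Lemma~\ref{lem:GenPoisGen:Phi_u_i f converges to a limit} are minor additions that do no harm.
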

\begin{proof}
  Consider any sequence \(\set{u_i}_{i\in\nats}\) such that \(\lim_{i\to+\infty}\sigma\pr{u_i}=0\).
  Let \(\lto\coloneqq\lim_{i\to+\infty}\Phi_{u_i}\), where this limit exists and is a lower counting transformation due to Theorem~\ref{the:GenPoisGen:Phi_u_i converges to a LCT}.
  We now verify that this lower counting transformation~\(\lto\) satisfies the condition of the statement.
  To that end, we fix any \(\epsilon\) in \(\posreals\), and choose any \(\epsilon'\) in \(\posreals\) such that \(3\epsilon'\leq\epsilon\).
  Additionally, we choose any \(\delta\) in \(\posreals\) such that \(2\delta\pr{s-t}\norm{\ltro_S}^2\leq\epsilon'\).
  We now proceed in a similar fashion as in the second part of the proof of Theorem~\ref{the:GenPoisGen:Phi_u_i converges to a LCT}.
  Fix any \(u\) in \(\setoftseq_{\br{t,s}}\) such that \(\sigma\pr{u}\leq\delta\).
  By definition of the norm for non-negatively homogeneous transformations, there is some \(f_1\) in \(\setoffn\pr{\stsp}\) with \(\norm{f_1}=1\) such that
  \begin{equation*}
    \norm{\lto-\Phi_u}
    \leq \norm{\lto f_1-\Phi_u f_1} + \epsilon'.
  \end{equation*}
  Because \(\lim_{i\to+\infty}\Phi_{u_i}=\lto\) and \(\lim_{i\to+\infty}\sigma\pr{u_i}=0\), there is an \(i\) in \(\nats\) such that \(\sigma\pr{u_i}\leq\delta\) and \(\norm{\lto-\Phi_{u_i}}\leq\epsilon'\).
  Observe now that
  \begin{align*}
    \norm{\lto-\Phi_u}
    &\leq \norm{\lto f_1-\Phi_u f_1} + \epsilon_1
    \leq \norm{\lto f_1-\Phi_{u_i} f_1}+\norm{\Phi_{u_i}f_1-\Phi_u f_1} + \epsilon' \\
    &\leq \norm{\lto-\Phi_{u_i}}+\norm{\Phi_{u_i}-\Phi_u} + \epsilon'
    \leq \norm{\Phi_{u_i}-\Phi_u} + 2\epsilon' \\
    &\leq 2\delta\pr{s-t}\norm{\ltro_S}^2 + 2\epsilon'
    \leq 3\epsilon'
    \leq\epsilon,
  \end{align*}
  where the second inequality follows from the triangle inequality, the third inequality follows from \ref{BNH: norm Af <= norm A norm f} and\(\norm{f_1}=1\), the fourth inequality holds because \(i\) was fixed in such a way that \(\norm{\lto-\Phi_{u_i}}\leq\epsilon'\), the fifth inequality follows from Lemma~\ref{lem:GenPoisGen:Phi_u-Phi_u'} because \(\sigma\pr{u}\leq\delta\) and \(\sigma\pr{u_i}\leq\delta\) and the penultimate inequality follows from our condition on \(\delta\).
  Because this inequality holds for any \(u\) in \(\setoftseq_{\br{t,s}}\) such that \(\sigma\pr{u}\leq\delta\), and because \(\epsilon\) was an arbitrary positive real number, this verifies the condition of the stated.

  Finally, we verify that \(\lto\) is unique.
  To that end, we let \(\lto'\) be any lower counting transformation that (also) satisfies the condition of the stated.
  For any \(\epsilon\) in \(\posreals\), we then clearly have that there is a \(u\) in \(\setoftseq_{\br{t,s}}\) such that \(\norm{\lto-\Phi_u}\leq\epsilon/2\) and \(\norm{\lto'-\Phi_u}\leq\epsilon/2\).
  Hence, \(\norm{\lto-\lto'}\leq\norm{\lto-\Phi_u}+\norm{\lto'-\Phi_u}\leq\epsilon\).
  Since \(\epsilon\) is an arbitrary positive real number, we conclude from this that \(\norm{\lto-\lto'}=0\), which in turn implies that \(\lto=\lto'\), as required.
\end{proof}
We end with two useful properties of the approximation~\(\Phi_u\).
\begin{lemma}
\label{lem:I+Delta GenPoisGen is state homogeneous}
  Consider a sequence \(S=\set{\pr{\llambda_x,\ulambda_x}}_{x\in\stsp}\) in \(\setoflambdaseq_{\Lambda}\).
  Fix an \(n\) in \(\nats\) and, for all \(i\) in \(\set{1, \dots, n}\), a \(\Delta_i\) in \(\nnegreals\) with \(\Delta_i\norm{\ltro_S}\leq2\).
  Then for any \(f\) in \(\setoffn\pr{\stsp}\) and \(x\) in \(\stsp\),
  \begin{equation*}
    \br*{\prod_{i=1}^n\pr{I+\Delta_i\ltro_S} f}\pr{y}
    = \br*{\prod_{i=1}^n\pr{I+\Delta_i\ltro_S} f'_x}\pr{y-x}
    \qquad\text{for all } y\in\stsp \text{ with } y\geq x,
  \end{equation*}
  where \(f'_x\colon\stsp\to\reals\colon z\mapsto f'_x\pr{z}\coloneqq f\pr{x+z}\).
\end{lemma}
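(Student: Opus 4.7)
The plan is to proceed by induction on $n$, with the base case $n=1$ reducing to a direct computation about $\ltro_S$ itself, and the inductive step reducing the length of the product by one while exploiting the fact that the shifted version of an already-shifted function coincides with the function obtained by applying the product to $f'_x$ directly.

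\textbf{Base case ($n=1$).} I would expand both sides of
\[
\br{(I+\Delta_1\ltro_S)f}(y) = \br{(I+\Delta_1\ltro_S)f'_x}(y-x)
\]
using the definitions of $I$, $\ltro_S$, and $f'_x$. Since $f'_x(y-x)=f(x+(y-x))=f(y)$ and $f'_x(y-x+1)=f(y+1)$, the identity component $I$ contributes $f(y)$ to both sides, and the discrete difference appearing under the minimum in $\ltro_S$ becomes $f(y+1)-f(y)$ on both sides. The two minima then differ only in the rate interval over which $\lambda$ is taken; they coincide provided $[\llambda_y,\ulambda_y]=[\llambda_{y-x},\ulambda_{y-x}]$, which is the translation invariance that the lemma's name anticipates (and which holds in the intended application where $S=\set{(\llambda,\ulambda)}_{x\in\stsp}$ so that $\ltro_S=\ltro$).

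\textbf{Inductive step.} Assume the identity for products of length $n-1$. Write
\[
\prod_{i=1}^{n}(I+\Delta_i\ltro_S) = (I+\Delta_1\ltro_S)\,\prod_{i=2}^{n}(I+\Delta_i\ltro_S),
\]
set $h\coloneqq\prod_{i=2}^{n}(I+\Delta_i\ltro_S)f$ and $g\coloneqq\prod_{i=2}^{n}(I+\Delta_i\ltro_S)f'_x$. By the inductive hypothesis, $h(y)=g(y-x)$ for all $y\geq x$. The key observation is that this means $h'_x(z)=h(x+z)=g((x+z)-x)=g(z)$ for every $z\in\stsp$ (because $x+z\geq x$ for all $z\geq 0$), so the shift of $h$ equals $g$ on \emph{all} of $\stsp$. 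Applying the base case to $h$ then yields
\[
\br{(I+\Delta_1\ltro_S)h}(y)=\br{(I+\Delta_1\ltro_S)h'_x}(y-x)=\br{(I+\Delta_1\ltro_S)g}(y-x)
\]
for all $y\geq x$, which is exactly the claim for length $n$.

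\textbf{Main obstacle.} The only non-cosmetic issue is verifying that the inductive hypothesis upgrades from the pointwise equality $h(y)=g(y-x)$ (valid only for $y\geq x$) to the global identity $h'_x=g$ on $\stsp$ that the base case needs; this is where the restriction $z\in\stsp=\nnegints$ is used, since then $x+z\geq x$ automatically. Apart from this bookkeeping, the argument is a routine induction that never leaves the purely algebraic properties of $I$ and $\ltro_S$, so no appeal to the normed-space machinery of Appendix~\ref{app:The Generalised Poisson Generator} is required.
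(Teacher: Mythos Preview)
Your induction matches the paper's proof exactly: both split off the first factor, apply the $n=1$ case to the tail product, and use the induction hypothesis to identify the shift of that tail product with $\prod_{i=2}^n(I+\Delta_i\ltro_S)f'_x$. Your caveat about needing $[\llambda_y,\ulambda_y]=[\llambda_{y-x},\ulambda_{y-x}]$ is well taken---the paper's base case silently makes this identification (writing $\min_{\lambda\in[\llambda_y,\ulambda_y]}$ and then declaring the result to be $[\ltro_S f'_x](y-x)$), which is indeed only valid for translation-invariant $S$; every invocation of the lemma in the paper is for the constant sequence $S=\{(\llambda,\ulambda)\}_{x\in\stsp}$ or $S=\{(\lambda,\lambda)\}_{x\in\stsp}$, so no harm is done, but you have spotted a genuine over-generality in the statement.
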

\begin{proof}
  Our proof is one by induction.
  First, we consider the case \(n=1\).
  Then clearly
  \begin{align*}
    \br{\pr{I+\Delta_1\ltro_S} f}\pr{y}
    &= f\pr{y} + \Delta_1\br{\ltro_S f}\pr{y}
    = f\pr{y} + \Delta_1\min_{\lambda\in\br{\llambda_y, \ulambda_y}}\lambda\pr{f\pr{y+1}-f\pr{y}} \\
    &= f'_x\pr{y-x} + \Delta_1\min_{\lambda\in\br{\llambda_y, \ulambda_y}}\lambda\pr{f'_x\pr{y+1-x}-f'_x\pr{y-x}}
    = f'_x\pr{y-x} + \Delta_1\br{\ltro_S f'_x}\pr{y-x} \\
    &= \br{\pr{I+\Delta_1\ltro_S}f'_x}\pr{y-x}.
  \end{align*}

  Next, we fix some \(n\) in \(\nats\) with \(n\geq 2\) and assume that the stated holds for all \(n'\) in \(\nats\) with \(1\leq n'<n\).
  We now show that this implies that the stated holds for \(n\) as well.
  Let \(g\coloneqq\prod_{i=2}^n\pr{I+\Delta_i\ltro_S} f\).
  Then
  \begin{equation}
  \label{eqn:Proof of state homogeneity of I+delta trm_lambda:equality}
    \br*{\prod_{i=1}^n\pr{I+\Delta_i\ltro_S} f}\pr{y}
    = \br{\pr{I+\Delta_1\ltro_S} g}\pr{y}
    = \br{\pr{I+\Delta_1\ltro_S} g'_x}\pr{y-x},
  \end{equation}
  where we let \(g'_x\colon\stsp\to\reals\colon z\mapsto g'_x\pr{z}\coloneqq g\pr{z+x}\) and where the second equality follows from the induction hypothesis for \(n'=1\).
  Observe now that, for any \(z\) in \(\stsp\),
  \begin{equation*}
    g'_x\pr{z}
    = g\pr{z+x}
    = \br*{\prod_{i=2}^n\pr{I+\Delta_i\ltro_S} f}\pr{z+x}
    = \br*{\prod_{i=2}^n\pr{I+\Delta_i\ltro_S} f'_x}\pr{z},
  \end{equation*}
  where the third equality follows from the induction hypothesis for \(n'=n-1\).
  Since this holds for all \(z\), this implies that \(g'_x=\prod_{i=2}^n\pr{I+\Delta_i\ltro_S} f'_x\).
  We now substitute this equality in Equation~\eqref{eqn:Proof of state homogeneity of I+delta trm_lambda:equality} to obtain the stated:
  \begin{equation*}
    \br*{\prod_{i=1}^n\pr{I+\Delta_i\ltro_S} f}\pr{y}
    = \br*{\pr{I+\Delta_1\ltro_S} \prod_{i=2}^n\pr{I+\Delta_i\ltro_S} f'_x}\pr{y-x}
    = \br*{\prod_{i=1}^n\pr{I+\Delta_i\ltro_S} f'_x}\pr{y-x}.
  \end{equation*}
\end{proof}
\begin{lemma}
\label{lem:Phi_u:Value of f does not matter above treshold}
  Consider a sequence \(S=\set{\pr{\llambda_x,\ulambda_x}}_{x\in\stsp}\) in \(\setoflambdaseq_{\Lambda}\).
  Fix some \(t,s\) in \(\nnegreals\) with \(t\leq s\), a sequence \(u=t_0, \dots, t_n\) in \(\setoftseq_{\br{t,s}}\) with \(\sigma\pr{u}\norm{\ltro_S}\leq2\) and an \(f\) in \(\setoffn\pr{\stsp}\).
  Then for any \(x,y\) in \(\stsp\) with \(y\geq x+n\),
  \begin{equation*}
    \br{\Phi_u f}\pr{x}
    = \br{\Phi_u \pr{\indica{\leq y}f + f\pr{y}\indica{>y}}}\pr{x}.
  \end{equation*}
\end{lemma}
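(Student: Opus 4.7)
The plan is to prove this by induction on \(n\), exploiting the ``one‐step look‐ahead'' nature of the Poisson generator \(\ltro_S\). The crux is that, by combining \ref{prop:GenPoisGen:Counting} and \ref{prop:GenPoisGen:Values above do not matter}, the value \(\br{\ltro_S g}\pr{z}\) depends on \(g\) only through \(g\pr{z}\) and \(g\pr{z+1}\); more explicitly, \(\br{\ltro_S g}\pr{z} = \min\set{\lambda g\pr{z+1}-\lambda g\pr{z} \colon \lambda\in\br{\llambda_z,\ulambda_z}}\). Hence each factor \(\pr{I+\Delta_i\ltro_S}\) appearing in \(\Phi_u\) ``looks one step ahead'', and after composing \(n\) such factors the value at \(x\) should depend only on values of \(f\) on \(\set{x, x+1, \dots, x+n}\).

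Concretely, I will first establish the following auxiliary claim by induction on \(n\): for any \(f, g\) in \(\setoffn\pr{\stsp}\) and any \(x\) in \(\stsp\), if \(f\pr{z}=g\pr{z}\) for all \(z\) in \(\set{x, x+1, \dots, x+n}\), then \(\br{\Phi_u f}\pr{x}=\br{\Phi_u g}\pr{x}\). The base case \(n=0\) is trivial because \(\Phi_u = I\) by Equation~\eqref{eqn:GenPoisGen:Phi_u}, so both sides equal \(f\pr{x}=g\pr{x}\). For the induction step, suppose the claim holds for sequences with \(n-1\) intervals, and let \(u=t_0,\dots, t_n\). Write \(\Phi_u=\pr{I+\Delta_1\ltro_S}\Phi_{u'}\), where \(u'\coloneqq t_1,\dots,t_n\) is a sequence with \(n-1\) intervals and \(\sigma\pr{u'}\leq\sigma\pr{u}\). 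Then
\begin{equation*}
  \br{\Phi_u f}\pr{x}
  = \br{\Phi_{u'} f}\pr{x} + \Delta_1\min_{\lambda\in\br{\llambda_x,\ulambda_x}}\lambda\pr*{\br{\Phi_{u'} f}\pr{x+1}-\br{\Phi_{u'} f}\pr{x}},
\end{equation*}
which depends on \(f\) only through \(\br{\Phi_{u'} f}\pr{x}\) and \(\br{\Phi_{u'} f}\pr{x+1}\). By the induction hypothesis applied at \(x\) and at \(x+1\) to \(u'\), these two numbers depend on \(f\) only through its restriction to \(\set{x, \dots, x+n-1}\) and to \(\set{x+1, \dots, x+n}\) respectively, and hence in total only through its restriction to \(\set{x, \dots, x+n}\). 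This establishes the auxiliary claim.

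The lemma now follows immediately: the function \(g\coloneqq\indica{\leq y}f+f\pr{y}\indica{>y}\) satisfies \(g\pr{z}=f\pr{z}\) for all \(z\) in \(\set{0, 1, \dots, y}\), which contains \(\set{x, x+1, \dots, x+n}\) since \(y\geq x+n\); applying the auxiliary claim yields \(\br{\Phi_u f}\pr{x}=\br{\Phi_u g}\pr{x}\), as required.

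I do not anticipate any serious obstacle; the argument is essentially combinatorial bookkeeping. The only point requiring a small amount of care is making explicit that the induction hypothesis must be invoked \emph{twice} per step (once at \(x\) and once at \(x+1\)) to cover both arguments of the ``one‐step look-ahead'' applied by \(I+\Delta_1\ltro_S\); this is what forces the ``window'' \(\set{x, \dots, x+n}\) to grow by one with each additional factor.
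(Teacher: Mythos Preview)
Your proof is correct and follows essentially the same approach as the paper's: induction on \(n\), decomposing \(\Phi_u=\pr{I+\Delta_1\ltro_S}\Phi_{u'}\), and using that \(\pr{I+\Delta_1\ltro_S}\) at \(x\) looks only at values at \(x\) and \(x+1\). Your auxiliary ``window'' claim is a slightly cleaner packaging of the same idea---the paper inducts directly on the statement with the particular truncation \(f_y\) and invokes the \(n'=1\) and \(n'=n-1\) cases separately---but the substance is identical.
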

\begin{proof}
  In order to simplify our notation, for any \(y\) in \(\stsp\) we let \(f_y\coloneqq\indica{\leq y}f+f\pr{y}\indica{>y}\).
  Observe that \(f_y\) is eventually constant starting from \(y\) by construction, and that \(f_y\pr{x}=f\pr{x}\) for all \(x\) in \(\stsp\) such that \(x\leq y\).

  We first consider the case \(t=s\).
  In this case, we have \(n=0\) and \(\Phi_u=I\).
  Fix any \(y\geq x+n=x\).
  We immediately see that
  \begin{equation*}
    \br{\Phi_u f}\pr{x}
    = \br{I f}\pr{x}
    = f\pr{x}
    = \indica{\leq y}\pr{x}f\pr{x} + \indica{>y}\pr{x}f\pr{y}
    = f_y\pr{x}
    = \br{I f_y}\pr{x}
    = \br{\Phi_u f_y}\pr{x}
    = \br{\Phi_u \pr{\indica{\leq y}f+f\pr{y}\indica{>y}}}\pr{x},
  \end{equation*}
  as required.

  Next, we consider the case \(t<s\).
  We will prove the stated by induction.
  Assume first that \(n=1\).
  Fix any \(y\geq x+n=x+1\).
  Observe that
  \begin{equation*}
    \br{\Phi_u f}\pr{x}
    = \br{\pr{I+\Delta_1 \ltro_S}f}\pr{x}
    = f\pr{x} + \Delta_1 \br{\ltro_S f}\pr{x}.
  \end{equation*}
  Recall from the beginning of this proof that \(f\pr{x}=f_y\pr{x}\) by construction.
  Furthermore, because \(\indica{\leq x+1}f=\indica{\leq x+1}f_y\), it follows from Proposition~\ref{prop:GenPoisGen properties}~\ref{prop:GenPoisGen:Values above do not matter} that \(\br{\ltro_S f}\pr{x}=\br{\ltro_S \pr{\indica{\leq x+1}f}}\pr{x}=\br{\ltro_S \pr{\indica{\leq x+1}f_y}}\pr{x}=\br{\ltro_S f_y}\pr{x}\).
  Hence,
  \begin{equation*}
    \br{\Phi_u f}\pr{x}
    = f\pr{x} + \Delta_1 \br{\ltro_S f}\pr{x}
    = f_y\pr{x} + \Delta_1 \br{\ltro_S f_y}\pr{x}
    = \br{\pr{I+\Delta_1\ltro_S} f_y}\pr{x}
    = \br{\Phi_u f_y}\pr{x},
  \end{equation*}
  as required.

  Fix now any \(n\) in \(\nats\) with \(n\geq2\), and assume that the stated holds for all \(1\leq n'<n\).
  We now show that this implies the stated for \(n\).
  Fix any \(y\) in \(\stsp\) with \(y\geq x+n\).
  Let \(v\coloneqq t_1, \dots, t_n\).
  Then
  \begin{equation}
  \label{eqn:Proof of Phi_u:Value of f does not matter above treshold:intermed}
    \br{\Phi_u f}\pr{x}
    = \br{\pr{I+\Delta_1\ltro_S} \Phi_v f}\pr{x}
    = \br{\pr{I+\Delta_1\ltro_S} g}\pr{x}
    = \br{\pr{I+\Delta_1\ltro_S}\pr{\indica{\leq x+1} g + \indica{>x+1}g\pr{x+1}}}\pr{x},
  \end{equation}
  where we let \(g\coloneqq\Phi_v f\) and where the final equality follows from the induction hypothesis with \(n'=1\).

  It now follows from the induction hypothesis with \(n'=n-1\) that, for any \(z\) in \(\stsp\) such that \(y\geq z+n-1\) (or equivalently, \(z\leq y-n+1\)), \(\br{\Phi_v f}\pr{z}=\br{\Phi_v f_y}\pr{z}\).
  As furthermore \(x\leq y-n\) by assumption, we conclude from this that, for any \(z\) in \(\stsp\) such that \(z\leq x+1\leq y-n+1\), \(\br{\Phi_v f}\pr{z}=\br{\Phi_v f_y}\pr{z}\).
  Consequently,
  \begin{equation*}
    \indica{\leq x+1} g + \indica{>x+1}g\pr{x+1}
    = \indica{\leq x+1} \pr{\Phi_v f} + \indica{>x+1}\br{\Phi_v f}\pr{x+1}
    = \indica{\leq x+1} \pr{\Phi_v f_y} + \indica{>x+1}\br{\Phi_v f_y}\pr{x+1}.
  \end{equation*}
  We now substitute this equality in Equation~\eqref{eqn:Proof of Phi_u:Value of f does not matter above treshold:intermed}, to yield
  \begin{equation*}
    \br{\Phi_u f}\pr{x}
    = \br{\pr{I+\Delta_1\ltro_S}\pr{\indica{\leq x+1} \Phi_v f_y + \indica{>x+1}\br{\Phi_v f_y}\pr{x+1}}}\pr{x}.
  \end{equation*}
  We now invoke the induction hypothesis with \(n'=1\) for the second time, to yield
  \begin{equation*}
    \br{\Phi_u f}\pr{x}
    = \br{\pr{I+\Delta_1\ltro_S}\pr{\indica{\leq x+1} \Phi_v f_y + \indica{>x+1}\br{\Phi_v f_y}\pr{x+1}}}\pr{x}
    = \br{\pr{I+\Delta_1\ltro_S} \Phi_v f_y}\pr{x}
    = \br{\Phi_u f_y}\pr{x}.
  \end{equation*}
\end{proof}

\subsection{The Corresponding Semi-Group of Lower Counting Transformations}
Let \(S=\set{\pr{\llambda_x,\ulambda_x}}_{x\in\stsp}\) be a sequence in \(\setoflambdaseq_{\Lambda}\).
Due to Theorems~\ref{the:GenPoisGen:Phi_u_i converges to a LCT} and \ref{the:LCT induced by GenPoisGen}, for any \(t,s\) in \(\nnegreals\) with \(t\leq s\), we can uniquely define the corresponding lower counting transformation
\begin{equation*}
\label{eqn:LCT by GenPoisGen}
  \lto_t^s
  \coloneqq \lim_{\sigma\pr{u}\to0} \set{\Phi_u\colon u\in\setoftseq_{\br{t,s}}}.
\end{equation*}
As explained right after Equation~\eqref{eqn:LTO gen by PoisGen}, this unconventional notation for the limit is used to emphasise that the limit does \emph{not} depend on the chosen sequence \(\set{u_i}_{i\in\nats}\) in \(\setoftseq_{\br{t,s}}\) so long as \(\lim_{i\to+\infty}\sigma\pr{u_i}=0\).

This way, we have defined an entire family \(\set{\lto_t^s\colon t,s\in\nnegreals, t\leq s}\) of lower counting transformations.
The following result establishes that this family is a time-homogeneous semi-group.
\begin{proposition}
\label{prop:Properties of lto_S induced by GenPoisGen}
  Consider a sequence \(S\) in \(\setoflambdaseq_{\Lambda}\).
  Then
  \begin{enumerate}[label=\upshape(\roman*)]
    \item \label{prop:Properties of lto_S by GenPoisGen:IDentity} \(\lto_t^{t} = I\) for all \(t\) in \(\nnegreals\);
    \item \label{prop:Properties of lto_S by GenPoisGen:Semi-group} \(\lto_t^s=\lto_t^{r}\lto_r^s\) for all \(t,r,s\) in \(\nnegreals\) with \(t\leq r\leq s\);
    \item \label{prop:Properties of lto_S by GenPoisGen:Time-homogeneity}
    \(\lto_t^s=\lto_0^{s-t}\) for all \(t,s\) in \(\nnegreals\) with \(t\leq s\).
  \end{enumerate}
\end{proposition}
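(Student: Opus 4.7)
The plan is to mimic the corresponding argument for lower transition rate transformations in \cite{2017Krak} (Proposition~\ref{prop:Gen ltro^chi:semi-group properties}), exploiting the approximation~\(\Phi_u\) and the properties of the operator norm established in Appendices~\ref{app:Preliminary results regarding transformations} and \ref{sapp:GenPoisGen:To Lower Counting Transformations}.

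Claim~\ref{prop:Properties of lto_S by GenPoisGen:IDentity} is immediate: by convention, for \(t=s\) we have \(\setoftseq_{\br{t,t}}=\set{t}\), \(\sigma\pr{t}=0\), and \(\Phi_t=I\), so the limit collapses to \(I\). Claim~\ref{prop:Properties of lto_S by GenPoisGen:Time-homogeneity} follows from a time-shift bijection. Given any \(\set{u_i}_{i\in\nats}\) in \(\setoftseq_{\br{t,s}}\) with \(\sigma\pr{u_i}\to0\), I would define \(u'_i\) by subtracting \(t\) from every time point of \(u_i\); this yields a sequence in \(\setoftseq_{\br{0,s-t}}\) with the same consecutive differences \(\Delta^i_k\), and therefore \(\Phi_{u'_i}=\Phi_{u_i}\) by Equation~\eqref{eqn:GenPoisGen:Phi_u}. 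Since \(\sigma\pr{u'_i}=\sigma\pr{u_i}\to0\), invoking the uniqueness of the limit from Theorem~\ref{the:LCT induced by GenPoisGen} gives \(\lto_t^s=\lto_0^{s-t}\).

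Claim~\ref{prop:Properties of lto_S by GenPoisGen:Semi-group} is the main step. Fix any \(\set{u_i}_{i\in\nats}\) in \(\setoftseq_{\br{t,r}}\) and \(\set{v_i}_{i\in\nats}\) in \(\setoftseq_{\br{r,s}}\) with both \(\sigma\pr{u_i}\to0\) and \(\sigma\pr{v_i}\to0\). Then \(w_i\coloneqq u_i\cup v_i\in\setoftseq_{\br{t,s}}\) with \(\sigma\pr{w_i}\leq\max\set{\sigma\pr{u_i},\sigma\pr{v_i}}\to0\), and by Equation~\eqref{eqn:GenPoisGen:Phi_u} the product decomposes as \(\Phi_{w_i}=\Phi_{u_i}\Phi_{v_i}\) (the time point \(r\) appearing as the last element of \(u_i\) and the first of \(v_i\) contributes no extra factor). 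Using the triangle inequality, \ref{BNH:norm A B <= norm A norm B}, \ref{LTT:Composition with nneg hom}, and \ref{LTT:Norm is lower than one}, I would then bound
\begin{equation*}
  \norm{\lto_t^s - \lto_t^r \lto_r^s}
  \leq \norm{\lto_t^s - \Phi_{w_i}}
  + \norm{\Phi_{u_i}}\norm{\Phi_{v_i}-\lto_r^s}
  + \norm{\Phi_{u_i}-\lto_t^r}\norm{\lto_r^s}.
\end{equation*}
Each term on the right vanishes as \(i\to+\infty\): the first by Theorem~\ref{the:LCT induced by GenPoisGen} applied on \(\br{t,s}\), the other two by Theorem~\ref{the:LCT induced by GenPoisGen} applied on \(\br{t,r}\) and \(\br{r,s}\), together with \(\norm{\Phi_{u_i}}\leq1\) (since \(\Phi_{u_i}\) is eventually a lower counting transformation by Corollary~\ref{cor:Phi_u is LTT} and hence has norm at most one by \ref{LTT:Norm is lower than one}) and the analogous bound \(\norm{\lto_r^s}\leq1\). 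Since the left-hand side is independent of \(i\), we conclude \(\lto_t^s=\lto_t^r\lto_r^s\).

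The main obstacle is really just bookkeeping: confirming that \(\Phi_{u_i\cup v_i}\) factors exactly as \(\Phi_{u_i}\Phi_{v_i}\) in the convention of Equation~\eqref{eqn:GenPoisGen:Phi_u} (which requires that \(r\) appears in both sub-sequences as a shared endpoint, contributing no spurious \(\Delta=0\) factor), and justifying that \(\Phi_{u_i}\) can be treated as having norm at most one for all sufficiently large \(i\). Both are straightforward once \(i\) is large enough that \(\sigma\pr{u_i}\norm{\ltro_S}\leq 2\) and \(\sigma\pr{v_i}\norm{\ltro_S}\leq 2\), which is guaranteed by \(\sigma\pr{u_i}\to0\) and \(\sigma\pr{v_i}\to0\).
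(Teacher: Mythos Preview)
Your proposal is correct and follows essentially the same approach as the paper: part~(i) is identical, part~(iii) uses the same time-shift argument, and for part~(ii) the paper also factors \(\Phi_{u\cup v}=\Phi_u\Phi_v\) and bounds \(\norm{\lto_t^s-\lto_t^r\lto_r^s}\) via the triangle inequality, though it invokes Lemma~\ref{lem: bound on norm of A_1 ... A_k - B_1 ... B_k} with \(n=2\) rather than telescoping by hand. One small nitpick: the factor \(\norm{\Phi_{u_i}}\) in your middle term is not quite what \ref{LTT:Composition with nneg hom} yields (it gives the bound \(\norm{\Phi_{v_i}-\lto_r^s}\) with coefficient \(1\), not \(\norm{\Phi_{u_i}}\)), but since you cite \ref{LTT:Composition with nneg hom} anyway and the term still vanishes, this does not affect the argument.
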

\begin{proof}
  The proofs of these properties are almost entirely the same as their counterparts in \cite{2017Krak}.
  \begin{enumerate}[label=\upshape(\roman*)]
    \item This is an immediate consequence of Theorem~\ref{the:LCT induced by GenPoisGen} because \(\setoftseq_{\br{t,t}}=\set{t}\), \(\sigma\pr{t}=0\) and \(\Phi_t=I\).
    \item Fix any arbitrary \(\epsilon\) in \(\posreals\).
    By Theorem~\ref{the:LCT induced by GenPoisGen}, there are sequences \(u_1\) in \(\setoftseq_{\br{t,r}}\) and \(u_2\) in \(\setoftseq_{\br{r,s}}\) such that (i) \(\norm{\lto_t^r-\Phi_{u_1}}\leq \epsilon/3\) and \(\sigma\pr{u_1}\norm{\ltro_S}\leq2\); (ii) \(\norm{\lto_r^s-\Phi_{u_2}}\leq \epsilon/3\) and \(\sigma\pr{u_2}\norm{\ltro_S}\leq2\); and (iii) \(\norm{\lto_t^s-\Phi_u}\leq \epsilon/3\) and \(\sigma\pr{u}\norm{\ltro_S}\leq2\), where \(u\coloneqq u_1\cup u_2\) is an element of \(\setoftseq_{\br{t,s}}\).
    Observe that
    \begin{align*}
      \norm{\lto_t^s-\lto_t^r\lto_r^s}
      &= \norm{\lto_t^s-\Phi_u+\Phi_{u_1}\Phi_{u_2}-\lto_t^r\lto_r^s}
      \leq \norm{\lto_t^s-\Phi_u} +\norm{\Phi_{u_1}\Phi_{u_2}-\lto_t^r\lto_r^s} \\
      &\leq \norm{\lto_t^s-\Phi_u} +\norm{\Phi_{u_1}-\lto_t^r}+\norm{\Phi_{u_2}-\lto_r^s}
      \leq 3\frac{\epsilon}3
      = \epsilon,
    \end{align*}
    where the first inequality follows from the triangle inequality and the second inequality follows from Lemma~\ref{lem: bound on norm of A_1 ... A_k - B_1 ... B_k} and Corollary~\ref{cor:Phi_u is LTT}.
    Because \(\epsilon\) was any arbitrary positive real number, we conclude from this inequality that the stated holds.
    \item Fix any sequence \(\set{u_i}_{i\in\nats}\) in \(\setoftseq_{\br{t,s}}\) such that \(\lim_{i\to+\infty}\sigma\pr{u_i}=0\).
    For any \(i\) in \(\nats\), we define \(u'_i\coloneqq t_0-t, t_1-t, \dots, t_n-t\), with \(u_i=t_0, \dots, t_n\).
    Observe that, by construction, \(\Phi_{u_i}-\Phi_{u'_i}\) for all \(i\) in \(\nats\).
    Because furthermore \(\lim_{i\to+\infty} \Phi_{u_i}=\lto_t^s\) and \(\lim_{i\to+\infty}\Phi_{u'_i}=\lto_0^{s-t}\) due to Theorems~\ref{the:GenPoisGen:Phi_u_i converges to a LCT} and \ref{the:LCT induced by GenPoisGen}, the stated now follows immediately.
  \end{enumerate}
\end{proof}

We conclude this section of the Appendix with some technical results regarding the family \(\set{\lto_t^s\colon t,s\in\nnegreals, t\leq s}\) of lower counting transformations.
\begin{lemma}
\label{lem:Additional properties of lto_S induced by GenPoisGen}
  Consider a sequence \(S\) in \(\setoflambdaseq_{\Lambda}\).
  Then for all \(t,s\) in \(\nnegreals\) with \(t\leq s\),
  \begin{enumerate}[label=\upshape(\roman*)]
    \item\label{lem:Additional properties of lto_S by GenPoisGen:Diff with I+Delta ltro}
    \(\norm{\lto_t^s - \pr{I+\pr{s-t}\ltro_S}}\leq\pr{s-t}^2\norm{\ltro_S}^2\);
    \item\label{lem:Additional properties of lto_S by GenPoisGen:Diff with I}
    \(\norm{\lto_t^s - I}\leq\pr{s-t}\norm{\ltro_S}\).
  \end{enumerate}
\end{lemma}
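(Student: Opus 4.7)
The plan is to prove both parts in the same way: work first at the level of a discretisation $\Phi_u$, establish the desired bound there using the tools from Appendix~\ref{app:The Generalised Poisson Generator}, and then pass to the limit as $\sigma(u)\to 0$ using Theorems~\ref{the:GenPoisGen:Phi_u_i converges to a LCT} and \ref{the:LCT induced by GenPoisGen} together with the continuity of the norm. The case $t=s$ is trivial since $\lto_t^t=I$ by Proposition~\ref{prop:Properties of lto_S induced by GenPoisGen}~\ref{prop:Properties of lto_S by GenPoisGen:IDentity} and both right-hand sides vanish, so I would assume $t<s$ throughout.

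For part (i), I would fix any sequence $\{u_k\}_{k\in\nats}$ in $\setoftseq_{[t,s]}$ with $\lim_{k\to+\infty}\sigma(u_k)=0$. For $k$ large enough we have $\sigma(u_k)\norm{\ltro_S}\leq 2$, so Lemma~\ref{lem:GenPoisGen:Delta_i vs Delta} applies to $u_k=t_0^{(k)},\dots,t_{n_k}^{(k)}$ with $\Delta=s-t$ and yields
\[
  \norm{\Phi_{u_k}-\pr{I+(s-t)\ltro_S}}
  \leq \norm{\ltro_S}^2\sum_{i=1}^{n_k}\Delta_i^{(k)}\sum_{j=i+1}^{n_k}\Delta_j^{(k)}.
\]
The key estimate is simply that the inner sum is bounded by $s-t$, so the full double sum is at most $(s-t)\sum_i\Delta_i^{(k)}=(s-t)^2$. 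Since $\Phi_{u_k}\to\lto_t^s$ by Theorem~\ref{the:GenPoisGen:Phi_u_i converges to a LCT} and the norm is continuous (it is even non-expansive on the non-negatively homogeneous transformations), passing to the limit on the left-hand side gives the stated bound.

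For part (ii), I would again fix such a sequence $\{u_k\}$ and, for each sufficiently fine $u_k$, compare the two sequences of lower transition transformations $(I+\Delta_1^{(k)}\ltro_S),\dots,(I+\Delta_{n_k}^{(k)}\ltro_S)$ and $I,\dots,I$: the former are lower transition transformations by Lemma~\ref{lem:I+Delta GenPoisGen is LTT} (using $\sigma(u_k)\norm{\ltro_S}\leq 2$), and $I$ trivially satisfies \ref{def:LTT:Non-negative homgeneity}--\ref{def:LTT:Bound}. Applying Lemma~\ref{lem: bound on norm of A_1 ... A_k - B_1 ... B_k} gives
\[
  \norm{\Phi_{u_k}-I}
  \leq \sum_{i=1}^{n_k}\norm{(I+\Delta_i^{(k)}\ltro_S)-I}
  = \norm{\ltro_S}\sum_{i=1}^{n_k}\Delta_i^{(k)}
  = (s-t)\norm{\ltro_S},
\]
and taking $k\to+\infty$ finishes the argument.

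There is no real obstacle: once one recognises that part (i) is a direct consequence of the refined telescoping estimate already established in Lemma~\ref{lem:GenPoisGen:Delta_i vs Delta}, and that part (ii) follows from the simpler telescoping bound of Lemma~\ref{lem: bound on norm of A_1 ... A_k - B_1 ... B_k}, both statements reduce to taking limits. The only small point to watch is that one must work with $k$ large enough so that $\sigma(u_k)\norm{\ltro_S}\leq 2$, which is needed both to invoke Lemma~\ref{lem:GenPoisGen:Delta_i vs Delta} and to guarantee via Lemma~\ref{lem:I+Delta GenPoisGen is LTT} that the factors $(I+\Delta_i^{(k)}\ltro_S)$ are genuine lower transition transformations; for $\norm{\ltro_S}=0$ the claims are immediate.
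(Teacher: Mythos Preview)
Your proposal is correct. For part~(i) you follow essentially the same route as the paper: both arguments apply Lemma~\ref{lem:GenPoisGen:Delta_i vs Delta} to a fine partition, bound the double sum $\sum_i\Delta_i\sum_{j>i}\Delta_j$ by $(s-t)^2$, and then absorb the discretisation error (you via $\Phi_{u_k}\to\lto_t^s$, the paper via an $\epsilon$-argument using Theorem~\ref{the:LCT induced by GenPoisGen}).

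For part~(ii) you take a genuinely different and more direct route. The paper does \emph{not} work at the level of $\Phi_u$; instead it uses part~(i) together with the semi-group property: writing $\lto_t^s=(\lto_0^{\Delta})^n$ with $\Delta=(s-t)/n$, it bounds $\norm{\lto_0^\Delta-I}$ by $\Delta^2\norm{\ltro_S}^2+\Delta\norm{\ltro_S}$ via~(i) and the triangle inequality, then applies Lemma~\ref{lem: bound on norm of A_1 ... A_k - B_1 ... B_k} to $(\lto_0^\Delta)^n-I^n$ and lets $n\to+\infty$ to kill the $\Delta^2$ term. Your approach instead compares the discretisation $\Phi_{u_k}$ directly with $I=\prod_i I$ via Lemma~\ref{lem: bound on norm of A_1 ... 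A_k - B_1 ... B_k}, which gives the sharp bound $(s-t)\norm{\ltro_S}$ in one step, without ever invoking~(i) or the semi-group structure. Your argument is shorter and makes~(ii) logically independent of~(i); the paper's argument, on the other hand, shows that~(ii) is a formal consequence of~(i) plus Proposition~\ref{prop:Properties of lto_S induced by GenPoisGen}, which is a nice structural observation.
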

\begin{proof}
  \begin{enumerate}[label=\upshape(\roman*)]
    \item Fix any arbitrary \(\epsilon\) in \(\posreals\).
    By Theorem~\ref{the:LCT induced by GenPoisGen}, there is a sequence \(u=t_0, \dots, t_n\) in \(\setoftseq_{\br{t,s}}\) such that \(\norm{\lto_t^s-\Phi_u}\leq \epsilon\) and \(\sigma\pr{u}\norm{\ltro_S}\leq2\).
    Observe that
    \begin{equation*}
      \norm{\lto_t^s-\pr{I+\pr{s-t}\ltro_S}}
      \leq \norm{\lto_t^s-\Phi_u}+\norm{\Phi_u-\pr{I+\pr{s-t}\ltro_S}}
      \leq \epsilon+\norm{\ltro_S}^2\sum_{i=1}^n \Delta_i \sum_{j=i+1}^n \Delta_j
      \leq \epsilon+\pr{s-t}^2\norm{\ltro_S}^2,
    \end{equation*}
    where the first inequality follows from the triangle inequality and the second inequality follows from Lemma~\ref{lem:GenPoisGen:Delta_i vs Delta}.
    The stated now holds because this inequality holds for any arbitrary positive real number \(\epsilon\).
    \item Fix any \(n\) in \(\nats\) such that \(\pr{s-t}\norm{\ltro_S}\leq 2n\), and let \(\Delta\coloneqq\pr{s-t}/n\).
    Observe that
    \begin{equation*}
      \norm{\lto_0^\Delta - I}
      = \norm{\lto_0^\Delta-\pr{I+\Delta\ltro_S}+\Delta\ltro_S}
      \leq \norm{\lto_0^\Delta-\pr{I+\Delta\ltro_S}}+\Delta\norm{\ltro_S}
      \leq \Delta^2\norm{\ltro_S}^2+\Delta\norm{\ltro_S},
    \end{equation*}
    where the first inequality follows from the triangle inequality and the second inequality follows from \ref{lem:Additional properties of lto_S by GenPoisGen:Diff with I+Delta ltro}.
    By repeatedly applying Proposition~\ref{prop:Properties of lto_S induced by GenPoisGen}~\ref{prop:Properties of lto_S by GenPoisGen:Semi-group} and Proposition~\ref{prop:Properties of lto_S induced by GenPoisGen}~\ref{prop:Properties of lto_S by GenPoisGen:Time-homogeneity}, we obtain that \(\lto_t^s=\pr{\lto_0^\Delta}^n\).
    We combine our findings, to yield
    \begin{equation*}
      \norm{\lto_t^s-I}
      = \norm*{\pr{\lto_0^\Delta}^n-I^n}
      \leq n\norm{\lto_0^\Delta-I}
      \leq n\Delta^2\norm{\ltro_S}^2+n\Delta\norm{\ltro_S}
      = \frac{\pr{s-t}^2}{n}\norm{\ltro_S}^2+\pr{s-t}\norm{\ltro_S},
    \end{equation*}
    where the first inequality follows from Lemma~\ref{lem: bound on norm of A_1 ... A_k - B_1 ... B_k}.
    The stated now follows if we take the limit for \(n\) going to \(+\infty\).
  \end{enumerate}
\end{proof}
The second of these technical results establishes an upper bound on the error made by approximating \(\lto_t^s\) by \(\Phi_u\).
\begin{lemma}
\label{lem:diff between lto_S and Phi_u}
  Fix a sequence \(S\) in \(\setoflambdaseq_{\Lambda}\), some \(t,s\) in \(\nnegreals\) with \(t\leq s\) and a sequence \(u\) in \(\setoftseq_{\br{t,s}}\).
  If \(\sigma\pr{u}\norm{\ltro_S}\leq2\), then
  \begin{equation*}
    \norm{\lto_t^s-\Phi_u}
    \leq\sigma\pr{u}\pr{s-t}\norm{\ltro_S}^2.
  \end{equation*}
\end{lemma}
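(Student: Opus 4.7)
The plan is to derive the bound by combining two ingredients already established in the appendix: (i) Corollary~\ref{cor:GenPoisGen:Phi_u vs Phi_u'}, which controls the distance between $\Phi_u$ and $\Phi_{u'}$ whenever $u'$ refines $u$; and (ii) Theorem~\ref{the:LCT induced by GenPoisGen}, which says that $\lto_t^s$ can be approximated arbitrarily well by $\Phi_{u'}$ for partitions $u'$ of sufficiently small mesh. The sequence $u$ in the statement is fixed and not necessarily fine enough on its own to make $\Phi_u$ close to $\lto_t^s$, so we will compare both $\Phi_u$ and $\lto_t^s$ to a common refinement.

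First I would fix an arbitrary $\epsilon$ in $\posreals$. By Theorem~\ref{the:LCT induced by GenPoisGen} there exists some $\delta$ in $\posreals$ such that $\norm{\lto_t^s - \Phi_{v}} \leq \epsilon$ for every $v$ in $\setoftseq_{[t,s]}$ with $\sigma(v) \leq \delta$. Now I would construct a refinement $u^\star \coloneqq u \cup v^\star$ of $u$, where $v^\star$ is any sequence in $\setoftseq_{[t,s]}$ with mesh at most $\delta$; then $u^\star \in \setoftseq_{[t,s]}$ satisfies $u \subseteq u^\star$ and $\sigma(u^\star) \leq \min\{\sigma(u), \sigma(v^\star)\} \leq \delta$. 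In particular $\sigma(u^\star)\norm{\ltro_S} \leq \sigma(u)\norm{\ltro_S} \leq 2$, so Corollary~\ref{cor:GenPoisGen:Phi_u vs Phi_u'} is applicable and yields
\[
  \norm{\Phi_u - \Phi_{u^\star}} \leq \sigma(u)(s-t)\norm{\ltro_S}^2.
\]

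Next I would apply the triangle inequality together with the previous two estimates:
\[
  \norm{\lto_t^s - \Phi_u}
  \leq \norm{\lto_t^s - \Phi_{u^\star}} + \norm{\Phi_{u^\star} - \Phi_u}
  \leq \epsilon + \sigma(u)(s-t)\norm{\ltro_S}^2.
\]
Since $\epsilon$ was an arbitrary positive real number, I can let $\epsilon \to 0$ to conclude that $\norm{\lto_t^s - \Phi_u} \leq \sigma(u)(s-t)\norm{\ltro_S}^2$, which is the desired bound.

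The argument is almost entirely bookkeeping; the only point one has to be careful about is ensuring that the refinement $u^\star$ lies in $\setoftseq_{[t,s]}$ and inherits a mesh small enough both to apply the approximation theorem (so that $\sigma(u^\star) \leq \delta$) and to remain bounded above by $\sigma(u)$ (so that Corollary~\ref{cor:GenPoisGen:Phi_u vs Phi_u'} applies with the constant $\sigma(u)$ rather than $\sigma(u^\star)$). Taking $u^\star$ to be the union of $u$ with \emph{any} sufficiently fine sequence in $\setoftseq_{[t,s]}$ handles both requirements simultaneously, so no real obstacle arises.
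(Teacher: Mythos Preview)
Your proof is correct and follows essentially the same route as the paper: fix $\epsilon>0$, pick a refinement $u^\star\supseteq u$ whose mesh is small enough that $\norm{\lto_t^s-\Phi_{u^\star}}\leq\epsilon$, then combine the triangle inequality with Corollary~\ref{cor:GenPoisGen:Phi_u vs Phi_u'} and let $\epsilon\to 0$. The only difference is that the paper states directly that such a $u_\epsilon\supseteq u$ exists, whereas you spell out its construction as $u\cup v^\star$; this is a minor expository difference, not a different argument.
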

\begin{proof}
  Our proof is entirely similar to that of \cite[Lemma~E.8]{2017Krak}.
  Fix any \(\epsilon\) in \(\posreals\).
  By Theorem~\ref{the:LCT induced by GenPoisGen}, there is a \(u_\epsilon\) in \(\setoftseq_{\br{t,s}}\) such that \(u\subseteq u_{\epsilon}\) and \(\norm{\lto_t^s-\Phi_{u_\epsilon}}\leq\epsilon\).
  From this, the triangle inequality and Corollary~\ref{cor:GenPoisGen:Phi_u vs Phi_u'}, it follows that
  \begin{equation*}
    \norm{\lto_t^s-\Phi_u}
    = \norm{\lto_t^s-\Phi_{u_\epsilon}+\Phi_{u_\epsilon}-\Phi_u}
    \leq \norm{\lto_t^s-\Phi_{u_\epsilon}}+\norm{\Phi_{u_\epsilon}-\Phi_u}
    \leq \epsilon + \sigma\pr{u}\pr{t-s}\norm{\ltro_S}^2.
  \end{equation*}
  The stated now follows because \(\epsilon\) is an arbitrary positive real number.
\end{proof}
Our next technical result can be interpreted as dealing with the ``time-derivative'' of \(\lto_t^s\), as is explained by \citet[Right after Proposition~7.15]{2017Krak}.
\begin{lemma}
\label{lem:ltro satisfies differential equation}
  Consider a sequence \(S\) in \(\setoflambdaseq_{\Lambda}\).
  Then for any \(t,s\) in \(\nnegreals\) with \(t\leq s\),
  \begin{equation*}
    \pr{\forall\epsilon\in\posreals}
    \pr{\exists\delta\in\posreals}
    \pr{\forall\Delta\in\reals, 0<\abs{\Delta}<\delta, 0\leq t+\Delta\leq s}~
    \norm*{\frac{\lto_{t+\Delta}^s-\lto_t^s}{\Delta}+\ltro_S \lto_t^s}
    \leq\epsilon.
  \end{equation*}
  and
  \begin{equation*}
    \pr{\forall\epsilon\in\posreals}
    \pr{\exists\delta\in\posreals}
    \pr{\forall\Delta\in\reals, 0<\abs{\Delta}<\delta, t\leq s+\Delta}~
    \norm*{\frac{\lto_t^{s+\Delta}-\lto_t^s}{\Delta}-\ltro_S \lto_t^s}
    \leq\epsilon.
  \end{equation*}
\end{lemma}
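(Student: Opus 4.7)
The plan is to derive each of the four subcase bounds (two equations, each with $\Delta>0$ and $\Delta<0$) by decomposing the perturbed operator through the semi-group property (Proposition~\ref{prop:Properties of lto_S induced by GenPoisGen}~\ref{prop:Properties of lto_S by GenPoisGen:Semi-group}) together with time-homogeneity (Proposition~\ref{prop:Properties of lto_S induced by GenPoisGen}~\ref{prop:Properties of lto_S by GenPoisGen:Time-homogeneity}), and then applying the first-order approximation of Lemma~\ref{lem:Additional properties of lto_S induced by GenPoisGen}~\ref{lem:Additional properties of lto_S by GenPoisGen:Diff with I+Delta ltro}, which (via time-homogeneity) gives $\lto_r^{r+h} = I + h\,\ltro_S + R_h$ with $\norm{R_h}\leq h^2\norm{\ltro_S}^2$ for every $h\geq 0$ and every admissible $r$. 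In each subcase I will choose the decomposition so that $\ltro_S$ materialises on the side demanded by the statement.

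For the first equation with $\Delta>0$ (so $t+\Delta\leq s$), apply the semi-group identity $\lto_t^s = \lto_t^{t+\Delta}\lto_{t+\Delta}^s$ to obtain
\[
  \lto_{t+\Delta}^s - \lto_t^s = -(\lto_t^{t+\Delta}-I)\lto_{t+\Delta}^s = -\Delta\,\ltro_S\lto_{t+\Delta}^s - R_\Delta\lto_{t+\Delta}^s,
\]
whose remainder has norm at most $\Delta^2\norm{\ltro_S}^2$ by \ref{BNH:norm A B <= norm A norm B} and \ref{LTT:Norm is lower than one}. The same identity combined with Lemma~\ref{lem:Additional properties of lto_S induced by GenPoisGen}~\ref{lem:Additional properties of lto_S by GenPoisGen:Diff with I} also gives $\norm{\lto_{t+\Delta}^s-\lto_t^s}\leq\Delta\norm{\ltro_S}$, so Corollary~\ref{cor:GenPoisGen:A-B} lets me replace $\lto_{t+\Delta}^s$ by $\lto_t^s$ inside $\ltro_S$ at a further cost of $\Delta^2\norm{\ltro_S}^2$; dividing by $\Delta$ produces a bound of order $\Delta\norm{\ltro_S}^2$. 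For $\Delta<0$ (set $\Delta'\coloneqq-\Delta$, with $\Delta'\leq t$), use the symmetric identity $\lto_{t-\Delta'}^s = \lto_{t-\Delta'}^t\lto_t^s$; the first-order step now directly yields $\lto_{t+\Delta}^s - \lto_t^s = \Delta'\,\ltro_S\lto_t^s + R_{\Delta'}\lto_t^s$, and no replacement step is needed.

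For the second equation I exploit time-homogeneity to route $\ltro_S$ onto the left of $\lto_t^s$. For $\Delta>0$, decompose $\lto_t^{s+\Delta} = \lto_t^{t+\Delta}\lto_{t+\Delta}^{s+\Delta}$ and identify $\lto_{t+\Delta}^{s+\Delta} = \lto_t^s$ by time-homogeneity, giving
\[
  \lto_t^{s+\Delta} - \lto_t^s = (\lto_t^{t+\Delta}-I)\lto_t^s = \Delta\,\ltro_S\lto_t^s + R_\Delta\lto_t^s,
\]
with remainder of norm at most $\Delta^2\norm{\ltro_S}^2$. For $\Delta<0$ (set $\Delta'\coloneqq-\Delta\leq s-t$), expand $\lto_t^s = \lto_t^{t+\Delta'}\lto_{t+\Delta'}^s$ and use time-homogeneity to identify $\lto_{t+\Delta'}^s = \lto_t^{s-\Delta'} = \lto_t^{s+\Delta}$; the first-order step then yields $\lto_t^s - \lto_t^{s+\Delta} = \Delta'\,\ltro_S\lto_t^{s+\Delta} + R_{\Delta'}\lto_t^{s+\Delta}$, and I replace $\lto_t^{s+\Delta}$ by $\lto_t^s$ via Corollary~\ref{cor:GenPoisGen:A-B} together with the continuity bound $\norm{\lto_t^s-\lto_t^{s+\Delta}}\leq\Delta'\norm{\ltro_S}$ (which follows from $\lto_t^s = \lto_t^{s+\Delta}\lto_{s+\Delta}^s$ and Lemma~\ref{lem:Additional properties of lto_S induced by GenPoisGen}~\ref{lem:Additional properties of lto_S by GenPoisGen:Diff with I}).

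The main obstacle is conceptual rather than computational: although the non-linear generator $\ltro_S$ does not a priori commute with $\lto_t^s$, each of the four subcases can be arranged so that $\ltro_S$ emerges on precisely the side required by the statement, by carefully choosing whether to split the interval at its left or its right endpoint. Once this routing is fixed, every remaining error term is of order $\abs{\Delta}^2\norm{\ltro_S}^2$, so choosing $\delta\coloneqq\epsilon/(2\norm{\ltro_S}^2)$ (with the degenerate case $\ltro_S = 0$ treated trivially) makes the displayed norm at most $\epsilon$.
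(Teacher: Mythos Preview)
Your proof is correct. It relies on the same ingredients as the paper's argument---the semi-group property, time-homogeneity, the first-order bound of Lemma~\ref{lem:Additional properties of lto_S induced by GenPoisGen}~\ref{lem:Additional properties of lto_S by GenPoisGen:Diff with I+Delta ltro}, and Corollary~\ref{cor:GenPoisGen:A-B}---and arrives at the same $2\abs{\Delta}\norm{\ltro_S}^2$ estimate.

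The organisation differs, however. You treat the four subcases separately, choosing in each one the decomposition that places $\ltro_S$ on the required side; this makes two of the subcases (first equation with $\Delta<0$, second equation with $\Delta>0$) very short since no replacement step is needed. The paper instead unifies the two signs of $\Delta$ in the first equation by introducing $t^\star\coloneqq\max\{t,t+\Delta\}$ and factoring through $\lto_{t^\star}^s$, and then derives the second equation from the first by a time-translation trick: writing $t'\coloneqq t+\tau$, $s'\coloneqq s+\tau$ for arbitrary $\tau>0$, applying the first equation at $(t',s')$ with $\Delta'=-\Delta$, and invoking time-homogeneity. Your route is more explicit and arguably easier to follow step by step; the paper's route is more compact and avoids repeating the replacement argument in the second equation.
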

\begin{proof}
  The proof is just the proof of \citeauthor{2017Krak} for \cite[Proposition~7.15]{2017Krak} with some obvious modifications.
  Fix any \(\epsilon\) in \(\posreals\), and fix any \(\delta\) in \(\posreals\) such that \(2\delta\norm{\ltro_S}^2\leq\epsilon\).
  Consider now any \(\Delta\) in \(\reals\) such that \(0<\abs{\Delta}<\delta\) and \(0\leq t+\Delta\leq s\).
  If we let \(t^\star\coloneqq\max\set{t,t+\Delta}\), then
  \begin{equation*}
    \norm{\lto_{t+\Delta}^s-\lto_t^s + \Delta\ltro_S\lto_t^s}
    = \norm{\lto_{t^\star}^s - \lto_{t^\star-\abs{\Delta}}^s + \abs{\Delta}\ltro_S\lto_t^s}
    = \norm{\lto_{t^\star}^s - \lto_{t^\star-\abs{\Delta}}^{t^\star}\lto_{t^\star}^s + \abs{\Delta}\ltro_S\lto_t^{t^\star}\lto_{t^\star}^s},
  \end{equation*}
  where the last equality follows Proposition~\ref{prop:Properties of lto_S induced by GenPoisGen}~\ref{prop:Properties of lto_S by GenPoisGen:Semi-group} because \(t\leq t^\star\leq s\).
  We now use \ref{BNH:norm A B <= norm A norm B} and \ref{LTT:Norm is lower than one}, to yield
  \begin{equation*}
    \norm{\lto_{t+\Delta}^s-\lto_t^s + \Delta\ltro_S\lto_t^s}
    \leq \norm{I - \lto_{t^\star-\abs{\Delta}}^{t^\star} + \abs{\Delta}\ltro_S\lto_t^{t^\star}}\norm{\lto_{t^\star}^s}
    \leq \norm{I - \lto_{t^\star-\abs{\Delta}}^{t^\star} + \abs{\Delta}\ltro_S\lto_t^{t^\star}}.
  \end{equation*}
  Further manipulations now yield
  \begin{align*}
    \norm{\lto_{t+\Delta}^s-\lto_t^s + \Delta\ltro_S\lto_t^s}
    &\leq \norm{I - \lto_{t^\star-\abs{\Delta}}^{t^\star} +\abs{\Delta} \ltro_S -\abs{\Delta}\ltro_S + \abs{\Delta}\ltro_S\lto_t^{t^\star}} \\
    &\leq \norm{I+\abs{\Delta}\ltro_S - \lto_{t^\star-\abs{\Delta}}^{t^\star}} + \norm{\abs{\Delta}\ltro_S - \abs{\Delta}\ltro_S\lto_t^{t^\star}} \\
    &\leq \norm{I+\abs{\Delta}\ltro_S - \lto_{t^\star-\abs{\Delta}}^{t^\star}} + \abs{\Delta}\norm{\ltro_S}\norm{\lto_t^{t^\star}-I} \\
    &\leq \Delta^2\norm{\ltro_S}^2 + \abs{\Delta}\norm{\ltro_S}\norm{\lto_t^{t^\star}-I}
    \leq \Delta^2\norm{\ltro_S}^2 + \abs{\Delta}\pr{t^\star-t}\norm{\ltro_S} \\
    &\leq 2\Delta^2\norm{\ltro_S}^2,
  \end{align*}
  where the third inequality follows from Corollary~\ref{cor:GenPoisGen:A-B}, the fourth inequality follows from Lemma~\ref{lem:Additional properties of lto_S induced by GenPoisGen}~\ref{lem:Additional properties of lto_S by GenPoisGen:Diff with I+Delta ltro}, the fifth inequality follows from Lemma~\ref{lem:Additional properties of lto_S induced by GenPoisGen}~\ref{lem:Additional properties of lto_S by GenPoisGen:Diff with I} and the final inequality follows from the fact that \(0\leq t^\star-t\leq\abs{\Delta}\).
  From this inequality, it now follows that
  \begin{equation*}
    \norm*{\frac{\lto_{t+\Delta}^s-\lto_t^s}{\Delta}+\ltro_S\lto_t^s}
    = \frac{1}{\abs{\Delta}} \norm{\lto_{t+\Delta}^s-\lto_t^s + \Delta\ltro_S\lto_t^s}
    \leq 2\abs{\Delta}\norm{\ltro_S}^2
    \leq 2\delta\norm{\ltro_S}^2
    \leq \epsilon,
  \end{equation*}
  which proves the first part of the stated.

  The second part of the stated follows from the first part.
  To see this, we fix any \(\epsilon, \tau\) in \(\posreals\), and let \(t'\coloneqq t+\tau\) and \(s'\coloneqq s+\tau\).
  It follows from the first part of the stated that there is some \(\delta'\) in \(\posreals\) such that
  \begin{equation}
  \label{eqn:Proof of ltro satisfies differential equation:First part prime}
    \pr{\forall \Delta'\in\reals, 0<\abs{\Delta'}<\delta', 0\leq t'+\Delta'\leq s'}~
    \norm*{\frac{\lto_{t'+\Delta'}^{s'}-\lto_{t'}^{s'}}{\Delta'}+\ltro_S\lto_{t'}^{s'}}
    \leq \epsilon.
  \end{equation}
  We now let \(\delta\coloneqq\min\set{\delta', \tau}\), and fix any \(\Delta\) in \(\reals\) such that \(0<\abs{\Delta}<\delta\).
  Observe that
  \begin{equation*}
    \norm*{\frac{\lto_{t}^{s+\Delta}-\lto_{t}^{s}}{\Delta}-\ltro_S\lto_{t}^{s}}
    = \norm*{\frac{\lto_{t'}^{s'+\Delta}-\lto_{t'}^{s'}}{\Delta}-\ltro_S\lto_{t'}^{s'}}
    = \norm*{\frac{\lto_{t'-\Delta}^{s'}-\lto_{t'}^{s'}}{\Delta}-\ltro_S\lto_{t'}^{s'}}
    = \norm*{\frac{\lto_{t'-\Delta}^{s'}-\lto_{t'}^{s'}}{-\Delta}+\ltro_S\lto_{t'}^{s'}},
  \end{equation*}
  where the first and second equality follow from Proposition~\ref{prop:Properties of lto_S induced by GenPoisGen}~\ref{prop:Properties of lto_S by GenPoisGen:Time-homogeneity}.
  Since furthermore \(t'-\Delta=t+\tau-\Delta\geq t\geq 0\) and \(t'-\Delta=t+\tau-\Delta\leq s+\tau=s'\), it follows from Equation~\eqref{eqn:Proof of ltro satisfies differential equation:First part prime} with \(\Delta'=-\Delta\) that
  \begin{equation*}
    \norm*{\frac{\lto_{t}^{s+\Delta}-\lto_{t}^{s}}{\Delta}-\ltro_S\lto_{t}^{s}}
    = \norm*{\frac{\lto_{t'-\Delta}^{s'}-\lto_{t'}^{s'}}{-\Delta}+\ltro_S\lto_{t'}^{s'}}
    \leq \epsilon,
  \end{equation*}
  as required.
\end{proof}
We now use Proposition~\ref{lem:ltro satisfies differential equation} to establish the limit behaviour of \(\br{\lto_t^{t+\Delta} \indica{\geq x+2}}\pr{x}\) and \(\br{\lto_{t-\Delta}^t \indica{\geq x+2}}\pr{x}\) for \(\Delta\to0^+\).
\begin{lemma}
\label{lem:lto_S satisfies orederliness property}
  Consider a sequence \(S\) in \(\setoflambdaseq_{\Lambda}\).
  Then for any \(t\) in \(\nnegreals\) and \(x\) in \(\stsp\),
  \begin{equation*}
    \lim_{\Delta\to0^+}\frac{\br{\lto_t^{t+\Delta}\indica{\geq x+2}}\pr{x}}{\Delta}
    = 0
  \end{equation*}
  and, if \(t>0\),
  \begin{equation*}
    \lim_{\Delta\to0^+}\frac{\br{\lto_{t-\Delta}^t\indica{\geq x+2}}\pr{x}}{\Delta}
    = 0.
  \end{equation*}
\end{lemma}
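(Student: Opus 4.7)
The plan is to exploit the fact that Lemma~\ref{lem:Additional properties of lto_S induced by GenPoisGen}~\ref{lem:Additional properties of lto_S by GenPoisGen:Diff with I+Delta ltro} already gives us a quadratic-in-$\Delta$ approximation of $\lto_t^{t+\Delta}$ by $I+\Delta\ltro_S$, and to observe that on the particular test function $\indica{\geq x+2}$, the zeroth- and first-order terms at the point $x$ both vanish.

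First I would use Proposition~\ref{prop:Properties of lto_S induced by GenPoisGen}~\ref{prop:Properties of lto_S by GenPoisGen:Time-homogeneity} to reduce both limits to the single statement
\begin{equation*}
  \lim_{\Delta\to0^+}\frac{\br{\lto_0^{\Delta}\indica{\geq x+2}}\pr{x}}{\Delta}
  = 0,
\end{equation*}
since $\lto_t^{t+\Delta}=\lto_0^{\Delta}=\lto_{t-\Delta}^t$ (the latter equality requiring $t>0$, which is exactly the side condition on the second limit).

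Next I would compute the first-order approximation explicitly. Note $\br{I \indica{\geq x+2}}\pr{x} = \indica{\geq x+2}\pr{x}=0$ since $x<x+2$. Likewise, from the definition of $\ltro_S$ in Equation~\eqref{eqn:Generalized Poisson Generator},
\begin{equation*}
  \br{\ltro_S \indica{\geq x+2}}\pr{x}
  = \min_{\lambda\in\br{\llambda_x,\ulambda_x}}\lambda\bigl(\indica{\geq x+2}\pr{x+1}-\indica{\geq x+2}\pr{x}\bigr)
  = \min_{\lambda\in\br{\llambda_x,\ulambda_x}}\lambda\pr{0-0}
  = 0,
\end{equation*}
because both $x$ and $x+1$ lie strictly below $x+2$. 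Consequently $\br{\pr{I+\Delta\ltro_S}\indica{\geq x+2}}\pr{x}=0$ for every $\Delta$ in $\nnegreals$.

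Then I would combine this with Lemma~\ref{lem:Additional properties of lto_S induced by GenPoisGen}~\ref{lem:Additional properties of lto_S by GenPoisGen:Diff with I+Delta ltro} and Lemma~\ref{lem:Properties of non-negatively homogeneous transformations}~\ref{BNH: norm Af <= norm A norm f}, using that $\norm{\indica{\geq x+2}}=1$ and that $\norm{\ltro_S}\leq 2\ulambda<+\infty$ by Lemma~\ref{lem:GenPoisGen:Norm} since $S\in\setoflambdaseq_\Lambda$:
\begin{equation*}
  \abs{\br{\lto_0^{\Delta}\indica{\geq x+2}}\pr{x}}
  = \abs{\br{\bigl(\lto_0^{\Delta}-\pr{I+\Delta\ltro_S}\bigr)\indica{\geq x+2}}\pr{x}}
  \leq \norm{\lto_0^{\Delta}-\pr{I+\Delta\ltro_S}}
  \leq \Delta^{2}\norm{\ltro_S}^{2}.
\end{equation*}
Dividing by $\Delta$ gives an upper bound of $\Delta\norm{\ltro_S}^{2}$, which goes to $0$ as $\Delta\to0^+$, finishing both limits.

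There is no real obstacle here: everything is bookkeeping built on top of the quadratic remainder estimate. The only mildly subtle point is noticing that $\indica{\geq x+2}$ evaluated at $x$ and $x+1$ both vanish, so the $\br{\ltro_S\cdot}\pr{x}$ term is exactly zero rather than merely $O(1)$; this is what upgrades the derivative from ``bounded'' to ``zero''.
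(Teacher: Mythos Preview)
Your proof is correct and essentially the same as the paper's: both reduce via time-homogeneity, observe that $\br{I\indica{\geq x+2}}\pr{x}=\br{\ltro_S\indica{\geq x+2}}\pr{x}=0$, and then control the remainder by a quadratic-in-$\Delta$ estimate. The only cosmetic difference is that the paper routes through Lemma~\ref{lem:ltro satisfies differential equation} (the $\epsilon$--$\delta$ ``derivative'' statement with $s=t$) whereas you invoke the underlying estimate Lemma~\ref{lem:Additional properties of lto_S induced by GenPoisGen}~\ref{lem:Additional properties of lto_S by GenPoisGen:Diff with I+Delta ltro} directly, which is slightly more economical since the former is itself proved from the latter.
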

\begin{proof}
  Fix any \(\epsilon\) in \(\posreals\).
  From Lemma~\ref{lem:ltro satisfies differential equation} (with \(s=t\)), we know that there is a \(\delta\) in \(\posreals\) such that
  \begin{equation}
  \label{eqn:Proof of lto_S satisfies orederliness property:Derivative}
    \pr{\forall \Delta\in\posreals, \Delta<\delta}~
    \norm*{\frac{\lto_t^{t+\Delta}-\lto_t^t}{\Delta}-\ltro_S \lto_t^t}
    \leq \epsilon.
  \end{equation}
  Fix any \(\Delta\) in \(\posreals\) such that \(\Delta<\delta\), and observe that
  \begin{align*}
    \abs*{\frac{\br{\lto_t^{t+\Delta}\indica{\geq x+2}}\pr{x}-\br{\lto_t^t\indica{\geq x+2}}\pr{x}}{\Delta}-\br{\ltro_S\lto_t^t\indica{\geq x+2}}\pr{x}}
    &\leq\norm*{\frac{\lto_t^{t+\Delta}\indica{\geq x+2}-\lto_t^t\indica{\geq x+2}}{\Delta}-\ltro_S\lto_t^t\indica{\geq x+2}} \\
    &\leq \norm*{\frac{\lto_t^{t+\Delta}-\lto_t^t}{\Delta}-\ltro_S\lto_t^t}\norm{\indica{\geq x+2}}
    = \norm*{\frac{\lto_t^{t+\Delta}-\lto_t^t}{\Delta}-\ltro_S\lto_t^t} \\
    &\leq \epsilon,
  \end{align*}
  where the first inequality follows from the definition of the supremum norm, the second inequality follows from \ref{BNH: norm Af <= norm A norm f}, the equality holds because \(\norm{\indica{\geq x+2}}=1\) and the final inequality follows from Equation~\eqref{eqn:Proof of lto_S satisfies orederliness property:Derivative} because \(0<\Delta<\delta\).
  Next, we take a closer look at the terms in the absolute value on the left hand side of the above inequality: (a) it follows from \ref{def:LTT:Bound} that \(\br{\lto_t^{t+\Delta}\indica{\geq x+2}}\pr{x}\geq \inf \indica{\geq x+2}=0\); (b) it follows from Proposition~\ref{prop:Properties of lto_S induced by GenPoisGen}~\ref{prop:Properties of lto_S by GenPoisGen:IDentity} that \(\br{\lto_t^t\indica{\geq x+2}}\pr{x}=\br{I\indica{\geq x+2}}\pr{x}=\indica{\geq x+2}\pr{x}=0\); and (c) it follows from Proposition~\ref{prop:Properties of lto_S induced by GenPoisGen}~\ref{prop:Properties of lto_S by GenPoisGen:IDentity} and Equation~\eqref{eqn:Generalized Poisson Generator} that \(\br{\ltro_S\lto_t^t\indica{\geq x+2}}\pr{x} = \br{\ltro_S \indica{\geq x+2}}\pr{x}=0\).
  Consequently,
  \begin{equation*}
    0 \leq
    \frac{\br{\lto_t^{t+\Delta}\indica{\geq x+2}}\pr{x}}{\Delta}
    = \abs*{\frac{\br{\lto_t^{t+\Delta}\indica{\geq x+2}}\pr{x}-\br{T_t^t\indica{\geq x+2}}\pr{x}}{\Delta}-\br{\ltro_S\lto_t^t\indica{\geq x+2}}\pr{x}}
    \leq \epsilon.
  \end{equation*}
  Since this holds for all~\(\Delta\) in \(\posreals\) such that \(\Delta<\delta\), and because \(\epsilon\) was an arbitrary positive real number, we have shown that
  \begin{equation*}
    \lim_{\Delta\to0^+} \frac{\br{\lto_t^{t+\Delta}\indica{\geq x+2}}\pr{x}}\Delta
    =0,
  \end{equation*}
  which is the first limit of the stated.
  The second limit of the stated follows from the the first limit and Proposition~\ref{prop:Properties of lto_S induced by GenPoisGen}~\ref{prop:Properties of lto_S by GenPoisGen:Time-homogeneity}.
\end{proof}
Finally, we conclude this section with a useful ``translation-invariance'' property.
\begin{lemma}
\label{lem:State homogeneity of lto induced by genpoisgen}
  Consider a sequence~\(S\) in \(\setoflambdaseq_{\Lambda}\).
  For any \(t,s\) in \(\nnegreals\) with \(t\leq s\), \(f\) in \(\setoffn\pr{\stsp}\) and \(x\) in \(\stsp\),
  \begin{equation*}
    \br{\lto_t^s f}\pr{y}
    = \br{\lto_t^s f'_x}\pr{y-x}
    \quad\text{for all } y\in\stsp \text{ with } y\geq x,
  \end{equation*}
  where \(f'_x\colon\stsp\to\reals\colon z\mapsto f'_x\pr{z}\coloneqq f\pr{x+z}\).
\end{lemma}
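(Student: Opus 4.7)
The plan is to lift the translation-invariance identity from the finite-product approximations \(\Phi_u\) to their limit \(\lto_t^s\). Lemma~\ref{lem:I+Delta GenPoisGen is state homogeneous} already establishes the exact same identity for \(\Phi_u=\prod_{i=1}^n\pr{I+\Delta_i\ltro_S}\) whenever \(u\) in \(\setoftseq_{\br{t,s}}\) is fine enough that \(\sigma\pr{u}\norm{\ltro_S}\leq 2\); and by Theorems~\ref{the:GenPoisGen:Phi_u_i converges to a LCT} and \ref{the:LCT induced by GenPoisGen}, any sequence \(\set{u_i}_{i\in\nats}\) in \(\setoftseq_{\br{t,s}}\) with \(\lim_{i\to+\infty}\sigma\pr{u_i}=0\) satisfies \(\Phi_{u_i}\to\lto_t^s\) in operator norm. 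The proof should therefore be a routine limit interchange rather than anything delicate.

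Concretely, I would first dispose of the trivial case \(t=s\) via Proposition~\ref{prop:Properties of lto_S induced by GenPoisGen}~\ref{prop:Properties of lto_S by GenPoisGen:IDentity}: then \(\lto_t^s=I\), so both sides immediately reduce to \(f\pr{y}=f\pr{x+\pr{y-x}}=f'_x\pr{y-x}\). For the main case \(t<s\), I would pick any sequence \(\set{u_i}_{i\in\nats}\) in \(\setoftseq_{\br{t,s}}\) with \(\lim_{i\to+\infty}\sigma\pr{u_i}=0\), noting that \(f'_x\) lies in \(\setoffn\pr{\stsp}\) since \(\norm{f'_x}\leq\norm{f}\). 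For all \(i\) large enough that \(\sigma\pr{u_i}\norm{\ltro_S}\leq 2\), applying Lemma~\ref{lem:I+Delta GenPoisGen is state homogeneous} to both \(f\) and \(f'_x\) then yields \(\br{\Phi_{u_i}f}\pr{y}=\br{\Phi_{u_i}f'_x}\pr{y-x}\) for every \(y\geq x\).

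To conclude I would let \(i\to+\infty\). Because \(\Phi_{u_i}\) and \(\lto_t^s\) are non-negatively homogeneous, so is their difference (by \ref{BNH: A+B} and \ref{BNH: mu a}), whence \ref{BNH: norm Af <= norm A norm f} gives \(\norm{\Phi_{u_i}g-\lto_t^s g}\leq\norm{\Phi_{u_i}-\lto_t^s}\norm{g}\) for every bounded \(g\); operator-norm convergence thus forces both \(\Phi_{u_i}f\to\lto_t^s f\) and \(\Phi_{u_i}f'_x\to\lto_t^s f'_x\) in supremum norm, and in particular pointwise at the respective arguments \(y\) and \(y-x\). Passing to the limit on both sides of the equality obtained at stage \(i\) delivers the stated identity. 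I do not foresee a substantive obstacle: the structural work sits entirely in Lemma~\ref{lem:I+Delta GenPoisGen is state homogeneous}, and the rest is the standard operator-norm limit machinery already set up in Appendix~\ref{app:The Generalised Poisson Generator}.
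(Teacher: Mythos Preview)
Your proposal is correct and follows essentially the same approach as the paper's proof: both lift the identity of Lemma~\ref{lem:I+Delta GenPoisGen is state homogeneous} from the approximations \(\Phi_u\) to \(\lto_t^s\) via operator-norm convergence, using \ref{BNH: norm Af <= norm A norm f} to pass from operator-norm to sup-norm closeness. The only cosmetic differences are that the paper runs a direct \(\epsilon\)-argument with a single \(u\) from Theorem~\ref{the:LCT induced by GenPoisGen} rather than your sequential formulation, and does not separately treat the case \(t=s\).
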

\begin{proof}
  Fix any \(\epsilon\) in \(\posreals\), and choose some \(\epsilon'\) in \(\posreals\) such that \(2\epsilon'\norm{f}\leq\epsilon\).
  By Theorem~\ref{the:LCT induced by GenPoisGen}, there is a \(u\) in \(\setoftseq_{\br{t,s}}\) such that
  \begin{equation*}
    \norm{\lto_t^s-\Phi_u}
    \leq \epsilon'.
  \end{equation*}
  Observe now that
  \begin{equation*}
    \abs*{\br{\lto_t^s f}\pr{y} - \br{\Phi_u f}\pr{y}}
    \leq \norm{\lto_t^s f - \Phi_u f}
    \leq \norm{\lto_t^s - \Phi_u}\norm{f}
    \leq \epsilon'\norm{f},
  \end{equation*}
  where the second inequality follows from \ref{BNH: norm Af <= norm A norm f}.
  Similarly, we find that
  \begin{equation*}
    \abs*{\br{\lto_t^s f'_x}\pr{y-x} - \br{\Phi_u f'_x}\pr{y-x}}
    \leq \norm{\lto_t^s - \Phi_u}\norm{f'_x}
    \leq \epsilon'\norm{f'_x}
    \leq \epsilon'\norm{f},
  \end{equation*}
  where for the penultimate inequality we have used the obvious inequality \(\norm{f'_x}\leq\norm{f}\).
  Using Lemma~\ref{lem:I+Delta GenPoisGen is state homogeneous}, we furthermore find that \(\br{\Phi_u f}\pr{y}=\br{\Phi_u f'_x}\pr{y-x}\).
  We now combine our two previous findings, to yield
  \begin{align*}
    \abs*{\br{\lto_t^s f}\pr{y} - \br{\lto_t^s f'_x}\pr{y-x}}
    &= \abs*{\br{\lto_t^s f}\pr{y} - \br{\Phi_u f}\pr{y} + \br{\Phi_u f'_x}\pr{y-x} - \br{\lto_t^s f'_x}\pr{y-x}} \\
    &\leq \abs*{\br{\lto_t^s f}\pr{y} - \br{\Phi_u f}\pr{y}} + \abs*{\br{\Phi_u f'_x}\pr{y-x} - \br{\lto_t^s f'_x}\pr{y-x}} \\
    &\leq \epsilon'\norm{f} + \epsilon'\norm{f}
    \leq \epsilon,
  \end{align*}
  where the final inequality is precisely our condition on \(\epsilon'\).
  Since \(\epsilon\) was an arbitrary positive real number, this proves the stated.
\end{proof}

\section{Linear Transformations}
\label{app:Linear Transformations}
Another important type of transformations on \(\setoffn\pr{\genset}\) are the linear ones.
A transformation~\(A\) on \(\setoffn\pr{\genset}\) is \emph{linear} if it is (i) homogeneous, in the sense that \(A\pr{\mu f}=\mu Af\) for all \(f\) in \(\setoffn\pr{\genset}\) and \(\mu\) in \(\reals\); and (ii) additive, in the sense that \(A\pr{f+g}=Af+Ag\) for all \(f,g\) in \(\setoffn\pr{\genset}\).
Observe that a linear transformation is always non-negatively homogeneous, and conversely, that a non-negatively homogeneous transformation is linear if and only if it is additive.

The special case that \(\genset\) is finite deserves some additional attention.
It is well-known that in this case, the linear transformation~\(A\) can be identified with a \(\card{\genset}\times\card{\genset}\) matrix, the \(\pr{x,y}\)-component of which is equal to \(A\pr{x,y}\coloneqq\br{A\indica{y}}\pr{x}\).
If convenient, we will sometimes prefer the matrix interpretation over the transformation interpretation.
It is also well-known that if \(\genset\) is finite,
\begin{equation}
\label{eqn:Norm of linear transformation of finite state space}
  \norm{A}
  = \max\set*{\sum_{y\in\genset}\abs{A\pr{x,y}}\colon x\in\genset}.
\end{equation}

\subsection{Linear Transition Transformations}
\label{ssec:Linear Transition Transformations}
If a lower transition (counting) transformation~\(\lto\) is linear and not just super-additive, in the sense that the inequality in \ref{def:LTT:Super-additivity} is actually an equality, then we will call it a linear transition (counting) transformation.
More formally, we have the following definition.
\begin{definition}
\label{def:Linear transition transformation}
  A \emph{linear transition transformation} is any transformation~\(T\colon\setoffn\pr{\genset}\to\setoffn\pr{\genset}\) such that
  \begin{enumerate}[label=\upshape{}T\arabic*., ref=\upshape(T\arabic*), leftmargin=*]
    \item\label{def:LinTT:Homogeneity} \(T\pr{\mu f} = \mu Tf\) for all \(f\) in \(\setoffn\pr{\genset}\) and \(\mu\) in \(\reals\); \hfill [homogeneity]
    \item\label{def:LinTT:Additivity} \(T\pr{f+g}=Tf+Tg\) for all \(f, g\) in \(\setoffn\pr{\genset}\); \hfill [additivity]
    \item\label{def:LinTT:Bounded by inf} \(T f \geq \inf f\) for all \(f\) in \(\setoffn\pr{\genset}\). \hfill [bound]
  \end{enumerate}
  A \emph{linear counting transformation} is a linear transition transformation~\(T\) with
  \begin{enumerate}[resume*]
    \item \label{def:LinTT:Counting} \(\br{T f}\pr{x}=\br{T\pr{\indica{\geq x}f}}\pr{x}\) for all \(f\) in \(\setoffn\pr{\genset}\) and \(x\) in \(\genset\).
  \end{enumerate}
\end{definition}

We now state some useful properties of linear counting transformations.
The first result establishes some basic properties of transition/counting transformations that follow almost immediately from \ref{def:LinTT:Homogeneity}--\ref{def:LinTT:Counting}.
\begin{lemma}
\label{lem:LinTT:Some frequently used properties}
  Consider a linear transition transformation~\(T\).
  Then
  \begin{enumerate}[label=\upshape{}T\arabic*., ref=\upshape(T\arabic*), start=5, leftmargin=*]
    \item \label{LinTT:inf f T f sup f}\(\inf f \leq T f \leq \sup f\) for all \(f\) in \(\setoffn\pr{\stsp}\);
    \item \label{LinTT:constant} \(T \mu = \mu\) for all \(\mu\) in \(\reals\);
    \item \label{LinTT:monotonicity} \(T f \leq T g\) for all \(f,g\) in \(\setoffn\pr{\genset}\) such that \(f\leq g\).
  \end{enumerate}
  If \(T\) is a linear counting transformation, then
  \begin{enumerate}[resume*]
    \item \label{LinTT:No transition to below} \(\br{T\indica{y}}\pr{x}=0\) for all \(x,y\) in \(\stsp\) such that \(y<x\).
  \end{enumerate}
\end{lemma}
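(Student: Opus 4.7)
The plan is to derive T5--T8 by straightforward manipulation of \ref{def:LinTT:Homogeneity}--\ref{def:LinTT:Counting}, in an order that lets each property lean on the previous ones. First, I would establish T6, which will be used throughout. Identifying a real number $\mu$ with the constant function equal to $\mu$, I would invoke \ref{def:LinTT:Bounded by inf} to obtain $T\mu \geq \inf\mu = \mu$. Applying the same bound with $-\mu$ in place of $\mu$, and then using \ref{def:LinTT:Homogeneity} with scalar $-1$ to write $T(-\mu) = -T\mu$, gives the reverse inequality, so $T\mu = \mu$.

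Next I would establish T5. The lower bound is immediate from \ref{def:LinTT:Bounded by inf}. For the upper bound I would apply \ref{def:LinTT:Bounded by inf} to the non-negative function $\sup f - f$, obtaining $T(\sup f - f) \geq \inf(\sup f - f) \geq 0$, and then expand the left-hand side using \ref{def:LinTT:Additivity}, \ref{def:LinTT:Homogeneity} and T6 to conclude that $\sup f - Tf \geq 0$. Property T7 follows by exactly the same trick applied to $g - f \geq 0$: apply \ref{def:LinTT:Bounded by inf}, then expand via \ref{def:LinTT:Additivity} and \ref{def:LinTT:Homogeneity} to obtain $Tg - Tf \geq 0$.

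Finally, for T8---where $T$ is assumed to be a linear counting transformation---I would use \ref{def:LinTT:Counting} to write $\br{T\indica{y}}\pr{x} = \br{T\pr{\indica{\geq x}\indica{y}}}\pr{x}$, then observe that since $y < x$ the pointwise product $\indica{\geq x}\indica{y}$ vanishes identically (as $\{y\}$ and $\{z \in \stsp : z \geq x\}$ are disjoint), so T6 applied with $\mu = 0$ gives $\br{T \cdot 0}\pr{x} = 0$. No step presents a real obstacle; the only subtleties worth flagging are the implicit identification of a constant function with a real number in the proof of T6, and making sure to invoke only those properties that are already available at each stage.
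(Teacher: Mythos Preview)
Your proposal is correct. For T8 it is essentially identical to the paper's argument: apply \ref{def:LinTT:Counting} with $f=\indica{y}$, observe $\indica{\geq x}\indica{y}=0$, and invoke T6.

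For T5--T7 your route differs from the paper's. The paper does not re-derive these from \ref{def:LinTT:Homogeneity}--\ref{def:LinTT:Bounded by inf}; instead it observes that a linear transition transformation is in particular a lower transition transformation (homogeneity implies non-negative homogeneity, additivity implies super-additivity, and the bound axiom is shared), so T5--T7 are inherited directly from the already-established properties \ref{LTT:inf f T f sup f}, \ref{LTT:constant}, \ref{LTT:monotonicity} of Lemma~\ref{lem:LTT:properties from coherence}. That lemma in turn is proved by noting that each component $\br{\lto\cdot}\pr{x}$ is a coherent lower prevision and citing standard results. Your approach is more elementary and self-contained, deriving everything from the defining axioms without passing through lower-prevision theory; the paper's approach avoids duplicating work already done for the super-additive case. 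Both are perfectly valid for results this routine.
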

\begin{proof}
  Properties \ref{LinTT:inf f T f sup f}--\ref{LinTT:monotonicity} follow immediately from Lemma~\ref{lem:LTT:Further properties}~\ref{LTT:inf f T f sup f}--\ref{LTT:monotonicity}.
  Property~\ref{LinTT:No transition to below} follows from \ref{def:LinTT:Counting} with \(f=\indic{y}\) and \ref{LinTT:constant}, as \(\indica{\geq x} f=\indica{\geq x}\indica{y}=0\).
\end{proof}
The second result is a specialisation of Lemma~\ref{lem:LTT:Composition is again a LTT}
\begin{lemma}
\label{lem:LinTT:Composition is again a LinTT}
  For any two linear transition (counting) transformations~\(T_1\) and \(T_2\), their composition \(T_1 T_2\) is again a linear transition (counting) transformation.
\end{lemma}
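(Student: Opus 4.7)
The plan is to observe first that every linear transition (counting) transformation is in particular a lower transition (counting) transformation in the sense of Definition~\ref{def:Lower transition transformation}: homogeneity \ref{def:LinTT:Homogeneity} implies non-negative homogeneity \ref{def:LTT:Non-negative homgeneity} as a special case, additivity \ref{def:LinTT:Additivity} trivially implies super-additivity \ref{def:LTT:Super-additivity}, and the bound \ref{def:LinTT:Bounded by inf} and counting conditions \ref{def:LinTT:Counting} coincide with \ref{def:LTT:Bound} and \ref{def:LTT:Counting}. Hence Lemma~\ref{lem:LTT:Composition is again a LTT} applies and immediately delivers that the composition~$T_1 T_2$ satisfies \ref{def:LTT:Bound}, and also \ref{def:LTT:Counting} under the additional counting assumption on $T_1$ and $T_2$. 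This already yields the corresponding conditions \ref{def:LinTT:Bounded by inf} and \ref{def:LinTT:Counting} for~$T_1 T_2$, so only the strengthened conditions~\ref{def:LinTT:Homogeneity} and \ref{def:LinTT:Additivity} still need to be checked.

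For the homogeneity condition \ref{def:LinTT:Homogeneity}, I would just chain the homogeneity of $T_1$ and $T_2$: for any $f \in \setoffn(\genset)$ and $\mu \in \reals$,
\begin{equation*}
  T_1 T_2(\mu f)
  = T_1\bigl(\mu T_2 f\bigr)
  = \mu\, T_1 T_2 f,
\end{equation*}
where each equality is a direct application of \ref{def:LinTT:Homogeneity} for $T_2$ and $T_1$ respectively. For the additivity condition \ref{def:LinTT:Additivity}, the same idea works: for any $f, g \in \setoffn(\genset)$,
\begin{equation*}
  T_1 T_2(f+g)
  = T_1\bigl(T_2 f + T_2 g\bigr)
  = T_1 T_2 f + T_1 T_2 g,
\end{equation*}
where the two equalities are direct applications of \ref{def:LinTT:Additivity} for $T_2$ and $T_1$ respectively.

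There is no real obstacle here; the lemma is essentially a routine verification once one realises that linear transition (counting) transformations form a subclass of lower transition (counting) transformations, so that Lemma~\ref{lem:LTT:Composition is again a LTT} can be invoked to dispatch the bound and counting conditions without any extra work, leaving only the two short chains above to establish the stronger homogeneity and additivity.
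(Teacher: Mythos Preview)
Your proposal is correct and follows essentially the same approach as the paper: both observe that linear transition (counting) transformations are lower transition (counting) transformations, invoke Lemma~\ref{lem:LTT:Composition is again a LTT} to obtain \ref{def:LinTT:Bounded by inf} and \ref{def:LinTT:Counting}, and note that linearity (\ref{def:LinTT:Homogeneity} and \ref{def:LinTT:Additivity}) of the composition is immediate from that of the factors. The paper simply says the linearity of $T_1T_2$ ``can be easily verified'', whereas you spell out the two short chains explicitly.
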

\begin{proof}
  The linearity---that is, \ref{def:LinTT:Homogeneity} and \ref{def:LinTT:Additivity}---of \(T_1\) and \(T_2\) implies the linearity of their composition~\(T_1T_2\), as one can easily verify.
  Because \(T_1\) and \(T_2\) are lower transition transformations (as they are linear ones by assumption), it follows from Lemma~\ref{lem:LTT:Composition is again a LTT} that their composition \(T_1 T_2\) satisfies \ref{def:LTT:Bound}, which is equivalent to \ref{def:LinTT:Bounded by inf}.
  Similarly, the composition~\(T_1 T_2\) of the two linear counting transformations satisfies \ref{def:LTT:Counting}, which is equivalent to \ref{def:LinTT:Counting}.
\end{proof}
The final general result will play an important role in the proof of Lemma~\ref{lem:Construction lemma for trans probs with full conditional} further on.
\begin{lemma}
\label{lem:LinPT:Sum decomposition with indicator}
  Consider two linear counting transformations~\(T_1\) and \(T_2\).
  Then for all \(x, y\) in \(\stsp\),
  \begin{equation*}
    \br{T_1 T_2 \indica{y}}\pr{x}
    = \begin{cases}
      \sum_{z=x}^y \br{T_1\indica{z}}\pr{x}\br{T_2\indica{y}}\pr{z} &\text{if } x\leq y, \\
      0 &\text{otherwise}.
    \end{cases}
  \end{equation*}
\end{lemma}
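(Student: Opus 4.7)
My plan is to peel the composition apart, applying each transformation's structural properties in turn. Write $g \coloneqq T_2 \indica{y}$, so that $[T_1 T_2 \indica{y}](x) = [T_1 g](x)$. Since $T_2$ is a linear counting transformation, property \ref{LinTT:No transition to below} gives $[T_2 \indica{y}](z) = 0$ whenever $z > y$; consequently, $g(z) = 0$ for all $z > y$, so the support of $g$ lies in $\{0, 1, \dots, y\}$.

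Next I invoke the counting property \ref{def:LinTT:Counting} of $T_1$ to write $[T_1 g](x) = [T_1(\indica{\geq x} g)](x)$, thereby restricting attention to values $z \geq x$. Combining this with the previous observation, $\indica{\geq x} g$ is supported on the finite set $\{z \in \stsp : x \leq z \leq y\}$, which is empty when $x > y$ and equal to $\{x, x+1, \dots, y\}$ when $x \leq y$. In the former case, $\indica{\geq x} g = 0$, and the homogeneity property \ref{def:LinTT:Homogeneity} with $\mu = 0$ forces $[T_1(\indica{\geq x} g)](x) = 0$, matching the second branch of the claim.

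When $x \leq y$, I would write the function $\indica{\geq x} g$ explicitly as the finite sum
\begin{equation*}
  \indica{\geq x} g
  = \sum_{z=x}^y g(z) \indica{z},
\end{equation*}
which equals $\indica{\geq x} g$ pointwise because $g$ vanishes outside $\{0, \dots, y\}$ and each $\indica{z}$ is the indicator of the singleton $\{z\}$. A straightforward induction using the homogeneity \ref{def:LinTT:Homogeneity} and additivity \ref{def:LinTT:Additivity} of $T_1$ then yields
\begin{equation*}
  T_1(\indica{\geq x} g)
  = \sum_{z=x}^y g(z) \, T_1 \indica{z},
\end{equation*}
and evaluating at $x$ and substituting $g(z) = [T_2 \indica{y}](z)$ gives precisely the first branch of the claim.

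I do not anticipate any real obstacle: the argument is essentially a bookkeeping exercise, and the key is simply to justify the rewriting $\indica{\geq x} g = \sum_{z=x}^y g(z) \indica{z}$ as a \emph{finite} sum of bounded functions, so that the linearity of $T_1$ can be applied termwise without any convergence issues. The potentially subtle point is that we must combine both the counting property of $T_1$ (to cap the sum from below at $z=x$) and the ``no backward transition'' property of $T_2$ (to cap it from above at $z=y$) before invoking linearity; either property alone would be insufficient.
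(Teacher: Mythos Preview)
Your proposal is correct and follows essentially the same route as the paper's proof: both use the counting property \ref{def:LinTT:Counting} of $T_1$ to truncate the sum from below at $z=x$, the counting property of $T_2$ (you via its consequence \ref{LinTT:No transition to below}, the paper by re-deriving it inline) to truncate from above at $z=y$, and then linearity of $T_1$ on the resulting finite sum. The only cosmetic difference is that for the case $x>y$ the paper invokes the counting property of the composition $T_1T_2$ directly (via Lemma~\ref{lem:LinTT:Composition is again a LinTT}), whereas you reach the same conclusion by observing that $\indica{\geq x}g=0$ and applying $T_1 0=0$.
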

\begin{proof}
  We first consider the case \(y<x\).
  By Lemma~\ref{lem:LinTT:Composition is again a LinTT} and \ref{def:LinTT:Counting},
  \begin{equation*}
    \br{T_1 T_2 \indica{y}}\pr{x}
    = \br{T_1 T_2 \pr{\indica{\geq x}\indica{y}}}\pr{x}
    = \br{T_1 T_2 0}\pr{x}
    = 0,
  \end{equation*}
  where the last equality follows from \ref{def:LinTT:Homogeneity}.

  Next, we consider the case \(y\geq x\).
  By \ref{def:LinTT:Counting},
  \begin{equation}
  \label{eqn:Proof of sum decomposition with indicator}
    \br{T_1 T_2 \indica{y}}\pr{x}
    = \br{T_1\pr{\indica{\geq x}\pr{T_2 \indica{y}}}}\pr{x}.
  \end{equation}
  Fix any \(z\) in \(\stsp\), and consider \(\indica{\geq x}\pr{z} \br{T_2 \indica{y}}\pr{z}\).
  If \(z<x\), then \(\indica{\geq x}\pr{z} \br{T_2 \indica{y}}\pr{z}=0\).
  If \(z>y\), then
  \begin{equation*}
    \indica{\geq x}\pr{z} \br{T_2 \indica{y}}\pr{z}
    = \br{T_2 \indica{y}}\pr{z}
    = \br{T_2 \pr{\indica{\geq z}\indica{y}}}\pr{z}
    = \br{T_2 0}\pr{z}
    = 0,
  \end{equation*}
  where the second equality follows from \ref{def:LinTT:Counting} and the final equality follows from \ref{def:LinTT:Homogeneity}.
  From this, we conclude that
  \begin{equation*}
    \indica{\geq x}\pr{T_2 \indica{y}}
    = \sum_{z=x}^y \br{T_2 \indica{y}}\pr{z}\indica{z}.
  \end{equation*}
  We now substitute this equality in Equation~\eqref{eqn:Proof of sum decomposition with indicator}, to yield
  \begin{equation*}
    \br{T_1 T_2 \indica{y}}\pr{x}
    = \br{T_1\pr{\indica{\geq x}\pr{T_2 \indica{y}}}}\pr{x}
    = \br*{T_1 \pr*{\sum_{z=x}^y \br{T_2 \indica{y}}\pr{z}\indica{z}}}\pr{x}
    = \sum_{z=x}^y \br{T_1\pr{\br{T_2 \indica{y}}\pr{z}\indica{z}}}\pr{x}
    = \sum_{z=x}^y \br{T_2 \indica{y}}\pr{z} \br{T_1\indica{z}}\pr{x},
  \end{equation*}
  using \ref{def:LinTT:Additivity} and \ref{def:LinTT:Homogeneity} for the final two equalities.
\end{proof}

\subsection{Linear Generalised Poisson Generators And The Semi-Groups They Induce}
We now follow the same pattern as we did in Appendix~\ref{app:The Generalised Poisson Generator}, but the with \emph{linear} transformations: we introduce linear generalised Poisson generators and subsequently show that these transformations generate a family of linear counting transformations.
With any sequence \(S\coloneqq\set{\lambda_x}_{x\in\stsp}\) in \(\Lambda=\br{\llambda,\ulambda}\), we associate the operator~\(\trm_S\) defined by
\begin{equation*}
  \br{\trm_S f}\pr{x}
  \coloneqq \lambda_x f\pr{x+1}-\lambda_x f\pr{x}
  \quad\text{for all } x\in\stsp, f\in\setoffn\pr{\stsp}.
\end{equation*}
Observe that \(\trm_S\) is indeed a linear generalised Poisson generator.
That it is linear follows immediately from its definition.
That is is a generalised Poisson generator follows from the fact that
\begin{equation}
\label{eqn:LinGenPoisGen and GenPoisGen}
  \trm_S
  = \ltro_{S'}
  \qquad\text{with } S=\set{\lambda_x}_{x\in\stsp} \text{ and } S'=\set{\pr{\lambda_x, \lambda_x}}_{x\in\stsp}.
\end{equation}
This relation allows us to immediately establish the following result.
\begin{corollary}
\label{cor:Gen trm:norm}
  Consider a sequence \(S\coloneqq\set{\lambda_x}_{x\in\stsp}\) in \(\Lambda=\br{\llambda, \ulambda}\).
  Then
  \begin{equation*}
    \norm{\trm_S}
    = 2\sup\set{\lambda_x\colon x\in\stsp}.
  \end{equation*}
\end{corollary}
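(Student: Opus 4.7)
The plan is to observe that this result is an immediate consequence of Lemma~\ref{lem:GenPoisGen:Norm}, which has already established that for any sequence $S'' = \set{\pr{\llambda_x,\ulambda_x}}_{x\in\stsp}$ in $\setoflambdaseq_\Lambda$, one has $\norm{\ltro_{S''}} = 2\sup\set{\ulambda_x\colon x\in\stsp}$. All that is needed is to recognise the linear generalised Poisson generator $\trm_S$ as a special case of the generalised Poisson generator $\ltro_{S''}$.

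The key step is the identification recorded in Equation~\eqref{eqn:LinGenPoisGen and GenPoisGen}: if we set $S'' \coloneqq \set{\pr{\lambda_x,\lambda_x}}_{x\in\stsp}$, then $\trm_S = \ltro_{S''}$, because the minimum over the degenerate interval $\br{\lambda_x,\lambda_x}$ in the definition of $\ltro_{S''}$ simply collapses to the value at $\lambda_x$, recovering the defining expression $\br{\trm_S f}\pr{x} = \lambda_x f\pr{x+1} - \lambda_x f\pr{x}$. Before invoking Lemma~\ref{lem:GenPoisGen:Norm}, I would briefly check that $S'' \in \setoflambdaseq_\Lambda$: this requires $\llambda_x \leq \ulambda_x$, which is trivially $\lambda_x \leq \lambda_x$, and $\llambda_x,\ulambda_x \in \br{\llambda,\ulambda}$, which holds since $\lambda_x \in \Lambda$ by assumption on $S$.

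With these verifications in place, Lemma~\ref{lem:GenPoisGen:Norm} applies to yield
\begin{equation*}
  \norm{\trm_S}
  = \norm{\ltro_{S''}}
  = 2\sup\set{\ulambda_x\colon x\in\stsp}
  = 2\sup\set{\lambda_x\colon x\in\stsp},
\end{equation*}
which is exactly the stated equality. I expect no real obstacle here; the only conceptual work was already done in Lemma~\ref{lem:GenPoisGen:Norm}, and the present corollary is little more than a bookkeeping exercise matching notation between the two settings.
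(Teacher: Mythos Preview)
Your proposal is correct and takes essentially the same approach as the paper: the paper's own proof simply cites Equation~\eqref{eqn:LinGenPoisGen and GenPoisGen} and Lemma~\ref{lem:GenPoisGen:Norm}, and you have spelled out precisely those two ingredients. Your additional verification that $S'' \in \setoflambdaseq_\Lambda$ is a welcome bit of rigor that the paper leaves implicit.
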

\begin{proof}
  This is a simple corollary of Equation~\eqref{eqn:LinGenPoisGen and GenPoisGen} and Lemma~\ref{lem:GenPoisGen:Norm}.
\end{proof}

We now establish that the linear generalised Poisson generator~\(\trm_S\) generates a family of linear counting transformations.
In essence, we simply combine the results of Appendix~\ref{sapp:GenPoisGen:To Lower Counting Transformations} with Equation~\eqref{eqn:LinGenPoisGen and GenPoisGen}.
\begin{corollary}
\label{cor:I + delta Gen trm is a linear counting transformation}
  Consider a sequence \(S\coloneqq\set{\lambda_x}_{x\in\stsp}\) in \(\Lambda=\br{\llambda, \ulambda}\).
  Then for any \(\Delta\) in \(\nnegreals\) with \(\Delta\norm{\trm_S}\leq2\), \(\pr{I+\Delta\trm_S}\) is a linear counting transformation.
\end{corollary}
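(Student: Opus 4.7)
The plan is to reduce the statement to Lemma~\ref{lem:I+Delta GenPoisGen is LTT} by invoking Equation~\eqref{eqn:LinGenPoisGen and GenPoisGen}, and then upgrade the lower counting transformation properties to their linear counterparts by exploiting the explicit linearity of~\(\trm_S\).

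First, I would observe that, by Equation~\eqref{eqn:LinGenPoisGen and GenPoisGen}, we have \(\trm_S=\ltro_{S'}\) with \(S'\coloneqq\set{\pr{\lambda_x,\lambda_x}}_{x\in\stsp}\) in \(\setoflambdaseq_\Lambda\), so \(\norm{\trm_S}=\norm{\ltro_{S'}}\) and the hypothesis \(\Delta\norm{\trm_S}\leq 2\) is precisely the hypothesis of Lemma~\ref{lem:I+Delta GenPoisGen is LTT} applied to \(\ltro_{S'}\). Consequently \(\pr{I+\Delta\trm_S}=\pr{I+\Delta\ltro_{S'}}\) is a lower counting transformation. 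In particular it is a transformation on \(\setoffn\pr{\stsp}\), it satisfies \ref{def:LTT:Bound}---which is exactly \ref{def:LinTT:Bounded by inf}---and it satisfies \ref{def:LTT:Counting}---which is exactly \ref{def:LinTT:Counting}.

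It remains to verify \ref{def:LinTT:Homogeneity} and \ref{def:LinTT:Additivity}, i.e.\ to upgrade non-negative homogeneity and super-additivity to full homogeneity and additivity. For this I would argue directly from the defining formula of \(\trm_S\): since for every \(x\) in \(\stsp\) the minimum in Equation~\eqref{eqn:Generalized Poisson Generator} collapses to the single value \(\lambda_x f\pr{x+1}-\lambda_x f\pr{x}\), the map \(f\mapsto\br{\trm_S f}\pr{x}\) is linear in \(f\). Hence \(\trm_S\) itself is linear, so \(\Delta\trm_S\) is linear, and therefore \(\pr{I+\Delta\trm_S}\) is linear as the sum of two linear transformations, establishing \ref{def:LinTT:Homogeneity} and \ref{def:LinTT:Additivity}.

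There is no real obstacle: the only subtlety is being careful to distinguish that the lower counting transformation framework yields non-negative homogeneity and super-additivity, and that the strengthening to full linearity has to be read off from the explicit formula for \(\trm_S\) rather than inherited from the general theory of \(\ltro_S\). With that observation in hand, the four defining properties \ref{def:LinTT:Homogeneity}--\ref{def:LinTT:Counting} of a linear counting transformation are all verified, which concludes the proof.
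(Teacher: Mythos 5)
Your proposal is correct and follows essentially the same route as the paper: reduce to Lemma~\ref{lem:I+Delta GenPoisGen is LTT} via Equation~\eqref{eqn:LinGenPoisGen and GenPoisGen} to get the lower counting transformation properties, then obtain full linearity directly from the linearity of \(I\) and \(\trm_S\). Your extra remark that the minimum in Equation~\eqref{eqn:Generalized Poisson Generator} collapses to a single term is just a spelled-out version of the paper's observation that \(\trm_S\) is linear by definition.
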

\begin{proof}
  It follows from Equation~\eqref{eqn:LinGenPoisGen and GenPoisGen} and Lemma~\ref{lem:I+Delta GenPoisGen is LTT} that \(\pr{I+\Delta\trm_S}\) is a lower counting transformation.
  That it is furthermore linear follows directly from the linearity of \(I\) and \(\trm_S\).
\end{proof}
Because the linear generalised Poisson generator~\(\trm_S\) is a generalised Poisson generator, we can use here use the notation~\(\Phi_u\) as introduced in Equation~\eqref{eqn:GenPoisGen:Phi_u} as well; we here simply replace \(\ltro_S\) by \(\trm_S=\ltro_{S'}\) in the definition.
\begin{corollary}
\label{cor:Phi_u is LinTT}
  Fix a sequence \(S=\set{\lambda_x}_{x\in\stsp}\) in \(\Lambda\), some \(t,s\) in \(\nnegreals\) with \(t\leq s\) and a sequence \(u=t_0, \dots, t_n\) in \(\setoftseq_{\br{t,s}}\).
  If \(\sigma\pr{u}\norm{\trm_S}\leq2\), then \(\Phi_u\), as defined in Equation~\eqref{eqn:GenPoisGen:Phi_u}, is a linear counting transformation.
\end{corollary}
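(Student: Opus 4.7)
The plan is to reduce the statement to two already-established facts: that each elementary factor $(I+\Delta_i\trm_S)$ is a linear counting transformation under the given step-size hypothesis, and that compositions of linear counting transformations remain linear counting transformations. The overall structure mirrors the proof of Corollary~\ref{cor:Phi_u is LTT}, but now using the linear versions of the underlying lemmas.

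First I would split on whether $t=s$ or $t<s$. In the degenerate case $t=s$, the definition in Equation~\eqref{eqn:GenPoisGen:Phi_u} gives $\Phi_u=I$, and the identity is trivially a linear counting transformation (conditions \ref{def:LinTT:Homogeneity}--\ref{def:LinTT:Counting} are all immediate for $I$). In the nontrivial case $t<s$, we have $\Phi_u=\prod_{i=1}^n (I+\Delta_i\trm_S)$ with $n\geq 1$.

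Next, I would observe that the hypothesis $\sigma(u)\norm{\trm_S}\leq 2$ together with $\Delta_i\leq \sigma(u)$ for each $i\in\{1,\dots,n\}$ implies $\Delta_i\norm{\trm_S}\leq 2$ for every $i$. Consequently, Corollary~\ref{cor:I + delta Gen trm is a linear counting transformation} applies to each factor, so every $(I+\Delta_i\trm_S)$ is a linear counting transformation. A straightforward induction on $n$, using Lemma~\ref{lem:LinTT:Composition is again a LinTT} at the induction step, then shows that the finite product $\Phi_u=\prod_{i=1}^n(I+\Delta_i\trm_S)$ is a linear counting transformation.

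There is no real obstacle here; the work was already done when establishing Corollaries~\ref{cor:I + delta Gen trm is a linear counting transformation} and \ref{cor:Phi_u is LTT} and Lemma~\ref{lem:LinTT:Composition is again a LinTT}. The only subtlety worth flagging is that one should not simply invoke Corollary~\ref{cor:Phi_u is LTT}, since that yields only a lower counting transformation rather than a linear one; linearity must be traced through the composition step via Lemma~\ref{lem:LinTT:Composition is again a LinTT}, which is exactly what makes the linear-counting structure (additivity in \ref{def:LinTT:Additivity} and homogeneity in \ref{def:LinTT:Homogeneity}) propagate through the product.
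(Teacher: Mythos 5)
Your proposal is correct and follows exactly the route of the paper, whose one-line proof simply invokes Corollary~\ref{cor:I + delta Gen trm is a linear counting transformation} for each factor and Lemma~\ref{lem:LinTT:Composition is again a LinTT} for the composition; you merely spell out the step-size observation \(\Delta_i\leq\sigma\pr{u}\), the induction, and the trivial \(t=s\) case.
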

\begin{proof}
  Follows immediately from Lemma~\ref{lem:LinTT:Composition is again a LinTT} and Corollary~\ref{cor:I + delta Gen trm is a linear counting transformation}.
\end{proof}
\begin{corollary}
\label{cor:LinGenPoisGen:Phi_u_i converges to a LinCT}
  Consider a sequence \(S\coloneqq\set{\lambda_x}_{x\in\stsp}\) in \(\Lambda=\br{\llambda, \ulambda}\), and fix some \(t,s\) in \(\nnegreals\) with \(t\leq s\).
  For any sequence \(\set{u_i}_{i\in\nats}\) in \(\setoftseq_{\br{t,s}}\) such that \(\lim_{i\to+\infty}\sigma\pr{u_i}=0\), the corresponding sequence \(\set{\Phi_{u_i}}_{i\in\nats}\) converges to a linear counting transformation.
\end{corollary}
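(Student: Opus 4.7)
The plan is to derive this corollary as a straightforward combination of two ingredients already at our disposal: the general convergence result Theorem~\ref{the:GenPoisGen:Phi_u_i converges to a LCT} for generalised Poisson generators, which guarantees that the limit exists and is a lower counting transformation, and the linearity of the finite-product approximations $\Phi_{u_i}$ established in Corollary~\ref{cor:Phi_u is LinTT}, which will survive the passage to the limit.

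First I would invoke Equation~\eqref{eqn:LinGenPoisGen and GenPoisGen} to identify $\trm_S$ with the generalised Poisson generator $\ltro_{S'}$ associated with the degenerate sequence $S'=\set{\pr{\lambda_x,\lambda_x}}_{x\in\stsp}$, which clearly belongs to $\setoflambdaseq_\Lambda$. Under this identification, the operators $\Phi_{u_i}$ appearing in the statement of the corollary coincide exactly with those defined via $\ltro_{S'}$ in Equation~\eqref{eqn:GenPoisGen:Phi_u}. Consequently, Theorem~\ref{the:GenPoisGen:Phi_u_i converges to a LCT} applies directly and yields a lower counting transformation $\lto$ such that $\Phi_{u_i}\to\lto$ in operator norm.

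Second, I would upgrade the limit $\lto$ from a lower counting transformation to a linear counting transformation. Since $\lim_{i\to+\infty}\sigma\pr{u_i}=0$, there is some $i^\star\in\nats$ with $\sigma\pr{u_i}\norm{\trm_S}\leq2$ for all $i\geq i^\star$, so Corollary~\ref{cor:Phi_u is LinTT} guarantees that each $\Phi_{u_i}$ with $i\geq i^\star$ is a linear counting transformation. Operator-norm convergence $\Phi_{u_i}\to\lto$, combined with Lemma~\ref{lem:Properties of non-negatively homogeneous transformations}~\ref{BNH: norm Af <= norm A norm f} applied to the non-negatively homogeneous transformation $\Phi_{u_i}-\lto$, gives $\norm{\Phi_{u_i}h-\lto h}\to0$ for every $h$ in $\setoffn\pr{\stsp}$, hence in particular pointwise convergence. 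Taking pointwise limits in the identities $\Phi_{u_i}\pr{\mu f}=\mu\Phi_{u_i}f$ and $\Phi_{u_i}\pr{f+g}=\Phi_{u_i}f+\Phi_{u_i}g$, valid for $i\geq i^\star$ and arbitrary $f,g$ in $\setoffn\pr{\stsp}$ and $\mu$ in $\reals$, then yields \ref{def:LinTT:Homogeneity} and \ref{def:LinTT:Additivity} for $\lto$. Together with \ref{def:LTT:Bound} and \ref{def:LTT:Counting}—which hold because $\lto$ is already known to be a lower counting transformation—this shows $\lto$ is a linear counting transformation, as required.

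I do not anticipate any real obstacle here: every ingredient is already in place and the argument amounts to routine verification that linearity of the approximants is preserved under the limit. The only point that deserves care is making explicit that operator-norm convergence of $\Phi_{u_i}$ implies pointwise convergence of the images $\Phi_{u_i}f$, since this is precisely what allows the linear identities satisfied by each $\Phi_{u_i}$ to be passed to the limit componentwise.
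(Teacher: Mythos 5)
Your proposal is correct and follows essentially the same route as the paper's own proof: identify \(\trm_S\) with \(\ltro_{S'}\) via Equation~\eqref{eqn:LinGenPoisGen and GenPoisGen}, invoke Theorem~\ref{the:GenPoisGen:Phi_u_i converges to a LCT} for convergence to a lower counting transformation, use Corollary~\ref{cor:Phi_u is LinTT} for eventual linearity of the \(\Phi_{u_i}\), and pass linearity to the limit. Your extra step spelling out that operator-norm convergence yields pointwise convergence of the images is just a more explicit rendering of the paper's remark that \ref{def:LinTT:Homogeneity}--\ref{def:LinTT:Counting} are preserved under the limit.
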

\begin{proof}
  Recall from Equation~\eqref{eqn:LinGenPoisGen and GenPoisGen} that \(\trm_S\) is equal to the generalised Poisson generator~\(\ltro_{S'}\) associated with the sequence~\(S'=\set{\pr{\lambda_x,\lambda_x}}_{x\in\stsp}\).
  It therefore follows from Theorem~\ref{the:GenPoisGen:Phi_u_i converges to a LCT} (with the sequence \(S'=\set{\pr{\lambda_x,\lambda_x}}_{x\in\stsp}\)) that the corresponding sequence~\(\set{\Phi_{u_i}}_{i\in\nats}\) converges to a lower counting transformation.
  Hence, what remains for us to verify is that this limit is a linear counting transformation.
  To that end, we observe that, because \(\lim_{i\to+\infty}\sigma\pr{u_i}=0\), there is a \(i^\star\) in \(\nats\) such that \(\sigma\pr{u_i}\norm{\trm_S}\leq2\) for all \(i\geq i^\star\).
  Hence, it follows from Corollary~\ref{cor:Phi_u is LinTT} that \(\Phi_{u_i}\) is a linear counting transformation for all \(i\geq i^\star\).
  As \ref{def:LinTT:Homogeneity}--\ref{def:LinTT:Counting} are preserved under taking the limit for \(i\to+\infty\), this implies that the limit of the corresponding sequence is a linear counting transformation, as required.
\end{proof}
Finally, we establish that the limit mentioned in Corollary~\ref{cor:LinGenPoisGen:Phi_u_i converges to a LinCT} does not depend on the exact choice of the sequence~\(\set{u_i}_{i\in\nats}\).
\begin{corollary}
\label{cor:Gen trm:Existence of T_S}
  Consider a sequence \(S\coloneqq\set{\lambda_x}_{x\in\stsp}\) in \(\Lambda=\br{\llambda, \ulambda}\).
  Then for any \(t, s\) in \(\nnegreals\) with \(t\leq s\), there is a unique linear counting transformation~\(T\) such that
  \begin{equation*}
    \pr{\forall\epsilon\in\posreals}
    \pr{\exists\delta\in\posreals}
    \pr{\forall u\in\setoftseq_{\br{t,s}}, \sigma\pr{u}\leq\delta}~
    \norm*{T - \Phi_u}
    \leq\epsilon.
  \end{equation*}
\end{corollary}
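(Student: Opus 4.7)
The plan is to obtain this result as a direct corollary of Theorem~\ref{the:LCT induced by GenPoisGen} together with Corollary~\ref{cor:LinGenPoisGen:Phi_u_i converges to a LinCT}, by exploiting the identification \(\trm_S = \ltro_{S'}\) from Equation~\eqref{eqn:LinGenPoisGen and GenPoisGen}, where \(S' = \{(\lambda_x,\lambda_x)\}_{x\in\stsp}\) is clearly an element of \(\setoflambdaseq_\Lambda\). Since the operators \(\Phi_u\) built from \(\trm_S\) as in Equation~\eqref{eqn:GenPoisGen:Phi_u} are literally the same as those built from \(\ltro_{S'}\), all the convergence machinery of Appendix~\ref{app:The Generalised Poisson Generator} applies verbatim.

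For existence, I would first invoke Theorem~\ref{the:LCT induced by GenPoisGen} applied to the sequence \(S'\): this yields a unique lower counting transformation \(\lto\) satisfying
\[
  (\forall\epsilon\in\posreals)(\exists\delta\in\posreals)(\forall u\in\setoftseq_{\br{t,s}},\sigma\pr{u}\leq\delta)~
  \norm{\lto-\Phi_u}\leq\epsilon.
\]
This \(\lto\) is by construction the desired transformation \(T\); what remains is to upgrade ``lower counting'' to ``linear counting''. For this, one picks any sequence \(\{u_i\}_{i\in\nats}\) in \(\setoftseq_{\br{t,s}}\) with \(\lim_{i\to+\infty}\sigma(u_i)=0\); the approximation property above then forces \(\lim_{i\to+\infty}\Phi_{u_i}=\lto\) in operator norm. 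By Corollary~\ref{cor:LinGenPoisGen:Phi_u_i converges to a LinCT} this limit is a linear counting transformation, so \(T\coloneqq\lto\) has all the required properties.

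For uniqueness, suppose \(T'\) is any linear counting transformation satisfying the same approximation property. Then in particular \(T'\) is a lower counting transformation, so Theorem~\ref{the:LCT induced by GenPoisGen} (whose uniqueness clause operates within the class of lower counting transformations) immediately yields \(T'=T\).

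Essentially there is no obstacle beyond bookkeeping: every piece of analytic content has already been done for generalised Poisson generators, and the only substantive observation is that linearity is preserved under the limit defining \(T\). The latter is itself immediate because \ref{def:LinTT:Homogeneity} and \ref{def:LinTT:Additivity} are closed equalities preserved by norm-convergence of operators (as used implicitly in the proof of Corollary~\ref{cor:LinGenPoisGen:Phi_u_i converges to a LinCT}), so no new estimates are needed.
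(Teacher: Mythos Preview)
Your proposal is correct and follows essentially the same approach as the paper: both invoke Theorem~\ref{the:LCT induced by GenPoisGen} via the identification \(\trm_S=\ltro_{S'}\) of Equation~\eqref{eqn:LinGenPoisGen and GenPoisGen} to obtain the unique lower counting transformation with the approximation property, then upgrade it to a linear counting transformation by choosing a sequence \(\{u_i\}\) with \(\sigma(u_i)\to0\) and appealing to Corollary~\ref{cor:LinGenPoisGen:Phi_u_i converges to a LinCT}. Your explicit remark that uniqueness among linear counting transformations follows because every such transformation is in particular a lower counting transformation is a nice clarification that the paper leaves implicit.
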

\begin{proof}
  Recall from Equation~\eqref{eqn:LinGenPoisGen and GenPoisGen} that \(\trm_S\) is equal to the generalised Poisson generator~\(\ltro_{S'}\) associated with the sequence~\(S'=\set{\pr{\lambda_x,\lambda_x}}_{x\in\stsp}\).
  It therefore follows from Theorem~\ref{the:LCT induced by GenPoisGen} (with the sequence \(S'=\set{\pr{\lambda_x,\lambda_x}}_{x\in\stsp}\)) that there is a unique lower counting transformation~\(\lto\) that satisfies the condition of the stated.
  All that remains for us is to verify that this unique lower counting transformation~\(\lto\) is linear.
  To that end, we fix any sequence \(\set{u_i}_{i\in\nats}\) such that \(\lim_{i\to+\infty}\sigma\pr{u_i}=0\).
  Because \(\lim_{i\to+\infty}\sigma\pr{u_i}=0\), it follows from the first part of the proof that
  \begin{equation*}
    \pr{\forall \epsilon\in\posreals}\pr{\exists i^\star\in\nats}\pr{\forall i\in\nats, i\geq i^\star}~
    \norm{\lto-\Phi_{u_i}}
    \leq \epsilon.
  \end{equation*}
  Hence, \(\lim_{i\to+\infty}\Phi_{u_i}=\lto\).
  That \(\lto\) is a linear counting transformation now follows from this equality if we recall from Corollary~\ref{cor:LinGenPoisGen:Phi_u_i converges to a LinCT} that \(\lim_{i\to+\infty}\Phi_{u_i}\) is a linear counting transformation.
\end{proof}

Using Corollaries~\ref{cor:LinGenPoisGen:Phi_u_i converges to a LinCT} and \ref{cor:Gen trm:Existence of T_S}, we now define the unique family of linear counting transformations that is generated by the linear generalised Poisson generator~\(\trm_S\).
Consider any sequence~\(S=\set{\lambda_x}_{x\in\stsp}\).
Then for any \(t,s\) in \(\nnegreals\) with \(t\leq s\), we define the corresponding linear counting transformation
\begin{equation*}
  T_{t,S}^s
  \coloneqq \lim_{\sigma\pr{u}\to+\infty} \set{\Phi_u\colon u\in\setoftseq_{\br{t,s}}}.
\end{equation*}
We collect all these transformations in the family~\(\mathcal{T}_S\coloneqq\set{T_{t,S}^s\colon t,s\in\nnegreals, t\leq s}\).

\subsection{Counting Transformation Systems}
We now provide a method to construct more intricate families of linear counting transformations.
This construction method is essential in the proof of Proposition~\ref{prop:Lower bound for conditional expectation is reached by a consistent process}, where we will construct a counting process with transition probabilities that are derived from these linear counting transformations.
Specifically, we are interested in families of the following type, the definition of which is based on \cite[Definition~3.3]{2017Krak}.
\begin{definition}
\label{def:Counting transformation system}
  A \emph{counting transformation system} is a family~\(\mathcal{T}=\set{T_t^s\colon t,s\in\nnegreals, t\leq s}\) of linear counting transformations such that
  \begin{enumerate}[label=\upshape{}S\arabic*., ref=\upshape(S\arabic*), leftmargin=*]
    \item \label{def:CTS:Identity}
    \(T_t^t=I\) for all \(t\) in \(\nnegreals\);
    \item \label{def:CTS:Semi-group}
    \(T_t^s=T_t^r T_r^s\) for all \(t,r,s\) in \(\nnegreals\) with \(t\leq r\leq s\);
    \item \label{def:CTS:Limit of more than two}
    \(\lim_{\Delta\to0^+}\frac{\br{T_t^{t+\Delta}\indica{\geq x+2}}\pr{x}}\Delta=0\) and, if \(t>0\), \(\lim_{\Delta\to0^+}\frac{\br{T_{t-\Delta}^t\indica{\geq x+2}}\pr{x}}\Delta=0\), for all \(t\) in \(\nnegreals\) and \(x\) in \(\stsp\).
  \end{enumerate}
\end{definition}
One example of a counting transformation system is the family~\(\mathcal{T}_S\) of linear counting transformations generated by the linear generalised Poisson generator~\(\trm_S\), as is established in the next result.
\begin{corollary}
\label{cor:Properties of T_S}
  Consider a sequence \(S\coloneqq\set{\lambda_x}_{x\in\stsp}\) in \(\Lambda=\br{\llambda, \ulambda}\).
  Then \(\mathcal{T}_S=\set{T_{t,S}^s\colon t, s\in\nnegreals, t\leq s}\), the corresponding family of linear counting transformations, is a counting transformation system.
  Furthermore,
  \begin{equation}
  \label{eqn:Prop of T_S:Time-homogeneity}
    T_{t,S}^s
    = T_{0,S}^{s-t}
    \qquad\text{for all } t,s\in\nnegreals, t\leq s.
  \end{equation}
\end{corollary}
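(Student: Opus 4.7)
The plan is to reduce everything to results already established for general (non-linear) generalised Poisson generators, by exploiting the identification in Equation~\eqref{eqn:LinGenPoisGen and GenPoisGen}. Concretely, for the given sequence $S=\{\lambda_x\}_{x\in\stsp}$ in $\Lambda$, form the ``degenerate'' sequence $S'=\{(\lambda_x,\lambda_x)\}_{x\in\stsp}$ in $\setoflambdaseq_\Lambda$. Then $\trm_S=\ltro_{S'}$, so the approximating operators $\Phi_u$ used to define $T_{t,S}^s$ coincide with those used to define the corresponding $\lto_t^s$ induced by $\ltro_{S'}$. Consequently, $T_{t,S}^s$ and $\lto_t^s$ are the same transformation, and I can transfer the properties proved for $\lto_t^s$ to $\mathcal{T}_S$.

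First I would note that, by Corollary~\ref{cor:Gen trm:Existence of T_S}, each $T_{t,S}^s$ is already a linear counting transformation, so it remains only to verify \ref{def:CTS:Identity}--\ref{def:CTS:Limit of more than two} and the time-homogeneity equation~\eqref{eqn:Prop of T_S:Time-homogeneity}. For \ref{def:CTS:Identity} and \ref{def:CTS:Semi-group}, as well as for equation~\eqref{eqn:Prop of T_S:Time-homogeneity}, I would apply parts \ref{prop:Properties of lto_S by GenPoisGen:IDentity}, \ref{prop:Properties of lto_S by GenPoisGen:Semi-group} and \ref{prop:Properties of lto_S by GenPoisGen:Time-homogeneity} of Proposition~\ref{prop:Properties of lto_S induced by GenPoisGen} applied to $S'$; the identification $T_{t,S}^s=\lto_t^s$ then transfers each equality verbatim.

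For \ref{def:CTS:Limit of more than two}, I would invoke Lemma~\ref{lem:lto_S satisfies orederliness property} with the sequence $S'$: it yields exactly the two one-sided limits
\begin{equation*}
\lim_{\Delta\to0^+}\frac{\br{\lto_t^{t+\Delta}\indica{\geq x+2}}\pr{x}}{\Delta}=0
\quad\text{and}\quad
\lim_{\Delta\to0^+}\frac{\br{\lto_{t-\Delta}^t\indica{\geq x+2}}\pr{x}}{\Delta}=0,
\end{equation*}
which, under the identification $T_{t,S}^s=\lto_t^s$, are precisely the two limits demanded by \ref{def:CTS:Limit of more than two}.

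There is essentially no hard step here: the whole argument is a bookkeeping exercise that threads through Equation~\eqref{eqn:LinGenPoisGen and GenPoisGen}. The only subtle point worth spelling out is that the uniqueness clause in Theorem~\ref{the:LCT induced by GenPoisGen}, together with the fact that $\Phi_u$ for $\trm_S$ and $\Phi_u$ for $\ltro_{S'}$ are literally the same operators, is what legitimises the identification $T_{t,S}^s=\lto_t^s$; everything else then follows from the semi-group and differential results already proved in Appendix~\ref{app:The Generalised Poisson Generator}.
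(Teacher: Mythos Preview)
Your proposal is correct and follows essentially the same approach as the paper's proof: the paper's one-line argument likewise cites Equation~\eqref{eqn:LinGenPoisGen and GenPoisGen} to identify $T_{t,S}^s$ with the $\lto_t^s$ induced by $\ltro_{S'}$, then invokes Proposition~\ref{prop:Properties of lto_S induced by GenPoisGen}~\ref{prop:Properties of lto_S by GenPoisGen:IDentity}--\ref{prop:Properties of lto_S by GenPoisGen:Time-homogeneity} for \ref{def:CTS:Identity}, \ref{def:CTS:Semi-group} and Equation~\eqref{eqn:Prop of T_S:Time-homogeneity}, and Lemma~\ref{lem:lto_S satisfies orederliness property} for \ref{def:CTS:Limit of more than two}. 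Your explicit remark about the uniqueness clause in Theorem~\ref{the:LCT induced by GenPoisGen} justifying the identification is a helpful clarification that the paper leaves implicit.
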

\begin{proof}
  This is a corollary of Equation~\eqref{eqn:LinGenPoisGen and GenPoisGen}, Proposition~\ref{prop:Properties of lto_S induced by GenPoisGen}~\ref{prop:Properties of lto_S by GenPoisGen:IDentity}--\ref{prop:Properties of lto_S by GenPoisGen:Time-homogeneity} and Lemma~\ref{lem:lto_S satisfies orederliness property}.
\end{proof}
These simple systems can be used to construct more intricate systems.
First, we restrict these counting transformation systems.
Consider any counting transformation system~\(\mathcal{T}=\set{T_t^s\colon t,s\in\nnegreals, t\leq s}\) and any interval \(\mathcal{I}\) in \(\nnegreals\), here and in the remainder assumed to be of the form \(\br{t,s}\) or \([t,+\infty)\).
With this system~\(\mathcal{T}\) and the interval~\(\mathcal{I}\), we associate the \emph{restricted counting transformation system}
\begin{equation*}
  \mathcal{T}^{\mathcal{I}}
  \coloneqq\set{T_t^s \in \mathcal{T} \colon t,s\in\mathcal{I}, t\leq s}.
\end{equation*}
Next, we concatenate two restricted transformation systems.
Consider two counting transformation systems \(\mathcal{T}_1=\set{T_t^{s,1}\colon t,s\in\nnegreals, t\leq s}\) and \(\mathcal{T}_2=\set{T_t^{s,2}\colon t,s\in\nnegreals, t\leq s}\) and two intervals \(\mathcal{I}_1\) and \(\mathcal{I}_2\) in \(\nnegreals\) such that \(\mathcal{I}_1\) is closed and \(\max\mathcal{I}_1=\min\mathcal{I}_2\).
Then the associated \emph{concatenated transformation system}~\(\mathcal{T}_1^{\mathcal{I}_1}\otimes\mathcal{T}_2^{\mathcal{I}_2}\) is defined as the family of transformations~\(\set{T_t^s \colon t,s\in\mathcal{I}_1\cup\mathcal{I}_2, t\leq s}\) such that for all \(t,s\) in \(\mathcal{I}_1\cup\mathcal{I}_2\) with \(t\leq s\),
\begin{equation}
\label{eqn:Concatenated system of linear counting transformations}
  T_t^s
  \coloneqq \begin{cases}
    T_t^{s,1} &\text{if } s\leq r, \\
    T_t^{r,1} T_r^{s,2} &\text{if }t\leq r\leq s, \\
    T_t^{s,2} &\text{if } r\leq t,
  \end{cases}
\end{equation}
where \(r\coloneqq\max\mathcal{I}_1=\min\mathcal{I}_2\).
The following result establishes that the concatenated counting transformation system is again a (restricted) counting transformation system.
\begin{lemma}
\label{lem:Concatenation of restricted systems is again a restricted system}
  Consider two counting transformation systems \(\mathcal{T}_1=\set{T_{t}^{s,1}\colon t,s\in\nnegreals, t\leq s}\) and \(\mathcal{T}_2=\set{T_{t}^{s,2}\colon t,s\in\nnegreals, t\leq s}\) and fix some \(r\) in \(\posreals\).
  Then the concatenated transformation system~\(\mathcal{T}_1^{\br{0,r}}\otimes\mathcal{T}_2^{[r,+\infty)}\) is a restricted counting transformation system.
\end{lemma}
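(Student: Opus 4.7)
\begin{proofof}{Lemma~\ref{lem:Concatenation of restricted systems is again a restricted system}}
Since $[0,r]\cup[r,+\infty)=\nnegreals$, the plan is to show that the concatenated family $\mathcal{T}\coloneqq\mathcal{T}_1^{[0,r]}\otimes\mathcal{T}_2^{[r,+\infty)}=\{T_t^s\colon t,s\in\nnegreals,t\leq s\}$ defined in Equation~\eqref{eqn:Concatenated system of linear counting transformations} is itself a counting transformation system; it is then trivially a restricted counting transformation system in the sense that it coincides with its own restriction to $\nnegreals$.

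First I would verify that each $T_t^s$ is a linear counting transformation. In the first and third case of Equation~\eqref{eqn:Concatenated system of linear counting transformations}, this is immediate because $T_t^s$ is an element of the counting transformation system $\mathcal{T}_1$ or $\mathcal{T}_2$. In the middle case, $T_t^s=T_t^{r,1}T_r^{s,2}$ is a composition of two linear counting transformations, so Lemma~\ref{lem:LinTT:Composition is again a LinTT} does the job.

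Next I would verify \ref{def:CTS:Identity}--\ref{def:CTS:Limit of more than two}. For \ref{def:CTS:Identity}, the cases $t<r$ and $t>r$ follow immediately from \ref{def:CTS:Identity} applied to $\mathcal{T}_1$ and $\mathcal{T}_2$, respectively; for $t=r$, the middle case of Equation~\eqref{eqn:Concatenated system of linear counting transformations} gives $T_r^r=T_r^{r,1}T_r^{r,2}=I\cdot I=I$. For \ref{def:CTS:Semi-group}, I would case-split on the positions of $t,r',s$ relative to $r$ (where $r'\in[t,s]$). If $s\leq r$ or $t\geq r$, the claim is \ref{def:CTS:Semi-group} for $\mathcal{T}_1$ or $\mathcal{T}_2$. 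If $t\leq r\leq s$, one further splits on whether $r'\leq r$ or $r'\geq r$. In the former case, Equation~\eqref{eqn:Concatenated system of linear counting transformations} yields
\[
T_t^{r'}T_{r'}^s
=T_t^{r',1}\bigl(T_{r'}^{r,1}T_r^{s,2}\bigr)
=\bigl(T_t^{r',1}T_{r'}^{r,1}\bigr)T_r^{s,2}
=T_t^{r,1}T_r^{s,2}
=T_t^s,
\]
using \ref{def:CTS:Semi-group} for $\mathcal{T}_1$ for the third equality. The case $r'\geq r$ is symmetric, using \ref{def:CTS:Semi-group} for $\mathcal{T}_2$.

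The only slightly delicate step is \ref{def:CTS:Limit of more than two}. For $t\neq r$, the transformation $T_t^{t+\Delta}$ is, for all $\Delta$ small enough, an element of $\mathcal{T}_1$ (if $t<r$) or $\mathcal{T}_2$ (if $t>r$), so the right-limit follows from \ref{def:CTS:Limit of more than two} for the appropriate system; the same argument handles $T_{t-\Delta}^t$ for $t\neq r$. For $t=r$, I would observe that Equation~\eqref{eqn:Concatenated system of linear counting transformations} and \ref{def:CTS:Identity} combine to give $T_r^{r+\Delta}=T_r^{r,1}T_r^{r+\Delta,2}=T_r^{r+\Delta,2}$ and $T_{r-\Delta}^r=T_{r-\Delta}^{r,1}T_r^{r,2}=T_{r-\Delta}^{r,1}$, so the two limits at $t=r$ reduce respectively to the right-limit for $\mathcal{T}_2$ and the left-limit for $\mathcal{T}_1$, both of which vanish by \ref{def:CTS:Limit of more than two} applied to the respective systems. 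The main obstacle is really just being careful with the case analysis at $t=r$; once the reductions via \ref{def:CTS:Identity} are made, everything is inherited from $\mathcal{T}_1$ and $\mathcal{T}_2$.
\end{proofof}
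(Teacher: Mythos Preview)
Your proposal is correct and follows the same approach as the paper, which merely states that the linear counting transformation property follows from Equation~\eqref{eqn:Concatenated system of linear counting transformations} and Lemma~\ref{lem:LinTT:Composition is again a LinTT}, and that \ref{def:CTS:Identity}--\ref{def:CTS:Limit of more than two} follow immediately from the corresponding properties of $\mathcal{T}_1$ and $\mathcal{T}_2$. You have simply spelled out the case analysis that the paper leaves implicit, including the careful reductions at $t=r$ via \ref{def:CTS:Identity}; there is no substantive difference.
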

\begin{proof}
  It follows from Equation~\eqref{eqn:Concatenated system of linear counting transformations} and Lemma~\ref{lem:LinTT:Composition is again a LinTT} that every operator~\(T_t^s\) in the concatenation~\(\mathcal{T}_1^{\br{0,r}}\otimes\mathcal{T}_2^{[r,+\infty)}\) is a linear counting transformation.
  That \(\mathcal{T}_1^{\br{0,r}}\otimes\mathcal{T}_2^{[r,+\infty)}\) furthermore satisfies \ref{def:CTS:Identity}--\ref{def:CTS:Limit of more than two} follows immediately from Equation~\eqref{eqn:Concatenated system of linear counting transformations} and the fact that \(\mathcal{T}_1\) and \(\mathcal{T}_2\) satisfy \ref{def:CTS:Identity}--\ref{def:CTS:Limit of more than two}.
\end{proof}
\begin{corollary}
\label{cor:Constructed counting transformation system}
  Consider some \(u=t_0, \dots, t_n\) in \(\setofnetseq\) with \(t_0=0\) and, for all \(i\) in \(\set{0,\dots,n}\), some sequence \(S_i\coloneqq\set{\lambda_{i,x}}_{x\in\stsp}\) in \(\br{\llambda, \ulambda}\).
  Then
  \begin{equation*}
    \mathcal{T}
    \coloneqq \mathcal{T}^{\br{0, t_1}}_{S_0}\otimes\mathcal{T}^{\br{t_1, t_2}}_{S_1}\otimes\cdots\otimes\mathcal{T}^{\br{t_{n-1}, t_n}}_{S_{n-1}}\otimes\mathcal{T}^{[t_n, +\infty)}_{S_n}
  \end{equation*}
  is a counting transformation system.
\end{corollary}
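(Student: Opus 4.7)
The plan is to establish the corollary by induction on $n$. For the base case $n = 0$, the sequence $u$ reduces to the single time point $t_0 = 0$ and $\mathcal{T}$ collapses to the single piece $\mathcal{T}^{[0, +\infty)}_{S_0} = \mathcal{T}_{S_0}$, which is a counting transformation system by Corollary~\ref{cor:Properties of T_S}.

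For the inductive step, suppose the result holds for $n - 1 \geq 0$. Applying the induction hypothesis to the truncated sequence $t_0, \dots, t_{n-1}$ and the rate sequences $S_0, \dots, S_{n-1}$ yields that
\[
    \mathcal{T}'
    \coloneqq \mathcal{T}^{[0, t_1]}_{S_0} \otimes \cdots \otimes \mathcal{T}^{[t_{n-2}, t_{n-1}]}_{S_{n-2}} \otimes \mathcal{T}^{[t_{n-1}, +\infty)}_{S_{n-1}}
\]
is a counting transformation system (where for $n = 1$ this reduces to $\mathcal{T}_{S_0}$). Since $\mathcal{T}_{S_n}$ is also a counting transformation system by Corollary~\ref{cor:Properties of T_S}, and since $t_n > t_0 = 0$, Lemma~\ref{lem:Concatenation of restricted systems is again a restricted system} applied to $\mathcal{T}'$ and $\mathcal{T}_{S_n}$ with $r = t_n$ tells us that the concatenation ${\mathcal{T}'}^{[0, t_n]} \otimes \mathcal{T}^{[t_n, +\infty)}_{S_n}$ is a counting transformation system.

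What is left is to verify that this concatenation equals $\mathcal{T}$, which is essentially an associativity check for the operation $\otimes$. Unpacking Equation~\eqref{eqn:Concatenated system of linear counting transformations}, restricting $\mathcal{T}'$ to $[0, t_n]$ affects only its last building block, replacing $\mathcal{T}^{[t_{n-1}, +\infty)}_{S_{n-1}}$ by $\mathcal{T}^{[t_{n-1}, t_n]}_{S_{n-1}}$, since all preceding intervals already lie inside $[0, t_n]$. Concatenating the resulting family with $\mathcal{T}^{[t_n, +\infty)}_{S_n}$ then reproduces $\mathcal{T}$ operator by operator. The main tedium, though not a genuine conceptual obstacle, lies in this operator-by-operator check: one has to traverse the three cases in Equation~\eqref{eqn:Concatenated system of linear counting transformations} for the outermost concatenation and, within each case, invoke the semi-group property \ref{def:CTS:Semi-group} of the intermediate systems to collapse compositions spanning more than two subintervals into the single operator expected from the original $n$-fold concatenation.
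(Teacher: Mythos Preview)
Your proof is correct and uses the same two ingredients as the paper---Corollary~\ref{cor:Properties of T_S} for the base systems and Lemma~\ref{lem:Concatenation of restricted systems is again a restricted system} for the gluing. The only difference is the direction of the iteration: the paper starts from the rightmost piece $\mathcal{T}_{S_n}$ and successively adjoins $\mathcal{T}_{S_{n-1}}, \mathcal{T}_{S_{n-2}}, \dots$ on the left, whereas you build up the left chunk first and adjoin $\mathcal{T}_{S_n}$ on the right at the end. Both orderings require the same tacit associativity of $\otimes$ (equivalently, that restricting a concatenated system and then concatenating again yields the expected operators); the paper leaves this implicit with ``it is now clear,'' while you spell it out. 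Neither direction offers a real advantage---they are the same argument up to bookkeeping.
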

\begin{proof}
  This essentially follows from Corollary~\ref{cor:Properties of T_S} and Lemma~\ref{lem:Concatenation of restricted systems is again a restricted system}.
  Let \(\mathcal{T}'_n\coloneqq \mathcal{T}_{S_n}\) and \(\mathcal{T}'_{n-1}\coloneqq \mathcal{T}_{S_{n-1}}^{\br{0, t_n}}\otimes\mathcal{T}_{n}^{[t_n,+\infty)}\).
  Recall from Corollary~\ref{cor:Properties of T_S} that \(\mathcal{T}_{S_{n-1}}\) and \(\mathcal{T}'_n=\mathcal{T}_{S_n}\) are counting transformation systems.
  Hence, it follows from Lemma~\ref{lem:Concatenation of restricted systems is again a restricted system} with \(\mathcal{T}_1=\mathcal{T}_{S_{n-1}}\),\(\mathcal{T}_2=\mathcal{T}'_n\) and \(r=t_n\) that \(\mathcal{T}'_{n-1}=\mathcal{T}_{S_{n-1}}^{\br{0, t_n}}\otimes\mathcal{T}_{n}^{[t_n,+\infty)}\) is a counting transformation system.

  Next, we let \(\mathcal{T}'_{n-2}\coloneqq \mathcal{T}_{S_{n-2}}^{\br{0, t_{n-1}}}\otimes\mathcal{T}_{n-1}^{\prime~[t_{n-1},+\infty)}\).
  We have just proven that \(\mathcal{T}'_{n-1}\) is a counting transformation system, and it follows from Corollary~\ref{cor:Properties of T_S} that \(\mathcal{T}_{S_{n-2}}\) is a counting transformation system.
  Hence, it follows from Lemma~\ref{lem:Concatenation of restricted systems is again a restricted system} with \(\mathcal{T}_1=\mathcal{T}_{S_{n-2}}\),\(\mathcal{T}_2=\mathcal{T}'_{n-1}\) and \(r=t_{n-1}\) that \(\mathcal{T}_{n-2}=\mathcal{T}_{S_{n-2}}^{\br{0, t_{n-1}}}\otimes\mathcal{T}_{n-1}^{\prime~[t_{n-1},+\infty)}\) is a counting transformation system.

  It is now clear that if we repeat the same argument an additional \(n-2\) times, we have verified the statement.
\end{proof}

\subsection{From a Linear Counting Transformation System to the Poisson Distribution}
We conclude this section of the Appendix with a study of the special case of constant sequences \(S=\set{\lambda}_{x\in\stsp}\) in \(\nnegreals\).
For any \(\lambda\) in \(\nnegreals\), we let
\begin{equation}
\label{eqn:trm_lambda}
  \trm_\lambda
  \coloneqq\trm_S
  =\ltro_{S'}
  \qquad\text{with }
  S\coloneqq\set{\lambda}_{x\in\stsp}
  \text{ and }
  S'\coloneqq\set{\pr{\lambda,\lambda}}_{x\in\stsp}.
\end{equation}
Similarly, we let \(T_{t,\lambda}^s\coloneqq T_{t,S}^s\) for all \(t,s\) in \(\nnegreals\) with \(t\leq s\), and \(\mathcal{T}_\lambda\coloneqq\mathcal{T}_S\) to simplify our notation.
\begin{corollary}
\label{cor:T_lambda is state homogeneous}
  Consider any \(\lambda\) in \(\nnegreals\).
  Then for any \(t,s\) in \(\nnegreals\) with \(t\leq s\), \(f\) in \(\setoffn\pr{\stsp}\) and \(x\) in \(\stsp\),
  \begin{equation*}
  \br*{T_{t,\lambda}^s f}\pr{y}
    = \br*{T_{t,\lambda}^s f'_x}\pr{y-x}
    \qquad\text{for all } y\in\stsp \text{ with } y\geq x,
  \end{equation*}
  where \(f'_x\colon\stsp\to\reals\colon z\mapsto f'_x\pr{z}\coloneqq f\pr{x+z}\).
\end{corollary}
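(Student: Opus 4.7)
The plan is to reduce the statement to Lemma~\ref{lem:State homogeneity of lto induced by genpoisgen}, which already establishes the analogous state-homogeneity property for the lower counting transformation generated by a generalised Poisson generator. The key observation is the identification in Equation~\eqref{eqn:LinGenPoisGen and GenPoisGen}: the linear generalised Poisson generator $\trm_\lambda$ equals $\ltro_{S'}$ for the constant sequence $S'\coloneqq\set{\pr{\lambda,\lambda}}_{x\in\stsp}$, which lives in $\setoflambdaseq_\Lambda$ for any rate interval $\Lambda$ containing $\lambda$. The approximations $\Phi_u$ used to define $T_{t,\lambda}^s$ via Corollary~\ref{cor:Gen trm:Existence of T_S} are therefore literally the same operators that define $\lto_t^s$ via Theorem~\ref{the:LCT induced by GenPoisGen}, so by the uniqueness clause of the latter theorem, $T_{t,\lambda}^s=\lto_t^s$ for this choice of $S'$. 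Applying Lemma~\ref{lem:State homogeneity of lto induced by genpoisgen} to this common transformation then immediately yields the stated equality.

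For a reader who prefers a self-contained argument, I would follow the same three-step pattern used to prove Lemma~\ref{lem:State homogeneity of lto induced by genpoisgen}. First I would fix an arbitrary $\epsilon\in\posreals$ and use Corollary~\ref{cor:Gen trm:Existence of T_S} to produce a $u\in\setoftseq_{\br{t,s}}$ with $\sigma\pr{u}\norm{\trm_\lambda}\leq 2$ and $\norm{T_{t,\lambda}^s-\Phi_u}\leq\epsilon$. Next I would apply Lemma~\ref{lem:I+Delta GenPoisGen is state homogeneous} to the sequence $S'$---its proof goes through because $\br{\llambda_z,\ulambda_z}=\set{\lambda}$ does not depend on $z$---to obtain
\begin{equation*}
  \br{\Phi_u f}\pr{y}
  = \br{\Phi_u f'_x}\pr{y-x}
  \quad\text{for every } y\in\stsp \text{ with } y\geq x.
\end{equation*}
Finally, the triangle inequality combined with property \ref{BNH: norm Af <= norm A norm f} and the trivial bound $\norm{f'_x}\leq\norm{f}$ yields
\begin{equation*}
  \abs{\br{T_{t,\lambda}^s f}\pr{y}-\br{T_{t,\lambda}^s f'_x}\pr{y-x}}
  \leq 2\epsilon\norm{f},
\end{equation*}
and letting $\epsilon\to 0^+$ closes the argument.

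There is no real obstacle here: once the identification $\trm_\lambda=\ltro_{S'}$ is made explicit, the corollary is merely a special case of Lemma~\ref{lem:State homogeneity of lto induced by genpoisgen} rephrased in the linear notation of Appendix~\ref{app:Linear Transformations}, and the self-contained version is a verbatim adaptation of the proof of that lemma.
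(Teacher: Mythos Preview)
Your proposal is correct and follows essentially the same approach as the paper: the paper's proof simply invokes Equation~\eqref{eqn:trm_lambda} to identify $T_{t,\lambda}^s$ with the $\lto_t^s$ generated by $\ltro_{S'}$ for $S'=\set{\pr{\lambda,\lambda}}_{x\in\stsp}$, and then applies Lemma~\ref{lem:State homogeneity of lto induced by genpoisgen} directly. Your first paragraph reproduces this reduction (just citing the more general Equation~\eqref{eqn:LinGenPoisGen and GenPoisGen} and spelling out the uniqueness step), and your self-contained version is exactly the proof of Lemma~\ref{lem:State homogeneity of lto induced by genpoisgen} specialised to the constant sequence, so nothing substantively new is added.
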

\begin{proof}
  From Equation~\eqref{eqn:trm_lambda}, we know that \(T_{t,\lambda}^s=\lto_t^s\), where \(\lto_t^s\) is the lower counting transformation generated by the (linear) generalised Poisson generator~\(\ltro_{S'}=\trm_\lambda\).
  Therefore, it follows from Lemma~\ref{lem:State homogeneity of lto induced by genpoisgen} that
  \begin{equation*}
    \br{T_{t,\lambda}^sf}\pr{y}
    =\br{\lto_t^sf}\pr{y}
    =\br{\lto_t^sf'_x}\pr{y-x}
    =\br{T_{t,\lambda}^sf'_x}\pr{y-x},
  \end{equation*}
  as required.
\end{proof}
\begin{corollary}
\label{cor:T_lambda satisfies differential equation}
  Consider any \(\lambda\) in \(\nnegreals\).
  Then for any \(t,s\) in \(\nnegreals\) with \(t\leq s\),
  \begin{equation*}
    \pr{\forall\epsilon\in\posreals}
    \pr{\exists\delta\in\posreals}
    \pr{\forall\Delta\in\reals, 0<\abs{\Delta}<\delta, 0\leq t+\Delta\leq s}~
    \norm*{\frac{T_{t+\Delta,\lambda}^s-T_{t,\lambda}^s}{\Delta}+\trm_\lambda T_{t,\lambda}^s}
    \leq\epsilon.
  \end{equation*}
  and
  \begin{equation*}
    \pr{\forall\epsilon\in\posreals}
    \pr{\exists\delta\in\posreals}
    \pr{\forall\Delta\in\reals, 0<\abs{\Delta}<\delta, t\leq s+\Delta}~
    \norm*{\frac{T_{t,\lambda}^{s+\Delta}-T_{t,\lambda}^s}{\Delta}-\trm_\lambda T_{t,\lambda}^s}
    \leq\epsilon.
  \end{equation*}
\end{corollary}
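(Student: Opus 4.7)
The plan is to derive this corollary directly from Lemma~\ref{lem:ltro satisfies differential equation}, exploiting the identification of $T_{t,\lambda}^s$ with a lower counting transformation generated by a (linear) generalised Poisson generator. Specifically, I would proceed essentially as in the proof of Corollary~\ref{cor:T_lambda is state homogeneous} just above: apply the more general result for $\lto_t^s$ to the degenerate sequence where $\llambda_x=\ulambda_x=\lambda$ for every $x$.

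First, I would recall Equation~\eqref{eqn:trm_lambda}, which states that $\trm_\lambda = \ltro_{S'}$ for the sequence $S'\coloneqq\set{\pr{\lambda,\lambda}}_{x\in\stsp}$. Since $\lambda\in\nnegreals$, this sequence $S'$ belongs to $\setoflambdaseq_{\Lambda'}$ with $\Lambda'\coloneqq\br{\lambda,\lambda}$ (or any rate interval containing $\lambda$), so all constructions of Appendix~\ref{app:The Generalised Poisson Generator} apply. By the definitions in Section~\ref{ssec:The Poisson generator} and Appendix~\ref{sapp:GenPoisGen:To Lower Counting Transformations}, the linear counting transformation $T_{t,\lambda}^s$ coincides with the lower counting transformation $\lto_t^s$ generated by $\ltro_{S'}=\trm_\lambda$; this is because both are obtained as the unique limit of the same family of products $\Phi_u=\prod_{i=1}^n\pr{I+\Delta_i\trm_\lambda}$ with $\sigma\pr{u}\to0$, invoked via Theorem~\ref{the:LCT induced by GenPoisGen} (or Corollary~\ref{cor:Gen trm:Existence of T_S}).

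With this identification in place, the two limit statements of the corollary follow directly by substituting $\lto_t^s=T_{t,\lambda}^s$ and $\ltro_S=\trm_\lambda$ into the two conclusions of Lemma~\ref{lem:ltro satisfies differential equation}. For any $\epsilon\in\posreals$, that lemma supplies a $\delta\in\posreals$ such that
\[
  \norm*{\frac{\lto_{t+\Delta}^s-\lto_t^s}{\Delta}+\ltro_{S'}\lto_t^s}\leq\epsilon
\]
for all $\Delta\in\reals$ with $0<\abs{\Delta}<\delta$ and $0\leq t+\Delta\leq s$, and analogously for the perturbation in $s$. Rewriting the operators via $\lto_t^s=T_{t,\lambda}^s$ and $\ltro_{S'}=\trm_\lambda$ yields exactly the two inequalities claimed.

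Because the whole argument is a direct translation through Equation~\eqref{eqn:trm_lambda}, no genuine obstacle arises; the only thing to double-check is that Lemma~\ref{lem:ltro satisfies differential equation} is legitimately applicable when the ambient rate interval is taken to be the degenerate one $\br{\lambda,\lambda}$, which is immediate from the fact that nothing in the proof of that lemma required the interval $\Lambda$ to be non-degenerate---all that is needed is $S'\in\setoflambdaseq_{\Lambda'}$ for some interval $\Lambda'$, and this is clearly the case here.
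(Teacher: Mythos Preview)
Your proposal is correct and matches the paper's own proof essentially verbatim: the paper also invokes Equation~\eqref{eqn:trm_lambda} to identify $T_{t,\lambda}^s$ with the $\lto_t^s$ generated by $\ltro_{S'}=\trm_\lambda$ for $S'=\set{\pr{\lambda,\lambda}}_{x\in\stsp}$, and then reads off the result as a specialisation of Lemma~\ref{lem:ltro satisfies differential equation}.
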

\begin{proof}
  This is a specialisation of Lemma~\ref{lem:ltro satisfies differential equation}, as by Equation~\eqref{eqn:trm_lambda}, \(T_{t,\lambda}^s=\lto_t^s\) where \(\lto_t^s\) is the lower counting transformation generated by the (linear) generalised Poisson generator~\(\ltro_{S'}=\trm_\lambda\) associated with \(S'=\set{\pr{\lambda, \lambda}}_{x\in\stsp}\).
\end{proof}
Everything is now set up to state and prove the main result of this section, namely how the Poisson distribution is obtained from a counting transformation system.
\begin{proposition}
\label{prop:T_lambda indic gives Poisson distribution}
  Consider any \(\lambda\) in \(\nnegreals\).
  Then for all \(t, \Delta\) in \(\nnegreals\) and \(x, y\) in \(\stsp\),
  \begin{equation*}
    \br{T_{t, \lambda}^{t+\Delta}\indica{y}}\pr{x}
    = \begin{cases}
      \pois_{\lambda\Delta}\pr{y-x} &\text{if } y\geq x, \\
      0 &\text{otherwise}.
    \end{cases}
  \end{equation*}
\end{proposition}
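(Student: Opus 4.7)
The plan is to reduce the claim to a pointwise initial value problem for \(p_k\pr{s} \coloneqq \br{T_{0,\lambda}^s \indica{k}}\pr{0}\), \(k\in\stsp\), \(s\in\nnegreals\), and to show by induction on \(k\) that \(p_k\pr{s} = \pois_{\lambda s}\pr{k}\).

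First I would reduce the statement to this pointwise form. For \(y<x\), the result is immediate from \ref{LinTT:No transition to below}. For \(y\geq x\), time-homogeneity~\eqref{eqn:Prop of T_S:Time-homogeneity} yields \(T_{t,\lambda}^{t+\Delta} = T_{0,\lambda}^\Delta\), and state-homogeneity (Corollary~\ref{cor:T_lambda is state homogeneous}), applied with shift~\(x\) at evaluation point~\(x\), rewrites \(\br{T_{0,\lambda}^\Delta\indica{y}}\pr{x}\) as \(\br{T_{0,\lambda}^\Delta\indica{y-x}}\pr{0} = p_{y-x}\pr{\Delta}\), since \(\pr{\indica{y}}'_x = \indica{y-x}\) whenever \(y\geq x\). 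So it suffices to establish \(p_k\pr{s}=\pois_{\lambda s}\pr{k}\) for all \(k\) in \(\stsp\) and \(s\) in \(\nnegreals\).

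Next I would derive the ODE system satisfied by \(\set{p_k}_{k\in\stsp}\). Corollary~\ref{cor:T_lambda satisfies differential equation} gives operator-norm differentiability of \(s\mapsto T_{0,\lambda}^s\), and the bound \(\abs{\br{Af}\pr{z}}\leq\norm{Af}\leq\norm{A}\norm{f}\) from \ref{BNH: norm Af <= norm A norm f} (applicable since the operator inside the norm is a linear combination of linear counting transformations) transfers this to pointwise differentiability of \(p_k\) with
\begin{equation*}
  p_k'\pr{s} = \br{\trm_\lambda T_{0,\lambda}^s \indica{k}}\pr{0} = \lambda\br{T_{0,\lambda}^s \indica{k}}\pr{1} - \lambda\br{T_{0,\lambda}^s \indica{k}}\pr{0}.
\end{equation*}
A second appeal to state-homogeneity at the state~\(1\) gives \(\br{T_{0,\lambda}^s \indica{k}}\pr{1} = p_{k-1}\pr{s}\) for \(k\geq1\), while \ref{LinTT:No transition to below} gives the same expression as \(0\) when \(k=0\). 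Combined with the initial values \(p_k\pr{0} = \br{I\indica{k}}\pr{0} = \indica{k}\pr{0}\) from \ref{def:CTS:Identity}, this yields \(p_0'\pr{s} = -\lambda p_0\pr{s}\) with \(p_0\pr{0}=1\), and \(p_k'\pr{s} = \lambda p_{k-1}\pr{s}-\lambda p_k\pr{s}\) with \(p_k\pr{0}=0\) for \(k\geq 1\).

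Finally I would solve this triangular system by induction on \(k\). The base case gives \(p_0\pr{s} = e^{-\lambda s} = \pois_{\lambda s}\pr{0}\). Assuming \(p_{k-1}\pr{s} = e^{-\lambda s}\pr{\lambda s}^{k-1}/\pr{k-1}!\), the integrating factor \(e^{\lambda s}\) turns the ODE for \(p_k\) into \(\pr{e^{\lambda s} p_k}'\pr{s} = \lambda^k s^{k-1}/\pr{k-1}!\), which integrates (using \(p_k\pr{0}=0\)) to \(p_k\pr{s} = e^{-\lambda s}\pr{\lambda s}^k/k! = \pois_{\lambda s}\pr{k}\). I expect the main obstacle to be the differentiation step of paragraph two: one must carefully pass from the operator-norm statement of Corollary~\ref{cor:T_lambda satisfies differential equation} to a pointwise ODE and keep track of the hypotheses of state-homogeneity, but everything else is mechanical.
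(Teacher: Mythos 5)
Your proposal is correct and follows essentially the same route as the paper's proof: reduce to \(\br{T_{0,\lambda}^{\Delta}\indica{y-x}}\pr{0}\) via time- and state-homogeneity, pass from the operator-norm derivative statement of Corollary~\ref{cor:T_lambda satisfies differential equation} to the pointwise triangular ODE system via \ref{BNH: norm Af <= norm A norm f}, and identify the solution as the Poisson probabilities. The only (harmless) differences are that you dispatch the case \(y<x\) directly with \ref{LinTT:No transition to below} rather than via state-homogeneity, and you solve the initial value problems explicitly by induction with an integrating factor where the paper simply invokes the well-known uniqueness of the solution of this recursively defined system.
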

\begin{proof}
  Let \(f\coloneqq \indica{y}\), and consider the function \(f'_x\colon\stsp\to\reals\colon z\mapsto f'_x\pr{z}\coloneqq f\pr{x+z}=\indica{y}\pr{x+z}\).
  We first consider the case that \(x>y\).
  Observe that \(f'_x=0\) because \(x>y\), whence it follows from Corollary~\ref{cor:T_lambda is state homogeneous} with \(f=\indica{y}\) that
  \begin{equation*}
    \br{T_{t,\lambda}^{t+\Delta} \indica{y}}\pr{x}
    = \br{T_{t,\lambda}^{t+\Delta} f}\pr{x}
    = \br{T_{t,\lambda}^{t+\Delta} f'_x}\pr{x-x}
    = \br{T_{t,\lambda}^{t+\Delta} 0}\pr{0}
    = 0,
  \end{equation*}
  where for the final equality we have used Corollary~\ref{cor:Properties of T_S} and \ref{LinTT:constant}.
  This equality clearly agrees with the stated.

  Second, we consider the case that \(x\leq y\).
  Observe that \(f'_x=\indica{y-x}\).
  Hence, it follows from Corollary~\ref{cor:T_lambda is state homogeneous} with \(f=\indica{y}\) that
  \begin{equation*}
    \br{T_{t,\lambda}^s \indica{y}}\pr{x}
    = \br{T_{t,\lambda}^{t+\Delta} f}\pr{x}
    = \br{T_{t,\lambda}^{t+\Delta} f'_x}\pr{x-x}
    = \br{T_{t,\lambda}^{t+\Delta} \indica{y-x}}\pr{0}
    = \br{T_{0,\lambda}^{\Delta} \indica{y-x}}\pr{0}.
  \end{equation*}
  where for the final equality we have used Equation~\eqref{eqn:trm_lambda} and Equation~\eqref{eqn:Prop of T_S:Time-homogeneity} of Corollary~\ref{cor:Properties of T_S}.
  Hence, to verify the stated we need to show that
  \begin{equation}
    \phi_z\pr{\Delta}
    \coloneqq \br{T_{0,\lambda}^\Delta \indica{z}}\pr{0}
    = \pois_{\lambda\Delta}\pr{z}
    \qquad\text{for all } \Delta\in\nnegreals \text{ and } z\in\stsp.
  \end{equation}
  Due to Equation~\eqref{eqn:trm_lambda}, Corollary~\ref{cor:Properties of T_S} and \ref{def:CTS:Identity}, we already know that
  \begin{equation*}
    \phi_{z}\pr{0}
    = \br{T_{0,\lambda}^0\indica{z}}\pr{0}
    = \br{I\indica{z}}\pr{0}
    = \indica{z}\pr{0}
    = \begin{cases}
      1 &\text{if } z=0, \\
      0 &\text{otherwise}.
    \end{cases}
  \end{equation*}
  To determine the other values, we start by fixing any \(\Delta\) in \(\nnegreals\) and \(z\) in \(\stsp\).
  Fix an \(\epsilon\) in \(\posreals\).
  By Corollary~\ref{cor:T_lambda satisfies differential equation}, there is a \(\delta^\star\) in \(\posreals\) such that
  \begin{equation*}
    \pr{\forall \delta\in\reals, 0<\abs{\delta}<\delta^\star, 0\leq\Delta+\delta}~
    \norm*{\frac{T_{0,\lambda}^{\Delta+\delta}-T_{0,\lambda}^\Delta}{\delta}-\trm_\lambda T_{0,\lambda}^\Delta}
    \leq \epsilon.
  \end{equation*}
  Fix any real number \(\delta\) such that \(0<\abs{\delta}<\delta^\star\) and \(0\leq\Delta+\delta\), and observe that
  \begin{align*}
    \abs*{\frac{\br{T_{0,\lambda}^{\Delta+\delta}\indica{z}}\pr{0}-\br{T_{0,\lambda}^\Delta\indica{z}}\pr{0}}{\delta}-\br{\trm_\lambda T_{0,\lambda}^\Delta\indica{z}}\pr{0}}
    &\leq\norm*{\frac{T_{0,\lambda}^{\Delta+\delta}\indica{z}-T_{0,\lambda}^\Delta\indica{z}}{\delta}-\trm_\lambda T_{0,\lambda}^\Delta\indica{z}} \\
    &\leq \norm*{\frac{T_{0,\lambda}^{\Delta+\delta}-T_{0,\lambda}^\Delta}{\delta}-\trm_\lambda T_{0,\lambda}^\Delta}\norm{\indica{z}}
    = \norm*{\frac{T_{0,\lambda}^{\Delta+\delta}-T_{0,\lambda}^\Delta}{\delta}-\trm_\lambda T_{0,\lambda}^\Delta}
    \leq \epsilon,
  \end{align*}
  where for the second inequality we have used \ref{BNH: norm Af <= norm A norm f} and the equality holds because \(\norm{\indica{z}}=1\).
  Note that
  \begin{equation*}
    \br{\trm_\lambda T_{0,\lambda}^\Delta\indica{z}}\pr{0}
    = \lambda \br{T_{0,\lambda}^\Delta\indica{z}}\pr{1} - \lambda \br{T_{0,\lambda}^\Delta\indica{z}}\pr{0}
    = \lambda \br{T_{0,\lambda}^\Delta\indica{z-1}}\pr{0} - \lambda \br{T_{0,\lambda}^\Delta\indica{z}}\pr{0}
    = \lambda\phi_{z-1}\pr{\Delta}-\lambda\phi_z\pr{\Delta},
  \end{equation*}
  where for the second equality we have used Corollary~\ref{cor:T_lambda is state homogeneous} and where for ease of notation we let \(\phi_{-1}\coloneqq0\) because if \(z=0\), then \(\indica{z-1}=0\) and hence it follows from Equation~\eqref{eqn:trm_lambda}, Corollary~\ref{cor:Properties of T_S} and \ref{def:LinTT:Homogeneity} that \(\br{T_{0,\lambda}^\Delta\indica{z-1}}\pr{0}=\br{T_{0,\lambda}^\Delta0}\pr{0}=0\).
  We substitute this equality in our previous inequality, to obtain
  \begin{equation*}
    \abs*{\frac{\phi_z\pr{\Delta+\delta}-\phi_z\pr{\Delta}}{\delta}-\lambda\phi_{z-1}\pr{\Delta}+\lambda\phi_z\pr{\Delta}}
    \leq \epsilon.
  \end{equation*}
  Since this holds for any \(\delta\) in \(\reals\) such that \(0<\abs{\delta}<\delta^\star\), and because \(\epsilon\) was an arbitrary positive real number, it follows from this inequality and the definition of the derivative that
  \begin{equation*}
    \mathrm{D}\phi_z\pr{\Delta}
    =\lambda\phi_{z-1}\pr{\Delta}-\lambda\phi_z\pr{\Delta}
    \qquad\text{for all } z\in\stsp \text{ and } \Delta\in\nnegreals,
  \end{equation*}
  where \(\mathrm{D}\phi_z\pr{\Delta}\) denotes the derivative of \(\phi_z\) evaluated in \(\Delta\).
  It is well-known---see for instance \cite[Section~3]{1960Khintchine}---that together with the initial condition~\(\phi_z\pr{0}=\indica{z}\pr{0}\), the resulting family of recursively defined initial value problems has a unique solution, namely \(\phi_z\pr{\Delta}=\pois_{\lambda\Delta}\pr{z}\) for all \(\Delta\) in \(\nnegreals\) and \(z\) in \(\stsp\).
\end{proof}

\section{Supplementary Material for Section~\ref{sec:Counting processes in general}}
\subsection{Coherent Conditional Probabilities}
We start this section with establishing some well-known properties of coherent conditional probabilities.
Our first result establishes a necessary and sufficient condition for the real-valued map to be a coherent conditional probability.
It is actually the condition that \citet{1985Regazzini} uses to define coherent conditional probabilities, but it follows from \cite[Theorems~3 and 4]{1985Regazzini} that our definition---that is, Definition~\ref{def:Coherent conditional probability in text}---is equivalent; see for instance also \cite[Appendix~B]{2017Krak}.
\begin{proposition}
\label{prop:Coherence of conditional probability}
  Let \(S\) be a sample space.
  The real-valued map \(\prob\) on \(\ccpdomain\subseteq\ccpevents\pr{S}\times\ccpneevents\pr{S}\) is a coherent conditional probability if and only if for all \(n\) in \(\nats\), \(\alpha_1, \dots, \alpha_n\) in \(\reals\) and \(\pr{A_1, C_1}, \dots, \pr{A_n, C_n}\) in \(\ccpdomain{}\),
  \begin{equation}
  \label{eqn:Coherence condition}
    \max\set*{\sum_{i=1}^n \alpha_i\indica{C_i}\pr{s}\pr{\prob\pr{A_i\cond C_i}-\indica{A_i}\pr{s}}\colon s\in\bigcup_{i=1}^n C_i}
    \geq 0.
  \end{equation}
\end{proposition}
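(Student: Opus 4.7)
The plan is to reduce this equivalence to the classical results of \citet{1985Regazzini}. The betting-style condition of Equation~\eqref{eqn:Coherence condition} is precisely Regazzini's original definition of coherence, so the proposition amounts to showing that this condition is equivalent to extensibility to a full conditional probability in the sense of Definition~\ref{def:Full conditional probability}. I would therefore structure the argument in two directions: first necessity, then sufficiency.

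For necessity, I would fix any $\prob$ that admits a full conditional extension $\prob^\star$, together with any finite family $\pr{A_1, C_1}, \dots, \pr{A_n, C_n}$ in $\ccpdomain$ and real coefficients $\alpha_1, \dots, \alpha_n$. The key observation is that the function
\begin{equation*}
  f\pr{s} \coloneqq \sum_{i=1}^n \alpha_i \indica{C_i}\pr{s} \pr{\prob\pr{A_i \cond C_i} - \indica{A_i}\pr{s}}
\end{equation*}
vanishes outside $C^\star \coloneqq \bigcup_{i=1}^n C_i$, while its conditional expectation with respect to $\prob^\star\pr{\cdot \cond C^\star}$ is zero. Indeed, since $C_i \subseteq C^\star$, the Bayes-rule property \ref{LOP:Bayes rule} gives $\prob^\star\pr{A_i \cap C_i \cond C^\star} = \prob\pr{A_i \cond C_i}\,\prob^\star\pr{C_i \cond C^\star}$, so each summand contributes zero in expectation. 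Consequently, if the maximum in Equation~\eqref{eqn:Coherence condition} were strictly negative, then $f$ would be strictly negative on all of $C^\star$, yielding a strictly negative expectation and a contradiction.

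For sufficiency, the plan is to extend $\prob$ one pair at a time. Given a pair $\pr{A, C}$ outside $\ccpdomain$, one shows that the set of values $p$ for which setting $\prob^\star\pr{A \cond C} \coloneqq p$ preserves Equation~\eqref{eqn:Coherence condition} on the enlarged domain is a non-empty closed interval; its non-emptiness follows by specialising Equation~\eqref{eqn:Coherence condition} to isolate the range of admissible $p$. Choosing any such $p$, and applying a Zorn's lemma argument over all pairs in $\ccpevents\pr{S} \times \ccpneevents\pr{S}$, yields a map defined on the full product domain that still satisfies Equation~\eqref{eqn:Coherence condition}. One then checks that any such map automatically satisfies axioms \ref{LOP:geq 0}--\ref{LOP:Bayes rule}, since each of these can be rephrased as a specific instance of Equation~\eqref{eqn:Coherence condition} applied to a small number of carefully chosen pairs and coefficients.

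The main obstacle is verifying that the single-pair extension step preserves Equation~\eqref{eqn:Coherence condition} and characterising the admissible interval for $p$; this requires a delicate case analysis based on how the pair $\pr{A, C}$ interacts with existing pairs in $\ccpdomain$. Rather than reproduce this technical work, I would appeal directly to \cite[Theorems~3 and 4]{1985Regazzini}, where the full argument is carried out, and to the self-contained exposition in \cite[Appendix~B]{2017Krak}, which treats an essentially identical framework.
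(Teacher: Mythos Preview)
Your proposal is correct and matches the paper's approach: the paper does not give a self-contained proof of this proposition but simply observes that Equation~\eqref{eqn:Coherence condition} is Regazzini's original definition and cites \cite[Theorems~3 and 4]{1985Regazzini} and \cite[Appendix~B]{2017Krak} for the equivalence with extensibility to a full conditional probability. Your sketch of the necessity direction and the Zorn-style extension for sufficiency is precisely the content of those references, so you end up appealing to exactly the same sources as the paper.
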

The next result allows us to always extend coherent conditional probabilities to a larger domain.
\begin{proposition}[Theorem~4 in \cite{1985Regazzini}]
\label{prop:CCP can be coherently extended}
  Consider a sample space~\(S\) and a coherent conditional probability~\(\prob\) on~\(\ccpdomain\subseteq\ccpevents\pr{S}\times\ccpneevents\pr{S}\).
  Then for any \(\ccpdomain^\star\) with \(\ccpdomain\subseteq\ccpdomain^\star\subseteq\ccpevents\pr{S}\times\ccpneevents\pr{S}\), \(\prob\) can be extended to a coherent conditional probability~\(\prob^\star\) on \(\ccpdomain^\star\).
\end{proposition}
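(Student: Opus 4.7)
The plan is to prove the extension theorem by the classical two-step strategy: first reduce to extension by a single new conditional event via Zorn's lemma, and then use the finitary coherence characterisation of Proposition~\ref{prop:Coherence of conditional probability} to exhibit an admissible value for that single conditional event.

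For the Zorn step, I would consider the family~\(\mathcal{F}\) of all pairs \(\pr{\ccpdomain', \prob'}\) such that \(\ccpdomain\subseteq\ccpdomain'\subseteq\ccpdomain^\star\) and \(\prob'\) is a coherent conditional probability on \(\ccpdomain'\) that restricts to \(\prob\) on \(\ccpdomain\). This family is non-empty (it contains \(\pr{\ccpdomain,\prob}\)) and partially ordered by ``\(\pr{\ccpdomain_1,\prob_1}\preceq\pr{\ccpdomain_2,\prob_2}\) iff \(\ccpdomain_1\subseteq\ccpdomain_2\) and \(\prob_2\) restricted to \(\ccpdomain_1\) equals \(\prob_1\)''. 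I would then verify that every chain \(\set{\pr{\ccpdomain_i,\prob_i}}_{i\in I}\) has an upper bound, obtained by taking \(\ccpdomain_\infty\coloneqq\bigcup_{i\in I}\ccpdomain_i\) and defining \(\prob_\infty\) in the obvious consistent way; coherence of \(\prob_\infty\) follows from Proposition~\ref{prop:Coherence of conditional probability} because its defining condition is finitary—any finite selection of conditional events already lies in some \(\ccpdomain_i\). Zorn's lemma then yields a maximal element \(\pr{\ccpdomain^\sharp,\prob^\sharp}\), and the theorem reduces to showing that \(\ccpdomain^\sharp=\ccpdomain^\star\).

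For the single-element step, I would argue by contradiction: suppose some \(\pr{A_0,C_0}\in\ccpdomain^\star\setminus\ccpdomain^\sharp\), and seek a value \(p_0\in\br{0,1}\) such that the map \(\prob^\sharp\) extended by \(\prob^\sharp\pr{A_0\cond C_0}\coloneqq p_0\) remains coherent, contradicting maximality. By Proposition~\ref{prop:Coherence of conditional probability}, extending coherently means that for every finite selection from \(\ccpdomain^\sharp\cup\set{\pr{A_0,C_0}}\), the associated maximum~\eqref{eqn:Coherence condition} is non-negative. Splitting such a selection into the terms using \(\pr{A_0,C_0}\) and those that do not, and using the coherence of \(\prob^\sharp\) for the latter, the problem becomes: find \(p_0\in\br{0,1}\) such that for every finite family of ``witnesses'', a certain affine-in-\(p_0\) quantity is non-negative. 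Concretely, I would define
\[
  \underline{p}
  \coloneqq\sup_{\mathcal{S}_-} L\pr{\mathcal{S}_-}
  \qquad\text{and}\qquad
  \overline{p}
  \coloneqq\inf_{\mathcal{S}_+} U\pr{\mathcal{S}_+},
\]
where \(\mathcal{S}_\pm\) range over finite selections from \(\ccpdomain^\sharp\) and the bounds \(L\), \(U\) are obtained by isolating the coefficient of~\(p_0\) in~\eqref{eqn:Coherence condition}. Any \(p_0\) in \(\br{\underline{p},\overline{p}}\cap\br{0,1}\) then gives an admissible extension.

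The main obstacle will be showing that this interval is non-empty, that is, \(\underline{p}\leq\overline{p}\) and that it meets \(\br{0,1}\). The proof is by contradiction: if \(\underline{p}>\overline{p}\), combining a near-optimal selection \(\mathcal{S}_-\) and a near-optimal \(\mathcal{S}_+\), taking linear combinations with appropriate coefficients, would produce a finite selection purely within \(\ccpdomain^\sharp\) violating~\eqref{eqn:Coherence condition}, contradicting the coherence of \(\prob^\sharp\). This step is essentially a finite-dimensional separation argument—one interprets the coherence inequalities as a system of linear constraints in the variable \(p_0\), and \(\underline{p}\leq\overline{p}\) is precisely the statement that this system is consistent. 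Containment in \(\br{0,1}\) follows analogously by taking \(\mathcal{S}_-\) and \(\mathcal{S}_+\) to be the trivial selections that exploit \ref{LOP:geq 0} and \ref{LOP:1 if C in A}. Once this is established, maximality of \(\pr{\ccpdomain^\sharp,\prob^\sharp}\) is contradicted, so \(\ccpdomain^\sharp=\ccpdomain^\star\), completing the proof.
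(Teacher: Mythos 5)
The paper does not prove this proposition at all: it is imported verbatim as Theorem~4 of Regazzini's 1985 paper (just as Proposition~\ref{prop:Coherence of conditional probability} is imported), so there is no in-paper proof to compare against. What you have written is, in outline, precisely the classical proof of that cited theorem: a Zorn's-lemma reduction to a one-element extension (legitimate, because the coherence criterion of Proposition~\ref{prop:Coherence of conditional probability} is finitary, so coherence passes to unions of chains), followed by the verification that the set of coherent values for the single new assessment \(\prob\pr{A_0\cond C_0}\) is a non-empty subinterval of \(\br{0,1}\). Your plan for the crucial non-emptiness step is the right one, but two details deserve more care than the sketch suggests. First, for a fixed selection the left-hand side of Equation~\eqref{eqn:Coherence condition} is not affine in \(p_0\); it is a maximum of finitely many affine functions of \(p_0\) (finitely many because the sum takes only finitely many values on the partition generated by the \(A_i\) and \(C_i\)), which is still monotone in \(p_0\) once the coefficient of \(\pr{A_0,C_0}\) is normalised to \(\pm1\), so the bounds \(L\) and \(U\) are well defined. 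Second, when you add a violating selection with coefficient \(+1\) on \(\pr{A_0,C_0}\) to one with coefficient \(-1\) (after rescaling so the two coefficients are exactly opposite), the \(p_0\)-terms cancel, but the two maxima are taken over different unions of conditioning events; one must observe that each gamble vanishes outside its own union, so the combined gamble is negative on the union of both, and hence a fortiori on the (possibly smaller, but non-empty) union of conditioning events of the combined selection, which is what contradicts the coherence of \(\prob^\sharp\). With those points spelled out, your argument goes through and reproduces Regazzini's result; the paper simply chooses to cite it rather than reprove it.
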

Finally, we establish some well-known properties of coherent conditional probabilities.
First, we here recall from \cite[Section~2]{1985Regazzini} that any coherent conditional probability~\(\prob\) satisfies \ref{LOP:geq 0}--\ref{LOP:Bayes rule} on its domain \(\cpdomain{}\).
Additionally, it satisfies the following well-known properties; we refer to \cite[Appendix~B]{2017Krak} for proofs.
\begin{lemma}
  Consider a sample space \(S\) and a coherent conditional probability~\(\prob\) on \(\ccpdomain{}\subseteq\ccpevents\pr{S}\times\ccpneevents\pr{S}\).
  Then for any \(\pr{A, C}\) in \(\ccpdomain\),
  \begin{enumerate}[label=\upshape{P\arabic*.}, leftmargin=*, ref=\upshape{(P\arabic*)}, leftmargin=*, start=5]
    \item \label{LOP:bounds}
    \(0\leq \prob\pr{A \cond C} \leq 1\);
    \item \label{LOP:A and A C}
    \(\prob\pr{A \cond C} = \prob\pr{A\cap C\cond C}\) if \(\pr{A\cap C, C}\in\ccpdomain{}\);
    \item \label{LOP:empty}
    \(\prob\pr{\emptyset \cond C} = 0\) if \(\pr{\emptyset, C}\in\ccpdomain{}\);
    \item \label{LOP:sample space}
    \(\prob\pr{S\cond C} = 1\) if \(\pr{S, C}\in\ccpdomain{}\).
  \end{enumerate}
\end{lemma}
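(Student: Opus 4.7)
The plan is to reduce every one of the four claims to the four defining axioms \ref{LOP:geq 0}--\ref{LOP:Bayes rule} of a full conditional probability by working with a full conditional extension of \(\prob\). Such an extension \(\prob^\star\colon \ccpevents\pr{S}\times\ccpneevents\pr{S}\to\reals\) exists by the very Definition~\ref{def:Coherent conditional probability in text} of a coherent conditional probability, and by construction it agrees with \(\prob\) on \(\ccpdomain\). Consequently, whenever one of the four stated hypothesis pairs is in \(\ccpdomain\), the equality or bound obtained for \(\prob^\star\) transfers verbatim to \(\prob\). All four proofs are then one- or two-line manipulations of \ref{LOP:geq 0}--\ref{LOP:Bayes rule}.

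First I would handle \ref{LOP:empty} and \ref{LOP:sample space}. For \ref{LOP:empty}, apply \ref{LOP:Additive if disjoint} to \(A=B=\emptyset\) (which are trivially disjoint) to obtain \(\prob^\star\pr{\emptyset\cond C} = 2\,\prob^\star\pr{\emptyset\cond C}\), hence \(\prob^\star\pr{\emptyset\cond C}=0\). For \ref{LOP:sample space}, the inclusion \(C\subseteq S\) lets us apply \ref{LOP:1 if C in A} directly to conclude \(\prob^\star\pr{S\cond C}=1\). With these in hand, \ref{LOP:bounds} follows by noting that \(\set{A,A^c}\) is a partition of \(S\); combining \ref{LOP:Additive if disjoint} with \ref{LOP:sample space} yields \(1=\prob^\star\pr{A\cond C}+\prob^\star\pr{A^c\cond C}\), and since both summands are non-negative by \ref{LOP:geq 0}, we get \(0\leq\prob^\star\pr{A\cond C}\leq 1\).

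Finally, for \ref{LOP:A and A C} I would apply \ref{LOP:Bayes rule} with \(D\coloneqq C\); the side condition \(D\cap C=C\neq\emptyset\) holds because \(C\in\ccpneevents\pr{S}\), and it produces
\begin{equation*}
\prob^\star\pr{A\cap C\cond C}
= \prob^\star\pr{A\cond C\cap C}\,\prob^\star\pr{C\cond C}
= \prob^\star\pr{A\cond C},
\end{equation*}
where the last equality uses \(C\cap C=C\) together with \(\prob^\star\pr{C\cond C}=1\) from \ref{LOP:1 if C in A}. The only conceptual subtlety, which the extension to \(\prob^\star\) neutralises, is that auxiliary pairs such as \(\pr{A^c, C}\), \(\pr{S,C}\), or \(\pr{\emptyset, C}\) need not lie in the original domain \(\ccpdomain\); the pairs that do lie in \(\ccpdomain\) by hypothesis are exactly those whose values appear in the statement, so the argument closes cleanly. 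I expect no serious obstacle beyond being careful about which pairs are assumed to be in \(\ccpdomain\) so that the identities are meaningful for \(\prob\) and not merely for \(\prob^\star\).
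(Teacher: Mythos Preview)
Your proposal is correct. The paper does not supply its own argument for this lemma; it simply records the properties as well known and cites \cite[Appendix~B]{2017Krak} for proofs. Your strategy---extend \(\prob\) to a full conditional probability \(\prob^\star\) via Definition~\ref{def:Coherent conditional probability in text}, derive \ref{LOP:bounds}--\ref{LOP:sample space} for \(\prob^\star\) from the axioms \ref{LOP:geq 0}--\ref{LOP:Bayes rule}, and then restrict back to \(\ccpdomain\)---is exactly the standard route and each of your four derivations is sound. The care you take in noting that auxiliary pairs like \(\pr{A^c,C}\) need not lie in \(\ccpdomain\), and that the extension to \(\prob^\star\) is precisely what makes those intermediate steps legitimate, is the one genuine subtlety and you handle it correctly.
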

In the remainder, we will make frequent use of \ref{LOP:geq 0}--\ref{LOP:sample space}.
As these are just the standard laws of probability, we will usually do this without explicitly referring to them.

\subsection{Counting Processes in Particular}
We first establish two obvious properties of coherent conditional probabilities on \(\cpdomain\) that will be useful throughout the remainder; see for example Lemma~\ref{lem:System results in counting process} or Proposition~\ref{prop:Poisson transition probabilities define a Poisson process} further on.
\begin{lemma}
\label{lem:Coherent prob on cpdomain:prob of <= x}
  Let \(\prob\) be a coherent conditional probability on the domain \(\ccpdomain{}\subseteq\ccpevents\pr{\setofpths{}}\times\ccpneevents\pr{\setofpths}\) that contains \(\cpdomain\).
  Fix some \(t,s\) in \(\nnegreals\) with \(t\leq s\), \(u\) in \(\setoftseq_{<t}\) and \(\pr{x_u,x}\) in \(\stsp_{u\cup t}\).
  Then for all \(y\) in \(\stsp\),
  \begin{equation*}
    \prob\pr{X_s\leq y\cond X_u=x_u, X_t=x}
    = \begin{cases}
      \sum_{z=x}^y \prob\pr{X_s=z\cond X_u=x_u, X_t=x} &\text{if } y\geq x, \\
      0 &\text{otherwise.}
    \end{cases}
  \end{equation*}
  Consequently, if \(y<x\), then
  \begin{equation*}
    \prob\pr{X_s= y\cond X_u=x_u, X_t=x}
    = 0.
  \end{equation*}
\end{lemma}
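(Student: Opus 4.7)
The plan is to decompose the event \((X_s \leq y)\) into a disjoint union of the single-valued events \((X_s = z)\) for \(z\in\set{0,\dots,y}\), apply finite additivity of the conditional probability, and then argue that all contributions with \(z<x\) vanish. Before invoking any of \ref{LOP:geq 0}--\ref{LOP:sample space}, I would first justify that the events in question lie in the domain~\(\ccpdomain\). Writing \(u'\coloneqq u\cup t\) and \(x_{u'}\coloneqq\pr{x_u, x}\), we have \(u'\in\setoftseq\) with \(\max u'=t\) and \(x_{u'}\in\stsp_{u'}\), and both \(\pr{X_s=z}\) and \(\pr{X_s\leq y}\) are finitary events of the form \(\pr{X_v\in B}\) with \(v=s\in u'\cup [\max u', +\infty)\), hence elements of \(\cpfield_{u'}\). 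Since \(\cpdomain\subseteq\ccpdomain\), the pairs \(\pr{\pr{X_s=z}, \pr{X_{u'}=x_{u'}}}\) and \(\pr{\pr{X_s\leq y}, \pr{X_{u'}=x_{u'}}}\) belong to~\(\ccpdomain\), so all the conditional probabilities I manipulate are well-defined.

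Next, I would observe that \(\pr{X_s\leq y}=\bigcup_{z=0}^{y}\pr{X_s=z}\) as a disjoint union of events in \(\cpfield_{u'}\). A straightforward induction using~\ref{LOP:Additive if disjoint} then yields
\begin{equation*}
  \prob\pr{X_s\leq y\cond X_u=x_u, X_t=x}
  = \sum_{z=0}^{y} \prob\pr{X_s=z\cond X_u=x_u, X_t=x}.
\end{equation*}
The key step is to show that each summand with \(z<x\) equals zero. Fix such \(z\) and consider the intersection \(\pr{X_s=z}\cap \pr{X_{u'}=x_{u'}}\). Any \(\pth\) in this intersection satisfies \(\pth\pr{s}=z<x=\pth\pr{t}\); but \(s\geq t\), so assumption~\ref{Paths:Non-decreasing} forces \(\pth\pr{s}\geq\pth\pr{t}=x>z\), a contradiction. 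Hence \(\pr{X_s=z}\cap\pr{X_{u'}=x_{u'}}=\emptyset\), and by~\ref{LOP:A and A C} together with~\ref{LOP:empty} we conclude \(\prob\pr{X_s=z\cond X_u=x_u, X_t=x}=0\).

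Combining the previous two observations gives both parts of the first identity: if \(y\geq x\), the sum collapses to indices \(z\in\set{x,\dots,y}\), while if \(y<x\), every term vanishes and so does the total. The ``Consequently'' is then immediate: for \(y<x\) the event \(\pr{X_s=y}\) falls in the just-treated range, so \(\prob\pr{X_s=y\cond X_u=x_u, X_t=x}=0\) by exactly the same argument. I do not foresee any real obstacle here; the only point requiring care is the bookkeeping that ensures all conditioning pairs lie in a domain on which the coherence properties \ref{LOP:geq 0}--\ref{LOP:Bayes rule} and their consequences actually apply, which is guaranteed by the hypothesis \(\cpdomain\subseteq\ccpdomain\).
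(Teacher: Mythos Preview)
Your proof is correct and follows essentially the same route as the paper's: use \ref{Paths:Non-decreasing} to see that the intersection of $(X_s=z)$ (or $(X_s\leq y)$) with the conditioning event is empty when $z<x$, then apply \ref{LOP:A and A C}, \ref{LOP:empty} and finite additivity \ref{LOP:Additive if disjoint}. The only cosmetic differences are that the paper first intersects $(X_s\leq y)$ with the conditioning event before decomposing (so the sum runs from $x$ to $y$ directly), and that for the ``Consequently'' it invokes monotonicity together with the first part rather than repeating the empty-intersection argument; neither difference is substantive.
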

\begin{proof}
  To prove the first part of the statement, we observe that
  \begin{equation*}
    \prob\pr{X_s\leq y\cond X_u=x_u, X_t=x}
    = \prob\pr{X_u=x_u, X_t=x, X_s\leq y\cond X_u=x_u, X_t=x}.
  \end{equation*}
  If \(y<x\), then it follows from \ref{Paths:Non-decreasing} that \(\pr{X_u=x_u, X_t=x, X_s\leq y}=\emptyset\).
  Hence,
  \begin{equation*}
    \prob\pr{X_s\leq x\cond X_u=x_u, X_t=x}
    =\prob\pr{\emptyset\cond X_u=x_u, X_t=x}
    = 0,
  \end{equation*}
  which agrees with the stated.
  Alternatively, if \(y\geq x\), then it follows from \ref{Paths:Non-decreasing} and the finite additivity of \(\prob\) that
  \begin{equation*}
    \prob\pr{X_s\leq y\cond X_u=x_u, X_t=x}
    = \sum_{z=x}^y \prob\pr{X_u=x_u, X_t=x, X_s=z\cond X_u=x_u, X_t=x}
    = \sum_{z=x}^y \prob\pr{X_s=z\cond X_u=x_u, X_t=x},
  \end{equation*}
  as required.

  For the second part of the statement, we observe that \(\pr{X_s=y}\subseteq\pr{X_s\leq y}\).
  Together with the first part of the statement and the monotonicity of \(\prob\), this implies that
  \begin{equation*}
    \prob\pr{X_s= y\cond X_u=x_u, X_t=x}
    \leq \prob\pr{X_s\leq y\cond X_u=x_u, X_t=x}
    = 0.
  \end{equation*}
  As furthermore \(\prob\pr{X_s= y\cond X_u=x_u, X_t=x}\geq 0\) by \ref{LOP:geq 0}, this clearly implies the second part of the statement.
\end{proof}
\begin{lemma}
\label{lem:Coherent prob on cpdomain:from t to s via r}
  Let \(\prob\) be a coherent conditional probability on the domain \(\ccpdomain{}\subseteq\ccpevents\pr{\setofpths{}}\times\ccpneevents\pr{\setofpths}\) that contains \(\cpdomain\).
  Fix some \(t,s\) in \(\nnegreals\) with \(t<s\), \(u\) in \(\setoftseq_{<u}\) and \(\pr{x_u,x,y}\) in \(\stsp_{u\cup \set{t,s}}\).
  Then for all \(r\) in \(\nnegreals\) such that \(t<r<s\),
  \begin{equation*}
    \prob\pr{X_s=y\cond X_u=x_u, X_t=x}
    = \sum_{z=x}^y \prob\pr{X_s=y\cond X_u=x_u, X_t=x, X_r=z}\prob\pr{X_r=z\cond X_u=x_u, X_t=x}
  \end{equation*}
  and
  \begin{multline*}
    \prob\pr{X_s\geq y\cond X_u=x_u, X_t=x}
    = \sum_{z=x}^{y-1} \prob\pr{X_s\geq y\cond X_u=x_u, X_t=x, X_r=z}\prob\pr{X_r=z\cond X_u=x_u, X_t=x} \\ + \prob\pr{X_r\geq y\cond X_u=x_u, X_t=x}.
  \end{multline*}
\end{lemma}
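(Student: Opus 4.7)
The plan is to derive both identities by decomposing events according to the value of $X_r$, using the non-decreasingness assumption \ref{Paths:Non-decreasing} to reduce each (countable) decomposition to a finite one, and then applying the standard laws of coherent conditional probability from Definition~\ref{def:Full conditional probability}.

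For the first identity, I would first use \ref{LOP:A and A C} to replace $\prob\pr{X_s=y\cond X_u=x_u, X_t=x}$ by $\prob\pr{X_u=x_u, X_t=x, X_s=y\cond X_u=x_u, X_t=x}$. By \ref{Paths:Non-decreasing}, any path $\pth$ belonging to $\pr{X_u=x_u, X_t=x, X_s=y}$ satisfies $x\leq\pth\pr{r}\leq y$, so that
\begin{equation*}
  \pr{X_u=x_u, X_t=x, X_s=y}
  = \bigsqcup_{z=x}^{y}\pr{X_u=x_u, X_t=x, X_r=z, X_s=y},
\end{equation*}
a \emph{finite} disjoint union. Iterating \ref{LOP:Additive if disjoint} and then applying \ref{LOP:Bayes rule} to each term---with the conditioning event $\pr{X_u=x_u, X_t=x, X_r=z}$, which is non-empty by \ref{Paths:Existence for any instantiation} since $\pr{x_u,x,z}\in\stsp_{u\cup\set{t,r}}$ for every $z\in\set{x, \dots, y}$---yields the stated factorisation. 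Along the way I would also invoke \ref{LOP:A and A C} to strip the redundant conjuncts from the numerator events, and note that every pair appearing has the form in $\cpdomain\subseteq\ccpdomain$, so that $\prob$ is indeed defined on all of them.

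For the second identity, the same skeleton works but the decomposition must accommodate the paths with $\pth\pr{r}\geq y$, which automatically satisfy $X_s\geq y$ by \ref{Paths:Non-decreasing}. Concretely, I would use
\begin{equation*}
  \pr{X_u=x_u, X_t=x, X_s\geq y}
  = \pr*{\bigsqcup_{z=x}^{y-1} \pr{X_u=x_u, X_t=x, X_r=z, X_s\geq y}}\sqcup \pr{X_u=x_u, X_t=x, X_r\geq y},
\end{equation*}
where the collapsing of the $z\geq y$ contributions into the single event $\pr{X_r\geq y}$ uses \ref{Paths:Non-decreasing} to note that $X_r\geq y$ already forces $X_s\geq y$. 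Once this finite decomposition is in place, \ref{LOP:Additive if disjoint}, \ref{LOP:A and A C} and \ref{LOP:Bayes rule} combine exactly as in the first part to deliver the claim.

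The argument has no real mathematical obstacle; the only delicate points are entirely bookkeeping. I will need to (i) verify at each step that the pairs $\pr{A,C}$ whose probabilities I manipulate lie in $\ccpdomain$---this follows from $\cpdomain\subseteq\ccpdomain$ together with the observation that $\pr{X_r=z}$ and $\pr{X_r\geq y}$ both belong to the field $\cpfield_{u\cup t}$ of Equation~\eqref{eqn:cpfield_u}; (ii) check that the conditioning events in every application of \ref{LOP:Bayes rule} are non-empty, which \ref{Paths:Existence for any instantiation} guarantees as long as the prescribed counts are non-decreasing, exactly the range over which we sum; and (iii) justify the disjointness and finiteness of the decompositions by a careful appeal to \ref{Paths:Non-decreasing}. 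Otherwise the proof is a routine application of the coherent-conditional-probability axioms.
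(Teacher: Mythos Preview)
Your proposal is correct and follows essentially the same approach as the paper: both proofs use \ref{LOP:A and A C} to intersect with the conditioning event, invoke \ref{Paths:Non-decreasing} to obtain a finite disjoint decomposition over the possible values of $X_r$, and then apply \ref{LOP:Additive if disjoint} and \ref{LOP:Bayes rule} to obtain the factorisation. Your explicit bookkeeping about domain membership and the non-emptiness check via \ref{Paths:Existence for any instantiation} is more careful than the paper's own proof, which leaves these points implicit.
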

\begin{proof}
  Observe that
  \begin{equation*}
    \prob\pr{X_s=y\cond X_u=x_u, X_t=x}
    = \prob\pr{X_u=x_u, X_t=x, X_s=y\cond X_u=x_u, X_t=x}
  \end{equation*}
  due to \ref{LOP:A and A C}.
  As \(\pr{X_u=x_u, X_t=x, X_s=y}=\cup_{z=x}^{y}\pr{X_u=x_u, X_t=x, X_r=z, X_s=y}\) due to \ref{Paths:Non-decreasing}, it follows from \ref{LOP:Additive if disjoint} and \ref{LOP:A and A C} that
  \begin{align*}
   \prob\pr{X_s=y\cond X_u=x_u, X_t=x}
    &= \sum_{z=x}^y \prob\pr{X_u=x_u, X_t=x, X_r=z, X_s=y\cond X_u=x_u, X_t=x} \\
    &= \sum_{z=x}^y \prob\pr{X_r=z, X_s=y\cond X_u=x_u, X_t=x}.
  \end{align*}
  Finally, we use \ref{LOP:Bayes rule}, to yield
  \begin{equation*}
    \prob\pr{X_s=y\cond X_u=x_u, X_t=x}
    = \sum_{z=x}^y \prob\pr{X_r=z\cond X_u=x_u, X_t=x} \prob\pr{X_s=y\cond X_u=x_u, X_t=x, X_r=z},
  \end{equation*}
  which is the first equality of the statement.

  For the second equality of the stated, we observe that due to \ref{Paths:Non-decreasing}, \(\pr{X_u=x_u, X_t=x, X_s\geq y}\) is the union of the pairwise disjoint events \(\pr{X_u=x_u, X_t=x, X_r\geq y}\) and \(\pr{X_u=x_u, X_t=x, X_r=z, X_s\geq y}\) for all \(z\) in \(\stsp\) such that \(x\leq z< y\).
  We now again use \ref{LOP:Additive if disjoint}, \ref{LOP:A and A C} and \ref{LOP:Bayes rule} to yield the second equality of the statement:
  \begin{align*}
    \prob\pr{X_s\geq y\cond X_u=x_u, X_t=x}
    &= \sum_{z=x}^{y-1} \prob\pr{X_r=z, X_s\geq y\cond X_u=x_u, X_t=x} \\
    &\qquad\qquad + \prob\pr{X_r\geq y\cond X_u=x_u, X_t=x} \\
    &= \sum_{z=x}^{y-1} \prob\pr{X_r=z\cond X_u=x_u, X_t=x} \prob\pr{X_s\geq y\cond X_u=x_u, X_t=x, X_r=z} \notag \\ &\qquad\qquad + \prob\pr{X_r\geq y\cond X_u=x_u, X_t=x}.
  \end{align*}
\end{proof}

Next, we prove the first result that is given in the main text.
In this proof, we will need the following---slightly stronger---lemma.
\begin{lemma}
\label{lem:Collection of finitary events is algabra}
  Consider some \(u\) in \(\setoftseq\).
  Then the corresponding collection of finitary events
  \begin{equation}
  \label{eqn:collection of finitary events}
    \mathcal{C}_u
    \coloneqq \set{\pr{X_v\in B} \colon v\in\setoftseq, B\subseteq\stsp_v, \\ \pr{\forall t\in v}~t\in u\cup [\max u, +\infty)}.
  \end{equation}
  is an algebra.
  Therefore, \(\cpfield_u=\mathcal{C}_u\).
\end{lemma}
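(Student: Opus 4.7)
\textbf{Proof plan for Lemma~\ref{lem:Collection of finitary events is algabra}.} The definition of $\cpfield_u$ in Equation~\eqref{eqn:cpfield_u} is precisely the algebra of subsets of $\setofpths$ generated by the collection $\mathcal{C}_u$, so it suffices to prove the first claim, namely that $\mathcal{C}_u$ is itself already an algebra. The second assertion follows immediately since any algebra generated by a collection that is already an algebra is that collection itself. I would therefore verify directly the three defining properties: $\setofpths \in \mathcal{C}_u$, closure under complements, and closure under finite intersections (which together with closure under complements gives closure under finite unions).

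For the first two properties the arguments are essentially immediate. To exhibit $\setofpths$ as an element of $\mathcal{C}_u$, pick any single time point $t$ satisfying $t\in u\cup[\max u,+\infty)$ (using the convention that $\max u<t$ is vacuously true when $u=\emptyset$), set $v\coloneqq t$ and $B\coloneqq\stsp_v=\stsp$, so that $(X_v\in B)=\setofpths$. Closure under complements is even simpler: if $A=(X_v\in B)\in\mathcal{C}_u$, then its complement in $\setofpths$ is $(X_v\in\stsp_v\setminus B)$, which lies in $\mathcal{C}_u$ with the same $v$ since $\stsp_v\setminus B\subseteq\stsp_v$.

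The main work is closure under finite intersections. Given two elements $A_1=(X_{v_1}\in B_1)$ and $A_2=(X_{v_2}\in B_2)$ of $\mathcal{C}_u$, I would form the increasing sequence $v\coloneqq v_1\cup v_2$ (ordered as elements of $\nnegreals$) and note that $v$ still satisfies the condition that every one of its time points lies in $u\cup[\max u,+\infty)$, simply because each of its elements already does so as a member of $v_1$ or $v_2$. For $i\in\set{1,2}$, I would then introduce the cylindrical extension
\begin{equation*}
  B_i'\coloneqq\set{x_v\in\stsp_v\colon\pi_{v_i}(x_v)\in B_i}\subseteq\stsp_v,
\end{equation*}
where $\pi_{v_i}\colon\stsp_v\to\stsp_{v_i}$ is the projection that discards coordinates whose time points do not lie in $v_i$. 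Since any projection of a non-decreasing tuple is non-decreasing, $\pi_{v_i}$ indeed lands in $\stsp_{v_i}$, so $B_i'$ is well-defined. The key identity to verify is $(X_v\in B_i')=(X_{v_i}\in B_i)$: for any $\pth\in\setofpths$, the evaluation tuple $\pr{\pth(t)}_{t\in v}$ lies in $\stsp_v$ by Assumption~\ref{Paths:Non-decreasing}, and its $v_i$-projection is the tuple $\pr{\pth(t)}_{t\in v_i}$, so membership on the two sides is equivalent by the definition of $B_i'$. From this identity and the standard set-theoretic manipulation $(X_v\in B_1')\cap(X_v\in B_2')=(X_v\in B_1'\cap B_2')$, we conclude that $A_1\cap A_2=(X_v\in B_1'\cap B_2')\in\mathcal{C}_u$, as desired.

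The only step that is subtler than bookkeeping is the cylindrical-extension identity above; the main obstacle to watch for is the role of Assumption~\ref{Paths:Non-decreasing}, which is what guarantees that one never needs to consider "illegal" (non-monotone) tuples, so that the extension from $\stsp_{v_i}$ to $\stsp_v$ is faithful. Degenerate cases, such as $v_i=\emptyset$ (where $\stsp_{v_i}=\set{x_\emptyset}$ so $B_i\in\set{\emptyset,\set{x_\emptyset}}$ yields $(X_{v_i}\in B_i)\in\set{\emptyset,\setofpths}$) or $v_1\cap v_2\neq\emptyset$, cause no difficulty and can be absorbed into the same argument.
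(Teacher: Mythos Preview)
Your proposal is correct and follows essentially the same strategy as the paper: both arguments reduce to verifying the algebra axioms directly, with the only nontrivial step being the ``merge the time sequences'' trick, where one passes from $v_1,v_2$ to $v=v_1\cup v_2$ and lifts each $B_i$ to a cylinder in $\stsp_v$. The paper proves closure under unions (defining $B$ as the tuples whose $v_i$-projections lie in $B_1$ \emph{or} $B_2$) while you prove closure under intersections (via the two cylindrical extensions $B_1',B_2'$ and then $B_1'\cap B_2'$); these are of course equivalent by De~Morgan. One small remark: in your complement step the equality $\setofpths\setminus(X_v\in B)=(X_v\in\stsp_v\setminus B)$ already relies on Assumption~\ref{Paths:Non-decreasing} (so that evaluation tuples always land in $\stsp_v$), which the paper invokes explicitly there; you only mention it later for the cylindrical-extension identity, so it would be worth flagging it in the complement step as well.
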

\begin{proof}
  The second part of the stated---that is, that \(\cpfield_u=\mathcal{C}_u\)---is an immediate consequence of the first part---that is, that the collection of finitary events \(\mathcal{C}_u\) is an algebra---because \(\cpfield_u\) is defined in Equation~\eqref{eqn:cpfield_u} as the smallest algebra of sets that contains~\(\mathcal{C}_u\).
  Hence, we only need to verify that the collection of finitary events \(\mathcal{C}_u\) is an algebra of sets (sometimes also called a field of events), in the sense that
  \begin{enumerate}[label={\upshape{}F\arabic*.}, ref=\upshape(F\arabic*)]
    \item\label{FOE:Empty set} \(\emptyset\) belongs to \(\mathcal{C}_u\);
    \item\label{FOE:complement} \(\setofpths\setminus A_1\) belongs to \(\mathcal{C}_u\) for all \(A_1\) in \(\mathcal{C}_u\);
    \item\label{FOE:union} \(A_1\cup A_2\) belongs to \(\mathcal{C}_u\) for all \(A_1, A_2\) in \(\mathcal{C}_u\).
  \end{enumerate}

  Observe first that \ref{FOE:Empty set} holds, as the empty set~\(\emptyset\) belongs to \(\mathcal{C}_u\).
  For instance, take any \(t\) in \([\max u, +\infty)\) and let \(B\coloneqq \emptyset\).
  Then clearly
  \begin{equation*}
    \pr{X_t\in B}
    = \pr{X_t\in\emptyset}
    = \set{\pth\in\setofpths\colon \pth\pr{t}\in\emptyset}
    = \emptyset,
  \end{equation*}
  and \(\pr{X_t\in B}=\emptyset\) belongs to \(\mathcal{C}_u\) due to Equation~\eqref{eqn:collection of finitary events}.
  Similarly, \(\setofpths\) belongs to \(\mathcal{C}_u\) because \(\setofpths=\pr{X_t\in\stsp}\) for any \(t\) in \([\max u, +\infty)\).
  Observe that \ref{FOE:complement} and \ref{FOE:union} are trivially satisfied for \(A_1=\emptyset\) and \(A_1=\setofpths\).

  Hence, we now fix any \(A_1=\pr{X_{v_1}\in B_1}\) and \(A_2=\pr{X_{v_2}=B_2}\) in \(\mathcal{C}_u\) such that \(\emptyset\neq A_1\neq\setofpths{}\) and \(\emptyset\neq A_2\neq\setofpths{}\), and verify that \ref{FOE:complement} and \ref{FOE:union} hold.
  To that end, we recall that \(\pr{X_\emptyset\in\stsp_\emptyset}=\pr{X_\emptyset=x_\emptyset}=\setofpths\), so the condition \(\emptyset\neq A_1\neq\stsp\) implies that \(v_1\neq\emptyset\), and similarly \(v_2\neq\emptyset\).
  Hence, we can enumerate the time points in \(v_1\) as \(t_1, \dots, t_n\) and the time points in \(v_2\) as \(s_1, \dots, s_m\).

  In order to verify \ref{FOE:complement}, we observe that
  \begin{align*}
    \setofpths\setminus A_1
    &=\setofpths{}\setminus A_1
    = \setofpths{}\setminus\set{\pth\in\setofpths\colon \pr{\pth\pr{t_1}, \cdots, \pth\pr{t_n}} \in B_1}
    = \set{\pth\in\setofpths\colon \pr{\pth\pr{t_1}, \cdots, \pth\pr{t_n}} \notin B_1} \\
    &= \set{\pth\in\setofpths\colon \pr{\pth\pr{t_1}, \cdots, \pth\pr{t_n}} \in B_1^c}
    = \pr{X_{v_1}\in B_1^c},
  \end{align*}
  with \(B_1^c=\stsp_{v_1}\setminus B_1\) and where the third equality follows from \ref{Paths:Non-decreasing} and Equation~\eqref{eqn:stsp_u}.
  As \(B_1^c\subseteq\stsp_{v_1}\), it follows from Equation~\eqref{eqn:collection of finitary events} that \(\pr{X_{v_1}\in B_1^c}\) belongs to \(\mathcal{C}_u\).
  Hence, we may conclude that \(\setofpths\setminus A_1\) belongs to \(\mathcal{C}_u\), as required.

  To verify \ref{FOE:union}, we observe that
  \begin{align*}
    A_1\cup A_2
    &= \set{\pth\in\setofpths\colon \pr{\pth\pr{t_1},\dots, \pth\pr{t_n}}\in B_1}\cup\set{\pth\in\setofpths\colon \pr{\pth\pr{s_1},\dots, \pth\pr{s_m}}\in B_2} \\
    &=\set{\pth\in\setofpths\colon \pr{\pth\pr{t_1},\dots, \pth\pr{t_n}}\in B_1 \text{ or } \pr{\pth\pr{s_1},\dots, \pth\pr{s_m}}\in B_2}.
  \end{align*}
  Let \(v\) be the ordered union of \(v_1=t_1,\dots,t_n\) and \(v_2=s_1,\dots,s_m\).
  We now furthermore enumerate the time-points in \(v\) as \(r_1, \dots, r_k\), and let
  \begin{equation}
    B
    \coloneqq \set{\pr{x_{r_1}, \dots, x_{r_k}}\in\stsp_v\colon \pr{x_{t_1}, \dots, x_{t_n}}\in B_1 \text{ or }  \pr{x_{s_1}, \dots, x_{s_m}}\in B_2}.
  \end{equation}
  It then follows from \ref{Paths:Non-decreasing} and Equation~\eqref{eqn:stsp_u} that
  \begin{align*}
    A_1\cup A_2
    &= \set{\pth\in\setofpths\colon \pr{\pth\pr{t_1},\dots, \pth\pr{t_n}}\in B_1 \text{ or } \pr{\pth\pr{s_1},\dots, \pth\pr{s_m}}\in B_2} \\
    &= \set{\pth\in\setofpths\colon \pr{\pth\pr{r_1},\dots, \pth\pr{r_k}}\in B}
    = \pr{X_v\in B}.
  \end{align*}
  Because \(v\) and \(B\) trivially satisfy the requirements of Equation~\eqref{eqn:collection of finitary events}, we may conclude from this equality that \(A_1\cup A_2\) belongs to \(\mathcal{C}_u\), as required.
\end{proof}
\begin{proofof}{Lemma~\ref{lem:Element of cpfield_u}}
  It follows from Lemma~\ref{eqn:collection of finitary events} that \(A=\pr{X_v\in B_v}\), where \(v\) is some sequence of time points in \(\setoftseq\) such that \(t\) belongs to \(u\cup [\max u,+\infty)\) for all \(t\) in \(v\), and where \(B_v\) is a subset of \(\stsp_v\).

  Let \(w'\coloneqq v\setminus u\).
  If \(w'=\emptyset\), then we fix any \(t\) in the open interval \((\max u, +\infty)\), and we let \(w\) be the sequence containing \(t\).
  Alternatively, if \(w'\neq\emptyset\), then we let \(w\) be the (ordered) sequence of time points in \(w\).

  In any case, if we enumerate the time points in \(u\cup w\) as \(t_1, \dots, t_n\) and those in \(v\) as \(s_1, \dots, s_m\), then we can define the set
  \begin{equation}
  \label{eqn:Proof of :Def of B}
    B
    \coloneqq \set{\pr{x_{t_1}, \dots, x_{t_n}}\in\stsp_{u\cup w}\colon \pr{x_{s_1},\dots,x_{s_m}}\in B_v}.
  \end{equation}
  To obtain the stated, we observe that
  \begin{equation*}
    A
    = \pr{X_v\in B_v}
    = \set{\pth\in\setofpths\colon \pr{\pth\pr{s_1}, \dots, \pth\pr{s_m}}\in B_v}
    = \set{\pth\in\setofpths\colon \pr{\pth\pr{t_1}, \dots, \pth\pr{t_n}}\in B}
    = \pr{X_{u\cup w}\in B},
  \end{equation*}
  where for the third equality we have used Equation~\eqref{eqn:Proof of :Def of B}, Equation~\eqref{eqn:stsp_u} and \ref{Paths:Non-decreasing}.
\end{proofof}

\subsection{Constructing Counting  Processes}
We end this section with a number of interesting technical results about the construction of counting processes.
In the proof of the first of these technical results, we need the following intermediary result.
\begin{lemma}
\label{lem:Helper lemma with not-quite convex combination}
  Consider some non-empty finite index set \(\mathcal{I}\) and, for all \(i\) in \(\mathcal{I}\), some \(\alpha_i\) and \(p_i\) in \(\reals\).
  Let \(\alpha^\star\coloneqq\min\set{\alpha_i\colon i\in\mathcal{I}}\).
  If \(p_i\geq 0\) for all \(i\) in \(\mathcal{I}\) and \(\sum_{i\in\mathcal{I}}p_i\leq 1\), then
  \begin{equation*}
    \sum_{i\in\mathcal{I}} \alpha_i p_i
    \geq \min\set{0, \alpha^\star}.
  \end{equation*}
\end{lemma}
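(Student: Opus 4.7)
The plan is to prove the inequality by first lower-bounding each summand uniformly using $\alpha^\star$, and then splitting into two cases based on the sign of $\alpha^\star$.

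First I would observe that, for every $i$ in $\mathcal{I}$, we have $\alpha_i \geq \alpha^\star$ by definition of $\alpha^\star$, and $p_i \geq 0$ by assumption, so $\alpha_i p_i \geq \alpha^\star p_i$. Summing over $i$ in $\mathcal{I}$ and letting $p \coloneqq \sum_{i \in \mathcal{I}} p_i$, this yields the key intermediate bound
\begin{equation*}
  \sum_{i \in \mathcal{I}} \alpha_i p_i
  \geq \alpha^\star \sum_{i \in \mathcal{I}} p_i
  = \alpha^\star p,
\end{equation*}
where, by assumption, $0 \leq p \leq 1$.

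Next I would split into two cases. If $\alpha^\star \geq 0$, then $\min\{0, \alpha^\star\} = 0$; since $\alpha^\star \geq 0$ and $p \geq 0$, we have $\alpha^\star p \geq 0$, which is exactly the desired bound. If instead $\alpha^\star < 0$, then $\min\{0, \alpha^\star\} = \alpha^\star$; because $p \leq 1$ and $\alpha^\star < 0$, multiplying the inequality $p \leq 1$ by the negative number $\alpha^\star$ reverses it to give $\alpha^\star p \geq \alpha^\star$, again delivering the desired bound. Combining the two cases completes the proof.

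There is essentially no obstacle here: the result is a straightforward estimation lemma, and the only subtlety worth flagging is the sign-reversal when multiplying through by $\alpha^\star$ in the case $\alpha^\star < 0$, which is precisely what forces the appearance of $\min\{0, \alpha^\star\}$ rather than simply $\alpha^\star$ in the statement.
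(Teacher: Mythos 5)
Your proof is correct and follows essentially the same route as the paper: the same case split on the sign of $\alpha^\star$, with the bound $\alpha^\star p \geq \alpha^\star$ when $\alpha^\star < 0$ and $0 \leq p \leq 1$. Your uniform estimate $\alpha_i p_i \geq \alpha^\star p_i$ applied up front is a mild streamlining that lets you avoid the paper's separate treatment of the sub-case $\sum_{i\in\mathcal{I}} p_i = 0$ and its convex-combination phrasing, but the substance is identical.
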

\begin{proof}
  We distinguish two cases based on the sign of \(\alpha^\star\).
  If \(\alpha^\star\geq 0\), then \(\alpha_i\geq 0\) for all \(i\) in \(\mathcal{I}\).
  Since furthermore \(p_i\geq 0\) for all \(i\) in \(\mathcal{I}\), we observe that \(\sum_{i\in\mathcal{I}} \alpha_i p_i \geq 0 = \min\set{0, \alpha^\star}\) as this is a sum of non-negative terms.

  Next, we consider the case \(\alpha^\star<0\).
  If \(\sum_{i\in\mathcal{I}}p_i=0\), then clearly \(\sum_{i\in\mathcal{I}}\alpha_i p_i = 0 \geq \min\set{0, \alpha^\star}\).
  If \(\sum_{i\in\mathcal{I}}p_i>0\), then we observe that
  \begin{equation*}
    \sum_{i\in\mathcal{I}} \alpha_i p_i
    = \pr*{\sum_{j\in\mathcal{I}}p_j} \sum_{i\in\mathcal{I}} \alpha_i \dfrac{p_i}{\sum_{j\in\mathcal{I}}p_j}
    \geq \pr*{\sum_{j\in\mathcal{I}}p_j} \alpha^\star
    \geq \alpha^\star
    = \min\set{0, \alpha^\star},
  \end{equation*}
  where the first inequality holds because a convex combination of real numbers is greater than or equal to the minimum of these real numbers, and the second inequality holds because \(\alpha^\star<0\) and \(0<\sum_{j\in\mathcal{I}}p_j\leq 1\).
\end{proof}
Everything is now set up to prove the following technical lemma, which is crucial when constructing counting processes in general and Poisson processes in particular.
\begin{lemma}
\label{lem:Construction lemma with sequence of time points and pmf}
  Let \(w=w_0, \dots, w_\ell\) be an element of \(\setofnetseq\).
  Let \(\prob_w\) be a real-valued function on
  \begin{equation}
  \label{eqn:ccpdomain_w}
    \ccpdomain_w
    \coloneqq \set{\pr{X_{w_j}=y, X_{w^j}=x_{w^j}} \in \cpdomain\colon j\in\set{0, \dots, \ell}, w^j\coloneqq\set{w_0, \dots, w_{j-1}}, x_{w^j} \in \stsp_{w^j}, y\in\stsp}
  \end{equation}
   such that, for any \(j\) in \(\set{0, \dots, \ell}\) and \(x_{w^{j}}\) in \(\stsp_{w^{j}}\) with \(w^{j}=\set{w_0, \dots, w_{j-1}}\)---and specifically \(w^0=\emptyset\)---(i) if \(j>0\), then \(\prob_w\pr{X_{w_j}=y\cond X_{w^j}=x_{w^j}}=0\) for all \(y\) in \(\stsp\) with \(y< x_{w_{j-1}}\); (ii) \(0\leq\prob_w\pr{X_{w_j}=y\cond X_{w^j}=x_{w^j}}\leq1\) for all \(y\) in \(\stsp\) with \(y\geq x_{w_{j-1}}\); and (iii) \(0\leq\sum_{y\in B}\prob_w\pr{X_{w_j}=y\cond X_{w^j}=x_{w^j}}\leq1\) for all finite subsets \(B\) of \(\stsp\).
  Then \(\prob_w\) is a coherent conditional probability.
\end{lemma}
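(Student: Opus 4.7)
My plan is to verify the coherence condition of Proposition~\ref{prop:Coherence of conditional probability} directly, by induction on \(\ell\).
Fix any \(n\) in \(\nats\), reals \(\alpha_1, \ldots, \alpha_n\), and pairs \(\pr{A_i, C_i}\) in \(\ccpdomain_w\), writing \(A_i = \pr{X_{w_{j_i}} = y_i}\) and \(C_i = \pr{X_{w^{j_i}} = x^{(i)}}\) with \(j_i\) in \(\set{0, \ldots, \ell}\).
The quantity
\begin{equation*}
  F\pr{\pth}
  \coloneqq \sum_{i=1}^n \alpha_i \indica{C_i}\pr{\pth} \pr{\prob_w\pr{A_i \cond C_i} - \indica{A_i}\pr{\pth}}
\end{equation*}
depends on \(\pth\) only through \(\pth\pr{w_0}, \ldots, \pth\pr{w_\ell}\), so Assumption~\ref{Paths:Existence for any instantiation} reduces the task to exhibiting a non-decreasing tuple in \(\stsp_w\) that realises some \(\pth\) in \(\bigcup_i C_i\) at which \(F\) is non-negative.
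For the base case \(\ell = 0\) every \(C_i\) equals \(\setofpths\); after consolidating repeated \(y_i\)'s I set \(\pth\pr{w_0} \coloneqq y_{k^*}\) with \(k^* \coloneqq \arg\min_i \alpha_i\) when \(\alpha_{k^*} < 0\), and \(\pth\pr{w_0} \notin \set{y_1, \ldots, y_n}\) otherwise (possible since \(\stsp\) is infinite).
Lemma~\ref{lem:Helper lemma with not-quite convex combination} applied with \(p_i \coloneqq \prob_w\pr{X_{w_0} = y_i}\)---whose hypotheses are precisely conditions (ii) and (iii) of the present lemma---then delivers \(\sum_i \alpha_i p_i \geq \alpha_{k^*}\) in the first case and \(\sum_i \alpha_i p_i \geq 0\) in the second, so \(F\pr{\pth} \geq 0\) either way.

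For the inductive step I split the indices into \(I_< \coloneqq \set{i \colon j_i < \ell}\) and \(I_= \coloneqq \set{i \colon j_i = \ell}\), and group the latter by conditioning history \(h\) in \(\stsp_{w^\ell}\) via \(J_=\pr{h} \coloneqq \set{i \in I_= \colon x^{(i)} = h}\).
Writing \(F = F_< + F_=\) accordingly, the term \(F_<\) depends only on \(\pr{\pth\pr{w_0}, \ldots, \pth\pr{w_{\ell-1}}}\) and is exactly the coherence test for \(I_<\) relative to the restriction \(\prob_{w^\ell}\), which inherits the hypotheses of the lemma and is therefore coherent by the inductive hypothesis.
For each \(h\) with \(J_=\pr{h} \neq \emptyset\) I apply the base-case argument to the single-row sub-probability \(\prob_w\pr{X_{w_\ell} = \cdot \cond X_{w^\ell} = h}\); condition (i) of the lemma forces the minimising index to sit in \(\set{i \in J_=\pr{h} \colon y_i \geq h_{w_{\ell-1}}}\), so a choice \(y^*\pr{h} \geq h_{w_{\ell-1}}\) exists that renders the \(J_=\pr{h}\)-contribution to \(F_=\) non-negative.
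If \(I_< = \emptyset\), I pick any \(h^*\) with \(J_=\pr{h^*} \neq \emptyset\) and realise the tuple \(\pr{h^*, y^*\pr{h^*}}\) as some \(\pth\), which lies in \(C_i\) for every \(i \in J_=\pr{h^*}\).
Otherwise the inductive hypothesis yields, via Proposition~\ref{prop:Coherence of conditional probability}, some \(\pth^\dagger\) in \(\bigcup_{i \in I_<} C_i\) with \(F_<\pr{\pth^\dagger} \geq 0\); setting \(h^\dagger \coloneqq \pr{\pth^\dagger\pr{w_0}, \ldots, \pth^\dagger\pr{w_{\ell-1}}}\) and invoking Assumption~\ref{Paths:Existence for any instantiation}, I realise \(\pr{h^\dagger, y^*\pr{h^\dagger}}\)---or \(\pr{h^\dagger, y}\) for any \(y \geq h^\dagger_{w_{\ell-1}}\) if \(J_=\pr{h^\dagger} = \emptyset\)---as some \(\pth\) in \(\bigcup_i C_i\) with \(F\pr{\pth} \geq 0\).

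The main subtlety is the compatibility between the base-case argument and the path-monotonicity of Assumption~\ref{Paths:Non-decreasing} when peeling off \(w_\ell\): the unrestricted \(\arg\min_i \alpha_i\) could point to a \(y_i < h_{w_{\ell-1}}\) that the path is forbidden from visiting.
The resolution is that condition (i) kills \(\prob_w\pr{X_{w_\ell} = y_i \cond X_{w^\ell} = h}\) for such \(i\), so those indices contribute zero to \(\sum_i \alpha_i p_i\) in Lemma~\ref{lem:Helper lemma with not-quite convex combination}; the minimisation that actually matters is over \(\set{i \colon p_i > 0} \subseteq \set{i \colon y_i \geq h_{w_{\ell-1}}}\), keeping the base-case choice of \(y^*\pr{h}\) within the admissible non-decreasing range.
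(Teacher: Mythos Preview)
Your proposal is correct and follows essentially the same approach as the paper's proof: induction on \(\ell\), with the base case handled via Lemma~\ref{lem:Helper lemma with not-quite convex combination} and the inductive step by splitting into the indices with \(j_i<\ell\) (to which the hypothesis applies) and those with \(j_i=\ell\), then fixing \(\pth(w_\ell)\) using condition~(i) to keep the chosen value compatible with \ref{Paths:Non-decreasing}. The paper organises the step slightly differently---it first selects \(\pth^\star\) and only then treats the single group \(\mathcal{I}_{C^\star}\), whereas you define \(y^*(h)\) for every history before specialising---but the content is the same, including the key observation that the indices with \(y_i<h_{w_{\ell-1}}\) contribute neither to the probability sum nor to the indicator sum.
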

\begin{proof}
  Our proof by induction is inspired by that of \citet[Lemma~C.1]{2017Krak}: we verify that \(\prob_w\) satisfies the necessary and sufficient condition for coherence of Proposition~\ref{prop:Coherence of conditional probability}.
  First, we observe that this is the case if \(\ell=0\).
  To verify this, we fix any \(n\) in \(\nats\) and, for any \(i\) in \(\set{1,\dots,n}\), some \(\pr{A_i, C_i}=\pr{X_{w_0}=y_i, \setofpths}\) in \(\ccpdomain_w\) and \(\alpha_i\) in \(\reals\).
  Observe that
  \begin{equation}
  \label{eqn:Proof of Construction lemma with sequence of time points and pmf:First step coherent}
    \max\set*{\sum_{i=1}^n \alpha_i \indica{C_i}\pr{\pth}\pr{\prob_w\pr{A_i\cond C_i} - \indica{A_i}\pr{\pth}} \colon \pth\in \bigcup_{i=1}^n C_i}
    = \max\set*{\sum_{i=1}^n \alpha_i \pr{\prob_w\pr{X_{w_0}=y_i\cond \setofpths} - \indica{A_i}\pr{\pth}} \colon \pth\in \setofpths}.
  \end{equation}
  Let \(B\coloneqq\set{y\in\stsp\colon\pr{\exists i\in\set{1,\dots,n}}~y_i=y}\), and observe that \(B\) is non-empty and has a finite number of elements.
  We now partition \(\set{1, \dots, n}\) according to the events~\(A_i=\pr{X_{w_\ell}=y_i}\): for any \(y\) in \(B\), we let
  \begin{equation*}
    \mathcal{I}_y
    \coloneqq \set{i\in\set{1,\dots,n}\colon y_i=y}.
  \end{equation*}
  Furthermore, we let \(\alpha^\star\coloneqq\min\set{\sum_{i\in\mathcal{I}_y}\alpha_i\colon y\in B}\), and let \(y^\star\) be the element of \(B\) that reaches this minimum.
  Observe that
  \begin{align}
    \sum_{i=1}^n \alpha_i \prob\pr{X_{w_0}=y_i\cond\setofpths}
    &= \sum_{y\in B}\sum_{i\in\mathcal{I}_y} \alpha_i \prob\pr{X_{w_0}=y\cond\setofpths}
    = \sum_{y\in B}\pr*{\sum_{i\in\mathcal{I}_y} \alpha_i} \prob\pr{X_{w_0}=y\cond\setofpths} \notag \\
    &\geq \min\set{0, \alpha^\star},
    \label{eqn:Proof of Construction lemma with sequence of time points and pmf:First step alpha inequality}
  \end{align}
  where the inequality follows from Lemma~\ref{lem:Helper lemma with not-quite convex combination} and the conditions (ii) and (iii) on \(\prob_w\) of the stated.
  If \(\alpha^\star\geq 0\), we let \(\pth^\star\) be any path such that \(\pth^\star\pr{w_0}\notin B\); note that this path exists by \ref{Paths:Existence for any instantiation} because \(B\) is a finite subset of \(\stsp\).
  This way,
  \begin{equation}
  \label{eqn:Proof of Construction lemma with sequence of time points and pmf:First step pth case 1}
    \sum_{y\in B}\pr*{\sum_{i\in\mathcal{I}_y}\alpha_i} \indica{\pr{X_{w_0}=y}}\pr{\pth^\star}
    = 0
    = \min\set{0, \alpha^\star}
  \end{equation}
  because \(\indica{\pr{X_{w_0}=y}}\pr{\pth^\star}=0\) for all \(y\) in \(B\).
  Otherwise, that is if \(\alpha^\star<0\), we let \(\pth^\star\) be any path such that \(\pth^\star\pr{w_0}=y^\star\); again, this path exists due to \ref{Paths:Existence for any instantiation}.
  This way,
  \begin{equation}
  \label{eqn:Proof of Construction lemma with sequence of time points and pmf:First step pth case 2}
    \sum_{y\in B}\pr*{\sum_{i\in\mathcal{I}_y}\alpha_i} \indica{\pr{X_{w_0}=y}}\pr{\pth^\star}
    = \alpha^\star
    = \min\set{0, \alpha^\star}
  \end{equation}
  because, for all \(y\) in \(B\), \(\indica{\pr{X_{w_0}=y}}\pr{\pth^\star}=0\) if \(y\neq y^\star\) and \(\indica{\pr{X_{w_0}=y}}\pr{\pth^\star}=1\) if \(y= y^\star\).
  Our choice of \(\pth^\star\) guarantees that
  \begin{align}
    \sum_{i=1}^n \alpha_i \pr{\prob\pr{X_{w_0}=y_i\cond\setofpths} - \indica{A_i}\pr{\pth^\star}}
    &= \sum_{i=1}^n \alpha_i\prob\pr{X_{w_0}=y_i\cond\setofpths} - \sum_{i=1}^n \alpha_i\indica{A_i}\pr{\pth^\star} \notag \\
    &= \sum_{i=1}^n \alpha_i \prob\pr{X_{w_0}=y_i\cond\setofpths} - \sum_{y\in B}\sum_{i\in\mathcal{I}_y} \alpha_i \indica{A_i}\pr{\pth^\star} \notag \\
    &= \sum_{i=1}^n \alpha_i \prob\pr{X_{w_0}=y\cond\setofpths} - \sum_{y\in B}\pr*{\sum_{i\in\mathcal{I}_y} \alpha_i} \indica{\pr{X_{w_0}=y}}\pr{\pth^\star} \notag \\
    &= \sum_{i=1}^n\alpha_i \prob\pr{X_{w_0}=y\cond\setofpths} - \min\set{0, \alpha^\star} \notag \\
    &\geq \min\set{0, \alpha^\star}-\min\set{0, \alpha^\star}
    = 0, \label{eqn:Proof of Construction lemma with sequence of time points and pmf:First step coherent <0}
  \end{align}
  where the fourth equality follows from Equations~\eqref{eqn:Proof of Construction lemma with sequence of time points and pmf:First step pth case 1} and \eqref{eqn:Proof of Construction lemma with sequence of time points and pmf:First step pth case 2}, and the inequality holds due to Equation~\eqref{eqn:Proof of Construction lemma with sequence of time points and pmf:First step alpha inequality}.
  From this we infer that Equation~\eqref{eqn:Proof of Construction lemma with sequence of time points and pmf:First step coherent} holds, which by Proposition~\ref{prop:Coherence of conditional probability} implies that \(\prob_w\) is a coherent conditional probability.

  Next, we fix any \(\ell\) in \(\nats\) with \(\ell\geq1\) and assume assume that the stated holds for any \(\ell'\) in \(\nats\) with \(0\leq\ell'<\ell\).
  We will now show that the stated then follows for \(\ell\).
  To verify the coherence of \(\prob_w\), we fix any \(n\) in \(\nats\), \(\pr{A_1, C_1}, \dots, \pr{A_n, C_n}\) in \(\ccpdomain_w\) and \(\alpha_1, \dots, \alpha_n\) in \(\reals\).
  We need to show that
  \begin{equation}
  \label{eqn:Proof of construction lemma:Coherenece in induction step}
    \max\set*{\sum_{i=1}^n \alpha_i \indica{C_i}\pr{\pth}\pr{\prob_w\pr{A_i\cond C_i} - \indica{A_i}\pr{\pth}} \colon \pth\in C}
    \geq 0,
  \end{equation}
  where \(C\coloneqq\cup_{i=1}^k C_i\).

  For any \(i\) in \(\set{1, \dots, n}\), there is some \(j_i\) in \(\set{0, \dots, \ell}\), some \(y_i\) in \(\stsp\) and \(x^i_{u^i}\) in \(\stsp_{u^i}\) with \(u^i\coloneqq\set{w_0, \dots, w_{j_i-1}}\), such that
  \begin{equation*}
    A_i
    = \pr{X_{w_{j_i}}=y_i}
  \quad\text{and}\quad
    C_i
    = \pr{X_{u^i}=x^i_{u^i}}.
  \end{equation*}
  Let \(\mathcal{I}_{<\ell}\coloneqq\set{i\in\set{1, \dots, n}\colon j_i<\ell}\).
  If \(\mathcal{I}_{<\ell}\neq\emptyset\), then it follows from the induction hypothesis that
  \begin{equation*}
    \max\set*{\sum_{i\in\mathcal{I}_{<\ell}} \alpha_i \indica{C_i}\pr{\pth}\pr{\prob_w\pr{A_i\cond C_i} - \indica{A_i}\pr{\pth}} \colon \pth\in C_{<\ell}}
    \geq 0,
  \end{equation*}
  with \(C_{<\ell}\coloneqq\cup_{i\in\mathcal{I}_{<\ell}} C_i\).
  From this, it follows that there is some \(\pth^\star\) in \(C_{<\ell}\subseteq C\) such that
  \begin{equation}
  \label{eqn:Proof of ...:first coherence ineqaulity}
    \sum_{i\in\mathcal{I}_{<\ell}} \alpha_i \indica{C_i}\pr{\pth^\star}\pr{\prob_w\pr{A_i\cond C_i} - \indica{A_i}\pr{\pth^\star}}
    \geq 0.
  \end{equation}
  If \(\mathcal{I}_{<\ell}=\emptyset\), then we let \(\pth^\star\) be an arbitrary element of \(C\).
  In any case, we have chosen a \(\pth^\star\) in \(C\) that satisfies Equation~\eqref{eqn:Proof of ...:first coherence ineqaulity}.

  Let \(C^\star\coloneqq\cap_{j=0}^{\ell-1} \pr{X_{w_j}=\pth^\star\pr{w_j}}\) and \(\mathcal{I}_{C^\star}\coloneqq\set{i\in\set{1, \dots, n}\colon C_i=C^\star}\).
  Observe that, by construction, \(j_i=\ell\) for all \(i\) in \(\mathcal{I}_{C^\star}\).
  We now execute the same trick as we did before.
  Since by construction \(\mathcal{I}_{C^\star}\) is clearly finite, the sets \(B_1^\star\coloneqq\set{y\in\stsp\colon y<\pth^\star\pr{w_{\ell-1}}, \pr{\exists i\in\mathcal{I}_{C^\star}}~y_i=y}\) and \(B_2^\star\coloneqq\set{y\in\stsp\colon y\geq \pth^\star\pr{w_{\ell-1}}, \pr{\exists i\in\mathcal{I}_{C^\star}}~y_i=y}\) have (at most) a finite number of elements.
  We now partition \(\mathcal{I}_{C^\star}\) according to the events~\(A_i=\pr{X_{w_\ell}=y_i}\): for any \(y\) in \(B_1^\star\cup B_2^\star\), we let
  \begin{equation*}
    \mathcal{I}_y
    \coloneqq \set{i\in\mathcal{I}_{C^\star}\colon A_i = \pr{X_{w_\ell}=y}}.
  \end{equation*}
  If we furthermore let \(\alpha^\star\coloneqq\min\set{\sum_{i\in\mathcal{I}_y}\alpha_i\colon y\in B_2^\star}\)---where we follow the convention that the minimum of the empty set is zero---then
  \begin{align}
    \sum_{i\in\mathcal{I}_{C^\star}} \alpha_i \prob_w\pr{A_i\cond C_i}
    &= \sum_{i\in\mathcal{I}_{C^\star}} \alpha_i \prob_w\pr{A_i\cond C^\star}
    = \sum_{y\in B_1^\star}\sum_{i\in\mathcal{I}_y} \alpha_i\prob_w\pr{A_i\cond C^\star} + \sum_{y\in B_2^\star}\sum_{i\in\mathcal{I}_y} \alpha_i\prob_w\pr{A_i\cond C^\star} \notag\\
    &= \sum_{y\in B_1^\star}\pr*{\sum_{i\in\mathcal{I}_y} \alpha_i}\prob_w\pr{X_{w_\ell} = y\cond C^\star} + \sum_{y\in B_2^\star}\pr*{\sum_{i\in\mathcal{I}_y} \alpha_i}\prob_w\pr{X_{w_\ell} = y\cond C^\star} \notag\\
    &= \sum_{y\in B_2^\star}\pr*{\sum_{i\in\mathcal{I}_y} \alpha_i}\prob_w\pr{X_{w_\ell} = y\cond C^\star} \notag\\
    &= \sum_{y\in B_2^\star}\pr*{\sum_{i\in\mathcal{I}_y} \alpha_i}\prob_w\pr{X_{w_\ell}=y \cond X_{w_0}=\pth^\star\pr{w_0}, \dots, X_{w_{\ell-1}}=\pth^\star\pr{w_{\ell-1}}} \notag\\
    &\geq \min\set{0, \alpha^\star},
    \label{eqn:Proof of Construction lemma with sequence of time points and pmf:Second step alpha inequality}
  \end{align}
  where for the fourth equality we use that \(\prob_w\pr{X_{w_\ell} = y\cond C^\star}=0\) for all \(y\) in \(B_1^\star\)---which follows from condition (i) on \(\prob_w\) of the statement because, by definition of \(B_1^\star\), \(y<\pth^\star\pr{w_{\ell-1}}\) for all \(y\) in \(B_1^\star\)---and where the inequality follows from Lemma~\ref{lem:Helper lemma with not-quite convex combination} due to the conditions (ii) and (iii) on \(\prob_w\) of the statement.

  If \(B_2^\star\) is non-empty and \(\alpha^\star<0\), then we let \(y^\star\) be any element of \(B_2^\star\) that reaches the minimum in the definition of \(\alpha^\star\); if \(B_2^\star\) is non-empty and \(\alpha^\star\geq0\), then we let \(y^\star\) be any element of \(\stsp\) such that \(y^\star\geq \pth^\star\pr{w_{\ell-1}}\) and \(y^\star\notin B_2^\star\); and finally, if \(B_2^\star\) is empty, then we set \(y^\star\coloneqq\pth^\star\pr{w_{\ell-1}}\).
  Because \(\pth^\star\) belongs to \(C^\star\) by definition and \(y^\star\geq\pth^\star\pr{w_{\ell-1}}\) by construction, it follows from \ref{Paths:Existence for any instantiation} that there is at least one path \(\pth\) in \(C^\star\) with \(\pth\pr{w_\ell}=y^\star\).
  Let \(\pth^{\star\star}\) be any such path.
  We have chosen \(\pth^{\star\star}\) such that
  \begin{equation}
  \label{eqn:Proof of construction lemma:Second step pth case 1}
    \sum_{y\in B^\star_1}\sum_{i\in\mathcal{I}_y} \alpha_i\indica{A_i}\pr{\pth^{\star\star}}
    = 0
  \end{equation}
  because \(\indica{A_i}\pr{\pth^{\star\star}}=\indica{\pr{X_{w_\ell}=y_i}}\pr{\pth^{\star\star}}=0\) for all \(i\) in \(\cup_{y\in B_1^\star}\mathcal{I}_y\) as \(\pth^{\star\star}\pr{w_\ell}=y^\star\geq\pth^\star\pr{w_{\ell-1}}>y_i\).
  Similarly,
  \begin{equation}
  \label{eqn:Proof of construction lemma:Second step pth case 2}
    \sum_{y\in B^\star_2}\sum_{i\in\mathcal{I}_y} \alpha_i\indica{A_i}\pr{\pth^{\star\star}}
    = \begin{cases}
      \sum_{i\in \mathcal{I}_{y^\star}} \alpha_i &\text{if } B^\star_2\neq\emptyset \text{ and } \alpha_\star<0 \\
      0 &\text{otherwise}
    \end{cases}
    = \min\set{0, \alpha^\star}
  \end{equation}
  because, for all \(i\) in \(\cup_{y\in B_2^\star}\mathcal{I}_y\), we have that \(\indica{A_i}\pr{\pth^{\star\star}}=\indica{\pr{X_{w_\ell}=y_i}}\pr{\pth^{\star\star}}=0\) if \(y_i\neq y^\star\) and \(1\) if \(y_i=y^\star\)---where the latter only occurs if \(B_2^\star\) is non-empty and \(\alpha^\star<0\).
  Furthermore, \(\indica{C_i}\pr{\pth^{\star\star}}=1\) for all \(i\) in \(\mathcal{I}_{C^\star}\) because \(\pth^{\star\star}\) is an element of \(C^\star\) by construction.
  Hence,
  \begin{align}
    \sum_{i\in\mathcal{I}_{C^\star}} \alpha_i \indica{C_i}\pr{\pth^{\star\star}} \pr{\prob_w\pr{A_i\cond C_i} - \indica{A_i}\pr{\pth^{\star\star}}}
    &= \sum_{i\in\mathcal{I}_{C^\star}} \alpha_i \pr{\prob_w\pr{A_i\cond C_i} - \indica{A_i}\pr{\pth^{\star\star}}}
    = \sum_{i\in\mathcal{I}_{C^\star}} \alpha_i \prob_w\pr{A_i\cond C_i} - \sum_{i\in\mathcal{I}_{C^\star}} \alpha_i \indica{A_i}\pr{\pth^{\star\star}} \notag \\
    &= \sum_{i\in\mathcal{I}_{C^\star}} \alpha_i \prob_w\pr{A_i\cond C_i} - \sum_{y\in B_1^\star}\sum_{i\in\mathcal{I}_y} \alpha_i\indica{A_i}\pr{\pth^{\star\star}} - \sum_{y\in B_2^\star}\sum_{i\in\mathcal{I}_y} \alpha_i\indica{A_i}\pr{\pth^{\star\star}} \notag \\
    &= \sum_{i\in\mathcal{I}_{C^\star}} \alpha_i\prob_w\pr{A_i\cond C_i} -0-\min\set{0, \alpha^\star} \notag\\
    &\geq \min\set{0, \alpha^\star} -\min\set{0, \alpha^\star}
    = 0, \label{eqn:Proof of construction lemma:Inequality for C^star}
  \end{align}
  where for the fourth equality we have used Equations~\eqref{eqn:Proof of construction lemma:Second step pth case 1} and \eqref{eqn:Proof of construction lemma:Second step pth case 2} and for the inequality we have used Equation~\eqref{eqn:Proof of Construction lemma with sequence of time points and pmf:Second step alpha inequality}.

  We now summarise our findings.
  Recall that, by construction, \(\pth^{\star\star}\) belongs to \(C^\star\) and that \(C^\star\) is a subset of \(C_{<\ell}\) if the latter is non-empty.
  Therefore, it follows from Equation~\eqref{eqn:Proof of ...:first coherence ineqaulity} that
  \begin{equation*}
    \sum_{i\in\mathcal{I}_{<\ell}} \alpha_i \indica{C_i}\pr{\pth^{\star\star}}\pr{\prob_w\pr{A_i\cond C_i}-\indica{A_i}\pr{\pth^{\star\star}}}
    \geq 0.
  \end{equation*}
  Similarly, it follows from Equation~\eqref{eqn:Proof of construction lemma:Inequality for C^star} that
  \begin{equation*}
    \sum_{i\in\mathcal{I}_{C^\star}} \alpha_i \indica{C_i}\pr{\pth^{\star\star}}\pr{\prob_w\pr{A_i\cond C_i}-\indica{A_i}\pr{\pth^{\star\star}}}
    \geq 0.
  \end{equation*}
  Finally, if we let \(\mathcal{I}\coloneqq\set{1, \dots,n}\setminus\pr{\mathcal{I}_{<\ell}\cup\mathcal{I}_{C^\star}}\), then clearly \(\indica{C_i}\pr{\pth^{\star\star}}=0\) for all \(i\) in \(\mathcal{I}\).
  Therefore,
  \begin{equation*}
    \sum_{i\in\mathcal{I}} \alpha_i \indica{C_i}\pr{\pth^{\star\star}}\pr{\prob_w\pr{A_i\cond C_i}-\indica{A_i}\pr{\pth^{\star\star}}}
    = 0.
  \end{equation*}
  From the three previous (in)equalities, we infer that
  \begin{equation*}
    \sum_{i=1}^n \alpha_i \indica{C_i}\pr{\pth^{\star\star}}\pr{\prob_w\pr{A_i\cond C_i}-\indica{A_i}\pr{\pth^{\star\star}}}
    = \sum_{i\in\pr{\mathcal{I}_{<\ell}\cup\mathcal{I}_{C^\star}\cup\mathcal{I}}} \alpha_i \indica{C_i}\pr{\pth^{\star\star}}\pr{\prob_w\pr{A_i\cond C_i}-\indica{A_i}\pr{\pth^{\star\star}}}
    \geq 0.
  \end{equation*}
  If the path \(\pth^{\star\star}\) belongs to \(C\), then we may conclude from this that Equation~\eqref{eqn:Proof of construction lemma:Coherenece in induction step} holds, which is what we set out to prove.
  Recall that this inequality holds for any path \(\pth^{\star\star}\) as long as (i) it belongs to \(C^\star\), in the sense that it coincides with \(\pth^\star\) on the time points \(w_0, \dots, w_{\ell-1}\); and (ii) it satisfies \(\pth^{\star\star}\pr{w_\ell}=y^\star\geq \pth^{\star\star}\pr{w_{\ell-1}}=\pth^{\star}\pr{w_{\ell-1}}\).
  Furthermore, we recall that \(\pth^\star\) belongs to \(C\), and that the paths in \(C\) are only ``specified'' on (a subset of) the time points \(w_0, \dots, w_{\ell-1}\), because
  \begin{equation*}
    C
    = \bigcup_{i=1}^k C_i
    = \bigcup_{i=1}^k \pr{X_{u^i}=x_{u^i}^i}
  \end{equation*}
  with \(u^i=\set{w_0, \dots, w_{j_i}-1}\) and \(j_i\leq \ell\).
  Consequently, it follows from \ref{Paths:Non-decreasing} and \ref{Paths:Existence for any instantiation} that there is a path \(\pth^{\star\star}\) in \(C\) that satisfies the two requirements.
\end{proof}
We continue with a second construction lemma, this time using a counting transformation system.
\begin{lemma}
\label{lem:Construction lemma for trans probs with full conditional}
  Consider a non-empty and ordered sequence of time points \(w=\set{w_0, \dots, w_\ell}\) in \(\setoftseq\) and a counting transformation system~\(\mathcal{T}=\set{T_t^s\colon t,s\in\nnegreals, t\leq s}\).
  Let \(\prob_w^\star\) be any coherent conditional probability on \(\ccpevents\pr{\setofpths{}}\times\ccpneevents\pr{\setofpths}\) such that, for all \(j\) in \(\set{1, \dots, \ell}\), \(x_{w^j}\) in \(\stsp_{w^j}\) with \(w^j\coloneqq\set{w_0, \dots, w_{j-1}}\) and \(x\) in \(\stsp\),
  \begin{equation*}
    \prob^\star_w\pr{X_{w_j}=x \cond X_{w^j}=x_{w^j}}
    = \br{T_{w_{j-1}}^{w_j} \indica{x}}\pr{x_{w_{j-1}}}.
  \end{equation*}
  Then for any \(t\) in \(w\) and \(u\) in \(\setoftseq_{<t}\) with
  \(0\neq u\subseteq w\), \(x_u\) in \(\stsp_u\) and \(x\) in \(\stsp\),
  \begin{equation*}
    \prob^\star_w\pr{X_t=x \cond X_u=x_u}
    = \br{T_{\max u}^t \indica{x}}\pr{x_{\max u}}.
  \end{equation*}
\end{lemma}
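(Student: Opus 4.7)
The plan is to proceed by induction on the index $j \in \set{1, \dots, \ell}$ for which $t = w_j$. The base case $j = 1$ is immediate: the constraints $u \neq \emptyset$, $u \subseteq w$ and $\max u < w_1$ force $u = \set{w_0}$, so $\max u = w_0$ and the claim collapses to the defining hypothesis on $\prob^\star_w$ at $j = 1$. For the inductive step, fix some $u \subseteq \set{w_0, \dots, w_{j-1}}$ with $\max u = w_m$ and assume the claim holds at every index strictly smaller than $j$. I will split into two subcases.

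In Subcase~1 (where $w_{j-1} \in u$, so $m = j-1$), set $v \coloneqq w^j \setminus u$. Since $v \subseteq \set{w_0, \dots, w_{j-2}}$, the non-decreasing constraint \ref{Paths:Non-decreasing} forces $x_{w_i} \leq x_{w_{j-1}}$ for every $w_i \in v$, so only finitely many tuples $x_v \in \stsp_v$ are compatible with $x_u$. By finite additivity and \ref{LOP:Bayes rule},
\begin{equation*}
  \prob^\star_w\pr{X_{w_j} = x \cond X_u = x_u}
  = \sum_{x_v} \prob^\star_w\pr{X_{w_j} = x \cond X_{w^j} = \pr{x_u, x_v}} \prob^\star_w\pr{X_v = x_v \cond X_u = x_u},
\end{equation*}
where the inner factor equals $\br{T_{w_{j-1}}^{w_j} \indica{x}}\pr{x_{w_{j-1}}}$ by the hypothesis on $\prob^\star_w$. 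Since this factor does not depend on $x_v$, it pulls out, and the remaining sum equals $\prob^\star_w\pr{\setofpths \cond X_u = x_u} = 1$.

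In Subcase~2 (where $w_{j-1} \notin u$, so $m < j-1$), I condition on $X_{w_{j-1}}$ via finite additivity. The non-decreasing constraint bounds the relevant values $y$ by $x_{w_m} \leq y \leq x$, making the sum finite. Invoking Subcase~1 applied to $u \cup \set{w_{j-1}}$ for the one-step factor, and the induction hypothesis at $j-1$ for the factor $\prob^\star_w\pr{X_{w_{j-1}} = y \cond X_u = x_u}$, yields
\begin{equation*}
  \prob^\star_w\pr{X_{w_j} = x \cond X_u = x_u}
  = \sum_{y = x_{w_m}}^{x} \br{T_{w_{j-1}}^{w_j} \indica{x}}\pr{y} \br{T_{w_m}^{w_{j-1}} \indica{y}}\pr{x_{w_m}}.
\end{equation*}
Lemma~\ref{lem:LinPT:Sum decomposition with indicator} identifies the right-hand side with $\br{T_{w_m}^{w_{j-1}} T_{w_{j-1}}^{w_j} \indica{x}}\pr{x_{w_m}}$, and the semi-group property \ref{def:CTS:Semi-group} collapses this composition to $T_{w_m}^{w_j}$, closing the induction.

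The main obstacle is carrying out these marginalisations rigorously within the \emph{finitely}-additive framework of coherent conditional probabilities, since countable additivity is unavailable. The key trick, used in both subcases, is to let the non-decreasing constraint \ref{Paths:Non-decreasing} cap the marginalised variables by already-fixed reference values (namely $x_{w_{j-1}}$ in Subcase~1 and $x$ in Subcase~2), so that every decomposition reduces to a genuinely finite sum to which \ref{LOP:Additive if disjoint} applies. A secondary check is that the conditioning events at every step remain non-empty, which follows from \ref{Paths:Existence for any instantiation}.
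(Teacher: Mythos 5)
Your proof is correct and follows essentially the same route as the paper's: induction on the index \(j\) with \(t=w_j\), a case split on whether \(\max u=w_{j-1}\), finite decompositions justified by the non-decreasing paths \ref{Paths:Non-decreasing} (with non-emptiness from \ref{Paths:Existence for any instantiation}), and Lemma~\ref{lem:LinPT:Sum decomposition with indicator} together with the semi-group property \ref{def:CTS:Semi-group} to collapse the composition. The only difference is a harmless streamlining: in the second case you obtain the one-step factor by applying your first case to \(u\cup\set{w_{j-1}}\), whereas the paper re-decomposes over full histories via the sets \(B_y\) before invoking the defining hypothesis.
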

\begin{proof}
  Our proof of follows that of \cite[Lemma~C.2]{2017Krak} very closely.
  By assumption, there is some \(j\) in \(\set{1, \dots, \ell}\) such that \(t=w_j\) and \(u\subseteq\set{w_0, \dots, w_{j-1}}\).
  We prove the stated using induction.
  First, we observe that if \(t=w_1\), then \(u=\set{w_0}\) and the stated is trivially satisfied.
  Next, we assume that the stated holds for \(t=w_{j-1}\) with \(1<j\leq \ell\), and prove that the stated then also holds for \(t=w_j\).
  In the remainder, we distinguish between two cases: \(\max u=w_{j-1}\) and \(\max u<w_{j-1}\).

  Let us first consider the case that \(\max u=w_{j-1}\).
  Observe that, due to the laws of probability,
  \begin{align}
    \prob^\star_w\pr{X_t=x \cond X_u=x_u}
    = \prob^\star_w\pr{\pr{X_t=x} \cap \pr{X_u=x_u} \cond X_u=x_u}.
  \end{align}
  Note that, due to \ref{Paths:Non-decreasing} and Equation~\eqref{eqn:stsp_u},
  \begin{equation}
  \label{eqn:Proof of Construction lemma for trans probs with full conditional:X_u=x_u with B}
    \pr{X_u=x_u}
    = \pr{X_{w^j} \in B},
  \end{equation}
  with
  \begin{equation*}
    B
    \coloneqq \set*{y_{w^j} \in \stsp_{w^j} \colon \pr{\forall s\in u}~ y_s=x_s}.
  \end{equation*}
  Note that \(B\) is clearly a finite set, as by  construction the last component \(y_{w_{j-1}}\) of any \(y_{w^j}\) in \(B\) is equal to \(x_{w_{j-1}}\).
  Substituting this equality in the previous, we now obtain that
  \begin{align*}
    \prob^\star_w\pr{X_t=x \cond X_u=x_u}
    &= \prob^\star_w\pr{\pr{X_t=x} \cap \pr{X_{w^j}\in B} \cond X_u=x_u}
    = \prob^\star_w\pr{\pr{X_t=x} \cap \pr{\cup_{y_{w^j}\in B} \pr{X_{w^j}=y_{w^j}}} \cond X_u=x_u} \\
    &= \prob^\star_w\pr{\cup_{y_{w^j}\in B} \pr{X_t=x} \cap \pr{X_{w^j}=y_{w^j}} \cond X_u=x_u}
    = \sum_{y_{w^j}\in B}\prob^\star_w\pr{\pr{X_t=x} \cap \pr{X_{w^j}=y_{w^j}} \cond X_u=x_u} \\
    &= \sum_{y_{w^j}\in B}\prob^\star_w\pr{X_t=x\cond \pr{X_{w^j}=y_{w^j}} \cap\pr{X_u=x_u}} \prob^\star_w\pr{X_{w^j}=y_{w^j} \cond X_u=x_u} \\
    &= \sum_{y_{w^j}\in B}\prob^\star_w\pr{X_t=x\cond X_{w^j}=y_{w^j}} \prob^\star_w\pr{X_{w^j}=y_{w^j} \cond X_u=x_u} \\
    &= \sum_{y_{w^j}\in B} \br{T_{w_{j-1}}^{w_j}\indica{x}}\pr{y_{w_{j-1}}} \prob^\star_w\pr{X_{w^j}=y_{w^j} \cond X_u=x_u} \\
    &= \br{T_{w_{j-1}}^{w_j}\indica{x}}\pr{x_{w_{j-1}}} \sum_{y_{w^j}\in B} \prob^\star_w\pr{X_{w^j}=y_{w^j} \cond X_u=x_u},
  \end{align*}
  where for the penultimate equality we have used the equality \(t=w_j\) and the condition on \(\prob_w^\star\) of the stated, and where the last equality holds because, by construction, \(y_{w_{j-1}}=x_{w_{j-1}}\) for all \(y_{w^j}\) in \(B\).
  If \(x<x_{w_{j-1}}\), then because \(\mathcal{T}\) is a counting transformation system, it follows from \ref{LinTT:No transition to below} that \(\br{T_{w_{j-1}}^{w_j}\indica{x}}\pr{x_{w_{j-1}}}=0\).
  This agrees with the stated, since
  \begin{equation*}
    \br{T_{\max u}^t\indica{x}}\pr{x_{\max u}}
    = \br{T_{w_{j-1}}^t\indica{x}}\pr{x_{w_{j-1}}}
    = 0,
  \end{equation*}
  where the second equality follows from \ref{LinTT:No transition to below} because \(x<x_{w_{j-1}}\).
  In case \(x\geq x_{w_{j-1}}\), we observe that
  \begin{align*}
    \prob^\star_w\pr{X_t=x \cond X_u=x_u}
    &= \br{T_{w_{j-1}}^{w_j}\indica{x}}\pr{x_{w_{j-1}}} \sum_{y_{w^j}\in B} \prob^\star_w\pr{X_{w^j}=y_{w^j} \cond X_u=x_u} \\
    &= \br{T_{w_{j-1}}^{w_j}\indica{x}}\pr{x_{w_{j-1}}} \prob^\star_w\pr{\cup_{y_{w^j}\in B}\pr{X_{w^j}=y_{w^j}} \cond X_u=x_u}
    = \br{T_{w_{j-1}}^{w_j}\indica{x}}\pr{x_{w_{j-1}}} \prob^\star_w\pr{X_{w^j}\in B \cond X_u=x_u} \\
    &= \br{T_{w_{j-1}}^{w_j}\indica{x}}\pr{x_{w_{j-1}}} \prob^\star_w\pr{X_u=x_u \cond X_u=x_u}
    = \br{T_{w_{j-1}}^t\indica{x}}\pr{x_{w_{j-1}}},
  \end{align*}
  where for the penultimate equality we have used Equation~\eqref{eqn:Proof of Construction lemma for trans probs with full conditional:X_u=x_u with B}.
  This proves the induction step in case \(\max u = w_{j-1}\).

  Next, we consider the case \(\max u<w_{j-1}\).
  We execute exactly the same trick, but now we consider the---clearly finite---set
  \begin{equation}
  \label{eqn:Proof of Construction lemma for trans probs with full conditional:B in second case}
    B
    \coloneqq \set{y_{w^j} \in \stsp_{w^j} \colon y_{w_{j-1}}\leq x, \pr{\forall s\in u} y_s=x_s}
  \end{equation}
  and, for any \(y\) in \(\stsp\) such that \(x_{\max u}\leq y \leq x\), the---again clearly finite---set
  \begin{equation}
  \label{eqn:Proof of Construction lemma for trans probs with full conditional:B_y in second case}
    B_y
    \coloneqq \set*{y_{w^{j-1}} \in \stsp_{w^{j-1}} \colon \pr{y_{w^{j-1}}, y} \in B}
    = \set{y_{w^{j-1}}\in\stsp_{w^{j-1}}\colon y_{w_{j-2}}\leq y, \pr{\forall s\in u}~y_s=x_s}.
  \end{equation}
  Note that these two sets are connected, as
  \begin{equation}
  \label{eqn:Proof of Construction lemma for trans probs with full conditional:B as union}
    B
    = \bigcup_{y=x_{\max u}}^{x} \set{\pr{y_{w^{j-1}}, y} \colon y_{w^{j-1}} \in B_y}.
  \end{equation}
  Observe that
  \begin{equation*}
    \pr{X_u=x_u}\cap\pr{X_t=x}
    = \pr{X_{w^j} \in B}\cap\pr{X_t=x}.
  \end{equation*}
  Therefore
  \begin{equation*}
    \prob^\star_w\pr{X_t=x \cond X_u=x_u}
    = \prob^\star_w\pr{\pr{X_t=x} \cap \pr{X_u=x_u} \cond X_u=x_u}
    = \prob^\star_w\pr{\pr{X_t=x} \cap \pr{X_{w^j}\in B} \cond X_u=x_u}.
  \end{equation*}
  If \(x<x_{\max u}\), then it follows from Equations~\eqref{eqn:Proof of Construction lemma for trans probs with full conditional:B in second case} and \eqref{eqn:stsp_u} that \(B=\emptyset\).
  Therefore
  \begin{equation*}
    \prob^\star_w\pr{X_t=x \cond X_u=x_u}
    = \prob^\star_w\pr{\emptyset \cond X_u=x_u}
    = 0
    = \br{T_{\max u}^t \indica{x}}\pr{x_{\max u}},
  \end{equation*}
  where the final equality holds due to \ref{LinTT:No transition to below}.
  This case therefore agrees with the stated.

  Next, we consider the case \(x\geq x_{\max u}\).
  Then
  \begin{align*}
    \prob^\star_w\pr{X_t=x \cond X_u=x_u}
    &= \sum_{y_{w^j}\in B} \prob^\star_w\pr{\pr{X_{w_j}=x} \cap \pr{X_{w^j} = y_{w^j}} \cond X_u=x_u} \\
     &= \sum_{y_{w^j}\in B} \prob^\star_w\pr{X_{w_j}=x \cond \pr{X_{w^j}=y_{w^j}}\cap\pr{X_u=x_u}} \prob^\star_w\pr{X_{w^j} = y_{w^j} \cond X_u=x_u} \\
    &= \sum_{y_{w^j}\in B} \prob^\star_w\pr{X_{w_j}=x \cond X_{w^j}=y_{w^j}} \prob^\star_w\pr{X_{w^j} = y_{w^j} \cond X_u=x_u} \\
    &= \sum_{y=x_{\max u}}^{x} \sum_{y_{w^{j-1}}\in B_y} \prob^\star_w\pr{X_{w_j}=x \cond X_{w^{j-1}}=y_{w^{j-1}}, X_{w_{j-1}}=y} \\ &\hspace{12em} \prob^\star_w\pr{X_{w^{j-1}} = y_{w^{j-1}}, X_{w_{j-1}}=y \cond X_u=x_u}.
  \end{align*}
  where for the third equality we have used Equation~\eqref{eqn:Proof of Construction lemma for trans probs with full conditional:B in second case} and for the final equality we have used Equation~\eqref{eqn:Proof of Construction lemma for trans probs with full conditional:B as union}.
  We now use the condition on \(\prob_w\) of the stated to substitute the terms of the form \(\prob^\star_w\pr{X_{w_j}=x \cond X_{w^{j-1}}=y_{w^{j-1}}, X_{w_{j-1}}=y}\) with \(\br{T_{w_{j-1}}^{w_j}\indica{x}}\pr{y}\), to yield
  \begin{align*}
    \prob^\star_w\pr{X_t=x \cond X_u=x_u}
    &= \sum_{y=x_{\max u}}^{x} \br{T_{w_{j-1}}^{w_j} \indica{x}}\pr{y} \sum_{y_{w^{j-1}}\in B_y} \prob^\star_w\pr{X_{w^{j-1}} = y_{w^{j-1}}, X_{w_{j-1}}=y \cond X_u=x_u} \\
    &= \sum_{y=x_{\max u}}^{x} \br{T_{w_{j-1}}^{w_j} \indica{x}}\pr{y} \prob^\star_w\pr{X_{w^{j-1}} \in B_y, X_{w_{j-1}}=y \cond X_u=x_u} \\
    &= \sum_{y=x_{\max u}}^{x} \br{T_{w_{j-1}}^{w_j} \indica{x}}\pr{y} \prob^\star_w\pr{X_u=x_u, X_{w^{j-1}} \in B_y, X_{w_{j-1}}=y \cond X_u=x_u}.
  \end{align*}
  Observe now that, for all \(y\) in \(\stsp\) with \(x_{\max u}\leq y\leq x\),
  \begin{multline*}
    \pr{X_u=x_u, X_{w_{j-1}}=y}
    = \set{\pth\in\setofpths{}\colon \pth\pr{w_{j-1}}=y, \pr{\forall s\in u}~ \pth\pr{s}=x_s} \\
    \subseteq \set{\pth\in\setofpths{}\colon \pth\pr{w_{j-2}}\leq y, \pr{\forall s\in u}~ \pth\pr{s}=x_s}
    = \pr{X_{w^{j-1}}\in B_y},
  \end{multline*}
  where the inclusion follows from \ref{Paths:Non-decreasing} and the final equality follows from Equation~\eqref{eqn:Proof of Construction lemma for trans probs with full conditional:B_y in second case}.
  We infer from this that \(\pr{X_u=x_u, X_{w^{j-1}}\in B_y, X_{w_{j-1}}=y}=\pr{X_u=x_u, X_{w_{j-1}}=y}\).
  We now use this equality to yield
  \begin{align*}
    \prob^\star_w\pr{X_t=x \cond X_u=x_u}
    &= \sum_{y=x_{\max u}}^{x} \br{T_{w_{j-1}}^{w_j} \indica{x}}\pr{y} \prob^\star_w\pr{X_u=x_u, X_{w_{j-1}}=y \cond X_u=x_u} \\
    &= \sum_{y=x_{\max u}}^{x} \br{T_{w_{j-1}}^{w_j} \indica{x}}\pr{y} \prob^\star_w\pr{X_{w_{j-1}}=y \cond X_u=x_u} \\
    &= \sum_{y=x_{\max u}}^{x} \br{T_{w_{j-1}}^{w_j} \indica{x}}\pr{y} \br{T_{\max u}^{w_{j-1}}\indica{y}}\pr{x_{\max u}}
    = \br{T_{\max u}^{w_j}\indica{y}}\pr{x_{\max u}}
    = \br{T_{\max u}^t\indica{y}}\pr{x_{\max u}},
  \end{align*}
  where the third equality follows from the induction hypothesis and the penultimate equality follows from Lemma~\ref{lem:LinPT:Sum decomposition with indicator}.
\end{proof}

\begin{lemma}
\label{lem:System results in counting process}
  Consider a counting transformation system~\(\mathcal{T}\).
  Let \(\tilde{\prob}\) be the real-valued map with domain
  \begin{equation*}
    \tilde{\ccpdomain}
    \coloneqq \set{\pr{X_{t+\Delta}=y, \pr{X_u=x_u, X_t=x}}\in\cpdomain\colon t,\Delta\in\nnegreals, u\in\setoftseq_{<t}, \pr{x_u,x}\in\stsp_{u\cup t}, y\in\stsp} \cup \set{\pr{X_0=x, \setofpths{}}\in\cpdomain\colon x\in\stsp{}},
  \end{equation*}
  that is defined for all \(t, \Delta\) in \(\nnegreals\), \(u\) in \(\setoftseq_{<t}\), \(\pr{x_u, x}\) in \(\stsp_{u\cup t}\) and \(y\) in \(\stsp\) as
  \begin{equation*}
    \tilde{\prob}\pr{X_{t+\Delta}=y \cond X_u=x_u, X_t=x}
    \coloneqq \br{T_t^{t+\Delta}\indica{y}}\pr{x}
  \end{equation*}
  and for all \(x\) in \(\stsp\) as
  \begin{equation*}
    \tilde{\prob}\pr{X_0=x \cond \setofpths}
    \coloneqq \begin{cases}
      1 &\text{if } x = 0, \\
      0 &\text{otherwise}.
    \end{cases}
  \end{equation*}
  Then \(\tilde{\prob}\) is coherent, and any coherent extension of \(\tilde{\prob}\) to \(\cpdomain{}\) is a counting process.
\end{lemma}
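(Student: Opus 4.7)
The plan is to verify coherence of $\tilde{\prob}$ via Proposition~\ref{prop:Coherence of conditional probability}, and then show that any coherent extension of $\tilde{\prob}$ to $\cpdomain$ satisfies both \ref{CP:Start at 0} and \ref{CP:Orederliness}. For coherence, I would argue that for any finite collection of events in $\tilde{\ccpdomain}$, one can build a single coherent conditional probability $\prob_w^\star$ on $\ccpevents\pr{\setofpths} \times \ccpneevents\pr{\setofpths}$ that agrees with $\tilde{\prob}$ on every event in the collection, whence the coherence criterion of Equation~\eqref{eqn:Coherence condition} transfers from $\prob_w^\star$ to $\tilde{\prob}$.

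To construct $\prob_w^\star$, I would first gather all time points appearing in the chosen events---adding $0$ if necessary---into an increasing sequence $w = w_0, \dots, w_\ell$. Then I would define $\prob_w$ on $\ccpdomain_w$ by $\prob_w\pr{X_{w_0} = y \cond \setofpths} = \indica{0}\pr{y}$ and, for $j \geq 1$, $\prob_w\pr{X_{w_j} = y \cond X_{w^j} = x_{w^j}} = \br{T_{w_{j-1}}^{w_j}\indica{y}}\pr{x_{w_{j-1}}}$. The three conditions of Lemma~\ref{lem:Construction lemma with sequence of time points and pmf} follow immediately from standard properties of linear counting transformations: (i) from \ref{LinTT:No transition to below}; (ii) from \ref{LinTT:inf f T f sup f} applied to the $[0,1]$-valued $\indica{y}$; and (iii) from the finite additivity of $T_{w_{j-1}}^{w_j}$ combined with \ref{LinTT:inf f T f sup f} applied to $\sum_{y \in B}\indica{y}$. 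The lemma then grants coherence of $\prob_w$, Proposition~\ref{prop:CCP can be coherently extended} extends it to a coherent $\prob_w^\star$ on the full domain, and Lemma~\ref{lem:Construction lemma for trans probs with full conditional} identifies $\prob_w^\star\pr{X_{w_j} = y \cond X_u = x_u} = \br{T_{\max u}^{w_j}\indica{y}}\pr{x_{\max u}}$ for every non-empty $u \subseteq w^j$; combined with the choice of initial pmf, this shows $\prob_w^\star$ matches $\tilde{\prob}$ everywhere in the chosen collection.

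For the second part, fix any coherent extension $\prob$ of $\tilde{\prob}$ to $\cpdomain$. Property~\ref{CP:Start at 0} is immediate, since $\prob\pr{X_0 = 0} = \tilde{\prob}\pr{X_0 = 0 \cond \setofpths} = 1$. For \ref{CP:Orederliness}, Lemma~\ref{lem:Coherent prob on cpdomain:prob of <= x} together with the defining values of $\tilde{\prob}$ yields
\[
  \prob\pr{X_{t+\Delta} \geq x+2 \cond X_u = x_u, X_t = x} = 1 - \sum_{y=x}^{x+1} \br{T_t^{t+\Delta}\indica{y}}\pr{x},
\]
and linearity and finite additivity of $T_t^{t+\Delta}$ combined with \ref{LinTT:constant} and \ref{LinTT:No transition to below} let me rewrite the right-hand side as $\br{T_t^{t+\Delta}\indica{\geq x+2}}\pr{x}$. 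The first half of \ref{def:CTS:Limit of more than two} then delivers the desired limit, and the backward version is entirely analogous.

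The main obstacle is keeping the correspondence between events in $\tilde{\ccpdomain}$---whose conditioning sets involve arbitrary finite pasts $u \cup \set{t}$---and the hypotheses of Lemma~\ref{lem:Construction lemma for trans probs with full conditional} properly aligned. This is handled by ensuring that $w$ contains every time point appearing in the finite collection so that $u \cup \set{t} \subseteq w^j$ for the relevant $j$; special care is needed for the initial event $\pr{X_0 = x, \setofpths}$, which necessitates $w_0 = 0$ in order for the lemma-produced value to match the prescribed value of $\tilde{\prob}$.
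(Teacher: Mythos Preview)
Your proposal is correct and follows essentially the same route as the paper: both reduce coherence to a finite time grid $w$ with $w_0=0$, invoke Lemma~\ref{lem:Construction lemma with sequence of time points and pmf} via the properties \ref{LinTT:No transition to below}, \ref{LinTT:inf f T f sup f} and additivity of the linear counting transformations, extend to a full conditional probability, and use Lemma~\ref{lem:Construction lemma for trans probs with full conditional} to match $\tilde{\prob}$ on the chosen events; the counting-process verification is likewise the same, rewriting $1-\sum_{y=x}^{x+1}\br{T_t^{t+\Delta}\indica{y}}\pr{x}$ as $\br{T_t^{t+\Delta}\indica{\geq x+2}}\pr{x}$ and appealing to \ref{def:CTS:Limit of more than two}. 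The only cosmetic difference is that the paper phrases the last rewriting through \ref{def:LinTT:Counting} (so that $1=\br{T_t^{t+\Delta}\indica{\geq x}}\pr{x}$) rather than through \ref{LinTT:No transition to below}, but both justifications are equivalent here.
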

\begin{proof}
  Our proof follows that of \citet[Therorem~5.2]{2017Krak} closely.
  We first verify that \(\tilde{\prob}\) is coherent using~Proposition~\ref{prop:Coherence of conditional probability}.
  To that end, we fix any arbitrary \(n\) in \(\nats\) and, for all \(i\) in \(\set{1, \dots, n}\), some \(\pr{A_i, C_i} = \pr{X_{t_i}=x, X_{u_i} = x_{u_i}}\) in \(\tilde{\ccpdomain}\) and \(\alpha_i\) in \(\reals\).
  We need to show that
  \begin{equation}
  \label{eqn:Coherence of tilde prob}
    \max\set*{\sum_{i=1}^n \alpha_i \indica{C_i}\pr{\pth} \pr*{\tilde{\prob}\pr{A_i\cond C_i}-\indica{A_i}\pr{\pth}} \colon \pth \in \bigcup_{i=1}^k C_i}
    \geq 0.
  \end{equation}
  Clearly, there is some non-empty, finite and increasing sequence \(w=w_0, \dots, w_{\ell}\) of time points with \(w_0=0\) such that \(u_i \subseteq w\) and \(t_i\in w\) for all \(i\) in \(\set{1, \dots, \ell}\).
  Let \(\tilde{\prob}_w\) be the restriction of \(\tilde{\prob}\) to \(\ccpdomain_w\), with \(\mathcal{D}_w\) as defined in Equation~\eqref{eqn:ccpdomain_w} of Lemma~\ref{lem:Construction lemma with sequence of time points and pmf}.
  In order to verify that \(\tilde{\prob}_w\) satisfies the three conditions of Lemma~\ref{lem:Construction lemma with sequence of time points and pmf}, we fix some \(j\) in \(\set{0, \dots, \ell}\) and \(x_{w^j}\) in \(\stsp_{w^j}\), with \(w^j\coloneqq w_0, \dots, w_{j-1}\).
  \begin{enumerate}[label=(\roman*)]
    \item Assume that \(j>0\), and fix some \(y\) in \(\stsp\) such that \(y<x_{w_{j-1}}\).
    Observe that
    \begin{equation*}
      \tilde{\prob}_w\pr{X_{w_j}=y\cond X_{w^j}=x_{w^j}}
      = \tilde{\prob}\pr{X_{w_j}=y\cond X_{w^j}=x_{w^j}}
      = \br{T_{w_{j-1}}^{w_j} \indica{y}}\pr{x_{w_{j-1}}}
      = 0,
    \end{equation*}
    where the last equality holds due to \ref{LinTT:No transition to below} because \(y<x_{w_{j-1}}\).
    \item Fix some \(y\) in \(\stsp\) such that \(y\geq x_{w_{j-1}}\).
    Observe that
    \begin{equation*}
      \tilde{\prob}_w\pr{X_{w_j}=y\cond X_{w^j}=x_{w^j}}
      = \tilde{\prob}\pr{X_{w_j}=y\cond X_{w^j}=x_{w^j}}
      = \br{T_{w_{j-1}}^{w_j} \indica{y}}\pr{x_{w_{j-1}}}.
    \end{equation*}
    The second condition is now satisfied because \(0=\inf\indica{y}\leq\br{T_{w_{j-1}}^{w_j} \indica{y}}\pr{x_{w_{j-1}}}\leq\sup\indica{y}=1\) due to \ref{LinTT:inf f T f sup f}.
    \item Fix some finite subset \(B\) of \(\stsp\), and observe that
    \begin{align*}
      \tilde{\prob}_w\pr{X_{w_j}\in B\cond X_{w^j}=x_{w^j}}
      &= \tilde{\prob}\pr{X_{w_j}\in B\cond X_{w^j}=x_{w^j}}
      = \sum_{y\in B} \tilde{\prob}\pr{X_{w_j}=y\cond X_{w^j}=x_{w^j}} \\
      &= \sum_{y\in B} \br{T_{w_{j-1}}^{w_j} \indica{y}}\pr{x_{w_{j-1}}}
      = \br[\bigg]{T_{w_{j-1}}^{w_j} \pr[\bigg]{\sum_{y\in B}\indica{y}}}\pr{x_{w_{j-1}}}
      = \br{T_{w_{j-1}}^{w_j} \indica{B}}\pr{x_{w_{j-1}}},
    \end{align*}
    where for the fourth equality we have used the additivity~\ref{def:LinTT:Additivity} of the linear counting transformation~\(T_{w_{j-1}}^{w_j}\).
    The third condition is now satisfied because \(0=\inf\indica{B}\leq\br{T_{w_{j-1}}^{w_j} \indica{B}}\pr{x_{w_{j-1}}}\leq\sup\indica{B}=1\) due to \ref{LinTT:inf f T f sup f}.
  \end{enumerate}
  Consequently, it follows from Lemma~\ref{lem:Construction lemma with sequence of time points and pmf} that \(\tilde{\prob}_w\) is a coherent conditional probability.
  By Proposition~\ref{prop:CCP can be coherently extended}, we can therefore extend \(\tilde{\prob}_w\) to a coherent conditional probability on \(\ccpevents\pr{\setofpths}\times\ccpneevents\pr{\setofpths}\).
  Let \(\tilde{\prob}^\star_w\) be any such extension.
  It then follows from Proposition~\ref{prop:Coherence of conditional probability} that
  \begin{equation}
  \label{eqn:Coherence of tilde prob star w}
    \max\set*{\sum_{i=1}^n \alpha_i \indica{C_i}\pr{\pth} \pr*{\tilde{\prob}^\star_w\pr{A_i\cond C_i}-\indica{A_i}\pr{\pth}} \colon \pth \in \bigcup_{i=1}^k C_i}
    \geq 0.
  \end{equation}

  We now claim that \(\tilde{\prob}^\star_w\pr{A_i\cond C_i} = \tilde{\prob}\pr{A_i\cond C_i}\) for all \(i\) in \(\set{1, \dots, n}\).
  To verify this claim, we fix any such \(i\).
  If \(u_i=\emptyset\), then \(t_i=0\) and so \(\pr{A_i, C_i}\) is an element of \(\ccpdomain_w\); therefore, \(\tilde{\prob}^\star_w\pr{A_i\cond C_i} = \tilde{\prob}_w\pr{A_i\cond C_i} = \tilde{\prob}\pr{A_i\cond C_i}\).
  If \(u_i\neq\emptyset\), then \(u_i\subseteq w\) and \(t_i\in w\).
  In this case, since \(\tilde{\prob}^\star_w\) satisfies the conditions of Lemma~\ref{lem:Construction lemma for trans probs with full conditional}, it follows from this lemma that \(\tilde{\prob}^\star_w\pr{A_i\cond C_i} = \tilde{\prob}\pr{A_i\cond C_i}\).
  Since \(\tilde{\prob}^\star_w\pr{A_i\cond C_i} = \tilde{\prob}\pr{A_i\cond C_i}\) for all \(i\) in \(\set{1, \dots, n}\), Equation~\eqref{eqn:Coherence of tilde prob} now follows from Equation~\eqref{eqn:Coherence of tilde prob star w}.

  Now that we have verified that \(\tilde{\prob}\) is coherent, it follows from Proposition~\ref{prop:CCP can be coherently extended} that it can be extended to a coherent conditional probability~\(\tilde{\prob}^\star\) on \(\cpdomain{}\).
  Let \(\tilde{\prob}^\star\) be any such coherent extension.

  We need to verify that \(\tilde{\prob}^\star\) is a counting process.
  That \ref{CP:Start at 0} is satisfied is immediate:
  \begin{equation*}
    \tilde{\prob}^\star\pr{X_0=0}
    = \tilde{\prob}^\star\pr{X_0=0\cond\setofpths}
    = \tilde{\prob}\pr{X_0=0\cond\setofpths}
    = 1.
  \end{equation*}
  To check \ref{CP:Orederliness}, we fix any \(t,\Delta\) in \(\nnegreals\), \(u\) in \(\setoftseq_{<t}\) and \(\pr{x_u, x}\) in \(\stsp_{u\cup t}\).
  Observe that
  \begin{align*}
    \tilde{\prob}^\star\pr{X_{t+\Delta}\geq x+2\cond X_u=x_u,X_t=x}
    &= 1-\tilde{\prob}^\star\pr{X_{t+\Delta}< x+2\cond X_u=x_u,X_t=x} \\
    &= 1-\tilde{\prob}^\star\pr{X_{t+\Delta}\leq x+1\cond X_u=x_u,X_t=x} \\
    &= 1 - \tilde{\prob}^\star\pr{X_{t+\Delta}=x\cond X_u=x_u,X_t=x} - \tilde{\prob}^\star\pr{X_{t+\Delta}=x+1\cond X_u=x_u,X_t=x} \\
    &= 1 - \tilde{\prob}\pr{X_{t+\Delta}=x\cond X_u=x_u,X_t=x} - \tilde{\prob}\pr{X_{t+\Delta}=x+1\cond X_u=x_u,X_t=x} \\
    &= 1 - \br{T_t^{t+\Delta}\indica{x}}\pr{x} - \br{T_t^{t+\Delta}\indica{x+1}}\pr{x},
  \end{align*}
  where we have used Lemma~\ref{lem:Coherent prob on cpdomain:prob of <= x} for the third equality.
  Observe that \(1=\br{T_t^{t+\Delta}\indica{\geq x}}\pr{x}\) because \(T_t^{t+\Delta}1=1\) due to \ref{LinTT:constant} and \(\br{T_t^{t+\Delta}1}\pr{x}=\br{T_t^{t+\Delta}\indica{\geq x}}\pr{x}\) due to \ref{def:LinTT:Counting}.
  Therefore,
  \begin{align*}
    \tilde{\prob}^\star\pr{X_{t+\Delta}\geq x+2\cond X_u=x_u,X_t=x}
    &= \br{T_t^{t+\Delta}\indica{\geq x}}\pr{x}-\br{T_t^{t+\Delta}\indica{x}}\pr{x} - \br{T_t^{t+\Delta}\indica{x+1}}\pr{x} \\
    &= \br{T_t^{t+\Delta}\pr{\indica{\geq x}-\indica{x}-\indica{x+1}}}\pr{x}
    = \br{T_t^{t+\Delta}\indica{\geq x+2}}\pr{x},
  \end{align*}
  where for the second equality we have used the additivity~\ref{def:LinTT:Additivity} of the linear counting transformation~\(T_t^{t+\Delta}\).
  Similarly, in case \(\max u < t-\Delta\),
  \begin{equation*}
    \tilde{\prob}^\star\pr{X_t\geq x+2\cond X_u=x_u,X_{t-\Delta}=x}
    =\br{T_{t-\Delta}^t\indica{\geq x+2}}\pr{x}.
  \end{equation*}
  That \ref{CP:Orederliness} is satisfied now follows if we combine these two equalities with \ref{def:CTS:Limit of more than two}.
\end{proof}

\section{Supplementary Material for Section~\ref{sec:Poisson processes in particular}}
Our proof of Theorem~\ref{the:Pois transition probabilities are poisson distributed and the converse} is rather lengthy, and therefore we split it into two parts.
For the first part, we first establish some convenient properties of a Poisson process.
\begin{lemma}
\label{lem:Properties of Poisson}
  Consider a Poisson process~\(\prob\).
  Then
  \begin{enumerate}[label=\upshape(\roman*), ref=\upshape(\roman*)]
    \item\label{lem:Properties of Poisson:Bounds} \(0\leq \prob\pr{X_t=x\cond X_0=0}\leq 1\) for all \(t\) in \(\nnegreals\) and \(x\) in \(\stsp\);
    \item\label{lem:Properties of Poisson:In 0} \(\prob\pr{X_0=x\cond X_0=0}=1\) if \(x=0\) and \(0\) otherwise for all \(x\) in \(\stsp\);
    \item\label{lem:Properties of Poisson:t_1 t_2} \(\prob\pr{X_{t_1+t_2}=x\cond X_0=0} = \sum_{y=0}^x\prob\pr{X_{t_1}=y\cond X_0=0}\prob\pr{X_{t_2}=x-y\cond X_0=0}\) for all \(t_1,t_2\) in \(\nnegreals\) and \(x\) in \(\stsp\);
    \item\label{lem:Properties of Poisson:Limit to 0} \(\lim_{t\to 0^+}\prob\pr{X_t=x\cond X_0=0}=\prob\pr{X_0=x\cond X_0=0}\) for all \(x\) in \(\stsp\).
  \end{enumerate}
\end{lemma}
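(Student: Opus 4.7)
The plan is to treat the four parts in order of difficulty. Parts (i) and (ii) are immediate from the axioms of coherent conditional probabilities: (i) is just \ref{LOP:bounds}, while in (ii) the case $x=0$ is \ref{LOP:1 if C in A}, and for $x\neq 0$ the intersection $\pr{X_0=x}\cap\pr{X_0=0}$ is empty, so \ref{LOP:A and A C} combined with \ref{LOP:empty} yields $\prob\pr{X_0=x\cond X_0=0}=0$.

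For (iii), I would first dispose of the boundary cases $t_1=0$ and $t_2=0$, which follow immediately from (ii) since only a single term in the sum survives. When $t_1,t_2>0$, applying Lemma~\ref{lem:Coherent prob on cpdomain:from t to s via r} with $t=0$, $r=t_1$, $s=t_1+t_2$ and $u=\emptyset$ gives
\begin{equation*}
  \prob\pr{X_{t_1+t_2}=x\cond X_0=0}
  = \sum_{z=0}^{x}\prob\pr{X_{t_1+t_2}=x\cond X_0=0, X_{t_1}=z}\prob\pr{X_{t_1}=z\cond X_0=0}.
\end{equation*}
The transition factor then simplifies via \ref{Pois:Markovian} to drop the condition on $X_0$, then \ref{Pois:State-homogeneous} to shift the state down by $z$, and finally \ref{Pois:Time-homogeneous} to translate the time window to $[0,t_2]$, yielding $\prob\pr{X_{t_2}=x-z\cond X_0=0}$.

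The only nontrivial part is (iv). My plan is to first handle $x=0$ and then bootstrap the rest. Set $g\pr{t}\coloneqq\prob\pr{X_t=0\cond X_0=0}$. Since \ref{Paths:Non-decreasing} gives $\pr{X_t=0}\subseteq\pr{X_s=0}$ whenever $s\leq t$, the monotonicity of coherent conditional probabilities makes $g$ non-increasing on $\nnegreals$, so the right limit $g\pr{0^+}\in[0,1]$ exists. Specialising (iii) to $x=0$ yields the multiplicativity $g\pr{s+t}=g\pr{s}g\pr{t}$, and letting $t\to0^+$ in $g\pr{t}=g\pr{t/2}^2$ forces $g\pr{0^+}\in\set{0,1}$.

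The main obstacle is ruling out $g\pr{0^+}=0$. Were that to hold, monotonicity of $g$ together with $g\pr{0}=1$ would force $g\equiv 0$ on $\posreals$, and (iii) applied with $x=1$ would give $\prob\pr{X_{2t}=1\cond X_0=0}=2g\pr{t}\prob\pr{X_t=1\cond X_0=0}=0$ for every $t>0$. On the other hand, the orderliness condition \ref{CP:Orederliness} forces $\prob\pr{X_t\geq 2\cond X_0=0}\to 0$ as $t\to 0^+$; combined with $g\equiv 0$ this would yield $\prob\pr{X_t=1\cond X_0=0}\to 1$, a contradiction. Hence $g\pr{0^+}=1$, which settles the $x=0$ case; for $x\geq 1$, the bound $0\leq\prob\pr{X_t=x\cond X_0=0}\leq 1-g\pr{t}$ together with (ii) finishes the argument.
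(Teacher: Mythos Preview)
Your proof is correct. Parts (i)--(iii) match the paper's argument essentially verbatim.

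For part (iv), you take a slightly different route from the paper, and it is worth pointing out the difference. The paper, after noting that $g(t)\coloneqq\prob\pr{X_t=0\cond X_0=0}$ is non-increasing so that $\ell\coloneqq g(0^+)$ exists, assumes $\ell<1$ and iterates (iii) at $x=1$ to obtain $\prob\pr{X_t=1\cond X_0=0}=n\,\prob\pr{X_{t/n}=1\cond X_0=0}\,g(t/n)^{n-1}\leq n\ell^{n-1}\to 0$, from which the partition identity forces $\ell=1$. You instead first exploit multiplicativity $g(s+t)=g(s)g(t)$ (which is just (iii) at $x=0$) to pin down $\ell\in\{0,1\}$ immediately, and then rule out $\ell=0$ by a single application of (iii) at $x=1$. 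This is a bit more economical: you avoid the iteration and the $n\ell^{n-1}$ estimate entirely. You also treat all $x\geq 1$ uniformly via the bound $\prob\pr{X_t=x\cond X_0=0}\leq 1-g(t)$, whereas the paper handles $x\geq 2$ and $x=1$ separately (the former via orderliness, the latter via the partition identity). Both routes are sound; yours is marginally cleaner.
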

\begin{proof}
  \begin{enumerate}[label=\upshape(\roman*)]
    \item Follows immediately from \ref{LOP:geq 0}.
    \item Follows almost immediately from \ref{CP:Start at 0} and \ref{LOP:1 if C in A}.
    \item
    If \(t_1\) or \(t_2\) is zero, then this follows almost immediately from \ref{lem:Properties of Poisson:In 0}.
    Hence, we now consider the case that \(t_1\neq0\neq t_2\).
    It follows from Lemma~\ref{lem:Coherent prob on cpdomain:from t to s via r} with \(t=0\), \(r=t_1\) and \(s=t_1+t_2\) that
    \begin{equation*}
      \prob\pr{X_{t_1+t_2}=x\cond X_0=0}
      = \sum_{y=0}^x \prob\pr{X_{t_1+t_2}=x\cond X_0=0, X_{t_1}=y}\prob\pr{X_{t_1}=y\cond X_0=0}.
    \end{equation*}
    We now use \ref{Pois:Markovian}, \ref{Pois:Time-homogeneous} and \ref{Pois:State-homogeneous}, to yield
    \begin{align*}
      \prob\pr{X_{t_1+t_2}=x\cond X_0=0}
      &= \sum_{y=0}^x \prob\pr{X_{t_1+t_2}=x\cond X_{t_1}=y}\prob\pr{X_{t_1}=y\cond X_0=0} \\
      &= \sum_{y=0}^x \prob\pr{X_{t_2}=x\cond X_0=y}\prob\pr{X_{t_1}=y\cond X_0=0} \\
      &= \sum_{y=0}^x \prob\pr{X_{t_1}=y\cond X_0=0}\prob\pr{X_{t_2}=x-y\cond X_0=}.
    \end{align*}
    \item Observe that if \(x\geq 2\), then it follows from \ref{LOP:bounds} and the monotonicity of \(\prob\) that for all \(t\) in \(\posreals\),
    \begin{equation*}
      0
      \leq \prob\pr{X_t=x\cond X_0=0}
      \leq \prob\pr{X_t\geq 2\cond X_0=0}.
    \end{equation*}
    The stated now follows from this inequality because \(\lim_{t\to 0^+}\prob\pr{X_t\geq 2\cond X_0=0}=0\) due to \ref{CP:Orederliness}.

    Next, we consider the case \(x=0\).
    Recall from \ref{lem:Properties of Poisson:Bounds} that \(\prob\pr{X_t=x\cond X_0=0}\) is bounded.
    Furthermore, as \(\pr{X_{t+\Delta}=0}\subseteq\pr{X_t=0}\) due to \ref{Paths:Non-decreasing}, it follows from the monotonicity of \(\prob\) that \(\prob\pr{X_t=x\cond X_0=0}\geq \prob\pr{X_{t+\Delta}=x\cond X_0=0}\) for all \(t, \Delta\) in \(\nnegreals\).
    In other words, \(\prob\pr{X_t=x\cond X_0=0}\) is a bounded and non-increasing function of \(t\).
    It is a standard result from analysis that the left limit of a bounded and non-increasing function on \(\nnegreals\) exists everywhere.
    Consequently, \(\lim_{t\to 0^+}\prob\pr{X_t=0\cond X_0=0}\) exists, and we denote this limit by \(\ell\).

    We need to show that \(\ell=\prob\pr{X_0=0\cond X_0=0}=1\), where the final equality holds due to \ref{lem:Properties of Poisson:In 0}.
    Observe that \(0\leq\ell\leq 1\) due to \ref{lem:Properties of Poisson:Bounds}.
    Our proof is one by contradiction: we assume ex-absurdo that \(\ell<1\).
    Fix any \(t\) in \(\posreals\) and \(n\) in \(\nats\), and let \(\Delta\coloneqq t/n\).
    It then follows from \ref{lem:Properties of Poisson:t_1 t_2} that
    \begin{equation*}
      \prob\pr{X_t=1\cond X_0=0}
      = \prob\pr{X_{\Delta}=0\cond X_0=0}\prob\pr{X_{t-\Delta}=1\cond X_0=0}+\prob\pr{X_{\Delta}=1\cond X_0=0}\prob\pr{X_{t-\Delta}=0\cond X_0=0}.
    \end{equation*}
    We now apply \ref{lem:Properties of Poisson:t_1 t_2} \(\pr{n-1}\) additional times, to yield
    \begin{equation*}
      \prob\pr{X_t=1\cond X_0=0}
      = n \prob\pr{X_\Delta=1\cond X_0=0} \prob\pr{X_\Delta=0\cond X_0=0}^{n-1}.
    \end{equation*}
    Observe that \(\prob\pr{X_\Delta=1\cond X_0=0}\leq 1\) due to \ref{LOP:bounds}, and that \(\prob\pr{X_\Delta=0\cond X_0=0}\leq\ell\) as \(\prob\pr{X_s=0\cond X_0=0}\) is non-increasing in \(s\) and has \(\ell\) as its right limit in \(s=0\).
    Hence,
    \begin{equation*}
      \prob\pr{X_t=1\cond X_0=0}
      \leq n \ell^{n-1}.
    \end{equation*}
    It is clear that if we take \(n\) sufficiently large, then this upper bound is arbitrarily close to \(0\).
    From this and \ref{lem:Properties of Poisson:Bounds}, it follows that \(\prob\pr{X_t=1\cond X_0=0}=0\) for all \(t\) in \(\posreals\).
    Consequently, \(\lim_{t\to0^+}\prob\pr{X_t=1\cond X_0=0}=0\).
    To obtain our contradiction, we observe that it follows from \ref{LOP:Additive if disjoint} and \ref{LOP:1 if C in A} that, for all \(t\) in \(\posreals\),
    \begin{equation}
    \label{eqn:Proof of Properties of Poisson:equality with 1}
      \prob\pr{X_t=0\cond X_0=0}
      = 1 - \prob\pr{X_t=1\cond X_0=0}-\prob\pr{X_t\geq 2\cond X_0=0}.
    \end{equation}
    Taking the limit for \(t\to0^+\) on both sides of the equality yields our contradiction, as
    \begin{equation*}
      \lim_{t\to+0^+}\prob\pr{X_t=0\cond X_0=0}
      = 1-\lim_{t\to0^+}\prob\pr{X_t=1\cond X_0=0}-\lim_{t\to0^+}\prob\pr{X_t\geq 2\cond X_0=0}
      =1,
    \end{equation*}
    where for the final equality we have also used that \(\lim_{t\to 0^+}\prob\pr{X_t\geq 2\cond X_0=0}=0\), a consequence of \ref{CP:Orederliness}.

    For the remaining case that \(x=1\), we use Equation~\eqref{eqn:Proof of Properties of Poisson:equality with 1} to yield the stated:
    \begin{equation*}
      \lim_{t\to0^+} \prob\pr{X_t=1\cond X_0=0}
      = 1 - \lim_{t\to0^+} \prob\pr{X_t=0\cond X_0=0}-\lim_{t\to0^+} \prob\pr{X_t\geq 2\cond X_0=0}
      = 0
      = \prob\pr{X_0=1\cond X_0=0},
    \end{equation*}
    where the second equality follows from the previous and the final equality follows from \ref{lem:Properties of Poisson:In 0}.
  \end{enumerate}
\end{proof}
Next, we establish the main result of the first part.
\begin{proposition}
\label{Prop:Pois transition probabilities are poisson distributed}
  Consider a Poisson process~\(\prob\).
  Then there is a rate~\(\lambda\) in \(\nnegreals\) such that, for all \(t, \Delta\) in \(\nnegreals\), \(u\) in \(\setoftseq_{<t}\), \(\pr{x_u, x}\) in \(\stsp_{u\cup t}\) and \(y\) in \(\stsp\),
  \begin{equation*}
    \prob\pr{X_{t+\Delta}=y \cond X_u=x_u, X_t=x}
    = \begin{cases}
      \pois_{\lambda \Delta}\pr{y-x} &\text{if } y\geq x, \\
      0 &\text{otherwise,}
    \end{cases}
  \end{equation*}
\end{proposition}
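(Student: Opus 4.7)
The plan is to reduce the problem to the one-parameter family \(p_x\pr{t}\coloneqq \prob\pr{X_t=x\cond X_0=0}\) and extract the Poisson form from the semigroup identity of Lemma~\ref{lem:Properties of Poisson}. First, invoking \ref{Pois:Markovian}, \ref{Pois:Time-homogeneous} and \ref{Pois:State-homogeneous} in succession reduces \(\prob\pr{X_{t+\Delta}=y\cond X_u=x_u, X_t=x}\) to \(p_{y-x}\pr{\Delta}\) whenever \(y\geq x\), while Lemma~\ref{lem:Coherent prob on cpdomain:prob of <= x} handles the case \(y<x\). It therefore suffices to exhibit some \(\lambda\in\nnegreals\) such that \(p_x\pr{t}=\pois_{\lambda t}\pr{x}\) for all \(t\in\nnegreals\) and \(x\in\stsp\).

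I would next pin down \(p_0\). Lemma~\ref{lem:Properties of Poisson}~\ref{lem:Properties of Poisson:t_1 t_2} specialised to \(x=0\) yields the multiplicative identity \(p_0\pr{t+s}=p_0\pr{t}p_0\pr{s}\). Since \(\pr{X_{t+\Delta}=0}\subseteq\pr{X_t=0}\) by \ref{Paths:Non-decreasing}, \(p_0\) is non-increasing; it takes values in \([0,1]\) by \ref{LOP:bounds}, and it is right-continuous at \(0\) with \(p_0\pr{0}=1\) by Lemma~\ref{lem:Properties of Poisson}~\ref{lem:Properties of Poisson:Limit to 0}. The standard classification of monotone multiplicative solutions then yields \(p_0\pr{t}=e^{-\lambda t}\) for some \(\lambda\in[0,+\infty]\). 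The value \(\lambda=+\infty\) is ruled out as follows: it forces \(p_0\pr{t}=0\) for all \(t>0\), and then the convolution identity \(p_1\pr{t+s}=p_0\pr{t}p_1\pr{s}+p_1\pr{t}p_0\pr{s}\) forces \(p_1\equiv 0\) on \(\posreals\); hence \(\prob\pr{X_t\geq 2\cond X_0=0}=1-p_0\pr{t}-p_1\pr{t}=1\) for every \(t>0\), which contradicts \ref{CP:Orederliness}. So \(\lambda\in\nnegreals\).

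Finally, I would induct on \(x\geq 1\). From \(1-p_0\pr{t}=p_1\pr{t}+\prob\pr{X_t\geq 2\cond X_0=0}\), dividing by \(t\) and invoking \ref{CP:Orederliness} gives \(\lim_{t\to 0^+}p_1\pr{t}/t=\lambda\). For the inductive step, set \(q_y\pr{t}\coloneqq e^{\lambda t}p_y\pr{t}\); assuming \(q_y\pr{t}=\pr{\lambda t}^y/y!\) for \(0\leq y<x\), substitute the hypothesis into the convolution identity \(q_x\pr{t+s}=\sum_{y=0}^x q_y\pr{t}q_{x-y}\pr{s}\) and apply the binomial identity \(\sum_{y=0}^x\pr{\lambda t}^y\pr{\lambda s}^{x-y}/\br{y!\pr{x-y}!}=\pr{\lambda\pr{t+s}}^x/x!\) to obtain
\begin{equation*}
  r\pr{t+s}=r\pr{t}+r\pr{s},\qquad r\pr{t}\coloneqq q_x\pr{t}-\frac{\pr{\lambda t}^x}{x!}.
\end{equation*}
Now \(r\) is right-continuous at \(0\) with \(r\pr{0}=0\) (by Lemma~\ref{lem:Properties of Poisson}~\ref{lem:Properties of Poisson:Limit to 0} combined with \(e^{\lambda t}\to 1\)), and additivity propagates this continuity to every point, so \(r\pr{t}=ct\). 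The constant vanishes: for \(x\geq 2\), \(p_x\pr{t}/t\leq\prob\pr{X_t\geq 2\cond X_0=0}/t\to 0\) by \ref{CP:Orederliness}; for \(x=1\) the limit \(p_1\pr{t}/t\to\lambda\) established above instead forces \(r\pr{t}/t\to 0\). In either case \(c=0\), so \(p_x\pr{t}=e^{-\lambda t}\pr{\lambda t}^x/x!\), as desired.

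The main obstacle I anticipate is the careful handling of the two Cauchy-type functional equations under the minimal regularity the coherence framework supplies. The identification \(p_0\pr{t}=e^{-\lambda t}\) rests on monotonicity plus right-continuity at \(0\) together with the convolution identity, and the inductive step similarly relies on right-continuity at \(0\) of \(r\) (inherited from that of \(p_x\)) to pass from additivity to linearity; threading these regularity arguments through a framework in which no topological structure is assumed a priori is the delicate part, while the rest is algebraic bookkeeping.
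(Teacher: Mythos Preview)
Your proposal is correct and, for the reduction step and the identification of \(p_0\pr{t}=e^{-\lambda t}\), essentially mirrors the paper's argument. The difference lies in how the case \(x\geq 1\) is handled. The paper computes the right derivative of \(\phi_z\pr{\Delta}\coloneqq p_z\pr{\Delta}\) at every point \(\Delta\) via the convolution identity and the values of the derivatives at \(0\), obtaining the recursive system \(\mathrm{D}\phi_z=\lambda\phi_{z-1}-\lambda\phi_z\), and then invokes the standard uniqueness result for this ODE system (with the left-derivative case only sketched). You instead pass to \(q_y\pr{t}=e^{\lambda t}p_y\pr{t}\), turn the convolution identity into Cauchy's additive equation for \(r\pr{t}=q_x\pr{t}-\pr{\lambda t}^x/x!\), and use only continuity (propagated from right-continuity at \(0\) via additivity) to force linearity. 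Your route is a bit more elementary: it sidesteps any differentiability claim and the appeal to an external ODE uniqueness theorem, at the cost of the algebraic bookkeeping around the binomial identity. One minor remark: once you have invoked Lemma~\ref{lem:Properties of Poisson}~\ref{lem:Properties of Poisson:Limit to 0} for right-continuity of \(p_0\) at \(0\), the case \(\lambda=+\infty\) is already excluded (since \(p_0\pr{t}\to 1\) is incompatible with \(p_0\equiv 0\) on \(\posreals\)); your orderliness-based argument also works but is not needed.
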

\begin{proof}
  The stated for the case \(y<x\) follows immediately from Lemma~\ref{lem:Coherent prob on cpdomain:prob of <= x}.
  Hence, we immediately move on to the case that \(y\geq x\).
  In this case,
  \begin{equation*}
    \prob\pr{X_{t+\Delta}=y \cond X_u=x_u, X_t=x}
    = \prob\pr{X_{t+\Delta}=y \cond X_t=x}
    = \prob\pr{X_{t+\Delta}=y-x \cond X_t=0}
    = \prob\pr{X_{\Delta}=y-x \cond X_0=0},
  \end{equation*}
  where the first equality follows from \ref{Pois:Markovian}, the second from \ref{Pois:State-homogeneous} and the third from \ref{Pois:Time-homogeneous}.
  To verify the stated, we now need to show that there is a \(\lambda\) in \(\nnegreals\) such that
  \begin{equation}
  \label{eqn:Proof of Prop:Pois transition probabilities are poisson distributed:Trans prob}
    \pr{\forall\Delta\in\nnegreals}\pr{\forall z\in\stsp}~
    \prob\pr{X_{\Delta}=z \cond X_0=0}
    = \pois_{\lambda\Delta}\pr{z}.
  \end{equation}

  What follows is a standard argument; see for instance \cite[Chapter~XVII, Section~6]{1968Feller} or \cite[Section~2]{1960Khintchine}.
  We start with the case \(z=0\).
  For notational simplicity, we let \(\theta\coloneqq\prob\pr{X_1=0\cond X_0=0}\).
  Recall from Lemma~\ref{lem:Properties of Poisson}~\ref{lem:Properties of Poisson:Bounds} that \(0\leq\theta\leq 1\).
  Furthermore, we observe that, for any \(n\) in \(\nats\),
  \begin{equation*}
    \prob\pr{X_1=0\cond X_0=0}
    = \prob\pr{X_{\frac{1}{n}}=0\cond X_0=0}^n,
  \end{equation*}
  where the equality follows from applying Lemma~\ref{lem:Properties of Poisson}~\ref{lem:Properties of Poisson:t_1 t_2} \(n\) times.
  Clearly, this implies that, for any \(n\) in \(\nats\),
  \begin{equation}
  \label{eqn:Proof of Pois transition probabilities are poisson distributed:theta^1/n}
    \prob\pr{X_{\frac{1}{n}}=0\cond X_0=0}
    = \theta^\frac{1}{n}.
  \end{equation}

  Recall from before that \(0\leq\theta\leq 1\).
  We infer from these inequalities and Equation~\eqref{eqn:Proof of Pois transition probabilities are poisson distributed:theta^1/n} that \(0<\theta\leq 1\).
  Indeed, in case \(\theta=0\), then it follows from Equation~\eqref{eqn:Proof of Pois transition probabilities are poisson distributed:theta^1/n} that \(\lim_{n\to+\infty}\prob\pr{X_{\frac{1}{n}}=0\cond X_0=0}=0\), which is not correct because this limit is equal to \(1\) due to Lemma~\ref{lem:Properties of Poisson}~\ref{lem:Properties of Poisson:In 0} and \ref{lem:Properties of Poisson:Limit to 0}.

  Next, we observe that, for any \(n\) and \(k\) in \(\nats\),
  \begin{equation}
  \label{eqn:Proof of Pois transition probabilities are poisson distributed:theta^k/n}
    \prob\pr{X_{\frac{k}{n}}=0\cond X_0=0}
    = \prob\pr{X_{\frac{1}{n}}=0\cond X_0=0}^k
    = \theta^{\frac{k}{n}},
  \end{equation}
  where for the first equality we have applied Lemma~\ref{lem:Properties of Poisson}~\ref{lem:Properties of Poisson:t_1 t_2} \(k\) times and for the second equality we have used Equation~\eqref{eqn:Proof of Pois transition probabilities are poisson distributed:theta^1/n}.

  Next, we fix any \(\Delta\) in \(\posreals\).
  Choose any \(n\) in \(\nats\), and let \(k\) be the non-negative integer such that
  \begin{equation*}
    \frac{k-1}{n}
    \leq \Delta
    \leq \frac{k}{n}.
  \end{equation*}
  Because \(\prob\pr{X_s=0\cond X_0=0}\) is a non-increasing function of \(s\)---as we have argued in the proof of Lemma~\ref{lem:Properties of Poisson}~\ref{lem:Properties of Poisson:Limit to 0}---it follows from these inequalities and Equation~\eqref{eqn:Proof of Pois transition probabilities are poisson distributed:theta^k/n} that
  \begin{equation*}
    \theta^{\frac{k-1}{n}}
    \geq \prob\pr{X_\Delta=0\cond X_0=0}
    \geq \theta^{\frac{k}{n}}.
  \end{equation*}
  It is now clear that in the limit for \(n\to+\infty\), the lower and upper bound both converge to \(\theta^\Delta\).
  We now let \(\lambda\coloneqq-\ln\pr{\theta}\), which yields a non-negative real number as \(0<\theta\leq 1\).
  Observe furthermore that \(\pois_{\lambda0}\pr{0}=1=\prob\pr{X_0=0\cond X_0=0}\), where the final equality is precisely Lemma~\ref{lem:Properties of Poisson}~\ref{lem:Properties of Poisson:In 0}.
  In conclusion,
  \begin{equation}
  \label{eqn:Proof of Prop:Pois transition probabilities are poisson distributed:Trans prob for 0}
    \pr{\forall\Delta\in\nnegreals}~
    \prob\pr{X_\Delta=0\cond X_0=0}
    = \theta^\Delta
    = e^{-\lambda\Delta},
  \end{equation}
  so Equation~\eqref{eqn:Proof of Prop:Pois transition probabilities are poisson distributed:Trans prob} is satisfied for the case \(z=0\).

  Next, we verify Equation~\eqref{eqn:Proof of Prop:Pois transition probabilities are poisson distributed:Trans prob} for \(z>0\).
  First, we introduce some additional notation.
  For any \(z\) in \(\stsp\), we let
  \begin{equation*}
    \phi_z\colon\nnegreals{}\to\reals
    \colon \Delta\mapsto\phi_z\pr{\Delta}
    \coloneqq \prob\pr{X_\Delta=z\cond X=0}.
  \end{equation*}
  Additionally, we also let \(\phi_{-1}\coloneqq0\).
  Observe that \(\pois_z\pr{0}=\indica{z}\pr{0}\) due to Lemma~\ref{lem:Properties of Poisson}~\ref{lem:Properties of Poisson:In 0}.
  We now claim that
  \begin{equation}
  \label{eqn:Proof of Prop:Pois transition probabilities are poisson distributed:System of diff eqns}
    \mathrm{D}\phi_z\pr{\Delta}
    =\lambda\phi_{z-1}\pr{\Delta}-\lambda\phi_z\pr{\Delta}
    \qquad\text{for all } z\in\stsp \text{ and } \Delta\in\nnegreals,
  \end{equation}
  where---as in the proof of Lemma~\ref{prop:T_lambda indic gives Poisson distribution}---\(\mathrm{D}\phi_z\pr{\Delta}\) denotes the derivative of \(\phi_z\) evaluated in \(\Delta\).
  It is well-known---see for instance \cite[Section~3]{1960Khintchine}---that together with the initial condition~\(\phi_z\pr{0}=\indica{z}\pr{0}\), the resulting family of recursively defined initial value problems has a unique solution, namely \(\phi_z\pr{\Delta}=\pois_{\lambda\Delta}\pr{z}\) for all \(\Delta\) in \(\nnegreals\) and \(z\) in \(\stsp\).
  Hence, our claim Equation~\eqref{eqn:Proof of Prop:Pois transition probabilities are poisson distributed:System of diff eqns} implies Equation~\eqref{eqn:Proof of Prop:Pois transition probabilities are poisson distributed:Trans prob}.

  In order to verify our claim, we first study the derivative of \(\phi_z\) in \(0\).
  It follows immediately from Equation~\eqref{eqn:Proof of Prop:Pois transition probabilities are poisson distributed:Trans prob for 0} that
  \begin{equation}
  \label{eqn:Proof of Prop:Pois transition probabilities are poisson distributed:Derivative in zero for zero}
    \lim_{\delta\to0^+} \frac{\phi_0\pr{\delta}-\phi_0\pr{0}}{\delta}
    = \lim_{\delta\to0^+} \frac{e^{-\lambda\delta}-1}{\delta}
    = \lambda.
  \end{equation}
  Next, we use that \(\phi_1\pr{0}=0\), execute some straightforward manipulations, use Equation~\eqref{eqn:Proof of Prop:Pois transition probabilities are poisson distributed:Trans prob for 0} and also \ref{CP:Orederliness}, to yield
  \begin{align}
    \lim_{\delta\to0^+} \frac{\phi_1\pr{\delta}-\phi_1\pr{0}}{\delta}
    &= \lim_{\delta\to0^+} \frac{\phi_1\pr{\delta}}{\delta}
    = \lim_{\delta\to0^+} \frac{\prob\pr{X_\delta=1\cond X_0=0}}{\delta} \notag\\
    &= \lim_{\delta\to0^+} \frac{1-\prob\pr{X_\delta=0\cond X_0=0}-\prob\pr{X_\delta\geq 2\cond X_0=0}}{\delta} \notag\\
    &= \lim_{\delta\to0^+} \frac{1-\prob\pr{X_\delta=0\cond X_0=0}}{\delta} - \lim_{\delta\to0^+} \frac{\prob\pr{X_\delta\geq 2\cond X_0=0}}{\delta} \notag\\
    &= \lim_{\delta\to0^+} \frac{1-e^{-\lambda\delta}}{\delta} - \lim_{\delta\to0^+} \frac{\prob\pr{X_\delta\geq 2\cond X_0=0}}{\delta} \notag\\
    &= -\lambda.
    \label{eqn:Proof of Prop:Pois transition probabilities are poisson distributed:Derivative in zero for one}
  \end{align}
  Finally, we observe that, for any \(z\) in \(\stsp\) such that \(z\geq 2\) and any \(\delta\) in \(\posreals\),
  \begin{equation*}
    0
    \leq \phi_z\pr{\delta}
    = \prob\pr{X_\delta=z\cond X_0=0}
    \leq \prob\pr{X_\delta\geq 2\cond X_0=0},
  \end{equation*}
  where the second inequality follows from the monotonicity of \(\prob\).
  From these inequalities, from the equality \(\phi_z\pr{0}=\indica{z}\pr{0}\) and from \ref{CP:Orederliness}, we infer that, for all \(z\) in \(\stsp\) such that \(z\geq 2\),
  \begin{equation}
  \label{eqn:Proof of Prop:Pois transition probabilities are poisson distributed:Derivative in zero for two or more}
    \lim_{\delta\to0^+} \frac{\phi_z\pr{\delta}-\phi_z\pr{0}}{\delta}
    = 0.
  \end{equation}

  We are now finally ready to study the derivative of \(\phi_z\) in a general time point---that is, to verify our claim Equation~\eqref{eqn:Proof of Prop:Pois transition probabilities are poisson distributed:System of diff eqns}.
  First, we observe that it is an immediate consequence of Equation~\eqref{eqn:Proof of Prop:Pois transition probabilities are poisson distributed:Trans prob for 0} that, for all \(\Delta\) in \(\nnegreals\),
  \begin{equation*}
    \mathrm{D}\phi_0\pr{\Delta}
    = \lim_{\delta\to0} \frac{\phi_0\pr{\Delta+\delta}-\phi_0\pr{\Delta}}{\delta}
    = \lambda\phi_0\pr{\Delta}
    = -\lambda\phi_{-1}\pr{\Delta}+\lambda\phi_0\pr{\Delta},
  \end{equation*}
  where the limit is a right limit if \(\Delta=0\).
  Hence, we move on to the case \(z\geq 1\).
  We will only consider the right limit, the left limit can be verified using a similar---but slightly more involved--- argument.
  To that end, we fix any \(\Delta\) in \(\nnegreals\) and \(z\) in \(\stsp\) with \(z\geq 1\).
  We use Lemma~\ref{lem:Properties of Poisson}~\ref{lem:Properties of Poisson:t_1 t_2} and Equations~\eqref{eqn:Proof of Prop:Pois transition probabilities are poisson distributed:Derivative in zero for zero}--\eqref{eqn:Proof of Prop:Pois transition probabilities are poisson distributed:Derivative in zero for two or more}, to yield
  \begin{align*}
    \lim_{\delta\to0^+} \frac{\phi_z\pr{\Delta+\delta}-\phi_z\pr{\Delta}}{\delta}
    &= \lim_{\delta\to0^+} \frac{\sum_{z'=0}^z \phi_{z-z'}\pr{\Delta}\phi_{z'}\pr{\delta}-\phi_z\pr{\Delta}}{\delta} \\
    &= \sum_{z'=0}^{z-1} \phi_{z'}\pr{\Delta} \lim_{\delta\to0^+} \frac{\phi_{z'}\pr{\delta}}{\delta}+\phi_z\pr{\Delta}\lim_{\delta\to0^+} \frac{\phi_0\pr{\delta}-1}{\delta} \\
    &= -\lambda \phi_{z-1}\pr{\Delta}+\lambda\phi_z\pr{\Delta},
  \end{align*}
  as required.
\end{proof}

For the second part of the proof of Theorem~\ref{the:Pois transition probabilities are poisson distributed and the converse}, we construct a Poisson process using only the transition probabilities.
\begin{proposition}
\label{prop:Poisson transition probabilities define a Poisson process}
  Fix any \(\lambda\) in \(\nnegreals\).
  Consider the real-valued map \(\tilde{\prob}\) with domain
  \begin{equation*}
    \tilde{\ccpdomain}
    \coloneqq \set{\pr{X_{t+\Delta}=y, \pr{X_u=x_u, X_t=x}}\in\cpdomain\colon t,\Delta\in\nnegreals, u\in\setoftseq_{<t}, \pr{x_u,x}\in\stsp_{u\cup t}, y\in\stsp} \cup \set{\pr{X_0=x, \setofpths{}}\in\cpdomain{}\colon x\in\stsp},
  \end{equation*}
  that is defined for all \(t, \Delta\) in \(\nnegreals\), \(u\) in \(\setoftseq_{<t}\), \(\pr{x_u, x}\) in \(\stsp_{u\cup t}\) and \(y\) in \(\stsp\) as
  \begin{equation*}
    \tilde{\prob}\pr{X_{t+\Delta}=y \cond X_u=x_u, X_t=x}
    \coloneqq \begin{cases}
      \pois_{\lambda \Delta}\pr{y-x} &\text{if } y \geq x, \\
      0 &\text{otherwise},
    \end{cases}
  \end{equation*}
  and for all \(x\) in \(\stsp\) as
  \begin{equation*}
    \tilde{\prob}\pr{X_0=0 \cond \setofpths}
    \coloneqq \begin{cases}
      1 &\text{if } x=0, \\
      0 &\text{otherwise.}
    \end{cases}1
  \end{equation*}
  Then \(\tilde{\prob}\) is a coherent conditional probability that has a unique extension~\(\tilde{\prob}^\star\) to \(\cpdomain\).
  Even more, this extension~\(\tilde{\prob}^\star\) is a Poisson process.
\end{proposition}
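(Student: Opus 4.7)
The plan is to leverage the machinery built up in Appendix~\ref{app:Linear Transformations} for linear counting transformations. Specifically, I will consider the constant sequence \(S=\set{\lambda}_{x\in\stsp}\) and the associated family \(\mathcal{T}_\lambda=\set{T_{t,\lambda}^s\colon t,s\in\nnegreals,t\leq s}\) of linear counting transformations generated by the linear Poisson generator \(\trm_\lambda\). By Corollary~\ref{cor:Properties of T_S}, \(\mathcal{T}_\lambda\) is a counting transformation system, and by Proposition~\ref{prop:T_lambda indic gives Poisson distribution} we have
\begin{equation*}
    \br{T_{t,\lambda}^{t+\Delta}\indica{y}}\pr{x}
    = \begin{cases}
      \pois_{\lambda\Delta}\pr{y-x} &\text{if } y\geq x, \\
      0 &\text{otherwise.}
    \end{cases}
\end{equation*}
Consequently, the map \(\tilde{\prob}\) of the statement coincides exactly with the map that Lemma~\ref{lem:System results in counting process} associates with the counting transformation system \(\mathcal{T}_\lambda\).

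With this identification in hand, the first half of the statement is essentially immediate: Lemma~\ref{lem:System results in counting process} directly yields that \(\tilde{\prob}\) is a coherent conditional probability on \(\tilde{\ccpdomain}\) and that any coherent extension \(\tilde{\prob}^\star\) to \(\cpdomain\) is a counting process. To upgrade this to a Poisson process, I would verify \ref{Pois:Markovian}--\ref{Pois:Time-homogeneous} directly. Each of these is immediate from the explicit formula: \ref{Pois:Markovian} holds because \(\pois_{\lambda\Delta}\pr{y-x}\) does not depend on \(\pr{u,x_u}\); \ref{Pois:State-homogeneous} holds because both sides equal \(\pois_{\lambda\Delta}\pr{y-x}\) for \(y\geq x\) and are zero otherwise; and \ref{Pois:Time-homogeneous} holds because \(\pois_{\lambda\Delta}\pr{y-x}\) depends on \(t\) and \(t+\Delta\) only through their difference \(\Delta\).

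The part I expect to require the most care is uniqueness of the extension. My plan is to fix an arbitrary \(\pr{A,X_u=x_u}\) in \(\cpdomain\) and use Lemma~\ref{lem:Element of cpfield_u} to write \(A=\pr{X_w\in B}\) for some \(w\supseteq u\cup\set{0}\) and some \(B\subseteq\stsp_w\). Since \(\stsp_w\) is a countable disjoint union of singletons, finite additivity of \(\tilde{\prob}^\star\pr{\cdot\cond X_u=x_u}\)---together with \ref{LOP:bounds}, \ref{LOP:empty} and a standard \(\sigma\)-additivity-free argument that exploits the fact that \(\sum_{x\in\stsp_w}\pois_{\lambda\Delta}\pr{x}=1\) already saturates probability mass on any finite sub-box---reduces the computation to \(\tilde{\prob}^\star\pr{X_w=y_w\cond X_u=x_u}\) for \(y_w\in\stsp_w\). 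Repeated application of \ref{LOP:Bayes rule} then expresses each such value as a product of one-step transition probabilities, all of which already lie in the domain \(\tilde{\ccpdomain}\) of \(\tilde{\prob}\). This forces a unique value, which establishes uniqueness. Much of this line of reasoning is close in spirit to Lemma~\ref{lem:Construction lemma for trans probs with full conditional}, which I would invoke wherever possible to keep the argument compact.

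Finally, I should remark that although Lemma~\ref{lem:System results in counting process} constructs \emph{a} coherent extension, it does not a priori guarantee uniqueness; that is precisely why the argument of the previous paragraph is needed. The main technical subtlety will be handling sets \(B\subseteq\stsp_w\) that are infinite: here I would exploit that the transition probabilities sum to one together with \ref{LOP:sample space} to reduce infinite disjoint unions to complements of finite ones, thereby staying within the finitely-additive framework provided by \ref{LOP:geq 0}--\ref{LOP:Bayes rule}.
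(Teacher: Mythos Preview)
Your proposal is correct and follows essentially the same route as the paper: coherence and the counting-process property come from Lemma~\ref{lem:System results in counting process} applied to \(\mathcal{T}_\lambda\) (via Proposition~\ref{prop:T_lambda indic gives Poisson distribution}), the axioms \ref{Pois:Markovian}--\ref{Pois:Time-homogeneous} are read off directly from the Poisson formula, and uniqueness is obtained by writing \(A=\pr{X_w\in B}\) through Lemma~\ref{lem:Element of cpfield_u}, factorising singleton probabilities with repeated \ref{LOP:Bayes rule}, and handling infinite \(B\) through a tail bound based on \(\sum_y\pois_{\lambda\Delta}\pr{y}=1\). The only place to tighten is your phrase ``reduce infinite disjoint unions to complements of finite ones'': both \(B\) and \(B^c\) may be infinite, so the paper instead truncates via \(\prob\pr{X_{\max w}>z\cond X_u=x_u}\leq\epsilon\) for large \(z\)---which is exactly what your earlier ``saturates probability mass on any finite sub-box'' remark is pointing at.
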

\begin{proof}
  Observe that for all \(t, \Delta\) in \(\nnegreals\), \(u\) in \(\setoftseq_{<t}\), \(\pr{x_u, x}\) in \(\stsp_{u\cup t}\) and \(y\) in \(\stsp\),
  \begin{equation}
  \label{eqn:Proof of Poisson transition probabilities define a Poisson process:Poisson via system with rate lambda}
    \tilde{\prob}\pr{X_{t+\Delta}=y\cond X_u=x_u, X_t=x}
    = \begin{cases}
      \pois_{\lambda \Delta}\pr{y-x} &\text{if } y \geq x \\
      0 &\text{otherwise}
    \end{cases}
    = \br{T_{t,\lambda}^{t+\Delta}\indica{y}}\pr{x},
  \end{equation}
  where the final equality follows from Proposition \ref{prop:T_lambda indic gives Poisson distribution}.
  Therefore, it follows from Corollary~\ref{cor:Properties of T_S} and Lemma~\ref{lem:System results in counting process} that the map~\(\tilde{\prob}\) is indeed coherent, and any coherent extension~\(\tilde{\prob}^\star\) of this map to \(\cpdomain{}\) is a counting process.

  We now set out to prove that this coherent extension~\(\tilde{\prob}^\star\) is unique.
  To that end, we fix any \(\pr{A, X_u=x_u}\) in \(\cpdomain\).
  We distinguish two cases: \(u=\emptyset\) and \(u\neq\emptyset\).
  For the first case, we let \(\tilde{\prob}'\) be any coherent extension of \(\tilde{\prob}^\star\) to \(\ccpevents\pr{\setofpths}\times\ccpneevents\pr{\setofpths}\).
  Observe that
  \begin{align}
    \tilde{\prob}^\star\pr{A\cond X_\emptyset=x_\emptyset}
    &= \tilde{\prob}'\pr{A\cond X_\emptyset=x_\emptyset}
    = \tilde{\prob}'\pr{A\cond \setofpths}
    = \tilde{\prob}'\pr{A\cap\pr{\pr{X_0=0}\cup\pr{X_0>0}}\cond \setofpths} \notag\\
    &= \tilde{\prob}'\pr{A\cap\pr{X_0=0}\cond \setofpths} + \tilde{\prob}'\pr{A\cap\pr{X_0>0}\cond \setofpths} \notag\\
    &= \tilde{\prob}'\pr{A\cond X_0=0}\tilde{\prob}'\pr{X_0=0\cond \setofpths} + \tilde{\prob}'\pr{A\cond X_0>0}\tilde{\prob}'\pr{X_0>0\cond \setofpths} \notag\\
    &= \tilde{\prob}^\star\pr{A\cond X_0=0}\tilde{\prob}^\star\pr{X_0=0\cond \setofpths} + \tilde{\prob}'\pr{A\cond X_0>0}\tilde{\prob}^\star\pr{X_0>0\cond \setofpths} \notag\\
    &= \tilde{\prob}^\star\pr{A\cond X_0=0}\tilde{\prob}^\star\pr{X_0=0\cond \setofpths} + \tilde{\prob}'\pr{A\cond X_0>0}\pr{1-\tilde{\prob}^\star\pr{X_0=0\cond \setofpths}} \notag\\
    &= \tilde{\prob}^\star\pr{A\cond X_0=0},
    \label{eqn:Proof of Poisson transition probabilities define a Poisson process:Annoying edge case}
  \end{align}
  where the third equality follows from \ref{LOP:A and A C} and the obvious fact that \(\pr{X_0=0}\) and \(\pr{X_0>0}\) partition \(\setofpths\), the fourth equality follows from \ref{LOP:Additive if disjoint}, the fifth equality follows from \ref{LOP:Bayes rule} and the last equality holds because \(\tilde{\prob}^\star\pr{X_0=0\cond \setofpths}=\tilde\prob\pr{X_0=0\cond \setofpths}=1\) by the conditions of the statement.
  Hence, \(\tilde{\prob}^\star\pr{A\cond X_\emptyset=x_\emptyset}\) is uniquely defined if \(\tilde{\prob}^\star\pr{A\cond X_0=0}\) is.

  We therefore immediately move on to the case \(u\neq\emptyset\).
  From Lemma~\ref{lem:Element of cpfield_u}, we know that there is a \(v\) in \(\setoftseq\) with \(\min v>\max u\) and a subset \(B\) of \(\stsp_w\) with \(w\coloneqq u\cup v\) such that \(A = \pr{X_w\in B}\).
  It follows from \ref{LOP:A and A C} that
  \begin{equation*}
    \tilde{\prob}^\star\pr{A\cond X_u=x_u}
    = \tilde{\prob}^\star\pr{A \cap \pr{X_u=x_u}\cond X_u=x_u}
    = \tilde{\prob}^\star\pr{X_u=x_u, X_v \in B' \cond X_u=x_u},
  \end{equation*}
  where we let \(B'\coloneqq\set{y_v\in\stsp_v \colon \pr{x_u, y_v}\in B}\).

  If \(B'=\emptyset\), then it immediately follows from \ref{LOP:empty} that
  \begin{equation*}
    \tilde{\prob}^\star\pr{A\cond X_u=x_u}
    = \tilde{\prob}^\star\pr{X_u=x_u, X_v \in B' \cond X_u=x_u}
    = \tilde{\prob}^\star\pr{\emptyset \cond X_u=x_u}
    = 0.
  \end{equation*}
  We therefore assume that \(B'\neq\emptyset\).
  Fix any arbitrary \(\epsilon\) in \(\posreals\), and let \(\Delta\coloneqq \max w-\max u\).
  As the Poisson distribution~\(\pois_{\lambda\Delta}\) is sigma-additive and normed, it follows that there is a \(z_\epsilon\) in \(\stsp\) such that for all \(z\) in \(\stsp\) with \(z\geq z_\epsilon\geq x_{\max u}\),
  \begin{equation}
  \label{eqn:Proof of Poisson transition probabilities define a Poisson process:Poisson normed condition}
    1-\epsilon
    \leq \sum_{y=x_{\max u}}^z \pois_{\lambda\Delta}\pr{y-x_{\max u}}
    \leq 1.
  \end{equation}
  Fix now any such \(z\).
  Observe that
  \begin{align*}
    \tilde{\prob}^\star\pr{X_{\max w}>z\cond X_u=x_u}
    &= 1-\tilde{\prob}^\star\pr{X_{\max u +\Delta}\leq z\cond X_u=x_u}
    = 1-\sum_{y=x_{\max u}}^z \tilde{\prob}^\star\pr{X_{\max u +\Delta}=y\cond X_u=x_u} \\
    &=1-\sum_{y=x_{\max u}}^z \tilde{\prob}\pr{X_{\max u +\Delta}=y\cond X_u=x_u}
    = 1-\sum_{y=x_{\max u}}^z \pois_{\lambda\Delta}\pr{y-x_{\max u}},
  \end{align*}
  where for the second equality we have used Lemma~\ref{lem:Coherent prob on cpdomain:prob of <= x}.
  We combine this equality with Equation~\eqref{eqn:Proof of Poisson transition probabilities define a Poisson process:Poisson normed condition}, to yield
  \begin{equation}
  \label{eqn:Proof of Poisson transition probabilities define a Poisson process:Epsilon bound}
    0
    \leq \tilde{\prob}^\star\pr{X_{\max w}>z\cond X_u=x_u}
    \leq \epsilon.
  \end{equation}

  Let \(B'_z\coloneqq\set{y_v\in B' \colon y_{\max v}\leq z}\), and observe that
  \begin{equation*}
    \pr{X_u=x_u, X_v \in B'_z}
    \subseteq \pr{X_u=x_u, X_v \in B'}
    \subseteq \pr{X_u=x_u, X_v\in B'_z} \cup \pr{X_{\max w} > z}.
  \end{equation*}
  Note that \(\pr{X_u=x_u, X_v\in B'_z}\) and \(\pr{X_{\max w} > z}\) are clearly disjoint.
  Therefore, it follows from the monotonicity and additivity of \(\tilde\prob^\star\) that
  \begin{equation}
  \label{eqn:Proof of Poisson transition probabilities define a Poisson process:Double inequality}
    \tilde{\prob}^\star\pr{X_u=x_u, X_v \in B'_z \cond X_u=x_u}
    \leq \tilde{\prob}^\star\pr{X_u=x_u, X_v \in B' \cond X_u=x_u}
    \leq \tilde{\prob}^\star\pr{X_u=x_u, X_v\in B'_z\cond X_u=x_u} + \epsilon,
  \end{equation}
  where for the last inequality we have also used the upper bound on \(\tilde{\prob}^\star\pr{X_{\max w} > z\cond X_u=x_u}\) of Equation~\eqref{eqn:Proof of Poisson transition probabilities define a Poisson process:Epsilon bound}.

  We now consider the communal term in Equation~\eqref{eqn:Proof of Poisson transition probabilities define a Poisson process:Double inequality}.
  Since \(B'_z\) is finite by construction, it follows from \ref{LOP:Additive if disjoint} that
  \begin{equation*}
    \tilde{\prob}^\star\pr{X_u=x_u, X_v \in B'_z \cond X_u=x_u}
    = \tilde{\prob}^\star\pr*{\bigcup_{y_v\in B'_z}\pr{X_u=x_u, X_v=y_v} \cond X_u=x_u}
    = \sum_{y_v\in B'_z} \tilde{\prob}^\star\pr{X_u=x_u, X_v=y_v \cond X_u=x_u}.
  \end{equation*}
  If we enumerate the time points in \(v\) as \(t_1, \dots, t_{\ell}\), then for any \(y_v\) in \(B'_z\), we find that
  \begin{multline*}
    \tilde{\prob}^\star\pr{X_u=x_u, X_v=y_v \cond X_u=x_u} \\
    = \tilde{\prob}^\star\pr{X_u=x_u, X_{t_1}=y_{t_1}\cond X_u=x_u} \cdots \tilde{\prob}^\star\pr{X_u=x_u, X_{t_\ell}=y_{t_\ell}\cond X_u=x_u, X_{t_1}=y_{t_1}, \dots, X_{t_{\ell-1}}=y_{t_{\ell-1}}} \\
    = \tilde{\prob}^\star\pr{X_{t_1}=y_{t_1}\cond X_u=x_u} \cdots \tilde{\prob}^\star\pr{X_{t_\ell}=y_{t_\ell}\cond X_u=x_u, X_{t_1}=y_{t_1}, \dots, X_{t_{\ell-1}}=y_{t_{\ell-1}}} \\
    = \tilde{\prob}\pr{X_{t_1}=y_{t_1}\cond X_u=x_u} \cdots \tilde{\prob}\pr{X_{t_\ell}=y_{t_\ell}\cond X_u=x_u, X_{t_1}=y_{t_1}, \dots, X_{t_{\ell-1}}=y_{t_{\ell-1}}},
  \end{multline*}
  where the last equality holds because all arguments of \(\tilde{\prob}^\star\) in the factors of this product are elements of \(\tilde{\ccpdomain{}}\).
  Hence, \(\tilde{\prob}^\star\pr{X_u=x_u, X_v \in B'_z \cond X_u=x_u}\) is uniquely defined by \(\tilde{\prob}\).
  Because of this, and also because Equation~\eqref{eqn:Proof of Poisson transition probabilities define a Poisson process:Double inequality} holds for any positive real number \(\epsilon\), we infer from this that \(\tilde{\prob}^\star\pr{A\cond X_u=x_u}\) is completely defined by the values of \(\tilde{\prob}\) on its domain.
  As \(\pr{A, X_u=x_u}\) was an arbitrary element of \(\cpdomain{}\), we conclude that there is a unique extension of \(\tilde{\prob}\) to \(\cpdomain{}\), and so \(\lambda\) indeed uniquely characterises a coherent conditional probability on \(\cpdomain\).

  Finally, we verify that the unique extension~\(\tilde{\prob}^\star\) is a Poisson process.
  To that end, we first verify that the coherent conditional probability~\(\tilde{\prob}^\star\) is a counting process.
  That \(\tilde{\prob}^\star\) satisfies \ref{CP:Start at 0} follows immediately from the definition of \(\tilde\prob\).
  Furthermore, \ref{CP:Orederliness} is also satisfied due to the definition of \(\tilde\prob\).
  In order to verify this, we fix any \(t\) in \(\nnegreals\), \(u\) in \(\setoftseq_{<t}\) and \(\pr{x_u, x}\) in \(\stsp_{u\cup t}\).
  Observe that, for any \(\Delta\) in \(\posreals\),
  \begin{align*}
    \tilde{\prob}^\star\pr{X_{t+\Delta}\geq x+2\cond X_u=x_u, X_t=x}
    &= 1-\tilde{\prob}^\star\pr{X_{t+\Delta}\leq x+1\cond X_u=x_u, X_t=x} \\
    &= 1-\tilde{\prob}^\star\pr{X_{t+\Delta}=x\cond X_u=x_u, X_t=x}-\tilde{\prob}^\star\pr{X_{t+\Delta}= x+1\cond X_u=x_u, X_t=x} \\
    &= 1-\tilde{\prob}\pr{X_{t+\Delta}=x\cond X_u=x_u, X_t=x}-\tilde{\prob}\pr{X_{t+\Delta}= x+1\cond X_u=x_u, X_t=x} \\
    &= 1-\pois_{\lambda\Delta}\pr{0}-\pois_{\lambda\Delta}\pr{1},
  \end{align*}
  where we have used Lemma~\ref{lem:Coherent prob on cpdomain:prob of <= x} for the second equality.
  Consequently,
  \begin{equation*}
    \lim_{\Delta\to0^+} \frac{\tilde{\prob}^\star\pr{X_{t+\Delta}\geq x+2\cond X_u=x_u, X_t=x}}{\Delta}
    = \lim_{\Delta\to0^+} \frac{1-e^{-\lambda\Delta}-\lambda\Delta e^{-\lambda\Delta}}{\Delta}
    = 0,
  \end{equation*}
  as required.
  If \(t>0\), then similar reasoning can be used to verify this equality for the limit from the left.
  Hence, \(\tilde{\prob}^\star\) is a counting process.
  That \(\tilde{\prob}^\star\) furthermore satisfies \ref{Pois:Markovian}--\ref{Pois:Time-homogeneous} follows immediately from the definition of \(\tilde{\prob}\).
\end{proof}

\begin{proofof}{Theorem~\ref{the:Pois transition probabilities are poisson distributed and the converse}}
  The first part of the stated follows from Proposition~\ref{Prop:Pois transition probabilities are poisson distributed}.
  The second part essentially follows from Proposition~\ref{prop:Poisson transition probabilities define a Poisson process}.
  The requirement of Proposition~\ref{prop:Poisson transition probabilities define a Poisson process} regarding \(\prob\pr{X_0=x\cond \setofpths}\) seems to be more restrictive, but it is not.
  To see this, we observe that any coherent conditional probability on \(\cpdomain{}\) that satisfies \ref{CP:Start at 0}, will also satisfy \(\prob\pr{X_0=x\cond \setofpths}=0\) for all \(x>0\), as
  \begin{equation*}
    0
    \leq \prob\pr{X_0=x\cond \setofpths}
    = 1-\prob\pr{X_0\in\stsp\setminus\set{x}\cond \setofpths}
    \leq 1-\prob\pr{X_0=0\cond \setofpths}
    = 0.
  \end{equation*}
\end{proofof}

\begin{proofof}{Corollary~\ref{cor:Poisson has rate lambda}}
  Let \(\prob\) be a Poisson process, and let \(\lambda\) be the rate mentioned in Theorem~\ref{the:Pois transition probabilities are poisson distributed and the converse}.
  Then
  \begin{equation*}
    \lim_{\Delta\to0^+}\frac{\prob\pr{X_{t+\Delta}=x+1\cond X_u=x_u, X_t=x}}{\Delta}
    = \lim_{\Delta\to0^+}\frac{\pois_{\lambda\Delta}\pr{1}}{\Delta}
    = \lim_{\Delta\to0^+}\frac{\lambda\Delta e^{-\lambda\Delta}}{\Delta}
    = \lambda
  \end{equation*}
  and, if \(t>0\),
  \begin{equation*}
    \lim_{\Delta\to0^+}\frac{\prob\pr{X_t=x+1\cond X_u=x_u, X_{t-\Delta}=x}}{\Delta}
    = \lim_{\Delta\to0^+}\frac{\pois_{\lambda\Delta}\pr{1}}{\Delta}
    = \lim_{\Delta\to0^+}\frac{\lambda\Delta e^{-\lambda\Delta}}{\Delta}
    = \lambda.
  \end{equation*}
\end{proofof}

\begin{proofof}{Theorem~\ref{the:Counting process with rate lambda is Poisson}}
  To prove this result, we make use of the terminology and results of Sections~\ref{sec:Sets of counting processes}--\ref{sec:Computing lower expectations}.
  Observe that, by assumption, \(\prob\) is consistent with the degenerate rate interval~\(\Lambda=\br{\lambda,\lambda}\).
  Fix some \(t, \Delta\) in \(\nnegreals\), \(u\) in \(\setoftseq_{<t}\), \(\pr{x_u, x}\) in \(\stsp_{u\cup t}\) and \(y\) in \(\stsp\).
  Observe that
  \begin{multline*}
    \prob\pr{X_{t+\Delta}=y\cond X_u=x_u, X_t=x}
    = \prev_\prob\pr{\indica{\pr{X_{t+\Delta}=y}}\cond X_u=x_u, X_t=x}
    = \prev_\prob\pr{\indica{y}\pr{X_{t+\Delta}}\cond X_u=x_u, X_t=x} \\
    \geq \lprev_{\br{\lambda,\lambda}}\pr{\indica{y}\pr{X_{t+\Delta}}\cond X_u=x_u, X_t=x}
    = \br{T_{t, \lambda}^{t+\Delta}\indica{y}}\pr{x},
  \end{multline*}
  where for the first equality we have used Equation~\eqref{eqn:Expectation of a simple function}, for the inequality we have used Equation~\eqref{eqn:Lower expectation with respect to set of counting processes} and for the final equality we have used Theorem~\ref{the:Lower conditional expectation of bounded function with respect to all consistent processes}.
  Similarly, we also find that
  \begin{align*}
    \prob\pr{X_{t+\Delta}=y\cond X_u=x_u, X_t=x}
    &\leq \uprev_{\br{\lambda,\lambda}}\pr{\indica{y}\pr{X_{t+\Delta}}\cond X_u=x_u, X_t=x}
    = -\lprev_{\br{\lambda,\lambda}}\pr{-\indica{y}\pr{X_{t+\Delta}}\cond X_u=x_u, X_t=x} \\
    &= -\br{T_{t, \lambda}^{t+\Delta}\pr{-\indica{y}}}\pr{x}
    = \br{T_{t, \lambda}^{t+\Delta}\indica{y}}\pr{x},
  \end{align*}
  where for the first equality we have used conjugacy and for the final equality we have used \ref{def:LinTT:Homogeneity}.
  Therefore,
  \begin{equation*}
    \prob\pr{X_{t+\Delta}=y\cond X_u=x_u, X_t=x}
    = \br{T_{t, \lambda}^{t+\Delta}\indica{y}}\pr{x}
    = \begin{cases}
      \pois_{\lambda\Delta}\pr{y-x} &\text{if } y\geq x, \\
      0 &\text{otherwise,}
    \end{cases}
  \end{equation*}
  where the last equality follows from Proposition~\ref{prop:T_lambda indic gives Poisson distribution}.
  This implies the stated due to Theorem~\ref{the:Pois transition probabilities are poisson distributed and the converse}.
\end{proofof}

\section{Supplementary Material for Section~\ref{sec:The Poisson semi-group}}
Most of the results in this section are specialisations of results in Appendices~\ref{app:Preliminary results regarding transformations}, \ref{app:ltro to lto} and \ref{app:The Generalised Poisson Generator}.

\subsection{The Poisson Generator}
All of the properties mentioned in the main text concerning the Poisson generator and the semi-group it induces, follow from results in Appendix~\ref{app:The Generalised Poisson Generator}.
Indeed, as we have previously mentioned---see Equation~\eqref{eqn:PoisGen as special case of GenPoisGen}---the Poisson generator~\(\ltro\) as defined in Section~\ref{ssec:The Poisson generator} is precisely the generalised Poisson generator~\(\ltro_S\) corresponding to the sequence~\(S=\set{\pr{\llambda,\ulambda}}_{x\in\stsp}\).
Hence, we obtain Theorem~\ref{the:Transformation induced by PoisGen in text} as a corollary of Theorems~\ref{the:GenPoisGen:Phi_u_i converges to a LCT} and \ref{the:LCT induced by GenPoisGen}.
\begin{proofof}{Theorem~\ref{the:Transformation induced by PoisGen in text}}
  Recall from Theorem~\ref{the:GenPoisGen:Phi_u_i converges to a LCT} that the corresponding sequence \(\set{\Phi_{u_i}}_{i\in\nats}\) converges to a lower counting transformation, which is a special type of non-negatively homogeneous transformation.
  Furthermore, it follows from Theorem~\ref{the:LCT induced by GenPoisGen} that this limit does not depend on the chosen sequence \(\set{u_i}_{i\in\nats}\).
\end{proofof}
The properties \ref{LTO in text:Non-negateive homogeneity}--\ref{LTO in text:Bound} of the family of transformations of the form~\(\lto_t^s\) just state that \(\lto_t^s\) is a lower counting transformation, as is established in Theorem~\ref{the:LCT induced by GenPoisGen}.
Furthermore, the properties \ref{LTO in text:Identity}--\ref{LTO in text:Time-invariance} are stated in Proposition~\ref{prop:Properties of lto_S induced by GenPoisGen}.

In the remainder, more specifically in the proof of Proposition~\ref{prop:Lower bound for conditional expectation is reached by a consistent process} further on, we will need the following intermediary result.
\begin{lemma}
\label{lem:lto:Approximate with prod of T_S}
  Fix some \(t, s\) in \(\nnegreals\) with \(t\leq s\) and \(f\) in \(\setoffn\pr{\stsp}\).
  Then for any \(\epsilon\) in \(\posreals\), there is a sequence \(u=t_0,\dots, t_n\) in \(\setoftseq_{\br{t,s}}\) and, for all \(i\) in \(\set{1, \dots, n}\), a sequence \(S_i=\set{\lambda_x^i}_{x\in\stsp}\) in \(\br{\llambda, \ulambda}\) such that
  \begin{equation*}
    \norm*{\lto_t^s f - \prod_{i=1}^n T_{t_{i-1},S_i}^{t_i} f}
    \leq \epsilon.
  \end{equation*}.
\end{lemma}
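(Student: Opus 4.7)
\begin{proofof}{Lemma~\ref{lem:lto:Approximate with prod of T_S}}
The plan is to approximate $\lto_t^s f$ first by $\Phi_u f$ on a sufficiently fine grid, then to rewrite $\Phi_u f$ as a product of linear ``infinitesimal'' factors $(I+\Delta_i\trm_{S_i})f$, and finally to recognise each of these as an approximation to $T_{t_{i-1},S_i}^{t_i} f$.

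The trivial cases $t=s$, $\ulambda=0$ or $\norm{f}=0$ are handled directly (the bound is immediate). Otherwise, fix any $\epsilon$ in $\posreals$. By Lemma~\ref{lem:diff between lto_S and Phi_u} and \ref{BNH: norm Af <= norm A norm f}, we can choose a grid $u=t_0,\dots,t_n$ in $\setoftseq_{\br{t,s}}$ with $\sigma(u)\norm{\ltro}\leq2$ and $\sigma(u)(s-t)\norm{\ltro}^2\norm{f}\leq\epsilon/2$, so that $\norm{\lto_t^s f-\Phi_u f}\leq\epsilon/2$. The first reduction is then complete.

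Next I would unfold $\Phi_u f = \prod_{i=1}^n(I+\Delta_i\ltro)f$ via backward recursion: set $h_n\coloneqq f$ and $h_{i-1}\coloneqq(I+\Delta_i\ltro)h_i$ for $i=n,\dots,1$, so that $\Phi_u f=h_0$. For each $i$ in $\set{1,\dots,n}$ and each $x$ in $\stsp$, the map $\lambda\mapsto \lambda\pr{h_i\pr{x+1}-h_i\pr{x}}$ is linear on $\br{\llambda,\ulambda}$, so it attains its minimum at some $\lambda^i_x$ in $\br{\llambda,\ulambda}$. Collecting these yields a sequence $S_i=\set{\lambda_x^i}_{x\in\stsp}$ in $\br{\llambda,\ulambda}$, and by construction $\br{\ltro h_i}\pr{x}=\lambda_x^i\pr{h_i\pr{x+1}-h_i\pr{x}}=\br{\trm_{S_i} h_i}\pr{x}$ for every $x$, so $\ltro h_i=\trm_{S_i}h_i$ and hence $(I+\Delta_i\ltro)h_i=(I+\Delta_i\trm_{S_i})h_i$. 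Iterating gives $\Phi_u f=\prod_{i=1}^n(I+\Delta_i\trm_{S_i})f$.

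It remains to replace each factor $(I+\Delta_i\trm_{S_i})$ by $T_{t_{i-1},S_i}^{t_i}$. Because $\trm_{S_i}=\ltro_{S_i'}$ with $S_i'=\set{\pr{\lambda^i_x,\lambda^i_x}}_{x\in\stsp}$ by Equation~\eqref{eqn:LinGenPoisGen and GenPoisGen}, Lemma~\ref{lem:Additional properties of lto_S induced by GenPoisGen}\ref{lem:Additional properties of lto_S by GenPoisGen:Diff with I+Delta ltro} gives $\norm{T_{t_{i-1},S_i}^{t_i}-(I+\Delta_i\trm_{S_i})}\leq\Delta_i^2\norm{\trm_{S_i}}^2\leq 4\Delta_i^2\ulambda^2$, using $\norm{\trm_{S_i}}\leq 2\ulambda$ from Corollary~\ref{cor:Gen trm:norm}. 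Both $T_{t_{i-1},S_i}^{t_i}$ and $(I+\Delta_i\trm_{S_i})$ are linear counting transformations (by Corollaries~\ref{cor:LinGenPoisGen:Phi_u_i converges to a LinCT} and \ref{cor:I + delta Gen trm is a linear counting transformation}, using $\Delta_i\norm{\trm_{S_i}}\leq\sigma(u)\cdot 2\ulambda\leq\sigma(u)\norm{\ltro}\leq 2$), hence lower transition transformations, so Lemma~\ref{lem: bound on norm of A_1 ... A_k - B_1 ... B_k} yields $\norm{\prod_{i=1}^n T_{t_{i-1},S_i}^{t_i}-\prod_{i=1}^n(I+\Delta_i\trm_{S_i})}\leq 4\ulambda^2\sum_{i=1}^n\Delta_i^2\leq 4\sigma(u)(s-t)\ulambda^2=\sigma(u)(s-t)\norm{\ltro}^2$. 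Applying \ref{BNH: norm Af <= norm A norm f} to this operator difference and combining with the first reduction via the triangle inequality, together with a refinement of the initial choice of $\sigma(u)$ (so that the sum of the two bounds is at most $\epsilon$), yields the stated inequality.

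The main obstacle is the second step: the minimisers $\lambda^i_x$ must be chosen \emph{after} $h_i$ has been defined, i.e.\ after the later factors $(I+\Delta_{i+1}\ltro),\dots,(I+\Delta_n\ltro)$ have acted on $f$, so the sequences $S_i$ genuinely depend on $f$ and on the grid $u$. The backward recursion is precisely what makes this selection consistent. Everything else is routine use of the norm estimates from Appendices~\ref{app:Preliminary results regarding transformations}--\ref{app:The Generalised Poisson Generator}.
\end{proofof}
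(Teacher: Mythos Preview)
Your proposal is correct and follows essentially the same approach as the paper's proof: approximate $\lto_t^s f$ by $\Phi_u f$ on a fine grid, select the sequences $S_i$ via a backward recursion so that $\Phi_u f=\prod_{i=1}^n(I+\Delta_i\trm_{S_i})f$ (the paper's $g_i$ is your $h_i$, and your explicit choice of minimisers is exactly Lemma~\ref{lem:ltro:reached by trm_S}), and then compare each $(I+\Delta_i\trm_{S_i})$ with $T_{t_{i-1},S_i}^{t_i}$ via Lemma~\ref{lem:Additional properties of lto_S induced by GenPoisGen}\ref{lem:Additional properties of lto_S by GenPoisGen:Diff with I+Delta ltro} and Lemma~\ref{lem: bound on norm of A_1 ... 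A_k - B_1 ... B_k}. The only cosmetic difference is that you invoke Lemma~\ref{lem:diff between lto_S and Phi_u} for the first reduction where the paper uses Theorem~\ref{the:LCT induced by GenPoisGen}, and you separate out the trivial cases explicitly.
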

In our proof of Lemma~\ref{lem:lto:Approximate with prod of T_S}, we need the following obvious observation.
\begin{lemma}
\label{lem:ltro:reached by trm_S}
  For any \(f\) in \(\setoffn\pr{\stsp}\), there is a sequence \(S=\set{\lambda_x}_{x\in\stsp}\) in \(\Lambda=\br{\llambda, \ulambda}\) such that \(\ltro f=\trm_S f\).
\end{lemma}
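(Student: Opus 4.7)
The proof will be almost immediate once we unpack the two definitions. Fix an arbitrary $f$ in $\setoffn(\stsp)$. By the definition of the Poisson generator,
\[
  [\ltro f](x) = \min\{\lambda f(x+1) - \lambda f(x) : \lambda \in [\llambda,\ulambda]\}
  = \min\{\lambda (f(x+1) - f(x)) : \lambda \in [\llambda,\ulambda]\}
\]
for every $x$ in $\stsp$. For each $x$, the map $\lambda \mapsto \lambda (f(x+1) - f(x))$ is linear and therefore continuous on the compact interval $[\llambda,\ulambda]$, so the minimum is attained; hence there is some $\lambda_x$ in $[\llambda,\ulambda]$ with $\lambda_x(f(x+1) - f(x)) = [\ltro f](x)$. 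Concretely, one may take $\lambda_x = \llambda$ whenever $f(x+1) - f(x) \geq 0$ and $\lambda_x = \ulambda$ otherwise.

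My plan is then to define the sequence $S \coloneqq \{\lambda_x\}_{x\in\stsp}$, which lies in $\Lambda = [\llambda,\ulambda]$ by construction. Applying the definition of the (linear) generalised Poisson generator associated with $S$, we have
\[
  [\trm_S f](x) = \lambda_x f(x+1) - \lambda_x f(x) = \lambda_x (f(x+1) - f(x)) = [\ltro f](x)
\]
for every $x$ in $\stsp$. Since this equality holds pointwise, $\trm_S f = \ltro f$, as required.

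There is essentially no obstacle here: the only subtlety worth noting is that we are picking one minimiser $\lambda_x$ separately for each $x$, which requires no choice principle beyond the explicit two-case selection rule above, and that the resulting sequence $S$ depends on $f$ (which is perfectly fine, as the statement only asserts existence for a given $f$).
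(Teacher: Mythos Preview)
Your proof is correct and follows the same approach as the paper, which simply remarks that the claim is immediate from the definitions of $\ltro$ and $\trm_S$. You have merely spelled out the details (including an explicit choice rule for the minimiser) that the paper leaves implicit.
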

\begin{proof}
  This is immediate from the definition of \(\ltro\) and \(\trm_S\).
\end{proof}
\begin{proofof}{Lemma~\ref{lem:lto:Approximate with prod of T_S}}
  Fix any \(\epsilon'\) in \(\posreals\) such that \(2\epsilon'\norm{f}\leq\epsilon/2\) and \(\delta\) in \(\posreals\) such that \(\delta\pr{s-t}\norm{\ltro}^2\norm{f}\leq\epsilon/2\).
  By Theorem~\ref{the:LCT induced by GenPoisGen}, there is a sequence \(u=t_0, \dots, t_n\) in \(\setoftseq_{\br{t, s}}\) such that \(\sigma\pr{u}\leq\delta\), \(\sigma\pr{u}\norm{\ltro}\leq2\) and
  \begin{equation}
  \label{eqn:Proof of T_S approx:First bound on norm}
    \norm{\lto_t^s - \Phi_u}
    \leq \epsilon'.
  \end{equation}
  Let \(g_i\coloneqq\prod_{j=i+1}^n\pr{I+\Delta_j\ltro}f\) for all \(i\) in \(\set{1, \dots, n}\), where \(g_n=If=f\).
  It now follows from Lemma~\ref{lem:ltro:reached by trm_S} that for any \(i\) in \(\set{1,\dots, n}\), there is a sequence \(S_i=\set{\lambda^i_x}_{x\in\stsp}\) in \(\br{\llambda,\ulambda}\) such that \(\ltro g_i=\trm_{S_i} g_i\).
  By construction,
  \begin{equation}
  \label{eqn:Proof of T_S approx:Phi is prod}
    \Phi_u f
    = \prod_{i=1}^n \pr{I+\Delta_i\ltro}f
    = \prod_{i=1}^n \pr{I+\Delta_i\trm_{S_i}}f.
  \end{equation}
  Furthermore, we use Lemma~\ref{lem:GenPoisGen:Norm}, to yield
  \begin{equation}
  \label{eqn:Proof of T_S approx:norm Q}
    \pr{\forall i\in\set{1,\dots,n}}~
    \norm{\trm_{S_i}}
    = 2\sup\set{\lambda^i_x\colon x\in\stsp}
    \leq 2\ulambda
    = \norm{\ltro}.
  \end{equation}

  Observe now that
  \begin{align*}
    \norm*{\lto_t^s f - \prod_{i=1}^n T_{t_{i-1}, S_i}^{t_i} f}
    &= \norm*{\lto_t^s f -\prod_{i=1}^n \pr{I+\Delta_i\trm_{S_i}}f +\prod_{i=1}^n \pr{I+\Delta_i\trm_{S_i}}f - \prod_{i=1}^n T_{t_{i-1}, S_i}^{t_i} f} \\
    &\leq \norm*{\lto_t^s f -\prod_{i=1}^n \pr{I+\Delta_i\trm_{S_i}}f} +\norm*{\prod_{i=1}^n \pr{I+\Delta_i\trm_{S_i}}f - \prod_{i=1}^n T_{t_{i-1}, S_i}^{t_i} f} \\
    &=\norm*{\lto_t^s f -\Phi_u f} +\norm*{\prod_{i=1}^n \pr{I+\Delta_i\trm_{S_i}}f - \prod_{i=1}^n T_{t_{i-1}, S_i}^{t_i} f} \\
    &\leq \norm*{\lto_t^s -\Phi_u}\norm{f} +\norm*{\prod_{i=1}^n \pr{I+\Delta_i\trm_{S_i}} - \prod_{i=1}^n T_{t_{i-1}, S_i}^{t_i}}\norm{f},
  \end{align*}
  where for the final equality we have used Equation~\eqref{eqn:Proof of T_S approx:Phi is prod} and for the final inequality we have used \ref{BNH: norm Af <= norm A norm f}.
  We use Equation~\eqref{eqn:Proof of T_S approx:First bound on norm}, to yield
  \begin{equation*}
    \norm*{\lto_t^s f - \prod_{i=1}^n T_{t_{i-1}, S_i}^{t_i} f}
    \leq \epsilon'\norm{f} +\norm*{\prod_{i=1}^n \pr{I+\Delta_i\trm_{S_i}} - \prod_{i=1}^n T_{t_{i-1}, S_i}^{t_i}}\norm{f}.
  \end{equation*}
  Next, we use Lemma~\ref{lem: bound on norm of A_1 ... A_k - B_1 ... B_k} to rewrite the second term on the right hand side of the inequality, to yield
  \begin{equation*}
    \norm*{\lto_t^s f - \prod_{i=1}^n T_{t_{i-1}, S_i}^{t_i} f}
    \leq \epsilon'\norm{f} +\sum_{i=1}^n\norm*{\pr{I+\Delta_i\trm_{S_i}} - T_{t_{i-1}, S_i}^{t_i}}\norm{f}.
  \end{equation*}
  Finally, we use Lemma~\ref{lem:Additional properties of lto_S induced by GenPoisGen}~\ref{lem:Additional properties of lto_S by GenPoisGen:Diff with I+Delta ltro} and Equation~\eqref{eqn:Proof of T_S approx:norm Q}, to yield
  \begin{align*}
    \norm*{\lto_t^s f - \prod_{i=1}^n T_{t_{i-1}, S_i}^{t_i} f}
    &\leq \epsilon'\norm{f} +\sum_{i=1}^n\Delta_i^2\norm{\trm_{S_i}}^2\norm{f}
    \leq \epsilon'\norm{f} +\sum_{i=1}^n\Delta_i^2\norm{\ltro}^2\norm{f}
    \leq \epsilon'\norm{f} +\sum_{i=1}^n\Delta_i\delta\norm{\ltro}^2\norm{f} \\
    &= \epsilon'\norm{f} +\pr{s-t}\delta\norm{\ltro}^2\norm{f}
    \leq \frac{\epsilon}2+\frac{\epsilon}2
    = \epsilon,
  \end{align*}
  as required.
\end{proofof}

\subsection{The Reduced Poisson Generator}
The claims in Section~\ref{ssec:The reduced Poisson generator} of the main text are essentially a consequence of the following simple result.
\begin{lemma}
\label{lem:ltro^chi is a lower transition rate operator}
  Consider some \(\lowx, \upx\) in \(\stsp\) with \(\lowx \leq \upx\).
  If we let \(\chi\coloneqq\set{x\in\stsp\colon\lowx\leq x\leq\upx}\), then \(\ltro^\chi\)---as defined in Section~\ref{ssec:The reduced Poisson generator}---is a lower transition rate transformation.
\end{lemma}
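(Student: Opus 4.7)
The plan is to verify, directly from the definition of $\ltro^\chi$, that it satisfies the four defining conditions \ref{ltrt:non-negatively homogeneous}--\ref{ltrt:non-negative off-diagonal} of a lower transition rate transformation. Throughout, the natural approach is to split each check into the two cases in the piecewise definition: $x = \upx$, where $[\ltro^\chi g](x) = 0$ for all $g$, and $\lowx \leq x < \upx$, where $[\ltro^\chi g](x) = \min_{\lambda \in [\llambda,\ulambda]} \lambda(g(x+1) - g(x))$. The case $x=\upx$ is trivial in every check, so the real work is always in the second case.

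First I would handle \ref{ltrt:non-negatively homogeneous}: fix $\gamma \geq 0$ and $f \in \setoffn(\chi)$, and observe that for $x < \upx$, $\gamma \lambda (f(x+1) - f(x))$ is obtained from $\lambda(f(x+1)-f(x))$ by multiplication by the non-negative constant $\gamma$, so the minimum over $\lambda$ factors through. Next for the zero row-sums condition \ref{ltrt:zero row-sums}: for any constant $\mu$ and any $x < \upx$, $\lambda(\mu - \mu) = 0$, so the minimum equals $0$.

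The least automatic step is the super-additivity condition. For fixed $f, g$ and any $x < \upx$, I would write
\[
 [\ltro^\chi(f+g)](x)
 = \min_{\lambda} \lambda\bigl((f(x+1)-f(x)) + (g(x+1)-g(x))\bigr)
\]
and then apply the elementary inequality $\min(a+b) \geq \min a + \min b$ over a common index set, yielding $[\ltro^\chi f](x) + [\ltro^\chi g](x)$; the case $x = \upx$ gives equality. This is the main obstacle in the sense that it is the only step that is not a one-line computation, but it is really just the standard super-additivity of the infimum.

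Finally, for \ref{ltrt:non-negative off-diagonal}, I would fix $x, y \in \chi$ with $x \neq y$ and use $f = \indica{y}$. If $x = \upx$, the value is $0$ by definition. If $x < \upx$, I split on whether $y = x+1$: if so, $\indica{y}(x+1) - \indica{y}(x) = 1$ and the minimum equals $\llambda \geq 0$; if not (so that $y \neq x$ and $y \neq x+1$), both indicator values vanish and the minimum is $0$. In either sub-case the value is non-negative, which completes the verification and hence the proof.
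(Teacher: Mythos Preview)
Your proposal is correct and follows exactly the approach the paper takes: direct verification of the four conditions \ref{ltrt:non-negatively homogeneous}--\ref{ltrt:non-negative off-diagonal}. The paper's proof is simply the one-line remark that ``the four conditions of Definition~\ref{def:Lower transition rate transformation} are trivially satisfied,'' whereas you have spelled out the case split $x=\upx$ versus $\lowx\leq x<\upx$ and the elementary inequality $\min(a+b)\geq\min a+\min b$ that underlies super-additivity; nothing is missing or incorrect.
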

\begin{proof}
  The four conditions of Definition~\ref{def:Lower transition rate transformation} are trivially satisfied.
\end{proof}
That the limit in Equation~\eqref{eqn:PoisGenChi:Lto} exists and is independent of the chosen sequence~\(\set{u_i}_{i\in\nats}\) now immediately follows from Lemma~\ref{lem:ltro^chi is a lower transition rate operator} and Propositions~\ref{prop:Gen ltro^chi:Sequence converges to ltt} and \ref{prop:Gen ltro^chi:Induced lto^chi}.
Furthermore, these three results also imply that the transformations of the form~\(\lto^\chi_{t,s}\) are lower transition transformations, and hence satisfy \ref{def:LTT:Non-negative homgeneity}--\ref{def:LTT:Bound}---or equivalently, \ref{LTO in text:Non-negateive homogeneity}--\ref{LTO in text:Bound}.
Furthermore, we additionally use Proposition~\ref{prop:Gen ltro^chi:semi-group properties} to yield that the family also satisfies---properties similar to---\ref{LTO in text:Identity}--\ref{LTO in text:Time-invariance}.
Finally, we end this section on the reduced Poisson generator~\(\ltro^\chi\) with some technical results.
\begin{lemma}
\label{lem:ltro^chi:Set of dominated transition rate matrices}
  Consider some \(\lowx, \upx\) in \(\stsp\) with \(\lowx \leq \upx\), and let \(\chi\coloneqq\set{x\in\stsp\colon\lowx\leq x \leq\upx}\).
  Then the set of dominating transition rate matrices
  \begin{equation*}
    \setoftrm^\chi
    \coloneqq \set{\trm^\chi\in\mathscr{R}\pr{\chi}\colon\pr{\forall g\in\setoffn\pr{\chi}}~\ltro^\chi g\leq\trm^\chi g},
  \end{equation*}
  where \(\mathscr{R}\pr{\chi}\) denotes the set of all transition rate matrices---that is, linear lower transition rate transformations---on \(\setoffn\pr{\chi}\), is non-empty, bounded, closed and convex.
  Furthermore, \(\trm^\chi\) is an element of \(\setoftrm^\chi\) if and only if there is a sequence \(\set{\lambda_x}_{x\in\chi\setminus\set{\upx}}\) in \(\br{\llambda, \ulambda}\) such that
  \begin{equation*}
    \br{\trm^\chi g}\pr{x}
    = \begin{dcases}
      \lambda_x\pr{g\pr{x+1}-g\pr{x}} &\text{if } \lowx\leq x<\upx \\
      0 &\text{if } x=\upx
    \end{dcases}
    \quad\text{for all \(g\in\setoffn\pr{\chi}\) and \(x\in\chi\).}
  \end{equation*}
\end{lemma}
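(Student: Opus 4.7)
My strategy is to first establish the explicit characterisation given in the second part of the statement, and then use it to deduce the topological properties of $\setoftrm^\chi$ listed in the first part. The harder direction of the equivalence is to show that \emph{every} $\trm^\chi$ in $\setoftrm^\chi$ takes the prescribed form. I plan to exploit the linearity of $\trm^\chi$ in a crucial way.

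For the $(\Leftarrow)$ direction of the characterisation, I would fix a sequence $\set{\lambda_x}_{x\in\chi\setminus\set{\upx}}$ in $\br{\llambda,\ulambda}$ and let $\trm^\chi$ be defined by the displayed formula. Verifying that $\trm^\chi$ satisfies \ref{ltrt:non-negatively homogeneous}--\ref{ltrt:non-negative off-diagonal} and linearity is a matter of direct computation: linearity and the zero row-sums are immediate, and the off-diagonal entry $\trm^\chi\pr{x,x+1}=\lambda_x\geq 0$ gives \ref{ltrt:non-negative off-diagonal}. Domination over $\ltro^\chi$ holds at every $x<\upx$ because $\lambda_x\pr{g\pr{x+1}-g\pr{x}}$ is one particular value from the set over which $\br{\ltro^\chi g}\pr{x}$ takes a minimum, and at $x=\upx$ both sides equal zero.

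For the $(\Rightarrow)$ direction, which is the main obstacle, I would start from any $\trm^\chi\in\setoftrm^\chi$ and use linearity to derive a \emph{two-sided} sandwich. Applying the inequality $\ltro^\chi g\leq\trm^\chi g$ to $-g$ and invoking $\trm^\chi\pr{-g}=-\trm^\chi g$ yields $\trm^\chi g\leq-\ltro^\chi\pr{-g}=\utro^\chi g$, where $\utro^\chi$ is the pointwise upper counterpart. At $x=\upx$ both $\ltro^\chi g$ and $\utro^\chi g$ vanish, forcing $\trm^\chi\pr{\upx,\cdot}=0$. At $\lowx\leq x<\upx$, I would plug in $g=\indica{y}$ for $y\in\chi\setminus\set{x,x+1}$: the quantity $g\pr{x+1}-g\pr{x}$ vanishes, so both sandwich bounds equal zero, forcing $\trm^\chi\pr{x,y}=0$. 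Zero row-sums then collapse the row to $\trm^\chi\pr{x,x}=-\trm^\chi\pr{x,x+1}$, and setting $\lambda_x\coloneqq\trm^\chi\pr{x,x+1}$ gives $\br{\trm^\chi g}\pr{x}=\lambda_x\pr{g\pr{x+1}-g\pr{x}}$. Finally, choosing $g=\indica{x+1}$ in the domination inequality yields $\llambda\leq\lambda_x$, and choosing $g=\indica{x}$ (which gives lower bound $-\ulambda$) yields $\lambda_x\leq\ulambda$.

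With the characterisation in hand, the four topological properties follow by transferring them from $\br{\llambda,\ulambda}$ to $\setoftrm^\chi$ via the bijection $\trm^\chi\leftrightarrow\set{\lambda_x}_{x\in\chi\setminus\set{\upx}}$, which is possible since $\chi$ is finite. Non-emptiness follows by taking $\lambda_x=\llambda$ for every $x$. Boundedness is immediate from Lemma~\ref{lem:Gen ltro^chi:Norm} (or from Equation~\eqref{eqn:Norm of linear transformation of finite state space}), which gives $\norm{\trm^\chi}=2\max_x\lambda_x\leq 2\ulambda$. Closedness follows because norm-convergence $\trm_n^\chi\to\trm^\chi$ implies entry-wise convergence $\lambda_x^n\to\lambda_x^\infty$, and the closed interval $\br{\llambda,\ulambda}$ contains each limit. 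Convexity follows because a convex combination of parameter sequences in $\br{\llambda,\ulambda}^{\chi\setminus\set{\upx}}$ remains in that product of intervals, and—since the map from sequences to matrices is linear—the corresponding matrix lies in $\setoftrm^\chi$.
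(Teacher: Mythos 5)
Your proof is correct, and it is genuinely more self-contained than the paper's. The paper disposes of the lemma in two lines: the topological properties (non-empty, bounded, closed, convex) are obtained by citing a general result for lower transition rate operators on finite state spaces, namely \cite[Proposition~7.8]{2017Krak}, and the explicit characterisation is declared ``a matter of straightforward verification''. You instead prove the characterisation first---and your \((\Rightarrow)\) argument is exactly the right one: the sandwich \(\ltro^\chi g\leq\trm^\chi g\leq-\ltro^\chi\pr{-g}\) obtained from linearity, evaluated at indicators \(\indica{y}\) with \(y\notin\set{x,x+1}\), kills all entries off the bidiagonal, zero row-sums then pin the diagonal to \(-\trm^\chi\pr{x,x+1}\), and the choices \(g=\indica{x+1}\) and \(g=\indica{x}\) give \(\llambda\leq\lambda_x\leq\ulambda\); the row at \(\upx\) vanishes because both sandwich bounds do. You then pull the four structural properties of \(\setoftrm^\chi\) back from the compact convex parameter set \(\br{\llambda,\ulambda}^{\chi\setminus\set{\upx}}\) through the (linear, finite-dimensional) parametrisation. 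What this buys is independence from the external citation and an argument that makes the geometry of \(\setoftrm^\chi\) completely explicit; what the paper's route buys is brevity and reuse of machinery that is needed elsewhere anyway. One small presentational remark: for closedness you should note (as is implicit in your entry-wise convergence argument) that the limit matrix not only has parameters in \(\br{\llambda,\ulambda}\) but also retains the prescribed bidiagonal form---its other entries are limits of zeros---so that membership in \(\setoftrm^\chi\) indeed follows from your \((\Leftarrow)\) direction; this is a one-line addition, not a gap.
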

\begin{proof}
  The first part of the stated follows immediately from \cite[Proposition~7.8]{2017Krak}.
  The second part is a matter of straightforward verification.
\end{proof}
\begin{corollary}
\label{cor:norm trm^chi <= norm ltro^chi}
  Consider some \(\lowx, \upx\) in \(\stsp\) with \(\lowx \leq \upx\), and let \(\chi\coloneqq\set{x\in\stsp\colon\lowx\leq x \leq\upx}\).
  Then for any \(\trm^\chi\) in \(\setoftrm^\chi\), with \(\setoftrm^\chi\) as defined in Lemma~\ref{lem:ltro^chi:Set of dominated transition rate matrices},
  \begin{equation*}
    \norm{\trm^\chi}
    \leq \norm{\ltro^\chi}.
  \end{equation*}
\end{corollary}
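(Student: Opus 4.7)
The plan is to exploit the explicit characterisation of elements of $\setoftrm^\chi$ provided by Lemma~\ref{lem:ltro^chi:Set of dominated transition rate matrices}, compute both norms directly, and compare them. The comparison then reduces to the trivial inequality $\lambda_x \leq \ulambda$.

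First, I would fix an arbitrary $\trm^\chi$ in $\setoftrm^\chi$ and invoke the second part of Lemma~\ref{lem:ltro^chi:Set of dominated transition rate matrices} to obtain a sequence $\{\lambda_x\}_{x\in\chi\setminus\{\upx\}}$ in $\br{\llambda,\ulambda}$ such that $\br{\trm^\chi g}\pr{x} = \lambda_x(g(x+1)-g(x))$ for $\lowx\leq x<\upx$ and $\br{\trm^\chi g}\pr{\upx}=0$. Since $\chi$ is finite, $\trm^\chi$ can be identified with its matrix representation, and I would then apply Equation~\eqref{eqn:Norm of linear transformation of finite state space} to compute its norm. For each $x<\upx$ the only non-zero entries in the $x$-th row are $\trm^\chi(x,x)=-\lambda_x$ and $\trm^\chi(x,x+1)=\lambda_x$, so the absolute row-sum equals $2\lambda_x$; for $x=\upx$ the row is identically zero. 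Consequently,
\begin{equation*}
  \norm{\trm^\chi}
  = \max\set{2\lambda_x\colon x\in\chi\setminus\set{\upx}}
  \leq 2\ulambda,
\end{equation*}
where the edge case $\lowx=\upx$ is handled by the convention that the maximum over an empty set is zero (in which case $\trm^\chi=0$ and the bound is trivial).

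Next, I would compute $\norm{\ltro^\chi}$. Because $\ltro^\chi$ is a lower transition rate transformation by Lemma~\ref{lem:ltro^chi is a lower transition rate operator}, Lemma~\ref{lem:Gen ltro^chi:Norm} yields $\norm{\ltro^\chi}=2\max\set{\abs{\br{\ltro^\chi \indica{x}}\pr{x}}\colon x\in\chi}$. A direct evaluation from the definition of $\ltro^\chi$ gives $\br{\ltro^\chi \indica{x}}\pr{x}=\min_{\lambda\in\br{\llambda,\ulambda}}(-\lambda)=-\ulambda$ when $\lowx\leq x<\upx$, and $\br{\ltro^\chi \indica{\upx}}\pr{\upx}=0$. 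Hence $\norm{\ltro^\chi}=2\ulambda$ whenever $\lowx<\upx$ (and $0$ otherwise). Combining this with the previous bound gives $\norm{\trm^\chi}\leq 2\ulambda=\norm{\ltro^\chi}$, as required.

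There is no substantial obstacle here: the result is essentially bookkeeping once the explicit form of the matrix representations of $\trm^\chi$ and $\ltro^\chi$ are in hand. The only small subtlety is the degenerate case $\lowx=\upx$, which is dispatched by noting that both operators are identically zero on the one-element space.
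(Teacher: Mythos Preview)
Your proposal is correct and follows essentially the same approach as the paper: invoke the explicit characterisation from Lemma~\ref{lem:ltro^chi:Set of dominated transition rate matrices}, compute both norms directly, and bound via $\lambda_x\leq\ulambda$. The only cosmetic difference is that the paper applies Lemma~\ref{lem:Gen ltro^chi:Norm} to both $\trm^\chi$ and $\ltro^\chi$, whereas you use the matrix row-sum formula~\eqref{eqn:Norm of linear transformation of finite state space} for the linear $\trm^\chi$; your treatment of the degenerate case $\lowx=\upx$ is in fact slightly more careful than the paper's terse one-line proof.
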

\begin{proof}
  This is an immediate corollary of Lemmas~\ref{lem:Gen ltro^chi:Norm} and \ref{lem:ltro^chi:Set of dominated transition rate matrices}:
  \begin{equation*}
    \norm{\trm^\chi}
    = 2\max\set{\lambda_x\colon x\in \chi, x<\upx}\cup\set{0}
    \leq 2\ulambda
    = \norm{\ltro^\chi}.
  \end{equation*}
\end{proof}
\begin{corollary}
\label{cor:I+delta ltro^chi f^chi <= I+delta trm^chi f^chi}
  Consider some \(\lowx, \upx\) in \(\stsp\) with \(\lowx \leq \upx\), and let \(\chi\coloneqq\set{x\in\stsp\colon\lowx\leq x \leq\upx}\).
  Fix some \(n\) in \(\nats\) and, for every \(i\) in \(\set{1, \dots, n}\), some \(\Delta_i\) in \(\nnegreals\) with \(\Delta_i\norm{\ltro^\chi}\leq 2\) and some \(\trm^\chi_i\) in \(\setoftrm^\chi\).
  Then for all \(f^\chi\) in \(\setoffn\pr{\chi}\),
  \begin{equation*}
    \prod_{i=1}^n \pr{I+\Delta_i\ltro^\chi} f^\chi
    \leq \prod_{i=1}^n \pr{I+\Delta_i\trm^\chi_i} f^\chi.
  \end{equation*}
\end{corollary}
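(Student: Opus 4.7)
The plan is to prove the inequality by straightforward induction on $n$, exploiting two facts: first, that under the assumption $\Delta_i \norm{\ltro^\chi} \leq 2$, each operator $(I + \Delta_i \ltro^\chi)$ is a lower transition transformation (by Lemma~\ref{lem:LTRO:I+Delta Q is a LTT}), and is therefore monotone by \ref{LTT:monotonicity}; second, that the defining property of $\setoftrm^\chi$ gives the pointwise domination $\ltro^\chi g \leq \trm^\chi_i g$ for every $g$ in $\setoffn\pr{\chi}$.

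For the base case $n=1$, fix any $f^\chi$ in $\setoffn\pr{\chi}$. Since $\trm_1^\chi$ belongs to $\setoftrm^\chi$, we have $\ltro^\chi f^\chi \leq \trm_1^\chi f^\chi$, and multiplying by the non-negative real number $\Delta_1$ and adding $f^\chi$ yields $(I+\Delta_1\ltro^\chi)f^\chi \leq (I+\Delta_1\trm_1^\chi)f^\chi$, as required.

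For the inductive step, suppose the statement holds for $n-1$, and let
\begin{equation*}
  g \coloneqq \prod_{i=2}^n (I+\Delta_i\ltro^\chi) f^\chi
  \qquad\text{and}\qquad
  h \coloneqq \prod_{i=2}^n (I+\Delta_i\trm_i^\chi) f^\chi.
\end{equation*}
Applied to $\Delta_2, \dots, \Delta_n$ and $\trm_2^\chi, \dots, \trm_n^\chi$, the induction hypothesis yields $g \leq h$. The operator $(I+\Delta_1\ltro^\chi)$ is a lower transition transformation by Lemma~\ref{lem:LTRO:I+Delta Q is a LTT}, so by monotonicity \ref{LTT:monotonicity} we obtain $(I+\Delta_1\ltro^\chi)g \leq (I+\Delta_1\ltro^\chi)h$. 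Using $\trm_1^\chi \in \setoftrm^\chi$ once more gives $\ltro^\chi h \leq \trm_1^\chi h$, and hence $(I+\Delta_1\ltro^\chi)h \leq (I+\Delta_1\trm_1^\chi)h$. Chaining these two inequalities produces
\begin{equation*}
  \prod_{i=1}^n (I+\Delta_i\ltro^\chi) f^\chi
  = (I+\Delta_1\ltro^\chi) g
  \leq (I+\Delta_1\trm_1^\chi) h
  = \prod_{i=1}^n (I+\Delta_i\trm_i^\chi) f^\chi,
\end{equation*}
which closes the induction.

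There is no real obstacle here; the only subtle point is ensuring that $(I+\Delta_1\ltro^\chi)$ is genuinely monotone, which requires the assumption $\Delta_1\norm{\ltro^\chi}\leq 2$ via Lemma~\ref{lem:LTRO:I+Delta Q is a LTT}, and that the operator $(I+\Delta_1\trm_1^\chi)$ appearing on the right-hand side is well-behaved enough for the final rewrite—this is automatic because $\norm{\trm_1^\chi}\leq\norm{\ltro^\chi}$ by Corollary~\ref{cor:norm trm^chi <= norm ltro^chi}, so $\Delta_1\norm{\trm_1^\chi}\leq 2$ as well, although in fact this observation is not even needed for the inequality itself.
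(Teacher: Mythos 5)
Your proof is correct. The paper itself disposes of this corollary in one line, by combining the characterisation of the dominating set \(\setoftrm^\chi\) in Lemma~\ref{lem:ltro^chi:Set of dominated transition rate matrices} with an external result, Lemma~F.4 of \cite{2017Krak}, which states precisely this product inequality for a lower transition rate operator and matrices that dominate it; your argument is, in effect, a self-contained reconstruction of that cited lemma using only the machinery already present in this paper. The two ingredients you use are exactly the right ones: the pointwise domination \(\ltro^\chi g\leq\trm^\chi_i g\) that defines \(\setoftrm^\chi\), and the monotonicity \ref{LTT:monotonicity} of \(\pr{I+\Delta_1\ltro^\chi}\), which is legitimate because \(\ltro^\chi\) is a lower transition rate transformation (Lemma~\ref{lem:ltro^chi is a lower transition rate operator}) and \(\Delta_1\norm{\ltro^\chi}\leq2\), so Lemma~\ref{lem:LTRO:I+Delta Q is a LTT} makes \(\pr{I+\Delta_1\ltro^\chi}\) a lower transition transformation. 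The base case and the chaining \(\pr{I+\Delta_1\ltro^\chi}g\leq\pr{I+\Delta_1\ltro^\chi}h\leq\pr{I+\Delta_1\trm^\chi_1}h\) are both sound, and you are right that no norm condition on \(\trm^\chi_1\) is needed since the operators on the right-hand side are only ever evaluated, never required to be transition transformations. What your route buys is independence from the external reference; what the paper's route buys is brevity, at the cost of sending the reader to \cite{2017Krak} for the identical induction you have written out.
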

\begin{proof}
  This is an immediate corollary of Lemma~\ref{lem:ltro^chi:Set of dominated transition rate matrices} and \cite[Lemma~F.4]{2017Krak}.

\end{proof}

\subsection{The Essential Case of Eventually Constant Functions}
Before we can prove our two results concerning eventually constant functions, we first need to establish two technical lemmas.
\begin{lemma}
\label{lem:eventually contant function:I+Delta ltro f}
  For any \(\Delta\) in \(\nnegreals\) and any \(f\) in \(\setofcafn\pr{\stsp}\) that is constant after \(\upx\), \(\pr{I+\Delta\ltro} f\) is constant after \(\upx\).
  Furthermore, if we fix any \(\lowx\) in \(\stsp\) with \(\lowx\leq\upx\) and let \(\chi\coloneqq\set{y\in\stsp\colon \lowx\leq x\leq\upx}\), then
  \begin{equation*}
    \br{\pr{I+\Delta\ltro} f}\pr{x}
    = \begin{cases}
      \br{\pr{I+\Delta\ltro^\chi} f^\chi}\pr{x} &\text{if } x \leq\upx \\
      f\pr{\upx} &\text{if } x\geq\upx
    \end{cases}
    \qquad\text{for all } x \in \stsp \text{ with } x\geq\lowx,
  \end{equation*}
  where \(f^\chi\) is the restriction of \(f\) to \(\chi\).
\end{lemma}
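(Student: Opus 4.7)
The plan is to carry out a direct case analysis by evaluating $\br{\pr{I+\Delta\ltro} f}\pr{x}$ pointwise and comparing it to the claimed right-hand side. The essential observation is that when $f$ is constant from $\upx$ onwards, the increment $f\pr{x+1}-f\pr{x}$ that drives the Poisson generator vanishes for all $x \geq \upx$; combined with the definitions of $\ltro$ and $\ltro^\chi$, this immediately yields both assertions.

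First, I would settle the claim that $\pr{I+\Delta\ltro}f$ is constant from $\upx$ onwards. Fix any $x \geq \upx$. Since $f\pr{x+1}=f\pr{\upx}=f\pr{x}$, the definition of $\ltro$ gives
\begin{equation*}
  \br{\ltro f}\pr{x}
  = \min\set{\lambda\pr{f\pr{x+1}-f\pr{x}}\colon\lambda\in\br{\llambda,\ulambda}}
  = 0,
\end{equation*}
so $\br{\pr{I+\Delta\ltro}f}\pr{x} = f\pr{x}+0 = f\pr{\upx}$. This proves both that $\pr{I+\Delta\ltro}f$ is constant from $\upx$ onwards and the ``$x\geq\upx$'' branch of the displayed equation.

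Next I would handle the ``$\lowx \leq x \leq \upx$'' branch. If $x = \upx$, the previous step already gives $\br{\pr{I+\Delta\ltro}f}\pr{\upx} = f\pr{\upx} = f^\chi\pr{\upx}$, while the definition of $\ltro^\chi$ forces $\br{\ltro^\chi f^\chi}\pr{\upx}=0$, so $\br{\pr{I+\Delta\ltro^\chi}f^\chi}\pr{\upx} = f^\chi\pr{\upx} = f\pr{\upx}$, and the two sides agree. If $\lowx\leq x<\upx$, then both $x$ and $x+1$ lie in $\chi$, so $f\pr{x}=f^\chi\pr{x}$ and $f\pr{x+1}=f^\chi\pr{x+1}$, and comparing the defining formulas of $\ltro$ and $\ltro^\chi$ gives $\br{\ltro f}\pr{x} = \br{\ltro^\chi f^\chi}\pr{x}$. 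Hence
\begin{equation*}
  \br{\pr{I+\Delta\ltro}f}\pr{x}
  = f\pr{x} + \Delta\br{\ltro f}\pr{x}
  = f^\chi\pr{x}+\Delta\br{\ltro^\chi f^\chi}\pr{x}
  = \br{\pr{I+\Delta\ltro^\chi}f^\chi}\pr{x}.
\end{equation*}

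There is no real obstacle here: the lemma is a bookkeeping result whose entire content is that the Poisson generator only ``sees'' values of $f$ at $x$ and $x+1$, so restricting to $\chi$ changes nothing except at the boundary $x=\upx$, where the reduced generator is defined precisely to match the vanishing increment of $f$. The only point to be slightly careful about is making sure the case $x=\upx$ is treated by both branches consistently, which is why I would handle it as part of the $x\leq\upx$ argument and then separately verify it falls under $x\geq\upx$ as well.
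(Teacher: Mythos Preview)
Your proof is correct and follows essentially the same pointwise case analysis as the paper: compute $\br{\ltro f}\pr{x}=0$ for $x\geq\upx$ to settle constancy and the upper branch, then match the defining formulas of $\ltro$ and $\ltro^\chi$ on $\chi$ for the lower branch, with the boundary case $x=\upx$ handled explicitly. The only cosmetic difference is that the paper first disposes of $x\geq\upx$ (which already covers $x=\upx$) and then treats $x<\upx$, whereas you additionally verify $x=\upx$ inside the $x\leq\upx$ branch; the substance is identical.
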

\begin{proof}
  That \(\pr{I+\Delta\ltro}f\) is constant after \(\upx\) is obvious: for any \(y\) in \(\stsp\) with \(y\geq \upx\),
  \begin{equation*}
    \br{\pr{I+\Delta\ltro} f}\pr{y}
    = f\pr{y} + \Delta \br{\ltro f}\pr{y}
    = f\pr{y} +\Delta \min\set{\lambda f\pr{y+1}-\lambda f\pr{y}\colon\lambda\in\br{\llambda, \ulambda}}
    = f\pr{\upx} + \Delta 0
    = f\pr{\upx},
  \end{equation*}
  where for the penultimate equality we use that \(f\) is constant after \(\upx\).

  To verify the second part of the statement, we fix any \(x\) in \(\stsp\) with \(x\geq\lowx\).
  Observe first that if \(x\geq \upx\), then
  \begin{equation*}
    \br{\pr{I+\Delta\ltro}f}\pr{x}
    = f\pr{\upx}
    = f^\chi\pr{\upx}
    = f^\chi\pr{\upx} + \Delta\br{\ltro^\chi f^\chi}\pr{\upx}
    = \br{\pr{I+\Delta\ltro^\chi}f^\chi}\pr{\upx},
  \end{equation*}
  where the first equality follows from the first part of our proof.
  If \(x<\upx\), then
  \begin{align*}
    \br{\pr{I+\Delta\ltro} f}\pr{x}
    &= f\pr{x} + \Delta \br{\ltro f}\pr{x}
    = f\pr{x} +\Delta \min\set{\lambda f\pr{x+1}-\lambda f\pr{x}\colon\lambda\in\br{\llambda, \ulambda}} \\
    &= f^\chi\pr{x} +\Delta \min\set{\lambda f^\chi\pr{x+1}-\lambda f^\chi\pr{x}\colon\lambda\in\br{\llambda, \ulambda}}
    =\br{\pr{I+\Delta\ltro^\chi}f^\chi}\pr{x}.
  \end{align*}
\end{proof}
\begin{lemma}
\label{lem:eventually contant function:I+Delta_i ltro f}
  Fix some \(n\) in \(\nats\) and, for all \(i\) in \(\set{1, \dots, n}\), a \(\Delta_i\) in \(\nnegreals\).
  Then for any \(f\) in \(\setofcafn{}\) that is constant after \(\upx\), \(\prod_{i=1}^n\pr{I+\Delta_i\ltro} f\) is constant after \(\upx\).
  Furthermore, if we fix any \(\lowx\) in \(\stsp\) with \(\lowx\leq\upx\) and let \(\chi\coloneqq\set{y\in\stsp\colon \lowx\leq x\leq\upx}\), then
  \begin{equation*}
    \br*{\prod_{i=1}^n\pr{I+\Delta_i\ltro} f}\pr{x}
    = \begin{dcases}
      \br*{\prod_{i=1}^n\pr{I+\Delta_i\ltro^\chi} f^\chi}\pr{x} &\text{if } x\leq\upx \\
      f\pr{\upx} &\text{if } x\geq\upx
    \end{dcases}
    \qquad\text{for all } x \in \stsp \text{ with } x\geq\lowx,
  \end{equation*}
  where \(f^\chi\) is the restriction of \(f\) to \(\chi\).
\end{lemma}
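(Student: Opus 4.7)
The plan is to establish the claim by induction on $n$, with Lemma~\ref{lem:eventually contant function:I+Delta ltro f} doing all of the real work at both the base case and at each inductive step. For $n=1$, the statement is precisely Lemma~\ref{lem:eventually contant function:I+Delta ltro f}, so there is nothing to do.

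For the inductive step, I would assume the claim holds for some $n$ in $\nats$ and fix any $\Delta_1, \dots, \Delta_{n+1}$ in $\nnegreals$ and any $f$ in $\setofcafn\pr{\stsp}$ that is constant after $\upx$. Setting $g \coloneqq \prod_{i=2}^{n+1}\pr{I+\Delta_i\ltro} f$, the induction hypothesis, applied to $f$ with the shorter sequence $\Delta_2, \dots, \Delta_{n+1}$, provides two facts: (i) $g$ is constant after $\upx$, with $g\pr{\upx}=f\pr{\upx}$; and (ii) on any set $\chi=\set{y\in\stsp\colon\lowx\leq y\leq\upx}$ with $\lowx\leq\upx$, the restriction $g^\chi$ of $g$ to $\chi$ coincides with $\prod_{i=2}^{n+1}\pr{I+\Delta_i\ltro^\chi}f^\chi$. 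Fact (i) guarantees that $g$ is itself bounded and constant after $\upx$, so I may apply Lemma~\ref{lem:eventually contant function:I+Delta ltro f} to $g$ with $\Delta=\Delta_1$. This immediately yields that $\prod_{i=1}^{n+1}\pr{I+\Delta_i\ltro} f = \pr{I+\Delta_1\ltro} g$ is constant after $\upx$, and also the restriction identity
\begin{equation*}
  \br{\pr{I+\Delta_1\ltro} g}\pr{x}
  = \begin{cases}
    \br{\pr{I+\Delta_1\ltro^\chi} g^\chi}\pr{x} &\text{if } x\leq\upx, \\
    g\pr{\upx} &\text{if } x\geq\upx,
  \end{cases}
\end{equation*}
for all $x$ in $\stsp$ with $x\geq\lowx$. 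Substituting $g\pr{\upx}=f\pr{\upx}$ from (i) and the expression for $g^\chi$ from (ii) gives exactly the desired formula for $n+1$.

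There is no genuine obstacle here: the entire lemma is a routine bookkeeping extension of Lemma~\ref{lem:eventually contant function:I+Delta ltro f} from a single factor to a product of $n$ factors. The only point requiring a moment of care is part (ii) — making sure that the restriction of the inductively constructed $g$ really does agree with the iterate of the reduced operator $\ltro^\chi$ on $f^\chi$ — but this is exactly what the induction hypothesis asserts, once one observes that points of $\chi$ satisfy $\lowx\leq x\leq\upx$.
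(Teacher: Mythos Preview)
Your proof is correct and follows essentially the same approach as the paper: induction on $n$, with the base case handled by Lemma~\ref{lem:eventually contant function:I+Delta ltro f}, and the inductive step obtained by setting $g\coloneqq\prod_{i\geq 2}\pr{I+\Delta_i\ltro}f$, invoking the induction hypothesis to identify $g^\chi$ and $g\pr{\upx}$, and then applying Lemma~\ref{lem:eventually contant function:I+Delta ltro f} once more to $\pr{I+\Delta_1\ltro}g$. The only cosmetic difference is that the paper phrases the induction as ``assume the statement for all $n'<n$'' whereas you go from $n$ to $n+1$; the content is the same.
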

\begin{proof}
  We provide a proof by induction.
  First, observe that for \(n=1\), the stated follows immediately from Lemma~\ref{lem:eventually contant function:I+Delta ltro f}.

  Second, fix some \(n\) in \(\nats\) with \(n\geq2\) and assume that the stated holds for \(1\leq n'< n\).
  We show that in this case, the stated then follows for \(n\) as well.
  Let \(g\coloneqq\prod_{i=2}^n\pr{I+\Delta_i\ltro}f\).
  Then by the induction hypothesis for \(n'=n-1\), \(g\) is constant after \(\upx\), and
  \begin{equation*}
    g\pr{x}
    = \br*{\prod_{i=2}^n\pr{I+\Delta_i\ltro}f}\pr{x}
    = \begin{dcases}
      \br*{\prod_{i=2}^n\pr{I+\Delta_i\ltro^\chi}f^\chi}\pr{x} &\text{if } x\leq\upx, \\
      f\pr{\upx} &\text{if } x\geq\upx
    \end{dcases}
    \qquad\text{for all } x \in \stsp \text{ with } x\geq\lowx.
  \end{equation*}
  So clearly, \(g^\chi\), the restriction of \(g\) to \(\chi\), is equal to \(\prod_{i=2}^n\pr{I+\Delta_i\ltro^\chi}f^\chi\).
  Observe now that
  \begin{equation*}
    g'
    \coloneqq \prod_{i=1}^n \pr{I+\Delta_i\ltro} f
    = \pr{I+\Delta_1\ltro} g.
  \end{equation*}
  From the induction hypothesis for \(n'=1\), we know that \(g'\) is constant after \(\upx\) and that, for all \(x\) in \(\stsp\) with \(x\geq\lowx\),
  \begin{equation*}
    g'\pr{x}
    = \begin{dcases}
      \br{\pr{I+\Delta_1\ltro^\chi} g^\chi}\pr{x} &\text{if } x\leq\upx, \\
      g\pr{\upx} &\text{if } x\geq\upx
    \end{dcases}
    = \begin{dcases}
      \br*{\prod_{i=1}^n\pr{I+\Delta_i\ltro^\chi}f^\chi}\pr{x} &\text{if } x\leq\upx, \\
      f\pr{\upx} &\text{if } x\geq\upx,
    \end{dcases}
  \end{equation*}
  where the second equality holds because \(g^\chi=\prod_{i=2}^n\pr{I+\Delta_i\ltro^\chi}f^\chi\) and \(g\pr{\upx}=f\pr{\upx}\).
\end{proof}

\begin{proofof}{Proposition~\ref{prop:eventually bounded functions:lto f equals lto^chi f}}
  Fix any \(x\) in \(\stsp\) with \(x\geq\lowx\).
  First, observe that if \(x\geq\upx\), then
  \begin{equation*}
    \br{\lto_t^s f}\pr{x}
    = \br{\lto_t^s f'_x}\pr{0}
    = \br{\lto_t^s \pr{f\pr{\upx}}}\pr{0}
    = f\pr{\upx},
  \end{equation*}
  where we let \(f'_x\colon\stsp\to\reals\colon z\mapsto f\pr{x+z}\), and where the first equality follows from Lemma~\ref{lem:State homogeneity of lto induced by genpoisgen}, the second equality holds because clearly \(f'_x=f\pr{\upx}\) and the third equality follows from \ref{LTT:constant}.
  The obtained equality clearly agrees with the stated.

  Next, we consider the case that \(x\leq\upx\).
  Fix any \(\epsilon\) in \(\posreals\), and choose any \(\epsilon'\) in \(\posreals\) such that \(2\norm{f}\epsilon'\leq\epsilon\).
  Then by Theorem~\ref{the:LCT induced by GenPoisGen}---in combination with Equation~\eqref{eqn:PoisGen as special case of GenPoisGen}---and Proposition~\ref{prop:Gen ltro^chi:Induced lto^chi}---in combination with Lemma~\ref{lem:ltro^chi is a lower transition rate operator}---there is a sequence \(u\) in \(\setoftseq_{\br{t,s}}\) such that
  \begin{equation}
  \label{eqn:proof of lto f=lto^chi f^chi:epsilon bound}
    \norm{\lto_t^s - \Phi_u}
    \leq \epsilon'
    \qquad\text{and}\qquad
    \norm{\lto^\chi_{t,s} - \Phi^\chi_u}
    \leq \epsilon'.
  \end{equation}
  Observe now that
  \begin{align*}
    \abs*{\br{\lto_t^s f}\pr{x} - \br{\lto^\chi_{t,s} f^\chi}\pr{x}}
    &= \abs*{\br{\lto_t^s f}\pr{x} - \br{\Phi_u f}\pr{x} + \br{\Phi_u f}\pr{x} - \br{\lto^\chi_{t,s} f^\chi}\pr{x}} \\
    &\leq \abs*{\br{\lto_t^s f}\pr{x} - \br{\Phi_u f}\pr{x}} + \abs*{\br{\Phi_u f}\pr{x}-\br{\lto^\chi_{t,s} f^\chi}\pr{x}} \\
    &= \abs*{\br{\lto_t^s f}\pr{x} - \br{\Phi_u f}\pr{x}} + \abs*{\br{\Phi^\chi_u f^\chi}\pr{x}-\br{\lto^\chi_{t,s} f^\chi}\pr{x}} \\
    &\leq \norm{\lto_t^s f - \Phi_u f} + \norm{\Phi^\chi_u f^\chi-\lto^\chi_{t,s} f^\chi}
    \leq \norm{\lto_t^s - \Phi_u}\norm{f} + \norm{\Phi^\chi_u-\lto^\chi_{t,s}}\norm{f^\chi} \\
    &\leq \epsilon'\norm{f}+\epsilon'\norm{f^\chi}
    \leq 2\epsilon'\norm{f}
    \leq \epsilon,
  \end{align*}
  where the second equality follows from Lemma~\ref{lem:eventually contant function:I+Delta_i ltro f}, the third inequality follows from \ref{BNH: norm Af <= norm A norm f}, the fourth inequality follows from Equation~\eqref{eqn:proof of lto f=lto^chi f^chi:epsilon bound}, the penultimate inequality holds because clearly \(\norm{f^\chi}\leq\norm{f}\) and the final inequality is precisely our condition on \(\epsilon'\).
  Since \(\epsilon\) was any arbitrary positive real number, we conclude from this inequality that \(\br{\lto_t^s f}\pr{x} = \br{\lto^\chi_{t,s} f^\chi}\pr{x}\), as required.
\end{proofof}

\begin{proofof}{Proposition~\ref{prop:Approximate lto f using lto f indic leq upx}}
  Fix any \(\epsilon\) in \(\posreals\).
  To prove the stated, we need to verify that
  \begin{equation*}
    \pr{\exists x^\star\in\stsp}\pr{\forall \upx\in\stsp, \upx\geq x^\star}~
    \abs*{\lpoisprev_t^s\pr{f\cond x}-\lpoisprev_t^s\pr{\indica{\leq \upx} f+\indica{> \upx} f\pr{\upx}\cond x}}
    = \abs*{\br{\lto_t^s f}\pr{x}-\br{\lto_t^s\pr{\indica{\leq \upx} f+f\pr{\upx}\indica{> \upx}}}\pr{x}}
    \leq \epsilon.
  \end{equation*}
  To that end, we fix any \(\epsilon'\) in \(\posreals\) with \(2\epsilon'\norm{f}\leq\epsilon\) and we recall from Theorem~\ref{the:LCT induced by GenPoisGen} that there is a sequence \(u=t_0, \dots, t_n\) in \(\setoftseq_{\br{t,s}}\) such that \(\sigma\pr{u}\norm{\ltro}\leq2\) and
  \begin{equation*}
    \norm{\lto_t^s-\Phi_u}
    \leq \epsilon'.
  \end{equation*}
  Let \(x^\star\coloneqq x+n\), fix any \(\upx\) in \(\stsp\) such that \(\upx\geq x^\star\) and let \(f_{\upx}\coloneqq \indica{\leq \upx} f+f\pr{\upx}\indica{> \upx}\).
  Then
  \begin{align*}
    \abs*{\br{\lto_t^s f}\pr{x}-\br{\lto_t^s\pr{\indica{\leq \upx} f+f\pr{\upx}\indica{> \upx}}}\pr{x}}
    &= \abs*{\br{\lto_t^s f}\pr{x}-\br{\lto_t^s f_{\upx}}\pr{x}}
    = \abs*{\br{\lto_t^s f}\pr{x}-\br{\Phi_u f}\pr{x}+\br{\Phi_u f}\pr{x}-\br{\lto_t^s f_{\upx}}\pr{x}} \\
    &\leq \abs*{\br{\lto_t^s f}\pr{x}-\br{\Phi_u f}\pr{x}}+\abs*{\br{\lto_t^s f_{\upx}}\pr{x}-\br{\Phi_u f}\pr{x}} \\
    &= \abs*{\br{\lto_t^s f}\pr{x}-\br{\Phi_u f}\pr{x}}+\abs*{\br{\lto_t^s f_{\upx}}\pr{x}-\br{\Phi_u f_{\upx}}\pr{x}} \\
    &\leq \norm*{\lto_t^s f-\Phi_u f}+\norm*{\lto_t^s f_{\upx}-\Phi_u f_{\upx}} \\
    &\leq \norm*{\lto_t^s-\Phi_u}\norm{f}+\norm*{\lto_t^s-\Phi_u}\norm{\indica{\leq \upx} f+f\pr{\upx}\indica{> \upx}} \\
    &\leq \epsilon'\norm{f}+\epsilon'\norm{\indica{\leq \upx} f+f\pr{\upx}\indica{> \upx}}
    \leq 2\epsilon'\norm{f}
    \leq \epsilon,
  \end{align*}
  as required.
  In this expression, the third equality follows from Lemma~\ref{lem:Phi_u:Value of f does not matter above treshold}, the third inequality follows from \ref{BNH: norm Af <= norm A norm f} and the fifth inequality follows from the obvious inequality \(\norm{\indica{\leq \upx} f+f\pr{\upx}\indica{> \upx}}\leq\norm{f}\).
\end{proofof}

\section{Supplementary Material for Section~\ref{sec:Computing lower expectations}}
In this section of the Appendix, we will focus on expectations of the form \(\prev_{\prob}\pr{f\pr{X_s}\cond X_u=x_u, X_t=x}\).
Therefore, we first establish that \(\pr{f\pr{X_s}, X_u=x_u, X_t=x}\) belongs to the domain \(\prevdomain{}\) of \(\prev_\prob\).
By definition of \(\prevdomain\), this is true if \(f\pr{X_s}\) is bounded below and \(\cpfield_{u\cup t}\) measurable---that is, belongs to \(\prevdomain_{u\cup t}\).
\begin{proofof}{Lemma~\ref{lem:f of X_t is a F_u-measurable function}}
  Fix any \(\alpha\) in \([\inf f, +\infty)\).
  Then
  \begin{equation*}
    \set{f\pr{X_s}>\alpha}
    = \set{\pth\in\setofpths{}\colon f\pr{\pth\pr{s}}>\alpha}
    = \pr{X_s\in B_\alpha},
  \end{equation*}
  where \(B_\alpha\coloneqq\set{y\in\stsp\colon f\pr{y}>\alpha}\).
  As \(B_\alpha\subseteq\stsp\), it follows from Equation~\eqref{eqn:cpfield_u}, the fundamental event~\(\pr{X_s\in B_\alpha}\) is an element of the field~\(\cpfield_u\).
  Because \(\alpha\) was an arbitrary real number in \([\inf f, +\infty)\), we infer from this that \(f\pr{X_s}\) is \(\cpfield_{u\cup t}\) measurable.
\end{proofof}

\subsection{With Respect to the Consistent Poisson Processes}
Section~\ref{ssec:Lower and upper with respect to consistent Poisson} of the main text contains only one (implicit) result: Equation~\eqref{eqn:prev_lambda f(X_s) in text}.
We will here formally establish this---not exactly immediate---consequence of Theorem~\ref{the:Pois transition probabilities are poisson distributed and the converse} in Proposition~\ref{prop:expectation of poisson process as sum}.
First, however, we state a helpful---and essentially well-known---technical lemma.
\begin{lemma}
\label{lem:Mu from trans prob is sigma additive}
  Consider the Poisson process~\(\prob\) with rate~\(\lambda\) in \(\nnegreals\).
  For any \(t, s\) in \(\nnegreals\) with \(t\leq s\), \(u\) in \(\setoftseq_{<t}\), \(\pr{x_u, x}\) in \(\stsp_{u\cup t}\), the measure
  \begin{equation}
    \mu\colon\ccpevents\pr{\stsp}\to\reals\colon
    A \mapsto\mu\pr{A}
    \coloneqq\prob\pr{X_s\in A\cond X_u=x_u, X_t=x}.
  \end{equation}
  is the \(\sigma\)-additive probability measure corresponding to the probability mass function
  \begin{equation*}
    \pi\colon\stsp\to\reals\colon
    y \mapsto \pi\pr{y}
    \coloneqq \begin{cases}
      \pois_{\lambda\pr{s-t}}\pr{y-x} &\text{if } y\geq x, \\
      0 &\text{otherwise.}
    \end{cases}
  \end{equation*}
\end{lemma}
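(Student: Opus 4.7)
The plan is to first identify the value of $\mu$ on singletons and then bootstrap this to $\sigma$-additivity via a squeeze argument using the total mass. I would proceed in the following order.

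First, I would observe that $\mu$ is well-defined on the full power set $\ccpevents(\stsp)$: for any $A \subseteq \stsp$, the finitary event $(X_s \in A)$ lies in $\cpfield_{u\cup t}$ by Equation~\eqref{eqn:cpfield_u}, since $s \geq t = \max(u \cup t)$. From \ref{LOP:geq 0}, \ref{LOP:Additive if disjoint} and \ref{LOP:sample space} I would then deduce that $\mu$ is a non-negative, finitely additive set function with $\mu(\stsp) = 1$. For the singletons, Theorem~\ref{the:Pois transition probabilities are poisson distributed and the converse} (applied with the fixed conditioning sequence) directly gives $\mu(\{y\}) = \pi(y)$ for every $y \in \stsp$. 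Finite additivity then yields $\mu(B) = \sum_{y \in B} \pi(y)$ for every \emph{finite} $B \subseteq \stsp$.

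The main obstacle, and the only nontrivial step, is upgrading this from finite to $\sigma$-additivity, since coherent conditional probabilities are a priori only finitely additive. My plan is a standard sandwich argument exploiting that both $\mu$ and $\pi$ sum to $1$ over $\stsp$. Fix any $A \subseteq \stsp$ and let $A^c \coloneqq \stsp \setminus A$. For each $n \in \nats$, set $A_n \coloneqq A \cap \{0,\dots,n\}$ and $A_n^c \coloneqq A^c \cap \{0,\dots,n\}$; these are finite and disjoint, so
\begin{equation*}
  \mu(A) \geq \mu(A_n) = \sum_{y \in A_n} \pi(y)
  \quad\text{and}\quad
  \mu(A^c) \geq \mu(A_n^c) = \sum_{y \in A_n^c} \pi(y)
\end{equation*}
by monotonicity (which follows from finite additivity and non-negativity). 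Letting $n \to +\infty$ gives $\mu(A) \geq \sum_{y \in A} \pi(y)$ and $\mu(A^c) \geq \sum_{y \in A^c} \pi(y)$. Adding these and using $\mu(A) + \mu(A^c) = \mu(\stsp) = 1 = \sum_{y \in \stsp} \pi(y)$, both inequalities must be equalities, so $\mu(A) = \sum_{y \in A}\pi(y)$.

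Finally, $\sigma$-additivity follows from this explicit formula: for any countable family of pairwise disjoint $\{A_i\}_{i\in\nats}$ with union $A$, Fubini/Tonelli for non-negative series yields
\begin{equation*}
  \mu(A) = \sum_{y\in A}\pi(y) = \sum_{i\in\nats}\sum_{y\in A_i}\pi(y) = \sum_{i\in\nats}\mu(A_i).
\end{equation*}
Thus $\mu$ coincides with the unique $\sigma$-additive probability measure on $\ccpevents(\stsp)$ induced by the pmf $\pi$, which is what was to be shown. The whole argument is short; the only place finite additivity is stressed is in the sandwich step, and that is what I expect a careful referee to scrutinize.
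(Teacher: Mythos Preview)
Your proof is correct and close in spirit to the paper's, but the packaging differs in a way worth noting. The paper, after matching $\mu$ and $\pi$ on singletons and finite sets just as you do, handles an infinite $A$ by a direct tail-$\epsilon$ argument: since the Poisson distribution is normed it picks $\overline{z}$ with $\prob(X_s\geq\overline{z}+1\mid\cdots)\leq\epsilon$, sets $A'=\{y\in A:y\leq\overline{z}\}$, and sandwiches $\mu(A)$ between $\mu(A')=\sum_{y\in A'}\pi(y)$ and $\mu(A')+\epsilon$. Your complement trick---showing $\mu(A)\geq\sum_{y\in A}\pi(y)$ and $\mu(A^c)\geq\sum_{y\in A^c}\pi(y)$ separately and then forcing equality via $\mu(A)+\mu(A^c)=1=\sum_{y\in\stsp}\pi(y)$---is the same idea (both hinge on $\pi$ summing to $1$) but is cleaner: it avoids the explicit $\epsilon$ bookkeeping and treats finite and infinite $A$ uniformly. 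Either way the only genuine content is upgrading finite additivity to $\sigma$-additivity using the total mass, and both arguments accomplish that correctly.
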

\begin{proof}
  That \(\mu\) is a probability measure on \(\ccpevents\pr{\stsp}\) follows immediately from the fact that \(\prob\) is a coherent conditional probability.
  Therefore, we only need to verify that \(\mu\) is \(\sigma\)-additive and corresponds to the probability mass function~\(\pi\).
  To that end, we observe that, for any \(y\) in \(\stsp\),
  \begin{equation}
  \label{eqn:Proof of Mu from trans prob is sigma additive:Mu on atoms}
    \mu\pr{\set{y}}
    = \prob\pr{X_s=y\cond X_u=x_u, X_t=x}
    = \begin{cases}
      \pois_{\lambda\pr{s-t}}\pr{y-x} &\text{if } y\geq x \\
      0 &\text{otherwise}
    \end{cases}
    = \pi\pr{y},
  \end{equation}
  where the second equality follows from Theorem~\ref{the:Pois transition probabilities are poisson distributed and the converse}.
  Therefore, the stated holds if we can prove that, for any subset \(A\) of \(\stsp\),
  \begin{equation}
    \mu\pr{A}
    = \sum_{y\in A} \pi\pr{y}.
  \end{equation}

  To verify this, we fix any subset \(A\) of \(\stsp\), and distinguish two cases: \(A\) is finite and \(A\) is infinite.
  In the first case, it follows from the finite additivity of \(\prob\) and Equation~\eqref{eqn:Proof of Mu from trans prob is sigma additive:Mu on atoms} that
  \begin{equation}
  \label{eqn:Proof of Mu from trans prob is sigma additive:Finite A}
    \mu\pr{A}
    = \prob\pr{X_s\in A\cond X_u=x_u, X_t=x}
    = \sum_{y\in A} \prob\pr{X_s= A\cond X_u=x_u, X_t=x}
    = \sum_{y\in A}\mu\pr{\set{y}}
    = \sum_{y\in A}\pi\pr{y},
  \end{equation}
  as required.

  The case that \(A\) is infinite is slightly more involved.
  Observe first that, for any \(\overline{z}\) in \(\stsp\) with \(\overline{z}\geq x\),
  \begin{multline*}
    \prob\pr{X_s\geq\overline{z}+1\cond X_u=x_u, X_t=x}
    = 1-\prob\pr{X_s\leq\overline{z}\cond X_u=x_u, X_t=x} \\
    = 1-\sum_{y=x}^{\overline{z}}\prob\pr{X_s=y \cond X_u=x_u, X_t=x}
    = 1-\sum_{y=x}^{\overline{z}} \pois_{\lambda\pr{s-t}}\pr{y-x},
  \end{multline*}
  where for the penultimate equality we have used Lemma~\ref{lem:Coherent prob on cpdomain:prob of <= x} and for the last equality we have used Theorem~\ref{the:Pois transition probabilities are poisson distributed and the converse}.
  Fix any \(\epsilon\) in \(\posreals\).
  As the Poisson distribution~\(\pois_{\lambda\pr{s-t}}\) is normed, there is some \(\overline{z}\) in \(\stsp\) such that \(\overline{z}\geq x\) and
  \begin{equation*}
    1-\epsilon
    \leq \sum_{y=x}^{\overline{z}} \pois_{\lambda\pr{s-t}}\pr{y-x}
    \leq 1.
  \end{equation*}
  Fix any such \(\overline{z}\) such that there is at least one \(y\) in \(A\) with \(y\leq \overline{z}\).
  Then from the left inequality and the previous equality, it now follows that
  \begin{equation}
  \label{eqn:Proof of Mu from trans prob is sigma additive:eps bound}
    \prob\pr{X_s\geq\overline{z}+1\cond X_u=x_u, X_t=x}
    \leq \epsilon.
  \end{equation}
  Let \(A'\coloneqq\set{y\in A\colon y\leq \overline{z}}\).
  As \(A'\subseteq A\subseteq A' \cup \set{y\in \stsp\colon y\geq \overline{z}+1}\) and \(A' \cap \set{y\in \stsp\colon y\geq \overline{z}+1}=\emptyset\), it follows from the monotonicity and finite additivity of \(\prob\) that
  \begin{equation*}
    \prob\pr{X_s\in A'\cond X_u=x_u, X_t=x}
    \leq \prob\pr{X_s\in A\cond X_u=x_u, X_t=x}
    \leq \prob\pr{X_s\in A'\cond X_u=x_u, X_t=x}+\epsilon,
  \end{equation*}
  where we have also used Equation~\eqref{eqn:Proof of Mu from trans prob is sigma additive:eps bound} for the right inequality.
  As \(A'\) is finite, it follows from these two inequalities and Equation~\eqref{eqn:Proof of Mu from trans prob is sigma additive:Finite A} that
  \begin{equation*}
    \sum_{y\in A'}\pi\pr{y}
    \leq \mu\pr{A}
    = \prob\pr{X_s\in A\cond X_u=x_u, X_t=x}
    \leq \sum_{y\in A'}\pi\pr{y}+\epsilon.
  \end{equation*}
  Because \(\epsilon\) was an arbitrary positive real number, we conclude from these inequalities that \(\mu\pr{A}=\sum_{y\in A}\pi\pr{y}\), as required.
\end{proof}
\begin{proposition}
\label{prop:expectation of poisson process as sum}
  Consider the Poisson process~\(\prob\) with rate~\(\lambda\) in \(\nnegreals\).
  Then for any \(t, s\) in \(\nnegreals\) with \(t\leq s\), \(u\) in \(\setoftseq_{<t}\), \(\pr{x_u, x}\) in \(\stsp_{u\cup t}\) and \(f\) in \(\setofbbfn\pr{\stsp}\),
  \begin{equation*}
    \prev_\prob\pr{f\pr{X_s}\cond X_u=x_u, X_t=x}
    = \sum_{y=x}^{+\infty} f\pr{y}\pois_{\lambda\pr{s-t}}\pr{y-x}.
  \end{equation*}
\end{proposition}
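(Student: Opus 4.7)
The plan is to combine the integral representation of $\prev_\prob(\cdot \cond X_u=x_u, X_t=x)$ from Section~\ref{ssec:Conditional expectation with respect to a counting process} with the identification of the marginal measure of $X_s$ already carried out in Lemma~\ref{lem:Mu from trans prob is sigma additive}, and then invoke a layer-cake identity to convert the resulting Riemann integral over level sets into a sum.

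First, I would fix any $\alpha \in [\inf f, +\infty)$ and note that
\[
\{f(X_s) > \alpha\}
= (X_s \in B_\alpha),
\qquad
B_\alpha \coloneqq \set{y \in \stsp \colon f(y) > \alpha},
\]
so that $\prob(\{f(X_s) > \alpha\} \cond X_u=x_u, X_t=x) = \mu(B_\alpha)$, where $\mu$ is the $\sigma$-additive probability measure identified in Lemma~\ref{lem:Mu from trans prob is sigma additive} with probability mass function $\pi(y) = \pois_{\lambda(s-t)}(y-x)$ for $y \geq x$ and $\pi(y)=0$ otherwise. In particular $\mu(B_\alpha) = \sum_{y \in B_\alpha} \pi(y)$.

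Second, I would apply the standard layer-cake identity
\[
\sum_{y \in \stsp} g(y)\, \pi(y)
= \int_0^{\sup g} \mu(\set{g > \beta})\, \mathrm{d}\beta
\]
to the non-negative function $g(y) \coloneqq f(y) - \inf f$, which is well-defined because $f \in \setofbbfn(\stsp)$. This identity is valid for any $\sigma$-additive probability measure and any non-negative (possibly unbounded) function on the countable set $\stsp$; it follows by monotone convergence after approximating $g$ by its truncations, and simultaneously handles the finite and the $+\infty$ cases of both sides. Substituting $\beta = \alpha - \inf f$ gives
\[
\int_{\inf f}^{\sup f} \mu(B_\alpha)\, \mathrm{d}\alpha
= \sum_{y \in \stsp} (f(y) - \inf f)\, \pi(y)
= \sum_{y=x}^{+\infty} f(y)\, \pois_{\lambda(s-t)}(y-x) - \inf f,
\]
using $\sum_y \pi(y) = 1$ and the fact that $\pi$ is supported on $\{y \geq x\}$.

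Finally, combining the above with the defining formula
\[
\prev_\prob(f(X_s) \cond X_u=x_u, X_t=x)
= \inf f + \int_{\inf f}^{\sup f} \prob(\{f(X_s) > \alpha\} \cond X_u=x_u, X_t=x)\, \mathrm{d}\alpha
\]
and cancelling the $\inf f$ terms yields the claim. The only non-cosmetic issue is the layer-cake step: it is standard measure theory, but one must justify it in a way that tolerates $\sup f = +\infty$ (where the left-hand integral is improper Riemann and the sum may be $+\infty$), which is easiest via a truncation-and-monotone-convergence argument combined with the monotonicity in $\alpha$ of $\mu(B_\alpha)$ that was already used in Section~\ref{ssec:Conditional expectation with respect to a counting process} to guarantee existence of the integral.
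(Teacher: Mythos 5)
Your proposal is correct and follows essentially the same route as the paper's proof: both identify the conditional probabilities of the level sets with the $\sigma$-additive measure of Lemma~\ref{lem:Mu from trans prob is sigma additive}, shift by $\inf f$ to work with a non-negative function, and use a monotone-convergence (layer-cake) argument with truncations to convert the improper Riemann integral over level sets into the Poisson-weighted sum. The paper handles your ``only non-cosmetic issue'' in exactly the way you anticipate, namely via Levi's Monotone Convergence Theorem applied to the sequence $\set{f'\indica{\leq i}}_{i\in\nats}$ with $f'=f-\inf f$.
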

\begin{proof}
  Recall from Section~\ref{ssec:Conditional expectation with respect to a counting process} that
  \begin{equation*}
    \prev_\prob\pr{f\pr{X_s}\cond X_u=x_u, X_t=x}
    = \inf f + \int_{\inf f}^{\sup f} \prob\pr{\set{f\pr{X_s}>\alpha}\cond X_u=x_u, X_t=x} \,\mathrm{d}\alpha.
  \end{equation*}
  Observe now that \(\set{f\pr{X_s}>\alpha}=\pr{X_s\in A_\alpha}\) with \(A_\alpha\coloneqq\set{y\in\stsp\colon f\pr{y>\alpha}}\subseteq\stsp\).
  Consequently,
  \begin{equation*}
    \prev_\prob\pr{f\pr{X_s}\cond X_u=x_u, X_t=x}
    = \inf f + \int_{\inf f}^{\sup f} \mu\pr{\set{f>\alpha}} \,\mathrm{d}\alpha,
  \end{equation*}
  with \(\set{f>\alpha}\coloneqq\set{y\in\stsp\colon f\pr{y}>\alpha}\) and where \(\mu\) is the \(\sigma\)-additive measure defined as in Lemma~\ref{lem:Mu from trans prob is sigma additive}:
  \begin{equation}
    \mu\colon\ccpevents\pr{\stsp}\to\reals\colon
    A \mapsto\mu\pr{A}\coloneqq\prob\pr{X_s\in A\cond X_u=x_u, X_t=x}.
  \end{equation}
  Let \(f'\coloneqq f-\inf f\).
  It then follows from Levi's Monotone Convergence Theorem---with the sequence \(\set{f' \indica{\leq i}}_{i\in\nats}\)---that
  \begin{equation*}
    \inf f + \int_{\inf f}^{\sup f} \mu\pr{\set{f>\alpha}} \,\mathrm{d}\alpha
    = \inf f+\int_{0}^{\sup f'} \mu\pr{\set{f'>\alpha}} \,\mathrm{d}\alpha
    = \inf f + \sum_{y=0}^{+\infty} f'\pr{y}\mu\pr{\set{y}}
    = \sum_{y=0}^{+\infty} f\pr{y}\mu\pr{\set{y}},
  \end{equation*}
  where for the final equality we have also used that \(1=\sum_{y=0}^{+\infty}\mu\pr{\set{y}}\).
  The stated now follows from this equality and Lemma~\ref{lem:Mu from trans prob is sigma additive}.
\end{proof}

\subsection{With Respect to the Consistent Counting Processes}
\subsubsection{Some Notation And Intermediary Technical Results}
Before we can prove our main results, we introduce some additional notation and establish some useful technical results.
Consider a counting process~\(\prob\).
Fix some \(t,s\) in \(\nnegreals\) with \(t\leq s\), some \(u\) in \(\setoftseq_{<t}\), some \(x_u\) in \(\stsp_u\) and some \(\lowx, \upx\) in \(\stsp\) such that \(x_{\max u}\leq\lowx\leq\upx\), and let \(\chi\coloneqq\set{x\in\stsp\colon \lowx\leq x\leq\upx}\).
We now consider the operator \(T^\chi_{x_u, t, s}\colon\setoffn\pr{\chi}\to\setoffn\pr{\chi}\), defined for all \(f^\chi\) in \(\setoffn\pr{\chi}\) by
\begin{equation}
\label{eqn:T^chi}
  \br{T^\chi_{x_u, t, s} f^\chi}\pr{x}
  \coloneqq \sum_{y=x}^{\upx-1} f^\chi\pr{y}\prob\pr{X_s=y\cond X_u=x_u,X_t=x}
  + f^\chi\pr{\upx} \prob\pr{X_s\geq\upx\cond X_u=x_u,X_t=x}
  \quad\text{for all } x\in\chi,
\end{equation}
where for notational simplicity we let the empty sum equal zero.
\begin{lemma}
\label{lem:T^chi is a linear transition transformation}
  Consider a counting process~\(\prob\).
  Fix some \(t,s\) in \(\nnegreals\) with \(t\leq s\), some \(u\) in \(\setoftseq_{<t}\), some \(x_u\) in \(\stsp_u\) and some \(\lowx, \upx\) in \(\stsp\) with \(x_{\max u}\leq\lowx\leq\upx\).
  If we let \(\chi\coloneqq\set{x\in\stsp\colon \lowx\leq x\leq\upx}\), then \(T^\chi_{x_u, t, s}\colon\setoffn\pr{\chi}\to\setoffn\pr{\chi}\) is a linear counting transformation and
  \begin{equation*}
    \br{T^\chi_{x_u, t, s} f^\chi}\pr{\upx}
    = f^\chi\pr{\upx}
    \quad\text{for all } f^\chi \in \setoffn\pr{\chi}.
  \end{equation*}
\end{lemma}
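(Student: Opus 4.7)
The plan is to verify the four defining conditions \ref{def:LinTT:Homogeneity}--\ref{def:LinTT:Counting} of Definition~\ref{def:Linear transition transformation} directly from the explicit formula~\eqref{eqn:T^chi}, plus the boundary identity $[T^\chi_{x_u,t,s}f^\chi](\upx)=f^\chi(\upx)$. Well-definedness as a map $\setoffn(\chi)\to\setoffn(\chi)$ is immediate: $\chi$ is finite, so every real-valued function on it is automatically bounded, and the right-hand side of \eqref{eqn:T^chi} is a finite real-linear combination of values of~$f^\chi$.

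First, homogeneity~\ref{def:LinTT:Homogeneity} and additivity~\ref{def:LinTT:Additivity} are inherited term by term from the linearity of $f^\chi\mapsto f^\chi(y)$ in \eqref{eqn:T^chi}, since all the probability weights are fixed reals depending only on $(x_u,t,s,x)$ and not on~$f^\chi$. This step is essentially bookkeeping.

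The central observation, which drives the remaining two properties and the boundary identity, is that the coefficients in \eqref{eqn:T^chi} form a probability mass in the sense that
\begin{equation*}
  \sum_{y=x}^{\upx-1}\prob(X_s=y\cond X_u=x_u,X_t=x)+\prob(X_s\geq\upx\cond X_u=x_u,X_t=x)=1
  \quad\text{for every }x\in\chi.
\end{equation*}
To justify this, I would invoke Lemma~\ref{lem:Coherent prob on cpdomain:prob of <= x} to write $\prob(X_s\geq x\cond X_u=x_u,X_t=x)$ as the sum of $\prob(X_s=y\cond\cdot)$ for $y=x,\dots,\upx-1$ plus $\prob(X_s\geq\upx\cond\cdot)$, and then use \ref{Paths:Non-decreasing} together with \ref{LOP:1 if C in A} to conclude that $\prob(X_s\geq x\cond X_u=x_u,X_t=x)=1$ because $(X_u=x_u,X_t=x)\subseteq(X_s\geq x)$ whenever $t\leq s$. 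Here we rely on the standing hypothesis $x_{\max u}\leq\lowx\leq x\leq\upx$ to guarantee that $(X_u=x_u,X_t=x)$ is a legitimate, non-empty conditioning event in $\cpdomain$.

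Given this, the bound~\ref{def:LinTT:Bounded by inf} follows because $[T^\chi_{x_u,t,s}f^\chi](x)$ is a convex combination of values of $f^\chi$ and hence $\geq\inf f^\chi$. The counting property~\ref{def:LinTT:Counting} is trivial: every index $y$ in \eqref{eqn:T^chi} satisfies $y\geq x$, so replacing $f^\chi$ by $\indica{\geq x}f^\chi$ leaves the expression unchanged. Finally, for the boundary identity, setting $x=\upx$ makes the finite sum in \eqref{eqn:T^chi} empty (it runs from $\upx$ to $\upx-1$), so only the second term survives, and the identity $\prob(X_s\geq\upx\cond X_u=x_u,X_t=\upx)=1$ (again from \ref{Paths:Non-decreasing} and \ref{LOP:1 if C in A}) yields $[T^\chi_{x_u,t,s}f^\chi](\upx)=f^\chi(\upx)$. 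No step here is a serious obstacle; the only subtlety is making sure the normalisation identity is justified cleanly from the excerpt's lemmas rather than assumed.
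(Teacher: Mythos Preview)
Your proposal is correct and follows essentially the same approach as the paper: both verify \ref{def:LinTT:Homogeneity}--\ref{def:LinTT:Counting} directly from the defining formula~\eqref{eqn:T^chi}, using the normalisation identity $\prob(X_s\geq x\cond X_u=x_u,X_t=x)=1$ (via \ref{Paths:Non-decreasing} and \ref{LOP:1 if C in A}) to establish the bound and the boundary value at~$\upx$. The only cosmetic difference is that you isolate the normalisation as a ``central observation'' up front, whereas the paper derives it inline while proving \ref{def:LinTT:Bounded by inf}.
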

\begin{proof}
  We first verify that \(T^\chi_{x_u, t, s}\) is a linear counting transformation.
  To that end, we just check the four conditions of Definition~\ref{def:Linear transition transformation}.
  That \(T^\chi_{x_u, t, s}\) is linear transformation---that is, \ref{def:LinTT:Homogeneity} and \ref{def:LinTT:Additivity}---follows immediately from Equation~\eqref{eqn:T^chi}.
  To verify \ref{def:LinTT:Bounded by inf}, we fix some \(f^\chi\) in \(\setoffn\pr{\chi}\) and some \(x\) in \(\chi\), and observe that
  \begin{align*}
    \br{T^\chi_{x_u, t, s} f^\chi}\pr{x}
    &= \sum_{y=x}^{\upx-1} f^\chi\pr{y}\prob\pr{X_s=y\cond X_u=x_u,X_t=x} + f^\chi\pr{\upx} \prob\pr{X_s\geq\upx\cond X_u=x_u,X_t=x} \\
    &\geq \sum_{y=x}^{\upx-1} \pr{\inf f^\chi} \prob\pr{X_s=y\cond X_u=x_u,X_t=x} + \pr{\inf f^\chi} \prob\pr{X_s\geq\upx\cond X_u=x_u,X_t=x},
  \end{align*}
  where the inequality holds because we replace each term by a lower term.
  From this, it now follows that
  \begin{align*}
    \br{T^\chi_{x_u, t, s} f^\chi}\pr{x}
    &= \pr{\inf f^\chi} \prob\pr{X_s\geq x\cond X_u=x_u,X_t=x}
    = \pr{\inf f^\chi} \prob\pr{X_u=x_u, X_t=x, X_s\geq x\cond X_u=x_u,X_t=x} \\
    &= \inf f^\chi \prob\pr{X_u=x_u, X_t=x \cond X_u=x_u,X_t=x}
    = \inf f^\chi,
  \end{align*}
  where for the first equality we have used the additivity of \(\prob\) and for the second equality we have used \ref{Paths:Non-decreasing}.
  The final condition \ref{def:LinTT:Counting} again follows immediately from Equation~\eqref{eqn:T^chi}.

  To verify the second part of the statement, we fix some \(f^\chi\) in \(\setoffn\pr{\chi}\) and observe that
  \begin{align*}
    \br{T^\chi_{x_u, t, s} f^\chi}\pr{\upx}
    &= f^\chi\pr{\upx}\prob\pr{X_s\geq \upx\cond X_u=x_u, X_t=\upx}
    = f^\chi\pr{\upx}\prob\pr{X_u=x_u, X_t=\upx, X_s\geq \upx\cond X_u=x_u, X_t=\upx} \\
    &= f^\chi\pr{\upx}\prob\pr{X_u=x_u, X_t=\upx\cond X_u=x_u, X_t=\upx}
    = f^\chi\pr{\upx},
  \end{align*}
  where for the third equality we have again used \ref{Paths:Non-decreasing}.
\end{proof}

The following result is heavily inspired by \cite[Proposition~4.7]{2017Krak}, and is essential to the proof of Lemma~\ref{lem:Heine-Borel generated sequence}.
\begin{lemma}
\label{lem:Bound on derivative of the transition matrix of a consistent process}
  Consider a counting process~\(\prob\) that is consistent with the rate interval~\(\Lambda\).
  Fix some \(t\) in \(\nnegreals\), \(u\) in \(\setoftseq_{<t}\), \(x_u\) in \(\stsp_u\) and \(\lowx,\upx\) in \(\stsp\) with \(x_{\max u}\leq\lowx\leq\upx\), and let \(\chi\coloneqq\set{x\in\stsp\colon \lowx\leq x\leq \upx}\).
  Then
  \begin{equation*}
    \pr{\forall\epsilon\in\posreals}
    \pr{\exists\delta\in\posreals}
    \pr{\forall \Delta\in\posreals, \Delta<\delta}
    \pr{\exists \trm^\chi \in \setoftrm^\chi}
    \norm*{\frac{T^\chi_{x_u, t, t+\Delta}-I}{\Delta}-\trm^\chi}
    \leq \epsilon
  \end{equation*}
  and, if \(t>0\),
  \begin{equation*}
    \pr{\forall\epsilon\in\posreals}
    \pr{\exists\delta\in\posreals}
    \pr{\forall \Delta\in\posreals, \Delta<\delta}
    \pr{\exists \trm^\chi \in \setoftrm^\chi}
    \norm*{\frac{T^\chi_{x_u, t-\Delta, t}-I}{\Delta}-\trm^\chi}
    \leq \epsilon.
  \end{equation*}
\end{lemma}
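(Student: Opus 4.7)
The plan is to exhibit, for each sufficiently small $\Delta$, an explicit element of $\setoftrm^\chi$ obtained by \emph{clipping} the empirical rate $\prob\pr{X_{t+\Delta}=x+1\cond X_u=x_u, X_t=x}/\Delta$ into the rate interval $\br{\llambda,\ulambda}$ at each state $x<\upx$, and then to bound the resulting operator norm by exploiting that $\chi$ is finite together with Equation~\eqref{eqn:Norm of linear transformation of finite state space}.

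Fix $\epsilon\in\posreals$ and set $\epsilon'\coloneqq\epsilon/4$. For each $x\in\chi\setminus\set{\upx}$, the consistency condition~\eqref{eqn:Consistent rate from right} and orderliness~\ref{CP:Orederliness} supply some $\delta_x\in\posreals$ such that for all $\Delta\in\posreals$ with $\Delta<\delta_x$,
\begin{equation*}
  \llambda-\epsilon'
  \leq \frac{\prob\pr{X_{t+\Delta}=x+1\cond X_u=x_u, X_t=x}}{\Delta}
  \leq \ulambda+\epsilon'
  \quad\text{and}\quad
  \frac{\prob\pr{X_{t+\Delta}\geq x+2\cond X_u=x_u, X_t=x}}{\Delta}
  \leq \epsilon'.
\end{equation*}
Since $\chi$ is finite, $\delta\coloneqq\min\set{\delta_x\colon x\in\chi\setminus\set{\upx}}$ is positive. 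For each such $\Delta\in\pr{0,\delta}$ and each $x\in\chi\setminus\set{\upx}$, I would let $\lambda_x\pr{\Delta}\coloneqq\max\set{\llambda,\min\set{\ulambda,\prob\pr{X_{t+\Delta}=x+1\cond X_u=x_u, X_t=x}/\Delta}}$ and let $\trm^\chi$ be the associated transition rate matrix on $\chi$ from Lemma~\ref{lem:ltro^chi:Set of dominated transition rate matrices}; by construction $\trm^\chi\in\setoftrm^\chi$ and $\abs{\lambda_x\pr{\Delta}-\prob\pr{X_{t+\Delta}=x+1\cond X_u=x_u, X_t=x}/\Delta}\leq\epsilon'$.

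It then remains to bound the operator norm of $B\coloneqq\pr{T^\chi_{x_u,t,t+\Delta}-I}/\Delta-\trm^\chi$. Since $\chi$ is finite, Equation~\eqref{eqn:Norm of linear transformation of finite state space} gives $\norm{B}=\max_{x\in\chi}\sum_{y\in\chi}\abs{B\pr{x,y}}$. For the row $x=\upx$, Lemma~\ref{lem:T^chi is a linear transition transformation} implies that $T^\chi_{x_u,t,t+\Delta}$ agrees with $I$ on functions evaluated at $\upx$ and $\trm^\chi$ annihilates this row, so this row of $B$ vanishes. For $x<\upx$, I would read off the matrix entries of $T^\chi_{x_u,t,t+\Delta}$ from Equation~\eqref{eqn:T^chi} and combine the telescoping identities
\begin{equation*}
  \frac{1-\prob\pr{X_{t+\Delta}=x\cond X_u=x_u, X_t=x}}{\Delta}
  = \frac{\prob\pr{X_{t+\Delta}=x+1\cond X_u=x_u, X_t=x}+\prob\pr{X_{t+\Delta}\geq x+2\cond X_u=x_u, X_t=x}}{\Delta}
\end{equation*}
and
\begin{equation*}
  \sum_{y=x+2}^{\upx-1}\frac{\prob\pr{X_{t+\Delta}=y\cond X_u=x_u, X_t=x}}{\Delta}+\frac{\prob\pr{X_{t+\Delta}\geq\upx\cond X_u=x_u, X_t=x}}{\Delta}
  = \frac{\prob\pr{X_{t+\Delta}\geq x+2\cond X_u=x_u, X_t=x}}{\Delta}
\end{equation*}
with the triangle inequality to obtain $\sum_{y\in\chi}\abs{B\pr{x,y}}\leq 2\epsilon'+2\epsilon'=4\epsilon'=\epsilon$. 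Taking the maximum over $x\in\chi$ yields $\norm{B}\leq\epsilon$, as required. The left-limit statement (when $t>0$) follows by the same argument after substituting the second parts of Equation~\eqref{eqn:Consistent rate from left} and~\ref{CP:Orederliness} for the conditioning $\pr{X_u=x_u, X_{t-\Delta}=x}$.

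The hard part is the careful bookkeeping of the matrix entries in the telescoping argument, in particular the fact that the last column of $T^\chi_{x_u,t,t+\Delta}$ aggregates all probability mass $\prob\pr{X_{t+\Delta}\geq\upx\cond\cdot}$ rather than only the point mass at $\upx$; edge cases such as $\upx=x+1$ (where the intermediate sum collapses) must be checked separately but yield the same bound.
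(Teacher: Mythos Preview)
Your proof is correct and follows essentially the same strategy as the paper's: construct $\trm^\chi\in\setoftrm^\chi$ row by row from the approximate rates and then bound the operator norm via the explicit row-sum formula of Equation~\eqref{eqn:Norm of linear transformation of finite state space}, treating the rows $x<\upx$ and $x=\upx$ separately. Your telescoping identity for the tail, which collapses $\sum_{y\geq x+2}\abs{B\pr{x,y}}$ into the single quantity $\prob\pr{X_{t+\Delta}\geq x+2\cond\cdots}/\Delta$, is in fact a neat simplification over the paper, which bounds each far-off-diagonal entry separately and therefore has to choose its tolerances with a $\card{\chi}$-dependent factor.
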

\begin{proof}
  Observe that if \(\lowx=\upx\), then \(\chi\) is the singleton containing \(\lowx=\upx\).
  In this case, \(\setoftrm^\chi\pr{\set{\lowx}}\) because \(0\) is the only linear transformation on \(\setoffn\pr{\chi}=\setoffn\pr{\set{\lowx}}\) that can satisfy all four conditions of Definition~\ref{def:Lower transition rate transformation}---and specifically \ref{ltrt:zero row-sums}.
  Observe now that for any \(\Delta\) in \(\posreals\),
  \begin{equation*}
    \norm*{\frac{T^\chi_{x_u,t,t+\Delta}-I}{\Delta}-0}
    = \frac{T^\chi_{x_u,t,t+\Delta}\pr{\lowx,\lowx}-I\pr{\lowx,\lowx}}{\Delta}-0\pr{\lowx,\lowx}
    = \frac{T^\chi_{x_u,t,t+\Delta}\pr{\lowx,\lowx}-1}{\Delta}
    = \frac{\br{T^\chi_{x_u,t,t+\Delta}\indica{\lowx}}\pr{\lowx}-1}{\Delta}
    = \frac{\indica{\lowx}\pr{\lowx}-1}{\Delta}
    = 0,
  \end{equation*}
  where for the second equality we have used Equation~\eqref{eqn:Norm of linear transformation of finite state space} and for the penultimate equality we have used Lemma~\ref{lem:T^chi is a linear transition transformation}.
  Hence, the first part of the stated is trivially verified.
  Similar reasoning yields that the first part is trivially satisfies for all \(\Delta\) in \(\posreals\) such that \(\Delta<t-\max u\).

  Next, we consider the alternative case that \(\lowx<\upx\).
  We here only prove the first inequality of the stated, the proof of the second inequality is entirely analoguous.
  Fix any arbitrary \(\epsilon\) in \(\posreals\), and choose some \(\epsilon_1\) and \(\epsilon_2\) in \(\posreals\) such that \(2\epsilon_1+\card{\chi}\epsilon_2\leq\epsilon\).
  Because \(\prob\) is consistent with \(\Lambda\), it follows from Equation~\eqref{eqn:Consistent rate from right} that for any \(x\) in
  \(\chi'\coloneqq\chi\setminus\set{\upx}\), there is a \(\delta_{1,x}\) in \(\posreals\) such that for all \(\Delta\) in \(\posreals\), there is a \(\lambda_{x,\Delta}\) in \(\Lambda\) such that
  \begin{equation}
  \label{eqn:Proof of Bound on derivative of the transition matrix of a consistent process:prob of one}
    \abs*{\frac{\prob\pr{X_{t+\Delta}=x+1\cond X_u=x_u, X_t=x}}{\Delta}-\lambda_{x,\Delta}}
    \leq \epsilon_1.
  \end{equation}
  Additionally, as \(\prob\) is a counting process it follows from \ref{CP:Orederliness} that for all \(x\) in \(\chi'\) there is a \(\delta_{2,x}\) in \(\posreals\) such that
  \begin{equation}
  \label{eqn:Proof of Bound on derivative of the transition matrix of a consistent process:prob of two or more}
    \pr{\forall\Delta\in\posreals, \Delta<\delta_{2,x}}~
    0 \leq
    \frac{\prob\pr{X_{t+_\Delta}\geq x+2\cond X_u=x_u, X_t=x}}{\Delta}
    \leq \epsilon_2.
  \end{equation}
  Observe that clearly \(\pr{X_{t+_\Delta}=y}\subseteq\pr{X_{t+_\Delta}\geq x+2}\) if \(x+2\leq y\), whence it follows from  Equation~\eqref{eqn:Proof of Bound on derivative of the transition matrix of a consistent process:prob of two or more} and the monotonicity of \(\prob\) that, for all \(x\) and \(y\) in \(\chi\) with \(x+2\leq y\),
  \begin{equation}
  \label{eqn:Proof of Bound on derivative of the transition matrix of a consistent process:prob of y}
    \pr{\forall\Delta\in\posreals, \Delta<\delta_{2,x}}~
    0 \leq
    \frac{\prob\pr{X_{t+_\Delta}=y\cond X_u=x_u, X_t=x}}{\Delta}
    \leq \epsilon_2.
  \end{equation}
  Similarly, it is clear that \(\pr{X_{t+_\Delta}\geq \upx}\subseteq\pr{X_{t+_\Delta}\geq x+2}\) if \(x+2\leq\upx\), whence it follows that for all \(x\) in \(\chi\) with \(x+2\leq\upx\),
  \begin{equation}
  \label{eqn:Proof of Bound on derivative of the transition matrix of a consistent process:prob of upx or more}
    \pr{\forall\Delta\in\posreals, \Delta<\delta_{2,x}}~
    0 \leq
    \frac{\prob\pr{X_{t+_\Delta}\geq \upx\cond X_u=x_u, X_t=x}}{\Delta}
    \leq \epsilon_2.
  \end{equation}
  Additionally, we use the finite additivity of \(\prob\) and Equations~\eqref{eqn:Proof of Bound on derivative of the transition matrix of a consistent process:prob of one} and \eqref{eqn:Proof of Bound on derivative of the transition matrix of a consistent process:prob of two or more} with \(x=\upx-1\), to yield
  \begin{equation}
  \label{eqn:Proof of Bound on derivative of the transition matrix of a consistent process:prob of one or more}
    \pr{\forall\Delta\in\posreals, \Delta<\delta_{1,\upx-1}, \Delta<\delta_{2,\upx-1}}~
    \abs*{\frac{\prob\pr{X_{t+\Delta}\geq\upx\cond X_u=x_u, X_t=\upx-1}}{\Delta}+\lambda_{\upx-1,\Delta}}
    \leq \epsilon_1+\epsilon_2.
  \end{equation}
  To bound \(\prob\pr{X_{t+\Delta}=x\cond X_u=x_u, X_t=x}\), we recall from Lemma~\ref{lem:Coherent prob on cpdomain:prob of <= x} that
  \begin{equation*}
    \prob\pr{X_{t+\Delta}=x\cond X_u=x_u, X_t=x}
    = 1-\prob\pr{X_{t+\Delta}=x+1\cond X_u=x_u, X_t=x}-\prob\pr{X_{t+\Delta}\geq x+2\cond X_u=x_u, X_t=x}.
  \end{equation*}
  We now combine this equality and Equations~\eqref{eqn:Proof of Bound on derivative of the transition matrix of a consistent process:prob of one} and \eqref{eqn:Proof of Bound on derivative of the transition matrix of a consistent process:prob of two or more}, to yield that for all \(x\) in \(\chi'\),
  \begin{equation}
  \label{eqn:Proof of Bound on derivative of the transition matrix of a consistent process:prob of none}
    \pr{\forall\Delta\in\posreals, \Delta<\delta_{1,x}, \Delta<\delta_{2,x}}~
    \abs*{\frac{\prob\pr{X_{t+\Delta}=x\cond X_u=x_u, X_t=x}-1}{\Delta}+\lambda_{x,\Delta}}
    \leq \epsilon_1+\epsilon_2.
  \end{equation}

  Let \(\delta\coloneqq\min\cup_{x\in\chi'}\set{\delta_{1,x}, \delta_{2,x}}\), and fix any \(\Delta\) in \(\posreals\) with \(\Delta<\delta\).
  Let \(\trm^\chi_\Delta\) be the element of \(\setoftrm^\chi\) that is characterised by the sequence~\(\set{\lambda_{x, \Delta}}_{x\in\chi'}\) in \(\Lambda\), as explained in Lemma~\ref{lem:ltro^chi:Set of dominated transition rate matrices}.
  To verify the stated, we now set out to bound
  \begin{equation*}
    \norm*{\frac{T^\chi_{x_u, t, t+\Delta}-I}{\Delta}-\trm^\chi_\Delta}
    = \max_{x\in\chi} \sum_{y\in\chi} \abs*{\frac{T^\chi_{x_u, t, t+\Delta}\pr{x, y}-I\pr{x, y}}{\Delta}-\trm^\chi_\Delta\pr{x, y}},
  \end{equation*}
  where the equality follows from Equation~\eqref{eqn:Norm of linear transformation of finite state space}.
  To that end, we take a closer look at the expression on the right for all \(x\) in \(\chi\).
  Observe first that
  \begin{equation*}
    \sum_{y\in\chi} \abs*{\frac{T^\chi_{x_u, t, t+\Delta}\pr{x, y}-I\pr{x, y}}{\Delta}-\trm^\chi_\Delta\pr{x, y}}
    = \sum_{y=x}^{\upx} \abs*{\frac{T^\chi_{x_u, t, t+\Delta}\pr{x, y}-I\pr{x, y}}{\Delta}-\trm^\chi_\Delta\pr{x, y}},
  \end{equation*}
  because, by construction, \(T^\chi_{x_u,t,t+\Delta}\pr{x,y}=0=\trm^\chi\pr{x,y}\) and, by definition, \(I\pr{x,y}=0\) for all \(y\) in \(\chi\) with \(y<x\).
  We now use the definitions of \(T^\chi_{x_u,t,t+\Delta}\) and \(\trm^\chi_\Delta\) to rewrite the expression on the right hand side.
  If \(x+2\leq\upx\), then this yields
  \begin{align*}
    \sum_{y\in\chi} \abs*{\frac{T^\chi_{x_u, t, t+\Delta}\pr{x, y}-I\pr{x, y}}{\Delta}-\trm^\chi_\Delta\pr{x, y}}
    &= \abs*{\frac{\prob\pr{X_{t+\Delta}=x\cond X_u=x_u, X_t=x}-1}{\Delta}+\lambda_{x,\Delta}}\\&\qquad+\abs*{\frac{\prob\pr{X_{t+\Delta}=x+1\cond X_u=x_u, X_t=x}}{\Delta}-\lambda_{x,\Delta}}\\&\qquad\qquad+\sum_{y=x+2}^{\upx}\abs*{\frac{\prob\pr{X_{t+\Delta}=y\cond X_u=x_u, X_t=x}}{\Delta}}\\&\qquad\qquad\qquad+\abs*{\frac{\prob\pr{X_{t+\Delta}\geq \upx\cond X_u=x_u, X_t=x}}{\Delta}}\\
    &\leq \epsilon_1+\epsilon_2+\epsilon_1+\pr{\upx-x-2}\epsilon_2+\epsilon_2
    \leq 2\epsilon_1+\card{\chi}\epsilon_2
    \leq \epsilon,
  \end{align*}
  where for the first inequality we have used Equations~\eqref{eqn:Proof of Bound on derivative of the transition matrix of a consistent process:prob of one}, \eqref{eqn:Proof of Bound on derivative of the transition matrix of a consistent process:prob of y}, \eqref{eqn:Proof of Bound on derivative of the transition matrix of a consistent process:prob of upx or more} and \eqref{eqn:Proof of Bound on derivative of the transition matrix of a consistent process:prob of none}.
  If \(x+1=\upx\), then this yields
  \begin{align*}
    \sum_{y\in\chi} \abs*{\frac{T^\chi_{x_u, t, t+\Delta}\pr{x, y}-I\pr{x, y}}{\Delta}-\trm^\chi_\Delta\pr{x, y}}
    &= \abs*{\frac{\prob\pr{X_{t+\Delta}=x\cond X_u=x_u, X_t=x}-1}{\Delta}+\lambda_{x,\Delta}}\\&\qquad+\abs*{\frac{\prob\pr{X_{t+\Delta}\geq \upx\cond X_u=x_u, X_t=x}}{\Delta}-\lambda_{x,\Delta}} \\
    &\leq \epsilon_1+\epsilon_2+\epsilon_1+\epsilon_2
    \leq 2\epsilon_1+\card{\chi}\epsilon_2
    \leq \epsilon,
  \end{align*}
  where for the first inequality we have used Equations~\eqref{eqn:Proof of Bound on derivative of the transition matrix of a consistent process:prob of one or more} and \eqref{eqn:Proof of Bound on derivative of the transition matrix of a consistent process:prob of none}.
  Finally, if \(x=\upx\), then it follows from Lemma~\ref{lem:T^chi is a linear transition transformation} that
  \begin{equation*}
    \sum_{y\in\chi} \abs*{\frac{T^\chi_{x_u, t, t+\Delta}\pr{x, y}-I\pr{x, y}}{\Delta}-\trm^\chi_\Delta\pr{x, y}}
    = \abs*{\frac{\br{T^\chi_{x_u,t,t+\Delta}\indica{\upx}}\pr{\upx}-1}{\Delta}} \\
    = \abs*{\frac{\indica{\upx}\pr{\upx}-1}{\Delta}}
    = 0
    \leq \epsilon.
  \end{equation*}
  From these three cases, we infer that
  \begin{equation*}
    \norm*{\frac{T^\chi_{x_u, t, t+\Delta}-I}{\Delta}-\trm^\chi_\Delta}
    = \max_{x\in\chi} \sum_{y\in\chi} \abs*{\frac{T^\chi_{x_u, t, t+\Delta}\pr{x, y}-I\pr{x, y}}{\Delta}-\trm^\chi_\Delta\pr{x, y}}
    \leq \epsilon,
  \end{equation*}
  as required.
\end{proof}
We now use Lemma~\ref{lem:Bound on derivative of the transition matrix of a consistent process} to establish a result---heavily inspired by \cite[Lemma~F.1]{2017Krak}---that will be essential in the proof of Proposition~\ref{prop:Lower bound for conditional expectation of consistent process}.
\begin{lemma}
\label{lem:Heine-Borel generated sequence}
  Consider a counting process~\(\prob\) that is consistent with \(\Lambda=\br{\llambda, \ulambda}\), some \(t, s\) in \(\nnegreals\) with \(t<s\), some \(u\) in \(\setoftseq_{<t}\) and some \(x_{u}\) in \(\stsp_{u}\).
  Fix some \(\lowx, \upx\) in \(\stsp\) with \(x_{\max u}\leq\lowx\leq \upx\), and let \(\chi\coloneqq\set{x\in\stsp\colon \lowx\leq x\leq \upx}\).
  Then for all \(\epsilon, \delta\) in \(\posreals\), there is a \(v=t_0, \dots, t_n\) in \(\setoftseq_{\br{t, s}}\) such that \(\sigma\pr{v} < \delta\) and
  \begin{equation*}
    \pr{\forall i \in \set{1, \dots, n}}
    \pr{\exists \trm^\chi_i \in \setoftrm^\chi}
    ~\norm{T^\chi_{x_{u}, t_{i-1}, t_{i}} - \pr{I+\Delta_i\trm^\chi_i}}
    \leq \Delta_i\epsilon.
  \end{equation*}
\end{lemma}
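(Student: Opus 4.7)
The plan is to establish the result by a supremum (continuation) argument along $[t, s]$, using Lemma~\ref{lem:Bound on derivative of the transition matrix of a consistent process} as the local building block. Applied at each $t' \in [t, s)$ with the given $\epsilon$, that lemma supplies a positive $\delta_R(t')$ such that for every $\Delta \in (0, \delta_R(t'))$ some $\trm^\chi \in \setoftrm^\chi$ satisfies
\begin{equation*}
  \norm{T^\chi_{x_u, t', t'+\Delta} - (I + \Delta \trm^\chi)} \leq \Delta \epsilon;
\end{equation*}
analogously, at each $t' \in (t, s]$ one obtains a positive $\delta_L(t')$ controlling $T^\chi_{x_u, t'-\Delta, t'}$ from the left.

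I plan to work with the set
\begin{equation*}
  E \coloneqq \set*{r\in(t,s] \colon \text{there exists } v_r \in \setoftseq_{\br{t,r}},\, \sigma(v_r)<\delta,\text{ meeting the lemma's bound on each subinterval}},
\end{equation*}
where ``meeting the lemma's bound on each subinterval'' means that for every subinterval $[t_{i-1},t_i]$ of $v_r$ some $\trm^\chi_i\in\setoftrm^\chi$ satisfies $\norm{T^\chi_{x_u,t_{i-1},t_i}-(I+\Delta_i\trm^\chi_i)}\le\Delta_i\epsilon$. Proving $s\in E$ is exactly the statement. The argument proceeds in three steps. \textbf{(i) Non-emptiness:} any $r\in(t,\min(s,t+\delta,t+\delta_R(t)))$ belongs to $E$ via the single-subinterval partition $\{t,r\}$ and the right-sided bound at $t$. \textbf{(ii) Upward-openness in $[t,s)$:} if $r\in E$ and $r<s$, then for any $r'\in(r,\min(s,r+\delta,r+\delta_R(r)))$, appending $r'$ to $v_r$ yields a valid partition for $r'$, since the new final subinterval $[r,r']$ is controlled by the right-sided bound at $r$. \textbf{(iii) Closure at the supremum:} set $r^*\coloneqq\sup E$, which lies in $(t,s]$ by (i). Since $\delta_L(r^*)>0$, pick $r\in E\cap(r^*-\min(\delta,\delta_L(r^*)),r^*)$, which is possible because $r^*=\sup E$; appending $r^*$ to $v_r$ produces a partition of $[t,r^*]$ whose final subinterval $[r,r^*]$ has length less than $\delta_L(r^*)$, so the left-sided version of Lemma~\ref{lem:Bound on derivative of the transition matrix of a consistent process} at $r^*$ furnishes the needed $\trm^\chi\in\setoftrm^\chi$. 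Hence $r^*\in E$.

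Combining (ii) and (iii): if $r^*<s$, then $r^*\in E$ together with (ii) yields some $r'>r^*$ in $E$, contradicting $r^*=\sup E$. Therefore $r^*=s$ and $s\in E$, which is the desired partition.

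The main obstacle will be Step (iii). Since the pointwise constants $\delta_R(\cdot)$ furnished by Lemma~\ref{lem:Bound on derivative of the transition matrix of a consistent process} need not be bounded away from zero near $r^*$, one cannot in general close the partition from the left by a right-sided continuation argument. The essential observation that makes the closure work is that the approximation on a given subinterval $[t_{i-1},t_i]$ can be delivered in two ways—either by the right-sided version at $t_{i-1}$ or by the left-sided version at $t_i$—and only the latter is guaranteed to apply for the terminal subinterval, which is where $\delta_L(r^*)>0$ becomes decisive.
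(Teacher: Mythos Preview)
Your proposal is correct. The paper itself omits the proof, deferring entirely to \cite[Lemma~F.1]{2017Krak}; given the lemma's name, that reference presumably proceeds by covering $[t,s]$ with the open intervals $(t'-\delta_L(t'),t'+\delta_R(t'))$ supplied by Lemma~\ref{lem:Bound on derivative of the transition matrix of a consistent process}, extracting a finite subcover via Heine--Borel compactness, and then reading off the partition from the subcover's endpoints. Your supremum/continuation argument is the standard alternative: it reproves the relevant consequence of Heine--Borel directly, trading the appeal to compactness for an explicit induction along the interval. The two approaches are equivalent in strength; yours is slightly more self-contained, while the covering argument makes the role of compactness transparent.

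One presentational point: in Step~(iii) you write ``pick $r\in E\cap(r^*-\min(\delta,\delta_L(r^*)),r^*)$, which is possible because $r^*=\sup E$.'' Strictly, the supremum only guarantees points of $E$ in $(r^*-\eta,r^*]$; you should first dispose of the case $r^*\in E$ (trivially done) before selecting $r<r^*$. This is cosmetic and does not affect the validity of the argument.
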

\begin{proof}
  Our proof is almost entirely equivalent to the proof of \cite[Lemma~F.1]{2017Krak}; the only difference is that we invoke Lemma~\ref{lem:Bound on derivative of the transition matrix of a consistent process} instead of \cite[Proposition~4.7]{2017Krak}.
  Therefore, and also because it is rather lengthy, we have chosen to omit the proof.
\end{proof}

\subsubsection{Eventually Constant Functions}
Before we consider general bounded functions, we first limit ourselves to eventually constant functions.
We first establish the following useful intermediary result.
\begin{lemma}
\label{lem:f(X_s) is a simple function}
  Consider an \(f\) in \(\setofcafn\pr{\stsp}\) that is constant from \(\upx\), some \(s\) in \(\nnegreals\) and some \(u\) in \(\setoftseq{}\) such that \(\max u\leq s\).
  Then
  \begin{equation*}
    f\pr{X_s}
    = \sum_{x=0}^{\upx-1} f\pr{x}\indica{\pr{X_s=x}} + f\pr{\upx}\indica{\pr{X_s\geq \upx}}
  \end{equation*}
  such that \(f\pr{X_s}\) is an \(\cpfield_u\)-simple function.
\end{lemma}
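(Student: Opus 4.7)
\begin{proofof}{Lemma~\ref{lem:f(X_s) is a simple function}}
The plan is to verify the pointwise equality on \(\setofpths\) and then check that each indicator appearing on the right-hand side is the indicator of an event in~\(\cpfield_u\); the second part of the statement then follows directly from the definition of an \(\cpfield_u\)-simple function given in Section~\ref{ssec:Conditional expectation with respect to a counting process}.

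For the pointwise equality, fix any \(\pth\) in \(\setofpths\) and let \(x_s\coloneqq\pth\pr{s}=X_s\pr{\pth}\), so \(\br{f\pr{X_s}}\pr{\pth}=f\pr{x_s}\). First, if \(x_s<\upx\), then \(\indica{\pr{X_s=x_s}}\pr{\pth}=1\) and all the other indicators on the right-hand side vanish, so the right-hand side evaluates to \(f\pr{x_s}\). Second, if \(x_s\geq\upx\), then all indicators \(\indica{\pr{X_s=x}}\) for \(x\in\set{0,\dots,\upx-1}\) vanish while \(\indica{\pr{X_s\geq\upx}}\pr{\pth}=1\), so the right-hand side evaluates to \(f\pr{\upx}\), which equals \(f\pr{x_s}\) because \(f\) is constant from \(\upx\). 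In both cases the two sides agree, which yields the displayed equality.

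It remains to show that each of the events \(\pr{X_s=x}\) for \(x\in\set{0,\dots,\upx-1}\) and \(\pr{X_s\geq\upx}\) belongs to \(\cpfield_u\). Let \(v\) be the singleton sequence consisting of the single time point~\(s\); by assumption \(s\geq\max u\), so \(s\in u\cup[\max u,+\infty)\), which means that \(v\) satisfies the quantifier in Equation~\eqref{eqn:cpfield_u}. Taking \(B=\set{x}\subseteq\stsp_v=\stsp\) identifies \(\pr{X_s=x}\) with the finitary event \(\pr{X_v\in B}\), and taking \(B'=\set{y\in\stsp\colon y\geq\upx}\subseteq\stsp_v\) identifies \(\pr{X_s\geq\upx\}\) with the finitary event \(\pr{X_v\in B'}\); both are therefore generators of~\(\cpfield_u\) and in particular belong to~\(\cpfield_u\).

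Combining these two observations, \(f\pr{X_s}\) is written as a finite linear combination of indicators of events in~\(\cpfield_u\) with real coefficients \(f\pr{0},\dots,f\pr{\upx-1},f\pr{\upx}\), which is precisely the definition of an \(\cpfield_u\)-simple function. No subtle step is anticipated; the only minor point worth verifying is that the definition of \(\cpfield_u\) in Equation~\eqref{eqn:cpfield_u} allows infinite subsets \(B\subseteq\stsp_v\) (which it does), so that the tail event \(\pr{X_s\geq\upx}\) is a legitimate generator.
\end{proofof}
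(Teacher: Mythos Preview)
Your proof is correct and follows the same approach as the paper; the paper's version simply asserts that the displayed equality ``is easy to see'' and that the events ``are clearly contained in \(\cpfield_u\)'', whereas you spell out both verifications explicitly.
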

\begin{proof}
  It is easy to see that
  \begin{equation*}
    f\pr{X_s}
    = \sum_{y=0}^{\upx-1} f\pr{y}\indica{y}\pr{X_s} + f\pr{\upx} \indica{\geq \upx}\pr{X_s}
    = \sum_{y=0}^{\upx-1} f\pr{y}\indica{\pr{X_s=y}} + f\pr{\upx} \indica{\pr{X_s\geq \upx}}.
  \end{equation*}
  As all events in the indicators are clearly contained in \(\cpfield_u\), it furthermore follows that \(f\pr{X_s}\) is an \(\cpfield_u\)-simple function.
\end{proof}
The following result is inspired by \cite[Lemma~F.2]{2017Krak}, and one of our main reasons for introducing the notation \(T^{\chi}_{x_u,t,s}\).
\begin{lemma}
\label{lem:Product expression of the conditional expectation of any counting process}
  Consider a counting process~\(\prob\).
  Fix some \(f\) in \(\setofcafn\pr{\stsp}\) that is constant from \(\upx\), some \(t, s\) in \(\nnegreals\) with \(t\leq s\), some \(u\) in \(\setoftseq_{<t}\) and some \(\pr{x_u, x_t}\) in \(\stsp_{u\cup t}\).
  If \(x_t<\upx\), then for any \(v=t_0, t_1, \dots, t_n\) in \(\setoftseq_{\br{t,s}}\),
  \begin{equation*}
    \prev_\prob\pr{f\pr{X_s}\cond X_u=x_u, X_t=x_t}
    = \br*{T^\chi_{x_u,t,t_1} \prod_{i=2}^n T^\chi_{x_{u\cup t}, t_{i-1}, t_i} f^\chi}\pr{x_t},
  \end{equation*}
  where \(\chi\coloneqq\set{x\in\stsp\colon x_t\leq x\leq \upx}\), and \(f^\chi\) is the restriction of \(f\) to \(\chi\).
\end{lemma}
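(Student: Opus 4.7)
The plan is to prove, by induction on $k$, the slightly stronger auxiliary claim that for every $k \in \set{1, \dots, n}$ and every $g \in \setoffn\pr{\chi}$,
\[
\br*{T^\chi_{x_u,t,t_1}\prod_{i=2}^{k} T^\chi_{x_{u\cup t}, t_{i-1}, t_i}\, g}\pr{x_t} = \sum_{y=x_t}^{\upx-1} g\pr{y}\,\prob\pr{X_{t_k}=y\cond X_u=x_u, X_t=x_t} + g\pr{\upx}\,\prob\pr{X_{t_k}\geq\upx\cond X_u=x_u, X_t=x_t}.
\]
Once this is established, the lemma follows by instantiating $k=n$ (so that $t_n = s$) and $g = f^\chi$: combining Lemma~\ref{lem:f(X_s) is a simple function}, Equation~\eqref{eqn:Expectation of a simple function} and Lemma~\ref{lem:Coherent prob on cpdomain:prob of <= x}, the resulting sum is exactly $\prev_\prob\pr{f\pr{X_s}\cond X_u=x_u, X_t=x_t}$. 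The base case $k=1$ is immediate from the definition of $T^\chi_{x_u,t,t_1}$ in Equation~\eqref{eqn:T^chi} evaluated at $x_t$.

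For the inductive step, set $h \coloneqq T^\chi_{x_{u \cup t}, t_k, t_{k+1}}\, g$, so that the $(k+1)$-fold product applied to $g$ equals the $k$-fold product applied to $h$. The inductive hypothesis expresses this as $\sum_{z=x_t}^{\upx-1} h\pr{z}\,\prob\pr{X_{t_k}=z\cond\cdot} + h\pr{\upx}\,\prob\pr{X_{t_k}\geq\upx\cond\cdot}$, and Lemma~\ref{lem:T^chi is a linear transition transformation} guarantees $h\pr{\upx} = g\pr{\upx}$. Substituting the defining formula for $h\pr{z}$, interchanging the order of the double sum, and then invoking both identities of Lemma~\ref{lem:Coherent prob on cpdomain:from t to s via r} (with $r = t_k$ and intermediate time $t_{k+1}$) collapses the nested sums: $\sum_{z=x_t}^{y}\prob\pr{X_{t_{k+1}}=y\cond\cdot, X_{t_k}=z}\prob\pr{X_{t_k}=z\cond\cdot} = \prob\pr{X_{t_{k+1}}=y\cond\cdot}$ for the equality part, and analogously for the $\geq\upx$ tail, yielding precisely the claim for $k+1$.

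The main subtlety, and essentially the only genuine work, lies in the bookkeeping of the $g\pr{\upx}$ boundary contributions. Each inner operator conditions only on $X_u$, $X_t$ and the immediately preceding evaluation point, not on all earlier grid times, so the formula is \emph{not} simply iterated Markovian conditioning. However, the two companion identities in Lemma~\ref{lem:Coherent prob on cpdomain:from t to s via r}---one for $\prob\pr{X_s=y\cond\cdot}$ and one for $\prob\pr{X_s\geq y\cond\cdot}$---are exactly paired with the ``truncate-and-absorb-at-$\upx$'' structure of $T^\chi$, so that the boundary term $g\pr{\upx}\prob\pr{X_{t_k}\geq\upx\cond\cdot}$ produced by the inductive hypothesis recombines with the sum $\sum_z \prob\pr{X_{t_{k+1}}\geq\upx\cond\cdot,X_{t_k}=z}\prob\pr{X_{t_k}=z\cond\cdot}$ to produce $g\pr{\upx}\prob\pr{X_{t_{k+1}}\geq\upx\cond\cdot}$. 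Once one recognises that this recombination uses precisely the second identity of Lemma~\ref{lem:Coherent prob on cpdomain:from t to s via r}, the induction essentially writes itself.
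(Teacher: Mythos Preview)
Your proof is correct and uses the same core argument as the paper's---induction combined with the two identities of Lemma~\ref{lem:Coherent prob on cpdomain:from t to s via r} and the boundary property \(h(\upx)=g(\upx)\) from Lemma~\ref{lem:T^chi is a linear transition transformation}. The only difference is organisational: you grow the product to the right (from \(k\) to \(k+1\), for general \(g\in\setoffn(\chi)\)), whereas the paper applies its induction hypothesis to the coarser grid \(t_0, t_2, \dots, t_n\) and then inserts \(t_1\) from the left.
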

\begin{proof}
  Fix some \(\upx\) in \(\stsp\) such that \(f\) is constant starting from \(\upx\).
  Our proof is one by induction.
  First, it is an immediate consequence of Lemma~\ref{lem:f(X_s) is a simple function} and Equation~\eqref{eqn:Expectation of a simple function} that
  \begin{equation*}
    \prev_\prob\pr{f\pr{X_s}\cond X_u=x_u, X_t=x_t}
    = \sum_{y=0}^{\upx-1} f\pr{y} \prob\pr{X_s=y\cond X_u=x_u, X_t=x_t} + f\pr{\upx} \prob\pr{X_s\geq \upx\cond X_u=x_u, X_t=x_t}.
  \end{equation*}
  We use Lemma~\ref{lem:Coherent prob on cpdomain:prob of <= x}, that \(f^\chi\) is the restriction of \(f\) to \(\chi\) and Equation~\eqref{eqn:T^chi}, to yield
  \begin{align*}
    \prev_\prob\pr{f\pr{X_s}\cond X_u=x_u, X_t=x_t}
    &= \sum_{y=x_t}^{\upx-1} f\pr{y} \prob\pr{X_s=y\cond X_u=x_u, X_t=x_t} + f\pr{\upx} \prob\pr{X_s\geq \upx\cond X_u=x_u, X_t=x_t} \\
    &= \sum_{y=x_t}^{\upx-1} f^{\chi}\pr{y} \prob\pr{X_s=y\cond X_u=x_u, X_t=x_t} + f^{\chi}\pr{\upx} \prob\pr{X_s\geq \upx\cond X_u=x_u, X_t=x_t} \\
    &= \br{T^\chi_{x_u, t, s} f^\chi}\pr{x_t},
  \end{align*}
  as required.

  For the induction step, we fix some \(n\) in \(\nats\) with \(n\geq2\) and assume that the stated holds for any sequence~\(v\) of length~\(n'+1\), with \(n'\) in \(\nats\) such that \(1\leq n'<n\).
  The stated then follows for any sequence~\(v\) of length~\(n+1\), as we will now prove.
  We start with applying the induction hypothesis to the sequence \(t_0, t_2, \dots, t_n\), to yield
  \begin{align}
    \prev_\prob\pr{f\pr{X_s}\cond X_u=x_u, X_t=x_t}
    &= \br*{T^\chi_{x_u,t,t_2} \prod_{i=3}^n T^\chi_{x_{u\cup t}, t_{i-1}, t_i} f^\chi}\pr{x_t} \notag \\
    &= \sum_{y_2=x_t}^{\upx-1} \prob\pr{X_{t_2}=y_2\cond X_u=x_u, X_t=x_t} \br*{\prod_{i=3}^n T^\chi_{x_{u\cup t}, t_{i-1}, t_i} f^\chi}\pr{y_2} \notag \\
    &\qquad\qquad + \prob\pr{X_{t_2}\geq \upx\cond X_u=x_u, X_t=x_t} \br*{\prod_{i=3}^n T^\chi_{x_{u\cup t}, t_{i-1}, t_i} f^\chi}\pr{\upx}.
    \label{eqn:Proof of Product expression of the conditional expectation of any counting process:First equality}
  \end{align}

  We now substitute the probabilities in this sum with an expanded expression
  From Lemma~\ref{lem:Coherent prob on cpdomain:from t to s via r}, it follows that, for any \(y_2\) in \(\chi\setminus\set{\upx}\),
  \begin{equation}
  \label{eqn:Proof of Product expression of the conditional expectation of any counting process:Second equality}
    \prob\pr{X_{t_2}=y_2\cond X_u=x_u, X_t=x_t}
    = \sum_{y_1=x_t}^{y_2} \prob\pr{X_{t_1}=y_1\cond X_u=x_u, X_t=x_t} \prob\pr{X_{t_2}=y_2\cond X_u=x_u, X_t=x_t, X_{t_1}=y_1},
  \end{equation}
  and
  \begin{multline}
  \label{eqn:Proof of Product expression of the conditional expectation of any counting process:Third equality}
    \prob\pr{X_{t_2}\geq \upx\cond X_u=x_u, X_t=x_t}
    = \sum_{y_1=x_t}^{\upx-1} \prob\pr{X_{t_1}=y_1\cond X_u=x_u, X_t=x_t} \prob\pr{X_{t_2}\geq \upx\cond X_u=x_u, X_t=x_t, X_{t_1}=y_1} \\ + \prob\pr{X_{t_1}\geq \upx\cond X_u=x_u, X_t=x_t}.
  \end{multline}
  We substitute Equations~\eqref{eqn:Proof of Product expression of the conditional expectation of any counting process:Second equality} and \eqref{eqn:Proof of Product expression of the conditional expectation of any counting process:Third equality} in Equation~\eqref{eqn:Proof of Product expression of the conditional expectation of any counting process:First equality}, to yield
  \begin{align*}
    \MoveEqLeft\prev_\prob\pr{f\pr{X_s}\cond X_u=x_u, X_t=x_t} \\
    &=\sum_{y_2=x_t}^{\upx-1}\sum_{y_1=x_t}^{y_2} \prob\pr{X_{t_1}=y_1\cond X_u=x_u, X_t=x_t} \prob\pr{X_{t_2}=y_2\cond X_u=x_u, X_t=x_t, X_{t_1}=y_1} \br*{\prod_{i=3}^n T^\chi_{x_{u\cup t}, t_{i-1}, t_i} f^\chi}\pr{y_2} \\
    &\quad+ \sum_{y_1=x_t}^{\upx-1} \prob\pr{X_{t_1}=y_1\cond X_u=x_u, X_t=x_t} \prob\pr{X_{t_2}\geq \upx\cond X_u=x_u, X_t=x_t, X_{t_1}=y_1} \br*{\prod_{i=3}^n T^\chi_{x_{u\cup t}, t_{i-1}, t_i} f^\chi}\pr{\upx} \\
    &\qquad + \prob\pr{X_{t_1}\geq \upx\cond X_u=x_u, X_t=x_t} \br*{\prod_{i=3}^n T^\chi_{x_{u\cup t}, t_{i-1}, t_i} f^\chi}\pr{\upx}.
  \end{align*}
  Recall from Lemma~\ref{lem:T^chi is a linear transition transformation} that \(\br*{\prod_{i=2}^n T^\chi_{x_{u\cup t}, t_{i-1}, t_i} f^\chi}\pr{\upx}=\br*{\prod_{i=3}^n T^\chi_{x_{u\cup t}, t_{i-1}, t_i} f^\chi}\pr{\upx}\).
  We substitute this in the previous equality and change the summation order, to yield
  \begin{align*}
    \MoveEqLeft\prev_\prob\pr{f\pr{X_s}\cond X_u=x_u, X_t=x_t} \\
    &=\sum_{y_1=x_t}^{\upx-1}\sum_{y_2=y_1}^{\upx-1} \prob\pr{X_{t_1}=y_1\cond X_u=x_u, X_t=x_t} \prob\pr{X_{t_2}=y_2\cond X_u=x_u, X_t=x_t, X_{t_1}=y_1} \br*{\prod_{i=3}^n T^\chi_{x_{u\cup t}, t_{i-1}, t_i} f^\chi}\pr{y_2} \notag \\
    &\quad+ \sum_{y_1=x_t}^{\upx-1} \prob\pr{X_{t_1}=y_1\cond X_u=x_u, X_t=x_t} \prob\pr{X_{t_2}\geq \upx\cond X_u=x_u, X_t=x_t, X_{t_1}=y_1} \br*{\prod_{i=3}^n T^\chi_{x_{u\cup t}, t_{i-1}, t_i} f^\chi}\pr{\upx} \\
    &\qquad + \prob\pr{X_{t_1}\geq \upx\cond X_u=x_u, X_t=x_t} \br*{\prod_{i=2}^n T^\chi_{x_{u\cup t}, t_{i-1}, t_i} f^\chi}\pr{\upx}.
  \end{align*}
  Finally, we use the definition of \(T^\chi_{x_{u\cup t},t_1,t_2}\) and \(T^\chi_{x_u,t,t_1}\) to yield the stated:
  \begin{align*}
    \prev_\prob\pr{f\pr{X_s}\cond X_u=x_u, X_t=x_t}
    &=\sum_{y_1=x_t}^{\upx-1} \prob\pr{X_{t_1}=y_1\cond X_u=x_u, X_t=x_t}\br*{\prod_{i=2}^n T^\chi_{x_{u\cup t}, t_{i-1}, t_i} f^\chi}\pr{y_2} \\&\qquad\qquad + \prob\pr{X_{t_1}\geq \upx\cond X_u=x_u, X_t=x_t} \br*{\prod_{i=2}^n T^\chi_{x_{u\cup t}, t_{i-1}, t_i} f^\chi}\pr{\upx} \\
    &= \br*{T^\chi_{x_u, t, t_1} \prod_{i=2}^n T^\chi_{x_{u\cup t}, t_{i-1}, t_i} f^\chi}\pr{x_t}.
  \end{align*}
\end{proof}

The following two propositions are essential to the proof of Theorem~\ref{the:Lower conditional expectation with respect to all consistent processes}.
\begin{proposition}
\label{prop:Lower bound for conditional expectation of consistent process}
  Consider any counting process~\(\prob\) that is consistent with the rate interval~\(\Lambda\).
  Fix any \(f\) in \(\setofcafn\pr{\stsp}\), any \(t, s\) in \(\nnegreals\) with \(t\leq s\) and any \(u\) in \(\setoftseq_{<t}\).
  Then for any \(\pr{x_u, x}\) in \(\stsp_{u\cup t}\),
  \begin{equation*}
    \lpoisprev_t^s\pr{f\cond x}
    \leq\prev_\prob\pr{f\pr{X_s}\cond X_u=x_u, X_t=x}.
  \end{equation*}
\end{proposition}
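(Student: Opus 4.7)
The plan is to reduce both sides to operators acting on a finite state space $\chi$ and then exploit the fact that the reduced Poisson generator $\ltro^\chi$ is dominated by all transition rate matrices in $\setoftrm^\chi$. First, I would dispose of the trivial cases. If $t=s$, then $\prev_\prob(f(X_s)\cond X_u=x_u,X_t=x)=f(x)$ because conditioning on $X_t=x$ forces $f(X_s)=f(x)$, and $\lpoisprev_t^t(f\cond x)=f(x)$ by $\lto_t^t=I$. If $x\geq\upx$, where $\upx$ is such that $f$ is constant from $\upx$, then \ref{Paths:Non-decreasing} forces $f(X_s)=f(\upx)$ almost surely, so both sides equal $f(\upx)$ by Proposition~\ref{prop:eventually bounded functions:lto f equals lto^chi f}. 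Hence I may assume $t<s$ and $x<\upx$, and set $\chi\coloneqq\set{y\in\stsp\colon x\leq y\leq\upx}$ with $f^\chi$ the restriction of $f$ to $\chi$.

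Next I would represent both sides via products indexed by a sufficiently fine partition $v=t_0,\dots,t_n$ in $\setoftseq_{\br{t,s}}$. Lemma~\ref{lem:Product expression of the conditional expectation of any counting process} (applied with $\lowx=x$, noting that $x_{\max u}\leq x$ by \ref{Paths:Non-decreasing}) yields
\begin{equation*}
\prev_\prob\pr{f\pr{X_s}\cond X_u=x_u,X_t=x}
= \br*{T^\chi_{x_u,t,t_1}\prod_{i=2}^n T^\chi_{x_{u\cup t},t_{i-1},t_i}\, f^\chi}\pr{x},
\end{equation*}
while Proposition~\ref{prop:eventually bounded functions:lto f equals lto^chi f} gives $\lpoisprev_t^s(f\cond x)=\br{\lto^\chi_{t,s} f^\chi}\pr{x}$, and by Proposition~\ref{prop:Gen ltro^chi:Induced lto^chi} the right-hand side can be approximated in norm, arbitrarily well, by products of the form $\prod_{i=1}^n(I+\Delta_i\ltro^\chi)$ as $\sigma(v)\to 0$.

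The key step is to link these two products via $\setoftrm^\chi$. Consistency of $\prob$ with $\Lambda$ is precisely what Lemma~\ref{lem:Heine-Borel generated sequence} needs: for every $\epsilon\in\posreals$ and every $\delta\in\posreals$, there is a partition $v$ with $\sigma(v)<\delta$ and matrices $\trm^\chi_i\in\setoftrm^\chi$ such that $\norm{T^\chi_i-(I+\Delta_i\trm^\chi_i)}\leq\Delta_i\epsilon$ for each $i$, where $T^\chi_i$ denotes the $i$-th factor in the product expression above. Choosing $\delta$ also small enough so that $\delta\norm{\ltro^\chi}\leq 2$ (ensuring $(I+\Delta_i\ltro^\chi)$ is a lower transition transformation by Lemma~\ref{lem:I+Delta GenPoisGen is LTT}) and so that $\norm{\lto^\chi_{t,s}-\prod_{i=1}^n(I+\Delta_i\ltro^\chi)}\leq\epsilon$ (possible by Theorem~\ref{the:LCT induced by GenPoisGen}), I can combine Lemma~\ref{lem: bound on norm of A_1 ... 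A_k - B_1 ... B_k} and \ref{BNH: norm Af <= norm A norm f} to obtain
\begin{equation*}
\norm*{\prod_{i=1}^n T^\chi_i - \prod_{i=1}^n(I+\Delta_i\trm^\chi_i)}\leq(s-t)\epsilon,
\end{equation*}
and hence, pointwise at $x$,
\begin{equation*}
\br*{\prod_{i=1}^n(I+\Delta_i\trm^\chi_i)f^\chi}\pr{x}
\leq \prev_\prob\pr{f\pr{X_s}\cond X_u=x_u,X_t=x}+(s-t)\epsilon\norm{f^\chi}.
\end{equation*}

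The decisive inequality is now Corollary~\ref{cor:I+delta ltro^chi f^chi <= I+delta trm^chi f^chi}, which gives
\begin{equation*}
\prod_{i=1}^n(I+\Delta_i\ltro^\chi)f^\chi
\leq \prod_{i=1}^n(I+\Delta_i\trm^\chi_i)f^\chi
\end{equation*}
because each $\trm^\chi_i\in\setoftrm^\chi$ dominates $\ltro^\chi$ on $\setoffn(\chi)$. Together with the semigroup approximation, this gives
\begin{equation*}
\br{\lto^\chi_{t,s}f^\chi}\pr{x}
\leq \br*{\prod_{i=1}^n(I+\Delta_i\ltro^\chi)f^\chi}\pr{x}+\epsilon\norm{f^\chi}
\leq \prev_\prob\pr{f\pr{X_s}\cond X_u=x_u,X_t=x}+\bigl((s-t)+1\bigr)\epsilon\norm{f^\chi}.
\end{equation*}
Since $\epsilon$ is arbitrary, letting $\epsilon\to 0$ and invoking Proposition~\ref{prop:eventually bounded functions:lto f equals lto^chi f} to rewrite the left-hand side as $\lpoisprev_t^s(f\cond x)$ gives the claim. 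The main obstacle is really book-keeping: one must simultaneously make all three approximation errors small (the approximation of each $T^\chi_i$ by $I+\Delta_i\trm^\chi_i$, the approximation of $\lto^\chi_{t,s}$ by $\prod(I+\Delta_i\ltro^\chi)$, and the uniform bound $\sigma(v)\norm{\ltro^\chi}\leq 2$ required for the domination inequality to apply), and then verify that Lemma~\ref{lem:Heine-Borel generated sequence} produces $\trm^\chi_i$ in the correct dominating set $\setoftrm^\chi$ (which is exactly where consistency of $\prob$ with $\Lambda$ enters).
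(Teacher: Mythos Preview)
Your approach is essentially the paper's: reduce to the finite state space $\chi$, express $\prev_\prob$ as a product of the operators $T^\chi$ via Lemma~\ref{lem:Product expression of the conditional expectation of any counting process}, approximate each factor by $I+\Delta_i\trm^\chi_i$ with $\trm^\chi_i\in\setoftrm^\chi$, invoke the domination of Corollary~\ref{cor:I+delta ltro^chi f^chi <= I+delta trm^chi f^chi}, and compare with $\Phi^\chi_v$.

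One technical point is glossed over, however. The factors in the product expression do not all carry the same history: the first factor is $T^\chi_{x_u,t,t_1}$ (conditioning on $x_u$ only), while for $i\geq 2$ the factors are $T^\chi_{x_{u\cup t},t_{i-1},t_i}$ (conditioning on $x_{u\cup t}$, now including the time point $t$). A single application of Lemma~\ref{lem:Heine-Borel generated sequence} yields a partition together with approximating $\trm^\chi_i$ for factors that all share \emph{one fixed} history, so it cannot directly deliver ``$\norm{T^\chi_i-(I+\Delta_i\trm^\chi_i)}\leq\Delta_i\epsilon$ for each $i$'' as you claim. The paper repairs this by first invoking Lemma~\ref{lem:Bound on derivative of the transition matrix of a consistent process} at time $t$ with history $x_u$ to choose $\Delta_1$ and $\trm^\chi_1$, and only then applying Lemma~\ref{lem:Heine-Borel generated sequence} on the interval $[t+\Delta_1,s]$ with history $x_{u\cup t}$ to obtain $\trm^\chi_2,\dots,\trm^\chi_n$. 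This is a straightforward patch that leaves the rest of your argument intact.
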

\begin{proof}
  Our proof is for a large part similar to that of \cite[Proposition~8.1]{2017Krak}.
  Let \(\upx\) be in \(\stsp\) such that \(f\) is constant after \(\upx\).
  From Lemma~\ref{lem:f(X_s) is a simple function} and Equation~\eqref{eqn:Expectation of a simple function}, it then follows that
  \begin{equation*}
    \prev_\prob\pr{f\pr{X_s}\cond X_u=x_u, X_t=x}
    = \sum_{y=0}^{\upx-1} f\pr{y} \prob\pr{X_s=y\cond X_u=x_u, X_t=x} + f\pr{\upx}\prob\pr{X_s\geq\upx\cond X_u=x_u, X_t=x},
  \end{equation*}
  where we let the empty sum equal zero.
  Recall from Lemma~\ref{lem:Coherent prob on cpdomain:prob of <= x} that \(\prob\pr{X_s=y\cond X_u=x_u, X_t=x}=0\) for all \(y\) in \(\stsp\) with \(y<x\).
  Therefore,
  \begin{equation}
  \label{eqn:Proof of Lower bound for conditional expectation of consistent process:Expectation as sum}
    \prev_\prob\pr{f\pr{X_s}\cond X_u=x_u, X_t=x}
    = \sum_{y=x}^{\upx-1} f\pr{y} \prob\pr{X_s=y\cond X_u=x_u, X_t=x} + f\pr{\upx}\prob\pr{X_s\geq\upx\cond X_u=x_u, X_t=x}.
  \end{equation}

  We distinguish two cases.
  First, we consider the case \(x>\upx\).
  In this case, \(\pr{X_u=x_u, X_t=x}\subseteq\pr{X_s\geq \upx}\) due to \ref{Paths:Non-decreasing}, such that \(\prob\pr{X_s\geq\upx\cond X_u=x_u, X_t=x}=1\) due to \ref{LOP:1 if C in A}.
  We substitute this equality in Equation~\eqref{eqn:Proof of Lower bound for conditional expectation of consistent process:Expectation as sum}, to yield
  \begin{equation*}
      \prev_\prob\pr{f\pr{X_s}\cond X_u=x_u, X_t=x}
      = f\pr{\upx}.
  \end{equation*}
  As furthermore \(\lpoisprev_t^s\pr{f\cond x}=f\pr{\upx}\) due to Proposition~\ref{prop:eventually bounded functions:lto f equals lto^chi f}, this verifies the inequality of the statement in this case.

  Next, we consider the case \(x\leq\upx\), and distinguish two additional cases.
  If \(s=t\), then the equality of the statement follows immediately from Equation~\eqref{eqn:Proof of Lower bound for conditional expectation of consistent process:Expectation as sum}, some obvious properties of counting processes and Proposition~\ref{prop:Properties of lto_S induced by GenPoisGen}~\ref{prop:Properties of lto_S by GenPoisGen:IDentity}:
  \begin{equation*}
    \prev_\prob\pr{f\pr{X_s}\cond X_u=x_u, X_t=x}
    = f\pr{x}
    = \br{If}\pr{x}
    = \br{\lto_t^t f}\pr{x}
    = \lpoisprev_t^s\pr{f\cond x}.
  \end{equation*}

  Hence, from now on we furthermore assume that \(t<s\).
  Let \(\chi\coloneqq\set{y\in\stsp\colon x\leq y\leq\upx}\), and recall from Proposition~\ref{prop:eventually bounded functions:lto f equals lto^chi f} that \(\lpoisprev_t^s\pr{f\cond x}=\br{\lto^\chi_{t,s} f^\chi}\pr{x}\).
  Thus, the equality of the statement is verified if we can show that
  \begin{equation}
  \label{eqn:Proof of Lower bound for conditional expectation of consistent process:lower bound with lto^chi}
    \br{\lto^\chi_{t,s} f^\chi}\pr{x}
    \leq \prev_\prob\pr{f\pr{X_s}\cond X_u=x_u, X_t=x}.
  \end{equation}
  To that end, we fix any \(\epsilon\) in \(\posreals\), and choose any \(\epsilon_1, \epsilon_2\) in \(\posreals\) such that \(\epsilon_1\norm{f^\chi}\leq\epsilon/2\) and \(\epsilon_2\pr{s-t}\norm{f^\chi}\leq\epsilon/2\).
  It follows from Lemma~\ref{lem:ltro^chi is a lower transition rate operator} and Proposition~\ref{prop:Gen ltro^chi:Induced lto^chi} that there is some \(\delta\) in \(\posreals\) such that \(\delta\norm{\ltro^\chi}\leq 2\) and
  \begin{equation}
  \label{eqn:Proof of Lower bound for conditional expectation of consistent process:intermed eps bound 1}
    \pr{\forall v\in\setoftseq_{\br{t,s}}, \sigma\pr{v}\leq\delta}~
    \norm{\lto^\chi_{t,s}-\Phi^\chi_v}
    \leq \epsilon_1.
  \end{equation}
  As \(\prob\) is consistent with~\(\br{\llambda, \ulambda}\), it follows from Lemma~\ref{lem:Bound on derivative of the transition matrix of a consistent process} that there is some \(\Delta_1\) in \(\posreals\) with \(\Delta_1 < \max\set{\delta, s-t}\) and some \(\trm^\chi_1\) in \(\setoftrm^\chi\) such that
  \begin{equation}
  \label{eqn:Proof of Lower bound for conditional expectation of consistent process:intermed eps bound 2}
    \norm{T^\chi_{x_u,t,t+\Delta}-\pr{I+\Delta_1\trm^\chi_1}}
    \leq \Delta_1\epsilon_2.
  \end{equation}
  Furthermore, since \(t_1\coloneqq t+\Delta_1<s\) by construction, it follows from Lemma~\ref{lem:Heine-Borel generated sequence} that there is a sequence \(v'=t_1, t_2, \dots, t_n\) in \(\setoftseq_{\br{t_1, s}}\) with \(\sigma\pr{v'}<\delta\) and some \(\trm^\chi_2\), \dots, \(\trm^\chi_n\) in \(\setoftrm^\chi\) such that
  \begin{equation}
  \label{eqn:Proof of Lower bound for conditional expectation of consistent process:intermed eps bound 3}
    \pr{\forall i\in\set{2, \dots, n}}~
    \norm{T^{\chi}_{x_{u\cup t},t_{i-1}, t_i}-\pr{I+\Delta_i\trm^\chi_i}}
    \leq \Delta_i\epsilon_2.
  \end{equation}
  Let \(t_0\coloneqq t\) and \(v^\star\coloneqq t_0, t_1, \dots, t_n\).
  Recall from Lemma~\ref{lem:Product expression of the conditional expectation of any counting process} that
  \begin{equation*}
    \prev_\prob\pr{f\pr{X_s}\cond X_u=x_u, X_t=x}
    = \br*{T^\chi_{x_u,t,t_1} \prod_{i=2}^{n}T^{\chi}_{x_{u\cup t},t_{i-1},t_i}f^\chi}\pr{x},
  \end{equation*}
  such that
  \begin{equation*}
    \abs*{\prev_\prob\pr{f\pr{X_s}\cond X_u=x_u, X_t=x} - \br*{\prod_{i=1}^{n} \pr{I+\Delta_i\trm^\chi_i} f^\chi}\pr{x}}
    = \abs*{\br*{T^\chi_{x_u,t,t_1} \prod_{i=1}^{n}T^{\chi}_{x_{u\cup t},t_{i},t_{i+1}} f^\chi}\pr{x} - \br*{\prod_{i=1}^{n} \pr{I+\Delta_i\trm^\chi_i} f^\chi}\pr{x}}.
  \end{equation*}
  We now use \ref{BNH: norm Af <= norm A norm f}, Lemma~\ref{lem: bound on norm of A_1 ... A_k - B_1 ... B_k} and Equations~\eqref{eqn:Proof of Lower bound for conditional expectation of consistent process:intermed eps bound 2} and \eqref{eqn:Proof of Lower bound for conditional expectation of consistent process:intermed eps bound 3}, to yield
  \begin{align*}
    \MoveEqLeft\abs*{\prev_\prob\pr{f\pr{X_s}\cond X_u=x_u, X_t=x} - \br*{\prod_{i=1}^{n} \pr{I+\Delta_i\trm^\chi_i} f^\chi}\pr{x}} \\
    &\leq \norm*{T^\chi_{x_u,t,t_1} \prod_{i=2}^{n}T^{\chi}_{x_{u\cup t},t_{i-i},t_i}f^\chi - \prod_{i=1}^{n} \pr{I+\Delta_i\trm^\chi_i}f^\chi}
    \leq \norm*{T^\chi_{x_u,t,t_1} \prod_{i=2}^{n}T^{\chi}_{x_{u\cup t},t_{i-1},t_i} - \prod_{i=1}^{n} \pr{I+\Delta_i\trm^\chi_i}} \norm{f^\chi} \\
    &\leq \norm{T^\chi_{x_u,t,t_1} - \pr{I+\Delta_1\trm^\chi_1}} \norm{f^\chi} + \sum_{i=2}^n\norm{T^{\chi}_{x_{u\cup t},t_{i-1},t_i} - \pr{I+\Delta_i\trm^\chi_i}} \norm{f^\chi} \\
    &\leq \sum_{i=1}^n \Delta_i \epsilon_2 \norm{f^\chi}
    = \pr{s-t}\epsilon_2\norm{f^\chi}
    \leq \frac{\epsilon}2.
  \end{align*}
  Furthermore, it follows from \ref{BNH: norm Af <= norm A norm f} and Equation~\eqref{eqn:Proof of Lower bound for conditional expectation of consistent process:intermed eps bound 1}, which holds because \(\sigma\pr{v^\star}<\delta\) by construction, that
  \begin{equation*}
    \abs*{\br{\lto^\chi_{t,s} f^\chi}\pr{x}-\br*{\Phi^\chi_{v^\star} f^\chi}\pr{x}}
    \leq \norm*{\lto^\chi_{t,s}f^\chi-\Phi^\chi_{v^\star}f^\chi}
    \leq \norm*{\lto^\chi_{t,s}-\Phi^\chi_{v^\star}}\norm{f^\chi}
    \leq \epsilon_1\norm{f^\chi}
    \leq \frac{\epsilon}2.
  \end{equation*}
  Combining the two previous inequalities, we find that
  \begin{equation*}
    \br{\lto^\chi_{t,s} f^\chi}\pr{x}
    \leq \br*{\prod_{i=1}^n\pr{I+\Delta_i\ltro^\chi} f^\chi}\pr{x} + \frac{\epsilon}2
    \leq\br*{\prod_{i=1}^n\pr{I+\Delta_i\trm^\chi_i} f^\chi}\pr{x} + \frac{\epsilon}2
    \leq \prev_\prob\pr{f\pr{X_s}\cond X_u=x_u, X_t=x} + \epsilon,
  \end{equation*}
  where the second inequality holds due to Corollary~\ref{cor:I+delta ltro^chi f^chi <= I+delta trm^chi f^chi}.
  Since \(\epsilon\) was an arbitrary positive real number, this implies Equation~\eqref{eqn:Proof of Lower bound for conditional expectation of consistent process:lower bound with lto^chi}, as required.
\end{proof}

\begin{lemma}
\label{lem:System results in consistent counting process}
  Consider a counting transformation system~\(\mathcal{T}\) of the form of Corollary~\ref{cor:Constructed counting transformation system}.
  Let \(\tilde{\prob}\) be the real-valued map with domain
  \begin{equation*}
    \tilde{\ccpdomain}
    \coloneqq \set{\pr{X_{t+\Delta}=y, \pr{X_u=x_u, X_t=x}}\in\cpdomain\colon t,\Delta\in\nnegreals, u\in\setoftseq_{<t}, \pr{x_u,x}\in\stsp_{u\cup t}, y\in\stsp} \cup \set{\pr{X_0=x, \setofpths{}}\in\cpdomain{}\colon x\in\stsp},
  \end{equation*}
  that is defined for all \(t, \Delta\) in \(\nnegreals\), \(u\) in \(\setoftseq_{<t}\), \(\pr{x_u, x}\) in \(\stsp_{u\cup t}\) and \(y\) in \(\stsp\) as
  \begin{equation*}
    \tilde{\prob}\pr{X_{t+\Delta}=y \cond X_u=x_u, X_t=x}
    \coloneqq \br{T_t^{t+\Delta}\indica{y}}\pr{x}
  \end{equation*}
  and for all \(x\) in \(\stsp\) as
  \begin{equation*}
    \tilde{\prob}\pr{X_0=x \cond \setofpths}
    \coloneqq \begin{cases}
      1 &\text{if } x = 0, \\
      0 &\text{otherwise}.
    \end{cases}
  \end{equation*}
  Then \(\tilde{\prob}\) is coherent, and any extension of \(\tilde{\prob}\) to \(\cpdomain{}\) is a counting process that is consistent with \(\Lambda\).
\end{lemma}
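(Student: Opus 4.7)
The plan is to split the proof into two parts: coherence together with the counting-process property on the one hand, and consistency with $\Lambda$ on the other. The first part should follow essentially for free: Corollary~\ref{cor:Constructed counting transformation system} tells us that the given $\mathcal{T}$ is a counting transformation system, so Lemma~\ref{lem:System results in counting process} applies directly and delivers both coherence of $\tilde{\prob}$ and the fact that any coherent extension $\tilde{\prob}^\star$ is a counting process. Hence all the work lies in verifying the two limit conditions \eqref{eqn:Consistent rate from right} and \eqref{eqn:Consistent rate from left} of Definition~\ref{def:Consistency of counting processes}.

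Since each probability $\tilde{\prob}^\star(X_{t+\Delta}=x+1\mid X_u=x_u, X_t=x)$ already lies in the domain of $\tilde{\prob}$, it equals $\br{T_t^{t+\Delta}\indica{x+1}}\pr{x}$ by construction, regardless of which coherent extension we take. So I only need to compute the right- and left-limits of this quantity as $\Delta\to 0^+$. For the right-limit, I would locate the smallest index $i\in\set{0,\dots,n}$ such that $t_i\leq t$ (interpreting $t_{n+1}=+\infty$), so that $t$ lies in the closed interval on which $\mathcal{T}$ is built from $S_i$. For $\Delta$ small enough, $t+\Delta$ still lies in that same interval, and the concatenation rule~\eqref{eqn:Concatenated system of linear counting transformations} then yields $T_t^{t+\Delta}=T_{t, S_i}^{t+\Delta}$.

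At this point I would invoke the differential-equation property: because $\trm_{S_i}=\ltro_{S'_i}$ for $S'_i=\set{\pr{\lambda_{i,x},\lambda_{i,x}}}_{x\in\stsp}$, the family $\set{T_{t, S_i}^s}$ is exactly the semi-group generated by $\ltro_{S'_i}$ through Equation~\eqref{eqn:LCT by GenPoisGen}. Applying the second part of Lemma~\ref{lem:ltro satisfies differential equation} at $s=t$ and combining with Corollary~\ref{cor:Properties of T_S}~\ref{prop:Properties of lto_S by GenPoisGen:IDentity}, I obtain
\begin{equation*}
  \lim_{\Delta\to 0^+}\frac{T_{t,S_i}^{t+\Delta}-I}{\Delta}
  = \trm_{S_i},
\end{equation*}
and evaluating at $\indica{x+1}$ in the point $x$ gives
\begin{equation*}
  \lim_{\Delta\to 0^+}\frac{\br{T_t^{t+\Delta}\indica{x+1}}\pr{x}}{\Delta}
  = \br{\trm_{S_i}\indica{x+1}}\pr{x}
  = \lambda_{i,x}\pr{\indica{x+1}\pr{x+1}-\indica{x+1}\pr{x}}
  = \lambda_{i,x},
\end{equation*}
which lies in $[\llambda,\ulambda]$ by the hypothesis of Corollary~\ref{cor:Constructed counting transformation system}. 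The left-limit at $t>0$ is handled analogously: I would find the index~$j$ such that $t$ lies in the closed interval on which $\mathcal{T}$ uses $S_j$ \emph{from the left}, use the concatenation rule to write $T_{t-\Delta}^t=T_{t-\Delta,S_j}^t$ for small $\Delta$, and apply the first part of Lemma~\ref{lem:ltro satisfies differential equation} (with a sign change arising from differentiating in the lower endpoint) to obtain $\lambda_{j,x}\in[\llambda,\ulambda]$.

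The only delicate point is the bookkeeping at breakpoints: when $t=t_i$ with $0<i\leq n$, the concatenation rule~\eqref{eqn:Concatenated system of linear counting transformations} makes the right-limit governed by $S_i$ while the left-limit is governed by $S_{i-1}$. This is routine but needs to be stated carefully so that the correct $\lambda_{i,x}$ or $\lambda_{i-1,x}$ is identified. Beyond this case analysis no substantial obstacle arises, and the proof collapses to a short application of Lemma~\ref{lem:ltro satisfies differential equation} together with the fact that the rates of the generating sequences are, by assumption, contained in $[\llambda,\ulambda]$.
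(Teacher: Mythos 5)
Your proposal matches the paper's proof essentially step for step: the first claim is discharged exactly as you say via Corollary~\ref{cor:Constructed counting transformation system} and Lemma~\ref{lem:System results in counting process}, and consistency is obtained by reducing \(T_t^{t+\Delta}\) to \(T_{t,S_i}^{t+\Delta}\) on the relevant piece of the concatenation and then applying Lemma~\ref{lem:ltro satisfies differential equation} at \(s=t\) together with \(T_{t,S_i}^t=I\), evaluating at \(\indica{x+1}\) in \(x\) to get the limit \(\lambda_{i,x}\in\br{\llambda,\ulambda}\) (the paper likewise dispatches the left limit with "a similar argument"). One small wording slip: the index you need is the \emph{greatest} \(i\) with \(t_i\leq t\) (equivalently the smallest \(i\) with \(t<t_{i+1}\)), not the smallest with \(t_i\leq t\); your parenthetical and your breakpoint discussion make clear you intend the former.
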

\begin{proof}
  Fix some \(u=t_0, \dots, t_n\) in \(\setofnetseq\) with \(t_0=0\) and, for all \(i\) in \(\set{0,\dots,n}\), some sequence \(S_i\coloneqq\set{\lambda_{i,x}}_{x\in\stsp}\) in \(\br{\llambda, \ulambda}\).
  Due to Corollary~\ref{cor:Constructed counting transformation system},
  \begin{equation*}
    \mathcal{T}
    \coloneqq \mathcal{T}^{\br{0, t_1}}_{S_0}\otimes\mathcal{T}^{\br{t_1, t_2}}_{S_1}\otimes\cdots\otimes\mathcal{T}^{\br{t_{n-1}, t_n}}_{S_{n-1}}\otimes\mathcal{T}^{[t_n, +\infty)}_{S_n}
  \end{equation*}
  is a counting transformation system.
  Therefore, it follows from Lemma~\ref{lem:System results in counting process} that \(\tilde{\prob}\) is coherent, and that any coherent extension~\(\tilde{\prob}^\star\)  of \(\tilde{\prob}\) to \(\cpdomain{}\) is a counting process.
  Hence, all that remains for us is to prove that any such coherent extension \(\tilde{\prob}^\star\) is consistent with \(\br{\llambda, \ulambda}\).
  We will only verify Equation~\eqref{eqn:Consistent rate from right}, Equation~\eqref{eqn:Consistent rate from left} can be verified in a similar fashion.
  To that end, we fix any \(t\) in \(\nnegreals\), \(u\) in \(\setoftseq_{<t}\), \(\Delta\) in \(\posreals\) and \(\pr{x_u, x}\) in \(\stsp_{u\cup t}\).
  Observe that
  \begin{equation*}
    \frac{\tilde{\prob}\pr{X_{t+\Delta}=x+1\cond X_u=x_u,X_t=x}}{\Delta}
    = \frac{\br{T_t^{t+\Delta}\indica{x+1}}\pr{x}}{\Delta}.
  \end{equation*}
  We let \(i\) be the greatest element of \(\set{0, \dots, n}\) such that \(t_i\leq t\).
  We now claim that
  \begin{equation*}
    \lim_{\Delta\to0^+} \frac{\br{T_t^{t+\Delta}\indica{x+1}}\pr{x}}\Delta
    = \lambda_{i,x}.
  \end{equation*}
  If this were true, then
  \begin{equation*}
    \lim_{\Delta\to0^+}\frac{\tilde{\prob}^\star\pr{X_{t+\Delta}=x+1\cond X_u=x_u,X_t=x}}{\Delta}
    = \lambda_{i,x}
  \end{equation*}
  which implies Equation~\eqref{eqn:Consistent rate from right} because \(\llambda\leq\lambda_{i,x}\leq\ulambda\).

  We now set out to verify our claim.
  By construction, \(T_t^{t+\Delta}=T_{t,S_i}^{t+\Delta}\) for any \(\Delta\) in \(\nnegreals\) with \(t_i+\Delta<t_{i+1}\) if \(i<n\), such that
  \begin{equation*}
    \abs*{\frac{\br{T_t^{t+\Delta}\indica{x+1}}\pr{x}}{\Delta}-\lambda_{i,x}}
    = \abs*{\frac{\br{T_{t,S_i}^{t+\Delta}\indica{x+1}}\pr{x}}{\Delta}-\lambda_{i,x}}.
  \end{equation*}
  We use the two obvious equalities \(\br{I\indica{x+1}}\pr{x}\) and \(\br{\trm_{S_i} \indica{x+1}}\pr{x}=\lambda_{i,x}\) and \ref{BNH: norm Af <= norm A norm f}, to yield
  \begin{align*}
    \abs*{\frac{\br{T_t^{t+\Delta}\indica{x+1}}\pr{x}}{\Delta}-\lambda_{i,x}}
    &= \abs*{\frac{\br{T_{t,S_i}^{t+\Delta}\indica{x+1}}\pr{x}-\br{I\indica{x+1}}\pr{x}}{\Delta}-\br{\trm_{S_i} \indica{x+1}}\pr{x}} \notag \\
    &\leq \norm*{\frac{T_{t,S_i}^{t+\Delta}\indica{x+1}-I\indica{x+1}}{\Delta}-\br{\trm_{S_i} \indica{x+1}}}
    \leq \norm*{\frac{T_{t,S_i}^{t+\Delta}-I}{\Delta}-\trm_{S_i}}\norm{\indica{x+1}}
    = \norm*{\frac{T_{t,S_i}^{t+\Delta}-I}{\Delta}-\trm_{S_i}}.
  \end{align*}
  Fix any \(\epsilon\) in \(\posreals\).
  Because, by definition, \(\mathcal{T}_{S_i}\) is the family of (linear) lower counting transformations induced by the generalised Poisson generator \(\trm_{S_i}=\ltro_{S'_i}\) characterised by the sequence \(S'_i=\set{\pr{\lambda_{i,x},\lambda_{i,x}}}_{x\in\stsp}\), it follows from Lemma~\ref{lem:ltro satisfies differential equation} that there is a \(\delta\) in \(\posreals\) such that for any \(\Delta\) in \(\posreals\) with \(\Delta<\delta\),
  \begin{equation}
    \norm*{\frac{T_{t,S_i}^{t+\Delta}-I}\Delta-\trm_{S_i}}
    = \norm*{\frac{T_{t, S_i}^{t+\Delta}-T_{t, S_i}^t}\Delta-\trm_{S_i}T_{t, S_i}^t}
    \leq \epsilon,
  \end{equation}
  where the equality follows from Proposition~\ref{prop:Properties of lto_S induced by GenPoisGen}~\ref{prop:Properties of lto_S by GenPoisGen:IDentity}.
  For any \(\Delta\) in \(\posreals\) such that \(\Delta<\delta\) and \(t_i+\Delta<t_{i+i}\) if \(i<n\), it follows from the previous two inequalities that
  \begin{equation*}
    \abs*{\frac{\br{T_t^{t+\Delta}\indica{x+1}}\pr{x}}{\Delta}-\lambda_{i,x}}
    \leq \epsilon.
  \end{equation*}
  Since \(\epsilon\) was any positive real number, we infer from this inequality that
  \begin{equation*}
    \lim_{\Delta\to0^+} \frac{\br{T_t^{t+\Delta}\indica{x+1}}\pr{x}}\Delta
    = \lambda_{i,x},
  \end{equation*}
  as required.
\end{proof}
\begin{proposition}
\label{prop:Lower bound for conditional expectation is reached by a consistent process}
  Consider any \(t,s\) in \(\nnegreals\) with \(t\leq s\), any \(u\) in \(\setoftseq_{<t}\), any \(\pr{x_u,x}\) in \(\stsp_{u\cup t}\) and any \(f\) in \(\setofcafn\pr{\stsp}\).
  Then for any \(\epsilon\) in \(\posreals\), there is a counting process~\(\prob\) that is consistent with the rate interval~\(\Lambda\) such that
  \begin{equation*}
    \abs*{\lpoisprev_t^s\pr{f\cond x} - \prev_\prob\pr{f\pr{X_s}\cond X_u=x_u, X_t=x}}
    \leq \epsilon.
  \end{equation*}
\end{proposition}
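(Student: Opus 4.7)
The plan is to construct, from the approximation provided by Lemma~\ref{lem:lto:Approximate with prod of T_S}, an explicit counting process that is consistent with \(\Lambda\) and whose conditional expectation of \(f(X_s)\) realises the approximation exactly. Since Proposition~\ref{prop:Lower bound for conditional expectation of consistent process} already shows that any such conditional expectation must lie above \(\lpoisprev_t^s(f\cond x)\), producing a process that comes within \(\epsilon\) of this lower bound will prove the result.

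First I would invoke Lemma~\ref{lem:lto:Approximate with prod of T_S} to secure a partition \(t=t_0<t_1<\cdots<t_n=s\) and sequences \(S_1,\dots,S_n\) in \([\llambda,\ulambda]\) such that
\[
\norm*{\lto_t^s f-\prod_{i=1}^n T_{t_{i-1},S_i}^{t_i} f}\leq\epsilon.
\]
If \(t>0\), I would prepend \(0\) to this partition; in either case I would pair the intervals \([0,t]\) (when present) and \([s,+\infty)\) with arbitrary sequences in \([\llambda,\ulambda]\). Feeding the resulting extended partition and list of sequences into Corollary~\ref{cor:Constructed counting transformation system} produces a counting transformation system \(\mathcal{T}=\{T_r^{r'}:r,r'\in\nnegreals,r\leq r'\}\). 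Lemma~\ref{lem:System results in consistent counting process} then yields a coherent extension to a counting process \(\prob\) that is consistent with \(\Lambda\), enjoying the identity \(\prob(X_{r+\Delta}=y\cond X_v=x_v,X_r=x_r)=[T_r^{r+\Delta}\indica{y}](x_r)\) on its restricted construction domain.

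The core computation is to match \(\prev_\prob(f(X_s)\cond X_u=x_u,X_t=x)\) with \([T_t^s f](x)\). Since \(f\) is constant from some \(\upx\), Lemma~\ref{lem:f(X_s) is a simple function} together with Equation~\eqref{eqn:Expectation of a simple function} gives
\[
\prev_\prob(f(X_s)\cond X_u=x_u,X_t=x)=\sum_{y=0}^{\upx-1}f(y)\prob(X_s=y\cond X_u=x_u,X_t=x)+f(\upx)\prob(X_s\geq\upx\cond X_u=x_u,X_t=x).
\]
I would substitute each \(\prob(X_s=y\cond X_u=x_u,X_t=x)\) by \([T_t^s\indica{y}](x)\) via the identity above, rewrite the tail \(\prob(X_s\geq\upx\cond X_u=x_u,X_t=x)\) as \(1-\sum_{y<\upx}\prob(X_s=y\cond X_u=x_u,X_t=x)\) using Lemma~\ref{lem:Coherent prob on cpdomain:prob of <= x}, and identify it with \([T_t^s\indica{\geq\upx}](x)\) by means of \ref{LinTT:constant}. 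Linearity of \(T_t^s\) then collapses the sum into \([T_t^s f](x)\). Finally, iterating Equation~\eqref{eqn:Concatenated system of linear counting transformations} over the intermediate partition points shows that \(T_t^s=\prod_{i=1}^n T_{t_{i-1},S_i}^{t_i}\), so evaluating the norm bound of the first step at the point \(x\) closes the argument.

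The main obstacle is the careful handling of the tail probability \(\prob(X_s\geq\upx\cond X_u=x_u,X_t=x)\): although the construction of \(\prob\) only directly encodes single-state transition probabilities through the transformation system, the eventual constancy of \(f\) is precisely what lets us reduce \(f(X_s)\) to a finite simple function and rewrite the tail by normalisation. Verifying this identification cleanly, and keeping the indexing of the concatenated system aligned with the partition supplied by Lemma~\ref{lem:lto:Approximate with prod of T_S}, constitute the only delicate parts of an otherwise mechanical assembly.
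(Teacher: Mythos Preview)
Your proposal is correct and follows essentially the same line as the paper: invoke Lemma~\ref{lem:lto:Approximate with prod of T_S}, extend the partition with an arbitrary sequence on \([0,t]\) and \([s,+\infty)\), build the concatenated counting transformation system via Corollary~\ref{cor:Constructed counting transformation system}, extract a consistent counting process via Lemma~\ref{lem:System results in consistent counting process}, and then use the simple-function representation of \(f(X_s)\) together with linearity of \(T_t^s\) and the semi-group decomposition to identify \(\prev_\prob(f(X_s)\cond X_u=x_u,X_t=x)\) with \(\bigl[\prod_{i=1}^n T_{t_{i-1},S_i}^{t_i} f\bigr](x)\). The mention of Proposition~\ref{prop:Lower bound for conditional expectation of consistent process} is not needed for the argument itself, but otherwise your outline matches the paper's proof step for step.
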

\begin{proof}
  By Lemma~\ref{lem:lto:Approximate with prod of T_S}, there is a sequence \(v=t_0, \dots, t_n\) in \(\setoftseq_{\br{t,s}}\) and, for all \(i\) in \(\set{1, \dots, n}\), a sequence \(S_i=\set{\lambda^i_x}_{x\in\stsp}\) in \(\br{\llambda, \ulambda}\) such that
  \begin{equation}
  \label{eqn:Proof of Lower bound for conditional expectation is reached by a consistent process}
    \abs*{\br{\lto_t^s f}\pr{x} - \br*{\prod_{i=1}^n T_{t_{i-1}, S_i}^{t_i} f}\pr{x}}
    \leq \norm*{\lto_t^s f - \prod_{i=1}^n T_{t_{i-1}, S_i}^{t_i} f}
    \leq \epsilon.
  \end{equation}
  Furthermore, we fix any arbitrary sequence \(S=\set{\lambda_x}_{x\in\stsp}\) in \(\br{\llambda,\ulambda}\).
  It now follows from Corollary~\ref{cor:Constructed counting transformation system} that
  \begin{equation*}
    \mathcal{T}
    = \set{T_r^q \colon r,q\in\nnegreals, r\leq q}
    \coloneqq
    \mathcal{T}_{S}^{\br{0, t_1}}\otimes\mathcal{T}_{S_1}^{\br{t_1, t_2}}\otimes\mathcal{T}_{S_3}^{\br{t_2, t_3}}\otimes\cdots\otimes\mathcal{T}_{S_n}^{\br{t_{n-1},t_n}}\otimes\mathcal{T}_{S}^{[t_n, +\infty)}
  \end{equation*}
  is a counting transformation system.
  Furthermore, it follows from Lemma~\ref{lem:System results in consistent counting process} that there is a counting process~\(\prob\) that is consistent with \(\br{\llambda, \ulambda}\) and that satisfies
  \begin{equation}
  \label{eqn:Proof of Lower bound for conditional expectation is reached by a consistent process:Trans prob}
    \prob\pr{X_s=y\cond X_u=x_u, X_t=x}
    = \br{T_t^s\indica{y}}\pr{x}
    = \br*{\prod_{i=1}^nT_{t_{i-1}, S_i}^{t_i}\indica{y}}\pr{x}
    \qquad\text{for all } y\in\stsp.
  \end{equation}
  Observe furthermore that for any \(y\) in \(\stsp\) with \(y\geq 1\),
  \begin{align}
    \prob\pr{X_s\geq y\cond X_u=x_u, X_t=x}
    &= 1 - \prob\pr{X_s\leq y-1\cond X_u=x_u, X_t=x}
    = 1 - \sum_{z=0}^{y-1}\prob\pr{X_s=z\cond X_u=x_u, X_t=x} \notag \\
    &= 1-\sum_{z=0}^{y-1}\br{T_t^s \indica{z}}\pr{x}
    = \br*{T_t^s \pr*{1-\sum_{z=0}^{y-1}\indica{z}}}\pr{x}
    = \br{T_t^s \indica{\geq y}}\pr{x},
  \label{eqn:Proof of Lower bound for conditional expectation is reached by a consistent process:Trans prob geq}
  \end{align}
  where for the third equality we have used Equation~\eqref{eqn:Proof of Lower bound for conditional expectation is reached by a consistent process:Trans prob} and for the fourth equality we have used the linearity of the linear counting transformation~\(T_t^s\).

  Fix some \(\upx\) in \(\stsp\) such that \(f\) is constant starting from \(\upx\).
  By Lemma~\ref{lem:f(X_s) is a simple function} and Equation~\eqref{eqn:Expectation of a simple function},
  \begin{equation*}
    \prev_\prob\pr{f\pr{X_s}\cond X_u=x_u, X_t=x}
    = \sum_{y=0}^{\upx-1} f\pr{y} \prob\pr{X_s=y\cond X_u=x_u, X_t=x} + f\pr{\upx}\prob\pr{X_s\geq\upx\cond X_u=x_u, X_t=x}.
  \end{equation*}
  We now substitute Equations~\eqref{eqn:Proof of Lower bound for conditional expectation is reached by a consistent process:Trans prob} and \eqref{eqn:Proof of Lower bound for conditional expectation is reached by a consistent process:Trans prob geq} and again use the linearity of the linear counting transformation~\(T_t^s\), to yield
  \begin{multline*}
    \prev_\prob\pr{f\pr{X_s}\cond X_u=x_u, X_t=x}
    = \sum_{y=0}^{\upx-1} f\pr{y} \br{T_t^s\indica{y}}\pr{x} + f\pr{\upx}\br{T_t^s\indica{\geq\upx}}\pr{x} \\
    = \br*{T_t^s \pr*{\sum_{y=0}^{\upx-1} f\pr{y}\indic{y}+f\pr{\upx}\indic{\geq \upx}}}\pr{x}
    = \br{T_t^s f}\pr{x}
    = \br*{\prod_{i=1}^nT_{t_{i-1}, S_i}^{t_i} f}\pr{x},
  \end{multline*}
  where the final equality holds due to the construction of \(\mathscr{T}\).
  The stated now follows if we substitute this equality in Equation~\eqref{eqn:Proof of Lower bound for conditional expectation is reached by a consistent process}.
\end{proof}
Everything is now set up to prove our main result regarding the expectation of eventually constant functions.
\begin{theorem}
\label{the:Lower conditional expectation with respect to all consistent processes}
  For any \(t,s\) in \(\nnegreals\) with \(t\leq s\), \(u\) in \(\setoftseq_{<t}\), \(f\) in \(\setofcafn\pr{\stsp}\) and \(\pr{x_u, x}\) in \(\stsp_{u\cup t}\),
  \begin{equation*}
    \lprev_\Lambda\pr{f\pr{X_s}\cond X_u=x_u, X_t=x}
    = \lpoisprev_t^s\pr{f\cond x}.
  \end{equation*}
\end{theorem}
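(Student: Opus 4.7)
The plan is to derive the theorem directly as a consequence of the two technical propositions established immediately before it, namely Proposition~\ref{prop:Lower bound for conditional expectation of consistent process} and Proposition~\ref{prop:Lower bound for conditional expectation is reached by a consistent process}. The first yields the ``$\geq$'' direction and the second yields the ``$\leq$'' direction.

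First, I would establish the lower bound. Since $\lprev_\Lambda(\cdot \cond\cdot)$ is defined in Equation~\eqref{eqn:Lower expectation with respect to set of counting processes} as the infimum of $\prev_\prob(\cdot\cond\cdot)$ over all $\prob \in \setofconscproc{\Lambda}$, Proposition~\ref{prop:Lower bound for conditional expectation of consistent process} immediately gives
\begin{equation*}
  \lpoisprev_t^s(f\cond x) \leq \prev_\prob(f(X_s)\cond X_u=x_u,X_t=x)
  \qquad\text{for all } \prob\in\setofconscproc{\Lambda},
\end{equation*}
so taking the infimum on the right yields $\lpoisprev_t^s(f\cond x) \leq \lprev_\Lambda(f(X_s)\cond X_u=x_u,X_t=x)$.

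Next, I would establish the reverse inequality. Fix an arbitrary $\epsilon \in \posreals$. By Proposition~\ref{prop:Lower bound for conditional expectation is reached by a consistent process}, there exists some $\prob_\epsilon \in \setofconscproc{\Lambda}$ such that
\begin{equation*}
  \abs{\lpoisprev_t^s(f\cond x) - \prev_{\prob_\epsilon}(f(X_s)\cond X_u=x_u,X_t=x)} \leq \epsilon.
\end{equation*}
Since $\lprev_\Lambda(f(X_s)\cond X_u=x_u,X_t=x) \leq \prev_{\prob_\epsilon}(f(X_s)\cond X_u=x_u,X_t=x)$ by definition of the infimum, and since the approximating expectation differs from $\lpoisprev_t^s(f\cond x)$ by at most $\epsilon$, we obtain $\lprev_\Lambda(f(X_s)\cond X_u=x_u,X_t=x) \leq \lpoisprev_t^s(f\cond x) + \epsilon$. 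Because $\epsilon$ was arbitrary, this gives the desired upper bound.

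Combining the two inequalities completes the proof. The bulk of the work has already been absorbed into the two supporting propositions, so this final argument is essentially a two-line assembly. The only subtlety to watch for is that Proposition~\ref{prop:Lower bound for conditional expectation is reached by a consistent process} is stated with absolute value (not a one-sided bound), which in principle could also produce a process whose expectation is \emph{larger} than $\lpoisprev_t^s(f\cond x)$; however, combined with the lower bound of Proposition~\ref{prop:Lower bound for conditional expectation of consistent process}, the absolute value collapses to the one-sided bound $\prev_{\prob_\epsilon}(f(X_s)\cond\cdots) - \lpoisprev_t^s(f\cond x) \leq \epsilon$, so no ambiguity arises. I do not expect any serious obstacle here; the real difficulty was in Proposition~\ref{prop:Lower bound for conditional expectation is reached by a consistent process}, where one had to construct a consistent counting process whose transition probabilities arise from a carefully chosen concatenated counting transformation system.
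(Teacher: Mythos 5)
Your proposal is correct and follows essentially the same route as the paper's own proof: Proposition~\ref{prop:Lower bound for conditional expectation of consistent process} together with the infimum definition in Equation~\eqref{eqn:Lower expectation with respect to set of counting processes} gives one inequality, and the $\epsilon$-approximation of Proposition~\ref{prop:Lower bound for conditional expectation is reached by a consistent process} gives the other, with $\epsilon$ arbitrary closing the argument. The handling of the absolute value is fine (one of its two one-sided consequences is all that is needed), so nothing is missing.
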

\begin{proof}
  On the one hand, it follows from Proposition~\ref{prop:Lower bound for conditional expectation of consistent process} and Equation~\eqref{eqn:Lower expectation with respect to set of counting processes} that
  \begin{equation*}
    \lpoisprev_t^s\pr{f\pr{X_s}\cond x}
    \leq \lprev_\Lambda\pr{f\pr{X_s}\cond X_u=x_u, X_t=x}
    = \inf\set{\prev_\prob\pr{f\pr{X_s}\cond X_u=x_u, X_t=x} \colon \prob\in \setofconscproc{\Lambda}}.
  \end{equation*}
  On the other hand, it follows from Proposition~\ref{prop:Lower bound for conditional expectation is reached by a consistent process} that
  \begin{equation*}
    \pr{\forall \epsilon\in\posreals}\pr{\exists\prob^\star\in\setofconscproc{\Lambda}}~
    \abs*{\lpoisprev_t^s\pr{f\cond x}-\prev_{\prob^\star}\pr{f\pr{X_s}\cond X_u=x_u, X_t=x}}
    \leq \epsilon.
  \end{equation*}
  From these two inequalities, it follows that for any \(\epsilon\) in \(\posreals\),
  \begin{equation*}
    \lprev_\Lambda\pr{f\pr{X_s}\cond X_u=x_u, X_t=x}
    \geq \lpoisprev_t^s\pr{f\pr{X_s}\cond x}
    \geq \prev_{\prob^\star}\pr{f\pr{X_s}\cond X_u=x_u, X_t=x} - \epsilon
    \geq \lprev_\Lambda\pr{f\pr{X_s}\cond X_u=x_u, X_t=x} - \epsilon.
  \end{equation*}
  The equality of the statement follows from these inequalities because \(\epsilon\) is an arbitrary positive real number.
\end{proof}

\subsubsection{Bounded Functions}
Next, we move from eventually constant functions to general bounded functions.
Essential to our proof of Theorem~\ref{the:Lower conditional expectation of bounded function with respect to all consistent processes} are the following two observations.
\begin{lemma}
\label{lem:upoisprev of I geq upx goes to zero}
  For any \(s, t\) in \(\nnegreals\) with \(t\leq s\),
  \begin{equation*}
    \lim_{\upx\to+\infty} \upoisprev_t^s\pr{\indica{\geq\upx}\cond x}
    = 0,
  \end{equation*}
  where \(\indica{\geq \upx}\) is the indicator of the set \(\set{z\in\stsp\colon z\geq \upx}\).
\end{lemma}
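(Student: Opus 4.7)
The plan is to relate the upper expectation $\upoisprev_t^s(\indica{\geq\upx}\cond x)$ to a tail probability of the Poisson distribution with parameter $\ulambda(s-t)$, by exploiting the fact that on non-increasing functions the (imprecise) Poisson generator $\ltro$ coincides with the linear Poisson generator $\trm_{\ulambda}$ at rate $\ulambda$. Tail probabilities of a normed Poisson distribution vanish, which gives the claim.

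First, by definition and conjugacy, $\upoisprev_t^s(\indica{\geq\upx}\cond x)=-\br{\lto_t^s(-\indica{\geq\upx})}\pr{x}$. Set $f_{\upx}\coloneqq-\indica{\geq\upx}$, which is non-increasing with values in $\set{-1,0}$, so $f_{\upx}(y+1)-f_{\upx}(y)\leq 0$ for all $y\in\stsp$. The minimum defining $\ltro$ is thus attained at $\ulambda$, yielding $\br{\ltro f_{\upx}}\pr{y}=\ulambda\pr{f_{\upx}(y+1)-f_{\upx}(y)}=\br{\trm_{\ulambda}f_{\upx}}\pr{y}$, with $\trm_{\ulambda}$ as in Equation~\eqref{eqn:trm_lambda}. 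I will then show by induction that for any grid $u=t_0,\dots,t_n\in\setoftseq_{\br{t,s}}$ with $\sigma(u)\ulambda\leq 1$, both $\prod_{i=1}^n(I+\Delta_i\ltro)f_{\upx}$ remains non-increasing and equals $\prod_{i=1}^n(I+\Delta_i\trm_{\ulambda})f_{\upx}$. The induction step uses that $\br{(I+\Delta\ltro)g}\pr{y}=(1-\Delta\ulambda)g(y)+\Delta\ulambda g(y+1)$ is a convex combination of consecutive values of $g$ when $g$ is non-increasing, and the equality follows from the previous identity.

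Taking $\sigma(u)\to 0$ and invoking Theorem~\ref{the:Transformation induced by PoisGen in text} on the left and Corollary~\ref{cor:Gen trm:Existence of T_S} on the right, we obtain $\lto_t^s f_{\upx}=T_{t,\ulambda}^s f_{\upx}$. Now, by Lemma~\ref{lem:System results in counting process} applied to $\mathcal{T}_{\ulambda}$, the resulting counting process coincides with the Poisson process $\prob_{\ulambda}$ of rate $\ulambda$ (Theorem~\ref{the:Pois transition probabilities are poisson distributed and the converse}), so $\br{T_{t,\ulambda}^s f_{\upx}}\pr{x}=\prev_{\prob_{\ulambda}}\pr{f_{\upx}\pr{X_s}\cond X_t=x}$. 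Proposition~\ref{prop:expectation of poisson process as sum} then evaluates this expectation as $-\sum_{y=\max\set{x,\upx}}^{+\infty}\pois_{\ulambda(s-t)}(y-x)$. Substituting back gives
\begin{equation*}
  \upoisprev_t^s\pr{\indica{\geq\upx}\cond x}
  =\sum_{y=\max\set{x,\upx}}^{+\infty}\pois_{\ulambda(s-t)}(y-x),
\end{equation*}
which is a tail of a normed distribution and therefore tends to $0$ as $\upx\to+\infty$.

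The main obstacle is the equality $\lto_t^s f_{\upx}=T_{t,\ulambda}^s f_{\upx}$: although intuitively clear from the pointwise identity $\ltro f_{\upx}=\trm_{\ulambda}f_{\upx}$, it requires the careful induction above to ensure that non-increasingness is preserved at every refinement step (so that the generators continue to coincide), followed by passage to the limit along a refining partition; the minor subsidiary point of identifying $T_{t,\ulambda}^s$ with the expectation operator under $\prob_{\ulambda}$ is handled cleanly by combining Lemma~\ref{lem:System results in counting process} with Proposition~\ref{prop:expectation of poisson process as sum}.
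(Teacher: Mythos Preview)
Your argument is correct, and it takes a genuinely different route from the paper's proof. The paper argues directly at the level of the approximating operators $\Phi_u$: it fixes $\epsilon>0$, picks a grid $u=t_0,\dots,t_n$ with $\norm{\lto_t^s-\Phi_u}\leq\epsilon$, and then observes (via Lemma~\ref{lem:Phi_u:Value of f does not matter above treshold}) that $\br{\Phi_u(-\indica{\geq\upx})}\pr{x}=0$ as soon as $\upx\geq x+n+1$, because $\Phi_u$ evaluated at $x$ only ``sees'' the values of its argument on $\{x,\dots,x+n\}$. The bound $0\leq\upoisprev_t^s(\indica{\geq\upx}\mid x)\leq\epsilon$ then follows immediately. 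This is shorter and entirely elementary: it never needs to identify the limiting semi-group with a Poisson process or invoke any distributional fact.

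Your approach, by contrast, exploits monotonicity to collapse the imprecise generator to the precise one at $\ulambda$, obtaining $\lto_t^s(-\indica{\geq\upx})=T_{t,\ulambda}^s(-\indica{\geq\upx})$; the induction you sketch is exactly the content of Lemma~\ref{lem:I+Delta GenPoisGen for monotonous functions}, so it is on solid ground. The payoff is an explicit formula $\upoisprev_t^s(\indica{\geq\upx}\mid x)=\sum_{y\geq\max\{x,\upx\}}\pois_{\ulambda(s-t)}(y-x)$, which is more informative than the paper's bare $\epsilon$-bound and foreshadows Proposition~\ref{prop:Monotone bounded functions:equality between conditional expectations}. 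One small simplification: rather than passing through Lemma~\ref{lem:System results in counting process} and Proposition~\ref{prop:expectation of poisson process as sum}, you can compute $\br{T_{t,\ulambda}^s\indica{\geq\upx}}\pr{x}$ directly from linearity and Proposition~\ref{prop:T_lambda indic gives Poisson distribution}, writing $\indica{\geq\upx}=1-\sum_{y<\upx}\indica{y}$; this avoids constructing a counting process altogether.
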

\begin{proof}
  Fix any \(\epsilon\) in \(\posreals\).
  To prove the stated, we need to verify that
  \begin{equation*}
    \pr{\exists x^\star\in\stsp}\pr{\forall \upx\in\stsp, \upx\geq x^\star}~
    0
    \leq \upoisprev_t^s\pr{\indica{\geq \upx}\cond x}
    = -\lpoisprev_t^s\pr{-\indica{\geq \upx}\cond x}
    = -\br{\lto_t^s \pr{-\indica{\geq\upx}}}\pr{x}
    \leq \epsilon.
  \end{equation*}
  To that end, we recall from Theorem~\ref{the:LCT induced by GenPoisGen} that there is a sequence \(u\) in \(\setoftseq_{\br{t,s}}\) such that \(\sigma\pr{u}\norm{\ltro}\leq2\) and
  \begin{equation*}
    \norm{\lto_t^s-\Phi_u}
    \leq \epsilon.
  \end{equation*}
  Let \(n\) be the number of time points in \(u\).
  Furthermore, we let \(x^\star\coloneqq x+n+1\) and fix any \(\upx\) in \(\stsp\) such that \(\upx\geq x^\star\).
  It then follows from Lemma~\ref{lem:Phi_u:Value of f does not matter above treshold} that
  \begin{equation*}
    \br{\Phi_u \pr{-\indica{\geq \upx}}}\pr{x}
    = \br{\Phi_u \pr{-\indica{\leq x+n}\indica{\geq \upx} -\indica{\geq \upx}\pr{x+n}\indica{>x+n}}}\pr{x}
    = \br{\Phi_u 0}\pr{x}
    = 0,
  \end{equation*}
  where for the final equality we have used \ref{LTT:constant}, which holds because \(\Phi_u\) is a lower counting transformation due to Corollary~\ref{cor:Phi_u is LTT}.
  We combine these two observations, to yield
  \begin{align*}
    0
    \leq \upoisprev_t^s\pr{\indica{\geq \upx}\cond x}
    &= -\br{\lto_t^s \pr{-\indica{\geq\upx}}}\pr{x} + \br{\Phi_u \pr{-\indica{\geq \upx}}}\pr{x}
    \leq \abs*{\br{\lto_t^s \pr{-\indica{\geq\upx}}}\pr{x} - \br{\Phi_u \pr{-\indica{\geq \upx}}}\pr{x}} \\
    &\leq \norm*{\lto_t^s \pr{-\indica{\geq \upx}}-\Phi_u \pr{-\indica{\geq \upx}}}
    \leq \norm*{\lto_t^s-\Phi_u}\norm{-\indica{\geq \upx}}
    = \norm*{\lto_t^s-\Phi_u}
    \leq \epsilon,
  \end{align*}
  where for the first equality we have used that \(\upoisprev_t^s\pr{\cdot\cond x}\) is a coherent upper prevision, for the third inequality we have used \ref{BNH: norm Af <= norm A norm f} and for the final equality we have used that \(\norm{-\indica{>\upx}}=1\).
\end{proof}

\begin{lemma}
\label{lem:Consistent counting process:Expectation via limit of eventually constant function}
  Let \(\prob\) be any counting process that is consistent with the rate interval~\(\Lambda\).
  Then for any \(t, s\) in \(\nnegreals\) with \(t\leq s\), \(u\) in \(\setoftseq_{<t}\),  \(\pr{x_u, x}\) in \(\stsp_{u\cup t}\), \(f\) in \(\setoffn\pr{\stsp}\) and \(\epsilon\) in \(\posreals\),
  \begin{equation*}
    \pr{\exists x^\star\in\stsp}
    \pr{\forall\upx\in\stsp, \upx\geq x^\star}
    \pr{\forall \prob\in\setofconscproc{\Lambda}}
    ~\abs{\prev_\prob\pr{f\pr{X_s} \cond X_u=x_u, X_t=x}-\prev_\prob\pr{\br{\indica{\leq \upx}f+f\pr{\upx}\indica{>\upx}}\pr{X_s} \cond X_u=x_u, X_t=x}}
    \leq \epsilon.
  \end{equation*}
\end{lemma}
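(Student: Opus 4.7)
The plan is to bound the difference of the two expectations uniformly in the consistent counting process $\prob$ by isolating the tail contribution from $\pr{X_s>\upx}$, and then to appeal to the eventually-constant machinery already developed (Theorem~\ref{the:Lower conditional expectation with respect to all consistent processes} and Lemma~\ref{lem:upoisprev of I geq upx goes to zero}) to control this tail \emph{independently} of $\prob$.

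First I would let $g\coloneqq\indica{\leq\upx}f+f\pr{\upx}\indica{>\upx}$ and observe the pointwise bound $\abs{f-g}\leq 2\norm{f}\indica{>\upx}$, which holds because $f$ and $g$ coincide on $\set{z\in\stsp\colon z\leq\upx}$ while each is bounded by $\norm{f}$ in absolute value everywhere. Since $\prev_\prob\pr{\cdot\cond X_u=x_u,X_t=x}$ is defined as a Riemann integral and is therefore linear and monotone on bounded functions, lifting the pointwise bound yields
\begin{equation*}
  \abs{\prev_\prob\pr{f\pr{X_s}\cond X_u=x_u,X_t=x}-\prev_\prob\pr{g\pr{X_s}\cond X_u=x_u,X_t=x}}
  \leq 2\norm{f}\prev_\prob\pr{\indica{>\upx}\pr{X_s}\cond X_u=x_u,X_t=x}.
\end{equation*}

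Next, to make the right-hand side independent of $\prob$, I would dominate the expectation on the right by $\uprev_\Lambda\pr{\indica{>\upx}\pr{X_s}\cond X_u=x_u,X_t=x}$, which is legitimate because $\prob\in\setofconscproc{\Lambda}$. Since $\indica{>\upx}=\indica{\geq\upx+1}$ belongs to $\setofcafn\pr{\stsp}$, Theorem~\ref{the:Lower conditional expectation with respect to all consistent processes} applied to $-\indica{>\upx}$, combined with the conjugacy $\uprev_\Lambda=-\lprev_\Lambda\pr{-\cdot}$, gives
\begin{equation*}
  \uprev_\Lambda\pr{\indica{>\upx}\pr{X_s}\cond X_u=x_u,X_t=x}
  = \upoisprev_t^s\pr{\indica{>\upx}\cond x}
  = \upoisprev_t^s\pr{\indica{\geq\upx+1}\cond x},
\end{equation*}
a quantity depending only on $t$, $s$, $x$ and $\upx$ (and in particular not on $u$, $x_u$, or $\prob$).

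Finally, Lemma~\ref{lem:upoisprev of I geq upx goes to zero} gives $\lim_{\upx\to+\infty}\upoisprev_t^s\pr{\indica{\geq\upx+1}\cond x}=0$, so assuming $\norm{f}>0$ (the case $\norm{f}=0$ being trivial) I can choose $x^\star\in\stsp$ large enough that $2\norm{f}\upoisprev_t^s\pr{\indica{\geq\upx+1}\cond x}\leq\epsilon$ for all $\upx\geq x^\star$; chaining the three previous displays then yields the claimed $\epsilon$-bound uniformly over $\prob\in\setofconscproc{\Lambda}$. The only real subtlety is the bookkeeping between the indicators $\indica{>\upx}$ and $\indica{\geq\upx}$, which is what allows the strict-inequality tail indicator here to be fed into the weak-inequality limit of Lemma~\ref{lem:upoisprev of I geq upx goes to zero}; everything else is just a careful invocation of previously established results.
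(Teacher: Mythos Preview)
Your proof is correct and follows essentially the same route as the paper: bound $\abs{f-g}$ pointwise by $2\norm{f}\indica{>\upx}$, lift to expectations, dominate the tail expectation uniformly over $\prob\in\setofconscproc{\Lambda}$ by $\upoisprev_t^s\pr{\indica{>\upx}\cond x}$, and then invoke Lemma~\ref{lem:upoisprev of I geq upx goes to zero}. The only cosmetic differences are that the paper obtains the key inequality $\abs{\prev_\prob\pr{f\pr{X_s}\cond\cdot}-\prev_\prob\pr{g\pr{X_s}\cond\cdot}}\leq 2\norm{f}\prev_\prob\pr{\indica{>\upx}\pr{X_s}\cond\cdot}$ via the sandwich $g-2\norm{f}\indica{>\upx}\leq f\leq g+2\norm{f}\indica{>\upx}$ together with the explicit simple-function formula Equation~\eqref{eqn:Expectation of a simple function} (thereby only invoking linearity on $\cpfield_{u\cup t}$-simple functions rather than on all bounded measurable functions), and that the paper bounds the tail directly via Proposition~\ref{prop:Lower bound for conditional expectation of consistent process} applied to $-\indica{>\upx}$ rather than passing through $\uprev_\Lambda$ and Theorem~\ref{the:Lower conditional expectation with respect to all consistent processes}.
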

\begin{proof}
  Fix any \(\upx\) in \(\stsp\), and let \(f_{\upx}\coloneqq f\indica{\leq \upx}+f\pr{\upx}\indica{>\upx}\).
  Observe that
  \begin{equation*}
    f_{\upx}-2\norm{f}\indica{>\upx}
    \leq f
    \leq f_{\upx}+2\norm{f}\indica{>\upx}.
  \end{equation*}
  Let \(\prob\) be any counting process that is consistent with \(\Lambda\).
  Due to the previous inequalities and the monotonicity of \(\prev_\prob\),
  \begin{multline}
  \label{eqn:Proof of Consistent counting process:Expectation via limit of eventually constant function:monotonicity with exp}
    \prev_\prob\pr{\br{f_{\upx}-2\norm{f}\indica{>\upx}}\pr{X_s} \cond X_u=x_u, X_t=x}
    \leq \prev_\prob\pr{f\pr{X_s} \cond X_u=x_u, X_t=x} \\
    \leq \prev_\prob\pr{\br{f_{\upx} +\norm{f}\indica{>\upx}}\pr{X_s} \cond X_u=x_u, X_t=x}.
  \end{multline}
  Because \(f_{\upx}-2\norm{f}\indica{>\upx}\) and \(f_{\upx}\) are both constant starting from \(\upx+1\), it follows from Lemma~\ref{lem:f(X_s) is a simple function} and Equation~\eqref{eqn:Expectation of a simple function} that
  \begin{equation}
  \label{eqn:Proof of Consistent counting process:Expectation via limit of eventually constant function:exp with -indic}
    \prev_\prob\pr{\br{f_{\upx} -2\norm{f}\indica{>\upx}}\pr{X_s}\cond X_u=x_u, X_t=x}
    = \prev_\prob\pr{f_{\upx}\pr{X_s}\cond X_u=x_u, X_t=x} - 2\norm{f}\prev_\prob\pr{\indica{>\upx}\pr{X_s}\cond X_u=x_u, X_t=x}.
  \end{equation}
  Similarly,
  \begin{equation}
  \label{eqn:Proof of Consistent counting process:Expectation via limit of eventually constant function:exp with +indic}
    \prev_\prob\pr{\br{f_{\upx} +2\norm{f}\indica{>\upx}}\pr{X_s}\cond X_u=x_u, X_t=x}
    = \prev_\prob\pr{f_{\upx}\pr{X_s}\cond X_u=x_u, X_t=x} + 2\norm{f}\prev_\prob\pr{\indica{>\upx}\pr{X_s}\cond X_u=x_u, X_t=x}.
  \end{equation}
  We now combine Equations~\eqref{eqn:Proof of Consistent counting process:Expectation via limit of eventually constant function:monotonicity with exp}--\eqref{eqn:Proof of Consistent counting process:Expectation via limit of eventually constant function:exp with +indic}, to yield
  \begin{equation}
  \label{eqn:Proof of Consistent counting process:Expectation via limit of eventually constant function:bound on expectation}
    \abs*{\prev_{\prob}\pr{f\pr{X_s}\cond X_u=x_u, X_t=x} - \prev_{\prob}\pr{f_{\upx}\pr{X_s}\cond X_u=x_u, X_t=x}}
    \leq 2 \norm{f} \prev_\prob\pr{\indica{>\upx}\pr{X_s}\cond X_u=x_u, X_t=x}.
  \end{equation}
  It now follows from Proposition~\ref{prop:Lower bound for conditional expectation of consistent process}, the conjugacy of \(\lpoisprev_t^s\pr{\cdot\cond x}\) and \(\upoisprev_t^s\pr{\cdot\cond x}\) and the obvious equality \(\prev_\prob\pr{\indica{>\upx}\pr{X_s}\cond X_u=x_u, X_t=x}=-\prev_\prob\pr{-\indica{>\upx}\pr{X_s}\cond X_u=x_u, X_t=x}\) that
  \begin{equation}
  \label{eqn:Proof of Consistent counting process:Expectation via limit of eventually constant function:bound on prob}
    \prev_\prob\pr{\indica{>\upx}\pr{X_s}\cond X_u=x_u, X_t=x}
    \leq \upoisprev_t^s\pr{\indica{>\upx}\cond x}.
  \end{equation}
  Fix any \(\epsilon\) in \(\posreals\), and choose any \(\epsilon'\) in \(\posreals\) such that \(2\norm{f}\epsilon'\leq\epsilon\).
  It now follows from  and Lemma~\ref{lem:upoisprev of I geq upx goes to zero} that there is an \(x^\star\) in \(\stsp\) such that if \(\upx\geq x^\star\), then
  \begin{equation}
  \label{eqn:Proof of Consistent counting process:Expectation via limit of eventually constant function:bound on upoisprev}
    \upoisprev\pr{\indica{>\upx}\cond x}
    \leq \epsilon'.
  \end{equation}
  Finally, we now combine Equations~\eqref{eqn:Proof of Consistent counting process:Expectation via limit of eventually constant function:bound on expectation}--\eqref{eqn:Proof of Consistent counting process:Expectation via limit of eventually constant function:bound on upoisprev} and recall that \(f_{\upx}=f\indica{\leq\upx}+f\pr{\upx}\indica{>\upx}\), to yield
  \begin{equation*}
    \pr{\forall \upx\in\stsp, \upx\geq x^\star}
    \pr{\forall \prob\in\setofconscproc{\Lambda}}
    ~\abs*{\prev_\prob\pr{f\pr{X_s}\cond X_u=x_, X_t=x} - \prev_\prob\pr{\br{f\indica{\leq\upx}+f\pr{\upx}\indica{>\upx}}\pr{X_s}\cond X_u=x_, X_t=x}}
    \leq 2\norm{f} \epsilon'
    \leq \epsilon.
  \end{equation*}
\end{proof}

\begin{proofof}{Theorem~\ref{the:Lower conditional expectation of bounded function with respect to all consistent processes}}
  Our proof is similar to the proof of Theorem~\ref{the:Lower conditional expectation with respect to all consistent processes}
  In the first part, we will show that
  \begin{equation}
  \label{eqn: proof of Lower conditional expectation of bounded function with respect to all consistent processes:first equality}
    \lpoisprev_t^s\pr{f\cond x}
    \leq \lprev_\Lambda\pr{f\pr{X_s}\cond X_u=x_u, X_t=x}
    = \inf\set{\prev_\prob\pr{f\pr{X_s}\cond X_u=x_u, X_t=x}\colon \prob\in\setofconscproc{\Lambda}}.
  \end{equation}
  In the second part, we will subsequently show that
  \begin{equation}
  \label{eqn: proof of Lower conditional expectation of bounded function with respect to all consistent processes:second equality}
    \pr{\forall\epsilon\in\posreals}
    \pr{\exists\prob^\star\in\setofconscproc{\Lambda}}~
    \abs*{\lpoisprev_t^s\pr{f\cond x} - \prev_{\prob^\star}\pr{f\pr{X_s}\cond X_u=x_u, X_t=x}}
    \leq \epsilon.
  \end{equation}
  The stated follows from Equations~\eqref{eqn: proof of Lower conditional expectation of bounded function with respect to all consistent processes:first equality} and \eqref{eqn: proof of Lower conditional expectation of bounded function with respect to all consistent processes:second equality}.
  Indeed, from these equations it follows that, for all \(\epsilon\) in \(\posreals\),
  \begin{equation*}
    \lprev_\Lambda\pr{f\pr{X_s}\cond X_u=x_u, X_t=x}
    \geq \lpoisprev_t^s\pr{f\cond x}
    \geq \prev_{\prob^\star}\pr{f\pr{X_s}\cond X_u=x_u, X_t=x}-\epsilon
    \geq \lprev_\Lambda\pr{f\pr{X_s}\cond X_u=x_u, X_t=x}-\epsilon.
  \end{equation*}
  The equality of the statement now follows from these inequalities because \(\epsilon\) is an arbitrary positive real number.

  We now set out to prove Equations~\eqref{eqn: proof of Lower conditional expectation of bounded function with respect to all consistent processes:first equality} and \eqref{eqn: proof of Lower conditional expectation of bounded function with respect to all consistent processes:second equality}.
  To that end, we fix any \(\epsilon\) in \(\posreals\), and choose some \(\epsilon_1, \epsilon_2, \epsilon_3\) in \(\posreals\) such that \(\epsilon_1+\epsilon_2+\epsilon_3\leq\epsilon\).
  Recall from Proposition~\ref{prop:Approximate lto f using lto f indic leq upx} that there is an \(x^\star_1\) such that
  \begin{equation}
  \label{eqn:Proof of Lower conditional expectation of bounded function with respect to all consistent processes:bound on lpoisprev}
    \pr{\forall \upx\in\stsp, \upx\geq x^\star_1}~
    \abs*{\lpoisprev_t^s\pr{f\cond x} - \lpoisprev_t^s\pr{\indica{\leq \upx}f+f\pr{\upx}\indica{>\upx}\cond x}}
    \leq \epsilon_1.
  \end{equation}
  Due to Lemma~\ref{lem:Consistent counting process:Expectation via limit of eventually constant function}, there is an \(x^\star_2\) such that
  \begin{equation}
  \label{eqn:Proof of Lower conditional expectation of bounded function with respect to all consistent processes:bound on prev}
    \pr{\forall \upx\in\stsp, \upx\geq x^\star_2}
    \pr{\forall \prob\in\setofconscproc{\Lambda}}~
    \abs*{\prev_\prob\pr{f\pr{X_s}\cond X_u=x_u, X_t=x} - \prev_\prob\pr{\br{\indica{\leq \upx}f+f\pr{\upx}\indica{>\upx}}\pr{X_s}\cond X_u=x_u, X_t=x}}
    \leq \epsilon_2.
  \end{equation}
  Let \(x^\star\coloneqq\max\set{x^\star_1, x^\star_2}\), and fix any \(\upx\) in \(\stsp\) such that \(\upx\geq x^\star\).
  It now follows from Equations~\eqref{eqn:Proof of Lower conditional expectation of bounded function with respect to all consistent processes:bound on lpoisprev} and \eqref{eqn:Proof of Lower conditional expectation of bounded function with respect to all consistent processes:bound on prev} that, for any \(\prob\) in \(\setofconscproc{\Lambda}\),
  \begin{multline*}
    \lpoisprev_t^s\pr{f\cond x} - \epsilon
    \leq \lpoisprev_t^s\pr{f\cond x} - \epsilon_1-\epsilon_2
    \leq \lpoisprev_t^s\pr{\indica{\leq \upx}f+f\pr{\upx}\indica{>\upx}\cond x} - \epsilon_1 \\
    \leq \prev_\prob\pr{\br{\indica{\leq \upx}f+f\pr{\upx}\indica{>\upx}}\pr{X_s}\cond X_u=x_u, X_t=x} - \epsilon_1
    \leq \prev_\prob\pr{f\pr{X_s}\cond X_u=x_u, X_t=x},
  \end{multline*}
  where the third inequality follows from Proposition~\ref{prop:Lower bound for conditional expectation of consistent process}.
  Since \(\epsilon\) was an arbitrary positive real number, we infer from this inequality that
  \begin{equation*}
    \pr{\forall \prob\in\setofconscproc{\Lambda}}~
    \lpoisprev_t^s\pr{f\cond x}\leq\prev_\prob\pr{f\pr{X_s}\cond X_u=x_u, X_t=x}.
  \end{equation*}
  We combine this with Equation~\eqref{eqn:Lower expectation with respect to set of counting processes} and use that non-strict inequalities are preserved when taking infima, to yield Equation~\eqref{eqn: proof of Lower conditional expectation of bounded function with respect to all consistent processes:first equality}.

  Next, we prove Equation~\eqref{eqn: proof of Lower conditional expectation of bounded function with respect to all consistent processes:second equality}.
  Due to Proposition~\ref{prop:Lower bound for conditional expectation is reached by a consistent process}, there is a \(\prob^\star\) in \(\setofconscproc{\Lambda}\) such that
  \begin{equation*}
    \abs*{\lpoisprev_t^s\pr{\indica{\leq \upx}f+f\pr{\upx}\indica{>\upx} f\cond x} - \prev_{\prob^\star}\pr{\br{\indica{\leq \upx}f+f\pr{\upx}\indica{>\upx}}\pr{X_s}\cond X_u=x_u, X_t=x}}
    \leq \epsilon_3.
  \end{equation*}
  We now use this inequality and Equations~\eqref{eqn:Proof of Lower conditional expectation of bounded function with respect to all consistent processes:bound on lpoisprev} and \eqref{eqn:Proof of Lower conditional expectation of bounded function with respect to all consistent processes:bound on prev}, to yield
  \begin{align*}
    \MoveEqLeft\abs*{\lpoisprev_t^s\pr{f\cond x}-\prev_{\prob^\star}\pr{f\pr{X_s}\cond X_u=x_u, X_t=x}} \\
    &\leq \abs*{\lpoisprev_t^s\pr{f\cond x}-\lpoisprev_t^s\pr{\indica{\leq x^\star} f+f\pr{x^\star}\indica{>x^\star}\cond x}} \\&\qquad + \abs*{\lpoisprev_t^s\pr{\indica{\leq x^\star} f+f\pr{x^\star}\indica{>x^\star}\cond x}-\prev_{\prob^\star}\pr{\br{\indica{\leq x^\star} f+f\pr{x^\star}\indica{>x^\star}}\pr{X_s}\cond X_u=x_u, X_t=x}} \\ &\qquad\qquad + \abs*{\prev_{\prob^\star}\pr{\br{\indica{\leq x^\star} f+f\pr{x^\star}\indica{>x^\star}}\pr{X_s}\cond X_u=x_u, X_t=x}-\prev_{\prob^\star}\pr{f\pr{X_s}\cond X_u=x_u, X_t=x}} \\
    &\leq \epsilon_1+\epsilon_2+\epsilon_3
    \leq \epsilon,
  \end{align*}
  as required by Equation~\eqref{eqn: proof of Lower conditional expectation of bounded function with respect to all consistent processes:second equality}.
\end{proofof}

Next, we consider the special case of monotone bounded functions.
\begin{lemma}
\label{lem:I+Delta GenPoisGen for monotonous functions}
  Fix any \(f\) in \(\setoffn\pr{\stsp}\), \(n\) in \(\nats\) and, for all \(i\) in \(\set{1, \dots, n}\), some \(\Delta_i\) in \(\nnegreals\) with \(\Delta_i\norm{\ltro}\leq2\).
  If \(f\) is non-decreasing, then
  \begin{equation*}
    \prod_{i=1}^n\pr{I+\Delta_i\ltro}f
    =\prod_{i=1}^n\pr{I+\Delta_i\trm_{\llambda}}f
  \end{equation*}
  is non-decreasing.
  Similarly, if \(f\) is non-increasing, then
  \begin{equation*}
    \prod_{i=1}^n\pr{I+\Delta_i\ltro}f
    =\prod_{i=1}^n\pr{I+\Delta_i\trm_{\ulambda}}f
  \end{equation*}
  is non-increasing.
\end{lemma}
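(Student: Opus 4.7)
The plan is to proceed by induction on $n$, exploiting the fact that when $f$ is monotone, the minimization defining $\ltro$ can be resolved explicitly in terms of $\llambda$ (in the non-decreasing case) or $\ulambda$ (in the non-increasing case).

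The base case $n=1$ goes as follows. If $f$ is non-decreasing then $f(x+1) - f(x) \geq 0$ for every $x \in \stsp$, so the minimum
\[
\br{\ltro f}\pr{x} = \min_{\lambda \in [\llambda, \ulambda]} \lambda\pr{f\pr{x+1} - f\pr{x}}
\]
is attained at $\lambda = \llambda$, whence $\ltro f = \trm_{\llambda} f$ pointwise and therefore $\pr{I + \Delta_1 \ltro} f = \pr{I + \Delta_1 \trm_{\llambda}} f$. Writing $h$ for this function, one computes $h\pr{x} = \pr{1 - \Delta_1 \llambda} f\pr{x} + \Delta_1 \llambda f\pr{x+1}$, which is a pointwise convex combination of $f\pr{x}$ and $f\pr{x+1}$, since $0 \leq \Delta_1 \llambda \leq \Delta_1 \ulambda = \Delta_1 \norm{\ltro}/2 \leq 1$ by Lemma~\ref{lem:GenPoisGen:Norm}. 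A direct calculation then yields $h\pr{x+1} - h\pr{x} = \pr{1 - \Delta_1 \llambda}\pr{f\pr{x+1} - f\pr{x}} + \Delta_1 \llambda \pr{f\pr{x+2} - f\pr{x+1}} \geq 0$, so $h$ is again non-decreasing.

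For the inductive step, I would let $g \coloneqq \prod_{i=2}^n \pr{I + \Delta_i \ltro} f$. The induction hypothesis yields that $g$ is non-decreasing and coincides with $\prod_{i=2}^n \pr{I + \Delta_i \trm_{\llambda}} f$. Applying the base-case argument to $g$ in place of $f$ then gives $\pr{I + \Delta_1 \ltro} g = \pr{I + \Delta_1 \trm_{\llambda}} g$ and shows that this is again non-decreasing, which closes the induction. The non-increasing case is entirely symmetric: the minimum in the definition of $\ltro$ is now attained at $\lambda = \ulambda$ because the differences $f\pr{x+1} - f\pr{x}$ are non-positive, and the same convex-combination argument preserves non-increasingness.

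There is no substantial obstacle here; the only delicate point worth flagging is the role of the assumption $\Delta_i \norm{\ltro} \leq 2$, which is precisely what guarantees that each one-step update $\pr{I + \Delta_i \trm_\lambda} f$ is a genuine convex combination of $f\pr{x}$ and $f\pr{x+1}$ rather than an affine combination with a negative coefficient on $f\pr{x}$; without this bound, monotonicity would not be propagated through the induction.
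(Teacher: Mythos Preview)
Your proof is correct and follows essentially the same approach as the paper's: induction on $n$, with the base case resolving the minimum at $\llambda$ (resp.\ $\ulambda$) and verifying that monotonicity is preserved via the convex-combination argument enabled by $\Delta_i\norm{\ltro}\leq 2$, and the induction step peeling off the outermost factor. The paper's proof is structurally identical, including the explicit computation of the difference $h(x+1)-h(x)$ and the appeal to $\norm{\ltro}=2\ulambda$.
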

\begin{proof}
  We only prove the stated for a non-decreasing \(f\), the proof for non-increasing \(f\) is entirely similar.
  Our proof is one by induction.
  In case \(n=1\), the equality of the stated follows trivially from the definition of \(\ltro\): for any \(x\) in \(\stsp\),
  \begin{align*}
    \br{\pr{I+\Delta_1\ltro} f}\pr{x}
    &= f\pr{x} + \Delta_1\br{\ltro f}\pr{x}
    = f\pr{x} + \Delta_1 \min\set{\lambda f\pr{x+1}-\lambda f\pr{x}\colon\lambda\in\br{\llambda, \ulambda}} \\
    &= f\pr{x} + \Delta_1 \pr{\llambda f\pr{x+1} - \llambda f\pr{x}}
    = f\pr{x} + \Delta_1 \br{\trm_{\llambda} f}\pr{x} \\
    &= \br{\pr{I+\Delta_1\trm_{\llambda}} f}\pr{x}.
  \end{align*}
  That \(\pr{I+\Delta_1\ltro} f\) is non-decreasing as well is easily verified in a similar fashion:
  \begin{align*}
    \br{\pr{I+\Delta_1\ltro} f}\pr{x+1} - \br{\pr{I+\Delta_1\ltro} f}\pr{x}
    &= \br{\pr{I+\Delta_1\trm_{\llambda}} f}\pr{x+1} - \br{\pr{I+\Delta_1\trm_{\llambda}} f}\pr{x} \\
    &= f\pr{x+1} + \Delta_1 \llambda\pr{f\pr{x+2} - f\pr{x+1}} - f\pr{x} - \Delta_1\llambda \pr{f\pr{x+1} - f\pr{x}} \\
    &= \pr{1-\Delta_1\llambda}\pr{f\pr{x+1} - f\pr{x}} + \Delta_1 \llambda \pr{f\pr{x+2} - f\pr{x+1}} \\
    &\geq 0,
  \end{align*}
  where the inequality follows from the inequality \(1-\Delta_1\llambda\geq0\), which holds because \(\llambda\leq\ulambda\) and \(\Delta_12\ulambda=\Delta_1\norm{\ltro}\leq2\), where the first inequality follows from Equation~\eqref{eqn:PoisGen as special case of GenPoisGen} and Lemma~\ref{lem:GenPoisGen:Norm}.

  For the induction step, we fix any \(m\) in \(\nats\) with \(m\geq2\) and assume that the stated then holds for all \(n\) in \(\nats\) with \(n<m\).
  We now show that the stated then follows for \(m\) as well.
  Let
  \begin{equation*}
    g'
    \coloneqq\prod_{i=2}^m\pr{I+\Delta_i\ltro} f
    \qquad\text{and\qquad}
    g
    \coloneqq\prod_{i=1}^m\pr{I+\Delta_i\ltro} f
    = \pr{I+\Delta_1\ltro} g'.
  \end{equation*}
  By the induction hypothesis, \(g'\) is non-decreasing and equal to
  \begin{equation*}
    \prod_{i=2}^m\pr{I+\Delta_i\trm_{\llambda}} f
  \end{equation*}
  HIt follows from this and the induction hypothesis for \(n=1\) that
  \begin{equation*}
    g
    = \pr{I+\Delta_1\ltro} g'
    = \pr{I+\Delta_1\trm_{\llambda}}g'
    = \prod_{i=1}^m\pr{I+\Delta_i\trm_{\llambda}} f
  \end{equation*}
  is non-decreasing, as required.
\end{proof}

\begin{proofof}{Proposition~\ref{prop:Monotone bounded functions:equality between conditional expectations}}
  We only prove the stated for a non-decreasing function \(f\), the proof for a non-increasing \(f\) is entirely similar.
  We first set out to prove that
  \begin{equation}
  \label{eqn:Proof of Monotone bounded functions:equality between conditional expectations:claim of equality}
    \br{\lto_t^s f}\pr{x}
    = \br{T_{t,\llambda}^s f}\pr{x}.
  \end{equation}
  To that end, we fix any \(\epsilon\) in \(\posreals\), and choose any \(\epsilon'\) in \(\posreals\) such that \(2\epsilon'\norm{f}\leq\epsilon\).
  By Theorem~\ref{the:LCT induced by GenPoisGen} and Corollary~\ref{cor:Gen trm:Existence of T_S}, there is a \(u\) in \(\setoftseq_{\br{t,s}}\) with \(\sigma\pr{u}\norm{\ltro}\leq2\)---and, due to Lemma~\ref{lem:GenPoisGen:Norm} and Corollary~\ref{cor:Gen trm:norm}, therefore also \(\sigma\pr{u}\norm{\trm_{\llambda}}\leq2\)---such that
  \begin{equation}
  \label{eqn:Proof of Monotone bounded functions:equality between conditional expectations:Approximation inequalities}
    \norm*{\lto_t^s - \prod_{i=1}^n\pr{I+\Delta_i\ltro}}
    \leq \epsilon'
    \qquad\text{and}\qquad
    \norm*{T_{t,\llambda}^s - \prod_{i=1}^n\pr{I+\Delta_i\trm_{\llambda}}}
    \leq \epsilon'.
  \end{equation}
  Observe now that
  \begin{align*}
    \abs*{\br{\lto_t^s f}\pr{x}-\br{T_{t,\llambda}^s f}\pr{x}}
    &\leq \norm{\lto_t^s f-T_{t,\llambda}^s f}
    = \norm*{\lto_t^s f-\prod_{i=1}^n\pr{I+\Delta_i\ltro}f+\prod_{i=1}^n\pr{I+\Delta_i\ltro}f-T_{t,\llambda}^s f} \\
    &= \norm*{\lto_t^s f-\prod_{i=1}^n\pr{I+\Delta_i\ltro}f+\prod_{i=1}^n\pr{I+\Delta_i\trm_{\llambda}}f-T_{t,\llambda}^s f} \\
    &\leq \norm*{\lto_t^s f-\prod_{i=1}^n\pr{I+\Delta_i\ltro}f}+\norm*{T_{t,\llambda}^s f-\prod_{i=1}^n\pr{I+\Delta_i\trm_{\llambda}}f} \\
    &\leq \norm*{\lto_t^s-\prod_{i=1}^n\pr{I+\Delta_i\ltro}}\norm{f}+\norm*{T_{t,\llambda}^s-\prod_{i=1}^n\pr{I+\Delta_i\trm_{\llambda}}}\norm{f} \\
    &\leq 2\epsilon'\norm{f}
    \leq \epsilon,
  \end{align*}
  where the second equality follows from Lemma~\ref{lem:I+Delta GenPoisGen for monotonous functions}, the second inequality follows from \ref{BNH: norm Af <= norm A norm f} and the penultimate inequality follows from Equation~\eqref{eqn:Proof of Monotone bounded functions:equality between conditional expectations:Approximation inequalities}.
  Since \(\epsilon\) was an arbitrary positive real number, these inequalities imply Equation~\eqref{eqn:Proof of Monotone bounded functions:equality between conditional expectations:claim of equality}.

  The stated basically follows from Equation~\eqref{eqn:Proof of Monotone bounded functions:equality between conditional expectations:claim of equality}.
  To see this, we let \(\prob_\lambda\) be a Poisson process with rate \(\lambda\) in the rate interval~\(\Lambda\).
  Then clearly
  \begin{equation}
  \label{eqn:Proof of Monotone bounded functions:equality between conditional expectations:Obvious inequalities}
    \lprev_\Lambda\pr{f\pr{X_s}\cond X_u=x_u, X_t=x}
    \leq \lprev_\Lambda^\star\pr{f\pr{X_s}\cond X_u=x_u, X_t=x}
    \leq \prev_{\prob_\lambda}\pr{f\pr{X_s}\cond X_u=x_u, X_t=x}.
  \end{equation}
  Recall from Theorem~\ref{the:Lower conditional expectation of bounded function with respect to all consistent processes} that
  \begin{equation}
  \label{eqn:Proof of Monotone bounded functions:equality between conditional expectations:lprev_Lambda}
    \lprev_\Lambda\pr{f\pr{X_s}\cond X_u=x_u, X_t=x}
    = \br{\lto_t^s f}\pr{x}.
  \end{equation}
  Similarly, it follows from Equation~\eqref{eqn:Singleton of Poisson with rate lambda} and Theorem~\ref{the:Lower conditional expectation of bounded function with respect to all consistent processes} that
  \begin{equation}
  \label{eqn:Proof of Monotone bounded functions:equality between conditional expectations:prev_llambda}
    \prev_{\prob_{\llambda}}\pr{f\pr{X_s}\cond X_u=x_u, X_t=x}
    = \lprev_{\br{\llambda, \llambda}}\pr{f\pr{X_s}\cond X_u=x_u, X_t=x}
    = \br{T_{t,\llambda}^s f}\pr{x}.
  \end{equation}
  It now follows from Equation~\eqref{eqn:Proof of Monotone bounded functions:equality between conditional expectations:claim of equality}, Equation~\eqref{eqn:Proof of Monotone bounded functions:equality between conditional expectations:Obvious inequalities} with \(\lambda=\llambda\) and Equations~\eqref{eqn:Proof of Monotone bounded functions:equality between conditional expectations:lprev_Lambda} and \eqref{eqn:Proof of Monotone bounded functions:equality between conditional expectations:prev_llambda} that
  \begin{equation*}
    \prev_{\prob_{\llambda}}\pr{f\pr{X_s}\cond X_u=x_u, X_t=x}
    = \br{T_{t,\llambda}^s f}\pr{x}
    = \br{T_{t,\llambda}^s f}\pr{x}
    = \lprev_\Lambda\pr{f\pr{X_s}\cond X_u=x_u, X_t=x}
    \leq \prev_{\prob_{\llambda}}\pr{f\pr{X_s}\cond X_u=x_u, X_t=x},
  \end{equation*}
  which clearly implies the equality of the statement for non-decreasing functions~\(f\).
\end{proofof}

\subsubsection{Non-Bounded Functions}
Finally, we are ready to make the transition towards bounded-below functions.
\begin{proofof}{Proposition~\ref{prop:Lower and upper expectation of non-decreasing functions}}
  Observe that if \(f\) is non-decreasing and bounded, then the stated follows immediately from Proposition~\ref{prop:Monotone bounded functions:equality between conditional expectations}.
  We therefore only have to prove the stated for a non-decreasing~\(f\) that is not bounded but bounded below.
  Observe that in this case, \(\inf f = f\pr{0}\).

  We now first set out to prove that
  \begin{equation}
  \label{eqn:Proof of Lower and upper expectation of non-decreasing functions:Claim with inequalities}
    \pr{\forall\prob\in\setofconscproc{\Lambda}}
    \prev_{\prob_{\llambda}}\pr{f\pr{X_S}\cond X_u=x_, X_t=x}
    \leq \prev_\prob\pr{f\pr{X_s}\cond X_u=x_u, X_t=x}.
  \end{equation}
  To that end, we fix any \(\prob\) in \(\setofconscproc{\Lambda}\).
  Recall that
  \begin{equation*}
    \prev_\prob\pr{f\pr{X_s}\cond X_u=X_u, X_t=x}
    = \int_{\inf f}^{\sup f} \prob\pr{\set{f\pr{X_s}>\alpha}\cond X_u=x_u, X_t=x}\, \mathrm{d}\alpha
    = \int_{f\pr{0}}^{+\infty} \prob\pr{\set{f\pr{X_s}>\alpha}\cond X_u=x_u, X_t=x}\, \mathrm{d}\alpha.
  \end{equation*}

  We now fix any \(\beta\) in \(\reals\) with \(\beta\geq f\pr{x}\), and let \(y_\beta\) be an element of \(\stsp\) such that \(f\pr{y_\beta}\leq\beta\leq f\pr{y_\beta+1}\)---this is possible because \(f\) is non-decreasing and unbounded.
  Observe that
  \begin{multline*}
    \int_{f\pr{0}}^\beta \prob\pr{\set{f\pr{X_s}>\alpha}\cond X_u=x_u, X_t=x}\, \mathrm{d}\alpha \\
    = \int_{f\pr{0}}^{f\pr{1}}\prob\pr{\set{f\pr{X_s}>\alpha}\cond X_u=x_u, X_t=x}\, \mathrm{d}\alpha
    + \int_{f\pr{1}}^{f\pr{2}}\prob\pr{\set{f\pr{X_s}>\alpha}\cond X_u=x_u, X_t=x}\, \mathrm{d}\alpha \\
    + \cdots + \int_{f\pr{y_\beta}}^{\beta}\prob\pr{\set{f\pr{X_s}>\alpha}\cond X_u=x_u, X_t=x}\, \mathrm{d}\alpha.
  \end{multline*}
  As a consequence of our assumptions on \(f\), it follows that
  \begin{equation}
  \label{eqn:Proof of Lower and upper expectation of non-decreasing functions:From level sets to events}
    \pr{\forall y\in\stsp}
    \pr{\forall\alpha\in[f\pr{y}, f\pr{y+1})}~
    \set{f\pr{X_s}>\alpha}
    = \pr{X_s>y}.
  \end{equation}
  Hence,
  \begin{multline*}
    \int_{f\pr{0}}^\beta \prob\pr{\set{f\pr{X_s}>\alpha}\cond X_u=x_u, X_t=x}\, \mathrm{d}\alpha
    = \sum_{y=0}^{y_\beta-1} \pr{f\pr{y+1}-f\pr{y}}\prob\pr{X_s>y\cond X_u=x_u, X_t=x} \\ + \pr{\beta-f\pr{y_\beta}}\prob\pr{X_s>y_\beta\cond X_u=x_u, X_t=x}
  \end{multline*}
  For any \(y\) in \(\stsp\), it follows from Equation~\eqref{eqn:Expectation of a simple function}---with the \(\cpfield_{u\cup t}\)-simple function \(\indica{>y}\pr{X_s}\)---and Proposition~\ref{prop:Monotone bounded functions:equality between conditional expectations}---with the non-decreasing function \(\indica{>y}\)---that
  \begin{multline*}
    \prob\pr{X_s>y\cond X_u=x_u, X_t=x}
    = \prev_\prob\pr{\indica{>y}\pr{X_s}\cond X_u=x_u, X_t=x} \\
    \geq \prev_{\prob_{\llambda}}\pr{\indica{>y}\pr{X_s}\cond X_u=x_u, X_t=x}
    = \prob_{\llambda}\pr{X_s>y\cond X_u=x_u, X_t=x},
  \end{multline*}
  where \(\prob_{\llambda}\) is the Poisson process with rate~\(\llambda\).
  We combine this inequality with the previous equality, to yield
  \begin{align*}
    \int_{f\pr{0}}^\beta \prob\pr{\set{f\pr{X_s}>\alpha}\cond X_u=x_u, X_t=x}\, \mathrm{d}\alpha
    &\geq \sum_{y=0}^{y_\beta-1} \pr{f\pr{y+1}-f\pr{y}}\prob_{\llambda}\pr{X_s>y\cond X_u=x_u, X_t=x} \\ &\qquad+ \pr{\beta-f\pr{y_\beta}}\prob_{\llambda}\pr{X_s>y_\beta\cond X_u=x_u, X_t=x} \\
    &= \int_{f\pr{0}}^\beta \prob_{\llambda}\pr{\set{f\pr{X_s}>\alpha}\cond X_u=x_u, X_t=x}\, \mathrm{d}\alpha.
  \end{align*}
  We take the limit for \(\beta\) going to \(+\infty\) on both sides of the inequality, to yield Equation~\eqref{eqn:Proof of Lower and upper expectation of non-decreasing functions:Claim with inequalities}:
  \begin{multline*}
    \prev_{\prob}\pr{f\pr{X_s}\cond X_u=x_u, X_t=x}
    = \lim_{\beta\to+\infty} \int_{f\pr{0}}^\beta \prob\pr{\set{f\pr{X_s}>\alpha}\cond X_u=x_u, X_t=x}\,\mathrm{d}\alpha \\
    \geq \lim_{\beta\to+\infty} \int_{f\pr{0}}^\beta \prob_{\llambda}\pr{\set{f\pr{X_s}>\alpha}\cond X_u=x_u, X_t=x}\,\mathrm{d}\alpha
    = \prev_{\prob_{\llambda}}\pr{f\pr{X_s}\cond X_u=x_u, X_t=x}.
  \end{multline*}

  Finally, we verify that the stated.
  On the one hand, we recall from Equation~\eqref{eqn:Relation between lower expectations for consistent processes} that
  \begin{equation*}
    \lprev_\Lambda\pr{f\pr{X_s}\cond X_u=x_u, X_t=x}
    \leq \lprev_\Lambda^\star\pr{f\pr{X_s}\cond X_u=x_u, X_t=x}
    \leq \prev_{\prob_{\llambda}}\pr{f\pr{X_s}\cond X_u=x_u, X_t=x},
  \end{equation*}
  where the final equality holds because \(\prob_{\ulambda}\) belongs to \(\setofconscproc{\Lambda}^\star\).
  On the other hand, because non-strict inequalities are preserved when taking the infimum, it follows from Equations~\eqref{eqn:Lower expectation with respect to set of counting processes} and \eqref{eqn:Proof of Lower and upper expectation of non-decreasing functions:Claim with inequalities} that
  \begin{equation*}
    \lprev_\Lambda\pr{f\pr{X_s}\cond X_u=x_u, X_t=x}
    \geq \prev_{\prob_{\llambda}}\pr{f\pr{X_s}\cond X_u=x_u, X_t=x}.
  \end{equation*}
  The stated equality now follows immediately from these two observations
\end{proofof}

\begin{proofof}{Corollary~\ref{cor:Lower and upper expected number of events}}
  As \(f\pr{X_s}=X_s\) is non-decreasing, it follows from Proposition~\ref{prop:Lower and upper expectation of non-decreasing functions} that
  \begin{equation*}
    \lprev_\Lambda\pr{X_s\cond X_u=x_u, X_t=x}
    = \lprev_\Lambda^\star\pr{X_s\cond X_u=x_u, X_t=x}
    = \prev_{\prob_{\llambda}}\pr{X_s\cond X_u=x_u, X_t=x}
  \end{equation*}
  and
  \begin{equation*}
    \uprev_\Lambda\pr{X_s\cond X_u=x_u, X_t=x}
    = \uprev_\Lambda^\star\pr{X_s\cond X_u=x_u, X_t=x}
    = \prev_{\prob_{\ulambda}}\pr{X_s\cond X_u=x_u, X_t=x}
  \end{equation*}
  The stated now immediately follows if we recall that
  \begin{equation*}
    \prev_{\prob_{\llambda}}\pr{X_s\cond X_u=x_u, X_t=x}
    = \sum_{y=x}^{+\infty} y\pois_{\llambda\pr{s-t}}\pr{y-x}
    = x+\llambda\pr{s-t},
  \end{equation*}
  and
  \begin{equation*}
    \prev_{\prob_{\ulambda}}\pr{X_s\cond X_u=x_u, X_t=x}
    = \sum_{y=x}^{+\infty} y\pois_{\ulambda\pr{s-t}}\pr{y-x}
    = x+\ulambda\pr{s-t},
  \end{equation*}
  where both times the first equality follows from Proposition~\ref{prop:expectation of poisson process as sum}.
\end{proofof}

\begin{lemma}
\label{lem:Expectation with respect to counting process:Via non-negative translation}
  Consider any counting process~\(\prob\).
  Fix any \(t,s\) in \(\nnegreals\) with \(t\leq s\), \(u\) in \(\setoftseq_{<t}\) and \(\pr{x_u, x}\) in \(\stsp_{u\cup t}\).
  Then for any \(f\) in \(\setofbbfn\pr{\stsp}\),
  \begin{equation*}
    \prev_\prob\pr{f\pr{X_s}\cond X_u=x_u, X_t=x}
    = \inf f + \prev_\prob\pr{f'\pr{X_s}\cond X_u=x_u, X_t=x},
  \end{equation*}
  with \(f'\coloneqq f-\inf f\).
\end{lemma}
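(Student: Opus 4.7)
The approach is to unfold both sides of the claimed equality by means of the defining integral formula for $\prev_\prob$ stated in Section~\ref{ssec:Conditional expectation with respect to a counting process}, and then recognise the right-hand integral as the left-hand integral after a single translation in the integration variable. There is no functional-analytic obstacle here; the result is essentially bookkeeping built on two facts about the counting process: the integral representation of $\prev_\prob$, and the shift property $\set{f'(X_s) > \alpha} = \set{f(X_s) > \alpha + \inf f}$.

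First, I would confirm that $(f'(X_s), X_u = x_u, X_t = x)$ actually lies in $\prevdomain$. Since $f \in \setofbbfn(\stsp)$, the constant $\inf f$ is a finite real number, so $f' \coloneqq f - \inf f$ belongs to $\setofbbfn(\stsp)$ as well, with $\inf f' = 0$ and $\sup f' = \sup f - \inf f$. Lemma~\ref{lem:f of X_t is a F_u-measurable function} applied to $f'$ ensures that $f'(X_s)$ is $\cpfield_{u \cup t}$-measurable, since $\max(u \cup t) = t \leq s$. Next, I would note that by assumption~\ref{Paths:Existence for any instantiation} (with the single-time sequence $s$), every value in $\stsp$ is attained by $X_s$ on some path, so the ranges of $f(X_s)$ and $f$ coincide; hence $\inf f(X_s) = \inf f$ and $\sup f(X_s) = \sup f$, and similarly for $f'$.

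The central step is then the change of variable. Writing out the definition gives
\begin{equation*}
\prev_\prob(f'(X_s) \cond X_u=x_u, X_t=x)
= 0 + \int_{0}^{\sup f - \inf f} \prob\!\pr{\set{f'(X_s) > \alpha} \cond X_u=x_u, X_t=x}\, \mathrm{d}\alpha.
\end{equation*}
Since $f'(\pth(s)) > \alpha$ if and only if $f(\pth(s)) > \alpha + \inf f$, the level set $\set{f'(X_s) > \alpha}$ equals $\set{f(X_s) > \alpha + \inf f}$. Substituting $\beta = \alpha + \inf f$ in the Riemann integral, I obtain
\begin{equation*}
\prev_\prob(f'(X_s) \cond X_u=x_u, X_t=x)
= \int_{\inf f}^{\sup f} \prob\!\pr{\set{f(X_s) > \beta} \cond X_u=x_u, X_t=x}\, \mathrm{d}\beta,
\end{equation*}
and adding $\inf f$ to both sides yields exactly $\prev_\prob(f(X_s) \cond X_u=x_u, X_t=x)$ by definition.

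The only subtlety worth checking is the validity of the substitution when $\sup f = +\infty$, in which case both integrals are improper. The integrand $\prob(\set{f(X_s) > \alpha} \cond \cdot)$ is a non-negative, bounded (by $1$) and non-increasing function of $\alpha$, so each integral exists in $[0, +\infty]$ as a limit of proper Riemann integrals over $[0, M]$ (respectively $[\inf f, M + \inf f]$) as $M \to +\infty$; on each such finite interval the shift $\beta = \alpha + \inf f$ is a routine affine change of variable, and taking the limit preserves the equality. That minor verification is the closest thing to an obstacle; the rest is purely mechanical.
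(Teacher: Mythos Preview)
Your proof is correct and follows exactly the route implicit in the paper's own argument, which consists of the single sentence ``Follows immediately from the definition of $\prev_\prob$.'' You have simply made that immediacy explicit by unfolding the integral definition and carrying out the affine change of variable, together with the careful verification (via~\ref{Paths:Existence for any instantiation}) that $\inf f(X_s)=\inf f$ and the treatment of the improper case; nothing is missing or superfluous.
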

\begin{proof}
  Follows immediately from the definition of \(\prev_\prob\).
\end{proof}

\begin{proofof}{Theorem~\ref{the:Lower and upper expectation of functions that do not increase to fast}}
  First, we observe that
  \begin{equation*}
    \inf f \leq f \leq f_{\max}.
  \end{equation*}
  Therefore, for any \(\prob\) in \(\setofconscproc{\Lambda}\),
  \begin{equation}
    \inf f
    \leq \prev_{\prob}\pr{f\pr{X_s}\cond X_u=x_u, X_t=x}
    \leq \prev_{\prob}\pr{f_{\max}\pr{X_s}\cond X_u=x_u, X_t=x}
  \end{equation}
  due to the monotonicity of \(\prev_\prob\).
  Since \(f_{\max}\) is clearly a non-decreasing bounded-below function, it follows from Proposition~\ref{prop:Lower and upper expectation of non-decreasing functions} and  Proposition~\ref{prop:expectation of poisson process as sum} that, for any \(\prob\) in \(\setofconscproc{\Lambda}\),
  \begin{equation*}
    \prev_{\prob}\pr{f_{\max}\pr{X_s}\cond X_u=x_u, X_t=x}
    \leq \prev_{\prob_{\ulambda}}\pr{f_{\max}\pr{X_s}\cond X_u=x_u, X_t=x}
    = \sum_{y=x}^{+\infty} f_{\max}\pr{y}\pois_{\ulambda\pr{s-t}}\pr{y-x}
    < +\infty,
  \end{equation*}
  where the final inequality is precisely the condition on \(f\) of the statement.
  Because non-strict inequalities are preserved when taking infima and suprema, we infer from this that
  \begin{equation*}
    \inf f
    \leq \lprev_\Lambda\pr{f\pr{X_s}\cond X_u=x_u, X_t=x}
    \leq \uprev_\Lambda\pr{f\pr{X_s}\cond X_u=x_u, X_t=x}
    \leq \sum_{y=x}^{+\infty} f_{\max}\pr{y}\pois_{\ulambda\pr{s-t}}\pr{y-x}
    < +\infty.
  \end{equation*}
  This already settles the second part of the stated, namely that the lower and upper expectations are finite.

  Next, we set out to prove the equalities of the statement.
  For any \(\upx\) in \(\stsp\), we let \(f_{\upx}\coloneqq f\indica{\leq\upx}+f\pr{\upx}\indica{>\upx}\).
  By definition of the limit, we need to prove that
  \begin{equation}
  \label{eqn:Proof of Lower and upper expectation of functions that do not increase to fast:Eps delta for lower}
    \pr{\forall\epsilon\in\posreals}
    \pr{\exists x^\star\in\stsp}
    \pr{\forall \upx\in\stsp, \upx\geq x^\star}~
    \abs{\lprev_\Lambda\pr{f\pr{X_s}\cond X_u=x_u, X_t=x}-\lpoisprev_t^s\pr{f_{\upx}\cond x}}
    \leq \epsilon
  \end{equation}
  and
  \begin{equation}
  \label{eqn:Proof of Lower and upper expectation of functions that do not increase to fast:Eps delta for upper}
    \pr{\forall\epsilon\in\posreals}
    \pr{\exists x^\star\in\stsp}
    \pr{\forall \upx\in\stsp, \upx\geq x^\star}~
    \abs{\uprev_\Lambda\pr{f\pr{X_s}\cond X_u=x_u, X_t=x}-\upoisprev_t^s\pr{f_{\upx}\cond x}}
    \leq \epsilon.
  \end{equation}
  To that end, we fix any \(\epsilon\) in \(\posreals\), and choose any \(\epsilon_1\) and \(\epsilon_2\) in \(\posreals\) such that \(\epsilon_1+\epsilon_2\leq\epsilon\).

  Our first step is to obtain a bound on
  \begin{equation*}
    \abs{\prev_\prob\pr{f\pr{X_s}\cond X_u=x_u, X_t=x}-\prev_\prob\pr{f_{\upx}\pr{X_s}\cond X_u=x_u, X_t=x}},
  \end{equation*}
  with \(\prob\) in \(\setofconscproc{\Lambda}\) and \(\upx\) in \(\stsp\).
  To that end, we let \(f'\coloneqq f-\inf f\), \(f'_{\max}\coloneqq f_{\max}-\inf f\) and \(f'_{\upx}\coloneqq f'\indica{\leq \upx}+f'\pr{\upx}\indica{>\upx}\) for any \(\upx\) in \(\stsp\).
  Due to the condition on \(f\) of the statement and the properties of the Poisson distribution,
  \begin{equation*}
    \sum_{y=x}^{+\infty} f'_{\max}\pr{y}\pois_{\ulambda\pr{s-t}}\pr{y-x}
    = \sum_{y=x}^{+\infty} \pr{f_{\max}\pr{y}-\inf f}\pois_{\ulambda\pr{s-t}}\pr{y-x}
    = \sum_{y=x}^{+\infty} f_{\max}\pr{y}\pois_{\ulambda\pr{s-t}}\pr{y-x} - \inf f
    < +\infty.
  \end{equation*}
  Hence, there is an \(x^\star\) in \(\stsp\) with \(x^\star\geq x\) such that
  \begin{equation}
  \label{eqn:Proof of the:Lower and upper expectation of functions that do not increase to fast:Bound on sum with f'_max}
    \pr{\forall\upx\in\stsp, \upx\geq x^\star}
    \abs*{\sum_{y=\upx+1}^{+\infty} f'_{\max}\pr{y}\pois_{\ulambda\pr{s-t}}\pr{y-x}}
    = \abs*{\sum_{y=x}^{+\infty} f'_{\max}\pr{y}\pois_{\ulambda\pr{s-t}}\pr{y-x}-\sum_{y=x}^{\upx} f'_{\max}\pr{y}\pois_{\ulambda\pr{s-t}}\pr{y-x}}
    \leq \epsilon_1.
  \end{equation}
  Fix any \(\upx\) in \(\stsp\) with \(\upx\geq x^\star\), and observe that
  \begin{equation}
  \label{eqn:Proof of Lower and upper expectation of functions that do not increase to fast:Bounds on f'}
    \indica{\leq \upx} f'
    \leq f'
    \leq \indica{\leq x} f' + f'_{\max} \indica{>\upx}.
  \end{equation}
  and
  \begin{equation}
  \label{eqn:Proof of Lower and upper expectation of functions that do not increase to fast:Bounds on f'_upx}
    \indica{\leq \upx} f'
    \leq f'_{\upx}
    \leq \indica{\leq x} f' + f'_{\max} \indica{>\upx}.
  \end{equation}
  Fix any \(\prob\) in \(\setofconscproc{\Lambda}\).
  Then due to Equation~\eqref{eqn:Proof of Lower and upper expectation of functions that do not increase to fast:Bounds on f'} and the monotonicity of \(\prev_\prob\),
  \begin{equation}
  \label{eqn:Proof of Lower and upper expectation of functions that do not increase to fast:Monotonicity of prev with f'}
    \prev_\prob\pr{\br{\indica{\leq \upx}f'}\pr{X_s}\cond X_u=x_u, X_t=x}
    \leq \prev_\prob\pr{f'\pr{X_s}\cond X_u=x_u, X_t=x}
    \leq \prev_\prob\pr{\br{\indica{\leq \upx}f'+\indica{>\upx}f'_{\max}}\pr{X_s}\cond X_u=x_u, X_t=x}.
  \end{equation}
  Similarly,
  \begin{equation}
  \label{eqn:Proof of Lower and upper expectation of functions that do not increase to fast:Monotonicity of prev with f'_upx}
    \prev_\prob\pr{\br{\indica{\leq \upx}f'}\pr{X_s}\cond X_u=x_u, X_t=x}
    \leq \prev_\prob\pr{f'_{\upx}\pr{X_s}\cond X_u=x_u, X_t=x}
    \leq \prev_\prob\pr{\br{\indica{\leq \upx}f'+\indica{>\upx}f'_{\max}}\pr{X_s}\cond X_u=x_u, X_t=x}.
  \end{equation}
  Furthermore, it follows from the linearity of \(\prev_\prob\) that
  \begin{equation}
  \label{eqn:Proof of Lower and upper expectation of functions that do not increase to fast:sum decomp of prev}
    \prev_\prob\pr{\br{\indica{\leq \upx}f'+\indica{>\upx}f'_{\max}}\pr{X_s}\cond X_u=x_u, X_t=x}
    = \prev_\prob\pr{\br{\indica{\leq \upx}f'}\pr{X_s}\cond X_u=x_u, X_t=x} + \prev_\prob\pr{\br{\indica{>\upx}f'_{\max}}\pr{X_s}\cond X_u=x_u, X_t=x}.
  \end{equation}
  Alternatively, we obtain Equation~\eqref{eqn:Proof of Lower and upper expectation of functions that do not increase to fast:sum decomp of prev} as follows.
  Recall that
  \begin{equation*}
    \prev_\prob\pr{\br{\indica{\leq \upx}f'+\indica{>\upx}f'_{\max}}\pr{X_s}\cond X_u=x_u, X_t=x}
    = \int_{0}^{\sup f'} \prob\pr{\set{\br{\indica{\leq \upx}f'+\indica{>\upx}f'_{\max}}\pr{X_s}>\alpha}\cond X_u=x_u, X_t=x} \,\mathrm{d}\alpha.
  \end{equation*}
  Observe now that for all \(\alpha\) in \(\nnegreals\),
  \begin{align*}
    \set{\br{\indica{\leq \upx}f'+\indica{>\upx}f'_{\max}}\pr{X_s}>\alpha}
    &= \set{\pth\in\setofpths{}\colon \indica{\leq\upx}\pr{\pth\pr{s}} f'\pr{\pth\pr{s}} + \indica{>\upx}\pr{\pth\pr{s}} f'_{\max}\pr{\pth\pr{s}} > \alpha } \\
    &= \set{\pth\in\setofpths{}\colon \pr*{\pth\pr{s} \leq \upx, f'\pr{\pth\pr{s}} > \alpha} \text{ or } \pr*{\pth\pr{s}>\upx, f'_{\max}\pr{\pth\pr{s}}> \alpha }} \\
    &= \set{\pth\in\setofpths{}\colon \pth\pr{s} \leq \upx, f'\pr{\pth\pr{s}} > \alpha} \cup \set{\pth\in\setofpths{}\colon \pth\pr{s}>\upx, f'_{\max}\pr{\pth\pr{s}} > \alpha} \\
    &= \set{\br{\indica{\leq \upx}f'}\pr{X_s} > \alpha} \cup \set{\br{\indica{>\upx}f'_{\max}}\pr{X_s} > \alpha},
  \end{align*}
  where the union is one of two disjoint sets.
  Furthermore, the first set of this union is clearly empty if \(\alpha\geq f_{\max}\pr{\upx}\).
  We use our decomposition of the level sets, to yield
  \begin{align*}
    \MoveEqLeft
    \prev_\prob\pr{\br{\indica{\leq \upx}f'+\indica{>\upx}f'_{\max}}\pr{X_s}\cond X_u=x_u, X_t=x} \\
    &= \int_{0}^{\sup f'} \prob\pr{\set{\br{\indica{\leq \upx}f'}\pr{X_s}>\alpha}\cond X_u=x_u, X_t=x}+\prob\pr{\set{\br{\indica{>\upx}f'_{\max}}\pr{X_s}>\alpha}\cond X_u=x_u, X_t=x} \,\mathrm{d}\alpha \\
    &= \int_0^{\sup \indica{\leq \upx}f'} \prob\pr{\set{\br{\indica{\leq \upx}f'}\pr{X_s}>\alpha}\cond X_u=x_u, X_t=x} + \int_{0}^{\sup f'} \prob\pr{\set{\br{\indica{>\upx}f'_{\max}}\pr{X_s}>\alpha}\cond X_u=x_u, X_t=x} \,\mathrm{d}\alpha \\
    &= \prev_\prob\pr{\br{\indica{\leq \upx}f'}\pr{X_s}\cond X_u=x_u, X_t=x} + \prev_\prob\pr{\br{\indica{>\upx}f'_{\max}}\pr{X_s}\cond X_u=x_u, X_t=x},
  \end{align*}
  where the for the second equality we have furthermore used the linearity of the (improper) Riemann integral.

  In any case, it now follows from Equations~\eqref{eqn:Proof of Lower and upper expectation of functions that do not increase to fast:Monotonicity of prev with f'}--\eqref{eqn:Proof of Lower and upper expectation of functions that do not increase to fast:sum decomp of prev} that
  \begin{equation*}
    \abs*{\prev_\prob\pr{f'\pr{X_s}\cond X_u=x_u, X_t=x}-\prev_\prob\pr{f'_{\upx}\pr{X_s}\cond X_u=x_u, X_t=x}}
    \leq \prev_\prob\pr{\br{\indica{>\upx}f'_{\max}}\pr{X_s}\cond X_u=x_u, X_t=x}.
  \end{equation*}
  Observe now that \(\indica{>\upx}f'_{\max}\) is a non-decreasing and bounded below function.
  Therefore, it follows from Proposition~\ref{prop:Lower and upper expectation of non-decreasing functions} and Proposition~\ref{prop:expectation of poisson process as sum} that
  \begin{equation*}
    \abs*{\prev_\prob\pr{f'\pr{X_s}\cond X_u=x_u, X_t=x}-\prev_\prob\pr{f'_{\upx}\pr{X_s}\cond X_u=x_u, X_t=x}}
    \leq \sum_{y=\upx+1}^{+\infty}f'_{\max}\pois_{\ulambda\pr{s-t}}\pr{y-x}.
  \end{equation*}
  From this, Lemma~\ref{lem:Expectation with respect to counting process:Via non-negative translation} and Equation~\eqref{eqn:Proof of the:Lower and upper expectation of functions that do not increase to fast:Bound on sum with f'_max}, we now infer that
  \begin{equation}
  \label{eqn:Proof of the:Lower and upper expectation of functions that do not increase to fast:Final bound with eps_1}
    \pr{\forall\upx\in\stsp, \upx\geq x^\star}
    \pr{\forall\prob\in\setofconscproc{\Lambda}}~
    \abs*{\prev_\prob\pr{f\pr{X_s}\cond X_u=x_u, X_t=x}-\prev_\prob\pr{f_{\upx}\pr{X_s}\cond X_u=x_u, X_t=x}}
    \leq \epsilon_1
  \end{equation}

  Next, we recall from Proposition~\ref{prop:Lower bound for conditional expectation is reached by a consistent process} that
  \begin{equation}
  \label{eqn:Proof of Lower and upper expectation of functions that do not increase to fast:Lower reached for eventually constant}
    \pr{\forall \upx\in\stsp}
    \pr{\exists \prob_{\text{l}}\in\setofconscproc{\Lambda}}~
    \abs*{\lpoisprev_t^s\pr{f_{\upx}\cond x} - \prev_{\prob_{\text{l}}}\pr{f_{\upx}\pr{X_s}\cond X_u=x_u, X_t=x}}
    \leq \epsilon_2
  \end{equation}
  and, due to conjugacy, that
  \begin{equation}
  \label{eqn:Proof of Lower and upper expectation of functions that do not increase to fast:Upper reached for eventually constant}
    \pr{\forall \upx\in\stsp}
    \pr{\exists \prob_{\text{u}}\in\setofconscproc{\Lambda}}~
    \abs*{\upoisprev_t^s\pr{f_{\upx}\cond x} - \prev_{\prob_{\text{u}}}\pr{f_{\upx}\pr{X_s}\cond X_u=x_u, X_t=x}}
    \leq \epsilon_2.
  \end{equation}

  Everything is now set up for us to verify Equations~\eqref{eqn:Proof of Lower and upper expectation of functions that do not increase to fast:Eps delta for lower} and \eqref{eqn:Proof of Lower and upper expectation of functions that do not increase to fast:Eps delta for upper}.
  We here only verify the former, the latter follows from entirely similar reasoning.
  Fix any \(\upx\) in \(\stsp\) such that \(\upx\geq x^\star\).
  On the one hand, we observe that
  \begin{align*}
    \lprev_\Lambda\pr{f\pr{X_s}\cond X_u=x_u, X_t=x}
    &\leq \prev_{\prob_{\text{l}}}\pr{f\pr{X_s}\cond X_u=x_u, X_t=x}
    \leq \prev_{\prob_{\text{l}}}\pr{f_{\upx}\pr{X_s}\cond X_u=x_u, X_t=x}+\epsilon_1 \\
    &\leq \lpoisprev_t^s\pr{f_{\upx}\cond x}+\epsilon_1+\epsilon_2
    \leq \lpoisprev_t^s\pr{f_{\upx}\cond x}+\epsilon,
  \end{align*}
  where the first equality holds because \(\prob_{\text{l}}\) belongs to \(\setofconscproc{\Lambda}\), and where for the subsequent inequalities we have used Equation~\eqref{eqn:Proof of the:Lower and upper expectation of functions that do not increase to fast:Final bound with eps_1}, Equation~\eqref{eqn:Proof of Lower and upper expectation of functions that do not increase to fast:Lower reached for eventually constant} and our condition on \(\epsilon_1\) and \(\epsilon_2\).
  On the other hand, we observe that
  \begin{align*}
    \lprev_\Lambda\pr{f\pr{X_s}\cond X_u=x_u, X_t=x}
    &= \inf\set{\prev_{\prob}\pr{f\pr{X_s}\cond X_u=x_u, X_t=x}\colon \prob\in\setofconscproc{\Lambda}} \\
    &\geq \inf\set{\prev_{\prob}\pr{f_{\upx}\pr{X_s}\cond X_u=x_u, X_t=x}-\epsilon_1\colon \prob\in\setofconscproc{\Lambda}} \\
    &= \inf\set{\prev_{\prob}\pr{f_{\upx}\pr{X_s}\cond X_u=x_u, X_t=x}\colon \prob\in\setofconscproc{\Lambda}}-\epsilon_1 \\
    &= \lprev_\Lambda\pr{f_{\upx}\pr{X_s}\cond X_u=x_u, X_t=x}-\epsilon_1 \\
    &= \lpoisprev_t^s\pr{f_{\upx}\cond x}-\epsilon_1
    \geq \lpoisprev_t^s\pr{f_{\upx}\cond x}-\epsilon,
  \end{align*}
  where the first two equalities follow from Equation~\eqref{eqn:Lower expectation with respect to set of counting processes}, the first inequality follows from Equation~\eqref{eqn:Proof of the:Lower and upper expectation of functions that do not increase to fast:Final bound with eps_1} and the final equality follows from Theorem~\ref{the:Lower conditional expectation of bounded function with respect to all consistent processes} because \(f_{\upx}\) is clearly bounded.
  It is now clear that these two observations imply Equation~\eqref{eqn:Proof of Lower and upper expectation of functions that do not increase to fast:Eps delta for lower}, as required.
\end{proofof}

\section{Supplementary Material for Section~\ref{sec:Justification for the term imprecise poisson process}}

\begin{proofof}{Proposition~\ref{prop:Poisson like}}
  We first consider the five properties for \(\lprev_\Lambda^\star\).
  Properties (i) and (iii)--(v) follow almost immediately from Equation~\eqref{eqn:Lower expectation with respect to set of counting processes} and Proposition~\ref{prop:expectation of poisson process as sum}.
  To verify (ii), we observe that, for all \(\Delta\) in \(\posreals\),
  \begin{align*}
    \lprev_\Lambda^\star\pr{\indica{\pr{X_{t+\Delta}\geq x+2}}\cond X_u=x_u, X_t=x}
    &= \inf\set{\prev_{\prob_\lambda}\pr{\indica{\pr{X_{t+\Delta}\geq x+2}}\cond X_u=x_u, X_t=x}\colon \lambda\in\Lambda} \\
    &= \inf\set{\prob_\lambda\pr{X_{x+2}\geq x+2\cond X_u=x_u, X_t=x}\colon \lambda\in\Lambda} \\
    &= \inf\set{1-\pois_{\lambda\Delta}\pr{0}-\pois_{\lambda\Delta}\pr{1}\colon \lambda\in\Lambda} \\
    &= 1-\pois_{\ulambda\Delta}\pr{0}-\pois_{\ulambda\Delta}\pr{1}.
  \end{align*}
  where we have used Equation~\eqref{eqn:Lower expectation with respect to set of counting processes} for the first equality, Equation~\eqref{eqn:Expectation of a simple function} for the second equality and Proposition~\ref{Prop:Pois transition probabilities are poisson distributed} for the third equality.
  Similarly, if \(t>0\), then
  \begin{equation*}
    \lprev_\Lambda^\star\pr{\indica{\pr{X_{t}\geq x+2}}\cond X_u=x_u, X_{t-\Delta}=x}
    1-\pois_{\ulambda\Delta}\pr{0}-\pois_{\ulambda\Delta}\pr{1}.
  \end{equation*}
  Therefore,
  \begin{equation*}
    \lim_{\Delta\to0^+} \frac{\lprev_\Lambda^\star\pr{\indica{\pr{X_{t+\Delta}\geq x+2}}\cond X_u=x_u, X_t=x}}\Delta
    = \lim_{\Delta\to0^+} \frac{1-\pois_{\ulambda\Delta}\pr{0}-\pois_{\ulambda\Delta}\pr{1}}\Delta
    = \lim_{\Delta\to0^+} \frac{1-e^{-\ulambda\Delta}-\ulambda\Delta e^{-\ulambda\Delta}}\Delta
    = 0,
  \end{equation*}
  and similarly for the limit from the left if \(t>0\).

  Next, we consider the five properties for \(\lprev_{\Lambda}\).
  Properties (i), (iii) and (v) follow almost immediately from Theorem~\ref{the:Lower conditional expectation of bounded function with respect to all consistent processes}.
  Property (ii) follows immediately from Theorem~\ref{the:Lower conditional expectation of bounded function with respect to all consistent processes}, Equation~\eqref{eqn:PoisGen as special case of GenPoisGen} and Lemma~\ref{lem:lto_S satisfies orederliness property}.
  Finally, property (iv) follows immediately from Theorem~\ref{the:Lower conditional expectation of bounded function with respect to all consistent processes}, Equation~\eqref{eqn:PoisGen as special case of GenPoisGen} and Lemma~\ref{lem:State homogeneity of lto induced by genpoisgen}.
\end{proofof}

\end{document}